\newtheorem{Theorem}{Theorem}[section]
\newtheorem{Proposition}[Theorem]{Proposition}
\newtheorem{Lemma}[Theorem]{Lemma}
\newtheorem{Corollary}[Theorem]{Corollary}
\newtheorem{Sublemma}[Theorem]{Sublemma}	
\newtheorem{Claim}{Claim}
\theoremstyle{definition}
\newtheorem{Definition}[Theorem]{Definition}
\theoremstyle{remark}
\newtheorem{Remark}[Theorem]{Remark}
\newtheorem*{Conjecture}{Conjecture}
\numberwithin{equation}{section}
\begin{document}



\title[The semiclassical zeta function]{The semiclassical zeta function for geodesic flows on negatively curved manifolds}

\author{Fr\'ed\'eric FAURE}

\author{ Masato TSUJII}

\address[Fr\'ed\'eric FAURE]{Institut Fourier, UMR 5582 100 rue des Maths, BP74 FR-38402 St Martin d'H\`eres, FRANCE}

\address[Masato TSUJII]{Department of Mathematics, Kyushu University, 744 Motooka, Nishi-ku, Fukuoka, 819-0395, JAPAN}

\email[Fr\'ed\'eric FAURE]{frederic.faure@ujf-grenoble.fr}
\email[Masato TSUJII]{tsujii@math.kyushu-u.ac.jp}

\keywords{contact Anosov flow, dynamical zeta function, transfer operator}

\date{\today}

\thanks{The authors thank Colin Guillarmou and Semyon Dyatlov for helpful discussions and useful comments during they are writing this paper. They also thank the anonymous referee for many (indeed more than 100!) comments to the previous version of this paper based on very precise reading, which was indispensable in correcting errors and making the paper more readable.
F.~Faure thanks the ANR agency support ANR-09-JCJC-0099-01. M.~Tsujii thanks the support by JSPS KAKENHI Grant Number 22340035.}

\begin{abstract}
We consider the semi-classical (or Gutzwiller-Voros) zeta functions for  $C^\infty$ contact Anosov flows. 
Analyzing the spectra of the generators of some transfer operators associated to the flow, we prove that, for arbitrarily small $\tau>0$, its zeros are contained in the union of the $\tau$-neighborhood of the imaginary axis, $|\Re(s)|<\tau$,  and the half-plane $\Re(s)<-\chi_0+\tau$, up to finitely many exceptions, where $\chi_0>0$ is the hyperbolicity exponent of the flow. 
Further we show that the density of the zeros along the imaginary axis satisfy an analogue of the Weyl law. 
\end{abstract}

\maketitle

\tableofcontents


\newcommand{\real}{\mathbb{R}}
\newcommand{\integer}{\mathbb{Z}}
\newcommand{\complex}{\mathbb{C}}
\renewcommand{\Re}{\mathrm{Re}}
\renewcommand{\Im}{\mathrm{Im}}

\newcommand{\supp}{\mathrm{supp}\,}
\newcommand{\trace}{\mathrm{Tr}}
\newcommand{\flatTr}{\mathrm{Tr}^\flat}
\newcommand{\disk}{\mathbb{D}}
\newcommand{\vol}{\mathbf{m}}


\newcommand{\cL}{\mathcal{L}}
\newcommand{\wL}{\widetilde{\L}}
\newcommand{\bL}{\mathbf{L}}


\newcommand{\Bargmann}{{\mathcal{B}}}
\newcommand{\BargmannP}{\mathcal{P}}
\newcommand{\pBargmann}{{\mathfrak{B}}}  
\newcommand{\pBargmannP}{{\mathfrak{P}}} 

\newcommand{\cT}{\mathcal{T}}
\newcommand{\bT}{\mathbf{T}}

\newcommand{\mult}{\mathcal{M}} 
\newcommand{\cH}{\mathcal{H}}
\newcommand{\bH}{\mathbf{H}}
\newcommand{\wK}{{\widetilde{\mathcal{K}}}}
\newcommand{\cK}{\mathcal{K}}
\newcommand{\bK}{\mathbf{K}}
\newcommand{\cW}{\mathcal{W}}

\renewcommand{\j}{\mathbf{j}} 
\newcommand{\cJ}{\mathcal{J}}

\newcommand{\wA}{\widehat{A}}
\newcommand{\cone}{\mathbf{C}}
\newcommand{\cA}{\mathcal{A} }

\newcommand{\cX}{\mathcal{X}}
\newcommand{\lift}{\mathrm{lift}}

\newcommand{\cQ}{\mathcal{Q}}
\newcommand{\cG}{\mathcal{G}}
\newcommand{\cN}{\mathcal{N}}
\newcommand{\cD}{\mathcal{D}}
\newcommand{\cS}{\mathcal{S}}

\newcommand{\bu}{\mathbf{u}}
\newcommand{\bv}{\mathbf{v}}

\newcommand{\bI}{\mathbf{I}} 

\newcommand{\proj}{\mathfrak{p}} 

\newcommand{\bPi}{\mathbf{\Pi}}
\newcommand{\bP}{\boldsymbol{\pi}}

\newcommand{\cR}{\mathcal{R}}

\newpage

\newpage
\section{Introduction}\label{sec:intro}
The dynamical zeta functions for  flows are introduced by S.~Smale in the monumental paper ``{\it Differentiable dynamical systems}" \cite{Smale}. 
In the former part of the paper, he  discussed  about the Artin-Masur zeta function for discrete dynamical systems among others and showed that it is a rational function for any Anosov diffeomorphism. 
Then, in the latter part, he considered  a parallel object for continuous dynamical systems (or flows). 
He defined  the dynamical zeta function for a (non-singular) flow  by the formula
\begin{equation}\label{eq:Smale_definition_of_zeta}
Z(s):=\prod_{\gamma\in \Gamma}\prod_{k=0}^\infty \left(1-e^{-(s+k)|\gamma|}\right)=\exp\left(-\sum_{\gamma\in \Gamma}\sum_{k=0}^\infty \sum_{m=1}^\infty  \frac{e^{-(s+k)m\cdot |\gamma|}}{m}  \right),
\end{equation}
where $\Gamma$ denotes the set of prime periodic orbits for the flow and $|\gamma|$ denotes the period of $\gamma \in \Gamma$.
This  definition, seemingly rather complicated, is motivated by a famous result of Selberg  \cite{Selberg56}. For the geodesic flow on a  closed hyperbolic surface, {\it i.e.}\ a closed surface with negative constant curvature ($\equiv -1$), $Z(s)$ coincides with the  Selberg zeta function and the result of Selberg gives\footnote{
The paper \cite{Selberg56} treats much more general setting and the results are  stated in terms of geometry. Since the closed geodesics correspond to the periodic orbits of the geodesic flow, we may interpret the results in terms of dynamical systems. For the result mentioned here, we refer  \cite{McKean72, Sunada}.} the following analytic properties of $Z(s)$:  (See Figure \ref{fig1}.)

\begin{itemize}
\item[(a)] The infinite product and sum on the right-hand side of  (\ref{eq:Smale_definition_of_zeta}) converge absolutely when  $\Re(s)>1$. Hence $Z(s)$ is initially defined as an analytic function without zeros on the region $\{\Re(s)>1\}$. 
\item[(b)] The function $Z(s)$ thus defined extends analytically to the whole complex plane $\complex$. 
\item[(c)] The analytic extension of $Z(s)$ has zeros at $s=-n$ for $n=0,1,2,\cdots$ and the order of the zero $s=-n$ is $(2n+1)(g-1)$, where $g\ge 2$ is the genus of the surface.  The other zeros are exactly
\[
s=\frac{1}{2}\pm \sqrt{\frac{1}{4}-\lambda_i}, \quad i=0,1,2,\cdots
\]
where $\lambda_0=0\le \lambda_1\le \lambda_2\le \cdots$ are the eigenvalues of the Laplacian on the surface. In particular, all of the zeros of the latter kind (called non-trivial zeros)  are located on the line $\Re(s)=1/2$ with finitely many exceptions.
\item[(d)] The analytic extension of $Z(s)$ satisfies the functional equation\footnote{The line integral on the right-hand side may change its value by an integer multiple of $2\pi i$ when we consider a different path for integration. But this ambiguity is cancelled when it is put in the exponential function  and therefore the factor  $\exp(\cdot)$ on the right-hand side is well-defined.}
\[
Z(1-s)=Z(s)\cdot \exp\left(2(g-1) \int_{0}^{s-1/2} \pi x \tan (\pi x) dx\right).
\]
\end{itemize}
\begin{figure}
\scalebox{1} 
{
\begin{pspicture}(0,-3.62)(7.4818945,3.62)
\psline[linewidth=0.04cm](2.2,3.6)(2.2,-3.6)
\psline[linewidth=0.04cm](-0.3,0.2)(6.6,0.2)
\usefont{T1}{ptm}{m}{n}
\rput(3.65,0.605){$\displaystyle\frac{1}{2}$}
\usefont{T1}{ptm}{m}{n}
\rput(1.95,0.505){$0$}
\rput(-0.1,0.505){$-1$}
\psdots[dotsize=0.2,dotstyle=x](2.2,0.2)
\psdots[dotsize=0.2,dotstyle=x](3.4,0.2)
\psdots[dotsize=0.2,dotstyle=x](3.4,0.6)
\psdots[dotsize=0.2,dotstyle=x](3.4,1.0)
\psdots[dotsize=0.2,dotstyle=x](3.4,1.4)
\psdots[dotsize=0.2,dotstyle=x](3.4,1.8)
\psdots[dotsize=0.2,dotstyle=x](3.4,2.0)
\psdots[dotsize=0.2,dotstyle=x](3.4,2.2)
\psdots[dotsize=0.2,dotstyle=x](3.4,2.4)
\psdots[dotsize=0.2,dotstyle=x](3.4,2.6)
\psdots[dotsize=0.2,dotstyle=x](3.4,3.0)
\psdots[dotsize=0.2,dotstyle=x](3.4,3.2)
\psdots[dotsize=0.2,dotstyle=x](3.4,3.4)
\psdots[dotsize=0.2,dotstyle=x](3.4,-0.2)
\psdots[dotsize=0.2,dotstyle=x](3.4,-0.6)
\psdots[dotsize=0.2,dotstyle=x](3.4,-1.0)
\psdots[dotsize=0.2,dotstyle=x](3.4,-1.4)
\psdots[dotsize=0.2,dotstyle=x](3.4,-1.6)
\psdots[dotsize=0.2,dotstyle=x](3.4,-1.8)
\psdots[dotsize=0.2,dotstyle=x](3.4,-2.0)
\psdots[dotsize=0.2,dotstyle=x](3.4,-2.2)
\psdots[dotsize=0.2,dotstyle=x](3.4,-2.4)
\psdots[dotsize=0.2,dotstyle=x](3.4,-2.8)
\psdots[dotsize=0.2,dotstyle=x](3.4,-3.0)
\psdots[dotsize=0.2,dotstyle=x](3.4,-3.2)
\psdots[dotsize=0.2,dotstyle=x](2.8,0.2)
\psdots[dotsize=0.2,dotstyle=x](4.0,0.2)
\psdots[dotsize=0.2,dotstyle=x](4.6,0.2)
\psdots[dotsize=0.2,dotstyle=x](-0.1,0.2)
\psline[linewidth=0.012cm](3.4,3.6)(3.4,-3.6)
\usefont{T1}{ptm}{m}{n}
\rput(4.651455,0.505){$1$}
\usefont{T1}{ptm}{m}{n}
\rput(6.231455,0.525){$\mathrm{Re}(s)$}
\usefont{T1}{ptm}{m}{n}
\rput(1.551455,3.305){$\mathrm{Im}(s)$}
\end{pspicture} 
}
\label{fig1}
\caption{Zeros of the Selberg zeta function $Z(s)$ }
\end{figure}
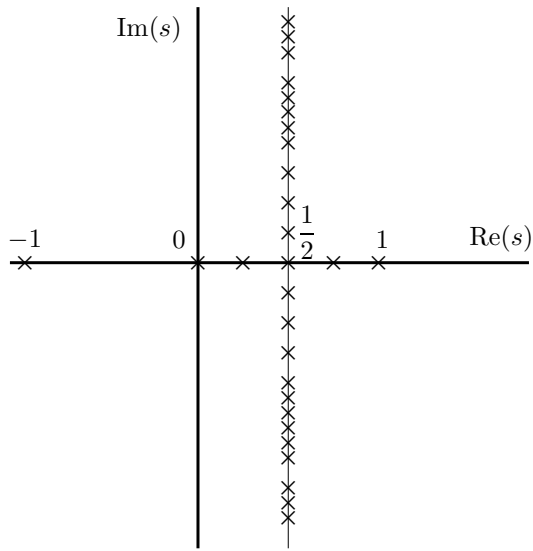

Smale's idea was to study the dynamical zeta function $Z(s)$ defined as above in more general context. 
The main question\footnote{There are many other related problems. For instance the relation of special values of the dynamical zeta function to the geometric properties of the underlying manifolds is an interesting problem. See \cite{1606.04560, Fried88,  Morita}.} ought to have been whether the properties (a)-(d) above hold for more general types of flow, such as the geodesic flows on manifolds with negative variable  curvature or, more generally, to general  Anosov flows. 
But it was not clear whether this idea was reasonable, since the results of Selberg were based on the  Selberg trace formula for the heat kernel on the surface and depended crucially on the fact that the surface was of  negative constant curvature. This must be the reason why Smale described his idea ``wild".  In \cite{Smale}, he showed that $Z(s)$ has meromorphic extension to the whole complex plane if the flow is a suspension flow  of an Anosov diffeomorphism by a constant roof function. However the main part of the ``wild" idea was left as a question. 

Later the dynamical zeta function $Z(s)$ is  generalized and studied extensively by many people not only in dynamical system theory but also in the fields of mathematical physics related to ``quantum chaos". In  dynamical system theory, the dynamical zeta function $Z(s)$ and its variants are related to semi-groups of transfer operators associated to the flow through the  Atiyah-Bott-Guillemin trace formula, as we will explain later.
We refer the papers  \cite{ParryPollicott, Ruelle76flow} for the development in the early stage and the paper \cite{GLP12} (and the references therein) for the recent state of related researches. 

For the extensions of the claim (a) and (b) above, we already have satisfactory results: for instance, the dynamical zeta function 
$Z(s)$ for a $C^\infty$ Anosov flow  is known to have meromorphic extension to the whole complex plane $\complex$. (See  \cite{GLP12, DZ13}. Note that the arguments in these papers are applicable to more general dynamical zeta functions.) However, to the best of authors' understanding, not much is known about the extension of the claims (c) or (d), or more generally on the distributions of singularities of the (generalized) dynamical zeta functions. In this paper, we consider an extension of the claim (c) in the case of geodesic flows on negatively curved manifolds or, more generally, contact Anosov flows.

Before proceeding with the problem, we would like to pose a question whether the zeta function $Z(s)$ introduced by Smale is the ``right" candidate to be studied. In fact, there are variety of generalized dynamical zeta functions which coincide with $Z(s)$ in the cases of geodesic flows on closed hyperbolic surfaces, because so do some dynamical exponents. Each of such generalized dynamical zeta functions may be regarded as an extension of Selberg zeta function in their own rights and their analytic property will be different when we consider them in more general cases. And there is no clear evidence that $Z(s)$ is better or more natural than the others. This is actually one of the question that the authors would like to address  in  this paper.  
For the geodesic flows on a negatively curved closed manifold $N$ (or more generally non-singular flows with some hyperbolicity),  the  ``semi-classical" or ``Gutzwiller-Voros" zeta function $Z_{sc}(s)$ is defined by
\begin{equation}\label{def:semiclassicalzeta}
Z_{sc}(s)=\exp\left(-\sum_{\gamma\in \Gamma}\sum_{m=1}^\infty  \frac{e^{-s\cdot m\cdot |\gamma|}}{m\cdot {|\det(\mathrm{Id}-D_\gamma^m)|^{1/2}}} \right)
\end{equation}
where $D_\gamma$ is the transversal Jacobian matrix\footnote{This is the Jacobian matrix of the Poincar\'e map for the orbit $\gamma$ at the intersection.} along a prime periodic orbit $\gamma$.
(See \cite{Rugh} for instance.) As we will see, this is a variant of the dynamical zeta function $Z(s)$ and coincides with the dynamical zeta function $Z(s)$ if $N$ is a closed surface with constant negative ($\equiv -1$) curvature\footnote{For the case of a surface with  constant negative curvature ($\equiv -1$), the eigenvalues of $D_\gamma$ are
$\exp(\pm|\gamma|)$. Hence we can check the equality $Z_{sc}(s)=Z(s+1/2)$ by simple calculation.},  
with shift of the variable $s$ by $1/2$. Hence we may regard   $Z_{sc}(s)$ as a different generalization of  Selberg zeta function than $Z(s)$. As the main result of this paper, we show that an extension of the claim (c) holds for $Z_{sc}(s)$ in the case of the geodesic flows on manifolds with negative variable curvature (and more generally for contact Anosov flows), \textit{that is},  countably many zeros of the analytic extension of $Z_{sc}(s)$ concentrate along the imaginary axis and there are regions on the both sides of the imaginary axis with only finitely many zeros. (See Figure \ref{fig2} and compare it with Figure \ref{fig1}.)
\begin{figure}
\scalebox{1} 
{
\begin{pspicture}(0,-3.62)(7.2818947,3.62)
\definecolor{color588b}{rgb}{0.8,0.8,0.8}
\psframe[linewidth=0.01,linestyle=dotted,dotsep=0.16cm,dimen=outer,fillstyle=solid,fillcolor=color588b](1.0,3.6)(0.0,-3.6)
\psframe[linewidth=0.01,linestyle=dotted,dotsep=0.16cm,dimen=outer,fillstyle=solid,fillcolor=color588b](3.4,3.6)(3.0,-3.6)
\psline[linewidth=0.04cm](3.2,3.6)(3.2,-3.6)
\psline[linewidth=0.04cm](0.0,0.2)(6.6,0.2)
\psdots[dotsize=0.12](3.3,3.0)
\psdots[dotsize=0.12](3.16,2.48)
\psdots[dotsize=0.12](3.1,1.76)
\psdots[dotsize=0.12](3.28,1.56)
\psdots[dotsize=0.12](3.12,0.8)
\psdots[dotsize=0.12](2.72,0.64)
\psdots[dotsize=0.12](1.9,0.48)
\psdots[dotsize=0.12](2.9,1.2)
\psdots[dotsize=0.12](3.32,2.68)
\psdots[dotsize=0.12](3.12,3.28)
\psdots[dotsize=0.12](3.28,3.46)
\psdots[dotsize=0.12](3.3,2.08)
\psdots[dotsize=0.12](3.72,0.94)
\psdots[dotsize=0.12](4.02,0.38)
\psdots[dotsize=0.12](1.88,-0.04)
\psdots[dotsize=0.12](2.72,-0.22)
\psdots[dotsize=0.12](3.12,-0.4)
\psdots[dotsize=0.12](2.88,-0.96)
\psdots[dotsize=0.12](3.12,-1.36)
\psdots[dotsize=0.12](3.32,-1.16)
\psdots[dotsize=0.12](3.28,-1.68)
\psdots[dotsize=0.12](3.18,-1.96)
\psdots[dotsize=0.12](3.32,-2.16)
\psdots[dotsize=0.12](3.32,-2.42)
\psdots[dotsize=0.12](3.1,-2.6)
\psdots[dotsize=0.12](3.32,-2.8)
\psdots[dotsize=0.12](4.0,0.04)
\psdots[dotsize=0.12](3.74,-0.5)
\psdots[dotsize=0.12](0.72,2.56)
\psdots[dotsize=0.12](0.72,1.26)
\psdots[dotsize=0.12](0.68,-0.4)
\psdots[dotsize=0.12](0.68,-1.94)
\psdots[dotsize=0.12](0.78,0.66)
\psdots[dotsize=0.12](0.8,1.88)
\psdots[dotsize=0.12](0.54,-1.0)
\psdots[dotsize=0.12](0.64,-2.52)
\psdots[dotsize=0.12](0.92,3.24)
\psdots[dotsize=0.12](0.9,-3.22)
\usefont{T1}{ptm}{m}{n}
\rput(1.411455,0.445){$-\chi_0$}
\usefont{T1}{ptm}{m}{n}
\rput(3.49708,0.425){0}
\usefont{T1}{ptm}{m}{n}
\rput(6.031455,0.525){$\mathrm{Re}(s)$}
\usefont{T1}{ptm}{m}{n}
\rput(3.9314551,3.285){$\mathrm{Im}(s)$}
\end{pspicture} 
}
\caption{The zeros of the semi-classical zeta function $Z_{sc}(s)$. The zeros are symmetric with respect to the complex conjugation.}
\label{fig2}
\end{figure}
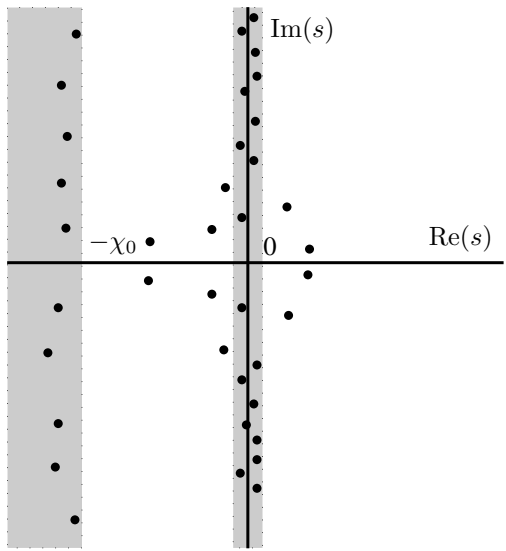
It seems  that this result and the argument in the proof are suggesting that the semi-classical zeta function $Z_{sc}(\cdot)$ is the ``right" generalization of the Selberg zeta function when we consider the extension of the claim (c) (and (d)).

Below we describe our result more precisely. 
Let $M$ be a closed $C^\infty$ manifold of odd dimension, say, $2d+1$. 
We consider a $C^\infty$ contact Anosov flow $f^t:M\to M$. 
By definition, the flow $f^t$ preserves a contact form $\alpha$ on $M$, that is, a differential $1$-form for which $\alpha\wedge (d\alpha)^{\wedge d}$ vanishes nowhere. 
(We may and do assume $\alpha(V)\equiv 1$ for the generating vector field $V$ of the flow$f^t$ by multiplying $\alpha$ by a $C^\infty$ function.)
Also there exist constants $\chi_0>0$, $C>0$ and a $Df^t$-invariant continuous  decomposition  
\[
TM=E_0\oplus E_s\oplus E_u 
\] 
of the tangent bundle $TM$   such that $E_0$ is the one-dimensional subbundle spanned by the generating vector field $V$ of the flow~$f^t$ and that 
\begin{equation}\label{eq:Anosov}
\|Df^t|_{E_s}\|\le Ce^{-\chi_0 t}\quad \mbox{and} \quad \|Df^{-t}|_{E_u}\|\le Ce^{-\chi_0 t}\quad \mbox{for $t\ge 0$.}
\end{equation}

\begin{Remark}
The geodesic flow $f^t:T^*_1N\to T^*_1N$ on a closed negatively curved manifold $N$ is a contact Anosov flow, where $T^*_1N$ is the unit cotangent bundle of $N$ and the contact form $\alpha$ preserved by the flow is the restriction of the canonical one form on $T^*N$.
\end{Remark}

From the definitions, it is not difficult to see that 
\[
E_s\oplus E_u=\ker \alpha\quad \mbox{and}\quad \dim E_u=\dim E_s=d.
\]
We henceforth fix $\chi_0>0$ satisfying (\ref{eq:Anosov}) and call it the hyperbolicity exponent of the flow $f^t$. Note that the subbundles $E_s$ and $E_u$ are in general not smooth but only H\"older continuous. Below we suppose that the subbundles  $E_s$ and $E_u$ are $\beta$-H\"older continuous with exponent
\begin{equation}\label{beta}
0<\beta<1.
\end{equation}

The main result of this paper is the following theorem. 
\begin{Theorem}
\label{mainth:spec}If $f^t:M\to M$ is a contact Anosov flow, its semi-classical zeta function $Z_{sc}(s)$, which is initially defined\/\footnote{Since the factor $|\det(\mathrm{Id}-D^m_\gamma)|$ in the definition of $Z_{sc}(s)$ is positive and proportional to $|\det (Df^{m|\gamma|}|_{E_u}(x_\gamma))|$ for  $x_\gamma\in \gamma$, the sum in the definition of $Z_{sc}(s)$ converges absolutely if and only if  (\ref{eq:region_convergence_sc}) holds. (See \cite[Theorem C]{Lu} for the definition of topological pressure $P_{top}(\cdot)$ and its expression in terms of periodic orbits in the case of Anosov flow.) Further $Z_{sc}(s)$ has its rightmost zero at  $P_{top}(f^t, -(1/2)\log |Df^t|_{E_u}|)$. For the proof, see \cite{Ruelle76flow} and the expression (\ref{eq:Expression_of_Zsc}).} by (\ref{def:semiclassicalzeta}) as a holomorphic function without zeros on the half-plane 
\begin{equation}\label{eq:region_convergence_sc}
\Re(s)>P_{top}(f^t, -(1/2)\log |Df^t|_{E_u}|)>0,
\end{equation}
extends to a meromorphic function on the whole complex plane $\complex$. 
For arbitrarily small $\tau>0$, the zeros of the meromorphic extension of $Z_{sc}(s)$ are contained in the region 
\begin{equation}\label{eq:Uchiepsilon}
U(\chi_0, \tau):=\{z\in \complex\mid |\Re(z)|<\tau \mbox{ or }\;\Re(z)<-\chi_0+\tau\}
\end{equation}
up to finitely many exceptions\footnote{The number of exceptional zeros may increase as $\tau$ becomes smaller. }, while there are at most finitely many poles on the region $\Re(s)>-\chi_0+\tau$. There do exist infinitely many zeros on the strip
\begin{equation}\label{eq:Uzero}
U_0(\tau)=\{z\in \complex\mid |\Re(z)|<\tau\}
\end{equation}
and a (weak) analogue of Weyl law holds for the distribution of the imaginary part of the zeros in $U_0(\tau)$, that is, for any $\delta>0$, there exists a constant $C>1$ such that, for arbitrarily small $0<\tau<\chi_0$,  the estimate    
\begin{equation}\label{eq:estimate_density}
\frac{|\omega|^{d}}{C}<
\frac{\#\{ \mbox{ zeros of $Z_{sc}(s)$ }\mid\mbox{ $|\Re(s)|<\tau$, 
$\omega\le \Im(s)\le \omega+  |\omega|^\delta$ }\}}{|\omega|^\delta}<C|\omega|^{d}
\end{equation}
holds for any real number $\omega$ with sufficiently large absolute value. 
\end{Theorem}

The last claim implies in particular that 
\[
\frac{|\omega|^{d+1}}{C'}<
\#\left\{ \mbox{ zeros of $Z_{sc}(s)$ }\mid \mbox{ $|\Re(s)|<\tau$,  $|\Im(s)|\le  \omega $ }\right\}<C'|\omega|^{d+1}
\]
for some constant $C'>0$ and for sufficiently large $\omega>0$. 

\begin{Remark}
In (\ref{eq:estimate_density}) above, the estimate \emph{from below} is the main assertion. In similar problems (such as density of Ruelle-Pollicott resonances and resonances in the scattering problems), reasonable estimates from below are usually much more difficult to obtain compared with those from above. 
(For estimates from above, we refer \cite{DDZ13, FaureSjostrand11}.) It will be possible to make the estimate (\ref{eq:estimate_density}) more precise by replacing the factors $|\omega|^\delta$ in it by smaller factor such as $\log |\omega|$ or even by some fixed large constant. 
Also, by analogy with the Weyl law for the Laplacians,  it is natural to expect that the ratio
\[
\frac{\#\{ \mbox{zeros of $Z_{sc}(s)$ }\mid\mbox{ $|\Re(s)|<\tau$, 
$\omega\le \Im(s)\le \omega+  |\omega|^\delta$ }\}}{|\omega|^{d+\delta}}
\] 
converges to $(2\pi)^{-d-1} \mathrm{Vol}(M)$ as $\omega\to \pm \infty$ where $\mathrm{Vol}$ denotes the contact volume, \textit{i.e.}\ $\mathrm{Vol}=\alpha\wedge (d\alpha)^{\wedge d}$. But we do not go farther into these problems in this paper.
\end{Remark}

We deduce the theorem above from spectral properties of some transfer operators associated to the flow $f^t$. Let us recall an idea due to Ruelle.
Let $\pi_V:V\to M$ be a complex vector bundle\footnote{We always assume that each vector bundle is complexified and equipped with a Hermitian inner product on it. The choice of  the Hermitian inner product is not essential. But we need it for some expressions, e.g. the definition (\ref{eq:ltk}). }  over $M$ and write $\Gamma^0(V)$ for the set of continuous sections of $V$. Let $F^t:V\to V$ be a one-parameter group of vector bundle maps which makes the following diagram commutes:
\[
\begin{CD} V@>{F^t}>> V\\
@V{\pi_V}VV @V{\pi_V}VV\\
M@>{f^t}>> M
\end{CD}
\]
We consider the one-parameter group of vector-valued transfer operators
\[
\cL^t:\Gamma^0(V)\to\Gamma^0(V), \quad \cL^t v(x)= F^{t}(v(f^{-t}(x))).
\]
The flat (or Atiyah-Bott-Guillemin) trace of $\cL^t$ is calculated as   
\begin{equation}\label{def:dynTr}
\flatTr\,\cL^t=\sum_{\gamma\in \Gamma}\sum_{m=1}^\infty
\frac{|\gamma|\cdot \trace\, E_\gamma^m}{|\det(\mathrm{Id}-D_\gamma^{-m})|}\cdot \delta(t-m\cdot |\gamma|),
\end{equation}
where $E_\gamma$ is the linear transformation $F^{|\gamma|}:\pi_V^{-1}(x_\gamma)\to \pi_V^{-1}(x_\gamma)$ at a point $x_\gamma$ on the orbit~$\gamma$. (See the remark below.) Notice that $\flatTr\,\cL^t$ is not a function of $t$ in the usual sense but is a distribution. 

\begin{Remark}\label{rm:dyntrace} 
The flat trace in  (\ref{def:dynTr}) is defined as the integral of the Schwartz kernel of $\cL^t$ on the diagonal set in $M\times M$. The integral is well-defined as a distribution (see \cite[Ch. 8]{Hormander1}, \cite[Th.8]{Guillemin}), because hyperbolicity of the flow $f^t$ ensures that the graph of $(t,x)\mapsto f^t(x)$ is transversal to the diagonal set $x=y$ in $\real_{+}\times M\times M$.
But, for our argument, it will be more convenient to interpret the definition as follows.  
First we consider the case where $V$ is one-dimensional and trivial, so that $\cL^t$ may be regarded as scalar-valued. Let $K(x,y;t)$ be the Schwartz kernel of the operator $\cL^t$ and let $K^{\delta}(x,y;t)$ for $\delta\ge 0$ be a one-parameter family of smoothings of $K(x,y;t)$ by using mollifier, which converges to $K(x,y;t)$ as $\delta\to +0$. We define the distribution  $\flatTr\,\cL^t$ on $(0,\infty)$ by the relation
\[
\langle \flatTr\,\cL^t,\varphi\rangle:=\lim_{\delta\to +0} \int K^{\delta}(x,x;t) \varphi(t) dx
\]
for $\varphi\in C^\infty_0(\real)$ supported on $\{t\in \real\mid t>0\}$. It is not difficult to check that the limit on the right-hand side exists and does not depend on the choice of the smoothing $K^{\delta}(x,y;t)$. 
When $V$ is higher dimensional or non-trivial, we write the transfer operator $\cL^t$ as a matrix of scalar-valued transfer operators $(\cL^t_{ij})_{1\le i,j\le N}$ by using a system of local trivializations of $V$ and an associated partition of unity. We define the flat trace as 
$\flatTr\,\cL^t=\sum_{i=1}^N \flatTr\,\cL^t_{ii}$. We can check that this definition does not depend on the matrix expression $(\cL^t_{ij})_{1\le i,j\le N}$ and gives  (\ref{def:dynTr}). 
\end{Remark}

For $0\le k\le d$, let $\pi:(E_u^*)^{\wedge k}\to M$ be the $k$-th exterior product of the dual $E_u^*$ of the unstable sub-bundle $E_u$ and let $F_k^t:(E_u^*)^{\wedge k}\to (E_u^*)^{\wedge k}$ be the vector bundle map defined by\footnote{As careful readers may have realized, the sub-bundles $E_u$ (and $E_s$) are not smooth in general and this will cause many technical difficulties in the argument. Indeed this is the main issue of this paper in technical sense. We will address this problem in the next section. For a while, we assume that $E_u$ is smooth or just ignore the problem. } 
\begin{equation}\label{eq:Fk}
F_k^t(v)=|\det Df^t|_{E_u}(\pi(v))|^{1/2}\cdot ((Df^{-t})^*)^{\wedge k}(v).
\end{equation}
Note that the action of $(Df^{-t})^*$ on $E_u^*$ is contracting when $t\ge 0$.
The corresponding one-parameter family of vector-valued transfer operators is 
\begin{align}\label{eq:ltk}
\cL_k^t u(x)&=F_k^t(u(f^{-t}(x)))\\
&=|\det Df^t|_{E_u}(f^{-t}(x))|^{1/2}\cdot ((Df^{-t})^*)^{\wedge k} (u(f^{-t}(x)))\notag
\end{align}
and its flat trace is 
\[
\flatTr\,\cL_k^t=\sum_{\gamma\in \Gamma}\sum_{m=1}^\infty
\frac{|\gamma|\cdot|\det D_\gamma^u|^{m/2}\cdot  \trace\, ((D_\gamma^u)^{-m})^{\wedge k}}{|\det(\mathrm{Id}-D_\gamma^{-m})|}\cdot \delta(t-m\cdot |\gamma|)
\]
where $D_\gamma^u$ is the transversal Jacobian matrix for $\gamma\in \Gamma$ restricted to the unstable sub-bundle $E_u$.  

Since the differential $d\alpha$ of the contact form $\alpha$ restricts to a symplectic form on $\ker \alpha=E_s\oplus E_u$ and is preserved by $Df^t$, we have 
\[
\sqrt{|\det(\mathrm{Id}-D_\gamma^{-m})|}=|\det(D_\gamma^u)|^{m/2}\cdot |\det\left(\mathrm{Id}-(D_\gamma^u)^{-m}\right)|.
\]
Hence, provided that the subbundle $E_u^*$ is orientable, 
we have
\[
\sum_{k=0}^{d}(-1)^{k}\;\flatTr\,\cL_k^t=\sum_{\gamma\in \Gamma}\sum_{m=1}^\infty
\frac{|\gamma|}
{\sqrt{|\det(\mathrm{Id}-D_\gamma^{-m})|}}\cdot \delta(t-m\cdot |\gamma|)
\]
from the algebraic relation
\[
\det(\mathrm{Id}-(D_\gamma^u)^{-m})=
\sum_{k=0}^{d}(-1)^{k} \cdot \trace\, (((D_\gamma^u)^{-m})^{\wedge k}).
\]
\begin{Remark}\label{rem:orientation}
In the case where $E_u^*$  is \emph{not} orientable, the formula above is not valid. In fact, we actually had to replace $F_k^t$ with its extension
\[
\widetilde{F}_k^t=F_k^t\otimes (Df^{-t})^*:(E_u^*)^{\wedge k}\otimes \ell_u\to (E_u^*)^{\wedge k}\otimes \ell_u
\]
where $\ell_u$ is the orientation line bundle of $E^*_u$ and $(Df^{-t})^*:\ell_u\to \ell_u$ denotes the natural pull-back action by $f^{-t}$ on $\ell_u$. (See \cite{Fried86}.) 
Since this modification does no harm in the argument below except making the notation more cumbersome, we proceed with assuming $E_u^*$ to be orientable and keep this modification necessary  for the other cases implicit in the following. 
\end{Remark}

Therefore the semiclassical zeta function $Z_{sc}(s)$ is expressed as
\footnote{The lower bound $+0$ in the integration indicates some small positive number that is smaller than the minimum of the periods of periodic orbits for the flow. } 
\begin{equation}\label{expression_Zsc}
Z_{sc}(s)=\exp\left(-\int_{+0}^\infty \frac{e^{-st}}{t}\sum_{k=0}^{d}(-1)^{k}\flatTr\,\cL_k^t dt\right).
\end{equation}
We define the dynamical Fredholm determinant of $\cL_k^t$ by
\begin{align*}
d_k(s):&=\exp\left(-\int_{+0}^\infty \frac{e^{-st}}{t}\cdot \flatTr\,\cL^t_k dt\right)\\
&=\exp\left(-\sum_{\gamma\in \Gamma}\sum_{m=1}^\infty \frac{e^{-s\cdot m\cdot |\gamma|}\cdot |\det D_\gamma^u|^{m/2}\cdot \trace\, ((D_\gamma^u)^{-m})^{\wedge k}}{m\cdot |\det(\mathrm{Id}-D_\gamma^{-m})|}\right).
\end{align*}
Then the semi-classical zeta function  is expressed as  an alternative product
\begin{equation}\label{eq:Expression_of_Zsc}
Z_{sc}(s)=\prod_{k=0}^{d} d_k(s)^{(-1)^{k}}
\end{equation}
at least for $s$ with sufficiently large real part. The dynamical Fredholm determinant $d_k(s)$ satisfies
\[
(\log d_k(s))'=\frac{d_k(s)'}{d_k(s)}=\int_{+0}^\infty e^{-st} \cdot \flatTr\,\cL_k^t dt.
\]
If $\cL_k^t$ were a finite rank diagonal matrix with diagonal elements $e^{\lambda_\ell t}$ and if the flat trace $\flatTr$ were the usual trace, the right-hand side would be $\sum_\ell (s-\lambda_\ell)^{-1}$ and we would have $d_k(s)={\mathrm{const.}}\prod_\ell(s-\lambda_\ell)$. We therefore expect that the eigenvalues of the generator of $\cL^t_k$ appear as zeros of the dynamical Fredholm determinant $d_k(s)$ and consequently 
zeros (resp.\ poles) of $Z_{sc}(s)$ when $k$ is even (resp.\ odd). 

\section{Grassmann extension}\label{sec:Gext}
A technical difficulty in dealing with the semi-classical zeta functions $Z_{sc}(s)$ is that the coefficient $|\det Df^t|_{E_u}|^{1/2}$ and also the vector bundle $(E_u^*)^{\wedge k}$  in the definition (\ref{eq:ltk}) of  the corresponding transfer operators $\cL^t_k$ is {\em not} smooth but  only H\"older continuous. To avoid this difficulty, we actually consider the corresponding transfer operators on a Grassmann bundle $G$ over the manifold $M$. (In the literature, this kind of idea is found in the papers \cite{CviVat93, GL08}.)

Consider the Grassmann bundle $
\pi_G:G\to M$
that consists of $d$-dimensional subspaces of the tangent bundle $TM$.  By definition,  the fiber $\pi_G^{-1}(x)$ for each point $x\in M$ is the Grassmann space $G_x$ that consists of  $d$-dimensional subspaces of the tangent space $T_xM$, whose dimension is $\dim G_x=d(2d+1-d)=d^2+d$. We suppose that $G$ is equipped with a smooth Riemann metric. 

 The flow $f^t$ naturally induces a flow on $G$:
\[
f^t_G:=(Df^t)_*:G\to G,
\qquad f^t_G(x,\sigma):=(Df^t)_*(x,\sigma)=(f^t(x), Df^{t}(\sigma)).
\]
Let $e_{u}:M\to G$ be the section which assigns the unstable subspace $E_u(x)\in G$ to each point $x\in M$. (But notice that this section $e_u$ is not  smooth in general.) Clearly the following diagrams commute:
\[
\begin{CD}
G@>{f^t_G}>> G\\
@V{\pi_G}VV @V{\pi_G}VV\\
M@>{f^t}>> M
\end{CD}
\qquad\qquad 
\begin{CD}
G@>{f^t_G}>> G\\
@A{e_{u}}AA @A{e_{u}}AA\\
M@>{f^t}>> M
\end{CD}
\]
Since image $\mathrm{Im}(e_u)$ of the section $e_{u}$ is an attracting isolated invariant subset for the extended flow $f^t_G$, we can take a small relatively compact absorbing neighborhood $U_0$ of it so that
\[
f^t_G(U_0)\Subset U_0\quad \mbox{for $t> 0$, \;\;and }\quad \bigcap_{t\ge 0} f^t_G(U_0)=\mathrm{Im}(e_u).
\]
The semi-flow  $f^t_G:U_0\to U_0$ for $t\ge 0$ is hyperbolic in the following sense: There is  a continuous decomposition of the tangent bundle
\begin{equation}\label{eq:hyp_docomp_G}TU_0=\widetilde{E}_u\oplus \widetilde{E}_s\oplus \widetilde{E}_0
\end{equation}
where $\widetilde{E}_s:=D\pi_G^{-1}(E_s)$, $\widetilde{E}_0:=\langle \partial_t f_G^t\rangle$
and $ \widetilde{E}_u$ is a complement of $\widetilde{E}_0\oplus \widetilde{E}_s=\pi_G^{-1}(E_0\oplus E_s)$ such that $
D\pi_{G}(\widetilde{E}_u) =E_u$; 
The semi-flow $f^t_G:U_0\to U_0$ ($t\ge 0$) is  exponentially contracting (resp.\ expanding) on $\widetilde{E}_s$ (resp.\ $\widetilde{E}_u$), that is\footnote{We can and do take the constant $\chi_0$ same as that in (\ref{eq:Anosov}), though this is not necessary.  }, 
\begin{equation}\label{eq:Anosov2}
\|Df_G^t|_{\widetilde{E}_s}\|\le Ce^{-\chi_0 t}\quad \mbox{and} \quad \|Df^{t}_G|_{\widetilde{E}_u}\|_{\min}\ge C^{-1}e^{\chi_0 t}\quad \mbox{for $t\ge 0$}
\end{equation}
where $\|\cdot\|_{\min}$ in the latter inequality denotes the minimum expansion rate 
\[
 \|Df^{t}_G|_{\widetilde{E}_u}\|_{\min}=\min\{ \|Df^{t}_G v\|\mid v\in \widetilde{E}_u, |v|=1\}.
\]
The sub-bundles $\widetilde{E}_0$ and $\widetilde{E}_s$ are (forward) invariant with respect to the semi-flow $f^t_G$, while $\widetilde{E}_u$ will not. 

Let $\pi_{\perp}:V_{\perp}\to G$ be the  $(d^2+d)$-dimensional sub-bundle of $TG$ defined by  
\[
V_\perp:=\{ (z,v)\in TG\mid D\pi_G(v)=0\}\subset TG.
\] 
Let $\pi_G^*(TM)$ be the pull-back  of the tangent  bundle $TM$ by the projection $\pi_G:G\to M$ and let  $\pi:V_u\to G$ be its smooth sub-bundle  defined tautologically   by
\[
V_u=\{(z,v)\in \pi_G^*(TM) \mid v\in [z]\}
\]
where $[z]$ denotes the $d$-dimensional subspace of $T_{\pi_G(z)}M$ that $z\in G$ represents. 
Let $\pi:V_u^*\to G$ be the dual of $V_u$. 
We define 
\[
\pi_{k,\ell}:V_{k,\ell}:=(V_u^*)^{\wedge k}\otimes (V_\perp)^{\wedge \ell}\to G\quad \mbox{ for $0\le k\le d$ and $0\le \ell \le d^2+d$.}
\]
This is a smooth vector bundle. And, instead of the \emph{non-smooth} one-parameter group of vector bundle map $F^t_k$ in (\ref{eq:Fk}), we consider 
the \emph{smooth} one-parameter semi-group $F^t_{k,\ell}:V_{k,\ell}\to V_{k,\ell}$ defined by 
\begin{equation}\label{eq:Fkl}
F^t_{k,\ell}(z, u\otimes v)=\left(f^t_G(z), b^t(z)\cdot ((Df^{-t})^*)^{\wedge k}(u)\otimes  (Df_G^t)^{\wedge\ell}(v)\right)
\end{equation}
where
\begin{equation}\label{eq:bt}
b^t(z)=|\det Df^t_{\pi_G(z)}|_{[z]}|^{1/2}\cdot 
|\det ((Df_G^t)_{z}|_{\ker D\pi_G})|^{-1}.
\end{equation}
The first term on the right-hand side of (\ref{eq:bt}) is the  determinant of the restriction of $Df^t$ at  $\pi_G(z)$ to the subspace $[z]$ of $T_{\pi_G(z)}M$ represented by $z$, while the second term is the  determinant of the restriction of $Df^t_G$ at $z$ to the kernel of $D\pi_G$. Clearly the action of $F^t_{k,\ell}$ is smooth.

Let $\Gamma^\infty(U_0,V_{k,\ell})$ be the set of smooth sections of the vector bundle $V_{k,\ell}$  whose support is contained in the isolating neighborhood  $U_0$ of the attracting subset $\mathrm{Im}(e_u)$. 
The  semi-group of transfer operators associated to  $F^t_{k,\ell}$ is
\begin{equation}\label{eq:Lkl}
\cL_{k,\ell}^t:\Gamma^\infty(U_0,V_{k,\ell})\to \Gamma^\infty(U_0,V_{k,\ell}),\quad
\cL_{k,\ell}^t u(z)= F^t_{k,\ell}(u(f^{-t}_{G}(z))).
\end{equation}
The  flat trace of $\cL_{k,\ell}^t$ is computed as
\[
\flatTr\, \cL_{k,\ell}^t=
\sum_{\gamma\in \Gamma}\sum_{m=1}^\infty \!
\frac{ |\gamma|\cdot |\det (D_\gamma^u)|^{m/2}\cdot   \trace\, ((D_\gamma^u)^{-m})^{\wedge k}\cdot \trace\, ((D_\gamma^\perp)^{m})^{\wedge \ell}}{ |\det (D_\gamma^\perp)|^{m}\cdot |\det(\mathrm{Id}-D_\gamma^{-m})|\cdot |\det(\mathrm{Id}-((D_\gamma^\perp)^{-m})|}
\cdot \delta(t-m |\gamma|)
\]
where $D^\perp_\gamma$ is the restriction of the transversal Jacobian matrix for the prime periodic orbit $\hat{\gamma}(t)=e_u(\gamma(t))$ of the flow $f^t_G$ to  $V^{\perp}=\ker D\pi_G$. 
We define the dynamical Fredholm determinant of $\cL_{k,\ell}^t$  by
\begin{equation}\label{def:ds_ext}
d_{k,\ell}(s)=\exp\left(-\int_{+0}^\infty\frac{e^{-st}}{t}\cdot\flatTr\, \cL^t_{k,\ell} dt\right).
\end{equation}
Computation as in the last section then gives
\begin{equation}\label{eq:expression_zeta}
Z_{sc}(s)=\prod_{k=0}^{d}\prod_{\ell=0}^{d^2+d}d_{k,\ell}(s)^{(-1)^{k+\ell}}
\end{equation}
provided that $E_u^*$ is orientable. (Note that $d^2+d$ is even and recall Remark \ref{rem:orientation}.) 
\begin{Remark}\label{rm:comment_on_Grassman_ext}
This argument using the Grassmann extension resolves the problems related to non-smoothness of the coefficient of the transfer operators $\cL^t_k$ in the formal level.  
However the things are not that simple. The attracting section $e_u$ is not smooth and we will find some technical problems (and solutions to them) in the course of the argument. See Subsection \ref{ss:choice_local_chart} for instance.
\end{Remark}

The next theorem on the spectral property of the generators of one-parameter semi-groups $\cL^t_{k,\ell}$ is the main ingredient of this paper. 
\begin{Theorem} \label{th:spectrum}
Let $0\le k\le d$ and $0\le \ell\le d^2+d$. 
For each $r\ge 0$, there exists a Hilbert space 
\[
\Gamma^\infty(U_0,V_{k,\ell})\subset \widetilde{\cK}^r(U_0,V_{k,\ell}) \subset (\Gamma^\infty(U_0,V_{k,\ell}))'
\]
that consists of distributional sections of the vector bundle $V_{k,\ell}$ and, if $r>0$ is sufficiently large,  the 
 following claims hold true:
\begin{enumerate}
\item  The  one-parameter semi-group of operators $\cL_{k,\ell}^t$ for $t\ge 0$ extends to a strongly continuous semi-group of operators on $\widetilde{\cK}^r(U_0,V_{k,\ell})$ and  the spectral set of the generator
\[
A_{k,\ell}:\cD(A_{k,\ell})\subset  \widetilde{\cK}^r(U_0,V_{k,\ell})\to \widetilde{\cK}^r(U_0,V_{k,\ell})
\]
in the region $\{\Re(z)>-r\chi_0/4\}$ consists of discrete eigenvalues with finite multiplicity. 
These discrete eigenvalues (and their multiplicities)  are independent of the choice of $r$. 
\item For any $\tau>0$, there exist only finitely many eigenvalues of the generator $A_{k,\ell}$ on the region $\Re(s)>-(k+\ell)\chi_0 +\tau$.
\item  For the case $(k,\ell)=(0,0)$, the spectral set of $A_{0,0}$ is contained in the region
$U(\chi_0, \tau)$ defined in (\ref{eq:Uchiepsilon}), up to finitely many exceptions, for arbitrarily small $0<\tau<\chi_0$. Moreover there do exist countably many eigenvalues of $A_{0,0}$ in the strip $U_0(\tau)$ defined in (\ref{eq:Uzero}) and, for any $\delta>0$,  we have
\[
\frac{|\omega|^d}{C}<\frac{\#\{ \text{\rm eigenvalues of $A_{0,0}$ }|\text{ $|\Re(s)|<\tau$,  
$\omega\le \Im(s)\le \omega+  |\omega|^\delta$}\}}{  |\omega|^\delta}<C|\omega|^d
\]
for $\omega$ with sufficiently large absolute value, where $C>1$ is a constant independent of $\omega$. 
\end{enumerate}
\end{Theorem}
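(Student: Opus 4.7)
\medskip

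The plan is to build anisotropic Hilbert spaces $\widetilde{\cK}^r(U_0,V_{k,\ell})$ via an FBI/Bargmann-type microlocal framework on $T^*G$, and then read off the spectral statements from how the lifted dynamics acts on such a weighted space. Concretely, using the hyperbolic splitting (\ref{eq:hyp_docomp_G}) of the semi-flow $f^t_G$ on $U_0$, I would introduce a dual decomposition
\[
T^*U_0=\widetilde{E}_s^*\oplus \widetilde{E}_u^*\oplus \widetilde{E}_0^*
\]
and define an escape function $E(z,\xi)$ on $T^*U_0$ that behaves like $+r\log\langle\xi\rangle$ on a conic neighborhood of $\widetilde{E}_s^*$ and like $-r\log\langle\xi\rangle$ on a conic neighborhood of $\widetilde{E}_u^*$, strictly decreasing along the Hamilton lift of $f^t_G$ outside a bounded set. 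The space $\widetilde{\cK}^r$ is then an $L^2$-type space weighted by the quantization of $e^{E}$ acting fiberwise. By the now-standard Faure--Sj\"ostrand / Butterley--Liverani / Gou\"ezel--Liverani mechanism, conjugation of $\cL^t_{k,\ell}$ by this weight yields a bounded strongly continuous semigroup, and compactness of the resolvent modulo a term of norm $\lesssim e^{-r\chi_0 t/4}$ gives the discrete spectrum on $\{\Re(z)>-r\chi_0/4\}$ with multiplicities independent of $r$ (by density and a standard comparison of eigenspaces between different weights).

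For claim (2), the key is to upgrade the essential spectral radius estimate by bookkeeping the multiplicative prefactor. In local FBI coordinates, the principal symbol of $\cL^t_{k,\ell}$ at a point $z\in\mathrm{Im}(e_u)$ is the product of
\[
b^t(z)\cdot \bigl|(Df^t)^{\wedge k}\bigr|_{E_u}\cdot \bigl|(Df^t_G)^{\wedge \ell}\bigr|_{\widetilde{E}_s\oplus\widetilde{E}_0}
\]
(the fiber-contracting part of $\widetilde{E}_s\subset V_\perp$ and the $\ell$-fold exterior product on the vertical directions of the Grassmann bundle). Using the definition (\ref{eq:bt}) of $b^t$ together with the Anosov rates (\ref{eq:Anosov2}), one shows that this quantity is bounded by $Ce^{-(d-k+\ell)\chi_0 t+o(t)}$ uniformly on a neighborhood of the attracting section. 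Combining with the standard volume-growth factor arising from the change of variables, and with the anisotropic weight, the essential spectral radius of $\cL^t_{k,\ell}$ on $\widetilde{\cK}^r$ is bounded by $e^{(-(d-k+\ell)\chi_0+\tau)t}$ for any $\tau>0$ provided $r$ is sufficiently large, giving claim (2).

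The hard part is claim (3). For $(k,\ell)=(d,0)$, the vector bundle $V_{d,0}=(V_u)^{\wedge d}$ is a line bundle and, on the attracting section $\mathrm{Im}(e_u)$, the weight $b^t\cdot (Df^t)^{\wedge d}$ reduces exactly to the semiclassical amplitude $|\det Df^t|_{E_u}|^{1/2}\cdot|\det Df^t_G|_{\ker D\pi_G}|^{-1}$. In particular, on the line bundle restricted to $\mathrm{Im}(e_u)$, the operator $\cL^t_{d,0}$ is conjugate, up to lower-order terms in the semiclassical sense, to a scalar transfer operator with unitary principal part with respect to the contact volume $\alpha\wedge (d\alpha)^{\wedge d}$. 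The contact form $\alpha$ endows $\ker\alpha=E_s\oplus E_u$ with a symplectic structure, which is preserved by $Df^t$, so the Bargmann-type quantization of $\cL^t_{d,0}$ localized near $\mathrm{Im}(e_u)$ fits into a semiclassical framework with semiclassical parameter $\hbar\sim 1/|\Im s|$. The spectrum in the strip $U_0(\tau)$ is then controlled by a semiclassical pseudodifferential operator acting on an auxiliary symplectic $2d$-dimensional model space, and the confinement to $U(\chi_0,\tau)$ follows from positivity of the symbol outside this strip (using $b^t$ and the hyperbolicity). The Weyl law for this model operator on frequency windows $[\omega,\omega+|\omega|^\delta]$ then gives both the lower and upper bounds, with exponent $d=\dim E_u=\dim E_s$ matching the dimension of the symplectic model fiber.

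The principal obstacle is the Weyl lower bound in claim (3): one needs a sufficiently sharp local trace formula on the windows $[\omega,\omega+|\omega|^\delta]$, matching an explicit semiclassical main term, to rule out the possibility that all near-imaginary-axis spectrum happens to cancel. This requires the semiclassical model on the trapped set $\mathrm{Im}(e_u)$ to be genuinely elliptic (not merely subunitary), which is where the contact hypothesis is essential; without it, the model would not be a bona fide semiclassical operator of principal type, and only an upper bound would be available. Once the elliptic model is established, the Weyl asymptotic is obtained by a standard Tauberian argument applied to the semiclassical trace, and the main theorem follows by inserting the spectral information of Theorem~\ref{th:spectrum} into the alternating product (\ref{eq:expression_zeta}) and using claim (2) to absorb the factors with $(k,\ell)\neq(d,0)$ into finitely many exceptional zeros and poles outside $U(\chi_0,\tau)$.
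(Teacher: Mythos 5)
Your overall architecture (anisotropic spaces on the Grassmann extension, a symplectic trapped set carrying a model operator with unitary principal part plus a contracting remainder) is the same philosophy as the paper, and claims (1)–(2) could indeed be reached by an escape-function construction. The genuine gap is in claim (3), precisely at the point you flag as the ``principal obstacle'' and then dismiss: there is no ``standard Tauberian argument applied to the semiclassical trace'' that yields a \emph{lower} bound for the eigenvalue count of a non-self-adjoint generator. Tauberian/trace-formula arguments of that type give Weyl \emph{upper} bounds (as in the cited work of Datchev--Dyatlov--Zworski), but for resonance-type spectra they cannot rule out cancellation, and ellipticity of the model on the trapped set does not repair this: the generator restricted to the band is not self-adjoint, so positivity of a symbol does not convert a trace asymptotic into a counting asymptotic. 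The paper's lower bound is obtained by an entirely different mechanism: one constructs, for each frequency window $\omega$, an explicit subspace $V(\omega)$ of wave packets concentrated on the trapped set with $\dim V(\omega)\gtrsim\langle\omega\rangle^{d}$ on which the band projector $\cT_\omega$ is bounded below (Lemmas \ref{lm:choiceV} and \ref{lm:trace1}(b)); assuming the spectral projector for the rectangle had rank $o(|\omega_*|^{d+\delta})$, one finds a nonzero $\tilde w$ in the kernel of the spectral projector built from these subspaces, and derives a contradiction from the vanishing of $\int_{\partial \mathrm{Rect}(\omega_*)}\Omega(b(s-i\omega(k_*)))\,\cR(s)\tilde w\,ds$ with $\Omega(s)=\exp(-\cos s)$, using the resolvent bound $\exp(C|\omega_*|^{d})$ on well-chosen horizontal segments (Corollary \ref{cor:Rs}, itself coming from Jensen's formula applied to the Fredholm determinant of $1-\chi_0(s-A')^{-1}\widetilde\cT_{\omega_*}$) together with the approximate forward/backward evolution estimates of Propositions \ref{pp:norm_estimate} and \ref{pp:backward}. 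Without some substitute for this quantitative ``non-cancellation'' argument, your claim (3) lower bound is unproved.

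Two secondary points. First, your sketch never confronts the fact that the attracting section $e_u$, hence the trapped set, is only H\"older continuous: at the natural wave-packet scale $\langle\omega\rangle^{-1/2}$ the section does \emph{not} look linear, and the paper's entire Section on charts (the anisotropic rescaling $E_\omega$ by $\langle\omega\rangle^{(1-\beta)(1/2-\theta)+4\theta}$ in the fiber directions, and the resulting modified spaces $\cK^{r,\sigma}$) exists to make the linear-model approximation of Theorem \ref{cor:linear1} legitimate; an escape-function space built naively from the H\"older splitting will give essential spectral radius bounds but not the fine band structure you need for (3). Second, the confinement of the spectrum to $U(\chi_0,\tau)$ is not a symbol-positivity statement: for $-\chi_0+\tau<\Re(s)<-\tau$ one must invert $s-A$ by running the flow \emph{backwards} on the band part, which requires the solvability statement of Proposition \ref{pp:backward} (existence of $v_\omega$ with $\cL^t v_\omega\approx\cT_\omega u$ and controlled frequency leakage), an ingredient absent from your outline.
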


The next theorem gives the relation between the eigenvalues of the generator of the semi-group $\cL_{k,\ell}^t$ and zeros of  the dynamical Fredholm determinant $d_{k,\ell}(s)$. 
\begin{Theorem}\label{th:zeta}
The dynamical Fredholm determinant $d_{k,\ell}(s)$ of the one-parameter semi-group of transfer operators $\cL^t_{k,\ell}$ extends to a holomorphic function on the complex plane $\complex$.  
For any $c>0$, there exists $r_*>0$ such that, if $r\ge r_*$, the  zeros of the analytic extension of $d_{k,\ell}(s)$ coincide with the eigenvalues of the generator of 
$\cL^t_{k,\ell}:\widetilde{\cK}^r(U_0,V_{k,\ell})\to \widetilde{\cK}^r(U_0,V_{k,\ell})$  on the region $\Re(s)>-c$, including multiplicity. 
\end{Theorem}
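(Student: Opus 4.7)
The plan is to identify $d_{k,\ell}(s)$, up to a nonvanishing holomorphic factor on $\{\Re(s)>-c\}$, with a Weierstrass-type determinant built from the discrete spectrum of $A_{k,\ell}$ provided by Theorem~\ref{th:spectrum}. Differentiating (\ref{def:ds_ext}) yields, for $\Re(s)$ large enough that the periodic-orbit series converges absolutely,
\[
\frac{d_{k,\ell}'(s)}{d_{k,\ell}(s)} = \int_{+0}^\infty e^{-st}\cdot \flatTr\, \cL^t_{k,\ell}\, dt,
\]
while, for $\Re(s)$ exceeding the exponential growth rate of $\cL^t_{k,\ell}$ on $\widetilde{\cK}^r$, the resolvent admits the Laplace representation $(s-A_{k,\ell})^{-1} = \int_0^\infty e^{-st}\cL^t_{k,\ell}\,dt$. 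The first goal is to give rigorous meaning to the expected identity $d_{k,\ell}'/d_{k,\ell} = \flatTr(s-A_{k,\ell})^{-1}$.

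To that end, I would approximate $\cL^t_{k,\ell}$ by the trace-class operator $Q_\epsilon\cL^t_{k,\ell}$, with $Q_\epsilon$ a smoothing operator at scale $\epsilon$. A Guillemin-type transversality argument on $G$, using the hyperbolicity (\ref{eq:Anosov2}) of $f^t_G$, yields $\trace(Q_\epsilon\cL^t_{k,\ell})\to\flatTr\,\cL^t_{k,\ell}$ in $\mathcal{D}'(0,\infty)$ as $\epsilon\to 0$. Laplace transforming against $e^{-st}$ for $\Re(s)$ large gives
\[
\frac{d_{k,\ell}'(s)}{d_{k,\ell}(s)} = \lim_{\epsilon\to 0}\trace\bigl(Q_\epsilon(s-A_{k,\ell})^{-1}\bigr).
\]
Using Theorem~\ref{th:spectrum}(1) and contour deformation past the discrete eigenvalues $\lambda_j\in\{\Re(z)>-r\chi_0/4\}$, the right-hand side extends meromorphically to $\{\Re(s)>-c\}$ (once $r$ is chosen so that $r\chi_0/4>c$), with simple poles of residue equal to the algebraic multiplicity $m_j$ of $\lambda_j$ plus a remainder holomorphic on that region.

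The Weyl-type density bound for eigenvalues provided by Theorem~\ref{th:spectrum} ensures that the set $\{\lambda_j\}\cap\{\Re(s)>-c\}$ has finite order, so a Hadamard/Weierstrass product $D_c(s)$ can be formed whose zero set (with multiplicity) is precisely $\{(\lambda_j,m_j)\}$ and whose logarithmic derivative equals $\sum_j m_j/(s-\lambda_j)$ modulo a holomorphic term. Matching with the previous step gives $d_{k,\ell}(s) = D_c(s)\cdot e^{H_c(s)}$ on $\{\Re(s)>-c\}$ with $H_c$ holomorphic there, so $d_{k,\ell}$ extends holomorphically to that region with zeros precisely at the eigenvalues of $A_{k,\ell}$, counted with algebraic multiplicity. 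Letting $c\to\infty$ then yields the entire analytic extension to $\complex$.

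The technical heart of the argument is the passage from the distributional Atiyah--Bott trace to the genuine trace of the regularized resolvent, together with the contour deformation that exposes the poles. Because the Schwartz kernel of $(s-A_{k,\ell})^{-1}$ is singular along the flow lines, one must analyze its wavefront set microlocally and exploit the hyperbolicity of $f^t_G$ to identify the contributions from periodic orbits; this is complicated by the non-smoothness of the attracting section $e_u$ (cf.\ Remark~\ref{rm:comment_on_Grassman_ext}), which is precisely the reason for working on the Grassmann extension rather than on $M$ directly.
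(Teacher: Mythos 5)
Your overall strategy (recover $d_{k,\ell}$ from the flat trace of resolvent powers/its logarithmic derivative and match zeros with eigenvalues) is in the right family, but the step you label "contour deformation that exposes the poles" is exactly where the theorem lives, and your proposal does not supply it. The operator $(s-A_{k,\ell})^{-1}$ is not trace class, and the Atiyah--Bott trace has \emph{no a priori relation} to the spectrum (Remark \ref{rm:dyntrace}); so the assertion that $\lim_{\epsilon\to 0}\trace\bigl(Q_\epsilon(s-A_{k,\ell})^{-1}\bigr)$ continues meromorphically to $\{\Re(s)>-c\}$ with poles precisely at the eigenvalues, residues equal to multiplicities, \emph{and a holomorphic remainder} is unproved. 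The smoothing argument gives $\trace(Q_\epsilon\cL^t_{k,\ell})\to\flatTr\,\cL^t_{k,\ell}$ for fixed $t>0$, but the Laplace transform only converges for $\Re(s)$ above the pressure; continuing past that line requires uniform-in-$\epsilon$ control of the flat trace of the \emph{non-discrete} part of the resolvent in the bad half-plane, and an operator-norm bound on $(1-\bP)\circ\cR(s)^n$ does not bound any trace-like functional of it. This is precisely what the paper's proof establishes: it shows $\flatTr\bigl(\cR(s_0)^n\bigr)=\sum_i(s_0-\chi_i)^{-n}+Q_n$ with $|Q_n|\le C r_0^{-n}$ by decomposing the lifted operator $\mathbf{R}^{(n)}$ into an "upper triangular" part (with respect to the grading by $\tilde m$), whose flat trace vanishes identically and whose operator norm is $\le C(r_0')^{-n}$, plus a genuinely trace-class part with uniformly bounded trace norm (Lemmas in Section \ref{sec:zeta} and Claim 5), and then resums the Taylor series of $\log d$ at $s_0$. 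Without some substitute for that structural cancellation (the vanishing of the flat trace on upper-triangular operators), your holomorphic-remainder claim is an assertion of the conclusion, not a proof; the microlocal/wavefront analysis you gesture at is obstructed by exactly the non-smoothness of $e_u$ you mention.

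A secondary problem: your Hadamard/Weierstrass step uses "the Weyl-type density bound" to control the order of the zero set in $\{\Re(s)>-c\}$, but Theorem \ref{th:spectrum}(3) gives such counting only for $(k,\ell)=(d,0)$ and only in a thin strip around the imaginary axis; no counting estimate is available in a full half-plane or for general $(k,\ell)$. The paper avoids this entirely by working disk-by-disk: inside each disk $\disk(s_0,r_0)$ the essential spectral radius bound leaves only finitely many eigenvalues, so no product representation or growth-order input is needed.
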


Since the relation (\ref{eq:expression_zeta}) holds at least for $s\in \complex$ with sufficiently large real part, the main theorem (Theorem \ref{mainth:spec}) follows immediately from the two theorems above. 

\begin{Remark}\label{rem:vector_valued_case}
In the proofs of Theorem \ref{th:spectrum} and Theorem \ref{th:zeta}, we will mostly consider  the case $(k,\ell)=(0,0)$ and put a few remarks about the other cases $(k,\ell)\neq (0,0)$ in the course of the argument. Indeed the case $(k,\ell)=(0,0)$ is most important because the zeros of the semi-classical zeta function $Z_{sc}(s)$ along the imaginary axis correspond to the eigenvalues of the generator $A_{0,0}$ of the semi-group $\cL^t_{0,0}$. 
Note that we may and do regard $\cL^t_{0,0}$ as a scalar-valued transfer operator because the vector bundle $V_{0,0}$ is one-dimensional and trivial. 
In the cases where $(k,\ell)\neq (0,0)$, the transfer operators $\cL^t_{k,\ell}$ are vector-valued. But 
we can apply the parallel argument regarding the transfer operators $\cL^t_{k,\ell}$ as matrices of scalar-valued transfer operators as we noted at the end of Remark \ref{rm:dyntrace}. (Actually the argument is much simpler in the cases $(k,\ell)\neq (0,0)$, because they are irrelevant to the claim (3) in Theorem~\ref{th:spectrum}.) 
\end{Remark}

We finish this section by describing\footnote{But note that the rigorous argument in the proofs will somewhat deviate from the explanation here by technical reasons.} the ideas behind the proofs and the plan of the following sections. A basic idea in the proofs is to regard the transfer operators as ``Fourier integral operator", that is to say, to regard functions (or sections of vector bundles actually) as superposition of wave packets ({\it i.e.} functions concentrating both on the real and frequency spaces) and look how the action of the transfer operator transform one wave packet to another (or to a cloud of wave packets more precisely). 
For simplicity, let us consider the case $\cL^t_{0,0}$ of scalar-valued transfer operators. In our argument, the wave packets are parametrized by the points in the cotangent bundle $T^*U_0\subset T^*G$ and the transformation of the wave packets that $\cL^t_{0,0}$ induces is closely related to the map $(Df_G^{-t})^*:T^*U_0\to T^*U_0$, called the canonical map.  Notice that, since the flow $f^t$ preserves the contact form $\alpha$,  the action of the canonical map $(Df_G^{-t})^*$ preserves the submanifold 
\begin{equation}\label{eq:X}
X=\{s \cdot \pi_G^*(\alpha)(w)\in T^*U_0\mid s\in \real, w\in \mathrm{Im}(e_u)\}\subset T^{*}U_0,
\end{equation}
which is called the ``trapped set"\footnote{In terminology of dynamical system theory, this is nothing but the non-wandering set for the dynamics of $(Df_G^{-t})^*$.}, and the action on the outside of a small neighborhood of  $X$ is not recurrent as a consequence of hyperbolicity of the flow $f^t_G$. This fact suggests that, concerning the spectrum and trace, the most essential is the action of the transfer operators on the wave packets corresponding to the points in a small neighborhood of  $X$. 
This idea has been used in the previous papers \cite{Tsujii2009, Tsujii2010} and led to the results which essentially correspond to the claim (2) of Theorem \ref{th:spectrum}.
\begin{Remark}\label{Explanation_for_du}
If the reader is familiar with Dolgopyat argument \cite{Dolgopyat98}, the idea behind the claim (2) of Theorem \ref{th:spectrum} (and the reason for the factors $|\det(Df^t_x|_{E_u})|^{1/2}$ in the definitions of transfer operators)
may be understood roughly as follows. 
For simplicity, let us forget about the Grassmann extension for the moment and consider the simple transfer operator $u\mapsto u\circ f^{-t}$ on~$M$. The trapped set in such setting is the one-dimensional vector subbundle of $T^*M$ spanned by $\alpha$. 
Consider the situation where the transfer operator $\cL^t_{0}$ for $t\gg 1$ acts on wave packets that have high frequency in the direction of $\alpha$ with spatial size  $0<\delta\ll 1$ As usual in Dolgopyat argument, we suppose that this action of the transfer operator $\cL^t_{0}$ is followed by a smoothing (or averaging) operation\footnote{Here we do not explain why we apply this smoothing along stable foliation. Let us just note that the effect of this smoothing will decreases exponentially fast in further evolution.} along the stable foliation in the scale $\delta$. 
The last smoothing enlarge the supports of the images of wave packets in the stable direction by the rate proportional to $|\det(Df^t_x|_{E_s})|^{-1}\sim |\det(Df^t_x|_{E_u})|$. Hence, on the one  hand,  the $L^2$ norms of the images of wave packets decrease by the rate proportional to $|\det(Df^t_x|_{E_u})|^{-1/2}$  and, on the other hand, makes overlaps of the images of wave packets at points that were separated in the stable direction. The analysis of interference (or cancellation by difference of complex phase) between such overlapping images is equivalent to the essential part of Dolgopyat argument. In the papers \cite{Tsujii2009, Tsujii2010}, we showed basically that the overlapping images are \emph{almost} orthogonal to each other in $L^2$, using complete non-integrability of the contact form $\alpha$, and concluded that the essential spectral radius of the (simple) transfer operator is bounded by $\sup_{x}|\det(Df^t_x|_{E_u})|^{-1/2}$. 
With the same idea, we can show the claim corresponding to Theorem \ref{th:spectrum} (2) for $\cL^t_{0}$, because the coefficient of $\cL^t_{0}$ balances the rate $|\det(Df^t_x|_{E_u})|^{-1/2}$.  
\end{Remark}
\begin{figure}\label{fig:explanation_dolgopyat}
\scalebox{0.55}{
{
\begin{pspicture}(0,-3.08)(16.96,3.08)
\definecolor{colour0}{rgb}{0.8,0.8,0.8}
\pscircle[linecolor=black, linewidth=0.04, fillstyle=solid,fillcolor=colour0, dimen=outer](0.5,-2.58){0.5}
\pscircle[linecolor=black, linewidth=0.04, fillstyle=solid,fillcolor=colour0, dimen=outer](0.5,-0.86){0.5}
\pscircle[linecolor=black, linewidth=0.04, fillstyle=solid,fillcolor=colour0, dimen=outer](0.5,0.86){0.5}
\pscircle[linecolor=black, linewidth=0.04, fillstyle=solid,fillcolor=colour0, dimen=outer](0.5,2.58){0.5}
\psellipse[linecolor=black, linewidth=0.04, fillstyle=solid,fillcolor=colour0, dimen=outer](7.1,0.4574603)(2.22,0.082539685)
\psellipse[linecolor=black, linewidth=0.04, fillstyle=solid,fillcolor=colour0, dimen=outer](7.1,0.1658201)(2.22,0.082539685)
\psellipse[linecolor=black, linewidth=0.04, fillstyle=solid,fillcolor=colour0, dimen=outer](7.1,-0.1258201)(2.22,0.082539685)
\psellipse[linecolor=black, linewidth=0.04, fillstyle=solid,fillcolor=colour0, dimen=outer](7.1,-0.41746032)(2.22,0.082539685)
\psellipse[linecolor=black, linewidth=0.04, fillstyle=solid,fillcolor=colour0, dimen=outer](14.74,0.49746034)(2.22,0.6825397)
\psellipse[linecolor=black, linewidth=0.04, fillstyle=solid,fillcolor=colour0, dimen=outer](14.74,0.2758201)(2.22,0.5725397)
\psellipse[linecolor=black, linewidth=0.04, fillstyle=solid,fillcolor=colour0, dimen=outer](14.74,-0.015820106)(2.22,0.4725397)
\psellipse[linecolor=black, linewidth=0.04, fillstyle=solid,fillcolor=colour0, dimen=outer](14.74,-0.35746032)(2.22,0.46253967)
\psline[linecolor=black, linewidth=0.1, arrowsize=0.05291667cm 3.28,arrowlength=1.4,arrowinset=0.0]{->}(1.8,0.06)(3.9,0.08)
\psline[linecolor=black, linewidth=0.1, arrowsize=0.05291667cm 3.28,arrowlength=1.4,arrowinset=0.0]{->}(9.78,0.0)(11.88,0.02)
\rput[bl](2.42,0.94){{\huge$\cL^t$}}
\rput[bl](10,2.02){{\huge smoothing along}}
\rput[bl](10,1.52){{\huge stable foliation}}
\end{pspicture}
}}
\caption{A schematic picture for the explanation in Remark \ref{Explanation_for_du}. This is a picture viewed from the direction of the flow.  }
\label{figR}
\end{figure}
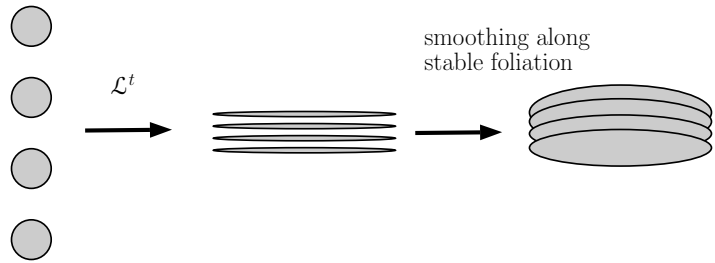

However, in order to get more information on the spectrum as described in the claim (3), we have to analyze more precisely  the action of transfer operators on the wave packets associated to the points in a neighborhood of  the trapped set $X$.  
Such action is modeled by the so-called ``prequantum map" and has already been studied in the paper \cite{Faure07} in the linear setting and then extended to the non-linear setting in the previous paper \cite{FaureTsujii12} of the authors. (The explanation in \cite[Section 2.3]{FaureTsujii12} will be useful to understand the idea that leads to the main theorems.) 
We are going to put the argument developed in those papers into the setting of contact Anosov flows. 
Note that, in \cite{Faure07, FaureTsujii12}, it was crucially important that the trapped set was an invariant  symplectic submanifold of the phase space and normally hyperbolic for the induced dynamical system on the phase space. In our setting of contact Anosov flows, this corresponds to the fact that the projection of the trapped set $X$ above to $T^*M$, 
\begin{equation}\label{eq:trapped_proj}
\check{X}=\{s \cdot \alpha(x)\in T^*M\mid s\in \real, x\in M\}\subset T^{*}M,
\end{equation}
is an invariant symplectic submanifold of $T^*M$ on the outside of the zero section and is normally hyperbolic with respect to the flow $(Df^{-t})^*$. 

\subsection*{Organization of this paper}
The remaining part of this paper is organized as follows. 
In the next section, Section \ref{sec:comments}, we make a few comments related to the main results.  
The proof of the main results starts from Section~\ref{sec:linear}. 
In Section~\ref{sec:linear}, we consider a linear model for contact Anosov flows and prove a proposition  corresponding to Theorem \ref{th:spectrum} in such model. 
The argument in this section is based on that in \cite{Faure07} and \cite{FaureTsujii12} for prequantum Anosov map and will serve as a guideline for the argument developed in the later sections.
In Section \ref{sec:basic},
we introduce some systems of local charts and associated partitions of unity on the Grassmann bundle $G$. Then, using them, we define the (modified) anisotropic Sobolev space $\cK^{r}(K_0)$ in Section \ref{sec:modifiedASS}. The Hilbert space $\widetilde{\cK^r}(K_0)$ appearing in Theorem \ref{th:spectrum} is a slight modification of  ${\cK^r}(K_0)$.
In Section \ref{sec:properties_Lt}, we give several propositions on the properties of the transfer operators $\cL^t=\cL^t_{0,0}$ on the Hilbert space $\cK^{r}(K_0)$. We expect that these propositions are easy to understand and intuitive for the readers. 
Then we prove that Theorem \ref{th:spectrum} follows from them in Section \ref{sec:proof1}. 
In Section \ref{sec:pre}--\ref{sec:proofs_of_props}, we give the proofs of the propositions given in Section \ref{sec:properties_Lt}. The argument in these sections is elementary but necessarily rather involved one because we have to deal with ``non-smooth" objects. The proof of Theorem \ref{th:zeta} (and that of a small lemma given in Section \ref{sec:proofs_of_props}) is put in Section \ref{sec:zeta} in the appendix. (See the remark below.)
\begin{Remark}
It is possible to derive Theorem \ref{th:zeta} from Theorem \ref{th:spectrum} by using a slight generalization of the existing results. In the paper \cite{GLP12}, it is proved that  
the dynamical zeta function $Z(s)$ for a general $C^\infty$ Anosov flow has meromorphic extension to the whole complex plane and that its zeros and poles are related to the discrete spectra of the generators for some associated transfer operators. To deal with the semi-classical zeta function and the Grassmann extension $f^t_G:G\to G$, we actually need a generalization of such results. This is not be difficult to obtain. 
Though a different kind of Banach spaces is used in \cite{GLP12}, it is possible to show that the discrete spectrum does not depend essentially on the function space. (See \cite[Appendix A]{BaladiTsujii08}.) 
We present a proof of  Theorem \ref{th:zeta} for completeness, based on the idea presented in \cite{BaladiTsujii08} in the case of hyperbolic discrete dynamical systems. 
\end{Remark}

\section{Comments}
\label{sec:comments}
\subsection{About this paper and a few related works of the authors}
A few years ago, the authors started the joint project studying transfer operators for geodesic flows on negatively curved manifolds (or more general contact Anosov flows) from the viewpoint of semi-classical analysis.  In the first paper \cite{FaureTsujii12} of the project, we considered  {\em prequantum Anosov maps} and studied the associated transfer operators in detail. A prequantum Anosov map is a $\mathbf{U}(1)$-extension of a symplectic Anosov map, equipped with a specific connection. 
It may be regarded as a model of contact Anosov flow because its  local structure is very similar to that of the time-$t$-maps of a contact Anosov flow and, in technical sense,  it is more tractable because 
the associated transfer operator is decomposed into the Fourier modes with respect to the $\mathbf{U}(1)$ action. (See \cite{FaureTsujii12} for more  explanation.)

In this paper and \cite{FaureTsujii13}, we extend the argument in \cite{FaureTsujii12}  to the contact Anosov flows. This paper concerns the results about the semi-classical transfer operators and also the semi-classical zeta functions. In the other paper \cite{FaureTsujii13}, we consider the band structure of the spectrum of the generators and also on the semi-classical aspect of the argument. (A part of the results in \cite{FaureTsujii13} has been announced in \cite{FaureTsujii13A}.)

\subsection{Recent related works} During the period the authors were writing this paper and the previous paper \cite{FaureTsujii12}, there have been some related developments. We give a few of them that came  into the authors' knowledge. Recently, Giulietti, Liverani and Pollicott published a paper \cite{GLP12} on dynamical zeta functions for Anosov flows. They proved among others that the dynamical zeta functions (including $Z(s)$ defined by Smale) has meromorphic extension to the complex plane $\complex$ if the flow is $C^\infty$ Anosov. 

In the proofs of the main theorems, we will regard the transfer operator as a
``Fourier integral operator" and consider its action in the limit of
high-frequency. (See \cite{FaureTsujii12} for more explanation.) Therefore the
main part of the argument is  naturally in the realm of 
semiclassical analysis. From this view point, the terminology and techniques
developed in semiclassical analysis must  be very  useful. (But this sounds
somewhat strange because the geodesic flow is completely a classical
object!). A first formulation of transfer operators and Ruelle spectrum in
terms of semiclassical analysis was given in the papers of the first author with N.\ Roy and
J.\ Sj\"ostrand \cite{FRS,FaureSjostrand11}. It was shown there that Ruelle resonances
are ``quantum resonances for a scattering dynamics in phase space''. 
Recently a few papers authored by K.\ Datchev, S.\ Dyatlov, S.\ Nonnenmacher and
M.\ Zworski gave precise
results for contact  Anosov flows  using this semiclassical approach: spectral gap estimate and decay of correlations \cite{NZ12}, Weyl law
upper bound \cite{DDZ13} and meromorphic properties of dynamical zeta function
\cite{DZ13}. We would like to mention also a closely related work: in \cite{D13}, for a
problem concerning decay of waves around black holes, S.\ Dyatlov  shows  that the
spectrum of resonances has a band structure similar to what we observe for
contact Anosov flows. In fact these two problems are very similar in the sense
that  in
both cases the trapped set is symplectic  and  normally hyperbolic. This
geometric property is the main reason for the existence of a band structure. However in  \cite{D13}, S.\ Dyatlov requires and uses some regularity of the
hyperbolic foliation that is not present for contact Anosov flows.

\subsection{Why do we consider the semi-classical zeta function?} \quad  
The semi-classical (or Gutzwiller-Voros) zeta function is related to the transfer operator with non-smooth coefficient if we do not consider the Grassmann extension. From this aspect, the semi-classical zeta function is a rather singular and difficult object to study. This may be one reason why the semi-classical zeta function has not been well studied in mathematics, at least compared with in physics. But here we would like to explain that the semi-classical zeta function is a very nice object to study among other kind of dynamical zeta functions. 

\subsubsection{Zeros along the imaginary axis}
In physics, there is a clear reason to study the semi-classical zeta function $Z_{sc}(s)$  rather than the zeta function $Z(s)$. 
The semi-classical zeta function appears in the semi-classical theory of quantum chaos in physics \cite{voros87,voros88,Rugh}. If we consider the semi-classical approximation of the kernel of the semi-group generated by the Schr\"odinger equation (or the wave equation) on a manifold $N$, we get the  Gutzwiller trace formula \cite{GutzwillerBook}. This formula is actually for some fixed range of time and for the limit where the Plank constant $\hbar$ goes to zero (or the energy goes to infinity). But, if we suppose that the formula holds for long time and if the long-time limit $t\to \infty$ and the semi-classical limit $\hbar\to 0$ were exchangeable, we would expect that the zeros of the semi-classical zeta function, which is  defined from the Gutzwiller trace formula, is closely related to the spectrum of the Laplacian on the manifold $N$. Thus the semi-classical zeta function is an object that connects the spectral structure of the quantized system (or the Schr\"odinger equation)  and the structure of the periodic orbits for the chaotic classical dynamical systems. For this reason, the semi-classical zeta function and its zeros have been discussed extensively in the field of ``quantum chaos". Of course, as any mathematician can imagine,  there is much difficulty in making such idea into rigorous argument. Still the semi-classical zeta function and its zeros are interesting objects to study.  To date, mathematically rigorous argument on semi-classical zeta function seems to be limited to the special case of constant curvature, where Selberg trace formula is available. To the authors' knowledge, Theorem \ref{mainth:spec}  is the first rigorous result for the semi-classical zeta function for the geodesic flows on manifolds with negative \emph{variable} curvature. We hope that our results will shed light on the related studies. 

\subsubsection{Cohomological argument}
There is some hope that we can relate the semi-classical zeta function $Z_{sc}(s)$ to transfer operators on some cohomological spaces (rather than those on the spaces of differential forms as in (\ref{eq:expression_zeta})) and get more precise results on its analytic properties. 
Though the idea is not new and may be well known, we would like to present it here and discuss how we may be able to go further than the main results of this paper. 
First of all, let us recall the following argument in the case of Anosov diffeomorphism. 
Let $f:M\to M$ be an Anosov diffeomorphism. 
The Artin-Mazur zeta function of $f$ is defined by
\[
\zeta(z)=\exp\left(-\sum_{n=1}^{\infty} \frac{z^n}{n} \# \mathrm{Fix}(f^n)\right)
\]
where $\# \mathrm{Fix}(f^n)$ denotes the number of fixed points for $f^n$. The flat trace of the transfer operator $\cL_k:\Omega^k(M)\to \Omega^k(M)$ associated to the natural action of $f$ on the space $\Omega^k(M)$ of $k$-forms is 
\[
\flatTr \cL_k=\sum_{p\in \mathrm{Fix}(f)}\frac{\trace\,  (Df_p)^{\wedge k}}{|\det(1-Df_p)| }
\]
and the dynamical Fredholm determinant of $\cL_k$ is defined by 
\[
\cD_k(z)=\exp\left(-\sum_{n=1}^\infty \frac{z^n}{n}\flatTr \cL_k^n\right)=\exp\left(-\sum_{n=1}^{\infty} \frac{z^n}{n} \sum_{p\in \mathrm{Fix}(f^n)}\frac{\trace\,  (Df^n_p)^{\wedge k}}{|\det(1-Df^n_p)| }\right).
\]
Then, similarly to (\ref{eq:expression_zeta}),  the Artin-Mazur zeta function is expressed\footnote{For simplicity, we assume that the stable and unstable subbundle are orientable and $f$ preserves their orientations.} as 
\begin{equation}\label{eq:AMzeta_exp}
\zeta(z)=\prod_{k=1}^{\dim M} \cD_k(z)^{(-1)^{\dim M-k}}.
\end{equation}
We can show  that the dynamical Fredholm determinants $\cD_k(z)$ are  entire functions and its zeros coincide with the reciprocals of  the discrete eigenvalues of the transfer operator $\cL_k$ acting on some Hilbert space. (See \cite{BaladiTsujii08}.) So  the Artin-Mazur zeta function $\zeta(z)$ is a meromorphic function on $\complex$. 
This argument is true for more general (Ruelle) dynamical zeta functions. But, for the Artin-Mazur zeta function $\zeta(z)$, we can simplify the argument as follows. Note that we have  the commutative diagram
\begin{equation}\label{eq:DR}
\begin{CD}
0@>>> \Omega^0 @>{d}>> \Omega^1@>{d}>> \cdots @>{d}>> \Omega^{\dim M}@>>> 0\\
@. @VV{\cL_0}V @VV{\cL_1}V @. @VV{\cL_{\dim M}}V @.\\
0@>>> \Omega^0 @>{d}>> \Omega^1@>{d}>> \cdots @>{d}>> \Omega^{\dim M}@>>> 0.
\end{CD}
\end{equation}
This tells that many of the discrete eigenvalues of $\cL_k$ for adjacent $k$'s coincide and the corresponding zeros of the dynamical Fredholm determinant $\cD_k(z)$ cancel each other in the alternative product  (\ref{eq:AMzeta_exp}). The remaining zeros and poles of the zeta function $\zeta(z)$ should correspond to the eigenvalues of the (push-forward) action of $f$ on the de Rham cohomology $H^*_{DR}(M)$ of $M$. Indeed we can actually count the number of periodic points using the Lefschetz fixed point formula and show that $\zeta(z)$ is a rational function. (See \cite{Smale}.)

For Anosov flows, the corresponding argument become much more subtle and only much less is known. The argument using the flat trace and dynamical Fredholm determinant work as well, as we discussed in the previous sections. But, for the moment, we do not know whether it can be simplified as in the case of Anosov diffeomorphism. This is not a new problem and there are many works on this subject especially in the case of geodesic flows on hyperbolic surfaces. (See the introduction chapter of \cite{Juhl} for instance.) But there seems no much argument in more general cases.  

Note that the Artin-Masur zeta functions was special because the corresponding transfer operators commute with the exterior derivatives $d$. Indeed, if we consider other (Ruelle) zeta functions, the corresponding transfer operator will not have this property and hence the structure of the zeta functions will be much more complicated. The similar will be true in the case of Anosov flows, so that we will have to choose a ``good" dynamical zeta function, although we do not know whether there do exists such choice. For instance, let us consider the Smale's zeta function $Z(s)$. As is presented in \cite{GLP12} (and in many other places), it is expressed as follows. 
Let $f^t:M\to M$ be a contact Anosov flow. 
Let $\Omega^{k}_{\perp}(M)$ be the space of $k$-forms on $M$ which vanish for the generating vector field of the flow. If we write $d^{\perp}_k(s)$ for the dynamical Fredholm determinant of the natural  (push-forward) action  of the flow on $\Omega^{k}_{\perp}(M)$, we have
$Z(s)=\prod_{k=0}^{2d} (d^{\perp}_k(s))^{(-1)^k}$. 
But, unfortunately, the exterior derivative $d$ does not preserves the space $\Omega^{k}_{\perp}\subset \Omega^k(M)$. This is one reason why we can not apply the cohomological argument to the zeta function $Z(s)$. (Of course there is possibility that some better expression of the zeta function $Z(s)$ works.) 

Let us now turn to the case of the semi-classical zeta function $Z_{sc}(s)$. For simplicity, we assume that $E_s$ is orientable and the the stable foliation is smooth. (The latter is a strong assumption.) 
As we discussed in Section \ref{sec:intro},  $Z_{sc}(s)$ is expressed as an alternative product (\ref{expression_Zsc}) of dynamical Fredholm determinants for the transfer operators on differential forms. 
But here we consider  in a slightly different way.  
Let $\pi:L\to M$ be the line bundle $L=(E_u)^{\wedge d}$. Since $E_u$ is assumed to be orientable, $L$ is trivial and therefore we can consider the square root $L^{1/2}$ of  $L$. Note that there is a natural dynamically defined connection \emph{along the unstable manifolds} on the line bundles $L$ and $L^{1/2}$, in which two elements $\sigma,\sigma'\in L$ (or $L^{1/2}$) on an unstable manifold are parallel if and only if the ratio between $(Df^{t})^*(\sigma)$ and $(Df^{t})^*(\sigma')$ (considered in some local chart) converges to $1$ as $t\to +\infty$. By definition, these connections are flat (along unstable manifolds) and preserved by the natural action of the flow $f^t$. 
Let $\Lambda^k$ be the space of smooth sections of the vector bundle $L^{1/2}\otimes (E_u^*)^{\wedge k}$. 
In other words, this is the space of differential $k$-forms along the stable foliation that take values in $L^{1/2}$.  Let $\cL^t_k:\Lambda^k\to \Lambda^k$ be the natural (push-forward) action of the flow $f^t$. 
Then these transfer operators are equivalent to those in Section \ref{sec:intro} denoted by the same symbol and therefore the expression 
(\ref{eq:Expression_of_Zsc}) holds with $d_k(s)$ the dynamical Fredholm determinant for $\cL^t_k$ defined above. One definitely better fact in this expression is that we have the commutative diagram:
\begin{equation}\label{eq:DR2}
\begin{CD}
0@>>> \Lambda^0 @>{D_u}>> \Lambda^1@>{D_u}>> \cdots @>{D_u}>> \Lambda^{d}@>>> 0\\
@. @VV{\cL^t_0}V @VV{\cL^t_1}V @. @VV{\cL^t_{\dim M}}V @.\\
0@>>> \Lambda^0 @>{D_u}>> \Lambda^1@>{D_u}>> \cdots @>{D_u}>> \Lambda^{d}@>>> 0
\end{CD}
\end{equation}
where $D_u$ is the covariant exterior derivative along the stable manifolds. (Again this observation is not new. For instance, we can find it in the paper \cite{Guillemin} by Guillemin.) 
We therefore expect that large part of the zeros of the dynamical Fredholm determinant $d_k(s)$ will cancel each other in the expression (\ref{eq:Expression_of_Zsc}). In fact, under some strong assumptions on smoothness of the unstable foliation, it seems possible to prove this. But, for more general contact Anosov flows, it is not clear whether we can set up appropriate Hilbert spaces as completions of $\Lambda^k$ so that the commutative diagram above is extended to them.  Also it is not clear to what extent the cancellation between zeros will be complete. Still, to be optimistic, we would like to put the following conjecture. 
\begin{Conjecture}
The semi-classical zeta function $Z_{sc}(s)$ for contact Anosov flows will have  {\em holomorphic} extension to the whole complex plane $\complex$. 
Its zeros will be contained in the region 
$
\{ z\in \complex\mid |\Re(s)|\le \tau \mbox{ or } |\Im(s)|\le C\}
$ 
for some $C>0$ and arbitrarily small $\tau>0$ up to finitely many exceptions.
\end{Conjecture}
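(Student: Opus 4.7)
The plan is to elevate the formal cohomological argument sketched around (\ref{eq:DR2}) to a rigorous statement on suitable anisotropic spaces. First I would replicate the Grassmann-bundle construction of Section \ref{sec:Gext} for each $k$: introduce a smooth vector bundle $V_k$ over the Grassmann bundle $G$ whose pull-back by the attracting section $e_u$ recovers $L^{-1/2}\otimes (E_s^*)^{\wedge k}$, and define one-parameter semi-groups $\cL^t_k:\Gamma^\infty(U_0,V_k)\to \Gamma^\infty(U_0,V_k)$ extending $\cL^t_{d,0}$. The core task is then to construct anisotropic Hilbert spaces $\widetilde{\cK}^r_k$ on which each $\cL^t_k$ has discrete spectrum in a half-plane $\Re(s)>-c$ (by an extension of Theorem \ref{th:spectrum}), which carry a bounded "stable exterior derivative" $\widetilde{D}_s:\widetilde{\cK}^r_k\to \widetilde{\cK}^{r-1}_{k+1}$ intertwining the semi-groups, so that (\ref{eq:DR2}) extends to a complex of Hilbert spaces.

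Next I would show that the spectral problem splits across this complex. For each non-zero generalized eigenvalue $\lambda$ of the generator of $\cL^t_k$ in the strip $|\Re(s)|\le \tau$, one restricts (\ref{eq:DR2}) to the root spaces $H^k_\lambda\subset \widetilde{\cK}^r_k$, obtaining a finite-dimensional cochain complex whose Euler characteristic equals the signed multiplicity of $\lambda$ in $\prod_k d_k(s)^{(-1)^k}$. The goal is to prove that this complex is acyclic for all but finitely many $\lambda$, so that $\lambda$ contributes no zero or pole to $Z_{sc}(s)$. The underlying mechanism I expect is that only the "ground band" of the semiclassical transfer operator (as described in the companion work \cite{FaureTsujii13}) contributes nontrivially to $H^*_\lambda$, and within this ground band acyclicity should follow from the fact that $\widetilde{D}_s$ acts microlocally like a semi-classical creation operator: it maps the $k$-th ground band isomorphically onto the $(k+1)$-st ground band outside a bounded spectral region, a kind of Poincar\'e lemma along unstable directions of the cotangent flow on the trapped set $X$.

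Combining this with Theorems \ref{th:spectrum} and \ref{th:zeta} then yields the conjecture. Theorem \ref{th:spectrum}(3) gives a Weyl-law's worth of zeros of $d_d(s)$ along the imaginary axis; the acyclicity of the complex forces these zeros, for $|\Im(s)|$ large, to be cancelled by zeros of $d_{d-1}$ of opposite parity in the alternating product (\ref{eq:Zscexp}), leaving only a finite-dimensional cohomological contribution near $s=0$. The same cancellation applied to factors of odd $d-k$ would eliminate the potential poles and upgrade the meromorphic extension of Theorem \ref{mainth:spec} to a holomorphic one.

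The main obstacle is the construction of $\widetilde{D}_s$ as a bounded operator between the anisotropic spaces, together with the verification of the intertwining $\widetilde{D}_s\,\cL^t_k=\cL^t_{k+1}\,\widetilde{D}_s$. The stable bundle $E_s$, and hence any differential operator along stable leaves, is only H\"older continuous, so $\widetilde{D}_s$ does not extend in any naive sense even on $G$, where $e_u$ itself is merely H\"older. One likely has to work on a further extension carrying both the unstable and stable Grassmann directions, or to exploit a dynamically defined smooth structure along stable leaves (invariant under stable holonomy) and then check that $\widetilde{D}_s$ is bounded in the semi-norms defining $\widetilde{\cK}^r_k$. Proving the microlocal Poincar\'e lemma for the ground-band complex, which is the analytic heart of the acyclicity step, is the other substantial difficulty and would demand a detailed analysis of how $\widetilde{D}_s$ acts on the wave-packet decomposition used in the proof of Theorem \ref{th:spectrum} and in \cite{FaureTsujii12}.
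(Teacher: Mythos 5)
The statement you are attacking is stated in the paper as a \emph{Conjecture}: the authors give no proof of it, only the heuristic discussion surrounding the diagram (\ref{eq:DR2}), where they themselves point out that it is not clear whether one can set up function spaces as completions of $\Lambda^k$ extending the commuting diagram, nor to what extent the cancellation of zeros in the alternating product (\ref{eq:Zscexp}) is complete, since non-smoothness of the stable foliation may destroy the fine structure. Your proposal is therefore not being measured against a proof in the paper; it is an elaboration of exactly the same cohomological route the authors sketch and then explicitly leave open.

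As a proof, your text has genuine gaps, and you have in fact named them yourself: (i) the construction of a bounded stable exterior derivative $\widetilde{D}_s$ between anisotropic spaces intertwining the semi-groups --- this is precisely the obstruction the authors cite, since $E_s$ (and the flat connection on $L^{-1/2}$ along stable leaves) is only H\"older continuous, so neither the bundle $L^{-1/2}\otimes (E_s^*)^{\wedge k}$ nor the operator $D_s$ is smooth, and the Grassmann-extension trick used in Section \ref{sec:Gext} fixes the coefficient of a single transfer operator but does not by itself produce a smooth complex of bundles with a differential commuting with all the $\cL^t_k$; and (ii) the acyclicity step (your ``microlocal Poincar\'e lemma'' for the ground band), for which no argument is given beyond the expectation that $\widetilde{D}_s$ acts like a creation operator --- even granting the band structure of \cite{FaureTsujii13}, nothing in the present paper or in your sketch shows that the restriction of the complex to each root space is exact away from finitely many eigenvalues, which is exactly what would be needed to cancel the Weyl-law's worth of zeros of $d_d(s)$ against zeros of $d_{d-1}(s)$ and to kill the poles. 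Until both points are established, the statement remains a conjecture, and your proposal should be presented as a research program consistent with the authors' own heuristic, not as a proof.
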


\section{Linear models}\label{sec:linear}
 In this section, we discuss about a one-parameter family of partially hyperbolic linear transformations. This is a linearized  model of  the Grassmann extension $f_G^t$ of the flow $f^t$ viewed in local coordinate charts. The main statement, Theorem \ref{cor:linear1}, of this section is a prototype of  Theorem \ref{th:spectrum}. The idea presented below is initially given in \cite{Faure07} and the following is basically a restatement of the results there in a modified setting and in a different terminology.
We have given a very similar argument in our previous paper \cite[Chapter 3 and 4]{FaureTsujii12} on prequantum Anosov maps. Since the argument there is self-contained and elementary, we will  refer \cite{FaureTsujii12} for the proofs of some statements and also for more detailed explanations.  
\subsection{A linear model for the flow $f^t_G$}\label{ss:linear_model}
\subsubsection{Euclidean space and coordinates}
Let us consider the Euclidean space  
\[
\real^{2d+d'+1}=\real^{2d}\oplus \real^{d'}\oplus \real
\]
as a local model of the Grassmann bundle $G$, where we suppose that the component $\real^{d'}$ in the middle is the fiber of the Grassmann bundle and the last component $\real$ is the flow direction. 
We equip the space $\real^{2d+d'+1}$ with the coordinates 
\begin{equation}\label{eq:xyz}
(x,y,z)\qquad \mbox{ with $x\in \real^{2d}$, $y\in \real^{d'}$ and $z\in \real$.}
\end{equation}
The first component  $x\in \real^{2d}$ is sometimes written 
\begin{equation}\label{eq:xpq}
x=(q,p)\quad\mbox{ with $q,p\in \real^{d}$.}
\end{equation}
We suppose that the $q$-axis and the $p$-axis are respectively  the expanding and contracting subspaces. 
Also we sometimes write the coordinates (\ref{eq:xyz}) above  as 
\begin{equation}\label{eq:w}
(w,z) \qquad \mbox{with setting }w=(x,y)\in \real^{2d+d'}
\end{equation}
for simplicity. 
In order to indicate which coordinate is used on which component, we sometimes use such notation as
\[
\real^{2d+d'+1}_{(x,y,z)}=\real^{2d}_x\oplus \real^{d'}_y\oplus \real_z=\real^{d}_q\oplus \real^{d}_p\oplus \real^{d'}_y\oplus \real_z=
\real^{2d+d'}_w\oplus \real_z.
\]
The orthogonal projections to some of the components are written as follows:
\begin{equation}\label{eq:pj}
\proj_{(x,z)}\!:\real^{2d+d'+1}_{(x,y,z)}\to \real^{2d+1}_{(x,z)}, \;\; \proj_{(x,y)}\!:\real^{2d+d'+1}_{(x,y,z)}\to \real^{2d+d'}_{(x,y)}, \;\;
 \proj_{x}\!:\real^{2d+d'+1}_{(x,y,z)}\to \real^{2d}_{x}
\end{equation}
We suppose that the space $\real^{2d+1}_{(x,z)}=\real^{2d+1}_{(q,p,z)}$ is equipped with the contact form
\begin{equation}\label{eq:walpha}
\alpha_0=dz-q dp+p dq.
\end{equation}

\subsubsection{Partially hyperbolic linear transformations} 
Let us consider invertible  linear transformations $A:\real^{d}\to \real^{d}$ and $\wA:\real^{d'}\to \real^{d'}$ and suppose that they are expanding and contracting respectively in the sense that  
\begin{equation}\label{eq:lambda}
\|A^{-1}\|<\frac{1}{\lambda}\quad\mbox{and}\quad \|\wA\|<\frac{1}{\lambda}\quad\mbox{for some constant $\lambda\ge 1$. }
\end{equation}
The transpose of the inverse of $A$ will be written as
\begin{equation}\label{eq:Adag}
A^\dag:=({}^tA)^{-1}:\real^d\to \real^d.
\end{equation}
In the following, we investigate the one-parameter family of partially hyperbolic affine transformations  
\begin{equation}\label{eq:Bt}
B^t:\real^{2d+d'+1}_{(x,y,z)}\to \real^{2d+d'+1}_{(x,y,z)},\qquad
B^t(q,p, y,z)=(Aq, A^\dag p, \wA y,z+t),
\end{equation}
as a model of the family of diffeomorphisms $f_G^{t+t_0}$ for $t_0\gg 0$ viewed in flow-box 
coordinate charts\footnote{We will take flow-box coordinate charts $\kappa$ and $\kappa'$ around a point $P$ and its image $f^{t_0}_G(P)$ respectively, so that the $q$-axis and $p$-axis corresponds to the unstable and stable subspace,  and consider the family of maps $(\kappa')^{-1}\circ f^{t+t_0}_G\circ \kappa$. Then its linearization will look like $B^t$.}. Observe that $B^t$ preserves the one form $(\proj_{(x,z)})^*\alpha_0$. Below we consider the one-parameter family of transfer operators 
\begin{equation}\label{eq:Lt}
L^t:C^\infty(\real^{2d+d'+1})\to C^\infty(\real^{2d+d'+1}), \quad L^t u(w)=\frac{|\det A|^{1/2}}{|\det\wA|}\cdot u(B^{-t}(w)).
\end{equation}
\begin{Remark}\label{Rem:Lt}
We ask the readers to check that the coefficient ${|\det A|^{1/2}}/{|\det\wA|}$  is chosen so that $L^t$ is an appropriate model of  the transfer operator $\cL^t_{0,0}$ considered in Section \ref{sec:Gext}. 
See (\ref{eq:Fkl}), (\ref{eq:bt}) and  (\ref{eq:Lkl}) for the definitions. 
\end{Remark}
\subsection{Bargmann transform}
\subsubsection{Definition}
We will employ the  partial Bargmann transform for analysis of the transfer operators. This is a kind of wave-packet transform. To begin with,  
we recall the definition of the (usual) Bargmann transform and its basic properties.  We refer \cite[Chapter 3]{FaureTsujii12} and \cite{FollandBook} for more detailed accounts. 

Let us consider the $D$-dimensional Euclidean space $\real^D_{w}$ and its cotangent bundle 
\[
T^*\real^D_w=\real^{2D}_{(w,\xi_w)}=\real^{D}_w\oplus \real^{D}_{\xi_w},
\]
where we regard $\xi_w\in \real^D$ as the dual variable of $w\in \real^D$. 
Let $\hbar>0$ be a parameter that is related to the sizes of  wave packets. For each point $(w, \xi_w)\in T^*\real^D_w$, we assign a Gaussian wave packet 
\begin{equation}\label{eq:phiD}
\phi_{w,\xi_w}(w')=a_D(\hbar)\cdot \exp(i \xi_w\cdot  (w'-(w/2))/\hbar-|w'-w|^2/(2\hbar))
\end{equation}
where $a_D(\hbar)$ is a normalization constant defined by
\begin{equation}\label{def:ad}
a_D(\hbar)=(\pi\hbar)^{-D/4}.
\end{equation}
The Bargmann transform on $\real^{D}_w$ (for the parameter $\hbar>0$) is defined by 
\begin{equation}\label{eq:Bargmann_transform_h}
\Bargmann_\hbar:L^2(\real_w^{D})\to L^2(\real_{(w,\xi_w)}^{2D}), \quad \Bargmann_\hbar u(w',\xi'_w)=\int \overline{\phi_{w',\xi'_w}(w)}\cdot  u(w) dw.
\end{equation}
Its $L^2$-adjoint $\Bargmann^*_\hbar:L^2(\real^{2D}_{(w,\xi_w)})\to L^2(\real^{D}_w)$ is given as
\[
\Bargmann_\hbar^* v(w')=\int \phi_{w,\xi_w}(w') v(w,\xi_w) \frac{dw d\xi_w}{(2\pi \hbar)^D}.
\]
Here we make a convention that we use the volume form $dw d\xi_w/(2\pi \hbar)^D$ in defining the $L^2$-norm on $L^2(\real^{2D}_{(w,\xi_w)})$. Then we have
\begin{Lemma}[{\cite[Lemma 3.1.2]{FaureTsujii12}}]\label{lm:Bargmann_basic} 
The Bargmann transform $\Bargmann_\hbar$ is an $L^2$-isometric embedding. 
Its adjoint $\Bargmann_\hbar^*$ is a bounded operator with respect to the $L^2$ norm and satisfies $\Bargmann_\hbar^*\circ \Bargmann_\hbar=\mathrm{Id}$. 
\end{Lemma}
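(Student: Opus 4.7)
The plan is to deduce the $L^2$-isometry of $\Bargmann_\hbar$ and the identity $\Bargmann_\hbar^*\circ \Bargmann_\hbar=\mathrm{Id}$ from a direct Gaussian computation, after which the boundedness of $\Bargmann_\hbar^*$ is automatic by duality.

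First I would rewrite
\[
\Bargmann_\hbar u(w,\xi_w)=a_D(\hbar)\,e^{\,i\xi_w\cdot w/(2\hbar)}\int e^{-i\xi_w\cdot w'/\hbar}\,g_w(w')\,dw',
\qquad g_w(w'):=u(w')\,e^{-|w'-w|^2/(2\hbar)},
\]
so that, for each fixed $w$, the map $\xi_w\mapsto \Bargmann_\hbar u(w,\xi_w)$ is (up to a unit-modulus phase) $a_D(\hbar)\,\widehat{g_w}(\xi_w/\hbar)$. Applying Plancherel's theorem in $\xi_w$ with the convention $dw\,d\xi_w/(2\pi\hbar)^D$, and then changing variables $\eta=\xi_w/\hbar$, I would obtain
\[
\|\Bargmann_\hbar u\|_{L^2}^{2}
=a_D(\hbar)^{2}\int|g_w(w')|^{2}\,dw'\,dw
=a_D(\hbar)^{2}\int |u(w')|^{2}\!\left(\int e^{-|w'-w|^{2}/\hbar}\,dw\right)dw'.
\]
The inner Gaussian integral evaluates to $(\pi\hbar)^{D/2}$, which combined with $a_D(\hbar)^2=(\pi\hbar)^{-D/2}$ gives $\|\Bargmann_\hbar u\|_{L^2}^{2}=\|u\|_{L^2}^{2}$. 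This establishes the isometry (the computation justifies the Fubini step for $u$ in the Schwartz class, and the isometry extends by density to all of $L^2(\real^D)$).

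Next I would verify $\Bargmann_\hbar^*\Bargmann_\hbar=\mathrm{Id}$ by a direct computation of the reproducing kernel. For $u$ Schwartz, Fubini gives
\[
\Bargmann_\hbar^*\Bargmann_\hbar u(w'')=\int K(w'',w')\,u(w')\,dw',\quad K(w'',w')=\int\phi_{w,\xi_w}(w'')\overline{\phi_{w,\xi_w}(w')}\,\frac{dw\,d\xi_w}{(2\pi\hbar)^{D}}.
\]
Performing the $\xi_w$-integration first yields $(2\pi\hbar)^D\,\delta(w''-w')$ times the Gaussian factor $a_D(\hbar)^{2}\exp(-(|w''-w|^{2}+|w'-w|^{2})/(2\hbar))$; the delta collapses $w''\to w'$ and then integrating $\exp(-|w'-w|^{2}/\hbar)$ in $w$ produces $(\pi\hbar)^{D/2}$, which cancels against $a_D(\hbar)^{2}$ to give $K(w'',w')=\delta(w''-w')$. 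Hence $\Bargmann_\hbar^*\Bargmann_\hbar=\mathrm{Id}$ on a dense subspace, and by the isometry already proved both sides are bounded, so the identity extends to all of $L^2(\real^D)$.

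Finally, boundedness of $\Bargmann_\hbar^{*}$ on $L^{2}(\real^{2D})$ is free: as the Hilbert-space adjoint of an isometry it has operator norm $1$. The only subtle point worth flagging is the interchange of integrals in the kernel computation; the cleanest justification is to carry out the $\xi_w$-integral as an oscillatory integral against a Schwartz test function (or, equivalently, replace $\delta$ by a Gaussian regularization $e^{-\epsilon|\xi_w|^2}$ and pass to the limit $\epsilon\to 0$), which is the main and really the only nontrivial step of the argument.
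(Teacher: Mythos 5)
Your proof is correct: the Plancherel computation in $\xi_w$ with the normalization $dw\,d\xi_w/(2\pi\hbar)^D$ and the Gaussian integral $\int e^{-|w'-w|^2/\hbar}\,dw=(\pi\hbar)^{D/2}$ give exactly the isometry, and the paper itself offers no proof here (it quotes \cite[Lemma 4.2]{FaureTsujii12}, whose argument is this same standard computation). One small simplification: once the isometry is established, $\Bargmann_\hbar^*\circ\Bargmann_\hbar=\mathrm{Id}$ follows immediately by polarization, so the delta-kernel computation (and its regularization) is not needed.
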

The last claim implies that  $u\in  L^2(\real_w^{D})$ is expressed as a superposition (or an integration) of the wave packets  $\phi_{w,\xi_w}(\cdot)$  for $(w,\xi_w)\in \real^{2D}_{(w,\xi_w)}$:
\[
u(w')=\Bargmann_\hbar^*\circ \Bargmann_\hbar u(w')=\int \phi_{w,\xi_w}(w') v(w,\xi_w) \frac{dw d\xi_w}{(2\pi \hbar)^D}\qquad \mbox{with setting  $v=\Bargmann_{\hbar} u$.}
\]
\begin{Lemma}[{\cite[Proposition 3.1.3]{FaureTsujii12}}] 
The operator 
\begin{equation}\label{eq:Bargmann_projector_h}
\BargmannP_\hbar=\Bargmann_\hbar\circ \Bargmann_\hbar^*: L^2(\real^{2D}_{(w,\xi_w)})\to L^2(\real^{2D}_{(w,\xi_w)})
\end{equation}
is an orthogonal projection onto the image of $\Bargmann$ and called the {\em Bargmann projector}.  It is expressed as an integral operator 
\[
\BargmannP_\hbar v(w',\xi'_w)=\int K_{\BargmannP,\hbar}(w',\xi'_w;w,\xi_{w}) v(w,\xi_w)  \frac{dw d\xi_w}{(2\pi \hbar)^D}
\]
with the kernel
\begin{equation}\label{eq:kernel_KP}
K_{\BargmannP,\hbar}(w',\xi'_w;w,\xi_{w})=e^{-i \Omega((w',\xi'_w),(w, \xi_w))/(2\hbar)-|(w',\xi'_w)-(w,\xi_w)|^2/(4\hbar)},
\end{equation}
where  $\Omega((w',\xi'_w), (w, \xi_w))=\xi'_w w-w' \xi_w$ is the standard symplectic form on $\real^{2D}_{(w,\xi_w)}$. 
\end{Lemma}
\subsubsection{Lift of transfer operators with respect to the Bargmann transform}
Let  $Q:\real_w^{D}\to \real_w^{D}$ be an invertible affine transformation. 
Let $Q_0:\real^{D}_w\to \real_w^{D}$ be its linear part and $q_0:=Q(0)\in \real^D$ be the constant part. Let $L_Q:L^2(\real_w^D)\to L^2(\real_w^D)$ be the $L^2$-normalized transfer operator defined by 
\begin{equation}\label{eq:normalizedOp}
L_Q u(w)=|\det Q_0|^{-1/2}\cdot  u(Q^{-1}w).
\end{equation}
We call the operator 
\[
L_{Q}^{\lift}:=\Bargmann_\hbar \circ L_Q\circ \Bargmann_\hbar^*:L^2(\real^{2D}_{(w,\xi_w)})\to L^2(\real^{2D}_{(w,\xi_w)})
\]
the lift of the operator $L_Q$ with respect to the Bargmann transform $\Bargmann_\hbar$, as it makes the following diagram commutes
\[
\begin{CD}
L^2(\real^{2D}_{(w,\xi_w)})@>{L_Q^{\lift}}>> L^2(\real^{2D}_{(w,\xi_w)})\\
@A{\Bargmann_\hbar}AA @A{\Bargmann_\hbar}AA\\
L^2(\real_w^D)@>{L_Q}>> L^2(\real_w^D).
\end{CD}
\]
The next lemma gives a useful expression of the lift $L_{Q}^{\lift}$. 
We consider the natural (push-forward) action of $Q$ on 
the cotangent bundle $T^*\real^{D}_w=\real^{2D}_{(w,\xi_w)}$:
\[
D^\dag Q:\real_{(w,\xi_w)}^{2D}\to \real_{(w,\xi_w)}^{2D},\quad 
D^\dag Q(w,\xi_w)=(Qw, Q_0^{\dag} \xi_w)=(Qw, ({}^tQ_0)^{-1} \xi_w).
\]
Let $L_{D^\dag Q}$ be the associated ($L^2$-normalized) transfer operator, which is defined by (\ref{eq:normalizedOp}) with $A$ replaced by $D^\dag Q$, that is, $L_{D^\dag Q}u:=u\circ (D^\dag Q)^{-1}$.
\begin{Lemma}[{\cite[Lemma 3.2.2 and Lemma 3.2.4]{FaureTsujii12}}] \label{lm:lift}
The lift $L_{Q}^{\lift}$  is expressed as 
\[
L_{Q}^{\lift}v(w,\xi_w)
=d(Q)\cdot e^{-i\xi_w q_0/(2\hbar)}\cdot  \BargmannP_\hbar\circ  L_{D^\dag Q}\circ \BargmannP_\hbar v(w,\xi_w)
\]
where $d(Q)=|\det((Q_0+{}^t Q_0^{-1})/2)|^{1/2}$. If $Q$ is isometric, we have
 $[L_{D^\dag Q}, \BargmannP_\hbar]=0$ and therefore 
 $\BargmannP_\hbar\circ  L_{D^\dag Q}\circ \BargmannP_\hbar=  L_{D^\dag Q}\circ \BargmannP_\hbar=\BargmannP_\hbar\circ  L_{D^\dag Q}$.
\end{Lemma}
\subsection{Partial Bargmann transform}

\subsubsection{Definition}
The partial Bargmann transform, which we will use in later sections, is  roughly the Fourier transform along the $z$-direction in $\real^{2d+d'+1}_{(x,y,z)}$ combined with the Bargmann transform $\Bargmann_\hbar$ in the transverse directions, where the parameter $\hbar$ is related to the frequency $\xi_z$ in the $z$-direction as $
\hbar=\langle \xi_z\rangle^{-1}$. Here and henceforth, we let $\langle s\rangle$ be a smooth function of $s\in \real$ such  that $\langle s\rangle =|s|$ if $|s|\ge 2$ and that  $\langle s\rangle\ge 1$ for all $s\in \real$.

As the (partial) cotangent bundle  of  $\real^{2d+d'+1}_{(x,y,z)}$, we consider the Euclidean space $\real^{4d+2d'+1}_{(x,y,\xi_x, \xi_y,\xi_z)}$ equipped with the coordinates
\[
(x,y,\xi_x, \xi_y,\xi_z)\quad \mbox{ with $x,\xi_x\in \real^{2d}$,\; $y,\xi_y\in \real^{d'}$,\; $\xi_z\in \real$. }
\]
We regard it as the cotangent bundle of the Euclidean space $\real^{2d+d'+1}_{(x,y,z)}$, where 
 $\xi_x$, $\xi_y$, $\xi_z$ are regarded as the dual variable of $x$, $y$, $z$ respectively. 
 (But notice that we omit the variable $z$. This is because we consider the Fourier transform along the $z$-axis.) For simplicity, we  sometimes write the coordinates above as 
\[
(w,\xi_w,\xi_z)\quad \mbox{ with setting
$w=(x,y),\; \xi_w=(\xi_x,\xi_y)$.}
\]
Also, according to (\ref{eq:xpq}), we sometimes write the coordinate $\xi_x\in \real^{2d}$ as
\[
\xi_x=(\xi_q,\xi_p) \quad \mbox{with $\xi_q,\xi_p\in \real^d$.}
\]
Instead of the functions $\phi_{w,\xi_w}(\cdot)$ in (\ref{eq:phiD}), we consider the functions
\begin{equation}\label{eq:pBwp}
\phi_{x,y,\xi_x, \xi_y,\xi_z}:\real^{2d+d'+1}_{(x,y,z)}\to \complex\quad \mbox{for}\quad (x,y,\xi_x, \xi_y,\xi_z)\in \real^{4d+2d'+1}
\end{equation}
defined by 
\begin{align*}
&\phi_{x,y,\xi_x, \xi_y,\xi_z}(x',y',z')\\
&=a_{2d+d'}(\langle\xi_z\rangle^{-1})\cdot \exp
\left(i\xi_z z' +i\langle \xi_z\rangle \,\xi_w  (w'-(w/2))-\langle \xi_z\rangle\cdot\frac{ |w'-w|^2}{2} \right)
\\
&=
a_{2d+d'}(\langle\xi_z\rangle^{-1})\cdot \exp
\left(i\xi_z z' +i\langle\xi_z\rangle\,\xi_x (x'-(x/2))+i\langle\xi_z\rangle\,\xi_y (y'-(y/2))\right)\\
&\qquad \qquad \cdot \exp\left(-\langle \xi_z\rangle\cdot\frac{ |x'-x|^2}{2}
-\langle \xi_z\rangle\cdot \frac{|y'-y|^2}{2} \right)
\end{align*}
where $a_{D}(\cdot)$ is that in (\ref{def:ad}).  For brevity, we sometimes write $\phi_{w,\xi_w, \xi_z}(w',z')$ for $\phi_{x,y,\xi_x, \xi_y,\xi_z}(x',y',z')$. 
\begin{Remark}\label{Rem:rescaledCoordinates}
Note that $\xi_z$ in (\ref{eq:pBwp}) indicates the frequency of  $\phi_{x,y,\xi_x, \xi_y,\xi_z}(\cdot)$ in $z$, and that the frequency in $w=(x,y)$ is actually $\langle \xi_z\rangle \xi_w=\langle \xi_z\rangle (\xi_x,\xi_y)$, that is, it is rescaled by the factor $\langle \xi_z\rangle$.
\end{Remark}
The  partial Bargmann transform 
\[
\pBargmann:L^2(\real_{(x,y,z)}^{2d+d'+1})\to L^2(\real_{(x,y,\xi_x, \xi_y,\xi_z)}^{4d+2d'+1})
\]
is defined by
\begin{equation}\label{eq:volume}
\pBargmann u(x',y',\xi'_x, \xi'_y,\xi'_z)=\int \overline{\phi_{x',y',\xi'_x, \xi'_y,\xi'_z}(x,y,z)}\cdot u(x,y,z) dx dy dz.
\end{equation}
We make a convention that we use the volume form 
\begin{equation}\label{eq:constNormalization}
d\vol= (2\pi)^{-1} \cdot (2\pi \langle \xi_z\rangle^{-1})^{-2d-d'} dx dy d\xi_x d\xi_y d\xi_z
\end{equation}
in defining the $L^2$-norm on $L^2(\real_{(x,y,\xi_x, \xi_y,\xi_z)}^{4d+2d'+1})$.  Then the  $L^2$-adjoint 
\[
\pBargmann^*: L^2(\real_{(x,y,\xi_x, \xi_y,\xi_z)}^{4d+2d'+1}) \to L^2(\real^{2d+d'+1}_{(x,y,z)})
\]
of the partial Bargmann transform $\pBargmann$ is the operator given by
\begin{equation}\label{eq:pBargmannAdj}
\pBargmann^* v(x',y',z')=\int  \phi_{x,y,\xi_x, \xi_y,\xi_z}(x',y',z') v(x,y,\xi_x,\xi_y,\xi_z)  
d\vol.
\end{equation}
\subsubsection{Basic properties of the partial Bargmann transform}
The following is a basic property of the partial Bargmann transform $\pBargmann$, which follows from those of the Bargmann transform and the Fourier transform.
\begin{Lemma} \label{lm:pBargmannL2}
The partial Bargmann transform $\pBargmann$ is an $L^2$-isometric injection and $\pBargmann^*$ is a bounded operator such that $\pBargmann^*\circ \pBargmann =\mathrm{Id}$. 
The composition  
\begin{equation}\label{eq:pBargmannP}
\pBargmannP:=\pBargmann\circ \pBargmann^*:L^2(\real_{(x,y,\xi_x, \xi_y,\xi_z)}^{4d+2d'+1})\to L^2(\real_{(x,y,\xi_x, \xi_y,\xi_z)}^{4d+2d'+1})
\end{equation}
is the $L^2$ orthogonal projection onto the image of $\pBargmann$.
\end{Lemma}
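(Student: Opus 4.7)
The plan is to factor $\pBargmann$ as a partial Fourier transform in the $z$ variable composed with the ordinary (scale-dependent) Bargmann transform $\Bargmann_\hbar$ in the transverse variable $w=(x,y)$, and then read off all three claims pointwise in $\xi_z$ from Lemma~\ref{lm:Bargmann_basic} and the Plancherel theorem.

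A direct inspection of the definition of $\phi_{x,y,\xi_x,\xi_y,\xi_z}$ shows that, with $\hbar=\langle\xi_z\rangle^{-1}$,
\[
\phi_{w,\xi_w,\xi_z}(w',z')=e^{i\xi_z z'}\cdot \phi^{(\hbar)}_{w,\xi_w}(w'),
\]
where $\phi^{(\hbar)}_{w,\xi_w}$ is precisely the Gaussian wave packet of (\ref{eq:phiD}) in $D=2d+d'$ variables at the scale $\hbar$; the normalization constant $a_{2d+d'}(\langle\xi_z\rangle^{-1})$ is included in $\phi_{w,\xi_w,\xi_z}$ so that this identification is exact. Consequently, denoting by $\mathcal{F}_z$ the Fourier transform in the $z$ variable, one has
\[
\pBargmann u(w,\xi_w,\xi_z)=\Bargmann_{\langle\xi_z\rangle^{-1}}\bigl[(\mathcal{F}_z u)(\cdot,\xi_z)\bigr](w,\xi_w).
\]

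To prove the isometry, I would insert this factorization into $\|\pBargmann u\|^2$ computed with the reference volume $d\vol$ of (\ref{eq:constNormalization}). For fixed $\xi_z$, the $(w,\xi_w)$-integral produces $\|\Bargmann_\hbar (\mathcal{F}_z u)(\cdot,\xi_z)\|^2$ against the measure $(2\pi\hbar)^{-(2d+d')}\,dw\,d\xi_w$, which by Lemma~\ref{lm:Bargmann_basic} equals $\|(\mathcal{F}_z u)(\cdot,\xi_z)\|_{L^2(\real^{2d+d'})}^2$. Integrating against $(2\pi)^{-1}d\xi_z$ and invoking Plancherel in $z$ yields $\|u\|_{L^2(\real^{2d+d'+1})}^2$. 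This establishes injectivity and the $L^2$-isometry of $\pBargmann$; the boundedness of $\pBargmann^*$ and the identity $\pBargmann^*\circ \pBargmann=\mathrm{Id}$ then follow by duality.

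Finally, $\pBargmannP=\pBargmann\circ\pBargmann^*$ is manifestly self-adjoint, and $\pBargmannP^2=\pBargmann(\pBargmann^*\pBargmann)\pBargmann^*=\pBargmannP$ using $\pBargmann^*\pBargmann=\mathrm{Id}$, so $\pBargmannP$ is an orthogonal projection; its image is contained in $\mathrm{Im}(\pBargmann)$, and any $v=\pBargmann u$ satisfies $\pBargmannP v=\pBargmann(\pBargmann^*\pBargmann)u=\pBargmann u=v$, giving equality of the ranges. The only step requiring any care is the bookkeeping of constants: the factor $a_{2d+d'}(\langle\xi_z\rangle^{-1})$ in the wave packet must match the $(2\pi\hbar)^{-(2d+d')}$ in the reference volume for the Bargmann isometry to apply pointwise in $\xi_z$, and the $(2\pi)^{-1}$ in $d\vol$ must match the Plancherel constant for $\mathcal{F}_z$. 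Both are built into the definitions (\ref{def:ad}) and (\ref{eq:constNormalization}) and amount to a direct verification.
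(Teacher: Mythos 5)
Your proof is correct and follows exactly the route the paper indicates: it offers no written proof beyond the remark that the lemma "follows from those of the Bargmann transform and the Fourier transform," and your factorization $\pBargmann u(w,\xi_w,\xi_z)=\Bargmann_{\langle\xi_z\rangle^{-1}}[(\mathcal{F}_z u)(\cdot,\xi_z)](w,\xi_w)$, combined with Lemma \ref{lm:Bargmann_basic} fiberwise in $\xi_z$ and Plancherel with the constants of (\ref{def:ad}) and (\ref{eq:constNormalization}), is precisely that argument spelled out. Nothing is missing.
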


Suppose that  $B:\real^{2d+d'+1}_{(w,z)}\to \real^{2d+d'+1}_{(w,z)}$ is an affine transform of the form
\[
B(w,z)=(B_0(w)+b_0, z+C_0(w)+c_0)
\]
where $B_0:\real^{2d+d'}_{w}\to \real^{2d+d'}_{w}$ and $C_0:\real^{2d+d'}_{w}\to \real$ are linear maps and $b_0$ and $c_0$ are constants. (The linear model $B^t$ in (\ref{eq:Bt}) is a special case of such maps.)  Let $D^\dag B:\real^{4d+2d'+1}_{(w,\xi_w,\xi_z)}\to \real^{4d+2d'+1}_{(w,\xi_w,\xi_z)}$ be the naturally induced (push-forward) action  
\[
D^\dag B(w,\xi_w,\xi_z)=(B_0(w)+b_0, \;B_0^\dag(\xi_w- {}^t C_0 \xi_z),\; \xi_z)
\] 
on the partial cotangent bundle $\real^{4d+2d'+1}_{(w,\xi_w,\xi_z)}$. 
We consider the $L^2$-normalized transfer operators $L_B$ and $L_{D^\dag B}$ defined in (\ref{eq:normalizedOp}) with $A$ replaced by $B$ and $D^{\dag}B$ respectively. The lift $L_B^{\lift}$ of the operator $L_B$ with respect to the partial Bargmann transform $\pBargmann$ is defined by 
\[
L_B^{\lift}:=\pBargmann\circ L_B\circ \pBargmann^*:\real^{4d+2d'+1}_{(w,\xi_w,\xi_z)}\to \real^{4d+2d'+1}_{(w,\xi_w,\xi_z)}
\]
and makes the following diagram commutes:
\[
\begin{CD}
L^2(\real^{4d+2d'+1})@>{L_B^{\lift}}>> L^2(\real^{4d+2d'+1})\\
@A{\pBargmann}AA @A{\pBargmann}AA\\
L^2(\real^{2d+d'+1})@>{L_B}>> L^2(\real^{2d+d'+1}).
\end{CD}
\]
The next lemma is a consequence of  Lemma \ref{lm:lift} and gives an expression of  $L_B^{\lift}$. 
\begin{Lemma}\label{lm:expression_lift} 
The  lift $L_B^{\lift}:=\pBargmann\circ L_B\circ \pBargmann^*$  is expressed as
\[
L_B^{\lift}v(w,\xi_w,\xi_z)=d(B_0)\cdot e^{-(i\langle \xi_z\rangle/2)\xi_w \cdot b_0-i\xi_z\cdot  (C_0 B_0^{-1}(w-b_0)+c_0)}
\cdot \pBargmannP \circ {L}_{D^\dag B}\circ \pBargmannP 
v(w,\xi_w,\xi_z)
\]
If $B$ is isometric, we have $[\pBargmannP, L_{D^\dag B}]=0$ and therefore 
\begin{align*}
L_B^{\lift}v
&= e^{-(i\langle \xi_z\rangle/2)\xi_w \cdot b_0-i\xi_z\cdot (C_0B_0^{-1}(w-b_0)+c_0)}
\cdot  {L}_{D^\dag B}\circ \pBargmannP 
v(w,\xi_w,\xi_z).
\end{align*}
\end{Lemma}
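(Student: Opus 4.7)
The plan is to reduce the claim to Lemma \ref{lm:lift} by factoring the partial Bargmann transform as $\pBargmann = \Bargmann_\hbar^{(w)}\circ\mathcal{F}_z$, where $\mathcal{F}_z$ is the Fourier transform in $z$ and $\Bargmann_\hbar^{(w)}$ is the usual Bargmann transform in $w$ at scale $\hbar=\langle\xi_z\rangle^{-1}$, applied fiberwise in $\xi_z$. Correspondingly I will compute the lift $L_B^{\lift}$ in two steps, conjugating $L_B$ first by $\mathcal{F}_z$ and then by $\Bargmann_\hbar^{(w)}$ at each fixed $\xi_z$.

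Since $B(w,z)=(B_0 w+b_0,z+C_0 w+c_0)$, a direct change of variables in the Fourier integral shows that $\mathcal{F}_z L_B\mathcal{F}_z^{-1}$ acts on $\hat u(w',\xi_z)$, at fixed $\xi_z$, as the composition of multiplication by the linear phase $e^{-i\xi_z(C_0 B_0^{-1}(w'-b_0)+c_0)}$ with the $L^2$-normalized $w$-transfer operator $L_{B_0,b_0}^{(w)}\colon h(w')\mapsto |\det B_0|^{-1/2}h(B_0^{-1}(w'-b_0))$. So the Bargmann conjugation step reduces to lifting these two factors separately.

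Lemma \ref{lm:lift} applied to the affine map $w'\mapsto B_0 w'+b_0$ produces the factor $d(B_0)$, the phase $e^{-i\langle\xi_z\rangle\xi_w\cdot b_0/2}$, and the sandwiched operator $\mathcal{P}_\hbar L_{D^\dag(B_0,b_0)}^{(w)}\mathcal{P}_\hbar$. Multiplication by the linear phase $e^{-i\xi_z C_0 B_0^{-1}w'}$ is handled via the standard Bargmann identity
\[
\Bargmann_\hbar^{(w)}(e^{i\eta\cdot w'}u)(w,\xi_w)=e^{i\eta\cdot w/2}\,\Bargmann_\hbar^{(w)} u(w,\xi_w-\hbar\eta),
\]
which simultaneously produces a phase in $w$ and shifts $\xi_w$. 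After the rescaling of the momentum variable dictated by Remark \ref{Rem:rescaledCoordinates}, this shift matches precisely the additional $-{}^t C_0\xi_z$ term appearing in $D^\dag B$, so that absorbing the shift into $L_{D^\dag(B_0,b_0)}^{(w)}$ upgrades it to $L_{D^\dag B}$, while the Bargmann projector $\mathcal{P}_\hbar$ becomes $\pBargmannP$ (since $\mathcal{F}_z$ acts trivially in the $w$-direction).

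The main obstacle is the careful bookkeeping of phase factors: contributions coming from the translation $b_0$, the shear $C_0$, the translation $c_0$ in $z$, and the cross-term $e^{i\eta\cdot w/2}$ of the linear-phase conjugation all have to telescope into the single expression $e^{-i\langle\xi_z\rangle\xi_w\cdot b_0/2-i\xi_z(C_0(w)+c_0)}$ claimed in the lemma; once the operator content is pinned down, $L^2$-unitarity essentially forces the telescoping. Finally, in the isometric case $B_0$ is orthogonal so $d(B_0)=1$, and the cotangent action $D^\dag B$ preserves the Gaussian profile of the Bargmann kernel (\ref{eq:kernel_KP}); it follows that $[\pBargmannP,L_{D^\dag B}]=0$, so one of the two projectors is absorbed via $\pBargmannP^2=\pBargmannP$, yielding the simpler second formula.
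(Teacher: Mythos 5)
Your proposal is correct and takes essentially the same route the paper intends: the paper gives no written proof, presenting the lemma as a consequence of Lemma \ref{lm:lift}, and your factorization $\pBargmann=\Bargmann_\hbar^{(w)}\circ\mathcal{F}_z$ with $\hbar=\langle\xi_z\rangle^{-1}$, followed by Lemma \ref{lm:lift} for the affine $w$-part and the Bargmann shift identity for the linear phase produced by the shear $C_0$, is exactly that derivation. One caution: the final phase bookkeeping should be completed by simply collecting the explicit phases your three steps already produce rather than by appealing to $L^2$-unitarity (which cannot determine phase factors), and ``isometric'' should be read as $B_0$ orthogonal, so that at each fixed $\xi_z$ the map $D^\dag B$ is an affine transformation preserving both the symplectic form and the Euclidean metric, hence leaving the projector kernel (\ref{eq:kernel_KP}) invariant, as you indicate.
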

\begin{Remark}\label{Rem:expression_lift}
The relation between (the lift of) the transfer operator $L_B$ and its canonical map 
$D^\dag B$ given in the lemma above realizes the idea explained in the latter part of Section \ref{sec:Gext} in a simple setting. Here note that the kernel of the partial Bargmann projector $\pBargmannP$ is of the form $k(w',\xi'_w;w,\xi_w)\cdot \delta(\xi'_z-\xi_z)$ and we have 
\[
k(w',\xi'_w;w,\xi_w)\le C_\nu \langle \langle \xi_z\rangle^{1/2} |w-w'|\rangle^{-\nu}\cdot \langle \langle \xi_z\rangle^{1/2}|\xi_w-\xi'_w|\rangle^{-\nu}
\]
for arbitrarily large $\nu>0$, where $C_\nu$ is a constant depending on $\nu$.
\end{Remark}

\subsection{A coordinate change on the phase space}\label{ss:coord}
\subsubsection{The lift of the transfer operators $L^t$ and the trapped set}
Let us now consider the family of transfer operators $L^t$ defined in (\ref{eq:Lt}) and its lift with respect to the partial Bargmann transform:
\begin{equation}\label{eq:Ltlift}
(L^t)^{\lift}=\pBargmann\circ L^t\circ \pBargmann^*.
\end{equation}
Below we keep in mind that $L^t$ is a model of the transfer operator $\cL^t_{0,0}$ viewed in the local coordinate charts. 
As we explained at the end of Section \ref{sec:Gext}, we mainly consider the action of 
the transfer operator $\cL^t$ on 
 the wave packets (with high frequency) corresponding to the points near the trapped set $X$ given in (\ref{eq:X}). 
In our linear model, we understand that the hyperplane 
\[
\{(x,y,z)\in \real^{2d+d'+1}\mid y=0\}\subset \real^{2d+d'+1}_{(x,y,z)}
\]
corresponds to the section $e_u$ in the global setting. Then the trapped set $X_0$, which corresponds to $X$ in (\ref{eq:X}), must be
\[
X_0:=\{\mu\cdot \proj_{(x,z)}^* \alpha_0(x,0,z)\mid \mu \in \real,\;(x,z)\in \real^{2d+1}\}.
\]
Since we consider the rescaled coordinates,  as mentioned in Remark \ref{Rem:rescaledCoordinates}, the subset $X_0$ is given by the equations
\begin{equation}\label{eq:X0} 
\langle \xi_z\rangle \xi_p=-\xi_z q, \quad \langle \xi_z\rangle\xi_q=\xi_z p,\quad  y=0,\quad  \xi_y=0,\quad  \xi_z\neq 0
\end{equation}
in the coordinates $(q,p,y,\xi_q,\xi_p,\xi_z)$ on $\real^{4d+2d'+1}$.

By Lemma \ref{lm:expression_lift}, the lift $(L^t)^{\lift}$ of  $L^t$ is expressed as a composition of the transfer operator ${L}_{D^\dag B^t}$ with the Bargmann projectors $\pBargmannP$ and some multiplication operator, where $D^\dag B^t$ is the linear map 
\[
D^\dag B^t:\real^{4d+2d'+1}\to \real^{4d+2d'+1},\quad D^\dag B^t:\real^{4d+2d'+1}(w,\xi_w,\xi_z)=
(B^t w,B_0^\dag\xi_w,\xi_z)
\]
with $B_0:=A\oplus A^\dag\oplus \widehat{A}:\real_w^{2d+d'}\to \real_w^{2d+d'}$ and $B_0^\dag:=({}^tB_0)^{-1}$. 
Note that this linear map $D^\dag B^t$ preserves the trapped set $X_0$. 

Let us consider the level sets of the coordinate $\xi_z$,
\[
Z_c:=\{(x,y,\xi_x,\xi_y,\xi_z)\in \real^{4d+2d'+1}\mid \xi_z=c\},
\]
which is preserved by $D^\dag B^t$. 
This level set $Z_c$ carries the canonical symplectic form $
dw\wedge d\xi_w$, which is also preserved by $D^\dag B^t$. 
Observe that the subspace $X_0\cap Z_c$ is a symplectic subspace of $Z_c$ with respect to this symplectic structure, provided $c\neq 0$. (This is a consequence of the fact that $\alpha_0$ is a contact form.) Hence the action of $D^\dag B^t$ restricted to each $Z_c$ with $c\neq 0$ preserves the decomposition
\begin{equation}\label{eq:symp_decomp}
Z_c=(X_0\cap Z_c)\oplus (X_0\cap Z_c)^{\perp}
\end{equation}
where $(X_0\cap Z_c)^{\perp}$ denotes the symplectic orthogonal of the subspace $(X_0\cap Z_c)$:
\[
(X_0\cap Z_c)^{\perp}:=\{v\in Z_c\mid 
dw\wedge d\xi_w(v,v')=0 \; \forall v'\in X_0\cap Z_c\}.
\]
The restriction of $D^\dag B^t$ to the subspace $(X_0\cap Z_c)$ describes the dynamics inside the trapped set, while that to the symplectic orthogonal $(X_0\cap Z_c)^{\perp}$ describes the dynamics in the transverse (or normal) directions.  
\begin{Remark}\label{Rem:restriction_xiz}
Notice that the symplectic decomposition (\ref{eq:symp_decomp}) does not make good sense when $c=0$. 
Correspondingly we have to treat the hyperplane $Z_0=\{\xi_z=0\}$ as an exceptional set when we consider this decomposition. 
\end{Remark}

\subsubsection{A new coordinate system }
From the observation above (and the argument in \cite[Chapter 2 and 4]{FaureTsujii12}), we introduce the coordinates on $\real^{4d+2d'+1}_{(w,\xi_w,\xi_z)}$,
\[
\zeta=(\zeta_p,\zeta_q)\in \real^{2d},\quad \nu=(\nu_q,\nu_p)\in \real^{2d},\quad \mbox{and}\quad (\tilde{y}, \tilde{\xi}_y)\in \real^{2d'}=\real^{d'}\oplus \real^{d'}
\]
as follows. On the region $\xi_z\ge 0$, we define 
\begin{alignat}{2}
\label{eq:newcoordinates}
\zeta_p&=2^{-1/2}\langle\xi_z\rangle^{-1/2}(\langle\xi_z\rangle\xi_p+\xi_z q),\qquad & 
\zeta_q&=2^{-1/2}\langle\xi_z\rangle^{-1/2}(\langle\xi_z\rangle\xi_q-\xi_z p),\\
\nu_q&=2^{-1/2}\langle\xi_z\rangle^{-1/2}(\xi_z q-\langle\xi_z\rangle\xi_p),&
\nu_p&=2^{-1/2}\langle\xi_z\rangle^{-1/2}(\xi_z p+\langle\xi_z\rangle\xi_q),\notag\\
\tilde{y}&=\langle\xi_z\rangle^{1/2}y,&
\tilde{\xi}_y&=\langle\xi_z\rangle^{1/2}\xi_y\notag
\end{alignat}
while, on the region $\xi_z<0$, we modify the definitions of $\zeta_p$ and $\nu_q$ as  
\begin{alignat}{2}
\label{eq:newcoordinates2}
\zeta_p&=-2^{-1/2}\langle\xi_z\rangle^{-1/2}(\langle\xi_z\rangle\xi_p+\xi_z q),\qquad &
\nu_q&=-2^{-1/2}\langle\xi_z\rangle^{-1/2}(\xi_z q-\langle\xi_z\rangle\xi_p),
\end{alignat}
by changing the signs, but keep the other definitions of $\zeta_q$, $\nu_p$, $\tilde{y}$ and $\tilde{\xi}_y$.
\begin{Remark}\label{Rem:newcoordinate_scope}
Basically we consider these new coordinates on the region $|\xi_z|\ge 2$ where $\langle \xi_z\rangle=|\xi_z|$. But we defined them also on the region $|\xi_z|\le 2$  for convenience in some definitions below. (See Definition \ref{def:Wrh}.)
Note that, if $\xi_z\ge 2$ or $\xi_z\le -2$, we have $\langle \xi_z\rangle=|\xi_z|=\pm \xi_z$ and the relations in the definitions above become simpler. 
\end{Remark}
The coordinates  $(\nu,\zeta,\tilde{y},\tilde{\xi}_y,\xi_z)$ above are defined so that the following hold on the region $|\xi_z|\ge 2$:
\begin{enumerate}
\item The trapped set $X_0$  is characterized by the equation $(\zeta,y,\xi_{y})=(0,0,0)$. 
\item The coordinate change transformation preserves the canonical symplectic form and the Riemann metric  on $Z_c$ up to multiplication by the factor $|\xi_z|$, that is, on each of the level set $Z_c$ with $|c|\ge 2$, we have
\begin{align*}
&|\xi_z|  (dx \wedge d\xi_x+dy\wedge d\xi_y)=d\zeta_p\wedge d\zeta_q+d\nu_q\wedge d\nu_p+
d\tilde{y}\wedge d\tilde{\xi}_y\\
\intertext{and}
&|\xi_z| ( |dx|^2+ |d\xi_x|^2+|dy|^2+ |d\xi_y|^2)=
|d\zeta|^2+|d\nu|^2+|d\tilde{y}|^2+|d\tilde{\xi}_y|^2.
\end{align*}
\item the volume form in (\ref{eq:constNormalization}) is written 
\begin{equation}\label{eq:normalized_volume}
d\vol=(2\pi)^{-(2d+d'+1)} d\nu d\zeta d\tilde{y} d\tilde{\xi}_y d\xi_z,
\end{equation}
\item the $\zeta_p$, $\nu_q$ and $\tilde{\xi}_y$ axes are the expanding directions  whereas the $\zeta_q$, $\nu_p$ and $\tilde{y}$ axes are the contracting directions.
\end{enumerate} 
We write the  corresponding coordinate change transformation as
\[
\Phi:\real^{4d+2d'+1}\to \real^{4d+2d'+1},\quad 
\Phi(x,y,\xi_x,\xi_y,\xi_z)=((\nu_q,\nu_p),((\zeta_p,\tilde{\xi}_y),(\zeta_q,\tilde{y})), \xi_z),
\]
where the order and combination of the variables on the right-hand side is chosen for convenience in the argument below.

\begin{Remark}\label{Rem:dag}
Below we sometimes restrict our attention to the functions supported on the region $|\xi_z|\ge 2$. We use the subscript $\dag$ to remind such restriction. For instance, we write $L^2_{\dag}(\real^{4d+2d'+1}_{(w,\xi_w,\xi_z)})$  for  the subspace of $L^2(\real^{4d+2d'+1}_{(w,\xi_w,\xi_z)})$ that consists of functions supported on the region $|\xi_z|\ge 2$. Correspondingly we let $L^2_\dag(\real^{2d+d'+1}_{(x,y,z)})$ be the preimage of $L^2_{\dag}(\real^{4d+2d'+1}_{(w,\xi_w,\xi_z)})$ with respect to the partial Bargmann transform~$\pBargmann$. 
\end{Remark}
The pull-back operator by $\Phi$ restricted to the region $|\xi_z|\ge 2$,
\[
\Phi^*:L_\dag^2(\real^{4d+2d'+1}_{((\nu_q,\nu_p),((\zeta_p,\tilde{\xi}_y),(\zeta_q,\tilde{y})), \xi_z)})\to L_\dag^2(\real^{4d+2d'+1}_{(x,y,\xi_x,\xi_y,\xi_z)}),\quad \Phi^* u=u\circ \Phi,
\]
is a unitary operator, because the norm on $L^2(\real^{4d+2d'+1}_{((\nu_q,\nu_p),((\zeta_p,\tilde{\xi}_y),(\zeta_q,\tilde{y})), \xi_z)})$ is defined by using the volume form (\ref{eq:normalized_volume}). 

\subsubsection{A tensorial decomposition of the transfer operator $L^t$}
In the next lemma, we express the transfer operator $L^t$ as a tensor product of  three simple operators.  This is a consequence of the fact that $D^\dag B^t$ preserves the symplectic decomposition (\ref{eq:symp_decomp}). 
In the statement below, we write $\Bargmann_1^{(d)}$ for the Bargmann transform defined in (\ref{eq:Bargmann_transform_h}) with $D=d$ and $\hbar=1$ and let $\BargmannP^{(d)}_1$ be the corresponding Bargmann projector defined in (\ref{eq:Bargmann_projector_h}). 
\begin{Lemma}{\cite[Proposition 4.3.1 and Proposition 7.1.2]{FaureTsujii12}}\label{lm:U}
The transformation $\Phi^*$ above satisfies 
\begin{equation}\label{eq:Phi_Bargmann}
\pBargmannP\circ \Phi^*=\Phi^*\circ (\BargmannP^{(2d+d')}_1\otimes \mathrm{Id})=
\Phi^*\circ (\BargmannP^{(d)}_1\otimes \BargmannP^{(d+d')}_1\otimes \mathrm{Id})\
\end{equation}
on $L^2_{\dag}(\real^{4d+2d'+1})$ and is an isomorphism between the images of the operators
\[
\Bargmann_1^{(d)}\otimes \Bargmann_1^{(d+d')}\otimes \mathrm{Id}:L^2_\dag(\real_{(\nu_q,(\zeta_p,\tilde{\xi}_y),\xi_z)}^{2d+d'+1})\to L^2_\dag(\real_{((\nu_q,\nu_p),((\zeta_p,\tilde{\xi}_y),(\zeta_q,\tilde{y})), \xi_z)}^{4d+2d'+1})
\]
and
\[
\pBargmann:L^2_\dag(\real_{(x,y,z)}^{2d+d'+1})\to L^2_\dag(\real_{(x, y,\xi_x,\xi_y,\xi_z)}^{4d+2d'+1}).
\]
The operator  
\[
\mathcal{U}=\Bargmann^* \circ \Phi^*\circ (\Bargmann_1^{(d)}\otimes \Bargmann_1^{(d+d')}\otimes \mathrm{Id}): L_\dag^2(\real^{2d+d'+1}_{(\nu_q,(\zeta_p,\tilde{\xi}_y),\xi_z)})\to L_\dag^2(\real_{(x,y,z)}^{2d+d'+1})
\]
is a unitary operator and makes the following diagram commute:
\begin{equation*}
\begin{CD}
L^2_\dag(\real^{2d+d'+1}_{(x,y,z)})@<{\mathcal{U}}<<  L^2_\dag(\real^{2d+d'+1}_{(\nu_q,(\zeta_p,\tilde{\xi}_y),\xi_z)})\cong L^2(\real^{d}_{\nu_q})\otimes L^2(\real^{d+d'}_{(\zeta_p,\tilde{\xi}_y)})\otimes L^2_\dag(\real_{\xi_z})\\
@VV{L^t}V  @V{\mathfrak{L}^t}VV\\
L^2_\dag(\real^{2d+d'+1}_{(x,y,z)})@<{\mathcal{U}}<<  L^2_\dag(\real^{2d+d'+1}_{(\nu_q,(\zeta_p,\tilde{\xi}_y),\xi_z)})\cong L^2(\real^{d}_{\nu_q})\otimes L^2(\real^{d+d'}_{(\zeta_p,\tilde{\xi}_y)})\otimes L_\dag^2(\real_{\xi_z})
\end{CD}
\end{equation*} 
where the operator $\mathfrak{L}^t$ is  defined by 
\[
\mathfrak{L}^t= \frac{|\det A|^{1/2}}{|\det \wA|^{1/2}}\cdot L_A\otimes (L_{A\oplus \wA^{\dag}})\otimes e^{i\xi_z t}, 
\]
writing $L_A$ and $L_{A\oplus \wA^{\dag}}$ for the $L^2$-normalized transfer operators defined  by
\[
L_A u=\frac{1}{|\det A|^{1/2}} \cdot u\circ A^{-1}, \quad 
L_{A\oplus \wA^{\dag}} u=\frac{|\det\wA|^{1/2}}{|\det A|^{1/2}} \cdot u\circ (A\oplus \wA^{\dag})^{-1}.
\] 
\end{Lemma}
For the proof,  we refer  \cite[Proposition 7.1.2 and Proposition 4.3.1]{FaureTsujii12}.

\subsection{Anisotropic Sobolev space $\cH^r(\real^{2d+d'+1})$}
We introduce the anisotropic Sobolev spaces in order to study spectral properties of transfer operators. This kind of Hilbert spaces have been introduced (in the context of dynamical systems) by Baladi \cite{Baladi05} and the related argument is developed in the papers  \cite{BaladiTsujii07, BaladiTsujii08, Tsujii2009, FRS, FaureSjostrand11}.  This is a kind of (generalized) Sobolev space with the weight function adapted to hyperbolicity of the dynamics (and  is  anisotropic accordingly).    
 Note that the anisotropic Sobolev space is not contained in the space of usual functions but contained in the space of distributions.
 
\subsubsection{The definition of the anisotropic Sobolev space}
For each $r>0$, we will define the anisotropic Sobolev space $\cH^r(\real^{2d+d'+1})$. For the construction below, we do not need any assumption on the range of the parameter $r$. But, for the argument in the later subsections, we assume 
\begin{equation}\label{eq:choice_of_r}
r>2+2(2d+d')
\end{equation}
and also
\begin{equation}\label{eq:choice_of_r3}
\lambda^{r-1}>|\det A|\cdot |\det \wA|^{-1}
\end{equation}
in relation to the affine transformation $B^t$ given in (\ref{eq:Bt}).

For each $\tau>0$, let us  consider the cones  
\begin{align}
{\cone}_{+}^{(d+d',d+d')}(\tau) & =\{(\zeta_p,\tilde{\xi}_y,\zeta_q, \tilde{y})\in\real^{2d+2d'}\mid |(\zeta_q, \tilde{y})| \le \tau\cdot |(\zeta_p,\tilde{\xi}_y)|\}\quad \mbox{and},\label{eq:cone_tau}\\
{\cone}_{-}^{(d+d',d+d')}(\tau)&=\{(\zeta_p,\tilde{\xi}_y,\zeta_q, \tilde{y})\in\real^{2d+2d'}\mid |(\zeta_p,\tilde{\xi}_y)|\le \tau\cdot|(\zeta_q, \tilde{y})|\}
\end{align}
in $\real^{2d+2d'}$ equipped with the coordinates $\zeta_p, \zeta_q\in \real^{d}$ and $\tilde{\xi}_y, \tilde{y}\in \real^{d'}$. 
Next we take and fix a $C^{\infty}$ function on the projective space $\mathbf{P}(\real^{2d+2d'})$,
\[
\mathrm{ord}:\mathbf{P}\left(\real^{2d+2d'}\right)\to[-1,1]
\]
 so that
\begin{equation}
\mathrm{ord}\left([(\zeta_p,\tilde{\xi}_y,\zeta_q, \tilde{y})]\right)=\begin{cases}
-1, & \quad\mbox{if $(\zeta_p,\tilde{\xi}_y,\zeta_q, \tilde{y})\in{\cone}_{+}^{(d+d',d+d')}(1/2)$};\\
+1, & \quad\mbox{if $(\zeta_p,\tilde{\xi}_y,\zeta_q, \tilde{y})\in\cone_{-}^{(d+d',d+d')}(1/2)$}
\end{cases}\label{eq:def_order_function_m}
\end{equation}
and that 
\[
\mathrm{ord}\left([(\zeta'_p,\tilde{\xi}'_y,\zeta'_q, \tilde{y}')]\right)
\le
\mathrm{ord}\left([(\zeta_p,\tilde{\xi}_y,\zeta_q, \tilde{y})]\right)
\quad \mbox{if}\quad \frac{|(\zeta'_q, \tilde{y}')|}{|(\zeta'_p,\tilde{\xi}'_y)|}\le \frac{|(\zeta_q, \tilde{y})|}{|(\zeta_p,\tilde{\xi}_y)|}.
\] 
We then consider  the smooth function 
\[
W^{r}:\real^{2d+2d'}\to\real_{+},\qquad W^{r}(\zeta_p,\tilde{\xi}_y,\zeta_q, \tilde{y})=\langle|(\zeta_p,\tilde{\xi}_y,\zeta_q, \tilde{y})|\rangle^{r\cdot \mathrm{ord}([(\zeta_p,\tilde{\xi}_y,\zeta_q, \tilde{y})])}.
\]
By definition, we have that 
\[
W^{r}(\zeta_p,\tilde{\xi}_y,\zeta_q, \tilde{y}) =
\begin{cases}
\langle|(\zeta_p,\tilde{\xi}_y,\zeta_q, \tilde{y}|)\rangle^{-r} &\quad \mbox{on  ${\cone}_{+}^{(d+d',d+d')}(1/2)$};\\
\langle|(\zeta_p,\tilde{\xi}_y,\zeta_q, \tilde{y})|\rangle^{+r} &\quad \mbox{on ${\cone}_{-}^{(d+d',d+d')}(1/2)$}.
\end{cases}
\]
A simple (but important) property of the function $W^r$ is that we have
\begin{equation}\label{eq:property_w1}
W^{r}(((A\oplus \wA^{\dag})\oplus (A\oplus \wA^{\dag})^\dag)(\zeta_p,\tilde{\xi}_y,\zeta_q, \tilde{y}))\le 
C_0 \lambda^{-r}\cdot W^{r}(\zeta_p,\tilde{\xi}_y,\zeta_q, \tilde{y})
\end{equation}
when $A$ and $\widehat{A}$ satisfy (\ref{eq:lambda}) with some $\lambda\ge 1$ and 
$(\zeta_p,\tilde{\xi}_y,\zeta_q, \tilde{y})\in \real^{2d+2d'}$ is sufficiently far from the origin,
where $C_0>0$ is a constant independent of $A$ and $\widehat{A}$. Another important property is that it is rather smooth in the sense that we have
\begin{equation}\label{eq:property_w2}
W^{r}(\zeta_p,\tilde{\xi}_y,\zeta_q, \tilde{y})\le C_0\cdot  W^{r}(\zeta'_p,\tilde{\xi}'_y,\zeta'_q, \tilde{y}')\cdot \langle |(\zeta_p,\tilde{\xi}_y,\zeta_q, \tilde{y})-(\zeta'_p,\tilde{\xi}'_y,\zeta'_q, \tilde{y}')|\rangle^{2r}
\end{equation}
for some constant $C_0>0$. 
\begin{Definition}[Anisotropic Sobolev space]
\label{def:Wrh} 
Let  $\cW^r:\real^{4d+2d'+1}\to \real_+$ be  the function defined by 
\[
\cW^r(x,y,\xi_x,\xi_y,\xi_z)=(1\otimes W^r\otimes 1)\circ \Phi(x,y,\xi_x,\xi_y,\xi_z)=W^r(\zeta_p,\tilde{\xi}_y,\zeta_q, \tilde{y})
\]
where the variables\footnote{Notice that this function $\cW^r$ is continuous despite of the discontinuity of the coordinates $(\zeta_p,\tilde{\xi}_y,\zeta_q, \tilde{y})$ at points on the hyperplane $\xi_z=0$.} $(\zeta_p,\tilde{\xi}_y,\zeta_q, \tilde{y})$  in the rightmost term are those defined by (\ref{eq:newcoordinates}). 
We define the anisotropic Sobolev norm $\|\cdot \|_{\cH^r}$ on $\cS(\real^{2d+d'+1})$ by 
\[
\|u \|_{\cH^r}:=\|\cW^r\cdot \pBargmann u\|_{L^2}.
\]
The anisotropic Sobolev space $\cH^r(\real^{2d+d'+1})$ is the Hilbert space obtained as the completion of the Schwartz space $\cS(\real^{2d+d'+1})$ with respect to this norm. 
\end{Definition}

By definition, the partial Bargmann transform $\pBargmann$ extends to an isometric embedding
\[
\pBargmann:\cH^{r}(\real^{2d+d'+1}_{(w,z)})\to L^2(\real^{4d+2d'+1}_{(w,\xi_w,\xi_z)},(\cW^r)^2)
\]
where $L^2(\real^{4d+2d'+1}_{(w,\xi_w,\xi_z)},(\cW^{r})^2)$ denotes the weighted $L^2$ space 
\[
L^2(\real^{4d+2d'+1}_{(w,\xi_w,\xi_z)},(\cW^{r})^2)=\{u\in L^2_{\mathrm{loc}}(\real^{4d+2d'+1}_{(w,\xi_w,\xi_z)})\mid 
\| \cW^{r}\cdot u\|_{L^2}<\infty\}.
\]
(Note that the $L^2$ norm on $\real^{4d+2d'+1}_{(w,\xi_w,\xi_z)}$ is defined with respect to the volume form $d\vol$ in (\ref{eq:constNormalization}).)

\subsubsection{Variants of $\cH^r(\real^{2d+d'+1})$} \label{ss:variantsOfH}
The anisotropic Sobolev spaces $\cH^r(\real^{2d+d'+1})$  introduced above  are quite useful when we consider the spectral properties of the transfer operators for hyperbolic maps or flows. But, in using them,  one has to be careful that they have  singular properties related to their  anisotropic nature. For instance, even if a linear map $B:\real^{2d+d'+1}\to \real^{2d+d'+1}$ is close to the identity,  the action of the associated transfer operator $L_B $ on them can be unbounded. This actually leads to various problems. In order to do with such problems, we introduce variants $\cH^{r,\sigma}(\real^{2d+d'+1})$ of $\cH^r(\real^{2d+d'+1})$ below. We consider the index set
\begin{equation}\label{eq:sigma}
\Sigma_0=\{-1,0,+1\}\subset \Sigma=\{-2,-1,0,+1,+2\}. 
\end{equation}
For $\sigma \in \Sigma$, we define $\mathrm{ord}^{\sigma}:\mathbf{P}\left(\real^{2d+2d'}\right)\to[-1,1]$ by 
\begin{equation}
\label{ord+}
\mathrm{ord}^{\sigma}\left(\left[(\zeta_p,\tilde{\xi}_y,\zeta_q, \tilde{y})\right]\right)=
\mathrm{ord}\left(\left[(2^{-\sigma/2}\zeta_p,2^{-\sigma/2}\tilde{\xi}_y,2^{+\sigma/2}\zeta_q, 2^{+\sigma/2}\tilde{y})\right]\right)
\end{equation}
so that we have
\begin{equation}\label{eq:ord_ineq}
\mathrm{ord}^{\sigma'}([(\zeta_p,\tilde{\xi}_y,\zeta_q, \tilde{y})])\le  \mathrm{ord}^{\sigma}([(\zeta_p,\tilde{\xi}_y,\zeta_q, \tilde{y})])\quad \mbox{if $\sigma'\le \sigma$.}
\end{equation}
(Here we use the factor $2^{\pm 1/2}$ for concreteness, but it could be any real number greater than $1$.) 
We define 
\[
W^{r,\sigma}:\real^{2d+2d'}\to\real_{+},\qquad W^{r,\sigma}(\zeta_p,\tilde{\xi}_y,\zeta_q, \tilde{y})=\langle|(\zeta_p,\tilde{\xi}_y,\zeta_q, \tilde{y})|\rangle^{r\cdot \mathrm{ord}^{\sigma}([(\zeta_p,\tilde{\xi}_y,\zeta_q, \tilde{y})])}.
\]
From (\ref{eq:ord_ineq}), we have 
\begin{equation}\label{eq:wrpm}
W^{r,\sigma'}(\zeta_p,\tilde{\xi}_y,\zeta_q, \tilde{y})\le W^{r,\sigma}(\zeta_p,\tilde{\xi}_y,\zeta_q, \tilde{y})\quad \mbox{if $\sigma'\le \sigma$.}
\end{equation}
These functions also satisfy the properties parallel to  (\ref{eq:property_w1}) and (\ref{eq:property_w2}). 

The functions $\cW^{r,\sigma}(\cdot)$, the norms $\|\cdot\|_{\cH^{r,\sigma}}$ and the Hilbert spaces $\cH^{r,\sigma}(\real^{2d+d'+1})$ are defined  in the same manner as $\cW^r$, $\|\cdot\|_{\cH^{r}}$ and $\cH^{r}(\real^{2d+d'+1})$ respectively, with the function $W^r(\cdot)$ replaced by $W^{r,\sigma}(\cdot)$. In particular, we have    
\[
W^{r,0}(\cdot)=W^r(\cdot), \quad \cW^{r,0}(\cdot)= \cW^{r}(\cdot), \quad \cH^{r,0}(\real^{2d+d'+1})=\cH^{r}(\real^{2d+d'+1}).
\]
From (\ref{eq:wrpm}),  we have
\begin{equation}\label{eq:norm_comparison}
\|u\|_{\cH^{r,\sigma'}}\le \|u\|_{\cH^{r,\sigma}}\quad  \mbox{if $\sigma'\le \sigma$,}
\end{equation}
and hence
\[
\cH^{r,\sigma}(\real^{2d+d'+1})\subset  \cH^{r,\sigma'}(\real^{2d+d'+1}) \quad \mbox{if $\sigma'\le \sigma$.}
\]
The partial Bargmann transform $\pBargmann$ extends to  isometric embeddings
\[
\pBargmann:\cH^{r,\sigma}(\real^{2d+d'+1})\to L^2(\real^{4d+2d'+1},(\cW^{r,\sigma})^2)\quad \mbox{for $\sigma\in \Sigma$}. 
\]

\subsection{The spectral structure of the transfer operator $L^t$}
\label{ss:specLt}
We now discuss about the  spectral properties of the transfer operator $L^t$, defined in (\ref{eq:Lt}), on the anisotropic Sobolev spaces $\cH^{r,\sigma}(\real^{2d+d'+1})$. 
First we recall a few results from \cite[Chapter 4 and 7]{FaureTsujii12}. 
Let  $H^{r,\sigma}(\real^{d+d'})$ be the completion of the space $\cS(\real^{d+d'})$ with respect to the norm
\[
\|u\|_{H^{r,\sigma}}=\|W^{r,\sigma} \cdot \Bargmann^{(d+d')}_1 u\|_{L^2}.
\]
From the definition of $\cW^{r, \sigma}$,  the commutative diagram in Lemma \ref{lm:U} extends naturally  to 
\begin{equation}\label{cd:H}
\begin{CD}
\cH^{r,\sigma}_{\dag}(\real_{(x,y,z)}^{2d+d'+1})@<{\mathcal{U}}<<  L^2(\real_{\nu_q}^{d})\otimes H^{r,\sigma}(\real_{(\zeta_p, \tilde{\xi}_y)}^{d+d'})\otimes L^2_{\dag}(\real_{\xi_z})\\
@VV{L^t}V  @V{\mathfrak{L}^t}VV\\
\cH^{r,\sigma'}_{\dag}(\real_{(x,y,z)}^{2d+d'+1})@<{ \mathcal{U}}<<  L^2(\real_{\nu_q}^{d})\otimes H^{r,\sigma'}(\real_{(\zeta_p, \tilde{\xi}_y)}^{d+d'})\otimes L^2_{\dag}(\real_{\xi_z})
\end{CD}
\end{equation}
if $t\ge 0$ and $\sigma'\le \sigma$, where $\mathcal{U}$ is  an isomorphism. (Here we use the subscript $\dag$ in $\cH^{r,\sigma'}_{\dag}(\real_{(x,y,z)}^{2d+d'+1})$ in the same meaning as noted in Remark \ref{Rem:dag}.) 

Therefore the operator $L^t:\cH^{r,\sigma}(\real^{2d+d'+1}_{(x,y,z)})\to \cH^{r,\sigma'}(\real^{2d+d'+1}_{(x,y,z)})$ is identified with the tensor product of the three operators
\begin{align}
&L_A: L^2(\real^{d}_{\nu_q})\to  L^2(\real^{d}_{\nu_q}),\\
&\widetilde{L}:=\frac{|\det A|^{1/2}}{|\det \wA|^{1/2}}\cdot L_{A\oplus \wA^{\dag}}:H^{r,\sigma}(\real^{d+d'}_{(\zeta_p, \tilde{\xi}_y)})\to H^{r,\sigma'}(\real^{d+d'}_{(\zeta_p, \tilde{\xi}_y)}),\label{eq:whLa}
\intertext{
and}
&e^{i\xi_z t}\cdot \mathrm{Id}:L^2_\dag(\real_{\xi_z})\to L^2_\dag(\real_{\xi_z}).
\end{align}
The first and third operators are unitary. In \cite[Chapter 3]{FaureTsujii12}, we studied the second operator $\widetilde{L}$ to some detail, which we recall below. 

Let us consider the projection operator
\begin{equation}\label{def:T0}
T_0:\cS(\real^{d+d'}_{(\zeta_p, \tilde{\xi}_y)})\to \cS(\real^{d+d'}_{(\zeta_p, \tilde{\xi}_y)})',\qquad T_0(u)(x)=u(0) \cdot \mathbf{1}
\end{equation}
where $\mathbf{1}$ denotes the constant function on $\real^{d+d'}_{(\zeta_p, \tilde{\xi}_y)}$ with value $1$.
This is a simple operation that extracts the constant term in the Taylor expansion of a function at the origin. Letting $\Bargmann^{(d+d')}_1:L^2(\real^{d+d'}_{(\zeta_p,\tilde{\xi}_y)})\to L^2(\real^{2d+2d'}_{(\zeta_p,\tilde{\xi}_y,\zeta_q, \tilde{y})})$ be the Bargmann transform with $\hbar=1$, we set 
\begin{equation}\label{eq:TzeroLift}
T_0^{\lift}:=\Bargmann^{(d+d')}_1\circ T_0 \circ (\Bargmann^{(d+d')}_1)^*: 
L^2(\real^{2d+2d'}_{(\zeta_p,\tilde{\xi}_y,\zeta_q, \tilde{y})}) \to L^2(\real^{2d+2d'}_{(\zeta_p,\tilde{\xi}_y,\zeta_q, \tilde{y})}).
\end{equation}
(Here we regard $\zeta_q$ and $\tilde{y}$ as the dual variable of $\zeta_p$ and $\tilde{\xi}_y$ respectively.) Clearly it makes the following diagram commutes:
\begin{equation}\label{cd:T_0Lift}
\begin{CD}
L^2\left(\real^{2d+2d'}_{(\zeta_p,\tilde{\xi}_y,\zeta_q, \tilde{y})}\right)@>{T_0^{\lift}}>> L^2\left(\real^{2d+2d'}_{(\zeta_p,\tilde{\xi}_y,\zeta_q, \tilde{y})}\right)\\
@A{\Bargmann^{(d+d')}_1}AA @A{\Bargmann^{(d+d')}_1}AA\\
L^2(\real^{d+d'}_{(\zeta_p, \tilde{\xi}_y)})@>{T_0}>> L^2(\real^{d+d'}_{(\zeta_p, \tilde{\xi}_y)}).
\end{CD}
\end{equation}
\begin{Lemma}[{\cite[Lemma 3.4.2 and its proof]{FaureTsujii12}}] \label{lm:Tzero}
The operator $T_0^{\lift}$ is written as an integral operator 
\[
T_0^{\lift} u(\zeta'_p,\tilde{\xi}'_y,\zeta'_q, \tilde{y}')=
\int K_+(\zeta'_p,\tilde{\xi}'_y,\zeta'_q, \tilde{y}') K_-(\zeta_p,\tilde{\xi}_y,\zeta_q, \tilde{y}) 
u(\zeta_p,\tilde{\xi}_y,\zeta_q, \tilde{y}) d\zeta_p d\tilde{\xi}_y d\zeta_q d\tilde{y}
\]
where the functions $K_{\pm}(\cdot)$ satisfy, for any $\sigma, \sigma'\in \Sigma$, that 
\begin{align}
&W^{r,\sigma'}(\zeta'_p,\tilde{\xi}'_y,\zeta'_q, \tilde{y}')\cdot K_+(\zeta'_p,\tilde{\xi}'_y,\zeta'_q, \tilde{y}')\le C_0\langle(\zeta'_p,\tilde{\xi}'_y,\zeta'_q, \tilde{y}')\rangle^{-r}\label{eq:kpm1}
\intertext{and}
&W^{r,\sigma}(\zeta_p,\tilde{\xi}_y,\zeta_q, \tilde{y})^{-1}\cdot K_-(\zeta_p,\tilde{\xi}_y,\zeta_q, \tilde{y})\le C_0\langle(\zeta_p,\tilde{\xi}_y,\zeta_q, \tilde{y})\rangle^{-r}\label{eq:kpm2}
\end{align}
for a constant $C_0>0$.
Hence, for any $\sigma, \sigma'\in \Sigma$,  $T_0^{\lift}$ extends to a rank-one  operator 
\[
T_0^{\lift}:L^2(\real^{2d+2d'}, (W^{r,\sigma})^2)\to L^2(\real^{2d+2d'}, (W^{r,\sigma'})^2).
\] 
\end{Lemma}
\begin{Corollary}
The operator $T_0$ extends naturally to a rank-one  operator
\[
T_0:H^{r,\sigma}(\real^{d+d'}_{(\zeta_p, \tilde{\xi}_y)})\to H^{r,\sigma'}(\real^{d+d'}_{(\zeta_p, \tilde{\xi}_y)})\quad \mbox{for any $\sigma, \sigma'\in \Sigma$.}
\]
\end{Corollary}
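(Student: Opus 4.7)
The plan is to push the rank-one factorization of $T_0^{\lift}$ provided by Lemma \ref{lm:Tzero} back through the isometric embedding $\Bargmann_1^{(d+d')}$. From the commutative diagram (\ref{cd:T_0Lift}), on Schwartz functions we have
\[
T_0 = (\Bargmann_1^{(d+d')})^{*}\circ T_0^{\lift}\circ \Bargmann_1^{(d+d')},
\]
and Lemma \ref{lm:Tzero} writes $T_0^{\lift}$ as the outer product $v\mapsto K_+\cdot \langle K_-,v\rangle$. So $T_0^{\lift}$ is literally a rank-one operator with a tensor-product kernel, and all we really have to do is verify that both factors behave well on the weighted spaces that define $H^{r,\sigma}$.

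First I would verify continuity of the linear functional $u\mapsto \int K_-\cdot \Bargmann_1^{(d+d')}u$ on $H^{r,\sigma}$. Using Cauchy--Schwarz together with the decay bound (\ref{eq:kpm2}),
\[
\left|\int K_-\cdot \Bargmann_1^{(d+d')}u\right|\le \|(W^{r,\sigma})^{-1}K_-\|_{L^2}\cdot \|W^{r,\sigma}\cdot \Bargmann_1^{(d+d')}u\|_{L^2}\le C_0\,\|\langle\cdot\rangle^{-r}\|_{L^2}\cdot \|u\|_{H^{r,\sigma}},
\]
and the $L^2$-norm of $\langle\cdot\rangle^{-r}$ on $\real^{2d+2d'}$ is finite because $r>3+2(2d+d')>d+d'$ by (\ref{eq:choice_of_r}). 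This gives the (single) linear functional that encodes the ``evaluation at $0$'' in the extended sense.

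Second I would identify the range vector. Since by construction $T_0^{\lift}=\Bargmann_1^{(d+d')}\circ T_0\circ (\Bargmann_1^{(d+d')})^{*}$, the range of $T_0^{\lift}$ automatically lies in the image of $\Bargmann_1^{(d+d')}$; concretely $K_+=c\cdot \Bargmann_1^{(d+d')}(\mathbf{1})$ for a nonzero constant $c$. Let $\psi_+:=c^{-1}(\Bargmann_1^{(d+d')})^{*}K_+=\mathbf{1}$ (viewed as a distribution); then by (\ref{eq:kpm1}),
\[
\|\psi_+\|_{H^{r,\sigma'}}=\|W^{r,\sigma'}\cdot \Bargmann_1^{(d+d')}\psi_+\|_{L^2}=c^{-1}\|W^{r,\sigma'}\cdot K_+\|_{L^2}\le C_0\,\|\langle\cdot\rangle^{-r}\|_{L^2}<\infty,
\]
so $\psi_+\in H^{r,\sigma'}$. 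Defining
\[
\widetilde{T_0}\,u\;:=\;\left(c^{-1}\!\int K_-\cdot \Bargmann_1^{(d+d')}u\right)\psi_+
\]
produces a bounded rank-one operator $H^{r,\sigma}\to H^{r,\sigma'}$ which, on Schwartz functions, coincides with $T_0$ by the commutative diagram (\ref{cd:T_0Lift}). Density of $\mathcal S$ in $H^{r,\sigma}$ shows that $\widetilde{T_0}$ is the unique continuous extension.

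The statement is really a packaging corollary of Lemma \ref{lm:Tzero}; there is no serious obstacle. The only point requiring attention is the a priori unclear claim that the constant distribution $\mathbf{1}$ belongs to $H^{r,\sigma'}$, but this reduces precisely to the $L^2$-integrability of $\langle\cdot\rangle^{-r}$ in dimension $2d+2d'$, which is guaranteed by our standing choice (\ref{eq:choice_of_r}) of $r$. The same estimate handles all $\sigma,\sigma'\in\Sigma$ uniformly, since the bounds (\ref{eq:kpm1})--(\ref{eq:kpm2}) are asserted for every $\sigma,\sigma'\in \Sigma$.
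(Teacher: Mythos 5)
Your proof is correct and follows the route the paper intends: the corollary is read off from Lemma \ref{lm:Tzero} by transporting the rank-one bound on $T_0^{\lift}$ through the isometric embedding $\Bargmann_1^{(d+d')}:H^{r,\sigma}\to L^2(\real^{2d+2d'},(W^{r,\sigma})^2)$, exactly as you do, with (\ref{eq:kpm1})--(\ref{eq:kpm2}) and the choice (\ref{eq:choice_of_r}) of $r$ giving the square-integrability you need. The only blemish is a harmless normalization slip (your $\widetilde{T_0}$ differs from $T_0$ by a fixed power of the constant $c$, since $T_0u=(\int K_-\cdot\Bargmann_1 u)\,\Bargmann_1^*K_+$ already carries one factor of $c$); this does not affect boundedness, rank one, or the identification on Schwartz functions after adjusting the constant.
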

The next lemma is a rephrase of  the main statement in \cite[Chapter 3]{FaureTsujii12}.  Note that  $A\oplus \wA^{\dag}:\real^{d+d'}\to \real^{d+d'}$ is an expanding map satisfying (\ref{eq:lambda}) for some $\lambda\ge 1$.
\begin{Lemma}[{\cite[Proposition 3.4.6 and Section 4]{FaureTsujii12}}]\label{lm:linear1} 
\noindent
{\rm (1)} The operator $\widetilde{L}$ in {\rm (\ref{eq:whLa})} extends to a bounded operator 
$
\widetilde{L}:H^{r,\sigma}(\real_{(\zeta_p, \tilde{\xi}_y)}^{d+d'})\to H^{r,\sigma'}(\real_{(\zeta_p, \tilde{\xi}_y)}^{d+d'})$
for $\sigma, \sigma'\in \Sigma$ with $\sigma'\le \sigma$ and the operator norm is bounded by a constant independent of $A$. Further, if $\lambda$  is sufficiently large, say $\lambda>10$, then this is true for any $\sigma, \sigma'\in \Sigma$. \\
\noindent
{\rm (2)} The operator $\widetilde{L}:H^{r,\sigma}(\real_{(\zeta_p, \tilde{\xi}_y)}^{d+d'})\to H^{r,\sigma}(\real_{(\zeta_p, \tilde{\xi}_y)}^{d+d'})$ commutes with $T_0$. And it preserves the decomposition 
$
H^{r,\sigma}(\real_{(\zeta_p, \tilde{\xi}_y)}^{d+d'})=\mathfrak{H}_0\oplus \mathfrak{H}_1$ 
where
\[
\mathfrak{H}_0=\mathrm{Im}\, T_0 =\{ c\cdot \mathbf{1} \mid c\in \complex\}
\quad \mbox{and}\quad
\mathfrak{H}_1=\mathrm{Ker}\, T_0 =\{ u \in H^{r,\sigma}(\real_{(\zeta_p, \tilde{\xi}_y)}^{d+d'}) \mid u(0)=0\}.
\]
Further we have that
\begin{itemize}
\item[(i)] the restriction of  $\widetilde{L}$ to $\mathfrak{H}_0$ is the identity, and that
\item[(ii)]  the restriction of  $\widetilde{L}$ to $\mathfrak{H}_1$ is contracting in the sense that 
\[
\|\widetilde{L} u\|_{H^{r,\sigma}}\le C_0\cdot (1/ \lambda)\cdot  \|u\|_{H^{r,\sigma}}\quad \mbox{for all $u\in \mathfrak{H}_1$,} 
\]
where $C_0$ is a constant independent of  $A$ and $\widehat{A}$. 
\end{itemize}
\end{Lemma}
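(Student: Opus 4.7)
The plan is to lift $\widetilde{L}$ to the Bargmann phase space via $\Bargmann^{(d+d')}_1$ and reduce the lemma to weighted $L^2$-estimates. By Lemma \ref{lm:lift}, up to a multiplicative factor $|\det B_0|^{1/2}\cdot d(B_0)$, the lift $\widetilde{L}^{\lift} := \Bargmann^{(d+d')}_1 \circ \widetilde{L} \circ (\Bargmann^{(d+d')}_1)^*$ equals $\BargmannP \circ L_{D^\dag B_0} \circ \BargmannP$, where $B_0 = A \oplus \widehat{A}^{-1}$ and $D^\dag B_0 = B_0 \oplus B_0^\dag$ acts on $\real^{2(d+d')}$ in coordinates $(\zeta_p, \tilde{\xi}_y, \zeta_q, \tilde{y})$; it expands the first $d+d'$ coordinates at rate $\ge \lambda$, contracts the last $d+d'$ coordinates at rate $\le \lambda^{-1}$, and fixes the origin. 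Since $\|u\|_{H^{r,\sigma}} = \|W^{r,\sigma} \cdot \Bargmann^{(d+d')}_1 u\|_{L^2}$ and $\Bargmann^{(d+d')}_1$ is an $L^2$-isometry onto its image, everything reduces to weighted $L^2$-bounds for $\widetilde{L}^{\lift}$.

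\textbf{Part (1).} The Schwartz kernel of $\widetilde{L}^{\lift}$, being a convolution of two Bargmann kernels (\ref{eq:kernel_KP}), is concentrated in a unit Gaussian tube around the graph of $D^\dag B_0$. Applying a Schur test, the estimate reduces to controlling the product of the polynomial prefactor $|\det B_0|^{1/2}\cdot d(B_0)$ with the ratio $W^{r,\sigma'}((D^\dag B_0) w^{*'}) / W^{r,\sigma}(w^{*'})$. For $|w^{*'}|$ large this ratio is bounded by $C \lambda^{-r}$ thanks to (\ref{eq:property_w1}) (combined with (\ref{eq:wrpm}) when $\sigma' \le \sigma$), which dominates the prefactor by the assumption (\ref{eq:choice_of_r}) on $r$; for bounded $|w^{*'}|$ the ratio is $O(1)$ since $W^{r,\sigma}$ is comparable to $1$ near the origin. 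The slow-variation bound (\ref{eq:property_w2}) absorbs the off-diagonal Gaussian tails of the kernel. When $\lambda$ is sufficiently large, $D^\dag B_0$ sends $\cone_+^{(d+d',d+d')}(1/2)$ strictly into itself and the extra geometric contraction compensates for an order increase from $\sigma$ to $\sigma'>\sigma$, yielding the last assertion of~(1).

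\textbf{Part (2).} The commutation $[\widetilde{L}, T_0] = 0$ and the identity action on $\mathfrak{H}_0$ follow from $B_0(0) = 0$: both $T_0 \widetilde{L} u$ and $\widetilde{L} T_0 u$ equal $u(0) \mathbf{1}$, and $\widetilde{L} \mathbf{1} = \mathbf{1}$. For the contraction on $\mathfrak{H}_1$, the crucial input is that $u \in \mathfrak{H}_1$ corresponds, via Lemma \ref{lm:Tzero}, to the rank-one orthogonality $\int K_- \cdot \Bargmann^{(d+d')}_1 u = 0$ on the Bargmann side. I would split the phase space into a fixed compact neighborhood $N_0$ of the origin and its complement $N_1$. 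On $N_1$ the Schur estimate of~(1) already gives contraction of order $\lambda^{-r} \le \lambda^{-1}$. On $N_0$ the map $D^\dag B_0$ is essentially the linearization at its fixed point, and the orthogonality to the ground state $K_-$ kills the component of $v = \Bargmann^{(d+d')}_1 u$ along the invariant subspace of $\widetilde{L}^{\lift}$ corresponding to Taylor order zero; the remainder is controlled by Hadamard's decomposition $u(x) = \sum_i x_i u_i(x)$ (valid because $u(0) = 0$), and pullback by $B_0^{-1}$ introduces the desired factor $\|B_0^{-1}\| = O(\lambda^{-1})$ through each monomial $(B_0^{-1}x)_i$.

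\textbf{Main obstacle.} The principal difficulty is the contraction on $\mathfrak{H}_1$. The low-frequency region $N_0$ of Bargmann phase space, where $W^{r,\sigma}$ is essentially flat, does not contract under $\widetilde{L}^{\lift}$ on its own, so the rank-one orthogonality $\int K_- v = 0$ must be converted quantitatively into a gain of $\lambda^{-1}$. Since $K_-$ is not compactly supported, the cutoff scale of $N_0$ has to be chosen compatibly with the Gaussian decay of $K_-$, and the resulting tail errors must be absorbed by the strong contraction on $N_1$ coming from (\ref{eq:property_w1}). Conceptually, $\widetilde{L}$ preserves the filtration by Taylor order at the origin with $j$-th graded piece contracted at rate $\|B_0^{-1}\|^j \le \lambda^{-j}$, and the condition $T_0 u = 0$ removes only the $j = 0$ piece, so the expected contraction rate is $\lambda^{-1}$; turning this picture into a clean norm bound on the anisotropic space is the core technical task.
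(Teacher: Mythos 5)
First, a point of comparison: the paper does not prove this lemma at all — it is imported verbatim from \cite[Proposition 4.19]{FaureTsujii12}, so there is no in-paper argument to match your proposal against. Your general framework (lift $\widetilde L$ through $\Bargmann_1^{(d+d')}$, reduce to weighted $L^2$ estimates on the Bargmann side, use (\ref{eq:property_w1})--(\ref{eq:property_w2}) away from the origin and the rank-one structure of $T_0$ from Lemma \ref{lm:Tzero} near it) is indeed the framework of the cited reference, and your treatment of the easy half of part (2) (commutation with $T_0$ and identity on constants, from $B_0(0)=0$ and invariance of $\mathbf{1}$) is correct.

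However, there are two genuine gaps. In part (1), your bookkeeping "the weight ratio $C\lambda^{-r}$ dominates the prefactor $|\det B_0|^{1/2}d(B_0)$ by (\ref{eq:choice_of_r})" is wrong as stated: $\lambda$ is only the \emph{minimal} expansion rate, while $|\det B_0|^{1/2}d(B_0)\sim|\det B_0|$ can be arbitrarily large compared with $\lambda^{r}$ for fixed $r$ (take one direction expanding by $\lambda$ and the others by a huge factor). No pointwise domination of the prefactor by the weight ratio is possible; the actual mechanism is that the kernel of the lifted operator spreads over a region of volume comparable to $|\det B_0|$ in the expanded position directions with correspondingly reduced height, and the Schur column integral is then finite uniformly in $A,\widehat A$ because the weight is integrable along those directions (this is where $r$ larger than the dimension, as in (\ref{eq:choice_of_r}), enters — not to beat $|\det B_0|$). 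Also, the kernel is not concentrated in a "unit Gaussian tube" around the graph of $D^\dag B_0$; its width in the expanding directions is set by $\|B_0\|$, and this is precisely why the naive pointwise accounting fails.

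The more serious gap is part (2)(ii), which is the actual content of the lemma. You reduce it to "convert $T_0u=0$ quantitatively into a gain of $\lambda^{-1}$ near the fixed point" and then propose a Hadamard decomposition $u(x)=\sum_i x_iu_i(x)$ together with a Taylor-order filtration — but you do not carry this out, and as written it does not go through on $H^{r,\sigma}$: elements of this space are distributions, pointwise Taylor expansion and division by coordinates are not defined there, multiplication by $x_i$ is not a bounded operation compatible with the anisotropic weight, and the only projector whose boundedness has been established is $T_0$ itself (via the very specific kernel bounds (\ref{eq:kpm1})--(\ref{eq:kpm2}) on $K_\pm$). The needed step is an estimate of the weighted kernel of $\widetilde L^{\lift}\circ(1-T_0^{\lift})$ (or of higher-order analogues of $K_\pm$) showing a $\lambda^{-1}$ gain on the near-origin region, uniformly in $A,\widehat A$, with the cutoff errors absorbed by the $\lambda^{-r}$ decay outside — exactly the explicit Gaussian-kernel computations performed in \cite{FaureTsujii12}. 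You acknowledge this yourself ("the core technical task"), so the proposal should be regarded as a strategy outline whose central estimate is missing rather than a proof.
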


From the last  lemma and  the commutative diagram (\ref{cd:H}), we conclude the next theorem. This is the counterpart of Theorem \ref{th:spectrum} for the operator $L^t$ as a (local) linearized model of the transfer operator $\cL^t$.
We consider the projection operator 
\begin{equation}\label{eq:T}
\cT_0=\mathcal{U}\circ (\mathrm{Id}\otimes T_0\otimes \mathrm{Id})\circ
\mathcal{U}^{-1}: \cS_{\dag}(\real^{2d+d'+1}_{(x,y,z)})\to \cS_\dag(\real^{2d+d'+1}_{(x,y,z)})'
\end{equation}
\begin{Theorem}\label{cor:linear1} 
{\rm (1)}  The operator $\cT_0$ extends naturally to a bounded operator
\[
\cT_0:\cH_\dag^{r,\sigma}(\real^{2d+d'+1}_{(x,y,z)})\to \cH_\dag^{r,\sigma'}(\real_{(x,y,z)}^{2d+d'+1})\quad\mbox{for any $\sigma, \sigma'\in \Sigma$. }
\]
\noindent
{\rm (2)} $L^t$ extends to a bounded operator $L^t:\cH_\dag^{r,\sigma}(\real^{2d+d'+1})\to \cH_\dag^{r,\sigma'}(\real^{2d+d'+1})$ for
 any $\sigma,\sigma'\in \Sigma$ with $\sigma'\le \sigma$ and the operator norms are bounded by a  constant independent of  $A$ and $\widehat{A}$.
Further, if $\lambda$ in the assumption (\ref{eq:lambda}) is sufficiently large, say $\lambda>10$, this is true for any $\sigma, \sigma'\in \Sigma$. \\
\noindent
{\rm (3)} 
 $L^t$ commutes with the projection operator $\cT_0$ and preserves the decomposition 
\[
\cH_\dag^{r, \sigma}(\real^{2d+d'+1})=\cH_0\oplus \cH_1
\quad \mbox{
where}\quad
\cH_0=\mathrm{Im}\, \cT_0
\quad \mbox{and}\quad 
\cH_1=\mathrm{Ker}\, \cT_0.
\]
Further we have that
\begin{itemize}
\item[(i)] the restriction of $L^t$ to $\cH_0$ is a unitary operator, and that
\item[(ii)]  the restriction of $L^t$ to $\cH_1$ is contracting in the sense that 
\[
\|L^t u\|_{\cH^{r,\sigma}}\le C_0\cdot (1/  \lambda) \cdot \|u\|_{\cH^{r,\sigma}} \quad \mbox{for all $u\in \cH_1$,}
\]
where $C_0$ is a constant independent of  $A$ and $\widehat{A}$.
\end{itemize}
\end{Theorem}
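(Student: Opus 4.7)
The entire statement is engineered to follow from the tensorial decomposition in the commutative diagram (\ref{cd:H}) together with the properties of the two building-block operators already recorded in Lemma \ref{lm:Tzero} and Lemma \ref{lm:linear1}. So the plan is to transport everything through the unitary $\mathcal{U}$ to the model space $L^2(\real^d_{\nu_q})\otimes H^{r,\sigma}(\real^{d+d'}_{(\zeta_p,\tilde\xi_y)})\otimes L^2_\dag(\real_{\xi_z})$ and check that each claim reduces cleanly to a known statement about one of the three tensor factors.

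For part (1), I would observe that $\cT_0=\mathcal{U}\circ(\mathrm{Id}\otimes T_0\otimes\mathrm{Id})\circ\mathcal{U}^{-1}$ by definition, and since $\mathcal{U}$ restricts to an isomorphism $\cH_\dag^{r,\sigma}\cong L^2\otimes H^{r,\sigma}\otimes L^2_\dag$ for every $\sigma\in\Sigma$ (this is precisely what diagram (\ref{cd:H}) asserts, with $t=0$), the claimed boundedness $\cT_0:\cH_\dag^{r,\sigma}\to\cH_\dag^{r,\sigma'}$ is equivalent to boundedness of $T_0:H^{r,\sigma}\to H^{r,\sigma'}$ on the middle factor. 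The latter is the Corollary following Lemma \ref{lm:Tzero}, so part (1) is immediate.

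For part (2), the same transport applied to (\ref{cd:H}) identifies $L^t$ with the tensor product $L_A\otimes\widetilde L\otimes (e^{i\xi_z t}\cdot\mathrm{Id})$. The outer two factors are unitary on $L^2(\real^d_{\nu_q})$ and $L^2_\dag(\real_{\xi_z})$ respectively, so the norm of $L^t$ as a map $\cH_\dag^{r,\sigma}\to\cH_\dag^{r,\sigma'}$ equals the norm of $\widetilde L:H^{r,\sigma}\to H^{r,\sigma'}$. The bounds in Lemma \ref{lm:linear1}(1) give exactly the required estimates, uniform in $A,\widehat A$, first for $\sigma'\le\sigma$ and, under the extra hypothesis $\lambda>10$, for all $\sigma,\sigma'\in\Sigma$.

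For part (3), commutativity $[L^t,\cT_0]=0$ reduces tensor-factor-wise to $[\widetilde L,T_0]=0$, which is the content of Lemma \ref{lm:linear1}(2); the outer factors commute trivially with $\mathrm{Id}$. The invariant decomposition $\cH_\dag^{r,\sigma}=\cH_0\oplus\cH_1$ is then the image under $\mathcal{U}$ of
\[
L^2(\real^d_{\nu_q})\otimes\mathfrak H_0\otimes L^2_\dag(\real_{\xi_z})\;\oplus\;L^2(\real^d_{\nu_q})\otimes\mathfrak H_1\otimes L^2_\dag(\real_{\xi_z}).
\]
On $\cH_0$ the middle factor is the identity by Lemma \ref{lm:linear1}(2)(i), so $L^t|_{\cH_0}\cong L_A\otimes\mathrm{Id}\otimes e^{i\xi_z t}$, which is unitary as a tensor product of unitaries. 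On $\cH_1$ the middle factor satisfies $\|\widetilde L|_{\mathfrak H_1}\|\le C_0/\lambda$ by Lemma \ref{lm:linear1}(2)(ii), so the product norm is bounded by $C_0/\lambda$ as required.

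There is essentially no obstacle left: all of the analytic work — bounding $T_0$ and $\widetilde L$ on the anisotropic space $H^{r,\sigma}$, and producing the contracting splitting $\mathfrak H_0\oplus\mathfrak H_1$ — has already been done in \cite{FaureTsujii12} and recalled in Lemmas \ref{lm:Tzero} and \ref{lm:linear1}. The only point that deserves a line of care is the verification that the isomorphism $\mathcal{U}$ in (\ref{cd:basic}) promotes to a Hilbert space isomorphism in each line of (\ref{cd:H}) simultaneously for every $\sigma\in\Sigma$, which is guaranteed by the very definition of $\mathcal{W}^{r,\sigma}=(1\otimes W^{r,\sigma}\otimes 1)\circ\Phi$ (only the middle tensor factor sees the weight). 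Once this is observed, the proof of Theorem \ref{cor:linear1} is a short bookkeeping argument.
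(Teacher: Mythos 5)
Your proposal is correct and follows essentially the same route as the paper: the authors simply assert that Theorem \ref{cor:linear1} follows from Lemma \ref{lm:linear1} (together with the Corollary to Lemma \ref{lm:Tzero}) and the commutative diagram (\ref{cd:H}), which is exactly the tensor-factor bookkeeping you carry out. Your extra remark that the weight $\mathcal{W}^{r,\sigma}=(1\otimes W^{r,\sigma}\otimes 1)\circ\Phi$ only affects the middle factor, so that $\mathcal{U}$ is an isomorphism in each line of (\ref{cd:H}) for every $\sigma$, is precisely the point the paper covers by its sentence preceding (\ref{cd:H}).
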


The lift of the operator $\cT_0$ with respect to the partial Bargmann transform $\pBargmann$ is
\begin{equation}\label{eq:Tlift}
\cT_0^{\lift}=\pBargmann\circ \cT_0\circ \pBargmann^*:\cS_\dag(\real^{4d+2d'+1}_{(w,\xi_w,\xi_z)})\to \cS_\dag(\real^{4d+2d'+1}_{(w,\xi_w,\xi_z)})'.
\end{equation}
Note that, by the definitions and  the relation (\ref{eq:Phi_Bargmann}), we may write it as 
\begin{equation}\label{eq:Tlift2}
\cT_0^{\lift}=\pBargmann\circ \mathcal{U}\circ (\mathrm{Id}\otimes T_0\otimes \mathrm{Id})\circ
\mathcal{U}^{-1}\circ \pBargmann^*=\Phi^*\circ (\BargmannP^{(d)}_1\otimes T_0^{\lift} \otimes \mathrm{Id}) \circ (\Phi^*)^{-1}.
\end{equation} 
The next  is a simple consequence of this expression and Lemma \ref{lm:Tzero}.
\begin{Corollary}\label{cor:Tzero}
The operator $\cT_0^{\lift}$ is written as an integral operator 
\[
\cT_0^{\lift} u(w',\xi'_w, \xi_z)=
\int K(w',\xi'_w;w,\xi_w;\xi_z)
u(w,\xi_w, \xi_z) dw d\xi_w 
\]
and the kernel satisfies, for any $\sigma, \sigma'\in \Sigma$ and $m>0$, that 
\begin{align*}
\frac{\cW^{r,\sigma'}(w',\xi'_w,\xi_z)}{\cW^{r,\sigma}(w,\xi_w, \xi_z)}  &\cdot K(w',\xi'_w;w,\xi_w;\xi_z)\\
&\le 
C_m \langle(\zeta'_p,\tilde{\xi}'_y,\zeta'_q, \tilde{y}')\rangle^{-r}\langle(\zeta_p,\tilde{\xi}_y,\zeta_q, \tilde{y})\rangle^{-r}
\langle(\nu'_q, \nu'_p)-(\nu_q,\nu_p)\rangle^{-m}\\
&\le C' \langle \langle \xi_z\rangle^{1/2}\cdot |(w',\xi'_w)-(w,\xi_w)|\rangle^{-r}
\end{align*}
where $C_m>0$ and $C'>0$ are constants and where $(\zeta_p,\tilde{\xi}_y,\zeta_q, \tilde{y})$ and $\nu=(\nu_q,\nu_p)$ (resp.\ $(\zeta'_p,\tilde{\xi}'_y,\zeta'_q, \tilde{y}')$ and $\nu'=(\nu'_q,\nu'_p)$) are the coordinates of $(w,\xi_w,\xi_z)$ (resp.\ $(w',\xi'_w,\xi_z)$) defined in (\ref{eq:newcoordinates}). 
In fact, the kernel is written as
\[
K(w',\xi'_w;w,\xi_w;\xi_z)=
 K_+(\zeta'_p,\tilde{\xi}'_y,\zeta'_q, \tilde{y}')\cdot K_-(\zeta_p,\tilde{\xi}_y,\zeta_q, \tilde{y}) \cdot k(\nu',\nu')  
\]
where $K_{\pm}(\cdot)$ are the functions in Lemma \ref{lm:Tzero} and $k(\nu,\nu')$ is the kernel of the Bargmann projector $\BargmannP_1^{(d)}$, which satisfies 
\[
|k(\nu,\nu')|\le C_m \langle \nu-\nu'\rangle^{-m}\quad \mbox{for any $m>0$.}
\]
\end{Corollary}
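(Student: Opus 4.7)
\medskip

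\noindent
\textbf{Proposal.} The plan is to read off the kernel of $\cT_0^{\lift}$ directly from the factorization \eqref{eq:Tlift2} and then collapse the resulting composition using two facts: (a) the kernel formula for $T_0^{\lift}$ in Lemma~\ref{lm:Tzero}, and (b) the explicit Gaussian form \eqref{eq:kernel_KP} of the kernel of the Bargmann projector. Since $\Phi^*$ is a coordinate change (unitary on $L^2_\dag$ in the normalized volume \eqref{eq:normalized_volume}), its conjugation simply re-expresses the kernel of the middle factor in the coordinates $(w,\xi_w,\xi_z)$ via the relations \eqref{eq:newcoordinates}--\eqref{eq:newcoordinates2}. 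So the first step is purely notational: write
\[
\cT_0^{\lift}=\Phi^*\circ\bigl(\BargmannP_1^{(d)}\otimes T_0^{\lift}\otimes \mathrm{Id}\bigr)\circ (\Phi^*)^{-1},
\]
observe that $\BargmannP_1^{(d)}$ is idempotent so that inserting one extra projector on each side costs nothing, and then read off that the Schwartz kernel in the coordinates $((\nu_q,\nu_p),(\zeta_p,\tilde\xi_y,\zeta_q,\tilde y),\xi_z)$ is
\[
k(\nu,\nu'')\,K_+(\zeta_p,\tilde\xi_y,\zeta_q,\tilde y)K_-(\zeta'_p,\tilde\xi'_y,\zeta'_q,\tilde y')\,k(\nu'',\nu'),
\]
integrated over the internal variable $\nu''$.

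\medskip

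\noindent
The second step is the pointwise estimate. Using the bounds \eqref{eq:kpm1}--\eqref{eq:kpm2} on $K_\pm$ together with the fact that the Bargmann kernel $k(\nu,\nu')$ has Gaussian decay $|k(\nu,\nu')|\le C_m\langle \nu-\nu'\rangle^{-m}$ for every $m$, I carry out the $\nu''$ integration via the elementary convolution estimate
\[
\int \langle \nu-\nu''\rangle^{-m}\langle \nu''-\nu'\rangle^{-m}\,d\nu''\le C'_m\langle \nu-\nu'\rangle^{-m}
\]
(valid for $m>2d$), which yields the claimed bound on $\mathcal{W}^{r,\sigma}(w,\xi_w,\xi_z)/\mathcal{W}^{r,\sigma'}(w',\xi'_w,\xi_z)\cdot K$ in the $(\zeta,\nu)$-coordinates. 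At this stage the factor $\mathcal{W}^{r,\sigma}/\mathcal{W}^{r,\sigma'}$ is absorbed by transferring $W^{r,\sigma}$ onto the $K_+$ side and $W^{r,\sigma'}$ onto the $K_-$ side exactly as \eqref{eq:kpm1}--\eqref{eq:kpm2} permit; the $\nu$-directions carry no weight at all, so the Gaussian factor $\langle \nu-\nu'\rangle^{-m}$ is inherited from $k$ untouched.

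\medskip

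\noindent
The final step is to convert the $(\zeta,\nu)$-type bound into the $(w,\xi_w)$-type bound in the second displayed inequality. Here I use that, by construction \eqref{eq:newcoordinates}, the map $(w,\xi_w)\mapsto (\zeta_p,\tilde\xi_y,\zeta_q,\tilde y,\nu_q,\nu_p)$ is linear with a uniform scaling factor of order $\langle \xi_z\rangle^{1/2}$, so
\[
|(\zeta,\tilde\xi_y,\tilde y)-(\zeta',\tilde\xi'_y,\tilde y')|^2+|\nu-\nu'|^2\asymp \langle \xi_z\rangle\cdot |(w,\xi_w)-(w',\xi'_w)|^2.
\]
Combining the three decay factors produced in the previous step, each of the form $\langle\,\cdot\,\rangle^{-r}$, gives a lower bound of order $\langle \langle\xi_z\rangle^{1/2}|(w,\xi_w)-(w',\xi'_w)|\rangle^{r}$ in the denominator, which is exactly the second bound. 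The only point requiring minor care is the behavior of $(\zeta,\nu)$ across the hyperplane $\xi_z=0$, but since we work on $L^2_\dag$ (\textit{i.e.} $|\xi_z|\ge 2$), the coordinate change is smooth and the scaling relation above is uniform; this is the only mild obstacle and it disappears once one records, as in the paper's convention, that $\mathcal{W}^{r,\sigma}$ is continuous across $\xi_z=0$ despite the sign-change in \eqref{eq:newcoordinates2}.
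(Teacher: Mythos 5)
Your proposal is correct and follows exactly the route the paper intends: the corollary is stated there as a direct consequence of the factorization (\ref{eq:Tlift2}) together with Lemma \ref{lm:Tzero}, and your argument simply fleshes this out (conjugation by the unitary $\Phi^*$, the bounds (\ref{eq:kpm1})--(\ref{eq:kpm2}), the Gaussian decay of the Bargmann kernel, and the metric property (2) of the coordinates (\ref{eq:newcoordinates}) giving the $\langle\xi_z\rangle^{1/2}$ scaling). Your restriction to $|\xi_z|\ge 2$ via the $\dag$ convention and your reading of the middle factor as $\BargmannP_1^{(d)}\otimes T_0^{\lift}\otimes\mathrm{Id}$ are both the correct interpretations, so no gap remains.
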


\subsection{Fibered contact diffeomorphism and affine transformations} \label{ss:affine}
In this subsection and the next, we prepare a few definitions and related facts for the argument in the following sections. 
We first introduce the following definition.
\begin{Definition}
\label{def:fiberedContact}
We call a $C^\infty$ diffeomorphism $f:V\to V'=f(V)$ between open subsets $V,V'\subset \real^{2d+d'+1}_{(x,y,z)}$  a {\em fibered contact diffeomorphism} if it satisfies the following conditions:
\begin{enumerate}
\item  $f$ is written in the form
\begin{equation}\label{eq:fdiff}
f(x,y,z)=(\tilde{f}(x), \hat{f}(x,y), z+\tau(x)),
\end{equation}
\item  the diffeomorphism
\[
\check{f}:\proj_{(x,z)}(V)\to \proj_{(x,z)}(V'), \quad \check{f}(x,z)=(\tilde{f}(x), z+\tau(x))
\]
preserves  the contact form $\alpha_0$ given in (\ref{eq:walpha}).  
\end{enumerate}
\begin{Remark}\label{rem:ext_fibered_contact}
We can always extend a fibered contact diffeomorphism $f:V\to V'$ to $f:\proj_{(x,y)}(V))\times \real_z\to \proj_{(x,y)}(V')\times \real_z$ by the expression (\ref{eq:fdiff}). We will assume this extension in some places.
\end{Remark}
The diffeomorphism $\check{f}$ above is called the {\em base diffeomorphism} of $f$. 
The diffeomorphism
\begin{equation}\label{eq:breve}
\breve{f}:\proj_{(x,y)}(V)\to \proj_{(x,y)}(V'),\quad 
\breve{f}(x,y):=(\tilde{f}(x), \hat{f}(x,y))
\end{equation}
is called the {\em transversal diffeomorphism} of $f$. 
We have the commutative diagrams
\[
\begin{CD}
V@>{f}>>V'\\
@V{\proj_{(x,z)}}VV @V{\proj_{(x,z)}}VV\\
\proj_{(x,z)}(V)@>{\check{f}}>>\proj_{(x,z)}(V)
\end{CD}
\qquad\qquad  
\begin{CD}
V@>{f}>>V'\\
@V{\proj_{(x,y)}}VV @V{\proj_{(x,y)}}VV\\
\proj_{(x,y)}(V)@>{\breve{f}}>>\proj_{(x,y)}(V')
\end{CD}
\]
\end{Definition}
The  function $\tau(x)$ in (\ref{eq:fdiff}) is determined by the transversal diffeomorphism $\breve{f}$ up to an additive constant.  In particular, we have
\begin{Lemma}[{\cite[Lemma 4.1]{Tsujii2010}}] \label{lm:tau} If $f:V\to V'$  is a  fibered contact diffeomorphism as  above and suppose that the transversal diffeomorphism preserves the origin, {\it i.e.}\ $\breve{f}(0)=0$, then the function $\tau(x)$ in the expression (\ref{eq:fdiff}) satisfies
\[
D_x\tau(0)=0, \qquad D_x^2\tau(0)=0.
\]
\end{Lemma}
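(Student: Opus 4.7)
The plan is to extract both conclusions directly from the contact-preservation identity $\check{f}^*\alpha_0=\alpha_0$. Write $\alpha_0=dz+\beta$ where $\beta:=p\,dq-q\,dp$ is a $1$-form on $\real^{2d}_x$. Since $\check{f}(x,z)=(\tilde{f}(x),z+\tau(x))$, one computes $\check{f}^*\alpha_0=dz+d\tau+\tilde{f}^*\beta$, so that preservation of $\alpha_0$ is equivalent to the $1$-form identity
\begin{equation*}
d\tau(x)=\beta-\tilde{f}^*\beta\qquad\text{on }\real^{2d}_x.
\end{equation*}

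For $D_x\tau(0)=0$, I would simply evaluate both sides at $x=0$. The coefficients of $\beta$ are the coordinate functions $p_i,-q_i$, which vanish at the origin; and the coefficients of $\tilde{f}^*\beta$ are of the form $\tilde{p}_i(x)\,\partial_j\tilde{q}_i(x)-\tilde{q}_i(x)\,\partial_j\tilde{p}_i(x)$, which also vanish at $x=0$ since $\tilde{f}(0)=0$ gives $\tilde{p}(0)=\tilde{q}(0)=0$. Hence $d\tau|_0=0$, so $D_x\tau(0)=0$.

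For $D_x^2\tau(0)=0$, I would differentiate the identity once more and evaluate at $0$. A direct calculation shows that $M_{kj}:=\partial_k\beta_j(0)$ is the constant matrix $-J$, where $J=\begin{pmatrix}0 & I_d\\ -I_d & 0\end{pmatrix}$ is the standard symplectic matrix. For the pullback term, Leibniz expansion together with the vanishing $\tilde{q}(0)=\tilde{p}(0)=0$ annihilates every term involving second derivatives of $\tilde{f}$, leaving
\begin{equation*}
\partial_k(\tilde{f}^*\beta)_j(0)=\bigl((A^p)^T A^q-(A^q)^T A^p\bigr)_{kj},
\end{equation*}
where $A^q=D\tilde{q}(0)$ and $A^p=D\tilde{p}(0)$ are the two $d\times 2d$ blocks of $A:=D\tilde{f}(0)$. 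Because $\check{f}$ preserves $\alpha_0$ it preserves $d\alpha_0$, whose restriction to $\real^{2d}_x$ is a nonzero scalar multiple of the standard symplectic form; hence $\tilde{f}$ is symplectic and $A^T J A=J$. Unpacking this block identity yields $(A^p)^T A^q-(A^q)^T A^p=-J$, matching $M$ exactly. Therefore $\partial_k\partial_j\tau(0)=0$ for all $j,k$, so $D_x^2\tau(0)=0$.

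The only nontrivial step is the block-matrix unpacking of the symplectic condition $A^T J A=J$, which is purely linear-algebraic. No dynamical or anisotropic-Sobolev input is needed; the lemma is an immediate local consequence of the contact-preservation property, with the identity $d\tau=\beta-\tilde{f}^*\beta$ making both assertions transparent.
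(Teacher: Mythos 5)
Your proof is correct: from $\check{f}^*\alpha_0=\alpha_0$ one gets exactly the identity $d\tau=\beta-\tilde{f}^*\beta$ with $\beta=p\,dq-q\,dp$, the evaluation at $0$ gives $D_x\tau(0)=0$ since $\tilde f(0)=0$, and your block computation of $\partial_k(\tilde f^*\beta)_j(0)=((A^p)^TA^q-(A^q)^TA^p)_{kj}=-J_{kj}=\partial_k\beta_j(0)$ via $A^TJA=J$ is the standard "little computation" that the paper itself does not spell out but delegates to \cite[Lemma 4.1]{Tsujii2010}. One small simplification you could note: both $\partial_k\beta_j(0)$ and $\partial_k(\tilde f^*\beta)_j(0)$ are antisymmetric in $(k,j)$, while $D_x^2\tau(0)$ is a symmetric matrix, so the second vanishing already follows without unpacking the symplectic condition.
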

\begin{proof} The first  equality $D\tau(0)=0$ is obvious. The second is also easy to prove but  we need a little computation. See the proof of \cite[Lemma 5.4.3]{FaureTsujii12}.
\end{proof}

Next we restrict ourselves to the case of affine transformations and introduce the following definitions. 
\begin{Definition}[Groups $\cA_0\supset \cA_1\supset \cA_2$ of affine transforms on $\real^{2d+d'+1}$] \label{def:transG} \leavevmode \newline
\noindent (1) Let $\cA_0$  be the group of affine transformations $a:\real^{2d+d'+1}_{(x,y,z)}\to  \real^{2d+d'+1}_{(x,y,z)}$ that are fibered contact diffeomorphisms (with setting $V=V'=\real^{2d+d'+1}$).\\
\noindent(2)Let $\cA_1 \subset \cA_0$ be the subgroup of all the affine transformations in $\cA_0$ of the form
\begin{equation}\label{eq:a}
a:\real^{2d+d'+1}_{(q,p,y,z)}\to \real^{2d+d'+1}_{(q,p,y,z)}, 
a(q,p,y,z)=(Aq+q_0, A^\dag p+p_0, \wA y, z+b(q,p)+z_0)
\end{equation}
where $A:\real^{d}\to \real^{d}$ and  $\wA:\real^{d'}\to \real^{d'}$ are unitary transformations,   $b:\real^{2d}\to \real$  is a  linear map and $(q_0, p_0, y_0,z_0)\in \real^{2d+d'+1}_{(q,p,y,z)}$  is a constant vector. (Recall (\ref{eq:Adag}) for the definition of $A^\dag$.)\\
(3) Let $\cA _2\subset \cA_1$ be the subgroup of all the affine transforms $a\in \cA_1$ as above  with $A$ and $\widehat{A}$ the identities on $\real^d$ and $\real^{d'}$ respectively. 
\end{Definition}
\begin{Remark}
Suppose that $a\in \cA_1$ is of the form (\ref{eq:a}). Then, from the condition that the base diffeomorphism preserves the contact form $\alpha_0$, we see that the linear map  $b(q,p)$ is determined by $A$, $p_0$ and $q_0$. In fact, by simple calculation, we find   $
b(q,p)=-({}^tA p_0)\cdot  q+(A^{-1} q_0)\cdot p$.
\end{Remark}
The following fact is easy to check and quite useful.  
\begin{Lemma}\label{lm:unitaryaction}
The transfer operator $L_a$ for $a\in \cA _1$ (defined by $L_a u:=u\circ a^{-1}$) extends to a unitary operator on $\cH^r(\real^{2d+d'+1})$ (resp.\ on $\cH^{r,\sigma}(\real^{2d+d'+1})$) and  commutes with the projection operator $\cT_0$, that is, $L_a\circ \cT_0=\cT_0\circ L_a$. 
\end{Lemma}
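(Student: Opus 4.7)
My plan is to reduce both claims to the partial Bargmann side via Lemma \ref{lm:expression_lift}, and then analyze the induced cotangent action $D^\dag a$ in the adapted coordinates $(\nu,\zeta,\tilde{y},\tilde{\xi}_y,\xi_z)$ from \S\ref{ss:coord}. The key point is that for $a\in\cA_1$ the linear part $B_0=A\oplus A^\dag\oplus\widehat{A}$ is an isometry (since $A^\dag=A$ whenever $A$ is orthogonal), so Lemma \ref{lm:expression_lift} gives
\[
L_a^{\lift}v = e^{-i\phi(w,\xi_w,\xi_z)}\cdot L_{D^\dag a}\circ\pBargmannP\, v,\qquad |e^{-i\phi}|\equiv 1,
\]
where $\phi$ comes from the explicit formula in the lemma. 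Hence $\|L_a u\|_{\cH^{r,\sigma}}=\|\mathcal{W}^{r,\sigma}\cdot L_{D^\dag a}\pBargmann u\|_{L^2}$, so unitarity reduces to showing (a) $D^\dag a$ preserves the volume form $d\vol$, and (b) $\mathcal{W}^{r,\sigma}\circ D^\dag a=\mathcal{W}^{r,\sigma}$.

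Claim (a) is immediate since $B_0$ and $B_0^\dag$ are isometric. For (b), the algebraic heart of the matter is that the cotangent shift $-{}^tC_0(\xi_z)$ induced by $C_0(w)=b(q,p)$ is precisely the one that, combined with the translation $b_0=(q_0,p_0,0)$, leaves the combinations $\xi_p+q$ and $\xi_q-p$ invariant; this is a reformulation of $a$ preserving the contact form $\alpha_0$. A short direct computation then yields, on $|\xi_z|\ge 2$,
\[
D^\dag a\colon\;\zeta_p\mapsto A\zeta_p,\;\zeta_q\mapsto A\zeta_q,\;\tilde{y}\mapsto\widehat{A}\tilde{y},\;\tilde{\xi}_y\mapsto\widehat{A}\tilde{\xi}_y,\;\nu\mapsto A\nu+s(\xi_z),\;\xi_z\mapsto\xi_z,
\]
where $s(\xi_z)$ is an $\xi_z$-dependent translation acting only in the trapped-set directions $\nu=(\nu_q,\nu_p)$. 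Because $\mathcal{W}^{r,\sigma}$ depends only on $(\zeta_p,\tilde{\xi}_y,\zeta_q,\tilde{y})$, and the orthogonal action $A\oplus\widehat{A}\oplus A\oplus\widehat{A}$ preserves both the Euclidean norm and the cones $\cone_{\pm}^{(d+d',d+d')}(\tau)$ used to define $\mathrm{ord}^\sigma$, we conclude $\mathcal{W}^{r,\sigma}\circ D^\dag a=\mathcal{W}^{r,\sigma}$. A change of variables then gives the unitarity on $\cH^r$ and on each $\cH^{r,\sigma}$.

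For the commutation $[L_a,\cT_0]=0$ I use the explicit kernel of $\cT_0^{\lift}$ from Corollary \ref{cor:Tzero}, which splits as a product of Gaussian factors $K_+(\zeta_p,\tilde{\xi}_y,\zeta_q,\tilde{y})\,K_-(\zeta'_p,\tilde{\xi}'_y,\zeta'_q,\tilde{y}')$ in the transverse coordinates and a convolution $\int k(\nu,\nu'')k(\nu'',\nu')\,d\nu''$ in the $\nu$-direction. The orthogonal part of $D^\dag a$ leaves $K_\pm$ invariant by rotational symmetry of the Gaussians, while the $\xi_z$-dependent translation of $\nu$ preserves the Bargmann convolution because $k(\nu,\nu'')$ depends on $\nu-\nu''$ up to a symplectic phase. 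The remaining symplectic phases, together with the factor $e^{-i\phi}$ from Lemma \ref{lm:expression_lift}, cancel in the conjugate $L_a^{\lift}\cT_0^{\lift}(L_a^{\lift})^{-1}$ by the standard metaplectic cocycle identity for affine symplectic maps. This yields $[L_a^{\lift},\cT_0^{\lift}]=0$, equivalently $[L_a,\cT_0]=0$.

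The hard part will be the bookkeeping of symplectic phases: both the phase $e^{-i\phi}$ appearing in Lemma \ref{lm:expression_lift} and the symplectic phases implicit in the Bargmann kernels $K_\pm$ and $k$. The cleanest route should be to decompose $a$ into its isometric linear part $B_0$, followed by the translation $b_0$ and the $z$-shift by $C_0+c_0$, verify the invariance of the kernel of $\cT_0^{\lift}$ factor by factor, and then reassemble using the cocycle identity stated in \S\ref{sec:linear}.
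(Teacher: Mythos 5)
The paper itself offers no proof of this lemma (it is dismissed as ``easy to check''), so there is no argument of record to compare with; judged on its own terms, the unitarity half of your proposal is essentially correct. The reduction through Lemma \ref{lm:expression_lift} is legitimate because for $a\in\cA_1$ the linear part $A\oplus A^\dag\oplus\wA$ is an isometry, and your asserted form of the induced map in the coordinates (\ref{eq:newcoordinates}) is right: the contact relation $b(q,p)=-({}^tAp_0)\cdot q+({}^tAq_0)\cdot p$ makes the frequency shift coming from the $z$-shear cancel the translation exactly in the combinations defining $(\zeta_p,\zeta_q)$, so the transverse block transforms by the rotation $A\oplus\wA\oplus A\oplus\wA$ with \emph{no} translation, while only $\nu$ picks up an $\xi_z$-dependent shift. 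Two small patches: invariance of $\mathrm{ord}^{\sigma}$ under this block rotation does not follow merely from preservation of the cones and of the total norm; you need the monotonicity requirement accompanying (\ref{eq:def_order_function_m}), which forces $\mathrm{ord}$ to be a function of the ratio $|(\zeta_q,\tilde{y})|/|(\zeta_p,\tilde{\xi}_y)|$, and that ratio is what the rotation preserves. Surjectivity (needed for unitarity rather than mere isometry) should be mentioned, though it is free since $a^{-1}\in\cA_1$.

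The commutation half has a genuine soft spot. You defer the decisive step to ``the standard metaplectic cocycle identity \dots stated in \S\ref{sec:linear}''; no such identity is stated there, and, more importantly, the step is not a formality. If you take the multiplier exactly as displayed in Lemma \ref{lm:expression_lift} (coefficient $1$ on $\xi_z C_0(w)$, coefficient $\tfrac12$ on $\langle\xi_z\rangle\xi_w\cdot b_0$, shift $-{}^tC_0(\xi_z)$ in the rescaled frequency) and rewrite it in the coordinates (\ref{eq:newcoordinates}), the phase does \emph{not} become constant in the transverse variables: its $\zeta_p$-coefficient is a nonzero multiple of $\langle\xi_z\rangle^{1/2}\bigl(\tfrac12 p_0-{}^tAp_0\bigr)$, and a $\zeta$-dependent unimodular multiplier manifestly fails to commute with $T_0^{\lift}$ (it sends the constant function $\mathbf{1}$ to a nonconstant one). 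What makes the lemma true is the \emph{exact} wave-packet covariance: computing $\phi_{w,\xi_w,\xi_z}\circ a$ directly, the $\xi_z C_0$-type term acquires a factor $\tfrac12$ and the frequency shift the scaling $\langle\xi_z\rangle^{-1}\xi_z\,{}^tC_0$, and with these the $\zeta$-dependent contributions from the shear and from the translation cancel (again precisely because $a$ preserves $\alpha_0$), leaving a total phase that is a function of $(\nu_q,\nu_p,\xi_z)$ only. This cancellation is the actual content of the commutation statement and must be computed, not quoted.

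Once that is in place, your argument closes more cleanly than via the kernel of Corollary \ref{cor:Tzero}: in the picture of Lemma \ref{lm:U} the conjugated operator $\mathcal{U}^{-1}\circ L_a\circ\mathcal{U}$ acts on the middle factor purely by the rotation $L_{A\oplus\wA}$, since all phases and the $\xi_z$-dependent $\nu$-translation live in the outer factors, and $T_0(u\circ R^{-1})=u(0)\cdot\mathbf{1}=(T_0u)\circ R^{-1}$ gives $[L_a,\cT_0]=0$. A convenient way to organize the bookkeeping is to split $a=t\circ\ell$ with $\ell\in\cA_1$ the pure block rotation (for which Lemma \ref{lm:expression_lift} produces no phase at all) and $t\in\cA_2$ a Heisenberg-type translation, so that the phase computation above is needed only for $t$.
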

We use the next lemma in setting up the local charts on $G$ in the next section.
\begin{Lemma}\label{lm:affine_trans}
Let $\ell$ and $\ell'$ be $d$-dimensional subspaces in $T_w \real^{2d+d'+1}_{(x,y,z)}$ at a point $w\in \real^{2d+d'+1}_{(x,y,z)}$.  
Suppose that the projection $\proj_{(x,z)}:\real^{2d+d'+1}_{(x,y,z)}\to \real^{2d+1}_{(x,z)}$ maps $\ell$ and $\ell'$ bijectively onto the images $\proj_{(x,z)}(\ell)$ and $\proj_{(x,z)}(\ell')$ and that we have
\[
\proj_{(x,z)}(\ell) \oplus \proj_{(x,z)}(\ell')=\ker \alpha_0(\check{w}),\qquad d\alpha_0|_{\proj_{(x,z)}(\ell)}=0,\qquad d\alpha_0|_{\proj_{(x,z)}(\ell')}=0
\]
where $\check{w}=\proj_{(x,z)}(w)$. 
Then there exists an affine transform $a\in \cA_0 $ (which is in particular a fibered contact diffeomorphism)  such that 
\[
a(0)=w, \quad (Da)_0(\real^d_{q}\oplus \{0\}\oplus\{0\}\oplus\{0\})=\ell, \quad 
(Da)_0(\{0\}\oplus \real^d_{p} \oplus\{0\}\oplus\{0\})=\ell'.
\]
\end{Lemma}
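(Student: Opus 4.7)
The plan is to construct $a$ as a composition $a = T \circ a_0$, where $T \in \cA_0$ is a contact translation sending $0$ to $w$ and $a_0 \in \cA_0$ is a linear fibered contact map fixing the origin. This reduces the construction to the model case $w = 0$, at which point the contact form degenerates to $\alpha_0(0)=dz$ and the required linear algebra becomes transparent.

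For the translation, writing $w = (q_w, p_w, y_w, z_w)$, set
\[
T(q, p, y, z) := (q + q_w,\ p + p_w,\ y + y_w,\ z + z_w - p_w \cdot q + q_w \cdot p).
\]
A direct calculation shows that the base map $\check{T}$ preserves $\alpha_0$, so $T \in \cA_0$ (in fact $T\in \cA_1$). Let $\tilde\ell := (DT)_0^{-1}(\ell)$ and $\tilde\ell' := (DT)_0^{-1}(\ell')$. Since $T$ is fibered contact, these satisfy the same hypotheses as $\ell, \ell'$ but at the origin, and it suffices to find a linear $a_0 \in \cA_0$ with $(Da_0)_0(\real^d_q) = \tilde\ell$ and $(Da_0)_0(\real^d_p) = \tilde\ell'$. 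Because $\ker \alpha_0(0) = \{z = 0\}$, every vector in $\tilde\ell$ has vanishing $z$-component, so $\tilde\ell$ and $\tilde\ell'$ are graphs of linear maps $\psi : L \to \real^{d'}$ and $\psi' : L' \to \real^{d'}$, where $L := \proj_x(\tilde\ell)$ and $L' := \proj_x(\tilde\ell')$ are complementary Lagrangian subspaces of $(\real^{2d}_x, dq \wedge dp)$.

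To build $a_0$, invoke the standard fact that any two Lagrangian decompositions of a symplectic vector space are related by a linear symplectomorphism: pick $\tilde a \in \mathrm{Sp}(2d, \real)$ with $\tilde a(\real^d_q) = L$ and $\tilde a(\real^d_p) = L'$. Define a linear $C : \real^{2d} \to \real^{d'}$ by the two conditions $C(\tilde a(q, 0)) := \psi(\tilde a(q, 0))$ and $C(\tilde a(0, p)) := \psi'(\tilde a(0, p))$, which consistently determine $C$ on the complementary summands $L \oplus L' = \real^{2d}$. Finally, for the given linear symplectic $\tilde a$, standard contact geometry provides a unique (up to constant) quadratic $b(q, p)$ such that $\check{a}_0(q, p, z) := (\tilde a(q, p), z + b(q, p))$ preserves $\alpha_0$. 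Set
\[
a_0(q, p, y, z) := \bigl(\tilde a(q, p),\ y + C(q, p),\ z + b(q, p)\bigr).
\]
Then $a_0$ is affine, has base $\check{a}_0$ preserving $\alpha_0$, and hence lies in $\cA_0$. Since $b$ is quadratic, $(Da_0)_0(q, p, y, z) = (\tilde a(q, p), y + C(q, p), z)$, and by construction this maps $\real^d_q$ onto $\tilde\ell$ and $\real^d_p$ onto $\tilde\ell'$. Taking $a := T \circ a_0$ finishes the construction.

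The argument has no genuine obstacle: the only substantive input is the symplectic linear algebra lemma producing $\tilde a$, which is standard. The point demanding the most care is bookkeeping — verifying that the above formulas really define a fibered contact diffeomorphism (in particular that the quadratic $b$ associated to a linear symplectic $\tilde a$ exists and matches the normalization of $\alpha_0$) and that $C$ is well defined across the two Lagrangian summands.
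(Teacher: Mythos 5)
Your construction is in essence the paper's own proof: the paper first uses an $\cA_0$ coordinate change to reduce to the case $w=0$ with $\ell,\ell'\subset\real^{2d}_{(q,p)}\oplus\{0\}\oplus\{0\}$ (which is precisely your explicit translation $T$ together with the fiber shear $y\mapsto y+C(\cdot)$, made explicit), and then, exactly as you do, invokes transitivity of the linear symplectic group on pairs of transverse Lagrangian subspaces. One slip needs repair: you define $C$ on the target splitting, $C|_{L}=\psi$ and $C|_{L'}=\psi'$, but then apply it to the source point in $a_0(q,p,y,z)=(\tilde a(q,p),\,y+C(q,p),\,z+b(q,p))$. As written, $(Da_0)_0(\real^d_q\oplus\{0\}\oplus\{0\}\oplus\{0\})=\{(\tilde a(q,0),C(q,0),0)\}$, which is the graph over $L$ of $C\circ\tilde a^{-1}|_L$, not of $\psi$, unless $L=\real^d_q$. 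Replacing $C(q,p)$ by $C(\tilde a(q,p))$ (equivalently, defining $C$ on the source as $(\psi\oplus\psi')\circ\tilde a$) fixes this, and the rest of the argument goes through. A further simplification you could note: the quadratic correction $b$ is unnecessary, since every \emph{linear} symplectic $\tilde a$ already preserves the one-form $q\,dp-p\,dq$ (it equals $\iota_X d\alpha_0$ for the Euler vector field $X$, which linear maps preserve), so $(x,z)\mapsto(\tilde a(x),z)$ preserves $\alpha_0$ and one may take $b\equiv 0$; this is exactly the characterization of linear contact maps that the paper's proof uses.
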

\begin{proof} 
By changing coordinates by the transformation group $\cA_0$, we may assume that $w=0$ and  that the subspaces $\ell$ and $\ell'$ are subspaces of  $\real^{2d}_{(q,p)}\oplus \{0\} \oplus \{0\}\subset \real^{2d+d'+1}$. Thus we have only to   find a linear  map $\check{a}:\real^{2d+1}\to \real^{2d+1}$ preserving $\alpha_0$ such that
\begin{equation}\label{eq:symp_a}
D\check{a}(\real^d_q\oplus \{0\}\oplus\{0\})=\proj_{(x,z)}(\ell), \quad D\check{a}(\{0\}\oplus \real^d_p\oplus \{0\})=\proj_{(x,z)}(\ell').\end{equation}
Note that a linear map $\check{a}:\real^{2d+1}\to \real^{2d+1}$ preserves the contact form $\alpha_0$ if and only if it is of the form  $\check{a}(x,z)=({a}_0(x),z)$ and ${a}_0:\real^{2d}\to \real^{2d}$ preserves the symplectic form $d\alpha_0$ (identifying $\real^{2d}$ with  $\real^{2d}\oplus \{0\}$). Since the subspaces $\proj_{(x,z)}(\ell)$ and $\proj_{(x,z)}(\ell')$ are Lagrangian subspaces ({\it i.e.} the restriction of $d\alpha_0$ to those subspaces are null) transversal to each other, we can find a linear transform $\check{a}:\real^{2d+1}\to \real^{2d+1}$  preserving $d\alpha_0$ so that (\ref{eq:symp_a}) holds true. 
\end{proof}


\section{Local charts and partitions of unity}\label{sec:basic}
In this section, we discuss about choice of local coordinate charts and partitions of unity that we will use. 
We henceforth fix a small constant  $0<\theta<1$ such that
\begin{equation}\label{eq:choice_of_theta}
0<\theta<\min\{\beta,1-\beta\}/20\le 1/40
\end{equation} 
where $0<\beta<1$ is the H\"older exponent given in (\ref{beta}). Also we let $\chi:\real\to [0,1]$ be a $C^\infty$ function satisfying the condition that\footnote{This definition of $\chi(\cdot)$ may look a bit strange for the argument below. Since we use this function $\chi(\cdot)$ later in a different context, we define it in this way. } 
\begin{equation}\label{eq:def_chi}
\chi(s)=
\begin{cases}
1& \mbox{for $s\le 4/3$;}\\
0& \mbox{for $s\ge 5/3$.}
\end{cases} 
\end{equation}

\subsection{On the choice of local coordinate charts and partitions of unity.}\label{ss:choice_local_chart}

Before giving the choice of local coordinate charts and partitions of unity precisely, we explain the motivation behind the choice. Observe first of all that, in the linear model that we discussed  in the last section, we may decompose the action of the transfer operator with respect to the frequency in $z$-direction. Indeed, in Lemma \ref{lm:U}, we had only point-wise multiplication in the third factor $L^2(\real_{\xi_z})$. 
Though this is not true for the transfer operators $\cL^t_{k,\ell}$ in exact sense, it is important to observe that they ``almost" preserve the frequency in the flow direction. 
And, based on such observation, we will decompose functions on $M$ so that each of the components has frequencies around some $\omega \in \integer$ in the flow direction and then consider the action of the transfer operators on each of them.
To look into such action, we choose a finite system of local charts and an associated partition of unity for each $\omega\in \integer$. 

When we consider the action of the transfer operator on a component with frequency around $\omega$, we will look things in the scale $\langle \omega\rangle^{-1/2}$ in the directions transversal to the flow. 
(Note that this corresponds to the scale in the definition of the partial Bargmann transform.) 
Accordingly we would like to consider a system of local charts and a partition of unity of size $\langle \omega\rangle^{-1/2+\theta}$. Then, in such small scale, the action of the transfer operators will be well-approximated by those for linear transformations considered in the last section. 

However, to proceed with this idea, we face one problem caused by the fact that the section $e_u:M\to G$ is only H\"older continuous. 
If we look its image $\mathrm{Im}\, e_u$ in the local chart of size $\langle \omega\rangle^{-1/2+\theta}$, its variation in the fiber directions of the Grassmann bundle will be proportional to $\langle \omega\rangle^{-\beta(1/2-\theta)}\gg\langle \omega\rangle^{-1/2+\theta}$. 
This is a problem because the image $\mathrm{Im}\, e_u$ must be approximated by the ``horizontal" subspace $\real^{2d}_{x}\oplus \{0\}\oplus \real_{z}\subset \real^{2d+d'+1}_{(x,y,z)}$ to make use of the argument in the last section.  
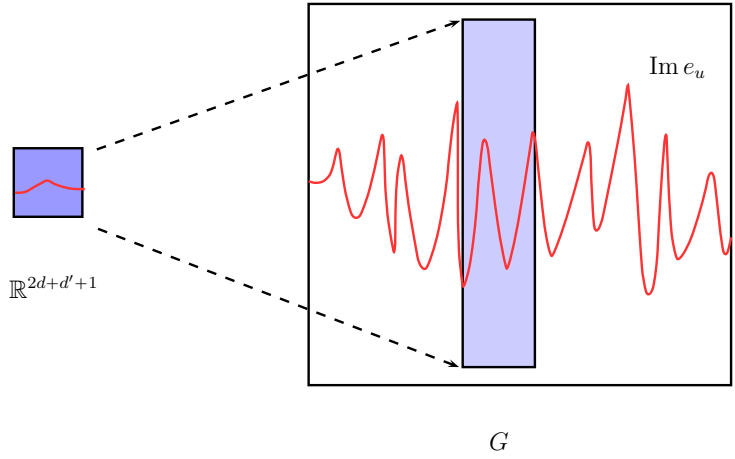
\begin{figure}
\scalebox{0.8}{
\scalebox{1} 
{
\begin{pspicture}(0,-3.7782812)(14.250546,3.7382812)
\definecolor{color75b}{rgb}{0.8,0.8,1.0}
\definecolor{color77}{rgb}{1.0,0.2,0.2}
\definecolor{color78b}{rgb}{0.6,0.6,1.0}
\psframe[linewidth=0.04,dimen=outer,fillstyle=solid,fillcolor=color75b](9.794687,3.4782813)(8.5546875,-2.3417187)
\psframe[linewidth=0.04,dimen=outer](13.0546875,3.7382812)(5.9946876,-2.6417189)
\pscustom[linewidth=0.04,linecolor=color77]
{
\newpath
\moveto(6.0146875,0.7672571)
\lineto(6.0946875,0.74659425)
\curveto(6.1346874,0.7362628)(6.2146873,0.74659425)(6.2546873,0.7672571)
\curveto(6.2946873,0.78791994)(6.3546877,0.8499078)(6.3746877,0.8912329)
\curveto(6.3946877,0.9325583)(6.4246874,1.0255401)(6.4346876,1.077197)
\curveto(6.4446874,1.1288538)(6.4696875,1.2321668)(6.4846873,1.2838233)
\curveto(6.4996877,1.3354797)(6.5246873,1.2941546)(6.5346875,1.2011728)
\curveto(6.5446873,1.1081909)(6.5796876,0.86023927)(6.6046877,0.70526916)
\curveto(6.6296873,0.5502994)(6.6896877,0.31267884)(6.7246876,0.23002838)
\curveto(6.7596874,0.14737792)(6.8346877,0.13704652)(6.8746877,0.20936584)
\curveto(6.9146876,0.28168517)(6.9946876,0.508974)(7.0346875,0.66394407)
\curveto(7.0746875,0.8189139)(7.1346874,1.077197)(7.1546874,1.1805103)
\curveto(7.1746874,1.2838233)(7.2096877,1.4491248)(7.2246876,1.5111127)
\curveto(7.2396874,1.5731006)(7.2596874,1.5214441)(7.2646875,1.4077994)
\curveto(7.2696877,1.2941546)(7.2796874,0.9842151)(7.2846875,0.78791994)
\curveto(7.2896876,0.59162474)(7.3196874,0.23002838)(7.3446875,0.06472717)
\curveto(7.3696876,-0.10057373)(7.4096875,-0.32786316)(7.4246874,-0.38985106)
\curveto(7.4396877,-0.451839)(7.4546876,-0.32786316)(7.4546876,-0.1418994)
\curveto(7.4546876,0.04406433)(7.4596877,0.36433533)(7.4646873,0.49864256)
\curveto(7.4696875,0.6329498)(7.4846873,0.8395764)(7.4946876,0.91189575)
\curveto(7.5046873,0.9842151)(7.5296874,1.1185223)(7.5446873,1.1805103)
\curveto(7.5596876,1.2424982)(7.5896873,1.1288534)(7.6046877,0.95322084)
\curveto(7.6196876,0.7775885)(7.6696873,0.3953296)(7.7046876,0.188703)
\curveto(7.7396874,-0.017923584)(7.7996874,-0.35885742)(7.8246875,-0.49316466)
\curveto(7.8496876,-0.6274719)(7.9196873,-0.7204541)(7.9646873,-0.6791284)
\curveto(8.009687,-0.6378027)(8.114688,-0.31753173)(8.174687,-0.038586427)
\curveto(8.234688,0.2403595)(8.309688,0.7156006)(8.324688,0.91189575)
\curveto(8.339687,1.1081909)(8.384687,1.5111127)(8.414687,1.7177392)
\curveto(8.444688,1.9243659)(8.479688,2.0999982)(8.484688,2.0690043)
\curveto(8.489688,2.0380104)(8.494687,1.7384018)(8.494687,1.4697872)
\curveto(8.494687,1.2011728)(8.494687,0.6329498)(8.494687,0.3333417)
\curveto(8.494687,0.03373291)(8.509687,-0.4105139)(8.524688,-0.5551526)
\curveto(8.539687,-0.69979125)(8.564688,-0.90641725)(8.574688,-0.9684058)
\curveto(8.584687,-1.0303937)(8.639688,-0.8754236)(8.684688,-0.65846556)
\curveto(8.729688,-0.44150758)(8.794687,0.10605225)(8.814688,0.43665466)
\curveto(8.834687,0.7672571)(8.879687,1.2424982)(8.904688,1.3871368)
\curveto(8.9296875,1.5317755)(8.979688,1.4491248)(9.004687,1.2218354)
\curveto(9.029688,0.9945462)(9.104688,0.42632324)(9.154688,0.085389405)
\curveto(9.204687,-0.25554442)(9.279688,-0.6378027)(9.3046875,-0.6791284)
\curveto(9.329687,-0.7204541)(9.439688,-0.25554442)(9.524688,0.2506909)
\curveto(9.609688,0.75692564)(9.709687,1.397468)(9.724688,1.5317752)
\curveto(9.739688,1.6660825)(9.779688,1.5214441)(9.8046875,1.2424982)
\curveto(9.829687,0.96355224)(9.889688,0.3953296)(9.924687,0.10605225)
\curveto(9.959687,-0.1832251)(10.024688,-0.47250184)(10.0546875,-0.47250184)
\curveto(10.084687,-0.47250184)(10.229688,-0.0799115)(10.344687,0.31267884)
\curveto(10.459687,0.70526916)(10.604688,1.2321668)(10.634687,1.366474)
\curveto(10.664687,1.5007813)(10.699688,1.3458115)(10.704687,1.0565344)
\curveto(10.709687,0.7672571)(10.734688,0.27135375)(10.754687,0.06472717)
\curveto(10.774688,-0.1418994)(10.854688,-0.059249267)(10.914687,0.23002808)
\curveto(10.974688,0.5193054)(11.094687,1.1185223)(11.154688,1.4284619)
\curveto(11.214687,1.7384015)(11.284687,2.1413233)(11.294687,2.2343054)
\curveto(11.3046875,2.3272872)(11.319688,2.3892753)(11.324688,2.3582811)
\curveto(11.329687,2.3272872)(11.359688,2.0173473)(11.384687,1.7384018)
\curveto(11.409687,1.4594562)(11.449688,0.9325586)(11.464687,0.6846066)
\curveto(11.479688,0.43665466)(11.504687,0.03373291)(11.514688,-0.12123657)
\curveto(11.524688,-0.27620667)(11.544687,-0.5551526)(11.5546875,-0.6791284)
\curveto(11.564688,-0.8031042)(11.599688,-0.9993994)(11.624687,-1.0717187)
\curveto(11.649688,-1.1440381)(11.704687,-1.1130444)(11.734688,-1.0097308)
\curveto(11.764688,-0.90641725)(11.8046875,-0.58614624)(11.814688,-0.36918822)
\curveto(11.824688,-0.15223022)(11.849688,0.31267884)(11.864688,0.5606305)
\curveto(11.879687,0.8085821)(11.909687,1.2218354)(11.924687,1.3871366)
\curveto(11.939688,1.5524378)(11.959687,1.5937629)(11.964687,1.469787)
\curveto(11.969687,1.3458111)(11.984688,0.9738837)(11.994687,0.725932)
\curveto(12.004687,0.47798035)(12.029688,0.075058594)(12.044687,-0.0799115)
\curveto(12.059688,-0.2348816)(12.104688,-0.451839)(12.134687,-0.5138269)
\curveto(12.164687,-0.57581544)(12.284687,-0.4001831)(12.374687,-0.16256225)
\curveto(12.464687,0.075058594)(12.594687,0.52963686)(12.634687,0.74659455)
\curveto(12.674687,0.96355224)(12.739688,0.96355224)(12.764688,0.74659455)
\curveto(12.789687,0.52963686)(12.829687,0.16804047)(12.844687,0.0234021)
\curveto(12.859688,-0.12123657)(12.889688,-0.32786316)(12.904688,-0.38985106)
\curveto(12.919687,-0.451839)(12.954687,-0.46217042)(12.974688,-0.4105139)
\curveto(12.994687,-0.35885742)(13.024688,-0.2348816)(13.034687,-0.16256225)
}
\psframe[linewidth=0.04,dimen=outer,fillstyle=solid,fillcolor=color78b](2.2746875,1.3382813)(1.0946875,0.15828125)
\psline[linewidth=0.04cm,linestyle=dashed,dash=0.16cm 0.16cm,arrowsize=0.05291667cm 2.0,arrowlength=1.4,arrowinset=0.4]{->}(2.4946876,1.2982812)(8.534687,3.4382813)
\psline[linewidth=0.04cm,linestyle=dashed,dash=0.16cm 0.16cm,arrowsize=0.05291667cm 2.0,arrowlength=1.4,arrowinset=0.4]{->}(2.5146875,-0.02171875)(8.514688,-2.3217187)
\pscustom[linewidth=0.04,linecolor=color77]
{
\newpath
\moveto(1.1346875,0.5782812)
\lineto(1.2146875,0.5782812)
\curveto(1.2546875,0.5782812)(1.3196875,0.59328127)(1.3446875,0.60828125)
\curveto(1.3696876,0.62328124)(1.4196875,0.6532813)(1.4446875,0.66828126)
\curveto(1.4696875,0.68328124)(1.5146875,0.7082813)(1.5346875,0.71828127)
\curveto(1.5546875,0.72828126)(1.5996875,0.75328124)(1.6246876,0.7682812)
\curveto(1.6496875,0.78328127)(1.6946875,0.78328127)(1.7146875,0.7682812)
\curveto(1.7346874,0.75328124)(1.7796875,0.72828126)(1.8046875,0.71828127)
\curveto(1.8296875,0.7082813)(1.8896875,0.68828124)(1.9246875,0.67828125)
\curveto(1.9596875,0.66828126)(2.0246875,0.6532813)(2.0546875,0.6482813)
\curveto(2.0846875,0.6432812)(2.1496875,0.6382812)(2.1846876,0.6382812)
\curveto(2.2196875,0.6382812)(2.2646875,0.6382812)(2.2946875,0.6382812)
}
\usefont{T1}{ptm}{m}{n}
\rput(9.179961,-3.5567188){\Large $G$}
\usefont{T1}{ptm}{m}{n}
\rput(1.779961,-0.9767187){\Large $\real^{2d+d'+1}$}
\usefont{T1}{ptm}{m}{n}
\rput(12.1499605,2.6632812){\Large $\mathrm{Im}\,e_u$}
\end{pspicture} 
}}
\caption{A schematic picture of the choice of local charts.}
\label{fig:choice}
\end{figure}

Our solution for this problem is rather simple-minded: We  choose the system of local  charts so that the section $\mathrm{Im}\, e_u$ ``looks" horizontal. 
 (See Figure \ref{fig:choice}.) More precisely, we choose local coordinate charts for the parameter $\omega\in \integer$ so that the objects look contracted by the rate $\langle \omega \rangle^{-(1-\beta)(1/2-\theta)-4\theta}$ in the fiber directions of the Grassmann bundle.
Then the ``vertical" variation of the section $\mathrm{Im}\, e_u$ in such coordinates will be bounded by
 \[
 \langle \omega\rangle^{-\beta(1/2-\theta)}\cdot \langle \omega \rangle^{-(1-\beta)(1/2-\theta)-4\theta} =\langle \omega\rangle^{-1/2-3\theta}\ll \langle \omega\rangle^{-1/2}
 \]
 so that the section $\mathrm{Im}\, e_u$ will look ``horizontal" in the scale $\langle \omega\rangle^{-1/2}$.  
 Of course, there are some drawbacks of such choice of (asymptotically) singular  local charts. At least, we have to be careful about the  non-linearity of the flow viewed in such local charts. Also we will find a related technical problem, as  discussed in the beginning of the next section.
\begin{Remark}
The idea mentioned above is equivalent to consider a generalization of Bargmann transform using eccentric wave packets. 
\end{Remark}

\subsection{Hyperbolicity of the flow  $f^t_G$}\label{ss:hyperbolicity} 
The flow $f^t_G$ is hyperbolic in the neighborhood $U_0$ of the section $\mathrm{Im}\, e_u$ with hyperbolic decomposition given in (\ref{eq:hyp_docomp_G}), as we noted in Section \ref{sec:Gext}.  
For the argument below, we take the ``maximum" exponent  $\chi_{\max}>\chi_0$ so that  
\begin{equation}\label{eq:chi_max}
|Df^t_G(v)|\le C_0\cdot e^{\chi_{\max}\cdot |t|}\cdot |v|\quad \mbox{for any $t\in \real$ and $v\in TU_0$}
\end{equation}
with some constant $C_0>0$. 
\begin{Remark}\label{rem:choice_chi0}
The hyperbolicity exponent $\chi_0>0$ was taken as the constant satisfying the condition (\ref{eq:Anosov2}). But, in what follows, we additionally suppose that the condition (\ref{eq:Anosov2}) remains true if we replace  $\chi_0$ with $\chi_0+\varepsilon$ for some small $\varepsilon>0$. Since we have not used $\chi_0$ from Section \ref{sec:linear} to this point and since the equalities in the main theorems related to $\chi_0$ are strict ones, this does not cause any loss of generality. 
\end{Remark}

In the next lemma, we introduce a continuous (but not necessarily smooth) Riemann metric $|\cdot|_*$ on $U_0\subset G$ which is adapted to the flow. 
\begin{Lemma}\label{lm:metric}
There exists a $\beta$-H\"older continuous Riemann metric $|\cdot|_{*}$ on  $U_0\subset G$ such that, for the decomposition 
$TU_0=\widetilde{E}_u\oplus \widetilde{E}_s\oplus \widetilde{E}_0$ in {\rm (\ref{eq:hyp_docomp_G})}, we have
 
\begin{enumerate}
\item $|Df^t_G(v)|_*\ge e^{\chi_0 t}\cdot |v|_*$ for $v\in \widetilde{E}_u$ and $t\ge 0$, 
\item $|Df^t_G(v)|_*\le e^{-\chi_0 t}\cdot  |v|_*$ for $v\in \widetilde{E}_s$ and $t\ge 0$,  
\item $|v|_*=|\alpha(v)|$ for $v\in \widetilde{E}_0$, 
\item $\widetilde{E}_s$, $\widetilde{E}_u$ and $\widetilde{E}_0$ are orthogonal to each other with respect to the metric $|\cdot|_*$,  
\item $|v|_*=\sup\{ d\alpha(v,v') \mid  v'\in \widetilde{E}_s, |v'|_*=1\}$ for $v\in \widetilde{E}_u$. 
\end{enumerate}
\end{Lemma}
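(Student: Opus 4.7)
The plan is to build $|\cdot|_*$ separately on each of the three subbundles of the decomposition (\ref{eq:hyp_docomp_G}) and then declare them mutually orthogonal, which forces (4) and extends $|\cdot|_*$ to all of $TU_0$. Condition (3) itself dictates the choice on the flow direction: for $v \in \widetilde{E}_0$ set $|v|_* := |\pi_G^*\alpha(v)|$. Because $f^t$ preserves $\alpha$ and $\pi_G$ intertwines $f^t_G$ with $f^t$, this metric is automatically $Df^t_G$-invariant.

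On $\widetilde{E}_s$, let $|\cdot|_g$ denote the ambient smooth Riemann metric on $G$ and use the enlargement of the hyperbolicity exponent noted after (\ref{eq:chi_max}) (so that (\ref{eq:Anosov2}) holds with some $\chi_1 > \chi_0$) to define a Lyapunov average
\[
|v|_*^2 := \int_0^{\infty} e^{2\chi_0 t}\,|Df^t_G(v)|_g^2\, dt \qquad (v\in \widetilde{E}_s),
\]
which converges by $|Df^t_G v|_g\le C e^{-\chi_1 t}|v|_g$. A change of variable $u=t+s$ yields
\[
|Df^s_G v|_*^2 = e^{-2\chi_0 s}\int_s^\infty e^{2\chi_0 u}|Df^u_G v|_g^2\,du \le e^{-2\chi_0 s}|v|_*^2,
\]
which is exactly (2). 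The two-sided comparability of $|\cdot|_*$ and $|\cdot|_g$ on $\widetilde{E}_s$ (giving continuity of $|\cdot|_*$) follows from the exponential bound above together with a short-time lower bound obtained from $|Df^t_G v|_g\ge c|v|_g$ for $t$ in a small interval $[0,\epsilon]$.

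On $\widetilde{E}_u$, take (5) itself as the \emph{definition}. Property (1) then follows from (2), the $f^t_G$-invariance of $\pi_G^*d\alpha$ (immediate from $f^t$-invariance of $\alpha$), and the fact that $Df^t_G$ restricts to a linear isomorphism $\widetilde{E}_s(x)\to\widetilde{E}_s(f^t_G x)$ (equal dimensions and forward invariance). The essential algebraic observation is that the $\widetilde{E}_s$- and $\widetilde{E}_0$-components of $Df^t_G v$ pair trivially against $\widetilde{E}_s$ via $\pi_G^*d\alpha$: indeed $E_s$ is Lagrangian in $\ker\alpha$ (standard consequence of $f^t$-invariance and contraction on $E_s$), and since $\alpha(V)=1$ with $\alpha$ invariant, Cartan's formula forces $\iota_V d\alpha=0$, i.e.\ $V$ is Reeb. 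Writing $v^t_u$ for the $\widetilde{E}_u$-projection of $Df^t_G v$ and substituting $u'=\tilde u'/|Df^t_G \tilde u'|_*$ in the sup (5) that computes $|v^t_u|_*$, one obtains
\[
|v^t_u|_* = \sup_{\tilde u' \in \widetilde{E}_s(x),\ |\tilde u'|_*=1} \frac{\pi_G^*d\alpha(v,\tilde u')}{|Df^t_G \tilde u'|_*} \ge e^{\chi_0 t}|v|_*
\]
by (2), and the Pythagorean inequality $|Df^t_G v|_*\ge |v^t_u|_*$ gives (1).

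The main delicate point will be verifying that the supremum in (5) actually defines a norm, not merely a seminorm, on $\widetilde{E}_u$. The possible degeneracy is that $\widetilde{E}_s$ contains the vertical subbundle $V_\perp=\ker D\pi_G$, on which $\pi_G^*d\alpha$ vanishes identically. Positivity of the supremum for nonzero $v\in\widetilde{E}_u$ is recovered from the facts that $D\pi_G$ maps $\widetilde{E}_u$ isomorphically onto $E_u$ (dimensions match) and that $d\alpha$ is nondegenerate as a pairing between $E_u$ and $E_s$ inside $\ker\alpha$; a horizontal lift into $\widetilde{E}_s$ of a suitable element of $E_s$ then supplies a nonzero pairing. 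Finiteness of the supremum follows from the norm equivalence of $|\cdot|_*$ and $|\cdot|_g$ on $\widetilde{E}_s$ established in the second step.
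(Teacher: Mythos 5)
Your proposal is correct and follows essentially the same route as the paper: an adapted (Lyapunov) average defines $|\cdot|_*$ on $\widetilde{E}_s$, conditions (3) and (5) are taken as definitions on $\widetilde{E}_0$ and $\widetilde{E}_u$, orthogonality (4) is imposed, and (1) is derived from (2) using the invariance of $\pi_G^*d\alpha$ together with the isomorphism $Df^t_G:\widetilde{E}_s(x)\to\widetilde{E}_s(f^t_G(x))$. The only differences are cosmetic: the paper averages over a finite time window $\int_0^T e^{\chi_0 t}|Df^t_G v|\,dt$ with $T$ large, whereas you use an infinite $L^2$ average (legitimate, since the paper arranges (\ref{eq:Anosov2}) to hold with a constant slightly larger than $\chi_0$), and your extra care about the non-invariance of $\widetilde{E}_u$ (the Pythagorean step) and the nondegeneracy of the pairing in (5) despite $\ker D\pi_G\subset\widetilde{E}_s$ supplies details the paper leaves implicit.
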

\begin{proof} The construction is standard. For $v\in\widetilde{E}_s$, we set 
\[
|v|_*=\int_0^T e^{+\chi_0 t} |Df_G^t(v)| dt
\]
where $T>0$ is a constant. Letting  $T>0$ be sufficiently large and using Remark \ref{rem:choice_chi0}, we see that the condition  (2) is fulfilled for $0\le t\le 1$ and hence for all $t\ge 0$. For the vectors in $\widetilde{E}_u$ and $\widetilde{E}_0$, we define the norm $|\cdot|_*$ uniquely so that the conditions (5) and (3) hold. We extend such construction so that the condition (4) holds.
Then it is easy to check the condition (1): For $v\in \widetilde{E}_u$, we have
\begin{align*}
|Df^t_G(v)|_*&=\sup\{ d\alpha(Df^t(v), v')\mid v'\in \widetilde{E}_s, |v'|_*=1\}\\
&=\sup\{ d\alpha(v, v'')\mid v''\in \widetilde{E}_s, |Df^t(v'')|_*=1\}\ge e^{\chi_0 t}\cdot |v|_*. 
\end{align*}
The H\"older continuity of $\|\cdot\|_*$ follows from that of the decomposition  $TU_0=\widetilde{E}_u\oplus \widetilde{E}_s\oplus \widetilde{E}_0$ and the construction above. 
\end{proof}

\subsection{Darboux charts} \label{ss:Darboux} 
The next lemma is a slight extension of the Darboux theorem \cite[pp.168]{Aebischer} for contact structure.  
\begin{Lemma}\label{th:Darboux}
There exists a finite system of local coordinate charts on $M$,
\[
\check{\kappa}_a:\check{V}_a\subset \real^{2d+1}_{(x,z)} \to \check{U}_a:=\check{\kappa}_a(\check{V}_a) \subset M\quad \text{for }a\in A,
\]
and corresponding  local coordinate charts on $G$,
\[
{\kappa}_a:V_a\subset  \real^{2d+d'+1}_{(x,y,z)} \to U_a:=\kappa_a(V_a) \subset G \quad \text{for }a\in A,
\]
such that  
\begin{enumerate}
\item $\check{V}_a=\proj_x(\check{V}_a)\times (-s_0,s_0)$ for some $s_0>0$,
\item $\check{\kappa}_a$ are Darboux charts, that is, $\check{\kappa}^*_a\alpha=\alpha_0$ on $\check{U}_a$, where $\alpha_0$ is the standard contact form in (\ref{eq:walpha}).  
\item $U_a=\pi_G^{-1}(\check{U}_a)\cap U_0$, where $U_0$ is the absorbing neighborhood of the attractor $\mathrm{Im}(e_u)$, and the following diagram commutes:
\[
\begin{CD}
U_a@<{\kappa_a}<< V_a\subset \real^{2d+d'+1}_{(x,y,z)}\\
@V{\pi_G}VV @V{\proj_{(x,z)}}VV\\
\check{U}_a @<{\check{\kappa}_a}<<  \check{V}_a\subset \real^{2d+1}_{(x,z)}
\end{CD}
\]
\item the pull-back of the generating vector field of $f^t_G$ by $\kappa_a$ is the  (constant) vector field $\partial_z$ on $\real^{2d+d'+1}_{(x,y,z)}$.
\end{enumerate}
\end{Lemma}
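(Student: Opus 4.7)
The plan has two parts: construct Darboux charts on the base $M$ that straighten the generating vector field $V$ of the flow, then lift each of them to a chart on $G$ using the induced flow $f_G^t$.

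For the base, the key observation is that $V$ is the Reeb vector field of $\alpha$. Indeed, $\alpha(V)\equiv 1$ and $(f^t)^*\alpha=\alpha$ imply $\mathcal{L}_V\alpha=0$, so Cartan's formula gives $\iota_V d\alpha = \mathcal{L}_V\alpha - d(\alpha(V)) = 0$. The classical Darboux theorem for contact forms then produces, around each point of $M$, a chart $\check{\kappa}\colon\check{V}\to\check{U}\subset M$ with $\check{\kappa}^*\alpha=\alpha_0$. Because the Reeb vector field is determined uniquely by the contact form, and the Reeb field of $\alpha_0=dz-q\,dp+p\,dq$ is $\partial_z$, the pull-back of $V$ by $\check{\kappa}$ is $\partial_z$. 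Extracting a finite subcover $\{\check{U}_a\}_{a\in A}$ by compactness of $M$ yields condition (1) together with the base-projection of (3).

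For the lift, fix $a\in A$ and, after shrinking $\check{V}_a$ if necessary, assume it contains a transversal slice $S_a:=\check{V}_a\cap\{z=0\}$. The preimage $\pi_G^{-1}(\check{\kappa}_a(S_a))$ is a smooth Grassmann bundle of fiber dimension $d'=d^2+d$, and $U_0\cap\pi_G^{-1}(\check{\kappa}_a(S_a))$ is a relatively compact neighborhood of $e_u(\check{\kappa}_a(S_a))$. I would choose a smooth diffeomorphism
\[
\eta_a\colon \proj_{x}(\check{V}_a\cap\{z=0\})\times W_a\;\longrightarrow\; U_0\cap \pi_G^{-1}(\check{\kappa}_a(S_a)),
\]
with $W_a\subset\real^{d'}$ open, and define
\[
\kappa_a(x,y,z):=f_G^{z}\bigl(\eta_a(x,y)\bigr)
\]
on the open set $V_a$ of those $(x,y,z)$ for which this expression is defined and lies in $\pi_G^{-1}(\check{U}_a)\cap U_0$.

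By construction $(\kappa_a)_*\partial_z$ is the generating vector field of $f_G^t$, which gives (3). The commutativity of the diagram in (2) follows from $\pi_G\circ f_G^t=f^t\circ\pi_G$ together with the fact that in the Darboux chart $f^t$ is translation by $t$ in $z$. The one mild obstacle is to ensure $\kappa_a(V_a)=\pi_G^{-1}(\check{U}_a)\cap U_0$: this requires that the flow-box construction reach every fiber over $\check{U}_a$. Since $U_0$ is a relatively compact absorbing neighborhood of $\mathrm{Im}(e_u)$ and $e_u$ is continuous, after shrinking $\check{U}_a$ we can arrange that $U_0\cap\pi_G^{-1}(\check{U}_a)$ sits inside a single Grassmannian trivializing neighborhood along each flow line in the chart; compactness then lets us extract a finite refinement, completing the construction.
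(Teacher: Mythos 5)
Your proposal is correct and follows essentially the same route as the paper: Darboux charts for $\alpha$, identification of the generating vector field with the Reeb field of $\alpha$ (hence $\partial_z$ in the coordinates), and extension to $G$ by flowing a fiberwise trivialization over a transversal slice — precisely the step the paper dispatches with ``we can easily define the extended charts''. Only be sure to take $\eta_a$ as an actual bundle trivialization, i.e. $\pi_G\circ\eta_a(x,y)=\check{\kappa}_a(x,0)$ (otherwise the diagram in (2) does not commute), and to trivialize over a slightly larger fiber neighborhood (or place the slice at the top of the flow box), since $U_0$ is only forward invariant and points of $U_0$ flowed backward may leave it; with that adjustment the image is exactly $\pi_G^{-1}(\check{U}_a)\cap U_0$.
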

\begin{proof} The Darboux theorem for contact structure  gives the Darboux charts $\check{\kappa}_a$, $a\in A$, satisfying the condition (2).  The generating vector field viewed in those coordinates are the constant vector field $\partial_z$ on $\real^{2d+1}_{(x,z)}$ because it is characterized as the Reeb vector field of $\alpha$. 
Then  we  can easily define the extended charts $\kappa_a$ for $a\in A$ so that the conditions (1), (3) and (4) hold.\end{proof}

We henceforth fix the local charts $\kappa_a$ in the lemma above. 
The time-$t$-map of the flow $f^t_{G}$ viewed in those local charts are 
\[
f^t_{a\to a'}:=\kappa_{a'}^{-1}\circ f^t_{G}\circ \kappa_a:\kappa^{-1}_a(U_a\cap f^{-t}_G(U_{a'}))\to \kappa_{a'}^{-1}(f^t_G(U_a)\cap U_{a'}).
\] 
The  next lemma is a consequence of  the choice of the local charts $\kappa_a$. 
\begin{Lemma}
The mappings $f^t_{a\to a'}:\kappa^{-1}_a(U_a\cap f^{-t}_G(U_{a'}))\to \kappa_{a'}^{-1}(f^t_G(U_a)\cap U_{a'})$ are fibered contact diffeomorphisms (defined in Definition \ref{def:fiberedContact}). Further we have
\[
f^{t+s}_{a\to a'}(x,y,z)=f^{t}_{a\to a'}(x,y,z+s)
\]
provided that both sides are defined and $s>0$ is sufficiently small. 
\end{Lemma}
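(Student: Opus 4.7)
The plan is to extract the claimed structure of $f^t_{a\to a'}$ purely from the three defining properties of the charts $\kappa_a,\check{\kappa}_a$ in Lemma \ref{th:Darboux}. First I would use property (2) — the compatibility of $\pi_G$ with the projection $\proj_{(x,z)}$ — to conclude that the $(x,z)$-components of $f^t_{a\to a'}(x,y,z)$ depend only on $(x,z)$ and coincide with the conjugated base flow $\check{f}^t_{a\to a'}:=\check{\kappa}_{a'}^{-1}\circ f^t\circ \check{\kappa}_a$. Since $\check{\kappa}_a^*\alpha=\alpha_0$ and $(f^t)^*\alpha=\alpha$, a direct pull-back computation gives $(\check{f}^t_{a\to a'})^*\alpha_0=\alpha_0$, which will yield condition (2) of Definition \ref{def:fiberedContact} once the form of $\check{f}^t_{a\to a'}$ is pinned down.

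Next I would exploit property (3): the generating vector field of $f^t_G$ pulls back to $\partial_z$ in both charts, so for sufficiently small $s>0$ one has $\kappa_a(x,y,z+s)=f^s_G(\kappa_a(x,y,z))$ and similarly for $\kappa_{a'}$. Writing $f^t_{a\to a'}(x,y,z)=(F_1,F_2,F_3)(x,y,z)$ and applying this identity on both sides of
\[
\kappa_{a'}\bigl(f^t_{a\to a'}(x,y,z+s)\bigr)=f^{t+s}_G(\kappa_a(x,y,z))=f^s_G\bigl(\kappa_{a'}(f^t_{a\to a'}(x,y,z))\bigr),
\]
I obtain $f^t_{a\to a'}(x,y,z+s)=f^t_{a\to a'}(x,y,z)+(0,0,s)$. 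This forces $F_1$ and $F_2$ to be independent of $z$ and $F_3(x,y,z)=z+\tau(x,y)$. Combining with the first step (where $F_1$ depends only on $x$ and $F_3$ only on $(x,z)$), I get the required form $f^t_{a\to a'}(x,y,z)=(\tilde f(x),\hat f(x,y),z+\tau(x))$, and the base diffeomorphism is precisely $\check{f}^t_{a\to a'}$, which preserves $\alpha_0$ by the previous paragraph. This establishes that $f^t_{a\to a'}$ is a fibered contact diffeomorphism.

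The commutation identity $f^{t+s}_{a\to a'}(x,y,z)=f^t_{a\to a'}(x,y,z+s)$ is then immediate: the group property of $f^t_G$ gives $f^{t+s}_G=f^t_G\circ f^s_G$, and using $\kappa_a(x,y,z+s)=f^s_G(\kappa_a(x,y,z))$ (valid for $s$ small enough that both sides lie in the relevant domains) yields
\[
f^{t+s}_{a\to a'}(x,y,z)=\kappa_{a'}^{-1}\circ f^t_G\circ f^s_G\circ \kappa_a(x,y,z)=\kappa_{a'}^{-1}\circ f^t_G\circ \kappa_a(x,y,z+s)=f^t_{a\to a'}(x,y,z+s).
\]

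There is no real obstacle; the only point that requires a bit of care is ensuring that the domain conditions are respected when invoking the translation identity $\kappa_a(x,y,z+s)=f^s_G(\kappa_a(x,y,z))$, which is why the hypothesis ``$s>0$ sufficiently small'' appears in the statement. Otherwise the argument is a direct consequence of properties (1)--(3) of the chart system together with the $f^t$-invariance of $\alpha$.
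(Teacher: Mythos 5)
Your proposal is correct, and it is exactly the argument the paper has in mind: the paper omits the proof, noting only that the lemma "is a consequence of the choice of the local charts $\kappa_a$," and your derivation simply spells this out using properties (1)--(3) of Lemma \ref{th:Darboux} together with $\pi_G\circ f^t_G=f^t\circ\pi_G$ and the $f^t$-invariance of $\alpha$. The only point needing care is the domain restriction when invoking $\kappa_a(x,y,z+s)=f^s_G(\kappa_a(x,y,z))$, which you correctly identify as the reason for the hypothesis that $s>0$ be sufficiently small.
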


Let  $K_0\Subset U_0$ be a compact neighborhood  of the section $\mathrm{Im}\, e_u$ such that 
\begin{equation}\label{eq:choice_K0}
f_G^t(K_0)\Subset K_0\;\;\mbox{ for  all $t> 0$. }
\end{equation}
We take and fix  a family of  $C^\infty$ functions 
\[
\rho_a:V_a\to [0,1]\quad\mbox{for $a\in A$}
\]
such that  $\supp \rho_a\Subset V_a$ and that
\[
\sum_{a\in A}\rho_a\circ \kappa_a^{-1}\equiv 
\begin{cases}
1,&\qquad \mbox{on $K_0$;}\\
0,&\qquad \mbox{on a neighborhood of $G\setminus U_0$.}
\end{cases}
\]
For the argument in the later sections, we take another family of smooth functions 
\[
\tilde{\rho}_a:V_a\to [0,1]
\quad \mbox{
for $a\in A$}
\]
such that $\supp \tilde{\rho}_a\subset V_a$ and $\tilde{\rho}_a\equiv 1$ on $\supp \rho_a$.

\subsection{The local charts adapted to the hyperbolic structure}
\label{ss:local_coord_hyp}
In the next proposition,  we construct local  charts that are more adapted to the hyperbolic structure of the flow $f_G^t$, by pre-composing  affine transformations. These local charts are  centered at the points of the form $e_u(\check{\kappa}_a(x,0))$ with $x\in \proj_{x}({V}_a)\subset \real^{2d}$.
\begin{Proposition}\label{pp:kappaaw}
For $a\in A$ and $x\in \proj_{x}({V}_a)\subset \real^{2d}$, we can choose an affine transformation 
\[
A_{a,x}:\real^{2d+d'+1}_{(x,y,z)}\to \real^{2d+d'+1}_{(x,y,z)}
\]
in the transformation group $\cA_0$ (defined in Definition \ref{def:transG}) so that, if we set  
\[
\kappa_{a,x}:= \kappa_{a}\circ A_{a,x}:A_{a,x}^{-1}(V_a)\to U_a,
\]
it sends the origin $0\in \real^{2d+d'+1}_{(x,y,z)}$ to the point $e_u(\check{\kappa}_a(x,0))\in \mathrm{Im}\, e_u$ and  
 the differential $(D{\kappa}_{a,x})_0$ is isometric with respect to  the Euclidean metric in the source and the Riemann metric $|\cdot|_*$ in the target and, further, $(D{\kappa}_{a,x})_0$ sends the components of the decomposition
\[
T_{0}\real^{2d+d'+1}_{(x,y,z)}=\real^{d}_q\oplus \real^{d}_p\oplus \real^{d'}_y\oplus \real_z 
\]
to those of the decomposition
\[
T_{e_u(\check{\kappa}_a(x,0))} G=\widetilde{E}_u\oplus_* (\widetilde{E}_s\ominus_*  \ker D\pi_G)\oplus \ker D\pi_{G} \oplus \widetilde{E}_0
\]
respectively in this order. ($\widetilde{E}_s\ominus_*  \ker D\pi_G$ denotes the orthogonal complement of $\ker D\pi_G$ in $\widetilde{E}_s$ with respect to the Riemann metric $\|\cdot\|_*$.)

\end{Proposition}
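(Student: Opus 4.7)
The plan is to build $A_{a,x}$ in two stages: first align the subspace structure using Lemma \ref{lm:affine_trans}, then rescale within each component by a linear contact transformation in $\cA_0$.

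For the first stage, set $p_{a,x}:=e_u(\check{\kappa}_a(x,0))$ and $q_{a,x}:=\kappa_a^{-1}(p_{a,x})$; the commutative diagram of Lemma \ref{th:Darboux} forces $\proj_{(x,z)}(q_{a,x})=(x,0)$, so $q_{a,x}=(x,y_{a,x},0)$ for some $y_{a,x}\in\real^{d'}$. Define
\[
F_u := (D\kappa_a)^{-1}_{q_{a,x}}\widetilde{E}_u,\qquad F_s:=(D\kappa_a)^{-1}_{q_{a,x}}(\widetilde{E}_s\ominus \ker D\pi_G).
\]
Since the stable and unstable subbundles of a contact Anosov flow are Lagrangian in $\ker\alpha$ and $\check{\kappa}_a^*\alpha=\alpha_0$, the projections $\proj_{(x,z)}(F_u)=D\check{\kappa}_a^{-1}(E_u)$ and $\proj_{(x,z)}(F_s)=D\check{\kappa}_a^{-1}(E_s)$ form a transverse Lagrangian decomposition of $\ker\alpha_0$ at $(x,0)$. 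Apply Lemma \ref{lm:affine_trans} with $w=q_{a,x}$, $\ell=F_u$, $\ell'=F_s$ to obtain $a_1\in\cA_0$ with $a_1(0)=q_{a,x}$, $(Da_1)_0\real^d_q=F_u$, $(Da_1)_0\real^d_p=F_s$. Since any fibered contact affine map has the form $(x,y,z)\mapsto(\tilde{a}(x),\hat{a}(x,y),z+\tau(x))$, one has $(Da_1)_0\partial_z=\partial_z$ and $(Da_1)_0\real^{d'}_y\subset\real^{d'}_y$ (hence equals it); since $\kappa_a$ sends $\real^{d'}_y$ onto $\ker D\pi_G$ and $\partial_z$ onto the flow generator, $(D(\kappa_a\circ a_1))_0$ already carries the standard decomposition onto the target decomposition in the prescribed order.

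For the second stage, I post-compose $a_1$ with a linear transformation
\[
T(q,p,y,z)=(Mq,\,M^\dag p,\,\widehat{M}y,\,z),\qquad M\in GL_d(\real),\ \widehat{M}\in GL_{d'}(\real).
\]
The identity $M^{T}M^{\dag}=I$ shows $T$ preserves $\alpha_0=dz-q\,dp+p\,dq$, so $T\in\cA_0$, and $T$ preserves the standard decomposition. The $\real_z\to\widetilde{E}_0$ component is automatically an isometry by Lemma \ref{lm:metric}~(3), since $(D\kappa_a)\partial_z=V$ and $|V|_{*}=|\alpha(V)|=1=|\partial_z|_{\mathrm{Eucl}}$. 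One can freely choose $\widehat{M}$ so that the $\real^{d'}_y\to\ker D\pi_G$ component becomes Euclidean-to-$|\cdot|_{*}$ isometric, and then choose $M$ so that the $\real^d_q\to\widetilde{E}_u$ component is isometric.

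The main obstacle is verifying that this same $M$ automatically produces an isometry on the $\real^d_p\to\widetilde{E}_s\ominus\ker D\pi_G$ component. This compatibility is the content of property~(5) of Lemma \ref{lm:metric}: up to the fixed scalar arising from $d\alpha_0=-2\,dq\wedge dp$, which can be absorbed once and for all into the normalization of the reference metric defining $|\cdot|_*$, the pairing $d\alpha$ identifies $|\cdot|_*$ on $\widetilde{E}_u$ with the dual of $|\cdot|_*$ on $\widetilde{E}_s\ominus\ker D\pi_G$, while $M^TM^\dag=I$ identifies the Euclidean metric on $\real^d_q$ with the dual (under $-2\,dq\wedge dp$) of the Euclidean metric on $\real^d_p$. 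Since $(D(\kappa_a\circ a_1))_0$ pulls $d\alpha$ back to $d\alpha_0$ via $\proj_{(x,z)}$, these two dualities align, forcing the isometry on the $p$-component once the isometry on the $q$-component is arranged. Setting $A_{a,x}:=a_1\circ T$ then yields the required affine transformation.
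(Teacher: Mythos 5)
Your proof is correct and follows essentially the same route as the paper: alignment of the four subspaces via Lemma \ref{lm:affine_trans}, then a block rescaling of the form $M\oplus M^\dag\oplus\widehat{M}\oplus\mathrm{Id}$ inside $\cA_0$, with the remaining components recovered from the $d\alpha$-duality and the normalization $\alpha(V)\equiv 1$ in Lemma \ref{lm:metric}. The only immaterial difference is the mirror choice of which leg you normalize (the paper fixes the $p$- and $y$-components and deduces the $q$- and $z$-components from Lemma \ref{lm:metric}(3),(5), then invokes (4) for orthogonality, whereas you fix $q$ and $y$ and deduce $p$), together with your explicit verification of the Lagrangian hypotheses of Lemma \ref{lm:affine_trans} and your remark on the scalar coming from $d\alpha_0=-2\,dq\wedge dp$, which the paper leaves implicit.
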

\begin{proof} By applying  Lemma \ref{lm:affine_trans}, we can find the affine map $A_{a,x}$ such that all the conditions in the conclusion hold true, but for the isometric property. By pre-composing a simple linear map of the form 
\[
C\oplus C^{\dag}\oplus \widehat{C}\oplus \mathrm{Id}:\real^{d}_q\oplus \real_p^d\oplus \real_y^{d'}\oplus \real_z\to \real^{d}_q\oplus \real^d_p\oplus \real_y^{d'}\oplus \real_z,
\]
we may modify $A_{a,x}$ so that $(D{\kappa}_{a,x})_0$ restricted to $\{0\}\oplus \real^{d}_p\oplus \{0\}\oplus \{0\}$ and $\{0\}\oplus \{0\} \oplus \real^{d'}_y\oplus \{0\}$  are respectively isometric. Then, from  (3) and (5)   in Lemma \ref{lm:metric}, we  see that  $(D{\kappa}_{a,x})_0$ restricted to 
$\{0\}\oplus \{0\}\oplus \{0\}\oplus \real_{z}$ and $\real^d_q\oplus \{0\}\oplus \{0\}\oplus \{0\}$ are also  isometric and then, from  (4), we obtain the isometric property of $(D{\kappa}_{a,x})_0$. 
\end{proof}

\subsection{The local coordinate charts parametrized by $\omega\in \integer$}\label{ss:chart_omega}
For each integer $\omega\in \integer$, we set up a finite system of local charts and an associated partition of unity. 
Following the idea explained in the beginning of this section, 
we define the local charts  as the composition of  the charts $\kappa_{a,x}$ introduced in the last subsection with the partially expanding linear map  
\begin{equation}\label{eq:Eomega}
E_\omega:\real^{2d+d'+1}_{(x,y,z)}\to \real^{2d+d'+1}_{(x,y,z)},\quad 
E_\omega(x,y,z)=(x,\langle \omega\rangle^{(1-\beta)(1/2-\theta)+4\theta} y,z).
\end{equation} 
\begin{Remark}\label{rem:num}
We will use the numerical relation 
\begin{equation}\label{eq:theta_beta}
(1-\beta)(1/2-\theta)\le (1-\beta)/2\le 1/2-10\theta,
\end{equation}
which follows from the choice of the constant $\theta>0$ in (\ref{eq:choice_of_theta}). 
\end{Remark}

For each $a\in A$ and $\omega\in \integer$, we consider the following  finite subset  of $\real^{2d}$:
\[
\cN (a,\omega)=\{n\in \real^{2d}\;\mid\; 
  n\in (\langle \omega\rangle^{-1/2+\theta}\cdot \integer^{2d}) \cap \proj_x(V_a) \}.
\]
For each element $n\in \cN (a,\omega)$, we define the local  chart $\kappa_{a,n}^{(\omega)}$ by
\begin{equation}\label{def:kappa_anw}
\kappa_{a,n}^{(\omega)}:= \kappa_{a,n}\circ E_{\omega}:
V_{a,n}^{(\omega)}:=E_{\omega}^{-1}\circ A_{a,n}^{-1}(V_a)\to U_a,
\end{equation}
where $\kappa_{a,n}$ is the local chart $\kappa_{a,x}$ defined in Proposition \ref{pp:kappaaw} for $x=n$. 

Next, for  each integer $\omega\in \integer$, we introduce  a partition of unity associated to the system of local coordinate charts $\{\kappa_{a,n}^{(\omega)}\}_{a\in A, n\in \cN (a,\omega)}$. First we take and fix  a smooth  function $\rho_0:\real^{2d}\to [0,1]$ so that the support is contained in the cube $(-1,1)^{2d}$ and that 
\begin{equation}\label{eq:rho_n}
\sum_{n\in \integer^{2d}} \rho_0(x-n)\equiv 1\quad \mbox{for all $x\in \real^{2d}$.}
\end{equation}
({For instance, set
$\rho_0(x)=\prod_{i=1}^{2d} (\chi(x_i+1)-\chi(x_i+2))$ for  $x=(x_i)_{i=1}^{2d}\in \real^{2d}$, using the function $\chi(\cdot)$ in (\ref{eq:def_chi}).}
)
For $a\in A$ and $n\in \cN(a,\omega)$, we define the function $\rho_{a,n}^{(\omega)}:\real^{2d+d'+1}_{(x,y,z)}\to [0,1]$ by
\begin{equation}\label{eq:rhoan}
\rho_{a,n}^{(\omega)}(x,y,z)=\rho_{a}(x',y',z')\cdot \rho_0(\langle \omega\rangle^{1/2-\theta}
(x'-n))
\end{equation}
where $
(x',y',z')=A_{a,n}\circ E_\omega(x,y,z)$. 
From this definition, we have
\begin{equation}\label{eq:relRho}
\rho_{a,n}^{(\omega)}\circ (\kappa_{a,n}^{(\omega)})^{-1}(p)=\rho_a\circ \kappa_a^{-1}(p)\cdot \rho_0(\langle \omega\rangle^{1/2-\theta}(\proj_x\circ \kappa_{a,n}^{-1}(p)-n)).
\end{equation}
Hence, from (\ref{eq:rho_n}) and the choice of $\rho_a$,  we have, for each $\omega\in \integer$, that
\[
\sum_{a\in A}\sum_{n\in \cN(a,\omega)} \rho_{a,n}^{(\omega)}\circ 
(\kappa_{a,n}^{(\omega)})^{-1}
=\sum_{a\in A} \rho_a\circ \kappa_a^{-1}
\equiv 
\begin{cases}
1&\qquad \mbox{on $K_0$;}\\
0&\qquad \mbox{on a neighborhood of $G\setminus U_0$.}
\end{cases}
\]
That is,  the set of functions
\[
\{\rho_{a,n}^{(\omega)}\circ 
(\kappa_{a,n}^{(\omega)})^{-1}:U_0\to [0,1] \mid a\in A, n\in \cN (a,\omega)\}
\]
is  a partition of unity on $K_0$ supported on $U_0$.

For the argument in the later sections, we define an ``enveloping" family of functions, $\tilde{\rho}_{a,n}^{(\omega)}(\cdot)$ for $\omega\in \integer$, $a\in A$ and $n\in \mathcal{N}(a,\omega)$, by
\begin{equation}\label{eq:trhoan}
\tilde{\rho}_{a,n}^{(\omega)}(x,y,z)=\tilde{\rho}_{a}(x',y',z')\cdot \tilde{\rho}_0(\langle \omega\rangle^{1/2-\theta}(x'-n))
\end{equation} 
where $(x',y',z')=A_{a,n}\circ E_\omega(x,y,z)$ and $\tilde{\rho}_0:\real^{2d}\to [0,1]$ is a $C^\infty$ function  supported on the cube $(-1,1)^{2d}$ such that $\tilde{\rho}_0\equiv 1$ on the support of $\rho_0$. 
By definition, we have $\tilde{\rho}_{a,n}^{(\omega)}(x,y,z)\equiv 1$ on the support of $\rho_{a,n}^{(\omega)}$.

\section{Modified  anisotropic Sobolev spaces $\cK^{r,\sigma}(K_0)$} \label{sec:modifiedASS}
We define the function spaces $\cK^{r,\sigma}(K_0)$, called {\em the modified anisotropic Sobolev spaces},  that consist of distributions on the neighborhood $K_0$ of the attracting section $\mathrm{Im}\, e_u$. The function spaces in Theorem \ref{th:spectrum} will be obtained from them by a simple procedure of time averaging of the norm. (See Definition \ref{def:modifiedAHS}.) 
We will use a  simple periodic partition of unity $\{q_\omega:\real\to [0,1]\}_{\omega\in \integer}$ defined by 
\begin{equation}\label{eq:chi_omega}
q_\omega(s)=\chi(s-\omega+1)-\chi(s-\omega+2).
\end{equation}

\subsection{The problems caused by the factor $E_\omega$}\label{ss:problem_K}
A simple idea to define the Hilbert spaces in Theorem \ref{th:spectrum}  is  to patch the anisotropic Sobolev spaces $\cH^{r,\sigma}(\real^{2d+d'+1})$ using  the local charts $\kappa_{a,n}^{(\omega)}$ and the partition of unity $\rho_{a,n}^{(\omega)}$. 
But, proceeding with this idea, we face one problem caused by the singularity of the local charts $\kappa_{a,n}^{(\omega)}$. This forces us to give a more involved definition of the modified anisotropic Sobolev space $\cK^{r,\sigma}(K_0)$ in the following subsections.

The difficulty may be explained as follows. 
(The explanation below may not be very clear and is not indispensable for the argument in the following subsections.)
For facility of explanation, suppose that $M=\real^{2d+1}_{(x,z)}$ and $G=\real^{2d+d'+1}_{(x,y,z)}$ and that the  the local chart to look functions with frequency around $\omega\in \integer$ along the flow direction (or the $z$-axis) is just the partial expanding map $E_\omega$ in (\ref{eq:Eomega}).
Then, if we follow the idea mentioned above, the norm that we consider for a function $u$ on $G=\real^{2d+d'+1}_{(x,y,z)}$ will look like 
\[
\|u\|=\|\mathfrak{W}^r\cdot \pBargmann u\|_{L^2}
\quad \mbox{with}\quad 
\mathfrak{W}^r(w,\xi_w,\xi_z):=\sum_{\omega}q_\omega(\xi_z)\cdot \cW^r\circ D^*E_\omega(w,\xi_w,\xi_z)
\]
where $D^*E_\omega:\real^{4d+2d'+1}_{(w,\xi_w,\xi_z)}\to \real^{4d+2d'+1}_{(w,\xi_w,\xi_z)}$ denotes the pull-back by $E_\omega$. The problem with this norm is that the function $\mathfrak{W}^r(\cdot)$ is rather singular in the limit $|\xi_w|\to \infty$.
To be more precise, let us consider two integers $0\ll \omega\ll \omega'$. 
Since the non-conformal property of $D^*E_\omega$ depends on $\omega$ and since  the weight function  $\cW^r$ is anisotropic, we can find  $\xi_w\in \real^{2d+d'}$ such that 
\begin{equation}\label{prob}
\lim_{s\to +\infty} \frac{\mathfrak{W}^r(0, s\cdot\xi_w,\omega)}{\mathfrak{W}^r(0,s\cdot\xi_w,\omega')}=\infty.
\end{equation}
while the distance between the points $(0, s\cdot\xi_w,\omega)$ and $(0, s\cdot\xi_w,\omega')$ is bounded uniformly in $s$. 
With this singularity\footnote{In terms of the theory of pseudo-differential operator, this implies that the function $\mathfrak{W}^r$ does not belong to an appropriate  class of symbols.}
 of $\mathfrak{W}^r$, even multiplications by moderate smooth functions will be unbounded with respect to the norm above.

We emphasize that the problem mentioned above does not affect the most essential part of our argument because it  happens only for the action of transfer operators on the wave packets that are very far from the trapped set. (Recall the explanation at the end of Section \ref{sec:Gext}.)  
The modification of the definition of the Hilbert spaces will be described in the following subsections. 
The idea is simply to relax the non-conformal property of the factor $E_\omega$ gradually and sufficiently slowly as we go far from the trapped set.

\subsection{Partitions of unity on the phase space}\label{ss:modified_pu}
We introduce a few partitions of unity on the phase space $\real^{4d+2d'+1}_{(w,\xi_w,\xi_z)}$. 
\subsubsection{Interpolating $E_\omega$ and  the identity map}
We first construct a family of linear maps $E_{\omega,m}$ for $m\ge 0$, which interpolates the linear map  $E_\omega$ and  the identity map. To begin with, we introduce two constants
\[
1<\Theta_1<\Theta_2. 
\]
These constants will be used to specify the interval of integers $m$ where we do the relaxation of the singularity (or non-conformal property) of the local coordinate charts. The choice of these constant are rather arbitrary. But, to make sure that the relaxation takes place sufficiently slowly, we  suppose
\begin{equation}\label{eq:choice_of_Theta}
\Theta_2-\Theta_1>10\cdot \frac{\chi_{\max}}{\chi_0}>10,
\end{equation}
so that 
\begin{equation}
\mu:=\frac{(1-\beta)(1/2-\theta)+4\theta}{\Theta_2-\Theta_1}
<\frac{1/2}{\Theta_2-\Theta_1}
<\frac{\chi_0}{20 \cdot \chi_{\max}}<\frac{1}{20}.  
\end{equation}
For each $\omega\in \integer$, we set 
\begin{equation}\label{eq:ni}
 n_0(\omega):=\left[\theta \cdot 
 \log \langle \omega\rangle\right],\quad 
 n_1(\omega):=\left[\Theta_1 \cdot 
 \log \langle \omega\rangle\right],\quad 
  n_2(\omega):=\left[\Theta_2\cdot  \log \langle \omega\rangle\right]
  \end{equation}
so that
\[
e^{n_0(\omega)}\sim \langle \omega\rangle^{\theta},\quad
e^{n_1(\omega)}\sim \langle \omega\rangle^{\Theta_1}\quad  \mbox{and}\quad e^{n_2(\omega)}\sim \langle \omega\rangle^{\Theta_2}.
\]
Then we define a function  $e_\omega :\integer_+ \to \real$
by 
\begin{equation}\label{def:eomega}
e_\omega(m)=
\begin{cases}
1,&\quad \mbox{if }m\le n_1(\omega);\\
e^{\mu(m-n_1(\omega))},&\quad \mbox{if }
n_1(\omega)< m<  n_2(\omega);\\
\langle \omega\rangle^{(1-\beta)(1/2-\theta)+4\theta},&\quad \mbox{if }m\ge n_2(\omega).
\end{cases}
\end{equation}
From the choice of the constants above, this function  varies slowly satisfying
\begin{equation}\label{eq:e}
\frac{e_{\omega}(m)}{e_{\omega}(m')}\le e^{\mu(|m-m'|+2)}< e^{(|m-m'|+2)/20}.
\end{equation}
We define the family of  linear maps $E_{\omega,m}:\real^{2d+d'+1}_{(x,y,z)}\to \real^{2d+d'+1}_{(x,y,z)}$ for $\omega\in \integer$ and $m\in \integer_+$ by 
\[
E_{\omega,m}(x,y,z)
=(x,\;e_{\omega}(m) \cdot y,\; z).
\]
From the definition, this family interpolates $E_\omega$ and the identity map in the sense that  $E_{\omega,m}=\mathrm{Id}$ if $m\le n_1(\omega)$ and $E_{\omega,m}=E_\omega$ if $m\ge n_2(\omega)$.

Let $D^*E_{\omega,m}:\real^{4d+2d'+1}_{(x,y,\xi_x, \xi_y,\xi_z)}\to \real^{4d+2d'+1}_{(x,y,\xi_x, \xi_y,\xi_z)}$ be the natural pull-back action of $E_{\omega,m}$ on the cotangent bundle: 
\begin{equation}\label{eq:Eomeagdag} 
D^*E_{\omega,m}(x,y,\xi_x,\xi_y,\xi_z)=(x,\;e_{\omega}(m)^{-1} y,\;\xi_x,\; e_{\omega}(m)\xi_y,\; \xi_z).
\end{equation}

\subsubsection{A partition of unity on the phase space} 
We next define  partitions of unity on the phase space $\real^{4d+2d'+1}_{(x,y,\xi_x, \xi_y,\xi_z)}$. 
Recall the periodic partition of unity $\{q_\omega\}_{\omega\in \integer}$ on the real line $\real$ and also the function $\chi(\cdot)$, defined in (\ref{eq:chi_omega}) and (\ref{eq:def_chi}) respectively.  
We  define a family of functions 
\begin{equation}\label{eq:Xm}
X_{m}:\real^{4d+2d'+1}_{(x,y,\xi_x, \xi_y,\xi_z)}\to [0,1]\quad \mbox{for $m\in \integer_+$}
\end{equation}
by
\[
X_{m}(x,y,\xi_x,\xi_y,\xi_z)=
\chi(e^{-m}|(\zeta_p,\tilde{\xi}_y,\zeta_q, \tilde{y})|)
\]
where $\zeta_p, \zeta_q, \tilde{ y}, \tilde{\xi}_y$ are coordinates on $\real^{4d+2d'+1}_{(x,y,\xi_x, \xi_y,\xi_z)}$ introduced in  (\ref{eq:newcoordinates}). Then  we define the functions 
\[
\widetilde{X}_{\omega,m}:\real^{4d+2d'+1}_{(x,y,\xi_x, \xi_y,\xi_z)}\to [0,1]\quad \mbox{for $\omega\in \integer$ and $m\in \integer$ with $m\ge   n_0(\omega)$}
\]
by
\[
\widetilde{X}_{\omega,m}=
\begin{cases}
X_{n_0(\omega)}\cdot q_{\omega}=(X_{n_0(\omega)}\circ D^*E_{\omega,m}^{-1})\cdot q_{\omega}&\quad \mbox{if $m=n_0(\omega)$;}\\
(X_{m}\circ D^*E_{\omega,m}^{-1} -X_{m-1}\circ D^*E_{\omega,m-1}^{-1})\cdot q_{\omega}&\quad \mbox{if $m>n_0(\omega)$}
\end{cases}
\]
where (and also in many places in the following) we understand $q_{\omega}(\cdot)$ as a function of the coordinate $\xi_z$ in $(x,y,\xi_x, \xi_y,\xi_z)$.  
By this construction, the family of functions
\[
\{ \widetilde{X}_{\omega,m}:\real^{4d+2d'+1}_{(x,y,\xi_x, \xi_y,\xi_z)}\to [0,1]\mid  \omega\in \integer, m\in \integer_+\mbox{ with $m\ge n_0(\omega)$}\}
\]
is a partition of unity on $\real^{4d+2d'+1}_{(x,y,\xi_x, \xi_y,\xi_z)}$. 
\begin{Remark}\label{Rem:index}
The index $m$ above is related to the distance of the support of $\widetilde{X}_{\omega,m}$ from the trapped set $X_0$, while $\omega$ indicates the values of the coordinate $\xi_z$. 
When $m=n_0(\omega)$, the support is contained in the $e^2 \langle \omega\rangle^{\theta}$-neighborhood of the trapped set $X_0$ in the standard Euclidean norm in coordinates introduced in (\ref{eq:newcoordinates}). 
When $n_0(\omega)\le m\le n_1(\omega)$, it is contained in the region where 
the distance from the trapped set $X_0$ is in between $e^{m-2}$ and 
 $e^{m+2}$. When $m\ge n_1(\omega)$, the situation is a little more involved because the modification by the family of linear maps $D^*E_{\omega,m}^{-1}$  takes effect. 
(If we look things through the linear map $D^*E_{\omega,m}^{-1}$, we have a parallel description.) 
\end{Remark}

Next, for $\sigma\in \Sigma$, we define the functions $
Z^\sigma_+, Z^\sigma_-:\real^{4d+2d'+1}_{(x,y,\xi_x, \xi_y,\xi_z)}\setminus\{0\}\to [0,1]$
by 
\begin{align*}
Z^\sigma_+(x,y,\xi_x, \xi_y,\xi_z)&=\frac12(1-\mathrm{ord}^{\sigma}([(\zeta_p,\tilde{\xi}_y,\zeta_q, \tilde{y})]))
\intertext{and}
Z^\sigma_-(x,y,\xi_x, \xi_y,\xi_z)&=\frac12(\mathrm{ord}^{\sigma}([(\zeta_p,\tilde{\xi}_y,\zeta_q, \tilde{y})])+1).
\end{align*}
Obviously we have $Z^\sigma_+(\cdot)+Z^\sigma_-(\cdot)\equiv 1$. (Recall (\ref{eq:def_order_function_m}) and (\ref{ord+}) for the definition of the function $\mathrm{ord}^{\sigma}(\cdot)$.) 
For each $m\in \integer$ with $m\neq 0$, we set 
\[
Z^{\sigma}_{+,\omega,m}=Z^{\sigma}_{+}\circ D^*E_{\omega,m}^{-1}\quad 
\mbox{and}\quad 
Z^{\sigma}_{-,\omega,m}=Z^{\sigma}_{-}\circ D^*E_{\omega,m}^{-1}.
\]
Again we have $Z^\sigma_{+,\omega,m}(\cdot)+Z^\sigma_{-,\omega,m}(\cdot)\equiv 1$ for each integer $\omega$ and  $m\ge 0$. 
\begin{Remark}
The supports of $Z^\sigma_\pm(\cdot)$ are contained in some conical subsets in the coordinates $(\zeta_p,\tilde{\xi}_y,\zeta_q, \tilde{y})$ in the stable or unstable direction. 
For instance, we have $Z^\sigma_+(x,y,\xi_x, \xi_y,\xi_z)\neq 0$ only if $(\zeta_p,\tilde{\xi}_y,\zeta_q, \tilde{y})$ belongs to the cone $\cone_{+}^{(d+d',d+d')}(2\cdot 2^{-\sigma/2})$.
This is true also for the supports of $Z^\sigma_{\pm,\omega,m}(\cdot)$, but the corresponding cones will be distorted by the factor $D^*E_{\omega,m}$. 
\end{Remark}

Finally  we define the functions 
\[
\Psi_{\omega, m}^\sigma:\real^{4d+2d'+1}_{(x,y,\xi_x, \xi_y,\xi_z)}\to [0,1],\quad\mbox{for $\omega\in \integer$, $m\in \integer$ and $\sigma\in \Sigma$}
\]
by
\begin{equation}\label{def:psi_sigma}
\Psi_{\omega, m}^{\sigma}=
\begin{cases}
\widetilde{X}_{\omega,n_0(\omega)},&\quad \mbox{if $m=0$;}\\
0,&\quad \mbox{if $0<|m|\le n_0(\omega)$;}\\
\widetilde{X}_{\omega,m} \cdot 
Z_{+,\omega,m}^{\sigma},&\quad \mbox{if $m> n_0(\omega)$;}\\
\widetilde{X}_{\omega,|m|} \cdot
Z_{-,\omega,m}^{\sigma},&\quad \mbox{if $m< -n_0(\omega)$.}
\end{cases}
\end{equation}

For each $\sigma\in \Sigma$, the family of functions $\{\Psi_{\omega, m}^{\sigma}\mid \omega\in \integer, m\in \integer\}$ is a partition of unity on $\real^{4d+2d'+1}_{(x,y,\xi_x, \xi_y,\xi_z)}$. For the support of the function $\Psi_{\omega, m}^\sigma$, the index $\omega$ indicates the approximate value of $\xi_z$, the absolute value 
of $m$ indicates the distance from the trapped set, and the  sign of $m$ indicates the (stable or unstable) directions from the trapped set. 

\begin{Remark}
In the definitions above, we suppose that the coordinates $(w,\xi_w,\xi_z)$ corresponds to those given by the distorted local charts $\kappa_{a,n}^{(\omega)}$ in (\ref{def:kappa_anw}). 
Note that, by definition, the supports of the functions $\Psi_{\omega, m}^{\sigma}$ in the partition of unity above will look somewhat regular if $m\le n_1(\omega)$ and they become distorted gradually by the factor $D^*E_{\omega,m}$ as $m$ increase. If $m\ge n_2(\omega)$ and $|\omega|$ is large, the supports of $\Psi_{\omega, m}^{\sigma}$ will be strongly distorted. However, if we look things in the usual local coordinate charts (say, $\kappa_{a,n}^{(\omega)}$ without the factor $E_{\omega}$ in its definition (\ref{def:kappa_anw})), they will look reversely: The supports of $\Psi_{\omega, m}^{\sigma}$ for $m\le n_1(\omega)$ will look strongly distorted while those for $m\ge n_2(\omega)$ will look regular in such coordinates.
\end{Remark}

\subsection{The decomposition of functions}
Suppose $\sigma\in \Sigma$. For each $u\in C^\infty(U_0)$, we assign  a countable family  of functions 
\[
u_{a,\omega, n, m}^\sigma=\Psi_{\omega,m}^\sigma\cdot \pBargmann\left(\rho_{a,n}^{(\omega)}\cdot u\circ \kappa_{a,n}^{(\omega)}\right)
\in C^{\infty}(\supp \Psi_{\omega,m}^\sigma)\subset 
C^{\infty}(\real^{4d+2d'+1}_{(x,y,\xi_x, \xi_y,\xi_z)})
\]
for $a\in A$, $\omega\in \integer$, $n\in\cN (a,\omega)$ and $m\in \integer$, where  $\rho_{a,n}^{(\omega)}\circ \kappa_{a,n}^{(\omega)}$ for $a\in A$, $\omega\in \integer$ and $n\in \cN(a,\omega)$ are those introduced in Subsection \ref{ss:chart_omega} 
and $\Psi_{\omega, m}^\sigma$ for $\omega, m\in \mathbb{Z}$ are those introduced in the last subsection. For simplicity, we set 
\[
\cJ=\{\j=(a,\omega,n,m)\mid \mbox{$a\in A$, $\omega\in \integer$, $n\in \cN (a,\omega)$, $m\in \integer$ s.t.\ $m=0$ or $|m|> n_0(\omega)$}\}
\]
and write 
\[
u_{\j}^{\sigma}=u_{a,\omega, n, m}^\sigma\quad \mbox{for $\j=(a,\omega, n,m)\in \cJ$.}
\]
We will refer the components of $\j=(a,\omega, n,m)\in \cJ$ as
\[
a(\j)=a,\quad \omega(\j)=\omega, \quad n(\j)=n,   \quad m(\j)=m.
\]
Also we set, for $\j\in\cJ$, 
\begin{equation}\label{eq:defsrho}
\rho_{\j}:=\rho_{a(\j),  n(\j)}^{(\omega(\j))},\quad \tilde{\rho}_{\j}:=\tilde{\rho}_{a(\j),  n(\j)}^{(\omega(\j))},\quad 
 \kappa_{\j}:=\kappa_{a(\j), n(\j)}^{(\omega(\j))} \quad\mbox{and}\quad \Psi^\sigma_{\j}:=\Psi^\sigma_{\omega(\j),m(\j)}.
\end{equation}
Then the assignment mentioned above can be regarded as an operator  
\[
\bI^\sigma:C^\infty(U_0)\to \overline{\bigoplus_{\j\in \cJ}L^2(\supp \Psi^\sigma_{\j})},\quad \bI^\sigma u=(u_{\j}^\sigma)_{\j\in \cJ}.
\]
This is of course injective on $C^\infty(K_0)$.
\begin{Remark}\label{rem:bI} Since the intersection multiplicities  of 
\[
\{\supp \Psi^\sigma_{\omega,m}\}_{m\in \integer}\quad\text{and}\quad 
\{\supp \rho_{a,n}^{(\omega)}\circ (\kappa^{(\omega)}_{a,n})^{-1}\mid a\in A, n\in \cN(a,\omega)\}
\]
are bounded by absolute constants and since the functions $\rho_{a,n}^{(\omega)}$ are smooth in the $z$-direction uniformly $a$, $n$ and $\omega$, it is not difficult to see that
\[
\sum_{\j\in \cJ} \|u_{\j}^\sigma\|_{L^2}^2\le 
C_0 \sum_{\omega\in \integer} \bigg\|\sum_{\j\in \cJ; \omega(\j)=\omega} u_{\j}^\sigma\bigg\|_{L^2}^2
\le C_R \sum_{\omega\in \integer} \langle \omega\rangle^{-2R} \|u\|_{H^R}^2\quad\text{for $u\in C^\infty(U_0)$}
\]
for any $R>0$, where $\|\cdot\|_{H^R}$ denotes the norm on the Sobolev space $H^R(K_0)$ of order $R$. (For the definition of the Sobolev spaces using Bargmann transform, we refer \cite[Ch.1]{NicolaBook}.)  
Hence the range of $\bI^\sigma$ above is indeed contained in $\overline{\bigoplus_{\j\in \cJ}L^2(\supp \Psi^\sigma_{\j})}$.
\end{Remark}
A left inverse of $\bI^\sigma$ is defined as  
\[
(\bI^\sigma)^*:{\bigoplus}_{\j\in \cJ}L^2(\supp \Psi^\sigma_{\j})\to L^2(U_0)
, \quad (\bI^\sigma)^* ((u_{\j})_{\j\in \cJ})=\sum_{\j\in \cJ} 
 \left((\tilde{\rho}_{\j}\cdot \pBargmann^*u_{\j})\circ \kappa_{\j}^{-1}\right).  
\]
Note that this is not the $L^2$ adjoint of $\bI^\sigma$. The following is not trivial. 
\begin{Lemma}\label{lm:IastI}   $(\bI^{\sigma})^*\circ (\bI^{\sigma})=\mathrm{Id}$ 
on $C^\infty(K_0)$ for any $\sigma\in \Sigma$.  
\end{Lemma}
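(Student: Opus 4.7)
The plan is to expand $(\bI^\sigma)^*\bI^\sigma u$ by the definitions of $\bI^\sigma$ and $(\bI^\sigma)^*$, then apply, in succession, three partition-of-unity identities together with the Bargmann identity $\pBargmann^*\pBargmann=\mathrm{Id}$ from Lemma \ref{lm:pBargmannL2}. For $u\in C^\infty(K_0)$ the functions $\rho_\j\cdot u\circ\kappa_\j$ are smooth with compact support, and at every point only finitely many $\j\in\cJ$ contribute, so formal interchange of sums is justified.

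The first step is to verify that $\{\Psi^\sigma_{\omega,m}\}_{(\omega,m)\in\integer^2}$ is a partition of unity on $\real^{4d+2d'+1}$. From the definition (\ref{def:psi_sigma}) and the identity $Z^\sigma_{+,\omega,m}+Z^\sigma_{-,\omega,m}\equiv 1$, pairing the terms with $\pm m$ (for $m>n_0(\omega)$) together with the $m=0$ term gives
$\sum_m \Psi^\sigma_{\omega,m}=\sum_{m\ge n_0(\omega)}\widetilde X_{\omega,m}$. The definition (\ref{eq:Xm}) is telescoping, and combined with the pointwise convergence $X_m\to 1$ as $m\to\infty$ this yields $\sum_m \Psi^\sigma_{\omega,m}=q_\omega(\xi_z)$. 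Summing over $\omega$ using $\sum_\omega q_\omega\equiv 1$ (from (\ref{eq:chi_omega})) completes the verification.

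For the main computation, expanding the definitions we have
$$(\bI^\sigma)^*\bI^\sigma u=\sum_{\j\in\cJ}\bigl(\tilde\rho_\j\cdot\pBargmann^*(\Psi^\sigma_\j\cdot\pBargmann(\rho_\j\cdot u\circ\kappa_\j))\bigr)\circ\kappa_\j^{-1}.$$
Since the function $v_{a,n,\omega}:=\rho_{a,n}^{(\omega)}\cdot u\circ\kappa_{a,n}^{(\omega)}$ inside the transfer does not depend on $m$, grouping the sum by $(a,n,\omega)$ and summing over $m$ first gives
$$\sum_{a,n,\omega}\bigl(\tilde\rho_{a,n}^{(\omega)}\cdot \pBargmann^*(q_\omega(\xi_z)\cdot\pBargmann v_{a,n,\omega})\bigr)\circ(\kappa_{a,n}^{(\omega)})^{-1}.$$
Since $q_\omega$ depends only on $\xi_z$ and $\sum_\omega q_\omega\equiv 1$, the further summation over $\omega$ reconstructs $\pBargmann^*\pBargmann = \mathrm{Id}$ applied to $v_{a,n,\omega}$, leaving
$$\sum_{a,n,\omega}\bigl(\tilde\rho_{a,n}^{(\omega)}\cdot\rho_{a,n}^{(\omega)}\cdot u\circ\kappa_{a,n}^{(\omega)}\bigr)\circ(\kappa_{a,n}^{(\omega)})^{-1}.$$
The cutoff property $\tilde\rho_\j\equiv 1$ on $\supp\rho_\j$ simplifies this to $\sum_{a,n,\omega}(\rho_{a,n}^{(\omega)}\circ(\kappa_{a,n}^{(\omega)})^{-1})\cdot u$, and the partition of unity $\sum_{a\in A,\, n\in \cN(a,\omega)}\rho_{a,n}^{(\omega)}\circ(\kappa_{a,n}^{(\omega)})^{-1}\equiv 1$ on $K_0$ (from the end of Subsection \ref{ss:chart_omega}), applied for each $\omega$ (consistent with $\sum_\omega q_\omega=1$), yields $u$.

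The main obstacle is the careful justification of the step where the $\omega$-summation is threaded through $\pBargmann^*$ while $\rho_{a,n}^{(\omega)}$ and $\kappa_{a,n}^{(\omega)}$ themselves depend on $\omega$. This is resolved by noting that the $q_\omega$-multiplier acts only in the $\xi_z$-slot, which is disjoint from the Bargmann action in $w=(x,y)$, and that the flow-box property of $\kappa_{a,n}^{(\omega)}$ (so that the $z$-coordinate corresponds to the flow direction) together with the local finiteness of the sums for smooth compactly supported $u$ makes the reassembly into $\pBargmann^*\pBargmann=\mathrm{Id}$ consistent term by term.
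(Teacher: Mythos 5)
Your first two steps (summing over $m$ to turn $\sum_m\Psi^\sigma_{\omega,m}$ into $q_\omega(\xi_z)$, and grouping the sum by $(a,n,\omega)$) are exactly right and match the paper's starting point. The gap is in the next step, where you collapse the $\omega$-sum. For a \emph{fixed} term, $\pBargmann^*\circ\mult(q_\omega)\circ\pBargmann$ is a genuine frequency cutoff in $z$ (convolution with $\hat q_\omega$), not the identity; the identity $\sum_\omega\pBargmann^*\circ\mult(q_\omega)\circ\pBargmann=\mathrm{Id}$ is only usable when all the terms act on one and the same $\omega$-independent function, whereas here the argument $v_{a,n,\omega}=\rho_{a,n}^{(\omega)}\cdot u\circ\kappa_{a,n}^{(\omega)}$, the cutoff $\tilde\rho_{a,n}^{(\omega)}$, the chart $\kappa_{a,n}^{(\omega)}=\kappa_{a,n}\circ E_\omega$, and even the index set $\cN(a,\omega)$ all depend on $\omega$. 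There is no ``term by term'' reassembly: dropping the $q_\omega$ from each summand is simply false, and indeed your resulting expression $\sum_{a,n,\omega}\bigl(\rho_{a,n}^{(\omega)}\circ(\kappa_{a,n}^{(\omega)})^{-1}\bigr)\cdot u$ equals $\sum_\omega u$ on $K_0$, which diverges — the closing remark that the partition of unity is ``applied for each $\omega$, consistent with $\sum_\omega q_\omega=1$'' cannot recover the cutoffs you already discarded.

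The correct way to exploit the observation you make in your last paragraph (that $\mult(q_\omega)$ acts only in $\xi_z$) is the paper's: after summing over $m$, do \emph{not} sum over $\omega$ yet. Since $\pBargmann^*\circ\mult(q_\omega)\circ\pBargmann$ is a convolution in the $z$-variable alone, it commutes with the action of the fibered contact coordinate change $(\kappa_{a,n}^{(\omega)})^{-1}\circ\kappa_a$ and with multiplication by the $z$-independent factor $\tilde\rho_0(\langle\omega\rangle^{1/2-\theta}(x'-n))$ in (\ref{eq:trhoan}). Using this together with (\ref{eq:rho_n}) and (\ref{eq:relRho}), first sum over $n\in\cN(a,\omega)$; this rewrites each $\omega$-block entirely in terms of the $\omega$-independent data $\rho_a$, $\tilde\rho_a$, $\kappa_a$, yielding $\sum_{a,\omega}\bigl(\tilde\rho_a\cdot(\pBargmann^*\circ\mult(q_\omega)\circ\pBargmann)(\rho_a\cdot u\circ\kappa_a)\bigr)\circ\kappa_a^{-1}$. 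Only now can the $\omega$-sum be collapsed via $\sum_\omega q_\omega\equiv1$, giving $\sum_a(\tilde\rho_a\rho_a\cdot u\circ\kappa_a)\circ\kappa_a^{-1}=\sum_a(\rho_a\circ\kappa_a^{-1})\,u=u$ on $K_0$. So your proof needs this intermediate commutation-and-sum-over-$n$ step inserted before the $\omega$-sum; as written, the argument does not go through.
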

\begin{proof} From Remark \ref{rem:bI}, we can see also that the composition $(\bI^{\sigma})^*\circ (\bI^{\sigma})$ is well-defined from $C^\infty(K_0)$ to itself. 
To prove the claim, it is enough to show 
\begin{equation}\label{lm:sum_claim}
\sum_{\j\in \cJ} \left(\tilde{\rho}_{\j}\cdot \pBargmann^* \left(\Psi^\sigma_{\j}\cdot  \pBargmann(\rho_{\j}\cdot (u\circ \kappa_\j))\right)\right)\circ \kappa_\j^{-1}=u 
\end{equation}
for $u\in C^{\infty}(K_0)$.
The proof is simple if we take the sum in an appropriate order. 
On the left hand side of (\ref{lm:sum_claim}), we first take the sum over  $\j$ with $a(\j)=a$, $\omega(\j)=\omega$ and $n(\j)=n\in \cN(a,\omega)$ fixed. Then the sum is  
\[
\sum_{a,\omega, n}\tilde{\rho}_{a,n}^{(\omega)}\circ (\kappa_{a,n}^{(\omega)})^{-1}\cdot\left( 
(\pBargmann^* \circ \mult(q_{\omega})\circ \pBargmann) (\rho_{a,n}^{(\omega)}\cdot u\circ \kappa_{a,n}^{(\omega)})\right)\circ (\kappa_{a,n}^{(\omega)})^{-1}
\]
where $\mult(q_{\omega})$ is the multiplication by the function $q_\omega$. 
Recall that the partial Bargmann transform $\pBargmann$ is a composition of the Fourier transform in the variable $z$ and the Bargmann transforms in the variable $w$ with scaling depending on the frequency $\xi_z$. From this, we see that the  operator $\pBargmann^* \circ \mult(q_{\omega})\circ\pBargmann$ above is a convolution operator which involves only the variable~$z$ and commutes with the action of the fibered contact diffeomorphism
 $(\kappa_{a,n}^{(\omega)})^{-1}\circ \kappa_a$.
In the definition (\ref{eq:trhoan}) of $\tilde{\rho}^{(\omega)}_{a,n}(\cdot)$, the latter factor does not  depend  on the variable $z$ so that the multiplication by that factor commutes with the operator  $\pBargmann^* \circ \mult(q_{\omega})\circ    \pBargmann$. 
Using these facts with (\ref{eq:rho_n}) and (\ref{eq:relRho}), and taking the sum over $n\in \cN_{a,\omega}$, we see that the left hand side of (\ref{lm:sum_claim}) equals 
\[
\sum_{a,\omega}\left(\tilde{\rho}_{a}\cdot (\pBargmann^*\circ \mult(q_{\omega})\circ    \pBargmann)(\rho_{a}\cdot (u\circ \kappa_a))\right)\circ \kappa_a^{-1}.
\]
Taking sum with respect to $\omega\in \integer$ and then to $a\in A$, we see that this equals $u$.\end{proof}

\subsection{The modified anisotropic Sobolev space $\cK^{r,\sigma}(K_0)$}\label{ss:defModifiedAniso}
We define the  Hilbert space $\cK^{r,\sigma}(K_0)$ for $r>0$ and $\sigma\in \Sigma$ as follows.
Though the definition makes sense for any $r>0$, we henceforth assume that $r$ satisfies (\ref{eq:choice_of_r}) and 
\begin{equation}\label{eq:choice_of_r4}
(r-1) \chi_0>4(d+d')\chi_{\max}
\end{equation}
which corresponds to (\ref{eq:choice_of_r3}), in order to make use of the results in Section \ref{sec:linear}. 
\begin{Definition}\label{Def:cK} Let  $\bK^{r,\sigma}$  be the completion of  $\bigoplus_{\j\in \cJ}L^2(\supp \Psi_{\j}^{\sigma})$ with respect to the norm 
\[
\|\bu\|^2_{\bK^{r,\sigma}}=\sum_{\j\in \cJ:m(\j)=0} \| \cW^{r,\sigma}\cdot u_{\j}\|_{L^2}^2+
\sum_{\j\in \cJ:m(\j)\neq 0} 2^{-r\cdot m(\j)}\|u_{\j}\|_{L^2}^2\quad \mbox{for 
$\bu=(u_{\j})_{\j\in \cJ}$.}
\]
Then let  $\cK^{r,\sigma}(K_0)$ be  the completion of the space $C^\infty(K_0)$ with respect to the norm 
\[
\|u\|_{\cK^{r,\sigma}}:=\|\bI^\sigma(u)\|_{\bK^{r,\sigma}}.
\]
For any compact subset $K'\subset K_0$,  $\cK^{r,\sigma}(K')$ denotes the subspace of $\cK^{r,\sigma}(K_0)$ that consists of elements supported on $K'$. 
\end{Definition}
\begin{Remark}\label{rem:HR}
From Remark \ref{rem:bI} and geometric consideration about the position of the supports of functions $\Psi^{\sigma}_\j=\Psi^{\sigma}_{\omega(\j), m(\j)}$, we see that the inequality
\[
C^{-1} \|u\|_{H^{-R}}\le \|u\|_{\cK^{r,\sigma}}\le C \|u\|_{H^{R}}\quad \text{for any $u\in C^\infty(K_0)$}
\]
holds for some $R>r$ and $C>1$.
This implies that  we have 
\[
C^{\infty}(K_0)\subset H^{R}(K_0)\subset \cK^{r,\sigma}(K_0)\subset H^{-R}(K_0) \subset \cD'(K_0)
\]
where $H^R(K_0)$ denotes the (usual) Sobolev space of order $R$. 
\end{Remark}

For convenience in the argument in the later sections, we give a few related definitions. 
For each $\j\in \cJ$, we define  the Hilbert space $\bK_\j^{r,\sigma}$  as the space  $L^2(\supp \Psi^{\sigma}_{\j})$ equipped  with  the norm
\begin{equation}\label{eq:bKnorm}
\|u\|_{\bK_\j^{r,\sigma}}=\begin{cases}
\|\cW^{r,\sigma} u\|_{L^2}  &\quad \mbox{if $m(\j)=0$;}\\
2^{-r\cdot m(\j)/2}\|u\|_{L^2}&\quad \mbox{if $m(\j)\neq 0$.}
\end{cases}
\end{equation}
Then we have 
\[
\bK^{r,\sigma}=\overline{\bigoplus_{\j\in \cJ} \bK^{r,\sigma}_{\j}}\quad \mbox{ for $\sigma\in \Sigma$.}
\]
For each $\j\in \cJ$, we define 
\[
\bI_\j^{\sigma}:\cK^{r,\sigma}(K_0)\to L^2(\supp \Psi^{\sigma}_{\j}), \quad \bI_{\j}^\sigma u= \Psi_{\j}^{\sigma}\cdot \pBargmann(\rho_{\j}\cdot u\circ \kappa_\j)
\]
so that the operator $\bI^{\sigma}$ is the direct product of them:
\begin{equation}\label{eq:bIpm}
\bI^{\sigma}=\overline{\bigoplus_{\j\in \cJ} \bI^\sigma_{\j}}:\cK^{r,\sigma}(K_0)\to \bK^{r,\sigma}=\overline{\bigoplus_{\j\in \cJ} \bK^{r,\sigma}_{\j}}. 
\end{equation}
\begin{Remark}\label{Rem:alternative_def} We could define the modified anisotropic Sobolev space $\cK^{r,\sigma}(K_0)$ in the same spirit as in the definition of $\cH^{r,\sigma}(\real^{2d+d'+1})$. 
Let $\mathfrak{W}^{r,\sigma}_\omega:\real^{2d+d'+1}\to \real$ be the function defined by 
\[
\mathfrak{W}^{r,\sigma}_\omega=\left( (\cW^{r,\sigma}\cdot \Psi_{\omega,0})^2 +
\sum_{|m|>n_0(\omega)} 2^{-rm}\cdot\Psi_{\omega,m}^2\right)^{1/2}.
\]
Then the norm $ \|u\|_{\cK^{r,\sigma}}$ is equivalent to the norm 
\[
 \|u \|'_{\cK^{r,\sigma}}= \left(\sum_{a\in A, \omega\in \integer,n\in \cN(a,\omega)} \| \mathfrak{W}^{r,\sigma}_\omega\cdot \pBargmann (\rho_{a,n}^{(\omega)}\cdot u\circ \kappa_{a, n}^{(\omega)})\|_{L^2}^2\right)^{1/2}.
\]
This definition looks a little simpler than the definition given above, since it avoids the decomposition of functions with respect to the integer $m$.
But the definition that we gave in the text is more useful in our argument. 
\end{Remark}
\begin{Remark}\label{rem:vector_valued_case_1}
In this section, we have defined the modified anisotropic Sobolev spaces $\cK^{r,\sigma}(K_0)$ as a completion of the space of $C^\infty$ functions. This is enough for the purpose of considering the scalar-valued transfer operators $\cL^t=\cL^t_{0,0}$. When we consider the vector-valued transfer operators $\cL^t_{k,\ell}$, we have to define the similar Hilbert spaces as a completion of $\Gamma^\infty(K_0, V_{k,\ell})$. 
The extension of the definition is straightforward once we fix some  local trivializations of $V_{k,\ell}$ subordinate to the local charts $\kappa_a$.  
\end{Remark}

\section{Properties of the transfer operator $\cL^t$}\label{sec:properties_Lt}
In order to clarify the structure of the proof of Theorem \ref{th:spectrum}, we state several propositions below in this section and then  deduce Theorem \ref{th:spectrum} from them in the next section. 
The proofs of the propositions are deferred to the later sections, Sections \ref{sec:pre},\ref{sec:lifted} and \ref{sec:proofs_of_props}. Below we mostly consider the case of scalar-valued transfer operator $\cL^t=\cL^t_{0,0}$. For the other cases of vector-valued transfer operators $\cL^t_{k,\ell}$ with $(k,\ell)\neq (0,0)$, we put a remark,  Remark \ref{rem:vector_valued_case_3}, at the end.

\subsection{Constants and some definitions}\label{ss:constandDef}
In addition to the constants $\chi_0$, $\beta$, $\theta$, $\Theta_1$ and $\Theta_2$ that we have fixed in the previous sections, we introduce two more constants $t_0>0$ and $\epsilon_0>0$. 

We take $t_0>0$ as the time that we need to wait until  the hyperbolicity of the flow takes sufficiently strong effects. Precisely we take and fix $t_0$ such that 
\[
e^{\chi_0 t_0}>10\quad \mbox{and}\quad f^{t_0}_{G}(U_0)\subset K_0. 
\]
Also we take $\epsilon_0>0$ as a small constant and define 
\begin{equation}\label{eq:tomega}
t(\omega):=\max\{\epsilon_0 \log \langle \omega\rangle, t_0\}.
\end{equation}
When we consider functions with  frequency around $\omega$ in the flow direction, we look them in  a small neighborhood of a point with size $\langle \omega\rangle^{-1/2+\theta}$ in the transversal directions to the flow. For each fixed time $t$, if we view the flow $f^t_{G}$ in such neighborhood, the effect of non-linearity will decrease as $|\omega|\to \infty$. By a little more precise consideration, we see that such  estimates on non-linearity remains true for $t$ in the range $0\le t\le t(\omega)$ if $t(\omega)$ grows sufficiently slowly with respect to $|\omega|$, that is,  if the constant $\epsilon_0$ is sufficiently small. Roughly this is  what we want to realize by choosing small $\epsilon_0$. The choice of $\epsilon_0$ will be given in the course of the argument. 

Recall that we took the compact subset $K_0\Subset U_0$ as a neighborhood of the section $ e_u$ satisfying the forward invariance condition (\ref{eq:choice_K0}). We define, in addition,  
\[
 K_0\Supset K_1:=f^{t_0}_{G}(K_0)\Supset K_2:=f^{2t_0}_{G}(K_0)
\]
and take smooth functions 
$\rho_{K_0}, \rho_{K_1}:U_0 \to [0,1]$ such that 
\begin{equation}\label{eq:defrhoK}
\rho_{K_0}(p)=\begin{cases}
1,&\quad\mbox{if $p\in K_1$;}\\
0,&\quad\mbox{if $p\notin K_0$.}
\end{cases},\qquad
\rho_{K_1}(p)=\begin{cases}
1,&\quad\mbox{if $p\in K_2$;}\\
0,&\quad\mbox{if $p\notin K_1$.}
\end{cases}
\end{equation}
We will use these functions to restrict the supports of functions to $K_0$ and $K_1$. 

We introduce the operator $\cQ_\omega$ for $\omega\in \integer$, which extracts the parts of functions whose frequency in the flow direction are around $\omega\in \integer$.  
\begin{Definition}\label{Def:PiOmega}
For each $\omega\in \integer$, let $\bPi_\omega:\bK^{r,\sigma} \to \bK^{r,\sigma}$ be the operator defined by
\[
(\bPi_\omega \mathbf{u})_{\j}=\begin{cases}
u_{\j},\quad \mbox{if $\omega(\j)=\omega$;}\\
0,\quad \mbox{otherwise,}
\end{cases}
\quad\mbox{for $\bu=(u_{\j})_{\j\in \cJ}$.}
\]
Then we define  $\cQ _{\omega}: C^\infty(K_0)\to C^\infty(K_0)$
by 
\[
\cQ _{\omega}= \mult(\rho_{K_0})\circ (\bI^\sigma)^{*}\circ \bPi_{\omega}\circ \bI^\sigma.
\]
\end{Definition}
From Lemma \ref{lm:IastI} and the choice of the function $\rho_{K_0}$,  we have 
\begin{equation}\label{eq:qid}
\sum_{\omega\in \integer} \cQ_\omega u=u\quad \mbox{ for  $u\in C^{\infty}(K_1)$. }
\end{equation}
\begin{Remark}
Since the operator $\cQ _{\omega}$ may enlarge the support of functions, each of  $\cQ_\omega(C^\infty(K_1))$ will not be contained in $C^\infty(K_1)$. 
\end{Remark}
\begin{Remark}\label{rem:qid}
The equality (\ref{eq:qid}) is valid for any distribution $u\in \cD'(K_1)$ if we regard the both sides as distributions.
\end{Remark}

In the next definition, we introduce the operator $\cT_\omega$, which corresponds to $\cT_0$ in (\ref{eq:T}) on local charts (restricted to the frequency around $\omega$).. 
\begin{Definition}\label{Def:bT}
For $\omega\in \mathbb{Z}$ and $\sigma,\sigma'\in \Sigma_0$, let  
$
\mathbf{T}_\omega^{\sigma\to \sigma'}:\bK^{r,\sigma}\to \bK^{r,\sigma'}$ be the operator defined 
by
\begin{equation}\label{eq:mathbfT}
(\mathbf{T}^{\sigma\to \sigma'}_\omega \mathbf{u})_{\j}=\begin{cases}
  X_{n_0(\omega)}\cdot \cT_0^\lift u_{\j},& \mbox{if  $m(\j)=0$ and $\omega(\j)=\omega$ with\footnote{We consider the condition $|\omega|\ge 3$ below from Remark \ref{Rem:dag}.}  $|\omega|\ge 3$;}\\
  0, & \mbox{otherwise,}
  \end{cases}
\end{equation}
for $\bu=(u_{\j})_{\j\in \cJ}$.
See (\ref{eq:Tlift}) and (\ref{eq:Xm}) for the definitions of $\cT_0^\lift$ and $X_{n_0(\omega)}$. 
Then we define 
\begin{equation}\label{def:cTomega}
\cT_\omega=\mult(\rho_{K_1})\circ (\bI^{\sigma'})^*\circ \mathbf{T}_\omega^{\sigma\to \sigma'}\circ \bI^{\sigma}\;:\; C^\infty(K_0)\to  C^\infty(K_1).
\end{equation}
Notice that the right-hand side actually does not depend on $\sigma,\sigma'\in \Sigma$. 
\end{Definition}
\begin{Remark}\label{Rem:suppX} 
The multiplication by $X_{n_0(\omega)}$ in (\ref{eq:mathbfT}) is inserted in order that the operator $\mathbf{T}_\omega^{\sigma\to \sigma'}$ is well-defined as that from  $\bK^{r,\sigma}$  to $\bK^{r,\sigma'}$.
(Note that the operator $\cT_0^\lift$ does not enlarge the support of the function in the $\xi_z$ direction.) Similarly the multiplication operator $\mult(\rho_{K_1})$ in  (\ref{def:cTomega}) is inserted so that the image of $\cT_\omega$ is supported on $K_1$. 
\end{Remark}
 
\begin{Remark}\label{Rem:smoothness}
Since the definition of $\mathbf{T}^{\sigma\to \sigma'}_\omega$ above involves only finitely many components in effect (and erase the other components), 
 it is easy to see that $\cT_\omega$ for each $\omega$ is continuous  from $\cK^{r,\sigma}(K_0)$ to $C^\infty(K_1)$. 
\end{Remark}

\subsection{Properties of the operators $\cL^t$, $\cQ_\omega$ and $\cT_\omega$. }\label{ss:lt}
\subsubsection{Boundedness and continuity of the operators $\cL^t$.}
First of all, we give a basic statement on continuity of the family $\cL^t$. 
From the fact noted in the beginning of Subsection \ref{ss:variantsOfH}, we unfortunately do not know whether 
$\cL^t:\cK^{r,\sigma}(K_0)\to \cK^{r,\sigma}(K_0)$ is bounded when  $t> 0$ is small. Instead, we prove the following proposition. Note that, from (\ref{eq:norm_comparison}), we have 
\begin{equation}\label{eq:cHinclusion}
\cK^{r,+}(K_0)\subset \cK^{r}(K_0)\subset \cK^{r,-}(K_0).
\end{equation}
\begin{Remark}
Here and henceforth, we write  $\cK^{r,-}(K_0)$, $\cK^{r}(K_0)$ and $\cK^{r,+}(K_0)$ for the Hilbert space $\cK^{r,\sigma}(K_0)$ with $\sigma=-1,0,+1$ respectively. Similarly we will write  $\|\cdot\|_{\cK^{r,-}}$, $\|\cdot\|_{\cK^{r}}$ and $\|\cdot\|_{\cK^{r,+}}$ for the norms on them. 
\end{Remark}

\begin{Proposition}\label{pp:continuity}
Suppose that  $\sigma,\sigma'\in \Sigma$. The operator $\cL^t:\cK^{r,\sigma}(K_0)\to \cK^{r,\sigma'}(K_0)$ is bounded if  
\begin{equation}\label{eq:condition_on_t}
\mbox{either\quad {\rm (i)} $t\ge 0$  and $\sigma'<\sigma$\quad or  \quad{\rm (ii)} $t\ge t_0 $ (and any $\sigma,\sigma'$).}
\end{equation}
In the latter case (ii), the image is contained in $\cK^{r,\sigma'}(K_1)$.
Further there exists a constant $C>0$ such that 
\[
\|\cL^t:\cK^{r,\sigma}(K_0)\to \cK^{r,\sigma'}(K_0)\|\le C e^{Ct}
\]
provided that  the condition (\ref{eq:condition_on_t})  holds true. 

\end{Proposition}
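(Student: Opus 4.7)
The plan is to reduce boundedness of $\cL^t:\cK^{r,\sigma}(K_0)\to \cK^{r,\sigma'}(K_0)$ to a norm estimate for the \emph{lifted} operator
\[
\bL^t_{\sigma\to\sigma'}:=\bI^{\sigma'}\circ\mult(\rho_{K_0})\circ\cL^t\circ(\bI^\sigma)^{*}\;:\;\bK^{r,\sigma}\to\bK^{r,\sigma'},
\]
using that $\bI^{\sigma'}\circ\mult(\rho_{K_0})\circ\cL^t = \bL^t_{\sigma\to\sigma'}\circ\bI^\sigma$ on $C^\infty(K_0)$ together with $(\bI^{\sigma'})^{*}\circ\bI^{\sigma'}=\mathrm{Id}$. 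Since $\bK^{r,\sigma}$ is a direct sum over $\j\in\cJ$, the lift $\bL^t_{\sigma\to\sigma'}$ has a matrix structure with components $\bL^t_{\j',\j}:=\bI^{\sigma'}_{\j'}\circ\mult(\rho_{K_0})\circ\cL^t\circ(\bI^\sigma_{\j})^{*}$. I will control the full operator norm by a Schur test, adapted to the weights built into the norms \eqref{eq:bKnorm}, which requires uniform (in $\j$ and $\j'$) bounds on appropriately weighted row and column sums of $\|\bL^t_{\j',\j}\|$.

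For each pair $(\j,\j')$, the component $\bL^t_{\j',\j}$ is obtained by viewing the flow in the local charts $\kappa_\j$, $\kappa_{\j'}$: up to the partial Bargmann transform and the cut-offs $\Psi^\sigma_\j$, $\Psi^{\sigma'}_{\j'}$, it is the transfer operator associated to the fibered contact diffeomorphism $\kappa_{\j'}^{-1}\circ f^t_G\circ \kappa_\j$ rescaled by $E_{\omega(\j)}$ and $E_{\omega(\j')}$. In the small-scale $\langle\omega\rangle^{-1/2}$ picked out by $\pBargmann$ and by the choice of charts in Section~\ref{sec:basic}, this map is well-approximated (for $|\omega(\j)|$ large) by an element of the affine group $\cA_0$ composed with a partially hyperbolic linear map of the type covered by Section~\ref{sec:linear}. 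Two estimates are then needed: a \emph{diagonal} bound, where the local model of Theorem~\ref{cor:linear1} and Lemma~\ref{lm:linear1}(1) produces $\|\bL^t_{\j,\j}\|\lesssim e^{Ct}$; and an \emph{off-diagonal decay} bound expressing the rapid decay in $|\omega(\j)-\omega(\j')|$, in $|m(\j)-m(\j')|$, and in the spatial separation of the supports of $\Psi^\sigma_\j$ and of the pull-back of $\Psi^{\sigma'}_{\j'}$ by $(Df^t_G)^{*}$. The off-diagonal decay comes from the rapid decay of the Bargmann projector kernel \eqref{eq:kernel_KP} and the almost-isometry of the affine group action (Lemma~\ref{lm:unitaryaction}); the factor $e^{Ct}$ accumulates from repeated composition on time intervals of length $t_0$, using (\ref{eq:chi_max}) to bound the deformation of the cone field and the weights $\mathcal{W}^{r,\sigma}$ at small times.

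The two alternative hypotheses in \eqref{eq:condition_on_t} correspond precisely to the two situations in which the diagonal bound is available through Lemma~\ref{lm:linear1}(1). In case (ii), $t\ge t_0$ forces the expansion rate of the linear model to satisfy $\lambda \ge e^{\chi_0 t_0}>10$, and the lemma then applies for every pair $\sigma,\sigma'\in\Sigma$; moreover $f^{t_0}_G(K_0)\subset K_1$ places the image in $\cK^{r,\sigma'}(K_1)$ via the insertion of $\mult(\rho_{K_1})$ (which here is implicit because of the forward invariance of $K_0$ and the choice of $\mult(\rho_{K_0})$). In case (i), the flow at small $t$ is only weakly hyperbolic, but the strict inequality $\sigma'<\sigma$ combined with the pointwise comparison $\mathcal{W}^{r,\sigma'}\le \mathcal{W}^{r,\sigma}$ from \eqref{eq:wrpm} provides the slack needed to absorb the near-identity deformation via the first half of Lemma~\ref{lm:linear1}(1).

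The principal obstacle is establishing the Schur-test bounds uniformly in $\j,\j'\in\cJ$ in the face of the genuinely singular weights introduced by $E_{\omega,m}$. For indices with $m(\j)$ and $m(\j')$ beyond $n_1(\omega)$ the cut-offs $\Psi^{\sigma}_\j$ are strongly distorted, and the commutation of the flow with the rescaling cannot be invoked directly; what saves the estimate is the slow-growth property \eqref{eq:e} of $e_\omega(\cdot)$, guaranteed by the choice $\mu<\chi_0/(20\chi_{\max})$, which ensures that passing from scale $m$ to scale $m\pm 1$ costs much less than the hyperbolic budget available at time $t_0$. The detailed verification of these kernel estimates, both diagonal and off-diagonal, is the substance of Sections~\ref{sec:pre}--\ref{sec:proofs_of_props}, and the boundedness claim together with the growth $Ce^{Ct}$ then follows by assembling them through the Schur test.
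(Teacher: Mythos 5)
Your overall reduction---pass to the lifted operator on the spaces $\bK^{r,\sigma}$, estimate it component by component in the charts $\kappa_{\j}$, and obtain the growth $Ce^{Ct}$ by iterating a uniform bound over time blocks of length $t_0$---is the same skeleton as the paper's proof, which assembles the block estimate $\|\bPi_{\omega'}\circ\bL^{t,\sigma\to\sigma'}\circ\bPi_{\omega}\|\le C_\nu\langle\omega'-\omega\rangle^{-\nu}$ for $0\le t\le 2t_0$ from Proposition \ref{lm:TL} (central part), Lemma \ref{lm:hyp} (hyperbolic part) and Lemma \ref{lm:low} (low-frequency part), and then concludes via the commutative diagram (\ref{cd:bL}) and iteration. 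The gap lies in the mechanism you propose for bounding the individual components.

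You assert that every component is well approximated by an affine map composed with a partially hyperbolic linear model, with the remaining smallness (in particular ``decay in $|m(\j)-m(\j')|$'') supplied by the decay of the Bargmann projector kernel. This fails away from the trapped set. The approximation of $L(g,\psi)^{\lift}$ by $\mult(\psi)^{\lift}$ in Lemma \ref{lm:IdMinusG} holds only after composing with the cut-off $\mult(Y)$ of (\ref{def:Y}), i.e.\ on wave packets within distance of order $\langle\xi_z\rangle^{2\theta}$ of $X_0$ in the coordinates (\ref{eq:newcoordinates}); the paper stresses that no such approximation is available far from $X_0$, precisely because of the anisotropic weight. Moreover, the dominant hyperbolic components are those allowed by the relation $\hookrightarrow^t$ of Definition \ref{def:arrow}, e.g.\ $\tilde m(\j')\approx\tilde m(\j)+\chi_0 t$: for these there is no kernel decay in $|m-m'|$ at all, and they carry the coefficient $\bar{b}^t_{\j'}$, which grows exponentially in $t$. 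Their boundedness on the weighted spaces is not a linearization statement: it is the solid unweighted $L^2$ bound of Corollary \ref{cor:multi2} multiplied by the weight ratio $2^{-r(m(\j')-m(\j))}$ built into (\ref{eq:bKnorm}) (this is Lemma \ref{lm:hyp}); the linear model of Theorem \ref{cor:linear1} and Lemma \ref{lm:IdMinusG} enter only where both supports lie inside that of $Y$, as in the central components and the borderline case (\ref{eq:smallm}). Conversely, for transitions violating $\hookrightarrow^t$ the weight ratio is exponentially large, $2^{r(m(\j)-m(\j'))}$, so one needs a quantitative separation estimate strong enough to beat it; this is exactly Lemma \ref{lm:basic2} combined with the coarse trace-norm bound of Corollary \ref{cor:coarse} (and Lemma \ref{lm:low} for the low-frequency block), not generic kernel decay at scale $\langle\omega\rangle^{-1/2}$. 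Finally, your explanation of case (i) through the pointwise inequality $\mathcal{W}^{r,\sigma'}\le\mathcal{W}^{r,\sigma}$ cannot be the whole reason $\sigma'<\sigma$ is required---if it were, $\sigma'=\sigma$ would work too, which it does not (cf.\ the remark after Lemma \ref{lm:Qomega}); what is actually used is the strict nesting of the cones defining $\mathrm{ord}^{\sigma'}$ versus $\mathrm{ord}^{\sigma}$, which absorbs the small-time and chart-change distortion of the stable/unstable cones (cf.\ Remark \ref{rem:hype1}).
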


\subsubsection{The operator $\cQ_\omega$.}
Since the operator $\cQ_\omega$ extracts the parts of functions 
whose frequencies in the flow direction is around $\omega$, the claims of the next lemma is natural.
\begin{Lemma}\label{lm:Qomega}
Suppose that $\sigma,\sigma'\in \Sigma$ satisfy $\sigma'< \sigma$.  The operator $\cQ_\omega$ extends naturally  to a bounded operator $\cQ_\omega:\cK^{r,\sigma}(K_0)\to \cK^{r,\sigma'}(K_0)$.  
There exists a constant $C_0>0$ such that  
\begin{equation}\label{one:qsum1}
\sum_{\omega\in \integer}\|\cQ_{\omega} u\|_{\cK^{r,\sigma'}}^2\le 
C_0 \| u\|_{\cK^{r,\sigma}}^2\quad\mbox{for 
$u\in \cK^{r,\sigma}(K_0)$.}
\end{equation}
\end{Lemma}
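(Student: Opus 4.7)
The plan is to exploit the factorization $\cQ_\omega = \mult(\rho_{K_0}) \circ (\bI^\sigma)^{*} \circ \bPi_{\omega} \circ \bI^\sigma$ from Definition \ref{Def:PiOmega} and reduce the claimed $\ell^2$ estimate to a single-operator boundedness statement. Among the three operators composed, the inner two have tautological norm properties and only the outer pair requires work.

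First I would observe that by Definition \ref{Def:cK}, $\bI^\sigma:\mathcal{K}^{r,\sigma}(K_0) \to \bK^{r,\sigma}$ is an isometric embedding. Second, the operators $\bPi_\omega$ act on $\bK^{r,\sigma} = \overline{\bigoplus_{\j\in\cJ} \bK^{r,\sigma}_\j}$ by keeping the components with $\omega(\j)=\omega$ and discarding the rest; since the norm on $\bK^{r,\sigma}$ is defined componentwise over $\j$, the $\bPi_\omega$'s form a family of mutually orthogonal projections summing to the identity, so
\[
\sum_{\omega\in\integer} \|\bPi_\omega \bI^\sigma u\|^2_{\bK^{r,\sigma}} = \|\bI^\sigma u\|^2_{\bK^{r,\sigma}} = \|u\|^2_{\mathcal{K}^{r,\sigma}}.
\]
Consequently, the lemma will follow once we establish the single operator bound
\[
\|\mult(\rho_{K_0}) \circ (\bI^\sigma)^{*} \mathbf{v}\|_{\mathcal{K}^{r,\sigma'}} \le C_0\, \|\mathbf{v}\|_{\bK^{r,\sigma}} \quad \text{for all } \mathbf{v}\in \bK^{r,\sigma},
\]
since applying it to $\mathbf{v}=\bPi_\omega \bI^\sigma u$ and summing in $\omega$ yields (\ref{one:qsum1}).

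To prove this single bound, I would analyze $\bI^{\sigma'}\circ \mult(\rho_{K_0})\circ (\bI^\sigma)^{*}$ as a "matrix" of operators $\bK^{r,\sigma}_\j \to \bK^{r,\sigma'}_{\j'}$ indexed by pairs $(\j,\j')\in\cJ\times\cJ$. Each entry is the composition of multiplication by $\Psi^{\sigma'}_{\j'}$, the partial Bargmann transform $\pBargmann$, multiplication by $\rho_{\j'}\cdot (\rho_{K_0}\circ\kappa_{\j'}) \cdot (\tilde{\rho}_\j\circ \kappa_{\j'}^{-1}\circ \kappa_\j)$, pullback via the chart change $\kappa_{\j'}^{-1}\circ \kappa_\j$, the adjoint $\pBargmann^{*}$, and multiplication by $\Psi^{\sigma}_{\j}$. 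The rapid off-diagonal decay of these entries comes from three mechanisms: the Gaussian decay (\ref{eq:kernel_KP}) of the Bargmann projector kernel gives spatial/frequency localization at the scale $\langle \xi_z\rangle^{-1/2}$; the partition-of-unity pieces $\rho_\j$, $\rho_{\j'}$ and frequency cutoffs $\Psi^\sigma_\j$, $\Psi^{\sigma'}_{\j'}$ are almost disjoint unless the indices $\j,\j'$ are close; and the chart change $\kappa_{\j'}^{-1}\circ \kappa_\j$ is well-behaved on the overlap supports. A Schur-type estimate then yields boundedness with a constant independent of $\omega$.

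The main obstacle will be the bookkeeping in the matrix entries, particularly accounting for the three distinct scales in play (the spatial partition scale $\langle\omega\rangle^{-1/2+\theta}$, the Bargmann wave-packet scale $\langle\omega\rangle^{-1/2}$, and the relaxation scale of $E_{\omega,m}$). Crucially, the strict inequality $\sigma' < \sigma$ provides strict domination $W^{r,\sigma'} \le W^{r,\sigma}$ from (\ref{eq:wrpm}) in the transitional conic regions between the expanding and contracting directions of Subsection \ref{ss:variantsOfH}; this slack absorbs the loss produced by the smooth but not anisotropically adapted cutoff $\rho_{K_0}$. Without this slack, multiplication by $\rho_{K_0}$ would generally fail to be bounded on $\mathcal{K}^{r,\sigma}$, as already warned in Subsection \ref{ss:variantsOfH}.
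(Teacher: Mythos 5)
Your proposal is correct and is essentially the paper's own argument: the reduction through the mutually orthogonal projections $\bPi_\omega$ to a single uniform bound on $\mult(\rho_{K_0})\circ(\bI^{\sigma})^{*}$ is exactly the paper's estimate (\ref{eq:I_omega_omega'2}), which the paper obtains (with the stronger off-diagonal decay (\ref{eq:I_omega_omega'})) as the $t=0$ case of its component-wise estimates for the lifted operators, i.e.\ precisely the kernel-localization/Schur analysis over the index set $\cJ\times\cJ$ that you sketch. One small correction: the strict inequality $\sigma'<\sigma$ is needed because of the coordinate-change diffeomorphisms hidden in $(\bI^{\sigma})^{*}$ followed by re-decomposition (as the paper's remark after the lemma indicates), not because multiplication by the cutoff $\rho_{K_0}$ by itself would be unbounded on $\cK^{r,\sigma}$.
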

\begin{Remark}
The claim of the lemma above will not hold for the case $\sigma=\sigma'$ because of the anisotropic property of our Hilbert spaces. (Note that the operator $\cQ_\omega$ involves the coordinate change transformations.) 
\end{Remark}

\subsubsection{The operator $\cT_\omega$}
From Lemma \ref{lm:Tzero},  the (Schwartz) kernel of the operator $\cT_0^{\lift}$ concentrate around the trapped set if we view it through the weight function $\cW^{r,\sigma}$. 
Also the operator $\cT_\omega$ concerns the part of functions whose frequency in the flow direction is around $\omega$. Hence  it is not difficult to see that this is a compact operator. More precise consideration leads to the following lemma. 
\begin{Lemma}\label{lm:trace1}
Let $\sigma,\sigma'\in \Sigma_0$. 
The operator $\cT_\omega$  extends to a trace class operator $\cT_\omega:\cK^{r,\sigma}(K_0)\to \cK^{r, \sigma'}(K_1)$. There is a constant $C_0>0$ such that, for any subset $Z\subset \integer$, we have 
\begin{equation}\label{eq:TZ}
 \left\|\sum_{\omega\in Z}\cT_\omega: \cK^{r,\sigma}(K_0)\to \cK^{r,\sigma'}(K_1)\right\|\le C_0.
 \end{equation}
Further there exist constants $C_0>1$ and $\omega_0>0$ such that, for each $\omega\in \integer$ with $|\omega|\ge \omega_0$, we have
\begin{itemize}
\item[{\rm (a)}]   the estimate
\begin{equation}\label{eq-trace-order}
C_0^{-1}\langle \omega\rangle^{d}\le \|\cT_\omega:\cK^{r,\sigma}(K_0)\to \cK^{r, \sigma'}(K_1)\|_{\trace}\le C_0 \langle \omega\rangle^{d}
\end{equation}
where $\|\cdot\|_{\trace}$ denotes the trace norm of an operator, and 
\item[{\rm (b)}] there exists a subspace
$V(\omega)\subset \cK^{r,\sigma}(K_1)$ with $\dim V(\omega)\ge C_0^{-1}\langle \omega\rangle^{d}$ such that 
\begin{equation}\label{eq:Vexp}
\|\cT_\omega u \|_{\cK^{r,\sigma'}}\ge C_0^{-1}\|u\|_{\cK^{r,\sigma}} \quad \mbox{for all $u\in V(\omega)$.}
\end{equation}
Further there exists a constant $C_\nu>0$ for $\nu>0$ such that 
\begin{equation}\label{eq:Vexp2}
|(\cT_\omega u, \cT_{\omega'} v)_{\cK^{r,\sigma}}|\le C_\nu \langle \omega'-\omega\rangle^{-\nu}\cdot \|u\|_{\cK^{r,\sigma}}\|v\|_{\cK^{r,\sigma}}
\end{equation}
for $u\in V(\omega)$ and $v\in V(\omega')$ provided $|\omega|, |\omega'|\ge \omega_0$.
\end{itemize}
\end{Lemma}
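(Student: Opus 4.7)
The plan is to unwind the definition of $\cT_\omega$ through the lifting $\bI^\sigma$ and the identity (\ref{eq:Tlift2}), reducing the analysis to a sum of rank-controlled, frequency-localized building blocks $X_{n_0(\omega)}\cdot \cT_0^{\lift}$ indexed by $\j$ with $\omega(\j)=\omega$ and $m(\j)=0$, and then to count wave packets on the trapped set at frequency $\omega$ by semiclassical phase-space volume.

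For the upper bound on trace norm and (\ref{eq:TZ}), I would first write $\cT_\omega$ as an integral operator on $K_0\times K_1$, whose Schwartz kernel is obtained by pulling back the kernels from Corollary \ref{cor:Tzero} through the charts $\kappa_\j$ and summing over the (finitely many) surviving indices. The trace class bound would then come from the tensor factorization (\ref{eq:Tlift2}): $\cT_0^{\lift}$ is rank-one in the normal-to-trapped-set factor $L^2(\real_{(\zeta_p,\tilde\xi_y)}^{d+d'})$ (the $T_0$ projection onto constants), identity in $\xi_z$, and a Bargmann projector in the $\nu_q$-factor, so the trace-norm estimate reduces to computing the effective phase-space volume of the $\nu$-support after the cutoffs $X_{n_0(\omega)}\cdot q_\omega$. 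A direct calculation using (\ref{eq:newcoordinates}) gives this volume as $\langle\omega\rangle^d$, yielding the upper half of (\ref{eq-trace-order}). For the uniform bound (\ref{eq:TZ}), I would apply a Cotlar--Stein argument: since $q_\omega$ and $q_{\omega'}$ have disjoint supports for $|\omega-\omega'|\ge 3$, and the partial Bargmann kernel has Schwartz decay in $\xi_z$ off the diagonal, the cross products $\cT_\omega\cT_{\omega'}^*$ and $\cT_\omega^*\cT_{\omega'}$ are negligible, yielding a uniform bound on partial sums.

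For the lower bound and the construction of $V(\omega)$, I would build explicit wave packets $\phi_\alpha$ centered at the points of a lattice $\Lambda_\omega\subset \widecheck{X}\cap\{\xi_z\approx\omega\}$ with spacing of order one in the rescaled $\nu$-coordinates. The count $|\Lambda_\omega|\sim\langle\omega\rangle^d$ is the semiclassical Weyl count on the $2d$-dimensional symplectic slice of $\widecheck{X}$ at fixed $\xi_z$. Using the explicit kernel in Lemma \ref{lm:Tzero}, one then verifies $\cT_\omega\phi_\alpha\approx\phi_\alpha$ modulo a small error (the wave packets are approximate fixed points of the quantization map, since they already vanish in the normal-to-$\widecheck X$ directions), so $\|\cT_\omega\phi_\alpha\|_{\cK^{r,\sigma'}}\asymp\|\phi_\alpha\|_{\cK^{r,\sigma}}$; and the $\phi_\alpha$ are approximately orthonormal in $\cK^{r,\sigma}$ since the anisotropic weight $\mathcal{W}^{r,\sigma}$ equals $1$ on the trapped set by (\ref{eq:def_order_function_m}). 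This gives $\dim V(\omega)\ge C^{-1}\langle\omega\rangle^d$, the bound (\ref{eq:Vexp}), and the matching lower half of (\ref{eq-trace-order}). Estimate (\ref{eq:Vexp2}) then follows because elements of $V(\omega)$ and $V(\omega')$ are essentially frequency-localized to $\xi_z\in[\omega-1,\omega+1]$ and $[\omega'-1,\omega'+1]$ respectively, so repeated integration by parts in $\xi_z$ against the smooth weights produces Schwartz decay in $|\omega'-\omega|$.

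The principal technical obstacle is the Weyl counting in the construction of $V(\omega)$. A naive accounting ``wave-packets per chart times number of charts'' overshoots by a factor of $\langle\omega\rangle^d$, because the rescaled $\nu$-support of an individual $u_\j$ has volume $\sim\langle\omega\rangle^{d+O(\theta)}$ along the contracting direction (via $\nu_p=\sqrt{2}\,\mathrm{sgn}(\omega)\langle\omega\rangle^{3/2}p+\langle\omega\rangle\zeta_q$), while $|\cN(a,\omega)|\sim\langle\omega\rangle^{d(1-2\theta)}$, giving spurious total rank $\langle\omega\rangle^{2d}$. The resolution is that the supports of different $u_\j$ overlap massively in the $\nu_p$-coordinate, so the count must be performed intrinsically on $\widecheck{X}$ using the global identification afforded by (\ref{eq:Tlift2}), not chart by chart. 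Executing this carefully, together with the kernel estimate of Corollary \ref{cor:Tzero} to control the error between the discretized trapped-set quantization and the actual operator $\cT_\omega$, is the crux of the proof.
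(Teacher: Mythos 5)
Your overall route---near-orthogonality in $\omega$ for (\ref{eq:TZ}), the factorization (\ref{eq:Tlift2}) plus a phase-volume count for the trace bounds, and trapped-set wave packets at frequency $\omega$ for $V(\omega)$---is essentially the route the paper takes via Lemma \ref{lm:trace_lifted}, Lemma \ref{lm:choiceV} and the estimate (\ref{eq:I_omega_omega'}). The genuine problem is the paragraph you single out as the crux. The printed formula for $\nu_p$ in (\ref{eq:newcoordinates}), with prefactor $\langle\xi_z\rangle^{1/2}$, is a misprint: with it, $d\nu_q\wedge d\nu_p$ is of order $|\xi_z|^2$ rather than $|\xi_z|$, so the properties (2)--(3) listed right after (\ref{eq:newcoordinates}), the unitarity of $\Phi^*$ with respect to (\ref{eq:normalized_volume}) used in Lemma \ref{lm:U}, and the kernel bound of Corollary \ref{cor:Tzero} in terms of $\langle\xi_z\rangle^{1/2}|(w,\xi_w)-(w',\xi'_w)|$ would all fail; the intended formula is $\nu_p=2^{-1/2}\langle\xi_z\rangle^{-1/2}(\xi_z p+\langle\xi_z\rangle\xi_q)$. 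With the intended coordinates one has $\nu\approx\sqrt2\,\mathrm{sgn}(\omega)\langle\omega\rangle^{1/2}x$ on the trapped set, so one chart occupies a $\nu$-region of side $\sim\langle\omega\rangle^{\theta}$, the per-chart trace norm of $\mult(X_{n_0(\omega)})\circ\cT_0^{\lift}$ is $\sim\langle\omega\rangle^{2d\theta}$, and multiplying by the $\sim\langle\omega\rangle^{(1-2\theta)d}$ charts gives exactly $\langle\omega\rangle^{d}$: the ``naive'' chart-by-chart accounting you dismiss is precisely how the paper proves the upper bound (\ref{eq:traceTlifted}) and, by placing wave packets on a $\langle\omega\rangle^{-1/2}$-lattice inside each chart (Lemma \ref{lm:choiceV}, $\dim W(\omega)\gtrsim\langle\omega\rangle^{2d\theta}$) and summing over charts with disjoint supports, the lower bound (\ref{eq:trace_V}). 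There is no overshoot to repair.

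Moreover, your proposed repair does not work even if one takes the printed formula at face value. Under $\nu_p=\sqrt2\,\mathrm{sgn}(\omega)\langle\omega\rangle^{3/2}p+\langle\omega\rangle\zeta_q$, the $\nu_p$-concentration regions of pieces $u_{\j}$ attached to adjacent $p$-lattice points have centres spaced $\sim\langle\omega\rangle^{1+\theta}$ apart and extents $\sim\langle\omega\rangle^{1+\theta}$, so in the full $(\nu_q,\nu_p)$-space the overlap multiplicity is bounded, not ``massive''; the chart-by-chart and the intrinsic counts then agree and both give $\langle\omega\rangle^{2d}$, so your scheme as written would yield the wrong exponent in (\ref{eq-trace-order}) and a space $V(\omega)$ of dimension $\sim\langle\omega\rangle^{2d}$, contradicting the upper trace bound. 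Once the coordinate is corrected, your lattice on $\widecheck{X}\cap\{\xi_z\approx\omega\}$ with unit $\nu$-spacing has $\sim\langle\omega\rangle^{d}$ points and your construction collapses to the paper's; but as it stands the central counting step of your proposal is incorrect and must be redone with the corrected rescaling (or organized chart by chart as in Lemmas \ref{lm:trace_lifted} and \ref{lm:choiceV}).
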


\subsubsection{The transfer operators  $\cL^t$}
We give two propositions on the transfer operators $\cL^t$. 
The first one below is  in the same spirit as Theorem  \ref{cor:linear1} in Section \ref{sec:linear}.  
\begin{Proposition}
\label{pp:norm_estimate}
Let $\sigma,\sigma'\in \Sigma_0$. 
 There exist constants $\epsilon>0$ and $C_\nu>1$ for any $\nu>0$ such that, for $\omega,\omega'\in \integer$ and $0\le t\le 2t(\omega)$, we have   
\begin{align}
&\|\cT_{\omega'}\circ {\cL}^t\circ \cT_{\omega}:\cK^{r,\sigma}(K_0)\to \cK^{r, \sigma'}(K_1)\|\le 
 C_\nu \langle \omega'- \omega\rangle^{-\nu}\label{eq:upperboundTLT},\\
&\|(\cQ_{\omega'}-\cT_{\omega'})\circ \cL^t\circ \cT_\omega:\cK^{r,\sigma}(K_0)\to \cK^{r, \sigma'}(K_0)\|\le C_\nu \langle \omega\rangle^{-\epsilon} \langle \omega'- \omega\rangle^{-\nu},\label{eq:upperboundQLT}\\
&\|\cT_{\omega'}\circ \cL^t\circ (\cQ_\omega-\cT_\omega):\cK^{r,\sigma}(K_0)\to \cK^{r, \sigma'}(K_1)\|\le C_\nu  \langle \omega\rangle^{-\epsilon} \langle \omega'- \omega\rangle^{-\nu}\label{eq:upperboundTLQ}
\end{align}
and, under the additional condition
 (\ref{eq:condition_on_t}) on $t$, also
\begin{equation}
\|(\cQ_{\omega'}-\cT_{\omega'})\circ {\cL}^t\circ (\cQ_{\omega}- \cT_\omega):\cK^{r,\sigma}(K_0)\to \cK^{r, \sigma'}(K_0)\|\le 
C_\nu e^{-\chi_0 t}  \langle \omega'- \omega\rangle^{-\nu}.
\label{eq:upperboundQLQ}
\end{equation}
In particular, from the four inequalities above, it follows that 
\begin{equation}\label{eq:bound_QLQ}
\|\cQ_{\omega'}\circ {\cL}^t\circ \cQ_{\omega}:\cK^{r,\sigma}(K_0)\to \cK^{r, \sigma'}(K_0)\|\le 
 C_\nu \langle \omega'- \omega'\rangle^{-\nu}
\end{equation}
for  $\omega,\omega'\in \integer$ and $0\le t\le 2t(\omega)$ satisfying (\ref{eq:condition_on_t}) with respect to $\sigma$ and $\sigma'$.
\end{Proposition}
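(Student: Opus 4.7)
\medskip
\noindent\textbf{Proof proposal.} The plan is to reduce the four estimates to the linear model analysed in Section~\ref{sec:linear} (in particular Theorem~\ref{cor:linear1}) and then to exploit (i) almost-preservation of the $z$-frequency by $f^t_G$, which is a consequence of the fact that $f^t_G$ covers a contact flow whose generating vector field is $\partial_z$ in the Darboux charts of Lemma~\ref{th:Darboux}, and (ii) the decomposition $\cH_0\oplus\cH_1$ of Theorem~\ref{cor:linear1}, where the linearised transfer operator is unitary on $\cH_0$ and contracting by a factor $\lambda^{-1}\lesssim e^{-\chi_0 t}$ on $\cH_1$.

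First I would express $\cL^t$, in each coordinate chart $\kappa_{a,n}^{(\omega)}$, as a perturbation of the linear model $L^t$ of Section~\ref{sec:linear}. Concretely, write $\bI^{\sigma'}\circ \cL^t \circ (\bI^\sigma)^*$ as a matrix of operators indexed by $\cJ\times \cJ$; each nonzero entry corresponds to a fibered contact diffeomorphism $g^t_{\j'\leftarrow\j}=(\kappa_{\j'})^{-1}\circ f^t_G\circ \kappa_{\j}$ which, on the scale $\langle\omega\rangle^{-1/2+\theta}$ of our partition of unity and for $0\le t\le 2t(\omega)$ with $\epsilon_0$ small enough, is $O(\langle\omega\rangle^{-\theta/2})$-close in the $C^2$ sense to its affine part $B^t_{\j'\leftarrow\j}\in \cA_0$. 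I would then pass to the phase space via $\pBargmann$ so that the linearised pieces become the lifted operators $(L^t)^{\lift}$ studied in Section~\ref{ss:specLt}, and control the non-linear remainder by standard stationary-phase estimates, losing only factors of $\langle\omega\rangle^{-\epsilon}$ for some $\epsilon>0$.

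Next I would extract the frequency decay $\langle\omega'-\omega\rangle^{-\nu}$. Because $(\partial_z)_*$ is invariant under the flow $f^t_G$ (as the Reeb field pulls back to $\partial_z$ in each Darboux chart), the coordinate $\xi_z$ is invariant under the canonical maps $(Df^t_G)^*$. Hence, modulo smooth symbolic error with rapid decay in $\xi_z$ coming from the non-flatness of the Bargmann kernel (\ref{eq:kernel_KP}) in the $z\leftrightarrow\xi_z$ variables, the lifted operator $(\cL^t)^{\lift}=\pBargmann\circ\cL^t\circ\pBargmann^*$ maps the support of $q_\omega(\xi_z)$ into the support of $q_{\omega'}(\xi_z)$ only when $|\omega-\omega'|$ is small; the remainder is of convolution type in $\xi_z$ with a Schwartz kernel, yielding the $\langle\omega'-\omega\rangle^{-\nu}$ factors for every $\nu>0$. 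This step handles simultaneously all four estimates (\ref{eq:upperboundTLT})--(\ref{eq:upperboundQLQ}) and reduces them to the matching statements in which the indices $\omega,\omega'$ are comparable.

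Finally, within the block $\omega'\approx\omega$, I would invoke Theorem~\ref{cor:linear1}. The operator $\cT_\omega$ corresponds under the lift, up to the frequency cut-off $q_\omega$ and support truncation $X_{n_0(\omega)}$, to the projector $\cT_0^{\lift}$ of (\ref{eq:Tlift}); so $\cT_{\omega'}\circ\cL^t\circ\cT_\omega$ is modelled by $\cT_0\circ L^t\circ\cT_0$, which is bounded uniformly in $t$ by Theorem~\ref{cor:linear1}(3)(i) since $L^t|_{\cH_0}$ is unitary, giving (\ref{eq:upperboundTLT}). The mixed terms (\ref{eq:upperboundQLT}), (\ref{eq:upperboundTLQ}) are modelled by $(\mathrm{Id}-\cT_0)\circ L^t\circ \cT_0$ and $\cT_0\circ L^t\circ(\mathrm{Id}-\cT_0)$, which vanish on the linearised model (since $L^t$ commutes with $\cT_0$); the non-zero contribution comes only from the non-linear remainder and hence is bounded by $\langle\omega\rangle^{-\epsilon}$. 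The double off-trapped term (\ref{eq:upperboundQLQ}) is modelled by $(\mathrm{Id}-\cT_0)\circ L^t\circ(\mathrm{Id}-\cT_0)$, on which Theorem~\ref{cor:linear1}(3)(ii) provides the contraction $C_0(1/\lambda)\lesssim e^{-\chi_0 t}$; the constraint (\ref{eq:condition_on_t}) is needed so that either $t\ge t_0$ gives genuine expansion or $\sigma'<\sigma$ allows one to absorb the anisotropic-norm mismatch for small $t$.

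The main obstacle in executing this plan is the fourth step, or rather the passage from the global non-linear operator to the linearised model in a way that is compatible with the non-conformal local charts $\kappa_{a,n}^{(\omega)}=\kappa_{a,n}\circ E_\omega$ and the modified weight structure of $\cK^{r,\sigma}(K_0)$. In particular, one has to check that (a) the chart transitions $\kappa_{\j'}^{-1}\circ\kappa_{\j}$ preserve the $\bK^{r,\sigma}$-norm up to constants independent of $\omega$ (this is precisely the reason for interpolating $E_\omega$ with the identity via $E_{\omega,m}$ in Section~\ref{ss:modified_pu}), and (b) the H\"older regularity of $e_u$, which only guarantees variation $\langle\omega\rangle^{-\beta(1/2-\theta)}$ in the $y$-direction, becomes negligible on the effective scale $\langle\omega\rangle^{-1/2-3\theta}$ after rescaling by $E_\omega$. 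Both issues are built into the setup of Sections~\ref{sec:basic} and \ref{sec:modifiedASS}, but verifying that they combine cleanly with the Bargmann-side estimates and with Corollary~\ref{cor:Tzero} is the technical heart of the matter.
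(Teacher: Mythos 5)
Your overall route (localize in charts, pass to the partial Bargmann side, pull out the $\langle\omega'-\omega\rangle^{-\nu}$ factor from near-preservation of $\xi_z$, and then compare with the linear model of Theorem \ref{cor:linear1}) is indeed the skeleton of the paper's argument, but there is a genuine gap in the way you propose to use the linear model. The approximation of a chart component by its affine part (the paper's Lemma \ref{lm:IdMinusG}) is \emph{only} valid after inserting the cutoff $\mult(Y)$ that restricts to a $\langle\omega\rangle^{-1/2+2\theta}$-neighborhood of the trapped set: away from $X_0$ the frequency $\langle\xi_z\rangle|\xi_w|$ is large enough that the nonlinear error in the phase is not $o(1)$, and the "standard stationary-phase" loss of $\langle\omega\rangle^{-\epsilon}$ simply does not hold there. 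Consequently you cannot obtain (\ref{eq:upperboundQLQ}) by applying Theorem \ref{cor:linear1}(3)(ii) to the whole range of $(\cQ_\omega-\cT_\omega)$. In the paper the components are first split into central, hyperbolic and low-frequency parts (Definition \ref{Def:partsOfOperators}); only the central part is treated by linearization plus Theorem \ref{cor:linear1} (Proposition \ref{lm:TL}), while for the hyperbolic components the $e^{-\chi_0 t}$ (in fact $e^{-(r/2)\chi_0 t}$) decay comes from an entirely different mechanism: the weight $2^{-r m(\j)}$ built into the $\bK^{r,\sigma}$-norm combined with the transport relation $\hookrightarrow^t$ (Lemma \ref{lm:hyp}), and the "wrong-direction" components are killed by coarse trace-norm bounds (Lemma \ref{lm:basic2} together with Corollary \ref{cor:coarse}). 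Likewise, in the mixed estimates (\ref{eq:upperboundQLT})--(\ref{eq:upperboundTLQ}) the hyperbolic components are not small because "$L^t$ commutes with $\cT_0$", but because the kernel of $\cT_0^{\lift}$ is localized near $X_0$ while those components live at distance $\gtrsim\langle\omega\rangle^{-1/2+\theta}$ from it (Lemma \ref{lm:center_peri}); your proposal has no substitute for this step.

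A second, smaller but still real, gap is the passage from component-wise bounds to the operator bounds on $\cK^{r,\sigma}$. For $0\le t\le 2t(\omega)$ the number of charts $\j$ feeding a given $\j'$ grows like $\langle\omega\rangle^{\theta}$, so naively summing the entries would destroy the \emph{uniform} bound (\ref{eq:upperboundTLT}), which has no spare factor $\langle\omega\rangle^{-\epsilon}$. The paper handles this by cutting off tails and using the uniformly bounded intersection multiplicity of the supports $D_{\j\to\j'}$ together with a Cauchy--Schwarz argument (latter part of the proof of Proposition \ref{lm:TL}); some version of this bookkeeping must appear in your argument as well.
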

We need the next proposition when we consider the resolvent of the generator of~$\cL^t$. 
This is essentially an estimate on $\cL^t$ for negative $t<0$. The transfer operators $\cL^t$ with negative $t\ll 0$ will not be a bounded operator on our modified anisotropic Sobolev spaces. 
But, since  $\cT_\omega u$ for  $u\in \cK^{r, \sigma}(K_0)$ is a smooth function as we noted in Remark \ref{Rem:smoothness}, its image  $\cL^t(\cT_\omega u)$ for $t<0$ is well defined and smooth. 
More precise consideration leads to  
\begin{Proposition}\label{pp:backward}
Let $\sigma,\sigma'\in \Sigma_0$.  There exist constants $\epsilon>0$, $C_0>0$ and $C_\nu>0$ for any $\nu>0$ such that,  for  
$u\in \cK^{r, \sigma}(K_0)$, $\omega\in \integer$ and $0\le t\le 2t(\omega)$, there exists $v_\omega\in \cK^{r, \sigma'}(K_1)$ such that 
\begin{align}\label{eq:backward1}
&\| \cL^t v_\omega-\cT_\omega u\|_{\cK^{r,\sigma}}\le C_0 \langle \omega\rangle^{-\theta}\|u\|_{\cK^{r,\sigma}}
\intertext{and, for $\omega'\in \integer$ and $0\le t'\le t$, }
&\| \cQ_{\omega'}\circ \cL^{t'} v_\omega\|_{\cK^{r,\sigma'}}\le C_\nu \langle \omega'-\omega\rangle^{-\nu} \| u\|_{\cK^{r,\sigma}},
\label{eq:cv1}\\
&
\| (\cQ_{\omega'}-\cT_{\omega'})\circ \cL^{t'} v_\omega\|_{\cK^{r,\sigma'}}\le C_\nu \langle \omega\rangle^{-\epsilon}\langle \omega'-\omega\rangle^{-\nu} \|u\|_{\cK^{r, \sigma}}.
\label{eq:cv2}
\end{align}

\end{Proposition}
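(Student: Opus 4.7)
My plan is to construct $v_\omega$ as a truncated backward transport of $\cT_\omega u$. Since $\cT_\omega u\in C^\infty(K_1)$ by Remark \ref{Rem:smoothness} and $\cL^t=\cL^t_{d,0}$ is scalar-valued, the pointwise formal inverse
\[
\tilde v_\omega(x) := b^t(x)^{-1}\cdot (\cT_\omega u)(f^t_G(x))
\]
is a smooth function on $\Omega_t := \{x\in G:f^t_G(x)\in K_1\}$. Provided $\epsilon_0>0$ is chosen small relative to $\chi_{\max}$ and the distance from $K_1$ to $\partial K_0$, the hyperbolicity estimate (\ref{eq:chi_max}) yields $\Omega_t\subset K_0$ for all $t\le 2t(\omega)$, so I set $v_\omega := \rho_{K_1}\cdot \tilde v_\omega\in C^\infty(K_1)\subset \cK^{r,\sigma'}(K_1)$. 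A direct calculation gives $\cL^t v_\omega = (\rho_{K_1}\circ f^{-t}_G)\cdot \cT_\omega u$, so
\[
\cL^t v_\omega-\cT_\omega u = \psi_t\cdot \cT_\omega u, \qquad \psi_t := \rho_{K_1}\circ f^{-t}_G-1,
\]
where $\psi_t$ is smooth and vanishes on $f^t_G(K_2)\supset f^{2t_0+t}_G(K_0)$, which contains a transverse $\eta_1$-neighborhood of $\mathrm{Im}\,e_u$ for some $\eta_1>0$ independent of $t$.

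For (\ref{eq:backward1}) I invoke Corollary \ref{cor:Tzero} in the charts $\kappa_{a,n}^{(\omega)}$: the Bargmann kernel of $\cT_0^{\lift}$ is concentrated within $O(\langle\omega\rangle^{-1/2})$ of the trapped set $X_0$, and unwinding through the partial Bargmann transform shows that $\cT_\omega u$ is essentially localized in position space to an $O(\langle\omega\rangle^{-1/2})$-neighborhood of $\mathrm{Im}\,e_u$ with polynomial decay beyond. Since $\psi_t$ vanishes in the $\eta_1$-neighborhood and has bounded derivatives, $\|\psi_t\cdot \cT_\omega u\|_{\cK^{r,\sigma}}\le C_N\langle\omega\rangle^{-N}\|u\|_{\cK^{r,\sigma}}$ for every $N$, strictly stronger than the required $\langle\omega\rangle^{-\theta}$. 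For (\ref{eq:cv1}) and (\ref{eq:cv2}) with $t'\in[0,t]$, I observe that $\cL^{t'}v_\omega$ agrees (up to the same cutoff error) with the backward transport of $\cT_\omega u$ by time $s:=t-t'\ge 0$. Lemma \ref{th:Darboux} renders the generating vector field as $\partial_z$, so the frequency localizer $q_\omega(\xi_z)$ commutes with the pointwise flow action up to non-linear corrections negligible on the scale $\langle\omega\rangle^{-1/2+\theta}$; integration by parts in $\xi_z$ then yields the polynomial decay in $\langle\omega'-\omega\rangle$. The extra factor $\langle\omega\rangle^{-\epsilon}$ in (\ref{eq:cv2}) arises because $(\cQ_{\omega'}-\cT_{\omega'})\cL^{t'}v_\omega$ extracts the component of $\cL^{t'}v_\omega$ lying in the complement $\mathfrak{H}_1$ of Theorem \ref{cor:linear1}(3)(ii), on which the linear model contracts at rate $\lambda^{-1}$ per unit time; in reverse time this yields smallness $\lambda^{-t(\omega)}\le\langle\omega\rangle^{-\epsilon_0\log\lambda}$.

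The technical heart lies in this last step, since $\cL^{-t}$ is unbounded on anisotropic Sobolev spaces and we must work only on the smooth image $\mathrm{Im}(\cT_\omega)$, on which the backward dynamics is essentially unitary by Theorem \ref{cor:linear1}(3)(i). Concretely, the argument amounts to running the proofs underlying Proposition \ref{pp:norm_estimate} in reverse time after decomposing $\cL^{t'}v_\omega$ via the partition of unity $\{\Psi_{\omega,m}^{\sigma}\}$. The main obstacle I anticipate is verifying that the non-linear distortion of $f^{-s}_G$ for $s$ up to $2t(\omega)\sim\log\langle\omega\rangle$ does not spoil the linearized-model estimates; this will be ensured by choosing $\epsilon_0>0$ small enough that the backward escape rate $\langle\omega\rangle^{\chi_{\max}\epsilon_0}$ is dominated by the partition-of-unity scale $\langle\omega\rangle^{1/2-\theta}$, leaving a net gain of size $\langle\omega\rangle^{-\epsilon}$ for some $\epsilon>0$.
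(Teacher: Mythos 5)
Your construction of $v_\omega$ is exactly the one the paper uses in Proposition \ref{pp:backward_lifted}: a cutoff of the backward transport $\cL^{-t}(\cT_\omega u)$, with the cutoff error controlled by the localization of the kernel of $\cT_0^{\lift}$ (Corollary \ref{cor:Tzero}) and with (\ref{eq:cv1})--(\ref{eq:cv2}) obtained by running the component-wise linearization estimates in reverse time on the image of the projection (Lemma \ref{lm:backward local}). Two of your justifications, however, do not stand. First, it is false that $f^t_G(K_2)$ contains a transverse $\eta_1$-neighborhood of $\mathrm{Im}\, e_u$ with $\eta_1$ independent of $t$: since $\bigcap_{t\ge 0}f^t_G(K_0)=\mathrm{Im}\, e_u$, the set $f^t_G(K_2)$ shrinks onto the section exponentially fast, so for $t\le 2t(\omega)$ the region where your $\psi_t$ vanishes only contains a neighborhood of size comparable to $\langle\omega\rangle^{-c\epsilon_0}$. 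This is precisely why the paper must take $\epsilon_0$ small enough that this size dominates $C^{-1}\langle\omega\rangle^{-\theta}$, and why the bound in (\ref{eq:backward1}) comes out as a fixed polynomial rate; your claimed $C_N\langle\omega\rangle^{-N}$ for every $N$ is unjustified, although the weaker $\langle\omega\rangle^{-\theta}$ that is actually needed does follow once the argument is corrected along these lines.

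The more serious gap is your mechanism for the factor $\langle\omega\rangle^{-\epsilon}$ in (\ref{eq:cv2}). You attribute it to the contraction of the linear model on $\mathfrak{H}_1$ ``run in reverse time''; but backward evolution \emph{expands} $\mathfrak{H}_1$ rather than contracting it, and in any case (\ref{eq:cv2}) must hold at $t'=0$, where no evolution is available at all, so no dynamical contraction factor can be the source of the smallness. The correct mechanism, used in the paper's proof of (\ref{eq:vjj2}), is algebraic: in the linear model the backward map commutes with $\cT_0$, so $(1-\cT_0^{\lift})\circ L^{-s}\circ \cT_0^{\lift}=0$ exactly, and the whole of (\ref{eq:cv2}) consists of the error terms produced when commuting $(1-\cT_0^{\lift})$ past the nonlinearity, the coefficient multiplications and the cutoffs (Lemma \ref{lm:IdMinusG}, Lemma \ref{lm:mphi}, Lemma \ref{lm:TM}), each of size $O(\langle\omega\rangle^{-\epsilon})$. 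Relatedly, your sketch never controls the components of $\cL^{t'}v_\omega$ with $m(\j')\neq 0$, which enter (\ref{eq:cv1}) through $\cQ_{\omega'}$; the paper handles them separately in (\ref{eq:vjj3}), using that $\supp\Psi^{\sigma'}_{\j'}$ lies at distance $\gtrsim\langle\omega'\rangle^{-1/2+\theta}$ from the trapped set while the backward-evolved image of $\cT_0^{\lift}$ stays concentrated at scale $\langle\omega'\rangle^{-1/2}$, and only then sums the component estimates via the bounded intersection-multiplicity argument.
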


\subsubsection{Short-time estimates}
The next lemma is a consequence of the fact that the component $\cQ_\omega u$ of $u\in \cK^{r,\sigma}(K_0)$ has frequency close to $\omega$ in the flow direction. 
\begin{Lemma} \label{lm:continuity_omega} Suppose that $\sigma,\sigma'\in \Sigma_0$ satisfy $\sigma'<\sigma$. 
There exists a constant $C_0>0$  such that, for $\omega\in \integer$ and $0\le t\le t_0$,
\begin{equation}\label{eq:continuity_omega}
\| (e^{-i\omega t}\cL^t -1)\circ \cQ_\omega:\cK^{r,\sigma}(K_0)\to \cK^{r,\sigma'}(K_0)\|\le C_0 t.
\end{equation}
For the generator $A:=\lim_{t\to +0}(\cL^t -1)/t$ of $\cL^t$, we have, for $u\in \cK^{r,\sigma}(K_0)$,
\begin{align}\label{eq:continuity_omega2}
&\sum_{\omega\in \integer}\| (i\omega-A)\circ \cQ_\omega u\|^2_{\cK^{r,\sigma'}}\le C_0 \|u\|_{\cK^{r,\sigma}}^2
\intertext{and}
&\sum_{\omega\in \integer}\| \cQ_\omega\circ (i\omega-A) u\|^2_{\cK^{r,\sigma'}}\le C_0 \|u\|_{\cK^{r,\sigma}}^2. \label{eq:continuity_omega3}
\end{align}
Further the claims (\ref{eq:continuity_omega2}) and (\ref{eq:continuity_omega3}) remain valid when $\cQ_\omega$ is replaced by $\cT_{\omega}$. 
\end{Lemma}
\begin{Remark}\label{rem:vector_valued_case_3}
For the vector-valued transfer operators $\cL^t_{k,\ell}$ with $(k,\ell)\neq (0,0)$, we consider their action on the Hilbert spaces given in Remark \ref{rem:vector_valued_case_1} and prove the propositions parallel to those stated in this section, except for Lemma \ref{lm:trace1} and Proposition \ref{pp:backward}. The extensions of the proofs given in later sections to such cases  are straightforward. We will not need Lemma \ref{lm:trace1} and Proposition \ref{pp:backward} for the vector-valued cases.
\end{Remark}


\section{Proof of the main theorems (1): Theorem \ref{th:spectrum} }\label{sec:proof1}
We prove  Theorem \ref{th:spectrum} assuming the propositions given in the last section. Below we consider the case of scalar-valued transfer operator $\cL^t=\cL^t_{0,0}$. For the other cases, we put a remark, Remark \ref{rem:vector_valued_case_4}, at the end of Subsection \ref{ss:bddR}. 

\subsection{Strong continuity and the generator}\label{ss:strongContinuity}
We define the Hilbert space $\widetilde{\cK}^r(K_0)$ in the statement of Theorem \ref{th:spectrum} as follows. 
\begin{Definition}\label{def:modifiedAHS}
We define the  norm $\|\cdot \|_{\wK^r}$ on $C^{\infty}(K_0)$ by 
\[
 \|u \|_{\wK^r}:=\left(\int_0^{t_0} \|\cL^t u\|^2_{\cK^{r,-}}dt\right)^{1/2}.
\]
The Hilbert space $\wK^r(K_0)$ is the completion of the space $C^\infty(K_0)$ with respect to this norm. 
\end{Definition}
From Proposition \ref{pp:continuity},  we have
\begin{equation}\label{eq:wK1}
\cK^{r}(K_0)\subset \wK^{r}(K_0)
\end{equation}
and the transfer operator $\cL^t$ for $t\ge t_0$  extends to
\begin{equation}\label{eq:wK2}
\cL^t:\wK^r(K_0)\to \cK^{r,+2}(f^t(K_0))\subset \cK^r(K_2) . 
\end{equation}
\begin{Remark}
The Hilbert space $\widetilde{\cK}^r(K_0)$ is contained in Sobolev space $H^{-R}(K_0)$ of some negative order $-R<0$, since $\cL^{t_0}(\wK^r(K_0))\subset \cK^{r}(f^{t_0}(K_0))\subset H^{-R}(f^{t_0}(K_0))$ from Remark \ref{rem:HR}  and $\cL^{-t_0}$ is  bounded  from $H^{-R}(f^{t_0}(K_0))$ to  $H^{-R}(K_0)$. 
\end{Remark}
\begin{Proposition}\label{pp:strong_continuity}
The transfer operators $\cL^t:C^\infty(K_0)\to C^\infty(K_0)$ for $t\ge 0$ extend to a  strongly continuous one-parameter semi-group of bounded operators 
\[
\mathbb{L}:=\{\cL^t:\wK^{r}(K_0)\to \wK^{r}(K_0), \;t\ge 0\}.
\]
For some constant $C>0$, we have
\begin{equation}\label{eq:LtExp}
\|\cL^t:\wK^{r}(K_0)\to \wK^{r}(K_0)\| \le C  e^{C t}\quad \mbox{for $t\ge 0$.}
\end{equation}
\end{Proposition}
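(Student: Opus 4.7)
The strategy is to verify the three defining properties of a strongly continuous semi-group on $\wK^{r}(K_0)$: (i) uniform boundedness with the claimed exponential rate, (ii) strong continuity at $t=0$, and (iii) the semi-group law, the last two of which follow easily from the first by density of $C^\infty(K_0)$ in $\wK^{r}(K_0)$.

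\smallskip

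\textbf{Step 1 (Boundedness).} For $u \in C^\infty(K_0)$ the semi-group property on smooth functions and a change of variable $\tau=s+t$ give
\begin{equation*}
\|\cL^{t}u\|_{\wK^{r}}^{2}
=\int_{0}^{t_{0}}\|\cL^{s+t}u\|_{\cK^{r,-}}^{2}\,ds
=\int_{t}^{t+t_{0}}\|\cL^{\tau}u\|_{\cK^{r,-}}^{2}\,d\tau.
\end{equation*}
For $t\in[0,t_{0}]$ split this as $\int_{t}^{t_{0}}+\int_{t_{0}}^{t+t_{0}}$. The first integral is clearly majorized by $\|u\|_{\wK^{r}}^{2}$. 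For the second, use the factorization $\cL^{\tau}=\cL^{t_{0}}\circ \cL^{\tau-t_{0}}$ together with the bound $\|\cL^{t_{0}}\|_{\cK^{r,-}\to\cK^{r,-}}\le Ce^{Ct_{0}}$ provided by case (ii) of Proposition \ref{pp:continuity}; after the substitution $\tau'=\tau-t_{0}\in[0,t]\subset[0,t_{0}]$ the resulting integral is bounded by $C^{2}e^{2Ct_{0}}\|u\|_{\wK^{r}}^{2}$. Hence $\|\cL^{t}u\|_{\wK^{r}}^{2}\le(1+C^{2}e^{2Ct_{0}})\|u\|_{\wK^{r}}^{2}$ for $t\in[0,t_{0}]$. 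For $t\ge t_{0}$ apply the same factorization $\cL^{s+t}=\cL^{t_{0}}\circ\cL^{s+t-t_{0}}$ inside the defining integral to obtain $\|\cL^{t}u\|_{\wK^{r}}\le Ce^{Ct_{0}}\|\cL^{t-t_{0}}u\|_{\wK^{r}}$, and iterate via $t=nt_{0}+r$ with $r\in[0,t_{0})$ to recover the exponential estimate (\ref{eq:LtExp}). The bounded extension $\cL^{t}:\wK^{r}(K_{0})\to\wK^{r}(K_{0})$ follows by density.

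\smallskip

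\textbf{Step 2 (Strong continuity).} By Step 1 and an $\varepsilon/3$ argument it suffices to check that $\|\cL^{t}u-u\|_{\wK^{r}}\to 0$ as $t\to 0^{+}$ for $u\in C^{\infty}(K_{0})$. Writing
\begin{equation*}
\|\cL^{t}u-u\|_{\wK^{r}}^{2}=\int_{0}^{t_{0}}\|\cL^{s+t}u-\cL^{s}u\|_{\cK^{r,-}}^{2}\,ds,
\end{equation*}
one notes that for smooth compactly supported $u$ the curve $\sigma\mapsto \cL^{\sigma}u$ is continuous into $C^{\infty}$ (with supports uniformly contained in a compact subset of $U_{0}$ for $\sigma\in[0,2t_{0}]$), and the continuous inclusion $C^{\infty}_{c}(U_{0})\hookrightarrow\cK^{r,-}(K_{0})$ then shows the integrand tends pointwise to zero and is dominated by a constant. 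Dominated convergence concludes.

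\smallskip

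\textbf{Step 3 (Semi-group law).} The identity $\cL^{t+s}=\cL^{t}\circ\cL^{s}$ holds on $C^{\infty}(K_{0})$ and extends by density and the uniform bound from Step 1.

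\smallskip

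\textbf{Main obstacle.} The conceptual difficulty is that $\cL^{t}:\cK^{r,-}(K_{0})\to\cK^{r,-}(K_{0})$ need not be bounded for small $t$; Proposition \ref{pp:continuity} furnishes such a bound only once $t\ge t_{0}$. The definition of $\|\cdot\|_{\wK^{r}}$ as a time-average over $[0,t_{0}]$ is precisely the device that converts a would-be small-time estimate into a size-$t_{0}$ estimate, and the entire argument hinges on being able to use the substitution $\tau'=\tau-t_{0}$ to bring the integrand back inside the window $[0,t_{0}]$ over which the $\wK^{r}$-norm integrates.
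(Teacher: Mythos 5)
Your proposal is correct and follows essentially the same route as the paper: the key step is the identical splitting of the time-averaged norm into the piece already inside the window $[t,t_0]$ and the piece handled via the factorization $\cL^{\tau}=\cL^{t_0}\circ\cL^{\tau-t_0}$ together with Proposition \ref{pp:continuity}(ii), followed by iteration for the exponential bound and a density/$\varepsilon$-argument for strong continuity. Your Step 2 merely spells out the continuity of $t\mapsto\cL^{t}u$ at $t=0$ for smooth $u$, which the paper asserts without detail.
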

\begin{proof} The claims follow from the definition of $\|\cdot\|_{\wK^r}$ and Proposition \ref{pp:continuity}. 
Indeed, for $0\le t\le t_0$, we have
\begin{align*}
\|\cL^t u\|_{\wK^r}^2&=\int_0^{t_0} \|\cL^{s+t}u\|^2_{\cK^{r,-}}ds
=\int_t^{t_0} \| \cL^{s}u\|^2_{\cK^{r,-}}ds
+\int_0^{t} \|\cL^{t_0}\circ \cL^{s}u\|^2_{\cK^{r,-}}ds\\
&\le 
\int_t^{t_0} \| \cL^{s}u\|^2_{\cK^{r,-}}ds
+C\int_0^{t} \|\cL^{s}u\|^2_{\cK^{r,-}}ds
\le (1+C) \|u\|_{\wK^r}^2.
\end{align*} 
Then we use this estimate recursively to get (\ref{eq:LtExp}).
The correspondence $t\mapsto \cL^t(u)\in C^\infty(K_0)\subset \wK^{r}(K_0)$ for $u\in C^\infty(K_0)$ is continuous at $t=0$ from Remark \ref{rem:HR}. Since $C^\infty(K_0)$ is dense in $\wK^{r}(K_0)$ by definition, we obtain strong continuity of the semi-group $\mathbb{L}$ by approximation argument. 
\end{proof}

We will denote the generator of the one-parameter semi-group $\mathbb{L}$  by 
\[
A:\cD(A)\subset \wK^{r}(K_0)\to \wK^{r}(K_0).
\]
By general argument (see \cite[\S 1.4 p.51]{engel}), this is a closed operator defined on a dense linear subspace $\cD(A)\subset \wK^{r}(K_0)$ that  contains $C^\infty(K_0)$.

\subsection{Meromorphic property of the resolvent}
In the following, we suppose that  $\tau>0$ is that in the statement of Theorem \ref{th:spectrum}, given as an arbitrarily small positive real number.
The  resolvent of the generator $A$ is written
\[
\cR(s)=(s-A)^{-1}.
\]
As the second step toward the proof of Theorem \ref{th:spectrum}, we prove 
\begin{Proposition}\label{prop:mero_res} The resolvent $\cR(s)$ is meromorphic on the region 
\[
\{s\in \complex\mid \Re(s)>-\chi_0+\tau, |\Im(s)|> s_0\}
\] 
if  $s_0>0$ is sufficiently large. Further there exists a constant $C_0>0$ such that, for  $\omega_*\in \integer$ with sufficiently large absolute value,  
there exist at most $C_0 |\omega_*|^{d}$ poles of the resolvent $\cR(s)$ (counted with multiplicity) in the region   
\begin{equation}\label{region2}
R(\omega_*)=\{s\in \complex\mid -\chi_0+2\tau<\Re(s)<1 ,\; |\Im (s)-\omega_*|\le 1\;\}.
\end{equation}
\end{Proposition}
\begin{Remark}\label{rem:mero12}
We actually can prove the meromorphic property of the resolvent $
\cR(s)=(s-A)^{-1}$ on much larger region. (See Remark \ref{rem:meromorphy} in Appendix \ref{sec:zeta}.)   
\end{Remark}

\begin{proof}
For each integer $\omega_*\in \integer$ with sufficiently large absolute value, 
we  prove that the resolvent $\cR(s)$ is meromorphic on the rectangle 
\[
\widetilde{R}(\omega_*):=\{s\in \complex\mid \Re(s)>-\chi_0+\tau,  |\Im (s)-\omega_*|<2\}
\Supset R(\omega_*).
\]  
Let us consider the operator  
\[
\widetilde{\cT}_{\omega_*}=\sum_{|\omega-\omega_*|\le 2\ell} \cT_\omega:\cK^{r,\sigma}(K_0)\to \cK^{r,\sigma'}(K_1)
\]
where the integer $\ell$ will be specified in the course of the argument below. (But note that we will choose  $\ell$ uniformly for $\omega_*$.) 
Below we write $C_0$ for large constants that are independent of $\omega_*$ and $\ell$.
From Lemma \ref{lm:trace1}, we have 
\begin{equation}\label{eq:TZcT}
\left\|\widetilde{\cT}_{\omega_*}:\cK^{r,\sigma}(K_0)\to \cK^{r,\sigma'}(K_1)\right\|\le C_0
\end{equation} 
and also 
\begin{equation}\label{eq:tracenorm_Tk}
\|\widetilde{\cT}_{\omega_*}:\cK^{r,\sigma}(K_0)\to \cK^{r,\sigma'}(K_1)\|_{\trace}\le C_0\ell\cdot |\omega_*|^{d}
\end{equation}
for any $\sigma, \sigma'\in \Sigma$. 
We regard  the generator $A$ as a compact perturbation of
\[
A':=A-\chi_0\cdot  \widetilde{\cT}_{\omega_*}:\cD(A)\to \wK^{r}(K_0).
\]
As the main step of the proof, we prove 
\begin{Lemma}\label{Lemma3}
The resolvent $\cR'(s):=(s-A')^{-1}$ of $A'$ is  bounded and satisfies $\|\cR'(s)\|_{\wK^r}\le C_0$ for $s\in \widetilde{R}(\omega_*)$ provided that  $|\omega_*|$ is sufficiently large.
\end{Lemma}
We postpone the proof of Lemma~\ref{Lemma3} and finish the proof of  Proposition \ref{prop:mero_res}.  From Proposition \ref{pp:strong_continuity}, the resolvent $\cR(s)=(s-A)^{-1}:\wK^{r}(K_0)\to \wK^{r}(K_0)$ is a uniformly bounded holomorphic family of isomorphisms on the region $\Re(s)\ge C_1$ if we take sufficiently large $C_1$. 
Let us write the operator $s-A$ as
\begin{equation}\label{eq:U}
s-A=(s-A')\circ  \mathcal{U}(s)\quad\mbox{where}\quad \mathcal{U}(s):=1-\chi_0 (s-A')^{-1}\circ \widetilde{\cT}_{\omega_*}.
\end{equation}
From Lemma \ref{Lemma3}, the operator $(s-A')^{-1}\circ\widetilde{\cT}_{\omega_*}$ belongs to the trace class and is holomorphic with respect to $s$ on the region $\widetilde{R}(\omega_*)$. Thus the resolvent $\cR(s)$ extends as a meromorphic family of Fredholm operators of index $0$ to the region $\widetilde{R}(\omega_*)$.  This gives the former claim of the proposition. 
Below we provide a more quantitative argument\footnote{We learned the following argument from the paper \cite{Sjostrand00} of Sj\"ostrand. 
} to get the latter claim. To begin with, note that the resolvent $\cR(s)$ has a pole of order $m$ at $s_0\in \widetilde{R}(\omega_*)$ if and only if
\[
k(s):=\det\mathcal{U}(s) 
\]
has a zero of order $m$ at $s=s_0$. 
\begin{Remark}
The determinant $\det\mathcal{U}(s) $ is well-defined 
since $\mathcal{U}(s)$ is a perturbation of the identity by a trace class operator. We refer \cite{GohbergBook} for the trace and determinant of operators on Hilbert (or Banach) spaces.
\end{Remark}
 From  (\ref{eq:tracenorm_Tk}) and Lemma \ref{Lemma3}, we have
\begin{equation}\label{eq:logks}
\log |k(s)|\le C_0\ell |\omega_*|^{d}\quad \mbox{uniformly for $s\in \widetilde{R}(\omega_*)$.}
\end{equation} 
We may write $\mathcal{U}(s)^{-1}$ for $s\in \widetilde{R}(\omega_*)$ with  $\Re(s)\ge C_1$ as 
\begin{align*}
\mathcal{U}(s)^{-1}
=(s-A)^{-1}(s-A')=\mathrm{Id}+\chi_0 (s-A)^{-1} \circ  \widetilde{\cT}_{\omega_*}.
\end{align*}
Hence, from (\ref{eq:tracenorm_Tk}), we obtain that
\begin{equation}\label{eq:logks2}
\log |k(s)|\ge -C_0 \ell  |\omega_*|^{d}\quad \mbox{uniformly for $s\in \widetilde{R}(\omega_*)$ with $\Re(s)\ge C_1$.}
\end{equation}
By virtue of Jensen's formula \cite[Chapter 5, formula (44) on page 208]{AhlforsBook}\footnote{By the Riemann mapping theorem, we find a biholomorphic mapping which maps the region $\widetilde{R}(\omega_*)$  onto the unit disk $|z|<1$ so that a point $s_*\in \widetilde{R}(\omega_*)$ with $\Re(s_*)>C_1$ and  $\Im(s_*)=\omega_*$ is sent to the origin $0$. Then we apply Jensen's formula to the holomorphic function on the unit disk corresponding to $k(s)$. (See also the proof of Corollary \ref{cor:Rs}.)}, the estimates (\ref{eq:logks}) and (\ref{eq:logks2})  imply that there are at most $C_0|\omega_*|^{d}$ poles in the region $R(\omega_*)\Subset \widetilde{R}(\omega_*)$. This proves the latter claim of the proposition. 
\end{proof}

\begin{proof}[Proof of Lemma \ref{Lemma3}]   
For the proof, we construct approximate right and left inverse of $(s-A)$, that is, 
\[
Q_R=Q_R(s),\;Q_L=Q_L(s)\;: \wK^r(K_0)\to \cD(A)\subset \wK^r(K_0)
\]
for $s\in \widetilde{R}(\omega_*)$ satisfying $\|Q_R\|_{\wK^r}\le C_0$, $\|Q_L\|_{\wK^r}\le C_0$ and
\begin{equation}\label{eq:Qlr}
\|\mathrm{Id}-(s-A') \circ Q_R\|_{\wK^r}<\frac{1}{2},\qquad
\|\mathrm{Id}-Q_L\circ (s-A')\|_{\wK^r}<\frac{1}{2}.
\end{equation}
Once we obtain such operators $Q_L$ and $Q_R$, we can prove Lemma  \ref{Lemma3}, constructing the resolvent $\cR'(s)=(s-A')^{-1}$ by iterative approximation. 
Indeed, if we define   
\[
\widetilde{Q}_R:=Q_R\sum_{k=0}^\infty (\mathrm{Id}-(s-A') Q_R)^k
\quad\text{ 
and}\quad 
\widetilde{Q}_L:=\sum_{k=0}^\infty (\mathrm{Id}-Q_L(s-A'))^k Q_L
\]
we have $\|\widetilde{Q}_R\|_{\wK^r}\le C'_0$, $\|\widetilde{Q}_L\|_{\wK^r}\le C'_0$ and
\[
(s-A')\circ \widetilde{Q}_R=\mathrm{Id},\quad \widetilde{Q}_L\circ (s-A')=\mathrm{Id}, \quad\widetilde{Q}_R=\widetilde{Q}_L \circ (s-A')\circ \widetilde{Q}_R=\widetilde{Q}_L.
\]
Below we construct $Q_R$ satisfying (\ref{eq:Qlr}). The construction of $Q_L$ is parallel and will be mentioned later. First, for $u\in \wK^r(K_0)$, we put
\[
\tilde{u}=e^{-st_0} \cL^{t_0}u -\chi_0 \widetilde{\cT}_{\omega_*} \int_0^{t_0} e^{-st}\cL^{t} u dt.
\]
Then we have $
\|\tilde{u}\|_{\cK^{r,+}}\le C_0 \|u\|_{\wK^r}$ from Proposition \ref{pp:continuity} and Lemma \ref{lm:trace1}.
We define the operator $Q_R:\wK^r(K_0)\to \wK^r(K_0)$ by setting
\[
Q_R u:=Q^{(1)}_R\tilde{u}+Q^{(2)}_R\tilde{u}+Q^{(3)}_R\tilde{u}+\int_0^{t_0} e^{-st}\cL^{t} u dt
\]
where   
\begin{align*}
Q^{(1)}_R \tilde{u}&=\sum_{\omega:|\omega -\omega_*|\le  \ell}
\int_{0}^{t(\omega_*)} e^{-(s+\chi_0)t}\cdot \cL^t\circ \cT_\omega 
\tilde{u} dt,\\
Q^{(2)}_R \tilde{u}&=\sum_{\omega:|\omega -\omega_*|\le  \ell}\int_{0}^{t(\omega_*)} e^{-st} \cL^t\circ (\cQ_\omega-\cT_\omega) \tilde{u} dt,\\
Q^{(3)}_R \tilde{u}&=\sum_{|\omega -\omega_*|> \ell}
(s-i\omega)^{-1}   \cQ_\omega\tilde{u}.
\end{align*} 
We have $\|Q^{(k)}_R \tilde{u}\|_{\wK^r}\le C_0\|u\|_{\wK^r}$ for $k=1,2,3$.
In the cases $k=1,2$, this follows from Proposition \ref{pp:norm_estimate}.   
In the case $k=3$, this follows from Lemma \ref{lm:Qomega} and Schwarz inequality.
 
Since $(d/dt) (e^{-st}\cL^t u)=-(s-A) e^{-st}\cL^t u$, we have
\[
(s-A')\int_0^{t_0} e^{-st}\cL^{t} u dt=u-e^{-st_0}\cL^{t_0} u+\chi_0 \widetilde{\cT}_{\omega_*}\int_0^{t_0} e^{-st}\cL^{t} u dt=u-\tilde{u}.
\]
Therefore, to prove the former claim in (\ref{eq:Qlr}) for $Q_R$, it is enough to show
\begin{enumerate}
\item $\|(s-A') Q^{(1)}_R \tilde{u}-\sum_{\omega:|\omega -\omega_*|\le  \ell}  \cT_\omega \tilde{u}\|_{\wK^r}<(1/6) \|u\|_{\wK^r}$,
\item $\|(s-A') Q^{(2)}_R \tilde{u}-\sum_{\omega:|\omega -\omega_*|\le  \ell}  (\cQ_\omega-\cT_\omega) \tilde{u}\|_{\wK^r}<(1/6) \|u\|_{\wK^r}$,
\item $\|(s-A') Q^{(3)}_R \tilde{u} -\sum_{\omega:|\omega -\omega_*|>  \ell}  
\cQ_\omega \tilde{u}\|_{\wK^r}<(1/6) \|u\|_{\wK^r}$.
\end{enumerate}
To prove the claims  (1) and (2), we  write
\begin{align*}
(s-A') Q^{(1)}_R \tilde{u}&=(s+\chi_0-A)Q^{(1)}_R \tilde{u}-\chi_0(1-\widetilde{\cT}_{\omega_*}) Q^{(1)}_R \tilde{u}\\
&=\sum_{\omega:|\omega -\omega_*|\le  \ell}
\left[\cT_{\omega}\tilde{u}-e^{-(s+\chi_0) t(\omega_*)} \cL^{t(\omega_*)} \circ \cT_{\omega}\tilde{u}\right]\\
&\qquad +
\sum_{\omega:|\omega -\omega_*|\le  \ell} \chi_0(1-\widetilde{\cT}_{\omega_*})\int_0^{t(\omega_*)} e^{-(s+\chi_0)t}\cdot \cL^t\circ \cT_\omega \tilde{u} dt
\end{align*}
and, similarly, 
\begin{align*}
(s-A') Q^{(2)}_R \tilde{u}
&=\sum_{\omega:|\omega -\omega_*|\le  \ell}
\left[(\cQ_\omega-\cT_{\omega})\tilde{u}-e^{-s t(\omega_*)} \cL^{-t(\omega_*)} \circ (\cQ_{\omega}-\cT_{\omega})\tilde{u}\right]\\
&\qquad +
\sum_{\omega:|\omega -\omega_*|\le  \ell} \chi_0
\int_0^{t(\omega_*)} e^{-st}\cdot \widetilde{\cT}_{\omega_*}\circ \cL^t\circ (\cQ_\omega-\cT_\omega) \tilde{u} dt.
\end{align*}
Then using Proposition \ref{pp:norm_estimate} and the relation 
\[
(1-\widetilde{\cT}_{\omega_*})=\sum_{|\omega-\omega_*|\le 2\ell}(\cQ_\omega-\cT_{\omega}) +\sum_{|\omega-\omega_*|>2\ell}\cQ_\omega,
\]
we can deduce the claims (1) and (2), provided that $\ell$ and $|\omega_*|$ are sufficiently large. 
To prove the claim (3), we write
\begin{align*}
(s-A')Q^{(3)}_R \tilde{u}&
=\sum_{|\omega -\omega_*|> \ell} \left(
\cQ_\omega \tilde{u}- 
\frac{A-i\omega}{s-i\omega}\cdot  \cQ_\omega \tilde{u}
+
\frac{\chi_0}{s-i\omega}\cdot \widetilde{\cT}_{\omega_*}\circ \cQ_\omega\tilde{u}\right).
\end{align*}
The sum of the second terms on the right-hand side is bounded in the $\wK^r$-norm by
\begin{align*}
C_0 \left(\sum_{|\omega -\omega_*|> \ell} 
\frac{1}{|s-i\omega|^2}\right)^{1/2}
\left(\sum_{|\omega -\omega_*|> \ell} \|(i\omega-A)\cdot  \cQ_\omega \tilde{u}\|_{\cK^{r}}^2\right)^{1/2}
\le C_0 \frac{\|u\|_{\wK^r}}{\ell^{1/2}} 
\end{align*} 
from Lemma \ref{lm:continuity_omega}.
Hence, if we take sufficiently large $\ell$ and then let $\omega_*$ be sufficiently large, we obtain the claim (3). 

For the construction of $Q_L$, we modify the definition of $\tilde{u}$ as
\[
\tilde{u}=e^{-st_0} \cL^{t_0}u +\chi_0  \int_0^{t_0} e^{-st}\cL^{t} \circ \widetilde{\cT}_{\omega_*} u dt
\]
and replace $\cL^t\circ \cT_\omega$ and $\cL^t\circ (\cQ_\omega-\cT_\omega)$ in the definitions of $Q^{(1)}_R$ and $Q^{(2)}_R$ respectively by $\cT_\omega\circ \cL^t$ and $(\cQ_\omega-\cT_\omega)\circ \cL^t$. Then we can follow the argument above with obvious modifications and obtain the latter claim in (\ref{eq:Qlr}) for $Q_L$.
\end{proof}

From the argument in the proof above, we get the following corollary, which we will use later in the proof of Proposition \ref{pp:lowerBound} in Subsection \ref{ss:lowerbound}. 
\begin{Corollary}\label{cor:Rs}
There exists a constant $C_0>1$ and $\omega_0>0$ such that, if $\omega_*\in \integer$ satisfies $|\omega_*|>\omega_0$, there exists some $\omega\in \real$ with $|\omega-\omega_*|<1$ such that  
\begin{equation}\label{eq:Rs}
\sup_{\mu\in [-\tau,\tau]}  \| \cR(\mu+\omega i):\wK^r(K_0)\to \wK^r(K_0)\|  \le \exp(C_0|\omega_*|^d) .
\end{equation}
\end{Corollary}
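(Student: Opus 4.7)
The proof builds directly on the factorization and notation used in the proof of the preceding proposition. The plan is to exploit $s-A=(s-A')\circ \mathcal{U}(s)$ from (\ref{eq:U}), which gives $\cR(s)=\mathcal{U}(s)^{-1}\circ(s-A')^{-1}$. Claim~3 in the preceding proof already furnishes a uniform bound on $\|(s-A')^{-1}\|$ throughout $\widetilde{R}(\omega_*)$, so the task reduces to producing, for each sufficiently large $\omega_*\in\integer$, some $\omega\in(\omega_*-1,\omega_*+1)$ for which
\[
\sup_{\mu\in[-\tau,\tau]}\|\mathcal{U}(\mu+i\omega)^{-1}\|\le \exp(C_0|\omega_*|^d).
\]

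To bound $\|\mathcal{U}^{-1}\|$ I would invoke the standard Fredholm-determinant inequality: if $K$ is trace class and $I-K$ is invertible, then $\|(I-K)^{-1}\|\le e^{\|K\|_{\mathrm{tr}}}/|\det(I-K)|$. In our situation $\mathcal{U}(s)=I-K(s)$ with $K(s)=\chi_0(s-A')^{-1}\circ\widetilde{\cT}_{\omega_*}$, and combining the uniform bound on $\|(s-A')^{-1}\|$ with the trace-norm estimate $\|\widetilde{\cT}_{\omega_*}\|_{\mathrm{tr}}\le C_0\ell\langle\omega_*\rangle^d$ from Lemma~\ref{lm:trace1} shows $\|K(s)\|_{\mathrm{tr}}\le C_1\langle\omega_*\rangle^d$ uniformly on $\widetilde{R}(\omega_*)$. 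Setting $k(s):=\det\mathcal{U}(s)$ as before, the problem thus becomes finding $\omega$ with
\[
|k(\mu+i\omega)|\ge \exp(-C_2\langle\omega_*\rangle^d)\qquad\text{for all }\mu\in[-\tau,\tau].
\]

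For this I would apply a minimum-modulus (Cartan) estimate to the holomorphic function $k$ on a disk containing $R(\omega_*)$ and contained in $\widetilde{R}(\omega_*)$. The relevant inputs are already in place: the global upper bound $\log|k|\le C_1\langle\omega_*\rangle^d$ from (\ref{eq:logks}), the lower bound $\log|k|\ge -C_1\langle\omega_*\rangle^d$ on the slice $|\Re(s)|>\tau$ from (\ref{eq:logks2}), and the counting estimate $\#\{\text{zeros of }k\text{ in }R(\omega_*)\}\le C_3\langle\omega_*\rangle^d$ from the preceding proposition. Fixing a base point $s_*\in\widetilde{R}(\omega_*)$ with $\Re(s_*)=\tau+1$, normalising $\tilde{k}(s)=k(s)/k(s_*)$ so that $\tilde{k}(s_*)=1$ and $\log\|\tilde{k}\|_\infty\le C_4\langle\omega_*\rangle^d$, Cartan's lemma produces a family of exceptional disks whose sum of radii is at most a small prescribed constant (say $1/4$), outside of which
\[
\log|k(s)|\ge -C_5\langle\omega_*\rangle^d.
\]

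It remains to choose the horizontal line. The projection of the exceptional set to the imaginary axis has total length at most $1/2$, while the interval $(\omega_*-1,\omega_*+1)$ has length $2$, so a measure-theoretic pigeonhole yields $\omega$ in this interval for which the entire segment $\{\mu+i\omega:\mu\in[-\tau,\tau]\}$ avoids every exceptional disk. This gives the required lower bound on $|k|$, hence (\ref{eq:Rs}). The main technical point is orchestrating the minimum-modulus bound so that the exceptional set is controlled in the imaginary direction; the naive route through a polynomial factorisation $k=P\cdot g$ and a direct pigeonhole on $\{\Im(s_j)\}$ produces the weaker lower bound $|P(s)|\ge (1/N)^N$, costing an undesired $|\omega_*|^d\log|\omega_*|$ in the exponent, and it is precisely Cartan's estimate that eliminates this logarithm.
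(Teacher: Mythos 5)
Your reduction to a lower bound on $|k(s)|=|\det\mathcal{U}(s)|$ is sound, and the operator-norm step is fine: the inequality $\|(1-K)^{-1}\|\le e^{\|K\|_{tr}}/|\det(1-K)|$ is a standard trace-ideal fact (the paper re-derives an equivalent bound by a self-adjoint trick applied to $\mathcal{U}(s)^*\mathcal{U}(s)$), and combined with (\ref{eq:tracenorm_Tk}), Claim 3 and (\ref{eq:U}) it gives $\|\cR(s)\|\le C\exp(C|\omega_*|^d)|k(s)|^{-1}$ on $R(\omega_*)$, exactly as in the paper.

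The gap is in the minimum-modulus step. The Cartan/minimum-modulus theorem requires $k$ to be holomorphic, with the sup bound, on a concentric disk whose radius exceeds by a fixed factor (of order $2$ to $2e$, depending on the version) the radius of the disk on which the lower bound is produced. But $k(s)$ is only controlled on $\widetilde{R}(\omega_*)$, a strip of height $4$: Claim 3 gives boundedness of $(s-A')^{-1}$ only there, and one cannot expect more, since for $|\Im(s)-\omega_*|$ slightly larger than $2$ the correction $\chi_0\widetilde{\cT}_{\omega_*}$ no longer shifts the spectrum of $A$ near the imaginary axis, so $\cR'(s)$ (hence $k$) will in general have poles just outside the strip. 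Geometrically, any disk containing $R(\omega_*)$ has radius at least about $1$, so the required larger concentric disk has radius at least about $2$ and cannot stay inside a strip of half-height $2$ (and, depending on $\chi_0$ and $\tau$, not even the small disk need fit); your phrase ``a disk containing $R(\omega_*)$ and contained in $\widetilde{R}(\omega_*)$'' already cannot accommodate the enlargement Cartan needs, and placing the base point at $\Re(s_*)=\tau+1$ off-center makes matters worse. The repair is to first uniformize: map $\widetilde{R}(\omega_*)$ conformally onto the unit disk (Riemann mapping, with Koebe distortion to keep the geometry of $R(\omega_*)$ under control) and run the lower-bound argument there. This is precisely what the paper does, writing $\log|k|$ as a harmonic part plus Green's functions at the zeros, bounding the boundary $L^1$-norm via the value at the base point, and then, instead of an exceptional-disk pigeonhole, integrating $\inf_{\mu\in[-\tau,\tau]}\log|k(\mu+i\omega)|$ over $\omega\in[\omega_*-1,\omega_*+1]$ and using that each zero contributes $\int\log|\omega-\Im w_i'|\,d\omega\ge -C$; your projection-of-exceptional-disks pigeonhole plays the same role, but only after the conformal step that your proposal omits.
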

\begin{Remark}
The estimate (\ref{eq:Rs}) seems very coarse. But for the moment we do not know whether we can give better estimates. 
\end{Remark}
\begin{proof}
Let us recall  the subharmonic function\footnote{We suppose that subharmonic functions can take value $-\infty$ and that $\log 0=-\infty$. } $\log |k(s)|=\log |\det\mathcal{U}(s)|$ and the constant $C_1$ in the proof of the last proposition. By the Riemann mapping theorem, we take a biholomorphic mapping $\varphi:\mathrm{int}\, \widetilde{R}(\omega_*)\to \disk$ which maps the region $\widetilde{R}(\omega_*)$ onto the unit disk $\disk=\{|z|<1\}$ so that  the point $s_*=C_1+i\omega_*\in \widetilde{R}(\omega_*)$  is mapped to the origin~$0$.   Let $\psi(z)=\log |k(\varphi^{-1}(z))|$ for $z\in \disk$ and observe that 
\begin{enumerate}
\item $
\psi(z)\le C_0|\omega_*|^{d}$ uniformly on  $\disk$ from (\ref{eq:logks}),
\item $\psi(z)$ is a subharmonic function with at most $C_0|\omega_*|^{d}$ points $w_i$ for $1\le i\le I$ (with $I<C_0|\omega_*|^{d}$) such that 
\[
\Delta \psi(z) =\sum_{i=1}^I \delta_{w_i},
\]
\item $\psi(0)=\log|k(s_*)|\ge -C_0|\omega_*|^d$ from (\ref{eq:logks2}) and the choice of $s_*$ above.\end{enumerate}
Let $\psi_i$, $1\le i\le I$, be the Green's function on $\disk$ at $w_i$, which is by definition the subharmonic function satisfying $\Delta \psi_i=\delta_{w_i}$ and $\psi_i\equiv 0$ on $\partial \disk$.
Then we see 
\[
\psi(z)=\psi_0(z)+\sum_{i=1}^I \psi_i(z)
\]
where $\psi_0$ is the harmonic function which takes the same boundary values as $\psi$. From the property (3) and the subharmonic property of $\psi$, 
it follows
\[
 \frac{1}{2\pi}\int_{\partial \disk} \psi_0(z) |dz|=\frac{1}{2\pi}\int_{\partial \disk} \psi(z) |dz|\ge \psi(0)\ge -C_0|\omega_*|^{d}.
\]
From this and the property (1) above,
\[
\frac{1}{2\pi}\int_{\partial \disk} |\psi(z)| |dz| \le C_0|\omega_*|^{d}.
\]
Hence, by  Poisson's formula \cite{AhlforsBook}, we get
\[
\psi(z)\ge -C_0|\omega_*|^{d}+\sum_{i=1}^I \psi_i(z)\quad \mbox{for $z\in \varphi(R(\omega_*))\Subset \disk$.}
\]
Note that the distortion of the Riemann map $\varphi$ on the compact subset $R(\omega_*)\Subset \disk$ is bounded uniformly in $\omega_*$, that is,  
\[
C_0^{-1}|s-s'|\le |\varphi(s)-\varphi(s')|\le C_0|s-s'|\quad \mbox{for $s,s'\in R(\omega_*)$}
\]
because the pairs  $(R(\omega_*), \widetilde{R}(\omega_*))$  for different $\omega_*$ are translations of each other. 
Since $\psi_i(z)\ge \log |z-w'_i|-C_0$, we obtain that 
\[
\log |k(s)|\ge -C_0|\omega_*|^{d} +\sum_{i=1}^I (\log |s-w'_i|-C_0)
\qquad \mbox{for $s\in R(\omega_*)$,}
\]
 where $w'_i=\varphi^{-1}(w_i)$. 
In particular we have
\begin{align*}
\int_{\omega_*-1}^{\omega_*+1} &\left(\inf_{\mu\in [-\tau,\tau]}  
\log |k(\mu+\omega i)| \right) d\omega\\
&\ge -C_0|\omega_*|^{d}+ 
\sum_{i=1}^{I}\left(\int_{\omega_*-1}^{\omega_*+1} \log |\omega -\Im(w'_i)| d\omega-C_0\right)\\
&\ge -C_0|\omega_*|^{d}-C_0I \ge -C_0|\omega_*|^{d}.
\end{align*}
Hence we can find $\omega\in [\omega_*-1,\omega_*+1]$ such that
\begin{equation}\label{eq:choice_omega}
\inf_{\mu\in [-\tau,\tau]}  
|k(\mu+\omega i)| \ge \exp(-C_0|\omega_*|^{d}).
\end{equation} 

For a self-adjoint trace class operator $X$ such that $1+X$ is positive, we have 
\[
\|(1+X)^{-1}\|\le \det(1+X)^{-1} \exp(\|X\|_{\trace}+1)
\]
because, writing $\sigma_i>-1$ for the eigenvalues of $X$, we have\footnote{Note that $e^{x}/(1+x)>1$ for $x>-1$.}
\[
\det(1+X)^{-1} \exp(\|X\|_{\trace})\ge \prod_{i}\frac{e^{\sigma_i}}{1+\sigma_i}\ge \max_{i} e^{\sigma_i}(1+\sigma_i)^{-1}\ge e^{-1}\|(1+X)^{-1}\|.
\]
Applying this to $\mathcal{U}(s)^*\cdot \mathcal{U}(s)=1+X$ with setting
\[
X=-Y-Y^*+Y^*\cdot Y\quad \mbox{and}\quad Y=1-\mathcal{U}(s)=\chi_0(s-A')^{-1}\circ \widetilde{\cT}_{\omega_*},
\] 
we obtain that  
\begin{align*}
\|\mathcal{U}(s)^{-1}\|^{2}&\le C_0\exp(C_0 \|X\|_{\trace})\cdot  |\det\mathcal{U}(s)|^{-2}\quad \mbox{for $s\in R(\omega_*)$.}
\end{align*}
Estimating the trace norm $\|X\|_{\trace}$ by     (\ref{eq:tracenorm_Tk}) and Claim 3 and recalling the relation (\ref{eq:U}), we conclude   
\[
\| \cR(s) \|\le C_0\|\mathcal{U}(s)^{-1}\|\le C_0\exp(C_0|\omega_*|^{d})\cdot |k(s)|^{-1} \quad \mbox{for $s\in R(\omega_*)$.
}
\]
Therefore (\ref{eq:choice_omega}) implies  the required estimate.
\end{proof}

\subsection{Boundedness of  the resolvent} \label{ss:bddR}
The third step toward the proof of Theorem \ref{th:spectrum} is to prove that there are only finitely many eigenvalues of the generator $A$ on the outside of 
$U(\chi_0, \tau)$. (See (\ref{eq:Uchiepsilon}) for the definition of $U(\chi_0, \tau)$.)

\begin{Proposition}\label{pp:resolvent1} 
There exists $s_0>0$ such that the resolvent $\cR(s)$ is  bounded as an operator on $\wK^{r}(K_0)$ uniformly for $s\in \complex\setminus U(\chi_0, \tau)$ satisfying $\Re(s)>-\chi_0+\tau$ and $|\Im(s)|\ge s_0$. 
\end{Proposition}
\begin{proof} 
We consider $s\in \complex\setminus U(\chi_0, \tau)$ satisfying $\Re(s)>-\chi_0+\tau$ and $|\Im(s)|\ge s_0$ and take $\omega_*=\omega_*(s)$ so that $s\in R(\omega_*)$. 
We show that, for any $u\in \wK^r(K_0)$, there exists $w\in \wK^r(K_0)$  such that  
$\|w\|_{\wK^r}\le  C_0\|u\|_{\wK^r}$
and that 
\begin{equation}\label{eq:s-Aclaim}
\|(s-A) w-u\|_{\wK^r}\le  \frac{1}{2}\|u\|_{\wK^r}.
\end{equation}
By iterative approximation as in the proof of Lemma \ref{Lemma3}, this implies that $(s-A)$ has a right inverse whose operator norm is bounded by $C_0$. Then, by Lemma \ref{Lemma3} and the relation (\ref{eq:U}), we see that the right inverse thus obtained is actually the resolvent $\cR(s)$ and therefore obtain the conclusion of the proposition.

The following argument is mostly parallel to that in the proof of Lemma \ref{Lemma3}. In the case $\Re(s)>\tau$, we set 
\[
w=w_1+w_2+w_3+\int_0^{t_0}e^{-st}\cL^t u dt
\]
where, letting $\tilde{u}=e^{-st_0}\cL^{t_0} u$, we set 
\begin{align}
w_1&=\sum_{\omega:|\omega -\omega_*|\le  \ell}\int_0^{t(\omega_*)} e^{-(s+\chi_0)t}\cdot \cL^t\circ \cT_\omega \tilde{u} dt,\\
w_2&=\sum_{\omega:|\omega -\omega_*|\le  \ell}\int_0^{t(\omega_*)} e^{-st}\cdot \cL^t\circ (\cQ_\omega-\cT_\omega) \tilde{u} dt,\\
w_3&=\sum_{|\omega -\omega_*|> \ell}
(s-i\omega)^{-1}    \cQ_\omega \tilde{u}
\end{align}
where $\ell>0$ is an integer that we will specify later.
In the case $\Re(s)<-\tau$, we replace the definition of $w_1$ above by 
\[
w_1=-\sum_{\omega:|\omega -\omega_*|\le \ell}\int_0^{t(\omega_*)} e^{s (t(\omega_*)-t)}  \cL^t  v_\omega dt\]
where $v_\omega\in \cK^{r,+}(K_1)$ is that in Proposition \ref{pp:backward} with setting  
$\sigma=0$, $\sigma'=+1$ and 
letting $u$ in its statement be $\tilde{u}\in \cK^{r,+2}(K_2)$. 

We can check that $\|w\|_{\wK^r}\le C_0 \|u\|_{\wK^r}$ as in the proof of Lemma \ref{Lemma3}. 
Since 
\[
(s-A)\int_0^{t_0} e^{-st} \cL^t u dt=u-e^{-st_0} \cL^{t_0} u=u-\tilde{u},
\]
the inequality (\ref{eq:s-Aclaim}) follows if we prove the claims   
\begin{enumerate}
\item $\|(s-A) w_1-\sum_{\omega:|\omega -\omega_*|\le  \ell}  \cT_\omega \tilde{u}\|_{\wK^r}<(1/6) \|u\|_{\wK^r}$,
\item $\|(s-A) w_2-\sum_{\omega:|\omega -\omega_*|\le  \ell}  (\cQ_\omega-\cT_\omega) \tilde{u}\|_{\wK^r}<(1/6) \|u\|_{\wK^r}$,
\item $\|(s-A) w_3-\sum_{\omega:|\omega -\omega_*|>  \ell}  \cQ_\omega \tilde{u}\|_{\wK^r}<(1/6) \|u\|_{\wK^r}$.
\end{enumerate}
The proofs of the claims  (1) and (2) are obtained from that of the corresponding claims in the proof of  Lemma \ref{Lemma3}, letting $\chi_0$ be $0$ in some places. 
But, in the case $\Re(s)<-\tau$, we need to modify the proof of (2)  slightly as follows: 
We write
\begin{align*}
(s-A) w_1&=-\sum_{\omega:|\omega -\omega_*|\le \ell}\int_0^{t(\omega_*)} e^{s (t(\omega_*)-t)}  \cL^t  v_\omega dt\\
&=\sum_{\omega:|\omega -\omega_*|\le \ell}[e^{st(\omega_*)}\cdot  v_\omega-\cL^{t(\omega_*)}v_\omega]
\end{align*}
and check that the claim follows from the choice of $v_\omega$. 
The proof of the claim (3) is again parallel to that in Lemma \ref{Lemma3}.
\end{proof}

\begin{Remark}\label{rem:vector_valued_case_4}
In the cases of vector-valued transfer operators $\cL^t_{k,\ell}$ with $(k,\ell)\neq (0,0)$, we can get the proof of the corresponding claims of Theorem \ref{th:spectrum} by the argument parallel to that in this section  for the case $\Re(s)>\tau$.  (See Remark \ref{rem:vector_valued_case_1} and Remark \ref{rem:vector_valued_case_3} also.) Note that we do not need statements corresponding to Lemma \ref{lm:trace1} and Proposition \ref{pp:backward} for these cases.
\end{Remark}

\subsection{Lower bound for the density of eigenvalues}
\label{ss:lowerbound}
To complete\footnote{As we noted in Remark \ref{rem:mero12}, we actually have proved discreteness of the spectral set of the generator $A$ only on the region $\Re(s)>-\chi_0+\tau$ and $|\Im(s)|\ge s_0$ for some large $s_0$. We will see that this is true for the region $\Re(s)>-r\chi_0/4$ in Appendix \ref{sec:zeta}.}
 the proof of Theorem \ref{th:spectrum}, it is enough to prove the following lower bound on the density of eigenvalues of the generator $A$. 
 
\begin{Proposition}\label{pp:lowerBound} For any  $\delta>0$, there exist  constants $C_0>1$ and $\omega_0>0$ such that, for $\omega_*\in \integer$ with $|\omega_*|\ge \omega_0$, it holds
\[
\frac{\#\{\;\mbox{poles of $\cR(s)$ such that $|\Re(s)|<\tau$ and $|\Im(s)-\omega_*|\le |\omega_*|^{\delta}$\;}\}}{  |\omega_*|^{\delta}}\ge \frac{|\omega_*|^{d}}{C_0}.
\] 
\end{Proposition}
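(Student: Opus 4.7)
The plan is to produce a subspace $W \subset \cK^{r,\sigma'}(K_1)$ of dimension at least $C^{-1}|\omega_*|^{d+\delta}$ built from ``approximate eigenvectors'' and to show that the Riesz spectral projector $\Pi$ of $A$ onto a slightly enlarged rectangle $\tilde R$ has rank at least $(\dim W)/2$ when restricted to $W$. Concretely, setting $L = \lceil |\omega_*|^{\delta} \rceil$, for each integer $\omega$ with $|\omega - \omega_*| \le L/4$ Lemma \ref{lm:trace1}(b) furnishes $V(\omega) \subset \cK^{r,\sigma}(K_1)$ of dimension $\ge C^{-1}|\omega_*|^d$ on which $\cT_\omega$ is bounded below; the almost-orthogonality estimate (\ref{eq:Vexp2}) applied with large $\nu$ then yields $W := \sum_{|\omega - \omega_*|\le L/4} \cT_\omega V(\omega)$ of dimension $\ge C^{-1} L \cdot |\omega_*|^d = C^{-1}|\omega_*|^{d+\delta}$. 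I would apply Corollary \ref{cor:Rs} around $\omega_* \pm L/2$ to select horizontal levels $\Im s = \omega^\pm$ with $|\omega^\pm - \omega_*| \in [L/2, L]$ on which $\|\cR(\mu + i\omega^\pm)\| \le M := \exp(C|\omega_*|^d)$ for $|\mu| \le \tau$, and Proposition \ref{pp:resolvent1} to bound the resolvent uniformly on vertical lines $\Re s = \pm \tau'$ with $\tau < \tau' < \chi_0 - \tau$. Together these give control of $\cR(s)$ on $\partial \tilde R$ where $\tilde R := \{|\Re s| \le \tau',\; \omega^- \le \Im s \le \omega^+\}$, and the Riesz projector $\Pi := (2\pi i)^{-1}\oint_{\partial \tilde R} \cR(s)\, ds$ has rank equal to the number $N(\tilde R)$ of eigenvalues of $A$ in $\tilde R$ with algebraic multiplicity.

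The core step is to rule out a large intersection $W \cap \ker \Pi$. On $\ker \Pi$ the restricted resolvent $\cR(\cdot)|_{\ker \Pi}$ is holomorphic throughout $\tilde R$, so subharmonicity of $s \mapsto \log\|\cR(s)|_{\ker \Pi} v\|$ for each unit $v$ combined with the boundary bound yields $\|\cR(s)|_{\ker \Pi}\| \le M$ on $\tilde R$, equivalently $\|(s-A)v\| \ge M^{-1}\|v\|$ for $v \in \ker \Pi$, $s \in \tilde R$. The approximate-eigenvector structure on $W$, made quantitative through Lemma \ref{lm:continuity_omega}, the near-unitarity of $\cL^t$ on $\mathrm{Im}\,\cT_\omega$ modeled by Theorem \ref{cor:linear1}, and the operator estimates in Proposition \ref{pp:norm_estimate}, should force a contradiction as follows: for $v = \sum_\omega c_\omega \cT_\omega u_\omega \in W$, integrating $e^{-i\omega_* t} \cL^t v$ against a smooth test function $\phi(t)$ supported in a time window $[0,T]$ with $\hat\phi$ concentrated on $[-L,L]$ reproduces $v$ up to a multiplicative factor of order one (via the oscillatory decomposition of $\cL^t v$), while if $v \in \ker \Pi$ the same quantity can be rewritten through functional calculus as a contour integral of $\hat\phi(\omega_* + is)\,\cR(s) v$ around $\partial \tilde R$ using holomorphy in the interior, producing the upper bound $\lesssim M \cdot \sup_{\partial \tilde R}|\hat\phi| \cdot \|v\|$ which should be much less than $\|v\|$ for suitable $\phi$.

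The main obstacle is reconciling $M = \exp(C|\omega_*|^d)$ with the fact that the approximate-eigenvector estimates on $W$ from Proposition \ref{pp:norm_estimate} only hold for semigroup time $t \le 2t(\omega_*) \sim \log|\omega_*|$, limiting the Paley--Wiener decay of $\hat\phi$ across the horizontal parts of $\partial \tilde R$. A likely cleaner reformulation is via the trace-class determinant $k(s) = \det\mathcal{U}(s)$ introduced in the proof of the preceding proposition: the approximate eigenvectors in $W$ should exhibit $\ge C^{-1}|\omega_*|^{d+\delta}$ eigenvalues of $\cR'(s) \widetilde{\cT}_{\omega_*}$ near $1/\chi_0$ at well-chosen points $s \in \tilde R$, forcing $\log|k(s)| \le -C^{-1}|\omega_*|^{d+\delta}$ there, while the upper bound $\log|k| \le C|\omega_*|^d$ on all of $\tilde R$ is available via (\ref{eq:logks}); Jensen's formula applied to $k$ on a disk contained in $\tilde R$ then converts the gap between these two bounds into the requisite lower bound $\ge C^{-1}|\omega_*|^{d+\delta}$ on the zero count of $k$ in $\tilde R$, equivalently on $N(\tilde R)$. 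The delicate point is engineering the point at which $|k|$ is exponentially small, which in turn requires sharpening the approximate-eigenvector analysis slightly beyond what (\ref{eq:continuity_omega2}) and Proposition \ref{pp:norm_estimate} directly supply.
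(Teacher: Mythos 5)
Your setup coincides with the paper's: the subspace $W$ built from $\cT_\omega V(\omega)$ via Lemma \ref{lm:trace1}(b) and the almost-orthogonality (\ref{eq:Vexp2}), the spectral projector over a rectangle of height $\sim|\omega_*|^\delta$, the contradiction through a unit vector in $W\cap\ker\Pi$, and the crude bound $\exp(C|\omega_*|^d)$ from Corollary \ref{cor:Rs} on selected horizontal levels. But the core of the argument --- how a vector in $\ker\Pi$ built from approximate eigenvectors leads to a contradiction --- is exactly the part you leave open, and neither of your two proposed completions works as stated. The test-function route fails for the reason you yourself flag, and the obstruction is not merely technical: to beat $M=\exp(C|\omega_*|^d)$ across horizontal sides at height $|\omega_*|^\delta$ with $\delta<d$ you would need $\hat\phi$ to decay faster than exponentially over a range of length $|\omega_*|^\delta$, which is impossible for a compactly supported smooth $\phi$; and the dynamical estimates (Proposition \ref{pp:norm_estimate}, Proposition \ref{pp:backward}) only control times $t\lesssim t(\omega_*)\sim\log|\omega_*|$, so no long time window is available anyway. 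The determinant/Jensen fallback also has a gap: even granting $\log|k(s_c)|\le -c|\omega_*|^{d+\delta}$ at one point (which is itself unjustified --- exhibiting vectors on which $\cT_\omega$ is bounded below does not show that $\chi_0\cR'(s)\widetilde{\cT}_{\omega_*}$ has many eigenvalues clustering at $1$), Jensen's formula bounds $\sum_j\log(R/|z_j-s_c|)$, and a single zero at distance $e^{-|\omega_*|^{d+\delta}}$ from $s_c$ absorbs the entire mass, so no lower bound on the \emph{number} of zeros follows; the thin geometry of the rectangle ($2\tau$ wide, $|\omega_*|^\delta$ tall) adds a further complication, since the lower bound (\ref{eq:logks2}) is only available off the strip $|\Re s|\le\tau$.

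The paper closes precisely this gap with two devices absent from your sketch. First, instead of a compactly supported time cutoff it weights the contour integrand by the entire function $\Omega(s)=\exp(-\cos s)$: on vertical lines $\Omega$ decays double-exponentially, $|\Omega(s)|\le\exp(-e^{|\Im s|}/4)$, which crushes $\exp(C|\omega_*|^d)$ on the horizontal sides, while its Fourier transform $\widehat\Omega_{b,\pm}$ concentrates near $t=0$ in $L^1$ as $b\to0$ even though it is not compactly supported. Second, on the vertical sides $\Re s=\pm\tau$ the resolvent applied to the test vectors is rewritten by Lemma \ref{lem:integral_on_vertical_side} as a finite-time integral ($t\le t(\omega_*)$) of $\cL^t\tilde w(i)$, respectively of $\cL^{t(\omega_*)-t}v_{\omega(i)}$ with the backward approximants $v_\omega$ of Proposition \ref{pp:backward}, plus errors $O(\langle\omega_*\rangle^{-\tau\epsilon_0})$, so no interior resolvent bound and no long-time propagation is ever used there. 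A further structural point you miss: the weight is centered not at $\omega_*$ but at $\omega(k_*)$, where $k_*$ maximizes $\|w(i)\|$, and the frequency projector $\cQ_*$ near $\omega(k_*)$ is applied so that Proposition \ref{pp:norm_estimate} and (\ref{eq:cv1}) suppress all terms $i\ne k_*$; the vertical integral then reproduces $\cQ_*\tilde w(k_*)\ne0$ up to small errors, contradicting the vanishing of $\int_{\partial\mathrm{Rect}(\omega_*)}\mathcal{Y}(s)\,ds$. Without these ingredients (or a genuine substitute for them) your proposal does not yet constitute a proof.
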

\begin{proof} 
For  $\omega_*\in \integer$, we consider the rectangle
\begin{equation}\label{region3}
\mathrm{Rect}(\omega_*)=\{s\in \complex\mid |\Re(s)|<\tau,\; - |\omega_*|^{\delta}+\Delta<\Im(s)-\omega_*<|\omega_*|^{\delta}-\Delta'\}
\end{equation}
where we choose  $\Delta,\Delta'\in [0,2]$ so that  the estimate (\ref{eq:Rs}) in Corollary \ref{cor:Rs} holds true on the horizontal sides of $\mathrm{Rect}(\omega_*)$.
Then we consider the spectral projector 
\[
\Pi_{\omega_*}=\frac{1}{2\pi i}\int_{\partial \mathrm{Rect}(\omega_*)}\cR(s) ds
\]
for the spectral set of $A$ in this rectangle. 
  For the proof of the proposition, it is enough to show that 
\[
\mathrm{rank}\,\Pi_{\omega_*} \ge  \frac{1}{C_0}{|\omega_*|^{d+\delta}} \quad \mbox{ when $|\omega_*|$ is sufficiently large.}
\] 
We prove this claim  by contradiction.  Let $\ell>0$ be a constant which we will   specify in the course of the argument (independently of $\omega_*$). We consider an integer $\omega_*$ with large absolute value and  take a sequence 
\[
\omega_*-|\omega_*|^{\delta}/2<\omega(1)<\omega(2)<\cdots<\omega(k)<\omega_*+|\omega_*|^{\delta}/2
\]
so that 
\[
|\omega(j+1)-\omega(j)|\ge 2\ell\qquad \mbox{and}\qquad  
k\ge \frac{|\omega_*|^{\delta}}{4\ell}.
\]
We take the subspace $V(\omega(j))$ in Lemma \ref{lm:trace1}(b) for each $1\le j\le k$ and 
set 
\[
\widetilde{V}(\omega(j))=\cT_{\omega(j)}({V}(\omega(j)))\subset \cK^{r,+}(K_1)\subset \cK^{r}(K_1).
\]
From the choice of $V(\omega(j))$  in Lemma \ref{lm:trace1}(b), we have
\[
\dim \widetilde{V}(\omega(j))=\dim {V}(\omega(j))\ge C_0^{-1} | \omega_*|^{d}.
\]
Let us set 
\[
W(\omega_*)=\sum_{j=1}^k \widetilde{V}(\omega(j))\subset \cK^{r,+}(K_1).
\]
From (\ref{eq:Vexp}) and (\ref{eq:Vexp2}) in Lemma \ref{lm:trace1}, the subspaces $\widetilde{V}(\omega(j))$ for $1\le j\le k$ are almost orthogonal to each other (and linearly independent) provided that $\ell$ is sufficiently large. More precisely, if we set $\tilde{v}_j=\cT_{\omega(j)} v_j\in \widetilde{V}(\omega(j))$ for any given $v_j\in V(\omega(j))$ for $1\le j\le k$, we have
\begin{align}\label{eq:almostorthogonal}
\sum_{i,j:i\neq j}|(\tilde{v}_i,\tilde{v}_j)_{\cK^{r,\sigma}}|&\le C_0\sum_{i,j:i\neq j} \langle \omega(i)-\omega(j)\rangle^{-2} \|v_i\|_{\cK^{r,\sigma}}\|v_j\|_{\cK^{r,\sigma}}\\
&\le C_0 \ell^{-1}\cdot \sum_j \|v_j\|_{\cK^{r,\sigma}}^2\le C_0 \ell^{-1}\cdot \sum_j \|\tilde{v}_j\|_{\cK^{r,\sigma}}^2.\notag
\end{align}
Hence, provided that $\ell$ is so large that $C_0\ell^{-1}<1$, we see 
\[
\dim  W(\omega_*)=\sum_{j=1}^k \dim \widetilde{V}(\omega(j))=\sum_{j=1}^k \dim V(\omega(j))\ge  \frac{ |\omega_*|^{d+\delta}} {4\ell  C_0}.
\]

From our assumption (for the proof by contradiction), we can take arbitrarily large $\omega_*\in \integer$ and an element  $\tilde{w}\in W(\omega_*) $ with $\|\tilde{w}\|_{\cK^r}=1$ that belongs to $\ker \Pi_{\omega_*}$. 
We express $\tilde{w}$ as 
\[
\tilde{w}=\sum_{j=1}^{k} \cT_{\omega(j)} w(j) \quad \mbox{ with }w(j)\in V(\omega(j))
\]
and,  for simplicity,  set 
\[
\tilde{w}(j):=\cT_{\omega(j)}w(j)\in \widetilde{V}(\omega(j))\subset \cK^{r,+}(K_1).
\]
Note that we have
\begin{equation}\label{eq:wi_norm}
C_0^{-1}\|w(j)\|_{\cK^r}\le \|\tilde{w}(j)\|_{\cK^r}\le  C_0\|w(j)\|_{\cK^r}
\end{equation}
from the choice of $V(\omega(j))$ and the uniform boundedness of the operators $\cT_{\omega}$ in $\omega$.
We choose an integer $1\le k_*\le k$ so that $\|w(k_*)\|_{\cK^r}$ is the largest among $\|w(j)\|_{\cK^r}$ for $1\le j\le k$.

For further argument, we introduce  an entire holomorphic function
\begin{equation}\label{eq:OmegaBd}
\Omega:\complex\to \complex\setminus \{0\}, \quad \Omega(s):=\exp(1-\cos(s)).
\end{equation}
This function converges to zero rapidly when $|s|\to \infty$ in the strip $|\Re(s)|<\pi/3$. More precisely, we have
\[
|\Omega(s)|=\exp(1-\Re(\cos(s))) \le \exp(1-\exp(|\Im(s)|)/4) 
\]
for $s\in \complex$ with  $|\Re(s)|\le \pi/3$, because, if $s=x+iy$ with $x\in [-\pi/3,\pi/3]$,  
\[
\Re(\cos(s))=\cos(x) \cdot \cosh(y)\ge \exp(|y|)/4.
\]
Also we can check  that $y\mapsto \Omega(x+iy)$ for $x\in [-\pi/3,\pi/3]$ is a function in the Schwartz class $\cS(\real)$ and uniformly bounded. 

Let $b>0$ be a small constant, which we will specify in the last part of the proof, and define the $\cK^r(K_0)$-valued function 
\[
\mathcal{Y}: \mathrm{Rect}(\omega_*)\to \cK^r(K_0),\quad  \mathcal{Y}(s)=\Omega(b (s-i\omega(k_*)))\cdot \cR(s) \tilde{w}.
\]  
Since $\tilde{w}$  belongs to the kernel of the spectral projector $\Pi_{\omega_*}$, this is  holomorphic  on a  neighborhood of the rectangle $\mathrm{Rect}(\omega_*)$ and hence we have
\begin{equation}\label{eq:Cauchy}
\int_{\partial \mathrm{Rect}(\omega_*)}\mathcal{Y}(s)ds=0. 
\end{equation}
Below we show that this can not be true. In fact, we  claim that, if $|\omega_*|$ is sufficiently large, we have 
\begin{equation}\label{eq:Cauchy-contradiction}
\left\|\cQ_* \left(\int_{\partial \mathrm{Rect}(\omega_*)} \mathcal{Y}(s)ds\right)- \tilde{w}(k_*) \right\|_{\cK^r}\le \frac{1}{2}\|\tilde{w}(k_*) \|_{\cK^r}
\end{equation}
where  
\[
\cQ_*=\sum_{\omega:|\omega -\omega(k_*)|\le \ell} \cQ_\omega.
\]
Since $\tilde{w}(k_*)\neq 0$ from the choice of $k_*$ and (\ref{eq:wi_norm}), this proves the proposition by  contradiction.

To begin with,  note that we have
\begin{equation}\label{eq:cQ-1}
\left\|\cQ_* \tilde{w}(k_*)- \tilde{w}(k_*) \right\|_{\cK^r}\le \frac{1}{4}\|\tilde{w}(k_*) \|_{\cK^r}
\end{equation}
provided that $\ell$ is sufficiently large. 
In fact, since $\tilde{w}(k_*)$ is supported on $K_1$ and satisfies (\ref{eq:qid}),  we may write the left hand side as 
\begin{align*}
\left\|\sum_{\omega':|\omega'-\omega(k_*)|>\ell}\cQ_{\omega'} \circ \cT_{\omega(k_*)}{w}(k_*) \right\|_{\cK^r}.
\end{align*}
Hence, using (\ref{eq:upperboundTLT}) and (\ref{eq:upperboundQLT}) for $t=0$  and also (\ref{eq:wi_norm}), we can check (\ref{eq:cQ-1}).

Let us write  $\partial^{h} \mathrm{Rect}(\omega_*)$ and $\partial^{v} \mathrm{Rect}(\omega_*)$ for the horizontal and vertical sides of the rectangle $\mathrm{Rect}(\omega_*)$.  For the integral (\ref{eq:Cauchy}) restricted to the horizontal sides, we have, from the choice of $\Delta, \Delta'>0$ in the definition of $\mathrm{Rect}(\omega_*)$ and (\ref{eq:OmegaBd}), that 
\begin{equation}\label{eq:cY}
\left\|\int_{\partial^h \mathrm{Rect}(\omega_*)} \!\!\!\!\mathcal{Y}(s)ds\right\|_{\cK^r}\le
  \exp\bigg( -\exp (|\omega_*|^{\delta}-1)/4\bigg)\cdot
\exp\big( C_0 |\omega_*|^d\big) \cdot\|\tilde{w}\|_{\cK^r}.
\end{equation}
Since $\|\tilde{w}\|_{\cK^r}\le C_0 |\omega_*|^{\delta} \|w(k_*)\|_{\cK^r}$ from the choice of $k_*$, this part of integral is much smaller than  $\|w(k_*)\|_{\cK^r}$
provided $|\omega_*|$ is large.

To evaluate the integral on the vertical sides, we prepare the next lemma.  Recall that $\epsilon_0$ is the constant that appear in the definition (\ref{eq:tomega}) of $t(\omega)$. 
\begin{Lemma} \label{lem:integral_on_vertical_side}
Suppose that $\rho\in \real$ satisfies $|\rho-\omega_*|\le |\omega_*|^{\delta}+1$.
There exists a constant $C_0>0$, independent of $\omega_*$ and $\rho$, such that, for  $1\le j\le k$, we have
\[
\left\|\cR(\tau+i\rho) \tilde{w}(j)-\int_{0}^{t(\omega_*)} 
 e^{-(\tau +i\rho) t}   \cL^t \tilde{w}(j) dt \right\|_{\cK^r}
\le C_0 | \omega_*|^{-\tau \epsilon_0} \|\tilde{w}(j)\|_{\cK^{r}}
\]
and further that, for the function  $v_{\omega(j)}$ given in  Proposition \ref{pp:backward} for the setting $\sigma=\sigma'=0$, $u=w(j)$, $\omega=\omega(j)$ and $t=t(\omega_*)\le 2t(\omega(j))$, we have 
\[
\left\| \cR(-\tau+i\rho) \tilde{w}(j)+\int_{0}^{t(\omega_*)} 
 e^{(-\tau +i\rho) t}  \cL^{t(\omega_*)-t}  v_{\omega(j)} dt \right\|_{\cK^r}
 \le C_0 |\omega_*|^{-\tau \epsilon_0}  \|\tilde{w}(j)\|_{\cK^{r}}.
\]
\end{Lemma}
\begin{proof} Since
\[
((\tau+i\rho)-A) \left(\int_{0}^{t(\omega_*)} 
 e^{-(\tau +i\rho) t} \cL^t  dt\right)=1-e^{-(\tau+i\rho)t(\omega_*)} \cL^{t(\omega_*)},
\]
we obtain, by applying $\cR(\tau+i\rho)$ to the both sides, that
\[
 \cR(\tau+i\rho)=\left(\int_{0}^{t(\omega_*)} 
 e^{-(\tau +i\rho) t} \cL^t  dt\right)+e^{-(\tau+i\rho)t(\omega_*)}\cR(\tau+i\rho)\circ \cL^{t(\omega_*)}.
\]
We apply this operator to $\tilde{w}(j)$. Since   $\cL^{t(\omega_*)}\tilde{w}(j)$ is supported on $K_1$ and satisfies (\ref{eq:qid}), we may use  (\ref{eq:upperboundTLT}) and (\ref{eq:upperboundQLT}) in Proposition \ref{pp:norm_estimate} and also Proposition \ref{pp:resolvent1} to get the estimate
\begin{align*}
\left\|\cR(\tau+i\rho) \tilde{w}(j)-\int_{0}^{t(\omega_*)} 
 e^{-(\tau +i\rho) t}     \cL^t \tilde{w}(j) dt \right\|_{\cK^r}
 &\le C_0 e^{-\tau t(\omega_*)} \|{w}(j)\|_{\cK^{r}}\\
& \le C_0 |\omega_*|^{-\tau \epsilon_0} \|\tilde{w}(j)\|_{\cK^{r}}.
\end{align*}
This is  the first claim. 
We can get the second inequality by a similar manner.  
Since
\[
((-\tau+i\rho)-A) \left(\int_{0}^{t(\omega_*)} 
 e^{(-\tau +i\rho)t} \cL^{t(\omega_*)-t} dt \right)=e^{(-\tau+i\rho)t(\omega_*)}-\cL^{t(\omega_*)},
\]
we have
\[
 \cR(-\tau+i\rho)\circ \cL^{t(\omega_*)}=-\left(\int_{0}^{t(\omega_*)} 
 e^{(-\tau +i\rho) t} \cL^{t(\omega_*)-t}  dt\right)+e^{(-\tau+i\rho)t(\omega_*)} \cR(-\tau+i\rho).
\]
We obtain the second inequality by applying this operator to $v_{\omega(j)}$ and using the estimate
\[
\left\| \cR(-\tau +i\rho)\circ \cL^{t(\omega_*)} v_{\omega(j)}-\cR(-\tau +i\rho)\tilde{w}(j)\right\|_{\cK^r}
\le C_0|\omega_*|^{-\theta} \|w(j)\|_{\cK^r}
\]
that follows from the condition (\ref{eq:backward1}) in the choice of $v_{\omega(j)}$. 
(We choose $\epsilon_0$ so small that $\tau \epsilon_0<\theta$.)
\end{proof}

From  Lemma \ref{lem:integral_on_vertical_side} above and the choice of $k_*$, we have that 
\begin{align*}
&\cQ_* \int_{\partial \mathrm{Rect}(\omega_*)}\mathcal{Y}(s) ds\\
&=\sum_{j=1}^k \int_{-\infty}^{+\infty} d\rho \int_0^{t(\omega_*)}
\Omega_{b,+}(\rho-\omega(k_*))  e^{ -i(\rho-\omega(k_*)) t} e^{(-\tau-i\omega(k_*)) t} \cQ_* \circ \cL^t \widetilde{w}(j)  \,dt \\
&\quad+\sum_{j=1}^k \int_{-\infty}^{+\infty}\!\! d\rho \int_0^{t(\omega_*)}
 \!\! \Omega_{b,-}(\rho-\omega(k_*)) e^{i(\rho-\omega(k_*)) t} e^{(-\tau+i\omega(k_*)) t}   \cQ_*\circ \cL^{t(\omega_*)-t} {v}_{\omega(j)}  \, dt\\
& \quad +\mathcal{O}_{\cK^{r}}\left(|\omega_*|^{-\tau \epsilon_0+\delta}  \|\tilde{w}(k_*)\|_{\cK^{r}}\right)
\end{align*}
where we set
\[
\Omega_{b,\pm}(\rho)=\Omega(b(\pm \tau+\rho i)).
\]
\begin{Remark}
The last term $\mathcal{O}_{\cK^{r}}\left(|\omega_*|^{-\tau \epsilon_0+\delta}  \|\tilde{w}(k_*)\|_{\cK^{r}}\right)$ denotes an error term whose  $\cK^r$-norm is bounded  by $C|\omega_*|^{-\tau \epsilon_0+\delta}   \|\tilde{w}(k_*)\|_{\cK^{r}}$.  We  use this notation below. 
\end{Remark}

Note that the integration along the horizontal sides of $\mathrm{Rect}(\omega_*)$ and also the integration with respect to  $\rho$ on the outside of the interval
\[
[\omega_*-|\omega_*|^{\delta}+\Delta, \;\omega_*+|\omega_*|^{\delta}-\Delta']
\]
 is included  in the last error term. (The former is small as we have seen in (\ref{eq:cY}). The latter is also very small because of the property of the function $\Omega(\cdot)$ in the integrand.)

Performing integration with respect to $\rho$, we get
\begin{align*}
\cQ_* \int_{\partial^v \mathrm{Rect}(\omega_*)}\mathcal{Y}(s) ds
&= \sum_{j=1}^k \int_0^{t(\omega_*)}
\widehat{\Omega}_{b,+}(t) \cdot e^{(-\tau-i\omega({k_*})) t}\cdot \cQ_* \circ \cL^t \widetilde{w}(j) \,dt \\
&\quad +\sum_{j=1}^{k} \int_0^{t(\omega_*)}
\widehat{\Omega}_{b,-}(-t) \cdot e^{(-\tau+i\omega({k_*})) t} \cdot   \cQ_*\circ\cL^{t(\omega_*)-t}  {v}_{\omega(j)}  \, dt\\
&\qquad +\mathcal{O}_{\cK^{r}}\left( |\omega_*|^{-\tau \epsilon_0+\delta} \|\tilde{w}(k_*)\|_{\cK^{r}}\right)
\end{align*}
where $\widehat{\Omega}_{b,\pm}(\cdot)$ is the Fourier transform of ${\Omega}_{b,\pm}(\cdot)$, 
\[
\widehat{\Omega}_{b,\pm}(t)=\frac{1}{2\pi}\int_{-\infty}^{\infty} e^{-i\rho t}\Omega_{b,\pm}(\rho)  d\rho=\frac{1}{2\pi}\int_{-\infty}^{\infty} e^{-i\rho t}\Omega(b(\pm \tau+i\rho))  d\rho.
\]
In both of the sums over $1\le j\le k$  on the right-hand side above, the contribution from the terms other than $j=k_*$ is relatively small provided that $\ell>0$ is large enough (independently of $\omega_*$). 
In fact, from Proposition \ref{pp:norm_estimate} and the choice of $k_*$,  
\begin{align*}
\left\| \cQ_* \circ \cL^t \widetilde{w}(j) \right\|_{\cK^r}
&\le \frac{C_\nu \ell\cdot  \|\tilde{w}(j)\|_{\cK^r}}{\langle|\omega(j)-\omega(k_*)|-\ell \rangle^\nu}  \le  \frac{C_\nu \ell \cdot \|\tilde{w}(k_*)\|_{\cK^r}}{\langle |\omega(j)-\omega(k_*)|-\ell\rangle^{\nu}}  
\end{align*}
for $j\neq k_*$ and $0\le t\le t(\omega_*)$,  with arbitrarily large $\nu>0$. Similarly, from (\ref{eq:cv1}) in the choice of $v_\omega$, we have
\begin{align*}
\left\| \cQ_* \circ \cL^t {v}_{\omega(j)} \right\|_{\cK^r}\le  \frac{C_\nu \ell}{\langle |\omega(j)-\omega(k_*)|-\ell\rangle^{\nu}}\cdot \|\tilde{w}(k_*)\|_{\cK^r}.
\end{align*}
Therefore the sum of contributions from the integrals for $j\neq k_*$ is bounded by $C \ell^{-\nu+2} \|\tilde{w}(k_*)\|_{\cK^r}$ in the $\cK^r$-norm. That is, we have 
\begin{align*}
\cQ_* \int_{\partial^v \mathrm{Rect}(\omega_*)}\mathcal{Y}(s) ds
&= \cQ_* \left( \int_0^{t(\omega_*)}
\widehat{\Omega}_{b,+}(t) \cdot e^{(-\tau-i\omega(k_*)) t}\cdot \cL^t \widetilde{w}(k_*)\,dt\right) \\
&\quad +\cQ_* \left( \int_0^{t(\omega_*)}
\widehat{\Omega}_{b,-}(-t) \cdot e^{(-\tau+i\omega(k_*)) t} \cdot   \cL^{t(\omega_*)-t}  {v}_{\omega(k_*)}  \, dt\right)\\
&\quad  +\mathcal{O}_{\cK^r}\left((\ell^{-\nu+1}+|\omega_*|^{- \tau \epsilon_0+\delta})\cdot  \|\tilde{w}(k_*)\|_{\cK^{r}}\right).
\end{align*}
We (finally) fix the constant $\ell >0$ so that the last error term is bounded by $\|\tilde{w}(k_*)\|_{\cK^{r}}/10$ when $\omega_*$ is sufficiently large. (Since we have only to prove Proposition \ref{pp:lowerBound} for sufficiently small $\delta$, we assume $\delta<\tau \epsilon_0$.)

Now we  let the constant $b$ be small. Then  the functions $
\widehat{\Omega}_{b,\pm}(t)$ concentrate around $0$ (in the $L^1$ sense) and, further,  $\int_0^{+\infty} \widehat{\Omega}_{b,\pm}(t) dt$ and $\int_{-\infty}^{0} \widehat{\Omega}_{b,\pm}(t) dt$ become close to $1/2$ by symmetry. 
By (\ref{eq:continuity_omega}) in Lemma \ref{lm:continuity_omega}, we have 
\[
\|e^{(-\tau -i \omega(k_*)) t} \cL^t   \tilde{w}(k_*)-\tilde{w}(k_*)\|_{\cK^{r}}\le C_0 |t| \|\tilde{w}(k_*)\|_{\cK^{r}}. 
\]
Similarly, by (\ref{eq:continuity_omega}) in Lemma \ref{lm:continuity_omega} and (\ref{eq:backward1}) in the choice of $v_{\omega(k_*)}$, we have also
\[
\|e^{(-\tau+i\omega(k_*)) t}   \cL^{t(\omega_*)-t}  {v}_{\omega(k_*)}-\tilde{w}(k_*)\|_{\cK^{r}}\le C_0 
(|t|+|\omega_*|^{-\theta}) \|\tilde{w}(k_*)\|_{\cK^{r}}.
\]
Therefore each of the integrations on the right-hand side above become close to $\cQ_*\tilde{w}(k_*)/2$ in $\cK^r(K_0)$ as $b\to +0$ uniformly in $\omega_*$. 
Recalling (\ref{eq:cQ-1}), we conclude  (\ref{eq:Cauchy-contradiction}) when $|\omega_*|$ is sufficiently large, provided the constant $b>0$ is sufficiently small. 
We finished the proof of Proposition \ref{pp:lowerBound}. 
\end{proof}


\section{Some preparatory lemmas}\label{sec:pre} 
In the last section, we have deduced Theorem \ref{th:spectrum} from the propositions given in Subsection \ref{ss:lt}. 
The remaining task (in proving Theorem \ref{th:spectrum}) is to prove those propositions. 
This is done in this section and the following two sections. This section is devoted to some basic estimates. 
\subsection{Multiplication by functions} \label{ss:mult}
We begin with considering the multiplication operator by a function $\psi\in C^\infty_0(\real^{2d+d'+1})$, 
\[
\mult(\psi):C^{\infty}(\real^{2d+d'+1})\to C^{\infty}(\real^{2d+d'+1}), \qquad \mult(\psi) u=\psi\cdot u
\]
and its lift with respect to the partial Bargmann transform,
\begin{equation}\label{eq:mult-lifted}
\mult(\psi)^\lift:=\pBargmann \circ \mult(\psi)\circ \pBargmann^*.
\end{equation}
Below we assume the following setting, which abstracts the situations that we will meet later.
\begin{framed}
\noindent {\bf Setting I:}\; For each $\omega\in \integer$, there is a given set $\cX _\omega$  of $C^\infty$ functions on $\real^{2d+d'+1}$ such that the following conditions hold for all $\omega\in\integer$ and $\psi\in \cX_{\omega}$ with uniform positive constants $C$ and $C_{\alpha,k}$ (independent of $\omega$ and $\psi$):
\begin{itemize}
\item[(C1)] the support of $\psi\in \cX_\omega$ is  contained in 
\[
\disk^{(2d)}(C\langle \omega\rangle^{-1/2+\theta}) \oplus \disk^{(d')}(C)\oplus \disk^{(1)}(C)\subset \real^{2d+d'+1}
\] 
where $\disk^{(D)}(\delta)\subset \real^{D}$ is the disk of radius $\delta$ with center at the origin.
\medskip
\item[(C2)] $\psi \in \cX_\omega$ satisfies the uniform estimate
\[
|\partial^\alpha_{w} \partial^k_z \psi(w,z)|<C_{\alpha,k}
\langle \omega\rangle^{(1-\theta)|\alpha|/2}, \quad \mbox{$\forall w\in \real^{2d+d'}$, $\forall z\in \real$,}
\]
 for any multi-indices $\alpha\in \integer_+^{2d+d'}$ and $k\in \integer_+$.
\end{itemize}
\end{framed}
\begin{Remark}
The conditions (C2) above means that the normalized family 
\[
\widetilde{\cX}_{\omega}=\{ \tilde{\psi}(w,z)=\psi(\langle \omega\rangle^{-(1-\theta)/2} w,  z)\mid \psi \in {\cX}_{\omega}\}
\]
is uniformly bounded in $C^k$-norm for any $k$, that is, they look very smooth (or almost constant) in the variable $w$ if we view them in the scale $\langle \omega\rangle^{-1/2}$. This observation is basic in the following argument.
\end{Remark}
\begin{Remark}\label{ex:setting1}
If we set $\mathcal{X}_{\omega}=\{\rho_{\j}\mid \j\in \cJ\mbox{ with } \omega(\j)=\omega\}$ where $\rho_{\j}$ are those defined in (\ref{eq:rhoan}) and (\ref{eq:defsrho}), then the conditions (C1) and (C2) above hold. 
But notice that a little stronger condition than (C2) holds: we may replace  $\langle \omega\rangle^{(1-\theta)|\alpha|/2}$ by $\langle \omega\rangle^{(1-2\theta)|\alpha|/2}\ll \langle \omega\rangle^{(1-\theta)|\alpha|/2}$ in (C2). 
\end{Remark}

In the next lemma, we consider the lifted multiplication operator $\mult(\psi)^\lift$ for $\psi\in {\cX}_{\omega}$ precomposed by $\mult(q_{\omega})$ and approximate it by a simpler operator constructed as follows. 
For $\psi\in \cX_\omega$, let $\widehat{\psi}$ be the Fourier transform of $\psi$ along the $z$-axis:
\begin{equation}\label{eq:hatpsi}
\widehat{\psi}(w,\xi_z)= \frac{1}{2\pi} \int e^{-i\xi_z z}\psi(w,z) dz,\qquad w\in \real^{2d+d'}, \xi_z\in \real. 
\end{equation}
Note that, from the conditions (C1) and (C2) in Setting I, 
there exists a constant $C_{\alpha,\nu}>0$ for any  $\alpha\in \integer_+^{2d+d'}$ and $\nu>0$ such that
\begin{equation}\label{eq:wpsi}
 |\partial^\alpha_{w} \widehat{\psi}(w,\xi_z)|<C_{\alpha,\nu}
\langle \omega\rangle^{(1-\theta)|\alpha|/2} \langle \xi_z\rangle^{-\nu}\qquad \mbox{for $\psi\in \mathcal{X}_\omega$}.
\end{equation}
We then define the operator 
$\mathcal{C}({\psi}):\cS(\real^{4d+2d'+1})\to \cS(\real^{4d+2d'+1})$ 
by 
\[
\mathcal{C}({\psi}) u(w',\xi'_w,\xi'_z)=\int \widehat{\psi}(w',\xi'_z-\xi_z)\, u(w,(\langle\xi'_z\rangle/\langle \xi_z\rangle)\xi'_w,\xi_z)\, d\xi_z.
\]
This is essentially the convolution operator in the variable $\xi_z$ but looks a little more complicated because of the rescaling mentioned in Remark \ref{Rem:rescaledCoordinates}. 
Recall the Bargmann projection operator $\pBargmannP$ from (\ref{eq:pBargmannP}). For brevity of notation, we will write $L^2(\cW^{r,\sigma})$ for $L^2(\real^{4d+2d'+1}; (\cW^{r,\sigma})^2)$ below.
\begin{Lemma}\label{lm:multiplication} 
Let $\sigma\in \Sigma$. There exists a constant $C_\nu>0$ for each $\nu>0$ such that, for any $\omega, \omega'\in \integer$ and any $\psi\in \cX_\omega$, we have
\[
\|\mult(q_{\omega'})\circ (\mult(\psi)^\lift -\pBargmannP\circ \mathcal{C}({\psi}))\circ \mult(q_{\omega})\|_{L^2(\cW^{r,\sigma})}\le  C_\nu \langle \omega\rangle^{-\theta/2}\langle \omega'-\omega\rangle^{-\nu}
\]
and
\[
\|\mult(q_{\omega'})\circ (\mult(\psi)^\lift -\mathcal{C}({\psi})\circ \pBargmannP)\circ \mult(q_{\omega})\|_{L^2(\cW^{r,\sigma})}\le  C_\nu \langle \omega\rangle^{-\theta/2}\langle \omega'-\omega\rangle^{-\nu}.
\]
\end{Lemma}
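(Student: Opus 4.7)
The plan is to compute the Schwartz kernel of $\mult(\psi)^\lift$ explicitly and compare it with that of $\pBargmannP\circ \mathcal{C}(\psi)$ (or $\mathcal{C}(\psi)\circ \pBargmannP$) via a Taylor expansion of the slowly varying factor. First I would write the kernel of $\mult(\psi)^\lift=\pBargmann\circ \mult(\psi)\circ \pBargmann^*$ as
\[
K(w,\xi_w,\xi_z;w',\xi'_w,\xi'_z)=\int \overline{\phi_{w,\xi_w,\xi_z}(w'',z'')}\,\psi(w'',z'')\,\phi_{w',\xi'_w,\xi'_z}(w'',z'')\,dw''\,dz''.
\]
Performing the $z''$-integration first produces the factor $\widehat\psi(w'',\xi_z-\xi'_z)$, leaving a Gaussian integral in $w''$ with the two Gaussians centred at $w$ and $w'$ and of widths $\langle\xi_z\rangle^{-1/2}$ and $\langle\xi'_z\rangle^{-1/2}$ respectively. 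The cutoffs $q_\omega(\xi_z)q_{\omega'}(\xi'_z)$ localize $\xi_z$ and $\xi'_z$ near $\omega$ and $\omega'$, and by $z''$-integration by parts together with (C2) we obtain $|\widehat\psi(w'',\xi_z-\xi'_z)|\le C_N\langle\xi_z-\xi'_z\rangle^{-N}$, which supplies the $\langle\omega-\omega'\rangle^{-\nu}$ factor.

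Second, I would Taylor-expand $w''\mapsto \widehat\psi(w'',\xi_z-\xi'_z)$ around $w'$ (for the first inequality) or $w$ (for the second) to order one. The zeroth-order term, once the $w''$-Gaussian integration is carried out, reproduces exactly the kernel of $\pBargmannP\circ \mathcal{C}(\psi)$; here the $\xi_w$-rescaling by $\langle\xi_z\rangle/\langle\xi'_z\rangle$ built into the definition of $\mathcal{C}(\psi)$ matches precisely the shift produced by completion of squares when the two Gaussian widths differ. For the first-order remainder, the crucial input is (\ref{eq:wpsi}): each factor $(w''-w')$ in the remainder is absorbed by the Gaussian of width $\langle\omega\rangle^{-1/2}$, while each derivative of $\widehat\psi$ costs $\langle\omega\rangle^{(1-\theta)/2}$, so the net gain is $\langle\omega\rangle^{-1/2}\cdot \langle\omega\rangle^{(1-\theta)/2}=\langle\omega\rangle^{-\theta/2}$, which is the factor claimed. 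What remains is a kernel with Gaussian decay in the transverse phase-space variables and rapid decay in $\langle\omega-\omega'\rangle$.

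Third, to pass from a pointwise kernel estimate to an operator-norm bound on $L^2(\mathcal{W}^{r,\sigma})$, I would use the polynomial quasi-invariance of the weight, namely the analogue of (\ref{eq:property_w2}) for $\mathcal{W}^{r,\sigma}$, together with the comparability of $\langle\xi_z\rangle$ and $\langle\xi'_z\rangle$ on the product support of $q_\omega \otimes q_{\omega'}$ modulo a polynomial factor $\langle\omega-\omega'\rangle^{C}$. This allows the weight ratio $\mathcal{W}^{r,\sigma}(w,\xi_w,\xi_z)/\mathcal{W}^{r,\sigma}(w',\xi'_w,\xi'_z)$ to be dominated by $\langle|(w,\xi_w,\xi_z)-(w',\xi'_w,\xi'_z)|\rangle^{2r}\cdot\langle\omega-\omega'\rangle^{C}$, both of which are then absorbed into the Gaussian/rapid decay of the kernel. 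Schur's test finishes the argument.

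The main obstacle will be the third step: the weight $\mathcal{W}^{r,\sigma}=W^{r,\sigma}\circ \Phi$ is built from the singular coordinate change $\Phi$, which is only piecewise smooth across $\xi_z=0$ and whose Jacobian grows like $|\xi_z|^{1/2}$. The supports of $q_\omega$ with $|\omega|\le 2$ form a harmless bounded exceptional set that I would treat separately by crude estimates, so that on $|\omega|,|\omega'|\ge 3$ the explicit form $\langle\xi_z\rangle = |\xi_z|$ applies; there the quasi-invariance inequality for $\mathcal{W}^{r,\sigma}$ can be verified directly from the definition of $W^{r,\sigma}$ together with the fact that $D\Phi$ is bi-Lipschitz on each component, up to a polynomial loss in $\langle\omega-\omega'\rangle$ that is absorbed in the rapid decay already established.
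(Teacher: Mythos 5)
Your proposal is correct and follows essentially the same route as the paper: write out the kernel of the difference, use (\ref{eq:wpsi}) to get rapid decay in $\langle \xi_z-\xi'_z\rangle$, gain the factor $\langle\omega\rangle^{-\theta/2}$ by comparing $\widehat{\psi}(w'',\cdot)$ with its value frozen at $w'$ (resp.\ $w$) against the Gaussian localization at scale $\langle\omega\rangle^{-1/2}$, and conclude by the Schur test on $L^2(\mathcal{W}^{r,\sigma})$ using the moderate-growth property of the weight. The one detail you gloss over is that for the Taylor remainder the decay in $\langle\omega\rangle^{-1/2}\,|\langle \xi_z\rangle\xi_w-\langle\xi'_z\rangle\xi'_w|$ is not simply absorbed by the Gaussian (the amplitude is no longer frozen, so the $w''$-integral is not exactly Gaussian); the paper obtains it by integration by parts in $w''$ against the oscillatory factor, using the operator $\cD_1$ in (\ref{eq:D1}), and the same device would complete your argument.
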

\begin{proof}
Below we prove the first inequality. The second inequality is proved in a parallel manner. We write the kernel 
$K(w',\xi'_w,\xi'_z;w,\xi_w,\xi_z)$ of the operator $\mult(\psi)^\lift -\pBargmannP\circ \mathcal{C}({\psi})$ explicitly and find 
\begin{align*}
&  |K(w',\xi'_w,\xi'_z;w,\xi_w,\xi_z)|\\
&\;=a_{2d+d'}(\langle \xi'_z\rangle^{-1})^2 \\
&\quad \times 
 \left|\int 
e^{-i(\langle \xi'_z\rangle \xi'_w -\langle \xi_z\rangle\xi_w) w''}\cdot \left[e^{-\langle \xi'_z\rangle |w''-w'|^2/2-\langle \xi_z\rangle |w''-w|^2/2}
\cdot  \Delta(w,w'',\xi'_z,\xi_z)\right] dw''\right|
\end{align*}
where
\begin{align*}
&\Delta(w,w'',\xi'_z,\xi_z)\\
&\qquad =
\left(
\frac{\langle \xi_z\rangle^{(2d+d')/4}}
{\langle\xi'_z\rangle^{(2d+d')/4}} \cdot \widehat{\psi}(w'',\xi'_z-\xi_z)-e^{(\langle \xi_z\rangle-\langle \xi'_z\rangle) |w''-w|^2/2}\cdot \widehat{\psi}(w,\xi'_z-\xi_z)\right).
\end{align*}
The computation to get the expression above is straightforward. 
We take integration with respect to  $z$ and perform the change of  variables $(\langle\xi'_z\rangle/\langle \xi_z\rangle)\xi_w\mapsto \xi_w$. 
Note that we consider the volume form $d\vol$ in (\ref{eq:constNormalization}) and that we ignored the term $e^{i\langle \xi'_z\rangle \xi'_w\cdot w'/2-i\langle \xi_z\rangle\xi_w\cdot w/2}$ as we take  absolute value of the both sides.

We claim that, for any $\nu>0$,  
\begin{equation}\label{eq:KernelEstimate1}
|K(w',\xi'_w,\xi'_z;w,\xi_w, \xi_z)|\le C_\nu 
\frac{\langle\omega\rangle^{-\theta/2}\cdot \langle \langle \omega\rangle^{1/2}  |w'-w| \rangle^{-\nu}}
{  
 \langle \langle \omega\rangle^{-1/2} (\langle \xi'_z\rangle\xi'_w-\langle \xi_z\rangle\xi_w)\rangle^{\nu}\cdot \langle \xi'_z-\xi_z\rangle^{\nu}}
\end{equation}
provided $\xi'_z\in \supp q_{\omega'}$ and $\xi_z\in \supp q_{\omega}$, where the constant $C_\nu>0$ is uniform for $\psi\in \cX_\omega$ and $\omega\in \integer$.  
Once we get this estimate, we obtain the  conclusion by Schur test. (See the remark below and also recall (\ref{eq:property_w2}).)
Notice that $\nu>0$ in (\ref{eq:KernelEstimate1}) is arbitrary large and may be different from that in the statement of the lemma. 
\begin{Remark} 
Schur test mentioned above reads as follows: For an integral operator $T:L^2(\real^D)\to L^2(\real^D)$ of the form $Tf(x)=\int K(x,y) dy$, we have
\[
\|T:L^2(\real^D)\to L^2(\real^D)\|\le \left(\sup_{x} \int |K(x,y)| dy\right)^{1/2}
\left(\sup_y \int |K(x,y)| dx\right)^{1/2}.
\]
For the proof, see \cite[p.50]{Martinez} for instance.
\end{Remark}

In order to prove the estimate (\ref{eq:KernelEstimate1}), we apply integration by parts several times,  regarding the term $w''\mapsto e^{-i(\langle \xi'_z\rangle \xi'_w -\langle \xi_z\rangle\xi_w) \cdot w''}$ as the oscillating part and using the differential operator
\begin{equation}\label{eq:D1}
\cD_1=\frac{1+i\langle \omega\rangle^{-1}
(\langle \xi'_z\rangle\xi'_w -\langle \xi_z\rangle \xi_w)\cdot \partial_{w''}
}{1+\langle \omega\rangle^{-1} |\langle \xi'_z\rangle \xi'_w -\langle \xi_z\rangle  \xi_w|^2}.
\end{equation}
\begin{Remark}
Here and henceforth, we mean, by ``integration by parts  regarding $e^{i\varphi(x)}$ as the oscillatory part",  application of the formula
\[
\int e^{i\varphi(x)} \Phi(x) dx =\int (\cD^m e^{i\varphi})(x) \Phi(x) dx =\int  e^{i\varphi(x)} ({}^t\cD)^m \Phi(x) dx 
\]
which holds when a differential operator $\cD$ satisfies $(\cD e^{i\varphi})(x)=e^{i\varphi(x)}$. 
\end{Remark}
To get (\ref{eq:KernelEstimate1}), it is enough to show the estimate
\begin{equation}\label{eq:DiffEst}
|\partial^\alpha_{w''} \Delta(w,w'',\xi'_z,\xi_z)|
\le  
C_{\alpha,\nu} \langle \omega\rangle^{-\theta/2 +|\alpha|/2} \cdot 
 \langle \langle \omega\rangle^{1/2}  |w''-w| \rangle^{|\alpha|}\cdot \langle \xi'_z-\xi_z\rangle^{-\nu}
\end{equation}
when $\xi'_z\in \supp q_{\omega'}$ and $\xi_z\in \supp q_{\omega}$. For convenience, we separate the cases where $\omega$ and $\omega'$ are relatively close and apart, that is, the cases
\begin{equation}\label{cond:near-apart}
\mathrm{(i)} \; |\langle \omega\rangle-\langle\omega'\rangle |\le \langle \omega\rangle^{1/2}\quad \text{and} \quad \mathrm{(ii)}\;
|\langle \omega\rangle-\langle\omega'\rangle |> \langle \omega\rangle^{1/2}.
\end{equation} 
In the case (ii), the proof is easy: We apply (\ref{eq:wpsi}) to differentials of the two terms in $\Delta(w,w'',\xi'_z,\xi_z)$ separately and get (\ref{eq:DiffEst}) using  
\begin{equation}\label{cos:apart}
\langle \xi'_z-\xi_z\rangle^{-1}\le C_0\langle \omega'-\omega\rangle^{-1} < C_0 \langle \omega\rangle^{-1/2}.
\end{equation}
In the case (i), the proof is a little more complicated. Note that we have
\begin{equation}\label{eq:cos:near}
\left|\langle \xi'_z\rangle/\langle \xi_z\rangle -1\right|\le C_0 \langle \omega\rangle^{-1/2}
\end{equation}
in this case. If we replace the coefficient 
 of $\widehat{\psi}(w'',\xi'_z-\xi_z)$ in $\Delta(w,w'',\xi'_z,\xi_z)$ with $1$, the difference made in $\Delta(w,w'',\xi'_z,\xi_z)$ satisfies (\ref{eq:DiffEst}) even with the factor $\langle \omega\rangle^{-\theta/2}$ on the right hand side replaced with $\langle \omega\rangle^{-1/2}$ and is therefore negligible.
Also, noting the factor $e^{-\langle \xi_z\rangle |w''-w|^2/2}$ in $K(\cdot)$,  we can replace the coefficient of  $\widehat{\psi}(w,\xi'_z-\xi_z)$ with $1$, producing a negligible term. Therefore it is enough to prove (\ref{eq:DiffEst}) supposing 
\[
\Delta(w,w'',\xi'_z,\xi_z) =
\widehat{\psi}(w'',\xi'_z-\xi_z)- \widehat{\psi}(w,\xi'_z-\xi_z).
\]
But this is now an easy consequence of (\ref{eq:wpsi}). 
\end{proof}

\begin{Corollary} \label{cor:multi}
Let $\sigma\in \Sigma$. There is a constant $C_\nu>0$ for each $\nu>0$ such that 
\[
\|\mult(q_{\omega'})\circ \mult(\psi)^\lift \circ \mult(q_{\omega})\|_{L^2(\cW^{r,\sigma})}\le
 C_\nu \langle \omega'-\omega\rangle^{-\nu}
 \]
 for all $\omega, \omega'\in \integer$ and $\psi\in \mathcal{X}_{\omega}$. 
\end{Corollary}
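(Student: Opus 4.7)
The plan is to reduce the estimate, via Lemma \ref{lm:multiplication}, to bounding the main term $\mult(q_{\omega'})\circ \pBargmannP \circ \mathcal{C}(\psi) \circ \mult(q_\omega)$ on $L^2(\mathcal{W}^{r,\sigma})$. Indeed, Lemma \ref{lm:multiplication} gives
\[
\|\mult(q_{\omega'})\circ \mult(\psi)^\lift \circ \mult(q_{\omega})\|_{L^2(\mathcal{W}^{r,\sigma})}\le \|\mult(q_{\omega'})\circ \pBargmannP\circ \mathcal{C}({\psi})\circ \mult(q_{\omega})\|_{L^2(\mathcal{W}^{r,\sigma})}+C_\nu\langle\omega\rangle^{-\theta/2}\langle\omega'-\omega\rangle^{-\nu},
\]
and the error term is already of the desired form.

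For the main term I would first note that, since $\pBargmannP$ is a Fourier transform in the $z$-variable only, it preserves the $\xi_z$-coordinate and therefore commutes with the cut-off $\mult(q_{\omega'})$. A standard Schur test using the Gaussian kernel (\ref{eq:kernel_KP}) (with the appropriate $\langle\xi_z\rangle^{-1/2}$ scaling) together with the ``smoothness'' estimate (\ref{eq:property_w2}) of the weight shows that $\pBargmannP$ is bounded on $L^2(\mathcal{W}^{r,\sigma})$ uniformly in $\xi_z$. Thus it is enough to bound $\mult(q_{\omega'})\circ \mathcal{C}(\psi)\circ \mult(q_\omega)$ on $L^2(\mathcal{W}^{r,\sigma})$.

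From the formula defining $\mathcal{C}(\psi)$, its kernel involves multiplication by $\widehat{\psi}(w,\xi_z-\xi'_z)$ composed with the rescaling $\xi_w\mapsto (\langle\xi_z\rangle/\langle\xi'_z\rangle)\xi'_w$. On the support of $q_{\omega'}(\xi_z)q_\omega(\xi'_z)$ one has $|\xi_z-\xi'_z-(\omega'-\omega)|\le 4$, so the rapid decay estimate (\ref{eq:wpsi}) (applied with $\nu+1$ in place of $\nu$) yields the pointwise bound $|\widehat{\psi}(w,\xi_z-\xi'_z)|\le C_\nu\langle\omega'-\omega\rangle^{-\nu}$ uniformly in $\psi\in\mathcal{X}_\omega$. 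It then remains to show that the remaining ``skeleton'' operator, a pure rescaling in $\xi_w$ combined with convolution in $\xi_z$ localized by $q_\omega,q_{\omega'}$, is uniformly bounded on $L^2(\mathcal{W}^{r,\sigma})$.

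The main obstacle I anticipate is the interaction of the rescaling $\xi_w\mapsto (\langle\xi_z\rangle/\langle\xi'_z\rangle)\xi'_w$ with the anisotropic weight $\mathcal{W}^{r,\sigma}$, since the weight is expressed in the rescaled coordinates (\ref{eq:newcoordinates}) that themselves depend on $\xi_z$. I would handle this by a dichotomy. In the close-frequency regime $|\omega-\omega'|\le \tfrac{1}{2}\min(\langle\omega\rangle,\langle\omega'\rangle)$ the ratio $\langle\xi_z\rangle/\langle\xi'_z\rangle$ is comparable to $1$, so in view of (\ref{eq:property_w2}) the weights at $(w,\xi_w,\xi_z)$ and at the rescaled point $(w,(\langle\xi'_z\rangle/\langle\xi_z\rangle)\xi_w,\xi'_z)$ differ by a bounded multiplicative factor, and a direct Schur estimate (using the $q_\omega,q_{\omega'}$ cutoffs to control the $\xi_z$-integration) gives uniform boundedness. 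In the complementary far-frequency regime, the $\langle\omega'-\omega\rangle^{-\nu}$ factor from $\widehat{\psi}$ absorbs any polynomial discrepancy between the two weights, and a crude $L^2$ bound combined with the cutoffs suffices. Combining all of these gives the desired bound $C_\nu\langle\omega'-\omega\rangle^{-\nu}$.
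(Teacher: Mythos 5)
There is a genuine gap at the final step, where you replace $\widehat{\psi}(w,\xi_z-\xi'_z)$ by its supremum and claim that the remaining ``skeleton'' operator (the rescaling $\xi_w\mapsto (\langle\xi_z\rangle/\langle\xi'_z\rangle)\xi_w$ combined with the $q_\omega,q_{\omega'}$-localized convolution in $\xi_z$) is uniformly bounded on $L^2(\mathcal{W}^{r,\sigma})$. It is not, and both branches of your dichotomy break down. In the rescaled coordinates (\ref{eq:newcoordinates}), the point $(w,\xi_w,\xi_z)$ and its partner $(w,(\langle\xi_z\rangle/\langle\xi'_z\rangle)\xi_w,\xi'_z)$ differ by an anisotropic scaling (a factor $(\langle\xi_z\rangle/\langle\xi'_z\rangle)^{\pm 1/2}$ acting oppositely on $(\zeta_p,\tilde{\xi}_y,\zeta_q)$ and on $\tilde y$) \emph{plus} inhomogeneous terms of size comparable to $\langle\xi_z\rangle^{-1/2}|\xi_z-\xi'_z|\,|x|$ and $\langle\xi_z\rangle^{1/2}$-rescalings of $y$. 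Since $w$ and $\xi_w$ range over the whole space, the distance between the two rescaled-coordinate vectors grows linearly in $|w|$ and in $|\zeta|$, so (\ref{eq:property_w2}) gives no bounded ratio in the ``close'' regime; worse, even the pure scaling part can move the direction $[(\zeta_p,\tilde\xi_y,\zeta_q,\tilde y)]$ across the transition zone of $\mathrm{ord}^{\sigma}$, producing a weight ratio of order $\langle|\zeta|\rangle^{r\delta}$ with $\delta>0$. In the ``far'' regime the factor $\langle\omega'-\omega\rangle^{-\nu}$ cannot absorb this either, because the discrepancy is polynomial in the phase-space point $|\zeta|$, which is unrelated to $\omega,\omega'$. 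This is exactly the phenomenon the paper warns about in Subsection \ref{ss:variantsOfH}: even transfer operators of maps arbitrarily close to the identity can be unbounded on these anisotropic spaces, and your skeleton operator is precisely such a map.

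The missing ingredient is the localization of $\psi$ in the base variables, which you discard when taking the supremum of $\widehat\psi$: by (C1), $\widehat\psi(\cdot,\xi_z-\xi'_z)$ is supported where $|x|\lesssim\langle\omega\rangle^{-1/2+\theta}$ and $|y|\le C$, and the Gaussian factors of the wave packets force $w,w'$ to lie within $O(\langle\omega\rangle^{-1/2})$ of this set on the essential support of the kernel. It is this confinement that kills the inhomogeneous terms and pins the $\tilde y$-component to size $O(\langle\omega\rangle^{1/2})$, so that when $|\zeta|$ is large the direction (hence $\mathrm{ord}^\sigma$, hence the weight exponent) is essentially unchanged by the $\xi_z$-rescaling, while the decay (\ref{eq:wpsi}) in $\xi_z-\xi'_z$ supplies the $\langle\omega'-\omega\rangle^{-\nu}$. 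The paper's route (cf.\ Remark \ref{Rem:multiplication_L2}) is accordingly to keep $\widehat\psi$ inside the kernel and run the weighted Schur test on the full kernel of $\mult(q_{\omega'})\circ\mult(\psi)^{\lift}\circ\mult(q_\omega)$, using a bound of the type (\ref{eq:KernelEstimate1}) (without the $\langle\omega\rangle^{-\theta/2}$ gain) together with the Gaussian localization near $\supp\psi$; your reduction through $\pBargmannP$ and $\mathcal{C}(\psi)$ is fine up to that point, but the $\psi$-free skeleton estimate as stated would fail.
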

\begin{Remark} \label{Rem:multiplication_L2}
Lemma \ref{lm:multiplication} and Corollary \ref{cor:multi} remain true when we consider the operators on the space  $L^2(\real^{4d+2d'+1})$ instead of $L^2(\cW^{r,\sigma})$, because we have proved the estimate (\ref{eq:KernelEstimate1}) on the kernel. 
\end{Remark}

\subsection{Transfer operator for nonlinear diffeomorphisms}\label{ss:small_nonlinear}
We now assume the following setting in addition to Setting~I. 
\begin{framed}
\noindent {\bf Setting II:}\; For each $\omega\in \integer$, there is a given set $\cG _\omega$  of 
 fibered contact diffeomorphisms $g:U_g\to \real^{2d+d'+1}$ whose domain $U_g\subset \real^{2d+d'+1}$ contains 
\[
\disk^{(2d)}(C\langle \omega\rangle^{-1/2+\theta})\oplus \disk^{(d')}(C)\oplus \disk^{(1)}(C)
\]
where $C$  is the constant in Setting I and, further,  
the following conditions hold for  $\omega\in \integer$ and $g\in \cG_{\omega}$ with positive constants $C'$ and $C_{\alpha}$ uniform for $\omega$ and~$g$:
\begin{itemize}
\setlength{\itemsep}{1mm}
\item[(G0)] $g(0,0,0)=(0,y_*,0)$ where $y_*\in \real^{d'}$ satisfies $|y_*|\le C'\langle \omega\rangle^{-1/2-3\theta}$.
\item[(G1)]  For the first derivative of $g$ at the origin $0\in \real^{2d+d'+1}$, we have  
\[
\|Dg(0)-\mathrm{Id}\|<C'\max\{\langle \omega\rangle^{-\beta(1/2-\theta)}, \langle \omega\rangle^{-(1-\beta)(1/2-\theta)-2\theta}\}.
\]
\item[(G2)] We have 
\[
\|\partial_w^\alpha   g(w,z)\|<C_{\alpha}  \langle \omega\rangle^{((1-\beta)(1/2-\theta)+4\theta)(|\alpha|-1)+|\alpha|\theta/2}\quad \mbox{ on $U_g$}
\]
for $\alpha\in \integer^{2d+d'}_+$  with $|\alpha|\ge 1$. Further, for the base diffeomorphism (defined in Definition \ref{def:fiberedContact}) $\check{g}:\proj_{(x,z)}(U_g)\to \real^{2d+1}$ of $g$ , we have 
\[
\|\partial_{x,z}^\alpha   \check{g}(x,z)\|<C_{\alpha}\langle \omega\rangle^{|\alpha|\theta/2}\quad \mbox{ on $\proj_{(x,z)}(U_g)$}
\]
for $\alpha\in \integer^{2d+d'}_+$.
\end{itemize}
\vspace{-1mm}
\end{framed}

\begin{Remark} From the numerical relation (\ref{eq:theta_beta}) given in Remark \ref{rem:num}, 
the conditions above implies that the diffeomorphisms in $\cG_{\omega}$ is close to identity including their derivatives when we look them in the scale $\langle \omega\rangle^{-1/2}$ (or even in a little more larger scale) in the source and target. The exponents in the conditions above are slightly different from those in the corresponding argument in the previous paper \cite[Ch. 7]{FaureTsujii12}. This is because of the involved definition of the partition of unity $\widetilde{X}_{\omega,m}$ introduced in Subsection \ref{ss:modified_pu}. But the difference is not essential.
\end{Remark}
\begin{Remark}
We will see in Corollary \ref{lm:distortion} that we can set up the sets $\cG_\omega$ of diffeomorphisms satisfying the conditions as above so that each of the diffeomorphisms $\kappa_{\j'}^{-1}\circ f^t_G\circ \kappa_{\j}$ with $\omega(\j)\sim \omega(\j')\sim \omega$ and $0\le t\le 2t(\omega)$ is expressed as a composition of a diffeomorphism in  $\cG_\omega$ with some affine maps. 
\end{Remark}

For a pair of a function $\psi\in \cX _\omega$ and a diffeomorphism $g\in \cG_\omega$, 
we consider the transfer operator 
\begin{equation}\label{eq:defLfphi}
L(g,\psi)u=\psi\cdot (u\circ g^{-1}) 
\end{equation}
and also its lift with respect to the partial Bargmann transform
\[
L(g,\psi)^{\lift}=\pBargmann\circ L(g,\psi)\circ \pBargmann^*.
\]
\begin{Remark}
The assumption on the support $U_g$ in Setting II is actually not indispensable for the argument below. In fact, since we will consider the transfer operator as above, it is enough to assume that $U_g$ contains the support of $\psi$.  
\end{Remark}

In the next lemma, we would like to show that the diffeomorphism $g$ in the operator $L(g,\psi)^{\lift}$ will not take much effect in its action and, consequently, $L(g,\psi)^{\lift}$ is well approximated by $L(\mathrm{Id},\psi)^{\lift}=\mult(\psi)^{\lift}$. But, actually, this conclusion is not true if we consider the action of the operator $L(g,\psi)^{\lift}$ on a region far from the trapped set $X_0$. 
In order to restrict the action of the lifted transfer operator $L(g,\psi)^{\lift}$ to a region near the trapped set~$X_0$,  we introduce the $C^\infty$ function 
\begin{align}\label{def:Y}
Y:\real^{4d+2d'+1}_{(w,\xi_w,\xi_z)}\to [0,1],\quad Y(w,\xi_w,\xi_z)&=\chi(\langle \xi_z\rangle^{-2\theta}|(\zeta_p, \tilde{\xi}_y, \zeta_q,\tilde{y}) |)
\end{align}
where $\zeta_p,\zeta_q,\tilde{y},\tilde{\xi}_y$ are the coordinates defined in (\ref{eq:newcoordinates}). 
 
\begin{Lemma}\label{lm:IdMinusG}
Let $\sigma\in \Sigma$. There exist constant $C_\nu>0$ for each $\nu>0$  such that, for any $\omega,\omega'\in \integer$, $\psi\in \cX _\omega$ and $g\in \cG_\omega$, we have
\begin{align*}
&\|\mult(q _{\omega'})\circ 
(L(g,\psi)^{\lift}-\mult(\psi)^{\lift}) \circ  \mult(Y)\circ \mult(q_\omega)
\|_{L^2(\cW^{r,\sigma})}\le C_\nu  \langle \omega\rangle^{-\theta/2} \langle \omega'-\omega\rangle^{-\nu}
\intertext{and}
&\|\mult(q _{\omega'})\circ \mult(Y)\circ
(L(g,\psi)^{\lift}-\mult(\psi)^{\lift}) \circ   \mult(q_\omega)
\|_{L^2(\cW^{r,\sigma})}\le C_\nu  \langle \omega\rangle^{-\theta/2} \langle \omega'-\omega\rangle^{-\nu}.
\end{align*}

\end{Lemma}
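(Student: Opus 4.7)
The plan is to imitate the proof of Lemma \ref{lm:multiplication}: write down the Schwartz kernel $K_{g,\psi}$ of $L(g,\psi)^{\lift}$ explicitly in terms of the Gaussian wave packets $\phi_{w,\xi_w,\xi_z}$ and the function $\psi$, and compare it to the kernel $K_{\mathrm{Id},\psi}$ of $\mult(\psi)^{\lift}$ pointwise, then apply Schur's test after integration by parts. First I would change variables via $g$ in the integral defining $K_{g,\psi}$, which replaces $\psi$ by $\psi\circ g\cdot |\det Dg|$ (still in the class $\mathcal{X}_\omega$ up to constants by (G2)) and, more importantly, replaces one of the two wave packets $\phi_{w',\xi'_w,\xi'_z}$ by its pullback by $g$. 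After taking the integral along the $z$-direction, the difference $K_{g,\psi}-K_{\mathrm{Id},\psi}$ becomes an oscillatory integral whose amplitude is the difference of the two Gaussian factors and whose phase has an additional term coming from $g-\mathrm{Id}$.

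Next I would Taylor expand $g$ around the origin: by (G0)–(G2), $g(w,z)=(0,y_*,z)+(\mathrm{Id}+\Delta)\cdot(w,z)+R(w,z)$ where the shift $y_*$ is bounded by $\langle\omega\rangle^{-1/2-3\theta}$, the linear error $\Delta$ satisfies $\|\Delta\|\le\langle\omega\rangle^{-\min\{\beta,1-\beta\}(1/2-\theta)-2\theta}$, and the remainder $R$ is quadratic with controlled derivatives. The crucial point is that the cutoff $\mult(Y)$ confines $\xi_z\in\supp q_\omega$ and the rescaled coordinates $(\zeta_p,\tilde\xi_y,\zeta_q,\tilde y)$ to a ball of radius $\langle\omega\rangle^{2\theta}$; in the original coordinates this means $|w|\lesssim\langle\omega\rangle^{-1/2+2\theta}$ and $|\xi_w|\lesssim\langle\omega\rangle^{-1/2+2\theta}$, so the phase correction coming from $\Delta$ and $R$ on the support of $\psi$ is bounded by the product of $\|\Delta\|$ (or a suitable power of the radius of $\supp\psi$ in the case of $R$) with $\langle\omega\rangle\cdot|w|\cdot|\xi_w|\lesssim \langle\omega\rangle\cdot\langle\omega\rangle^{-1+4\theta}=\langle\omega\rangle^{4\theta}$. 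A direct bookkeeping using (\ref{eq:choice_of_theta}) and (\ref{eq:theta_beta}) shows that each of the three error terms (shift $y_*$, linear error $\Delta$, nonlinear remainder $R$) contributes a factor $\langle\omega\rangle^{-\epsilon}$ with a uniform $\epsilon>0$. As in Lemma \ref{lm:multiplication}, I would then perform integration by parts using a differential operator of the form (\ref{eq:D1}) (adapted to the new phase) to obtain the decay factors $\langle\langle\omega\rangle^{1/2}|w-w'|\rangle^{-\nu}\cdot\langle\xi_z-\xi'_z\rangle^{-\nu}\cdot\langle\langle\omega\rangle^{-1/2}(\langle\xi_z\rangle\xi_w-\langle\xi'_z\rangle\xi'_w)\rangle^{-\nu}$ on the kernel, and conclude by the Schur test.

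The hardest part will be the quantitative bookkeeping that shows the claimed gain $\langle\omega\rangle^{-\epsilon}$ really appears after one restricts to the support of $\mult(Y)\circ\mult(q_\omega)$, since the factor $\Delta$ mixes the $x$ and $y$ directions which have different effective scales (size $\langle\omega\rangle^{-1/2+\theta}$ and $O(1)$, respectively, on $\supp\psi$), and the H\"older exponent $\beta$ from (G1) enters non-trivially through the choice (\ref{eq:choice_of_theta}) of $\theta<\min\{\beta,1-\beta\}/20$. One must also treat the two asymmetric inequalities separately: in the first the cutoff $\mult(Y)$ sits on the right and so the variables $(w',\xi'_w,\xi'_z)$ in the kernel are localized near $X_0$, whereas in the second it is $(w,\xi_w,\xi_z)$ that is localized; however, the roles of the two wave packets in $K_{g,\psi}$ are essentially symmetric (one can alternatively expand the integrand around $g(w,z)$ instead of $(w,z)$), so the same argument works by reversing the change of variables, with the same type of estimates controlling the deviation of $g^{-1}$ from the identity thanks to the uniform bounds in Setting II.
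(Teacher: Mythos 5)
Your overall strategy (write out the kernel of the difference, use the $Y$-cutoff to localize near the trapped set, Taylor-expand $g$ at the base point, integrate by parts with a $\cD_1$-type operator, and conclude by Schur's test) is the same as the paper's, but as written it has a genuine gap: you never treat the phase contribution $\xi'_z\,\tau(w'')$ coming from the $z$-component of the fibered contact diffeomorphism, $g^{-1}(w'',z'')=(\breve{g}^{-1}(w''),\,z''+\tau(w''))$. This term carries the full, unrescaled frequency $\xi'_z\sim\omega$, so your bookkeeping bound ``(size of the error in $g$) $\times\ \langle\omega\rangle\,|w|\,|\xi_w|$'' simply does not apply to it. If $\tau$ is controlled only as a generic quadratic remainder --- which is all that (G0)--(G2) give without further input --- then on the effective domain $|x''|\lesssim\langle\omega\rangle^{-1/2+(5/2)\theta}$ one only gets $|\xi'_z\,\tau(w'')|\lesssim\langle\omega\rangle\cdot\langle\omega\rangle^{-1+5\theta}=\langle\omega\rangle^{5\theta}$, a factor that \emph{grows} with $\omega$, and no choice of $\theta$ in (\ref{eq:choice_of_theta}) rescues the estimate. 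The paper closes exactly this point with Lemma \ref{lm:tau}: since $g$ is a fibered \emph{contact} diffeomorphism whose transversal part fixes the origin up to the tiny shift $y_*$ (harmless, as $\tau$ depends only on $x$), one has $D_x\tau(0)=0$ \emph{and} $D_x^2\tau(0)=0$, i.e.\ $\tau$ vanishes to third order at the base point; combined with the second condition in (G2) this yields $|\partial^\alpha_{w''}\tau(w'')|\le C\langle\omega\rangle^{-(3-|\alpha|)(1/2-\theta)+(3/2)\theta}$ for $|\alpha|\le 2$, hence $|\xi'_z\,\tau(w'')|\lesssim\langle\omega\rangle^{-1/2+(9/2)\theta}$, and this contact-geometric cancellation is precisely where the gain $\langle\omega\rangle^{-\epsilon}$ comes from. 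Without invoking it your argument fails.

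A secondary issue: the localization of the \emph{integration} variable $w''$ to $|w''|\lesssim\langle\omega\rangle^{-1/2+(5/2)\theta}$, which your bounds for the $\Delta$- and $R$-contributions implicitly require, does not follow from $\supp\psi$ alone, since by (C1) the $y''$-coordinate there is only $O(1)$; with $|y''|=O(1)$ the displacement $\|\Delta\|$ multiplied by the frequency $\langle\xi'_z\rangle|\xi'_w|\lesssim\langle\omega\rangle^{1/2+2\theta}$ need not decay. The needed localization comes from the Gaussian factor $e^{-\langle\xi'_z\rangle|w''-w'|^2/2}$ together with the fact that the $Y\cdot q_\omega$ cutoff forces $|w'|\lesssim\langle\omega\rangle^{-1/2+2\theta}$; the paper makes this explicit by inserting the extra cutoff $\chi(\langle\omega\rangle^{1/2-(5/2)\theta}w'')$ and discarding the negligible tail, and you should incorporate the same step before Taylor-expanding $g$.
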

\begin{proof} 
Below we prove the former inequality. The latter is proved in a parallel manner. 
For the proof, it is enough to show that the kernel $K(w',\xi'_w,\xi'_z, w,\xi_w,\xi_z)$ of the operator  
\[
\mult(q_{\omega'})\circ(L(g,\psi)-\mult(\psi))^{\lift}\circ \mult(Y)\circ\mult(q_{\omega})
\]
satisfies 
\begin{equation}\label{eq:kernel_Lg}
|K(w',\xi'_w,\xi'_z, w,\xi_w,\xi_z)|\le C_\nu  
\frac{\langle \omega\rangle^{-\theta/2}\cdot  \langle \langle \omega\rangle^{1/2} |w'-w|\rangle^{-\nu} }
{ 
\langle \langle\omega\rangle^{-1/2}|\langle \xi'_z\rangle\xi'_w -\langle \xi_z\rangle\xi_w|\rangle^{\nu} 
\cdot \langle \xi_z-\xi'_z\rangle^{\nu}}
\end{equation}
for arbitrarily large $\nu>0$. Indeed we can deduce the former inequality in the lemma as a consequence of (\ref{eq:kernel_Lg}) by Schur test.
From the definitions, we have
\begin{align}\label{eq:estimate_kernel_MLM}
&|K(w',\xi'_w,\xi'_z;w,\xi_w,\xi_z)|\\
&\quad \le  \left|\int e^{i \varphi(w'',z'';\xi'_w,\xi'_z;\xi_w,\xi_z) }\cdot \Phi(w'',z'';w',\xi'_w,\xi'_z;w,\xi_w,\xi_z) dw'' dz''\right|\notag
\end{align}
where
\[
\varphi(w'',z'';\xi'_w,\xi'_z;\xi_w,\xi_z)=
 (\langle \xi_z\rangle \xi_w- \langle \xi'_z\rangle \xi'_w)\cdot  w''  
+( \xi_z-\xi'_z) z'' 
\]
and
\begin{align*}
&\Phi(w'',z'';w',\xi'_w,\xi'_z;w,\xi_w,\xi_z) \\
&\quad= a_{2d+d'}(\langle\xi'_z\rangle^{-1})\cdot a_{2d+d'}(\langle\xi_z\rangle^{-1})\cdot\psi(w'',z'')\cdot 
e^{-\langle \xi'_z\rangle|w''-w'|^2/2-\langle \xi_z\rangle|w-w''|^2/2}
\\
& \qquad\quad  \times 
\left[-1 +
e^{i \xi_z  \tau(w'')-i\langle \xi_z\rangle\xi_w\cdot  (w''-\breve{g}^{-1}(w''))+
\langle \xi_z\rangle|w-w''|^2/2-\langle \xi_z\rangle|w-\breve{g}^{-1}(w'')|^2/2)}
 \right].
\end{align*}
In the last line, the function $\tau(w)$ and the diffeomorphism $\breve{g}^{-1}$ are those in the expression of the fibered contact diffeomorphism $g^{-1}$,
\[
g^{-1}(w'',z'')=(\breve{g}^{-1}(w''), z''+\tau(x''))\quad \text{for $(w'',z'')=(x'',y'',z'')$}
\]
in Definition \ref{def:fiberedContact}. 
Note that we neglected the multiplication operators $\mult(q_{\omega'})$, $ \mult(Y)$ and $\mult(q_{\omega})$ on the right-hand side of  (\ref{eq:estimate_kernel_MLM}), though we will remember and use the fact that the kernel $K(\cdot)$ vanishes unless $\xi_z\in \supp q_{\omega'}$, $\xi'_z\in \supp q_{\omega}$ and $(w,\xi_w, \xi_z)\in \supp Y$. 

For the proof of the estimate (\ref{eq:kernel_Lg}), we 
 apply integration by parts several times regarding the term $(w'',z'')\mapsto e^{i \varphi(w'',z'';\xi_w,\xi_z;\xi'_w,\xi'_z) }$ as the oscillatory part and using  the differential operators
\begin{equation}\label{eq:D0}
\cD_0=\frac{1-i(\xi'_z-\xi_z)\partial_{z''}}{1+|\xi'_z-\xi_z|^2}
\end{equation}
and $\cD_1$ in (\ref{eq:D1}). In order to get the required estimate (\ref{eq:kernel_Lg}), it is enough to show 
\begin{equation}\label{eq:estimatePhi}
|\partial^\alpha_{w''} \partial_{z''}^k\Phi(w'';w',\xi'_w,\xi'_z;w,\xi_w,\xi_z)|<
\frac{C_{\alpha,k,\nu} \langle \omega\rangle^{-\theta/2+|\alpha|/2}}
{ \langle \langle \omega'\rangle |w'-w''|\rangle^{\nu} \cdot 
\langle \langle \omega\rangle |w-w''|\rangle^{\nu}  }
\end{equation}
for $(w',\xi'_w,\xi'_z)\in \supp q_{\omega'}$, $(w,\xi_w,\xi_z)\in \supp (Y\cdot q_{\omega})$ and $w''\in \proj_{(x,y)}(\supp \psi)$. 

To proceed, it is convenient to consider the two cases in (\ref{cond:near-apart})  separately, as in the proof of Lemma \ref{lm:multiplication}.
In the case (ii), we have (\ref{cos:apart}) and hence each application of integration by parts using $\cD_0$ yields a small factor $\langle \xi'_z-\xi_z\rangle^{-1}< C_0 \langle \omega\rangle^{-1/2}$. 
Hence it is actually enough to prove (\ref{eq:estimatePhi}) with an extra factor $\langle \omega\rangle^m$ for some $m>0$ on the right-hand side. But such estimate can be obtained by plane estimates using the conditions in Setting I and II and also noting (\ref{eq:theta_beta}) for the exponents.

We next consider the case (i) in (\ref{cond:near-apart}), where we need  more precise estimate.  Let us write  $w,w'w''\in \real^{2d+d'}=\real^{2d}\oplus \real^{d'}$ as $w=(x,y)$, $w'=(x',y')$, $w''=(x'',y'')$. 
The condition $w''\in \proj_{(x,y)}(\supp \psi)$ implies that  $|x''|\le C\langle \omega\rangle^{-1/2+\theta}$. Also the condition $(w,\xi_w, \xi_z)\in \supp Y$ implies that $|y|\le C \langle \omega\rangle^{-1/2+2\theta}$. 
Below we assume that $|w|$, $|w'|$, $|w''|$ are bounded by $C\langle \omega\rangle^{-1/2+2\theta}$ for some large $C>0$ because, otherwise, the factor $e^{-\langle \xi'_z\rangle|w''-w'|^2/2-\langle \xi_z\rangle|w-w''|^2/2}$ is bounded by $C_\nu \langle \omega\rangle^{-\nu}$ for arbitrarily large $\nu$ and we can get  (\ref{eq:estimatePhi}) by easy estimate as in the case (ii). 
Then, under such an assumption, the condition $(w,\xi_w,\xi_z)\in \supp (Y\cdot q_{\omega})$ implies  $|\xi_w|<C\langle \omega\rangle^{-1/2+2\theta}$. 

Let us write $\Phi_0(w'';w',\xi'_w,\xi'_z;w,\xi_w,\xi_z)$ for the term in the square bracket $[\cdot]$ in the expression of $\Phi(\cdot)$ above. The required estimate (\ref{eq:estimatePhi}) follows immediately if we show
\begin{equation}\label{eq:estimatePhi0}
|\partial^\alpha_{w''} \Phi_0(w'';w',\xi'_w,\xi'_z;w,\xi_w,\xi_z)|<C_{\alpha} \langle \omega\rangle^{-\theta/2+|\alpha|/2}.
\end{equation} 
From the condition (G2) in Setting II, we have $
|\partial_{w''}^\alpha\tau(x'')|\le C_{\alpha} \langle \omega\rangle^{|\alpha|\theta/2}$ for any multi-index $\alpha$. 
Hence, from Lemma \ref{lm:tau} and Taylor theorem, we get  
\[
|\partial_{w''}^\alpha\tau(x'')|\le C \langle \omega\rangle^{-(3-|\alpha|)(1/2-\theta)+(3/2)\theta} \quad \mbox{if $0\le |\alpha|\le 2$ }
\]
provided $|x''|\le C\langle \omega\rangle^{1/2-\theta}$. 
Also, by Taylor theorem for $\breve{g}^{-1}$ at the origin and using the conditions in Setting II, we have
\begin{align*}
|\breve{g}^{-1}(w'')-w''|&\le C 
\langle \omega\rangle^{-1/2-3\theta}
+C\langle \omega\rangle^{\max\{-\beta(1/2-\theta), -(1-\beta)(1/2-\theta)-2\theta\}}\langle \omega\rangle^{-1/2+2\theta}\\
&\quad +
C\langle \omega\rangle^{((1-\beta)(1/2-\theta)+4\theta)+\theta}
\langle \omega\rangle^{2(-1/2+2\theta)}\\
&\le C'\langle \omega\rangle^{-1/2-3\theta}\qquad\text{by (\ref{eq:choice_of_theta})}.
\end{align*}
The last estimate gives for instance  
\[
||w-w''|^2/2-|w-\breve{g}^{-1}(w'')|^2/2|\le C \langle \omega\rangle^{-1/2+2\theta}\langle \omega\rangle^{-1/2-3\theta}=C\langle \omega\rangle^{-1-\theta}
\]
and further, together with the conditions (G1) and (G2) in Setting II,  
\[
\left|\partial^{\alpha}_{w''}\left(|w-w''|^2/2-|w-\breve{g}^{-1}(w'')|^2/2\right)\right|\le C_\alpha \langle \omega\rangle^{-1+|\alpha|/2-\theta}.
\]
It is now straightforward to show the claim (\ref{eq:estimatePhi0}) by using the estimates above, (\ref{eq:cos:near}) and the conditions in Setting II.
\end{proof}

\subsection{The projection operator $\cT_0$ and its lift}
We next consider the lift $\cT^\lift_0$ of the projection operator $\cT_0$ defined in (\ref{eq:TzeroLift}). Recall from Corollary \ref{cor:Tzero} that the kernel of the operator
$\cT_0^{\lift}$ concentrates around the trapped set $X_0$ if we view it through the weight $\cW^{r,\sigma}$. 
The following two lemmas are direct consequences of this fact and Lemma \ref{lm:multiplication}.
We omit the proofs since they are straightforward. (Similar statements and their proofs can be found in \cite[Lemma 5.1.6, Lemma 5.3.1]{FaureTsujii12}.)
\begin{Lemma}\label{lm:mphi} Let $\sigma,\sigma'\in \Sigma$.
There is a constant $C_\nu>0$ for each $\nu>0$ such that 
\[
\|\mult(q_{\omega'})\circ [\mult(\psi)^{\lift}, \cT^\lift_0]\circ  \mult(q_{\omega})
\|_{L^2(\cW^{r,\sigma})\to L^2(\cW^{r,\sigma'})}\le C_\nu \langle \omega\rangle^{-\theta/2} \langle \omega'-\omega\rangle^{-\nu}
\]
for $\omega, \omega'\in \integer$ and $\psi\in \cX_\omega$. Here $[A,B]$ denotes the commutator of two operators $A$ and $B$, $[A,B]=A\circ B-B\circ A$.
\end{Lemma}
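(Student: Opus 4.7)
The overall strategy is to first use the freedom to commute $\cT_0^\lift$ past the Fourier cutoffs, then apply Lemma \ref{lm:multiplication} to replace $\mult(\psi)^\lift$ by the model operator $\pBargmannP\circ \mathcal{C}(\psi)$ (or its mirror $\mathcal{C}(\psi)\circ \pBargmannP$), and finally exploit the identity $\pBargmannP\circ \cT_0^\lift = \cT_0^\lift\circ \pBargmannP = \cT_0^\lift$ to collapse the projectors, reducing the task to bounding a smaller and more transparent commutator. Concretely, from the tensorial decomposition (\ref{eq:Tlift2}), $\cT_0^\lift$ acts as the identity on the $\xi_z$-factor and hence commutes with any Fourier multiplier in $z$, in particular with $\mult(q_\omega)$ and $\mult(q_{\omega'})$. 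Therefore
\begin{equation*}
\mult(q_\omega)\circ [\mult(\psi)^\lift, \cT_0^\lift]\circ \mult(q_{\omega'}) \;=\; [P, \cT_0^\lift], \qquad P:=\mult(q_\omega)\circ \mult(\psi)^\lift\circ \mult(q_{\omega'}).
\end{equation*}

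Next I apply Lemma \ref{lm:multiplication}: using its first estimate, $P = \mult(q_\omega)\circ \pBargmannP\circ \mathcal{C}(\psi)\circ \mult(q_{\omega'}) + E_1$, and using its second estimate, $P = \mult(q_\omega)\circ \mathcal{C}(\psi)\circ \pBargmannP\circ \mult(q_{\omega'}) + E_2$, with $\|E_i\|_{L^2(\mathcal{W}^{r,\sigma})}$ and $\|E_i\|_{L^2(\mathcal{W}^{r,\sigma'})}$ both bounded by $C_\nu\langle\omega\rangle^{-\theta/2}\langle\omega'-\omega\rangle^{-\nu}$. Substituting the second expression for $P$ into $P\cT_0^\lift$ and the first into $\cT_0^\lift P$, the error terms $E_i$ compose with the bounded operator $\cT_0^\lift:L^2(\mathcal{W}^{r,\sigma})\to L^2(\mathcal{W}^{r,\sigma'})$ (Lemma \ref{lm:Tzero}) to produce contributions of the claimed size. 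Since $\pBargmannP$ is diagonal in $\xi_z$ (its Schwartz kernel carries a factor $\delta(\xi_z-\xi'_z)$), it commutes with $\mult(q_\bullet)$; and from $\cT_0^\lift = \pBargmann\circ \cT_0\circ \pBargmann^*$ together with $\pBargmann^*\circ \pBargmann = \mathrm{Id}$ I get $\pBargmannP\circ \cT_0^\lift = \cT_0^\lift\circ \pBargmannP = \cT_0^\lift$. Using these identities to collapse the Bargmann projectors, the problem is reduced to bounding
\begin{equation*}
\mult(q_\omega)\circ [\mathcal{C}(\psi), \cT_0^\lift]\circ \mult(q_{\omega'}) \;:\; L^2(\mathcal{W}^{r,\sigma})\to L^2(\mathcal{W}^{r,\sigma'}).
\end{equation*}

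For this residual commutator I perform a direct kernel estimate, modelled on the proof of Lemma \ref{lm:multiplication}. The kernel of $\mathcal{C}(\psi)$ is concentrated on the diagonal $w = w'$ and carries the factor $\widehat{\psi}(w,\xi_z-\xi'_z)$, which enjoys the rapid decay (\ref{eq:wpsi}); the kernel of $\cT_0^\lift$, by Corollary \ref{cor:Tzero}, preserves $\xi_z$ and, after multiplication by the weights, decays like $\langle\langle\omega\rangle^{1/2}|(w,\xi_w)-(w',\xi'_w)|\rangle^{-r}$, so it lives on an $\langle\omega\rangle^{-1/2}$-neighbourhood of the trapped set. Performing a first-order Taylor expansion of $\psi(w)$ around the center of this neighbourhood, the zeroth-order term is a constant in $w$ and commutes exactly with $\cT_0^\lift$, while the first-order remainder is controlled by $\sup|\partial_w\psi|\cdot\langle\omega\rangle^{-1/2}\le C\langle\omega\rangle^{(1-\theta)/2}\cdot\langle\omega\rangle^{-1/2}=C\langle\omega\rangle^{-\theta/2}$, using (C2) of Setting I. The desired $\langle\omega'-\omega\rangle^{-\nu}$ decay is extracted by integration by parts in $z''$, exploiting the oscillating factor in $\widehat{\psi}(w,\xi_z-\xi'_z)$ via the operator $\cD_0$ of (\ref{eq:D0}), followed by a Schur test.

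The main obstacle is the last step: one has to track the Taylor remainder through the tensorial decomposition of $\cT_0^\lift$ while simultaneously integrating by parts in the $w''$- and $z''$-variables to secure both the $\langle\omega\rangle^{-\theta/2}$ gain and the off-diagonal decay in $\omega'-\omega$. However, the bookkeeping closely parallels the kernel estimate (\ref{eq:KernelEstimate1}) already carried out for Lemma \ref{lm:multiplication}, and no genuinely new mechanism beyond those already used in the paper is required.
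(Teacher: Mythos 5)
Your proof is correct and is essentially the argument the paper has in mind (the paper omits the proof, calling it a direct consequence of Lemma \ref{lm:multiplication} and the localization of the kernel of $\cT_0^\lift$ in Corollary \ref{cor:Tzero}): you commute the $\xi_z$-cutoffs through $\cT_0^\lift$, trade $\mult(\psi)^{\lift}$ for $\mathcal{C}(\psi)$ via Lemma \ref{lm:multiplication}, collapse the projectors using $\pBargmannP\circ\cT_0^\lift=\cT_0^\lift\circ\pBargmannP=\cT_0^\lift$, and then gain $\langle\omega\rangle^{-\theta/2}$ from condition (C2) of Setting I measured against the $\langle\omega\rangle^{-1/2}$-scale localization of the kernel of $\cT_0^\lift$, with the factor $\langle\omega'-\omega\rangle^{-\nu}$ coming from integration by parts in $z$. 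Two small points to tidy up: Lemma \ref{lm:multiplication} places the cutoff matching $\psi\in\cX_\omega$ on the \emph{right}, whereas here $q_\omega$ sits on the left, so you must either transpose its kernel estimate or convert $\langle\omega'\rangle^{-\theta/2}$ into $\langle\omega\rangle^{-\theta/2}$ at the cost of a power of $\langle\omega'-\omega\rangle$ (harmless since $\nu$ is arbitrary); and the $w$-independent part of $\mathcal{C}(\psi)$ does not commute \emph{exactly} with $\cT_0^\lift$ --- the $\xi_z$-convolution, the $\xi_w$-rescaling in $\mathcal{C}(\psi)$ and the $\xi_z$-dependence of the kernel in Corollary \ref{cor:Tzero} all produce extra terms --- but these are of order $\langle\omega\rangle^{-1/2}$ thanks to (\ref{eq:wpsi}) and are absorbed by the same kernel estimates, so your argument stands.
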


\begin{Lemma}\label{lm:TM} Let $\sigma,\sigma'\in \Sigma$. There is a constant $C_\nu>0$ for each $\nu>0$ such that 
\[
\|\mult(q_{\omega'})\circ \mult(1-Y)\circ  {\cT}^\lift_0 \circ \mult(q_{\omega})\|_{L^2(\cW^{r,\sigma})\to L^2(\cW^{r,\sigma'})}\le C\langle \omega\rangle^{-\theta} \langle \omega'-\omega\rangle^{-\nu}
\]
and
\[
\|\mult(q_{\omega'})\circ {\cT}^\lift_0\circ \mult(1-Y)\circ \mult(q_{\omega})\|_{L^2(\cW^{r,\sigma})\to L^2(\cW^{r,\sigma'})}\le C\langle \omega\rangle^{-\theta} \langle \omega'-\omega\rangle^{-\nu}
\]
for $\omega, \omega'\in \integer$ and $\psi\in \cX_\omega$.
\end{Lemma}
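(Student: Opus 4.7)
I will prove only the first inequality in Lemma~\ref{lm:TM}; the second inequality follows by a symmetric argument (with the roles of the ``input'' and ``output'' factors in the kernel bound of Corollary~\ref{cor:Tzero} interchanged).

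The first step of the plan is to observe that $\cT_0^\lift$ acts as the identity in the variable $\xi_z$. Indeed, from the factorization~(\ref{eq:Tlift2}), $\cT_0^\lift = \Phi^*\circ (\BargmannP_1^{(2d+2d')}\otimes T_0^\lift\otimes \mathrm{Id})\circ(\Phi^*)^{-1}$, and both $\Phi^*$ and the tensor factor preserve the coordinate $\xi_z$. Since $\mult(Y)$ and $\mult(1-Y)$ are also multiplication operators that do not mix different values of $\xi_z$, the composition $\mult(q_{\omega'})\circ\mult(1-Y)\circ\cT_0^\lift\circ\mult(q_\omega)$ vanishes identically as soon as the $\xi_z$-supports of $q_\omega$ and $q_{\omega'}$ are disjoint, i.e.\ as soon as $|\omega-\omega'|\ge 5$. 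In particular the factor $\langle\omega'-\omega\rangle^{-\nu}$ can be supplied trivially, and it remains only to prove a uniform bound of size $\langle\omega\rangle^{-\theta}$ in the regime $|\omega-\omega'|\le 5$, which I assume henceforth (so that $\langle\xi_z\rangle\sim\langle\omega\rangle$ on the support of $q_{\omega'}$).

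The second step is to combine the kernel estimate of Corollary~\ref{cor:Tzero} with the support property of $1-Y$. By the definition (\ref{def:Y}), on the support of $(1-Y)(w,\xi_w,\xi_z)\,q_{\omega'}(\xi_z)$ we have
\[
|(\zeta_p,\tilde{\xi}_y,\zeta_q,\tilde{y})|\ge (4/3)\,\langle \xi_z\rangle^{2\theta}\gtrsim \langle\omega\rangle^{2\theta}.
\]
Applying Corollary~\ref{cor:Tzero} with the roles of $\sigma$ and $\sigma'$ exchanged, the kernel of the weighted operator $\mathcal{W}^{r,\sigma'}\cdot \cT_0^\lift\cdot (\mathcal{W}^{r,\sigma})^{-1}$ (i.e.\ the kernel realizing $\cT_0^\lift$ as a bounded operator from $L^2(\mathcal{W}^{r,\sigma})$ to $L^2(\mathcal{W}^{r,\sigma'})$) satisfies
\[
\widetilde K(w,\xi_w;w',\xi'_w;\xi_z)\le C_m\langle(\zeta_p,\tilde{\xi}_y,\zeta_q,\tilde{y})\rangle^{-r}\langle(\zeta'_p,\tilde{\xi}'_y,\zeta'_q,\tilde{y}')\rangle^{-r}\langle \nu-\nu'\rangle^{-m}.
\]
I will split $\langle(\zeta_p,\tilde{\xi}_y,\zeta_q,\tilde{y})\rangle^{-r}=\langle(\zeta_p,\tilde{\xi}_y,\zeta_q,\tilde{y})\rangle^{-1/2}\cdot \langle(\zeta_p,\tilde{\xi}_y,\zeta_q,\tilde{y})\rangle^{-(r-1/2)}$ and bound the first factor by $\lesssim\langle\omega\rangle^{-\theta}$ on the support of $1-Y$, keeping the second factor, together with $\langle(\zeta'_p,\tilde{\xi}'_y,\zeta'_q,\tilde{y}')\rangle^{-r}\langle\nu-\nu'\rangle^{-m}$, for the Schur test.

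The third and final step is the Schur estimate in the coordinates $(\nu,\zeta,\tilde y,\tilde\xi_y)$, in which the transverse volume form is (a constant multiple of) the Euclidean one by (\ref{eq:normalized_volume}). The assumption (\ref{eq:choice_of_r}) on $r$, namely $r>3+2(2d+d')$, guarantees both that $\langle(\zeta'_p,\tilde{\xi}'_y,\zeta'_q,\tilde{y}')\rangle^{-r}$ is integrable over $\real^{2d+2d'}$ and that $\langle\nu-\nu'\rangle^{-m}$ is integrable over $\real^{2d}$ for suitable $m$, so that
\[
\sup_{(w,\xi_w,\xi_z)}\int \widetilde K\cdot \mathbf{1}_{\supp((1-Y)q_{\omega'})}\,dw'\,d\xi'_w \lesssim \langle\omega\rangle^{-\theta},
\]
and similarly for the integral with the roles of primed and unprimed variables swapped (this bound is in fact even stronger, since the output side also lies in the support of $(1-Y)q_{\omega'}$). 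Schur's test then yields the desired $L^2(\mathcal{W}^{r,\sigma})\to L^2(\mathcal{W}^{r,\sigma'})$ operator norm bound of order $\langle\omega\rangle^{-\theta}$. No real obstacle is expected here; the only delicate point is to be careful enough with the coordinate change between $(w',\xi'_w)$ and $(\nu',\zeta',\tilde y',\tilde\xi'_y)$ at fixed $\xi_z$, which is linear with Jacobian a power of $\langle\xi_z\rangle$ exactly compensating the $\langle\xi_z\rangle$-dependent normalization of $d\vol$ in (\ref{eq:constNormalization}).
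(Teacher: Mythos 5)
Your proof is correct and follows exactly the route the paper intends but omits as "straightforward": the factor $\langle\omega'-\omega\rangle^{-\nu}$ is trivial because $\cT_0^\lift$ is diagonal in $\xi_z$, and the gain $\langle\omega\rangle^{-\theta}$ comes from the concentration of the weighted kernel of $\cT_0^\lift$ near the trapped set (Corollary \ref{cor:Tzero}) combined with the fact that $\supp(1-Y)$ lies at distance $\gtrsim\langle\omega\rangle^{2\theta}$ from $X_0$ in the coordinates (\ref{eq:newcoordinates}), the Schur test closing the argument thanks to the assumption (\ref{eq:choice_of_r}) on $r$. Nothing further is needed.
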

We prepare the next elementary lemma for the proof of Lemma \ref{lm:trace1}(b).
\begin{Lemma}\label{lm:choiceV} 
Let $\sigma,\sigma'\in \Sigma$.
For any $\epsilon>0$, there exist a constant $c>0$ and $\omega_0>0$ such that, for  $\omega\in \integer$ with $|\omega|\ge \omega_0$, we can find  a finite dimensional subspace
\begin{equation}\label{eq:VclaimSupport}
W(\omega)\subset C^{\infty}(\{(w,z)\in \real^{2d+d'+1}\mid |w|\le \epsilon\langle \omega\rangle^{-1/2+\theta}, 
|z|\le \epsilon\})
\end{equation}
with  
\begin{equation}\label{eq:Vclaimdim}
\dim W(\omega)\ge c \langle \omega\rangle^{2 d\theta}
\end{equation}
such that 
\begin{align}\label{eq:Vclaim}
&\|\mult(X_{n_0(\omega)})\circ \cT^\lift_0\circ \mult(\Psi^{\sigma}_{\omega,0})\circ \pBargmann v\|_{L^2(\cW^{r,\sigma'})}\ge c  
\| \mult(\Psi^{\sigma}_{\omega',0})\circ \pBargmann v\|_{L^2(\cW^{r,\sigma})} 
\end{align}
for all $v\in W(\omega)$.
\end{Lemma}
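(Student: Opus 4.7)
The plan is to construct $W(\omega)$ as the span of $N \sim \langle\omega\rangle^{2d\theta}$ wave packets localized on an $R$-separated lattice along the trapped set. Using the factorization of Lemma~\ref{lm:U}, in the coordinates $(\nu_q, (\zeta_p, \tilde\xi_y), \xi_z)$ the trapped set is parametrized by the base variables $(\nu_q, \nu_p) \in \real^{2d}$, and the support constraint (\ref{eq:VclaimSupport}) translates through (\ref{eq:newcoordinates}) into the constraint $|\nu| \lesssim \langle\omega\rangle^\theta$ on these base parameters, together with $\xi_z \in \supp q_\omega$. A maximal $R$-separated set $\{\nu^{(k)}\}_{k=1}^N$ in the ball $\{|\nu| \leq c_0\epsilon \langle\omega\rangle^\theta\} \subset \real^{2d}$, with $R$ a large absolute constant and $c_0$ a small constant to be fixed, has cardinality $N \geq c\langle\omega\rangle^{2d\theta}$ and produces, via the wave-packet separation furnished by Lemma~\ref{lm:Bargmann_basic} and the Gaussian decay (\ref{eq:kernel_KP}), an almost-orthonormal family.

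I would then take test functions $\tilde v_k = \phi_{\nu^{(k)}} \otimes \psi_0 \otimes \eta_\omega$, where $\phi_{\nu^{(k)}} \in L^2(\real^d_{\nu_q})$ is the Gaussian wave packet with $\Bargmann^{(d)}_1 \phi_{\nu^{(k)}}$ a unit Gaussian peaked at $\nu^{(k)}$, $\psi_0 \in L^2(\real^{d+d'})$ is a fixed bump with $\psi_0(0) = 1$ (so that $T_0 \psi_0 \neq 0$), and $\eta_\omega$ is an $L^2$-normalized bump with $\supp \eta_\omega \subset \supp q_\omega$; set $v_k^0 = \mathcal{U}(\tilde v_k)$ and define $v_k = \chi_\epsilon(w,z)\cdot v_k^0$ where $\chi_\epsilon$ is a fixed smooth cutoff enforcing (\ref{eq:VclaimSupport}). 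Tracing (\ref{eq:newcoordinates}) through $\mathcal{U}$ shows that $v_k^0$ is Gaussian-localized within the required region with $L^2$-tails of size $O(\langle\omega\rangle^{-\nu})$ for any $\nu > 0$, so that $\|v_k - v_k^0\|_{L^2}$ is negligible and $W(\omega) := \mathrm{span}\{v_k\}$ satisfies $\dim W(\omega) \geq N$.

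For the norm inequality (\ref{eq:Vclaim}) applied to $v = \sum_k \alpha_k v_k$, the identifications (\ref{eq:Phi_Bargmann}) and (\ref{eq:Tlift2}) express $\cT_0^\lift$ as the product $\Phi^* \circ (\BargmannP_1^{(2d+2d')} \otimes T_0^\lift \otimes \mathrm{Id}) \circ (\Phi^*)^{-1}$; applied to $\pBargmann v_k^0$ this produces, by (\ref{cd:T_0Lift}) and Lemma~\ref{lm:Tzero}, a function whose $(\zeta_p, \tilde\xi_y, \zeta_q, \tilde y)$-profile is $\psi_0(0)\cdot K_+(\cdot)$ and whose $(\nu_q, \nu_p)$-profile is a unit Gaussian peaked at $\nu^{(k)}$. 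Since $|\nu^{(k)}| \leq c_0\epsilon\langle\omega\rangle^\theta$ lies well inside $\{X_{n_0(\omega)} = 1\}$ once $c_0$ is small enough, the cutoff $X_{n_0(\omega)}$ affects only Gaussian tails, and (\ref{eq:kpm1})--(\ref{eq:kpm2}) give a uniform lower bound on $\|X_{n_0(\omega)}\cdot K_+\|_{L^2(\mathcal{W}^{r,\sigma'})}$. Combining with almost-orthogonality of the $\phi_{\nu^{(k)}}$ yields LHS $\geq c\|\alpha\|_{\ell^2}^2$. For the RHS, the $(\omega', m') = (\omega, 0)$ term is bounded by $C\|\pBargmann v\|_{L^2}^2 \lesssim \|\alpha\|_{\ell^2}^2$, while the remaining $(\omega', m') \neq (\omega, 0)$ terms pick up only Gaussian tails of wave packets outside the $\langle\omega\rangle^\theta$-neighborhood of the trapped set or outside $\supp q_\omega$, which are rapidly decaying and absorb both the weight $\mathcal{W}^{r,\sigma}$ and the factor $2^{-2rm'}$.

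The principal obstacle will be the stable-direction tails, where both $\mathcal{W}^{r,\sigma}$ (for $m' = 0$) and $2^{-2rm'}$ (for $m' < 0$) grow polynomially or exponentially away from the trapped set; the estimate must simultaneously bound the LHS from below and the RHS from above against these weights. Gaussian decay of the wave packets at rate $e^{-c|\nu-\nu^{(k)}|^2}$ in base coordinates and $e^{-c|(\zeta, \tilde y, \tilde\xi_y)|^2}$ transversally beats any polynomial/exponential weight once $\langle\omega\rangle^\theta$ is sufficiently large, but ensuring uniformity in $\sigma, \sigma' \in \Sigma$ requires choosing $\omega_0$ according to the specific constants appearing in (\ref{eq:kpm1})--(\ref{eq:kpm2}) and in the weight definitions, and verifying that the polynomial bounds on $K_\pm\cdot W^{r,\sigma'}$ from Lemma~\ref{lm:Tzero} are sharp enough to close the estimates.
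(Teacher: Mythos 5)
Your construction is in substance the same as the paper's. The paper takes the lattice $S=\{\mathbf n\in N\langle\omega\rangle^{-1/2}\integer^{2d}:\ |\mathbf n|<(\epsilon/2)\langle\omega\rangle^{-1/2+\theta}\}$, attaches to each $\mathbf n$ the wave packet $\phi_{(\mathbf n,0),\xi(\mathbf n)}$ whose frequency $\xi(\mathbf n)$ places it on the trapped set $X_0$ at $\xi_z=\omega$, and multiplies by the cutoffs $\chi(2\epsilon^{-1}|z|)\cdot\chi(6N^{-1}\langle\omega\rangle^{1/2}|w-(\mathbf n,0)|)$ so that the supports are disjoint; the single-packet lower bound via the explicit rank-one kernel of $\cT_0^{\lift}$ and the passage to linear combinations (which you do by almost-orthogonality of the coherent states, the paper by disjointness of supports together with the $\langle\omega\rangle^{-1/2}$-scale localization of the kernels in $w$) are the same steps, only phrased by you through the factorization $\mathcal{U}$ of Lemma \ref{lm:U} rather than directly with the packets $\phi_{w,\xi_w,\xi_z}$. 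So the route is not genuinely different, and the parts you flag as "to be verified" are exactly the crude tail estimates the paper also treats somewhat informally.

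There is, however, one concretely wrong intermediate claim. With $\eta_\omega$ an $L^2$-normalized bump satisfying $\supp\eta_\omega\subset\supp q_\omega$, the $z$-profile of $v_k^0=\mathcal{U}(\tilde v_k)$ is, up to the modulation $e^{i\omega z}$, the inverse Fourier transform of a bump of fixed width; its $L^2$-mass outside $\{|z|\le\epsilon\}$ is therefore a positive constant depending on $\epsilon$ but \emph{not} decaying in $\omega$, and by the uncertainty principle no choice of $\eta_\omega$ with frequency support of width $O(1)$ can give $\|v_k-v_k^0\|_{L^2}=O(\langle\omega\rangle^{-\nu})$. Your Gaussian-tail argument is valid only in the $w$-variables, so the transfer of the lower bound from $v_k^0$ to the cut-off function $v_k$ does not follow as stated. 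The repair is what the paper does: impose the $z$-localization on the physical side (a cutoff $\chi(2\epsilon^{-1}|z|)$ multiplying the packet, whose oscillation $e^{i\omega z}$ already carries the frequency), accept that the resulting spread in $\xi_z$ is $O(1/\epsilon)$, and absorb the leakage to $\omega'\neq\omega$ and to the $m'\neq 0$ terms into the rapidly decaying off-diagonal contributions on the right-hand side of (\ref{eq:Vclaim}); all losses then depend on $\epsilon$ (which is permitted, since $c$ may depend on $\epsilon$) but not on $\omega$. With that modification your argument closes along the same lines as the paper's proof.
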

\begin{proof} 
Let $N>0$ be a large integer and take the lattice points
\[
S=\{ \mathbf{n}\in \real^{2d}\mid \mathbf{n}\in N \langle \omega\rangle^{-1/2} \integer^{2d}, \; |\mathbf{n}|<(\epsilon/2)\langle \omega\rangle^{-1/2+\theta} \}.
\]
Clearly we have $
C^{-1}\langle \omega\rangle^{ 2d\theta}
\le \# S \le C \langle \omega\rangle^{2d\theta}
$
for a constant $C>0$ which  depends on the choice of $N$ but not on~$\omega$. 
For each $\mathbf{n}\in S$, 
we let $\xi(\mathbf{n})=(\xi_w,\xi_z)\in \real^{2d+d'+1}$ be the unique point such that $\xi_z=\omega$ and that $((\mathbf{n},0),\xi(\mathbf{n}))$ belongs to the trapped set $X_0$, where $(\mathbf{n},0)\in \real^{2d+d'}_{(x,y)}$.
Then we define  $v_{\mathbf{n}}\in C^{\infty}_0(\real^{2d+d'+1})$ by 
\[
v_{\mathbf{n}}(w,z)=\chi(2 \epsilon^{-1} |z|) \cdot 
\chi\left(2\epsilon^{-1}\langle \omega\rangle^{1/2-\theta}|w|\right) \cdot \phi_{(\mathbf{n},0),\xi(\mathbf{n})}(w,z)
\]
where $\chi(\cdot)$ is the function defined in (\ref{eq:def_chi}). 
By definition, the partial Bargmann transform $\pBargmann v_{\mathbf{n}}$ of  $v_{\mathbf{n}}$ concentrates around the point $((\mathbf{n},0),\xi(\mathbf{n}))\in X_0$ in the scale $\langle \omega\rangle^{-1/2}$ in the variable $(w,\xi_w)$ and the unit scale in the variable $\xi_z$. Hence, in the coordinates (\ref{eq:newcoordinates})  introduced in Subsection \ref{ss:coord}, it concentrates around the point
\begin{equation*}
\Xi_{\mathbf{n}}:=(\zeta,\nu,\tilde{y},\tilde{\xi}_y)=(0,2^{1/2}\langle\omega\rangle^{1/2}\mathbf{n},0,0)
\end{equation*}
in the unit scale. Note that we have
\begin{equation}\label{eq:distXi}
d(\Xi_{\mathbf{n}}, \Xi_{\mathbf{n}'})=2^{1/2}\langle\omega\rangle^{1/2}\cdot d(\mathbf{n},\mathbf{n}')\ge 2^{1/2}N
\end{equation}
for different points $\mathbf{n}\neq\mathbf{n}'\in S$. 

Let $W(\omega)$ be the linear space spanned by the functions $v_{\mathbf{n}}$ for ${\mathbf{n}}\in S$. Then the claim (\ref{eq:VclaimSupport}) holds by definition. It is easy to check that the functions $v_{\mathbf{n}}$ for $\mathbf{n}\in S$ are almost orthogonal in the sense that
\[
|(v_{\mathbf{n}}, v_{\mathbf{n}'})_{\cH^{r,\sigma}}|\le 
C_{\nu} \cdot d(\Xi_{\mathbf{n}}, \Xi_{\mathbf{n}'})^{-\nu}\quad \text{for $\mathbf{n}\neq\mathbf{n}'\in S$}
\]
with arbitrarily large $\nu$. Hence we obtain (\ref{eq:Vclaimdim}):
\[
\dim W(\omega)\ge \# S \ge c \langle \omega\rangle^{2 d\theta}
\]
provided that we take sufficiently large constant $N$. 
From the description (\ref{eq:Tlift2}) of the operator $\cT^\lift_0$, we see that the claim (\ref{eq:Vclaim}) holds for each $v=v_{\mathbf{n}}$ with some small constant $c>0$ uniform for ${\mathbf{n}}\in S$ and $\omega$ with $|\omega|\ge \omega_0$. 
But, since the kernel of the operator $\cT^\lift_0$ is localized as described in Corollary \ref{eq:Tlift2} and since we have (\ref{eq:distXi}), we can extend the estimate to the linear combinations of them, again provided that we take sufficiently large $N>0$.  
\end{proof}

\section{Components of lifted operators}
\label{sec:lifted}
\subsection{The lifted transfer operators and their components}
\label{ss:lifted}
The semi-group of (scalar-valued) transfer operators
\[
\cL^t=\cL^t_{0,0}:C^\infty(K_0)\to C^\infty(K_0)\quad \mbox{for }t\ge 0
\]
induces the family of lifted operators:
\begin{equation}\label{eq:def_bLt}
\bL^{t,\sigma\to \sigma'}:=\bI^{\sigma'}\circ \cL^t\circ (\bI^\sigma)^*:\bigoplus_{\j\in \cJ}\bK_\j^{r,\sigma}\to \prod_{\j\in \cJ}\bK_\j^{r,\sigma'},\qquad \sigma,\sigma'\in \Sigma.
\end{equation}
As we will see, these  operators extend to bounded operators
$
\bL^{t,\sigma\to \sigma'}:\bK^{r,\sigma}\to \bK^{r,\sigma'}$ 
provided $t\ge 0$ satisfies (\ref{eq:condition_on_t}) with respect to $\sigma$ and $\sigma'$. Hence the operators $\cL^t:\cK^{r,\sigma}(K_0)\to \cK^{r,\sigma'}(K_0)$ are also bounded and  the diagram  
\begin{equation}\label{cd:bL}
\begin{CD}
\bK^{r,\sigma}@>{\bL^{t,\sigma\to \sigma'}}>>\bK^{r,\sigma'}\\
@A{\bI^\sigma}AA@A{\bI^{\sigma'}}AA\\
\cK^{r,\sigma}(K_0)@>{\cL^t}>>\cK^{r,\sigma'}(K_0)
\end{CD}
\end{equation}
commutes where  $\bI^\sigma:\cK^{r,\sigma}(K_0)\to \bK^{r,\sigma}$ is the natural   isometric embedding. 

The lifted transfer operator $\bL^{t,\sigma\to \sigma'}$ in (\ref{eq:def_bLt}) is expressed  as an infinite matrix of operators:
\begin{equation}
\bL^{t,\sigma\to \sigma'}\bu =
\left(
\sum_{\j\in \cJ} \bL_{\j\to \j'}^{t,\sigma\to \sigma'} u_{\j}
\right)_{\j'\in \cJ} \quad \mbox{for $\bu=(u_{\j})_{\j\in \cJ}$.}
\end{equation}
The components $\bL_{\j\to \j'}^{t,\sigma\to\sigma'}:L^2(\supp \Psi^{\sigma}_{\j})\to L^2(\supp \Psi^{\sigma'}_{\j'})
$ are the operators written in the form 
\begin{equation}\label{eq:ljj}
\bL_{\j\to \j'}^{t,\sigma\to \sigma'}= \mult( \Psi_{\j'}^{\sigma'})\circ \pBargmann \circ L(\, f^t_{\j\to \j'},  \,\tilde{b}^t_{\j'}\cdot \rho^t_{\j\to \j'}\,)\circ \pBargmann^*
\end{equation}
where $L(g,\psi)$ denotes the transfer operator defined by (\ref{eq:defLfphi}).
The diffeomorphism $f^t_{\j\to \j'}$ and the function $\rho^t_{\j\to \j'}$ in (\ref{eq:ljj}) are defined by 
\begin{align}\label{def:ftjj}
f^t_{\j\to \j'}&:=\kappa_{\j'}^{-1}\circ f^t_{G}\circ \kappa_{\j}
\intertext{and}
 \rho^t_{\j\to \j'}&:=
\rho_{\j'}\cdot (\tilde{\rho}_{\j}\circ (f^t_{\j\to \j'})^{-1}).
\label{eq:rhot}
\end{align}
(See (\ref{eq:defsrho}) and also (\ref{def:kappa_anw}), (\ref{eq:rhoan}) for the definitions of $\kappa_{\j}$ and $\rho_{\j}$, $\tilde{\rho}_{\j}$.)
The function $\tilde{b}^t_{\j'}(\cdot)$ in (\ref{eq:ljj}) is defined from $b^t(\cdot)$ in (\ref{eq:bt}) by
\begin{align}\label{eq:tbt}
\tilde{b}^t_{\j'}(w,z)&=b^t(f^{-t}_{G}\circ \kappa_{\j'}(w,z))\quad \mbox{for $(w,z)\in \supp \rho^t_{\j\to \j'}$.}
\end{align} 
\begin{Remark}\label{rem:difference}
If we replace  $\cL^t$ by $\cL^t\circ \mult(\rho_{K_i})$, $i=0,1$, in the definitions above, we obtain slightly different operators. But the difference is only that  
 $\tilde{\rho}_{\j}$ is replaced by $\tilde{\rho}_{\j}\cdot (\rho_{K_i}\circ \kappa_{\j})$, $i=0,1$, and hardly affects the validity of the argument below. In this section and Section \ref{sec:proofs_of_props}, we suppose that 
we are discussing about these variants in parallel.  
\end{Remark}

From the definition of the partial Bargmann transform and its adjoint,  given in (\ref{eq:volume}) and (\ref{eq:pBargmannAdj}), the operator $\bL^{t,\sigma\to \sigma'}_{\j\to \j'}$ is written as an integral operator
\begin{equation*}
\bL_{\j\to \j'}^{t,\sigma\to \sigma'}u(w',\xi'_w,\xi'_z)=\int K^{t,\sigma\to \sigma'}_{\j\to \j'}(w',\xi'_w,\xi'_z;w,\xi_w,\xi_z) u(w,\xi_w,\xi_z)
d\vol(w,\xi_w,\xi_z)
\end{equation*}
where $d\vol$ is the volume form given in (\ref{eq:volume}). 
The kernel is expressed as the integral  
\begin{align}
\label{eq:bigK}
&K^{t,\sigma\to \sigma'}_{\j\to \j'}(w',\xi'_w,\xi'_z;w,\xi_w,\xi_z)\\
&\qquad =\Psi^{\sigma'}_{\j'}(w',\xi'_w,\xi'_z)\cdot\int k^{t}_{\j\to \j'}(w'', z'';w',\xi'_w,\xi'_z;w,\xi_w,\xi_z) dw'' dz''\notag
\end{align}
where
\begin{align*}
&k^{t}_{\j\to \j'}(w'', z'';w',\xi'_w,\xi'_z;w,\xi_w,\xi_z)\\
&\qquad = (\tilde{b}^t_{\j'}\cdot   \rho^t_{\j\to \j'})(w'',z'') \cdot \overline{\phi_{w',\xi'_w,\xi'_z}(w'',z'')}\cdot  \phi_{w,\xi_w,\xi_z}((f^{t}_{\j\to \j'})^{-1}(w'',z'')).
\end{align*}
For further argument, it is convenient to write the last function $k^{t}_{\j\to \j'}(\cdot)$ in the form 
\begin{align}\label{eq:Kt_expression}
k^t_{\j\to \j'}(&w'', z'';w',\xi'_w,\xi'_z;w,\xi_w,\xi_z)\\
&\;\;= e^{i \varphi(w'',z'';w',\xi'_w,\xi'_z;w,\xi_w,\xi_z) }\cdot \Phi(w'',z'';w',\xi'_w,\xi'_z;w,\xi_w,\xi_z)\notag
\end{align}
with setting 
\[
\varphi(w'',z'';w',\xi'_w,\xi'_z;w,\xi_w,\xi_z)= 
-\langle \xi'_z\rangle  \xi'_w \cdot w''  + \langle \xi_z\rangle \xi_w\cdot  (\breve{f}^{t}_{\j\to \j'})^{-1}(w'')  
-(\xi'_z-\xi_z) z''
\]
and
\begin{align}
&\Phi(w'',z'';w',\xi'_w,\xi'_z;w,\xi_w,\xi_z)\label{eq:Ksp}\\
&\;\quad=a_{2d+d'}(\langle\xi_z\rangle^{-1})\, a_{2d+d'}(\langle\xi'_z\rangle^{-1})\cdot 
\exp(i\langle \xi'_z\rangle \xi'_w \cdot w'/2-i\langle \xi_z\rangle \xi_w\cdot w/2)
\notag \\
&\qquad\quad  \times (\tilde{b}^t_{\j'}\cdot   \rho^t_{\j\to \j'})(w'',z'')\cdot \exp(i \xi_z  \tau_{\j\to \j'}(x''))\notag\\
&\qquad \quad \times 
\exp(-\langle \xi'_z\rangle|w''-w'|^2/2-\langle \xi_z\rangle|(\breve{f}^{t}_{\j\to \j'})^{-1}(w'')-w|^2/2
), \notag
\end{align}
where $x''\in \real^{2d}$ is the first component of $w''=(x'',y'')\in \real^{2d+d'}$ and $\tau_{\j\to \j'}$ and $(\breve{f}^{t}_{\j\to \j'})^{-1}$ are those in the expression  
\[
({f}^{t}_{\j\to \j'})^{-1}(w'',z'')=((\breve{f}^{t}_{\j\to \j'})^{-1}(w''), z''+\tau_{\j\to\j'}(x''))
\]
of the fibered contact diffeomorphism $(f^t_{\j\to \j'})^{-1}$ corresponding to  (\ref{eq:fdiff}). 

We may then regard the integral (\ref{eq:bigK}) as an oscillatory integral and expect that it becomes small if the  term $e^{i \varphi(\cdot) }$ oscillates fast with respect to $w''$ or $z''$. Though we will give precise statements later, it is reasonable to make the following observations for the kernel $K^{t,\sigma\to \sigma'}_{\j\to \j'}(w',\xi'_w,\xi'_z;w,\xi_w,\xi_z)$ at this moment:
\begin{itemize} 
\item[(Ob1)] It decays rapidly as the distance $|\xi'_z-\xi_z|$ gets large. This is because the term $e^{i\varphi(\cdot)}$ oscillates fast with respect to $z''$ (while the other terms do not). 
\item[(Ob2)] It decays rapidly as the distance $|\breve{f}^t_{\j\to \j'}(w)-w'|$ gets large in the scale $\langle\omega(\j)\rangle^{-1/2}\sim \langle \xi_z\rangle^{-1/2}$ because so is the last term of (\ref{eq:Ksp}). 
\item[(Ob3)] It  decays rapidly as the distance between $\langle \xi'_z\rangle \xi'_w$ and 
$\langle \xi_z\rangle ((D\breve{f}^t_{\j\to \j'})^*_{w''})^{-1}\xi_w$ gets large (uniformly for  $w''\in \supp \rho^{t}_{\j\to \j'}$) in the scale $\langle\omega(\j)\rangle^{1/2}\sim \langle \xi_z\rangle^{1/2}$. This is because the term  $e^{i\varphi(\cdot)}$ oscillates fast with respect to  $w''$. 
\end{itemize}
\begin{Remark}
Intuitively, the observations above implies that the operator $\bL^{t,\sigma\to \sigma'}_{\j\to \j'}$ is localized in ``energy (or the frequency in $z$)", ``position" and ``momentum" and 
 that the transport of wave packets induced by $\bL^{t,\sigma\to \sigma'}_{\j\to \j}$ is described by the canonical map $((Df^t_{\j\to \j'})^*)^{-1}$.  (Recall the discussion at the end of Section \ref{sec:Gext}.)
\end{Remark}

\subsection{Distortion estimates on $f^t_{\j\to \j'}$} 
We give some estimates on the differentials of diffeomorphisms $f^t_{\j\to \j'}$. The estimates are quite elementary, but may not be completely obvious. Note that the main point in the estimates below is  the effect of the factor $E_{\omega(\j)}$ in the definition of the local charts $\kappa_{\j}$.  
We henceforth consider only those diffeomorphisms $f^t_{\j\to \j'}$ for which the function $\rho^{t}_{\j\to \j'}$ does not vanish. This is of course enough for our consideration on the operators $\bL^{t,\sigma\to \sigma'}_{\j\to \j'}$.

From the definitions in Subsection \ref{ss:chart_omega}, the diffeomorphism $f^t_{\j\to \j'}$ is a fibered contact diffeomorphism and  is written in the form 
\begin{equation}\label{eq:EhE}
f^t_{\j\to \j'}=E_{\omega(\j')}^{-1}\circ h^t_{\j\to \j'}\circ E_{\omega(\j)}
\end{equation}
where 
\begin{equation}\label{eq:htj}
h^t_{\j\to \j'}:=\kappa_{a(\j'),n(\j')}^{-1}\circ f^t_G\circ \kappa_{a(\j),n(\j)}.
\end{equation}
The local charts $\kappa_{a,n}$ are composition of the coordinate charts $\kappa_a$ and bijective affine maps whose derivatives and their inverses are bounded uniformly. Hence we have, for any $\j,\j'\in \cJ$ and any multi-index $\alpha$, that 
\[
|\partial_{w}^\alpha h^t_{\j\to \j'}(w,z)|\le C_{\alpha} e^{|\alpha| \chi_{\max} |t|}
\]
for $t\in \real$ and $(w,z)\in \real^{2d+d'+1}_{(w,z)}$ at which $h^t_{\j\to \j'}(w,z)$ is defined.
In particular, we have the estimate
\begin{equation}\label{eq:dalphah}
|\partial_{w}^\alpha h^t_{\j\to \j'}(w,z)|\le C_{\alpha} \langle \omega\rangle^{\theta |\alpha|/4} \quad \mbox{for $0\le t\le 2t(\omega)$}
\end{equation}
if we let the constant $\epsilon_0$ in the definition of $t(\omega)$ be small. 
 
Recall that there was some arbitrariness in the choice of the local coordinate charts $\kappa_a:U_a\to V_a$ and the associated family of functions $\rho_a$ in Subsection \ref{ss:Darboux}.   
By modifying them if necessary (so that the supports of the functions $\rho_a\circ \kappa_a$ become smaller), 
we may assume the following estimate on the diffeomorphism $h^t_{\j\to \j'}$  for \emph{uniformly bounded time}, that is, for $0\le t\le 2t_0$. 
\begin{Lemma}\label{lm:crude_estimates}  For  $0\le t\le 2t_0$, $\j,\j'\in \cJ$ and $(w,z)\in E_{\omega(\j)}(\supp \rho_{\j})$, we have  
\begin{align*}
\|(Dh^t_{\j\to \j'})_{(w,z)}\circ B^{-1}-\mathrm{Id}\|<10^{-2} 
\end{align*}
with some linear map of the form
\begin{equation}\label{eq:B}
B:\real^{2d+d'+1}_{(x,y,z)}\to \real^{2d+d'+1}_{(x,y,z)},\qquad
B(q,p, y,z)=(Aq, A^\dag p, \wA y,z).
\end{equation}
depending on $t$, $\j$, $\j'$ and $(w,z)$, where  
\begin{equation}\label{eq:normA}
e^{\chi_0 t}\le \|A^{-1}\|^{-1}\le \|A\|\le e^{\chi_{\max} t},
\qquad
 e^{-\chi_{\max} t}\le \|\widehat{A}^{-1}\|^{-1}\le \|\widehat{A}\|\le e^{-\chi_{0} t}
\end{equation}
and $A^\dag={}^t A^{-1}$ as we defined in (\ref{eq:Adag}). (Recall (\ref{eq:chi_max}) for the choice of $\chi_{\max}$.)
\end{Lemma}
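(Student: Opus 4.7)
The strategy is to define $B$ as the block-diagonal approximation of $(Dh^t_{\j\to\j'})_0$, verify that $B$ is of the form (\ref{eq:B}) and satisfies (\ref{eq:normA}), and then bound both the off-diagonal part of $(Dh^t_{\j\to\j'})_0$ and the variation of $(Dh^t_{\j\to\j'})_{(w,z)}$ over $E_{\omega(\j)}(\supp\rho_\j)$ by shrinking the supports $\supp\rho_a$. By Proposition \ref{pp:kappaaw}, $(D\kappa_{a(\j),n(\j)})_0$ is an isometry sending the coordinate decomposition $\real^d_q\oplus\real^d_p\oplus\real^{d'}_y\oplus\real_z$ onto the hyperbolic decomposition $\widetilde E_u\oplus(\widetilde E_s\ominus\ker D\pi_G)\oplus\ker D\pi_G\oplus\widetilde E_0$ at $\kappa_{\j}(0)=e_u(\check\kappa_{a(\j)}(n(\j),0))\in\mathrm{Im}(e_u)$, and the analogous statement holds for $\kappa_{\j'}$ at its origin. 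Along $\mathrm{Im}(e_u)$ the differential $(Df^t_G)$ preserves each of $\widetilde E_u,\widetilde E_0,\widetilde E_s$ and also $\ker D\pi_G\subset\widetilde E_s$, the latter because $\pi_G\circ f^t_G=f^t\circ\pi_G$. Reading this invariance through the two charts therefore exhibits the block structure of (\ref{eq:B}) with the diagonal $A$ and $\hat A$ satisfying the bounds (\ref{eq:normA}) by Lemma \ref{lm:metric}(1)--(3) and (\ref{eq:chi_max}), together with one potentially nonzero off-diagonal block $\real^d_p\to\real^{d'}_y$ and a smaller off-diagonal error coming from the fact that $h^t_{\j\to\j'}(0)$ lies only within $O(\langle\omega(\j')\rangle^{-1/2+\theta})$ of the origin of the target chart (the lattice spacing of $\cN(a(\j'),\omega(\j'))$).

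The $\real^d_p\to\real^{d'}_y$ off-diagonal block measures how $(Df^t_G)$ rotates $\widetilde E_s\ominus\ker D\pi_G$ into $\ker D\pi_G$ inside $\widetilde E_s$, and I propose to eliminate it by a specific choice of the Riemann metric in Lemma \ref{lm:metric}: namely, requiring that $(De_u)(E_s(\pi_G(p)))\perp\ker D\pi_G(p)$ at every $p\in\mathrm{Im}(e_u)$. Conditions (1)--(5) of Lemma \ref{lm:metric} do not constrain the internal orthogonality inside $\widetilde E_s$, so this freedom is available; because $E_s$ and $e_u$ are only H\"older continuous the resulting $|\cdot|_*$ is H\"older, but that lemma only requires continuity. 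With this choice, $\widetilde E_s\ominus\ker D\pi_G$ coincides with $(De_u)(E_s)$ along $\mathrm{Im}(e_u)$, which is invariant under $(Df^t_G)$ as the restriction of the dynamical stable bundle to the attracting section, so the offending block vanishes. The symplectic relation $A^\dag=(A^*)^{-1}$ on the $\real^d_p$-block then follows from Lemma \ref{lm:metric}(5) and the preservation of $\pi_G^*(d\alpha)$ by $(Df^t_G)$: the pairing $d\alpha$ identifies $\widetilde E_u$ nondegenerately with the quotient $\widetilde E_s/\ker D\pi_G$, represented under our metric choice by $\widetilde E_s\ominus\ker D\pi_G$.

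To pass from the origin to arbitrary $(w,z)\in E_{\omega(\j)}(\supp\rho_\j)$, I will use the uniform bound $\|D^2 h^t_{\j\to\j'}\|_{C^0}\le C$ for $0\le t\le 2t_0$, which holds because $h^t_{\j\to\j'}$ is the smooth flow $f^t_G$ conjugated by the affine maps $\kappa_{a,n}=\kappa_a\circ A_{a,n}$ whose derivatives are uniformly bounded in $n$. The set $E_{\omega(\j)}(\supp\rho_\j)$ lies in a box of $x$-diameter at most $\min(C\cdot\mathrm{diam}_x\supp\rho_{a(\j)},\,C\langle\omega\rangle^{-1/2+\theta})$ and $(y,z)$-diameter bounded by $C\cdot\mathrm{diam}_{y,z}\supp\rho_{a(\j)}$; by refining the Darboux charts $V_a$ and the partition $\rho_a$ (compactness of $M$ still allowing finitely many), each of these diameters can be made uniformly smaller than any prescribed $\delta>0$. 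Combining $\|(Dh^t_{\j\to\j'})_{(w,z)}-(Dh^t_{\j\to\j'})_0\|\le C\delta$ with the upper bound $\|B^{-1}\|\le e^{2\chi_{\max}t_0}$ from (\ref{eq:normA}) then delivers $\|(Dh^t_{\j\to\j'})_{(w,z)}\circ B^{-1}-\mathrm{Id}\|<10^{-2}$ for $\delta$ small enough, and the same shrinking absorbs the secondary lattice-spacing error noted in paragraph one. The principal technical difficulty is arranging the orthogonality $(De_u)(E_s)\perp\ker D\pi_G$ consistently with all conditions of Lemma \ref{lm:metric} in a continuous metric, but this requires only the one-parameter freedom inside $\widetilde E_s$ that those conditions leave unconstrained.
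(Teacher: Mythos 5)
There is a genuine gap, and it sits exactly at the step your whole argument hinges on: the elimination of the off-diagonal block $\real^d_p\to\real^{d'}_y$ by choosing the metric so that $\widetilde{E}_s\ominus\ker D\pi_G=(De_u)(E_s)$ along $\mathrm{Im}(e_u)$. The section $e_u$ (equivalently the bundle $E_u$) is only $\beta$-H\"older continuous, so $De_u$ simply does not exist and ``$(De_u)(E_s)$'' is not a defined object; the non-smoothness you flag as merely forcing a H\"older metric is in fact fatal to the construction. Nor can the idea be rescued by replacing $(De_u)(E_s)$ with some abstract $Df^t_G$-invariant complement of $\ker D\pi_G$ inside $\widetilde{E}_s$ over $\mathrm{Im}(e_u)$: the fiber contraction on $\ker D\pi_G$ and the base stable contraction on $\widetilde{E}_s/\ker D\pi_G\cong E_s$ have overlapping rate intervals (both roughly in $[e^{-2\chi_{\max}t},e^{-\chi_0 t}]$), so there is no domination and in general no continuous invariant splitting of this extension. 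Hence the block you are trying to kill cannot be made to vanish by any admissible choice in Lemma \ref{lm:metric}, and your remaining ingredients (shrinking $\supp\rho_a$, uniform $C^2$ bounds) only control the variation of the derivative and the lattice-spacing error, not this block, which is a fixed geometric quantity of order one at the chart centers. A secondary inaccuracy: you assert that $Df^t_G$ preserves $\widetilde{E}_u$ along $\mathrm{Im}(e_u)$, whereas the paper explicitly notes $\widetilde{E}_u$ is \emph{not} invariant; so there is also a $q\to y$ block you have not accounted for (this one could be repaired by choosing $\widetilde{E}_u$ along the section to be the genuine unstable bundle of the hyperbolic attractor, but it needs saying).

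The paper avoids this problem rather than solving it: it first accepts the conclusion with an extra \emph{bounded} off-diagonal term, i.e.\ with $B$ replaced by $B'(q,p,y,z)=(Aq,A^\dag p,\wA y+\widetilde{B}(q,p),z)$ where $\widetilde{B}:\real^{2d}_{(q,p)}\to\real^{d'}_y$ is uniformly bounded (this is the ``easy'' part, essentially your first and third paragraphs), and then exploits the remaining freedom in the Darboux charts by rescaling the fiber coordinate, $\kappa_a(x,y,z)=\kappa'_a(x,\eta y,z)$ with $\eta>1$ large; under this rescaling $A$, $A^\dag$, $\wA$ are unchanged while $\widetilde{B}$ becomes $\eta^{-1}\widetilde{B}$, which is absorbed into the $10^{-2}$ tolerance. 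In other words, the off-diagonal coupling into the Grassmann fiber is not annihilated by a clever invariant splitting (none exists); it is made small by an anisotropic change of coordinates, in the same spirit as the factor $E_\omega$ used elsewhere in the paper to cope with the non-smoothness of $e_u$. If you replace your second paragraph by this rescaling argument, the rest of your outline goes through.
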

\begin{proof}
Let us recall the construction of the local chart $\kappa_a:V_a\to U_a$ in Lemma \ref{th:Darboux} and that of $\kappa_{a,x}:A^{-1}_{a,x}\to U_a$ in Proposition \ref{pp:kappaaw}. 
Since we are assuming $0\le t\le 2t_0$, we may let (the diameter of) $V_a$ be small so that the conclusion of the lemma holds true with $1/100$ replaced by $1/200$ and the matrix $B$ by      
\[
B'(q,p, y,z)=(Aq, A^\dag p, \wA y+\widetilde{B}(q,p),z)
\]
where $\widetilde B:\real^{2d}_{(q,p)}\to \real^{d'}_y$ is bounded uniformly for $0\le t\le t_0$, $\j,\j'\in \cJ$ and $(w,z)\in E_{\omega(\j)}(\supp \rho_{\j})$. 
In order to suppress the extra term $\widetilde B$, we further modify the local chart $\kappa_a$ and $\kappa_{a,x}$ by pre-composing the linear map $(x,y,z)\mapsto (x,\eta y,z)$. Then the extra term $\widetilde B$ becomes $\eta^{-1}\widetilde{B}$ while the other conditions remain valid. Therefore, letting $\eta>1$ large and incorporating $\eta^{-1}\widetilde{B}$ in the error term, we obtain the conclusion of the lemma. 
\end{proof}

Let us recall the affine transformation groups $\cA_2\subset \cA_1\subset \cA_0$ in Definition \ref{def:transG}. 
For each $f^t_{\j\to \j'}$, we take and fix an element  $a^t_{\j\to \j'} \in \cA_2$ whose inverse carries the point $f^t_{\j\to \j'}(0,0,0)=(x_*,y_*,z_*)\in \real^{2d+d'+1}_{(x,y,z)}$ to $(0,y_*,0)$, so that 
\begin{equation}\label{eq:tildef}
\tilde{f}^t_{\j\to \j'}:=(a^t_{\j\to \j'})^{-1}\circ f^{t}_{\j\to \j'}\quad \mbox{satisfies }\tilde{f}^t_{\j\to \j'}(0,0,0)=(0,y_*,0).
\end{equation}
Recall, from Lemma \ref{lm:unitaryaction}, that the transfer operator associated to $a^t_{\j\to \j'}$ is a unitary operator on $\cH^{r,\sigma}(\real^{2d+d'+1})$ and therefore basically negligible.  
\begin{Lemma}\label{lm:crude_estimates2} 
There exist constants $C>0$ and $C_{\alpha}>0$ for each multi-index $\alpha\in \integer_+^{2d+d'}$ such that, for $\j,\j'\in \cJ$ and $0\le t\le 2t(\omega(\j))$, the following hold true:
\begin{enumerate}
\setlength{\leftskip}{-6mm}
\item For $(x_*,y_*,z_*)={f}^t_{\j\to \j'}(0,0,0)$, we have
\[
|x_*|<C\langle \omega(\j')\rangle^{-(1/2-\theta)},\quad 
|y_*|<C\langle \omega(\j')\rangle^{-(1/2+3\theta)},\quad |z_*|<C.
\]
\item The first derivative of $\tilde{f}^t_{\j\to \j'}:=(a^t_{\j\to \j'})^{-1}\circ f^{t}_{\j\to \j'}$ defined in (\ref{eq:tildef}) above at the origin $0\in \real^{2d+d'+1}$ is written in the form
\[
(D\tilde{f}^t_{\j\to \j'})_0=\begin{pmatrix}a_{1,1}&0&0\\
a_{2,1}&a_{2,2}&0\\
0&0&1
\end{pmatrix}\;:\;\real^{2d}_x\oplus\real^{d'}_y\oplus \real_z \to \real^{2d}_x\oplus\real^{d'}_y\oplus \real_z
\]
and the entries satisfy 
\begin{align*}
&\|a_{1,1}\|\le \langle \omega(\j)\rangle^{\theta/4},\\
&\|a_{2,1}\|\le  \langle \omega(\j)\rangle^{\theta/4} \langle \omega(\j')\rangle^{-(1-\beta)(1/2-\theta)-4\theta}, \mbox{ and }\\  
&\|a_{2,2}\|\le  e^{-\chi_0 t}\cdot  (\langle \omega(\j)\rangle/\langle \omega(\j')\rangle)^{(1-\beta)(1/2-\theta)+4\theta}.
\end{align*}
\item For any multi-index $\alpha\in \integer_{+}^{2d+d'+1}$ with $|\alpha|\ge 2$, we have 
\[
\|\partial_w^{\alpha} \tilde{f}^t_{\j\to \j'}\|_\infty <C_{\alpha} 
\langle \omega(\j)\rangle^{\theta|\alpha|/4}\cdot 
\left\langle \langle \omega(\j)\rangle^{|\alpha|}/\langle \omega(\j')\rangle\right\rangle^{(1-\beta)(1/2-\theta)+4\theta}.
\]
Also its base diffeomorphism $(\check{a}^t_{\j\to \j'})^{-1}\circ \check{f}^t_{\j\to \j'}$ satisfies
\[
\|\partial_{x,z}^{\beta}((\check{a}^t_{\j\to \j'})^{-1}\circ \check{f}^t_{\j\to \j'})\|<C_\beta \langle \omega(\j)\rangle^{\theta|\alpha|/4}
\]
for any multi-index $\beta\in\integer_{+}^{2d+1}$ with $|\beta|\ge 2$.
\end{enumerate}
\end{Lemma}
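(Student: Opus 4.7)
The plan is to analyze the factorization
\[
\tilde f^t_{\j\to\j'} = (a^t_{\j\to\j'})^{-1}\circ E_{\omega(\j')}^{-1}\circ h^t_{\j\to\j'}\circ E_{\omega(\j)},
\]
combining the uniform smooth bound (\ref{eq:dalphah}) on $h^t_{\j\to\j'}$, the $\beta$-H\"older regularity of the attracting section $e_u$, and the Anosov estimates (\ref{eq:Anosov2}). The normalizer $a^t_{\j\to\j'}\in\cA_2$ only adjusts the $x$- and $z$-components (it does not translate in $y$), so $\tilde f^t_{\j\to\j'}$ remains a fibered contact diffeomorphism with the same transversal $y$-structure as $f^t_{\j\to\j'}$.

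For (1), since $\rho^t_{\j\to\j'}\not\equiv 0$ the image $f^t_{\j\to\j'}(\supp\rho_\j)$ must meet $\supp\rho_{\j'}$. Projecting down to the base $(x,z)$-coordinates, the support widths prescribed by (\ref{eq:rhoan}) give at once $|x_*|\le C\langle\omega(\j')\rangle^{-1/2+\theta}$ and $|z_*|\le C$. The $y_*$ estimate is the heart of the matter and the step I expect to be the main obstacle, because it is the only place where the H\"older exponent $\beta$ enters and it is what dictates the specific exponent $(1-\beta)(1/2-\theta)+4\theta$ defining $E_\omega$. The idea is: $\kappa_\j(0)$ lies on the $f^t_G$-invariant section $\mathrm{Im}\,e_u$, so its image $f^t_G(\kappa_\j(0))$ does too. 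Read in the unscaled chart $\kappa_{a(\j'),n(\j')}$, this image sits over a base point within $x$-distance $\le C\langle\omega(\j')\rangle^{-1/2+\theta}$ of $\check\kappa_{a(\j'),n(\j')}(0)$, so the $\beta$-H\"older oscillation of $e_u$ across this distance bounds its $y$-coordinate by $C\langle\omega(\j')\rangle^{-\beta(1/2-\theta)}$. Composing with $E_{\omega(\j')}^{-1}$, which divides $y$ by $\langle\omega(\j')\rangle^{(1-\beta)(1/2-\theta)+4\theta}$, converts this into the desired $|y_*|\le C\langle\omega(\j')\rangle^{-1/2-3\theta}$.

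For (2), the fibered contact form (\ref{eq:fdiff}) automatically enforces the zero entries at positions $(x,y)$, $(x,z)$, $(y,z)$, $(z,y)$ and gives $(z,z)=1$. The remaining zero at position $(z,x)$, namely $D\tau(0)=0$, does not follow immediately from Lemma~\ref{lm:tau} because its hypothesis $\breve{\tilde f}^t(0)=0$ fails in general (since $y_*\ne 0$). Instead I will use the base level directly: the base diffeomorphism of $\tilde f^t_{\j\to\j'}$ fixes the origin (by our choice of $a^t_{\j\to\j'}$) and preserves $\alpha_0=dz+pdq-qdp$; evaluating $(\mathrm{base})^*\alpha_0=\alpha_0$ at the origin yields $d\tau|_0=0$ because the $1$-form $pdq-qdp$ vanishes at $(q,p)=0$. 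The magnitude bounds then combine three ingredients: (\ref{eq:dalphah}) with $|\alpha|=1$ controls $\|a_{1,1}\|$ and, after the $E_{\omega(\j')}^{-1}$ contraction on the $y$-output, also $\|a_{2,1}\|$; the Anosov contraction (\ref{eq:Anosov2}) applied to the $Df^t_G$-invariant subbundle $\ker D\pi_G\subset\widetilde E_s$, using the isometric identification from Proposition~\ref{pp:kappaaw}, gives $\|\partial_y\hat h^t|_0\|\le Ce^{-\chi_0 t}$; and the $y$-scaling ratio $(\langle\omega(\j)\rangle/\langle\omega(\j')\rangle)^{(1-\beta)(1/2-\theta)+4\theta}$ from $E_{\omega(\j)}$ on input $y$ and $E_{\omega(\j')}^{-1}$ on output $y$ then yields the claimed bound on $\|a_{2,2}\|$.

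For (3), the chain rule applied to the displayed factorization gives, for $|\alpha|\ge 2$,
\[
\partial_w^\alpha\tilde f^t_{\j\to\j'}(w) = (Da^t_{\j\to\j'})^{-1}\cdot E_{\omega(\j')}^{-1}\cdot \big(\partial^\alpha h^t_{\j\to\j'}\big)(E_{\omega(\j)}w)\cdot (DE_{\omega(\j)})^{\otimes|\alpha|},
\]
since the outer factors are affine. The central derivative is bounded by $C_\alpha\langle\omega(\j)\rangle^{\theta|\alpha|/2}$ by (\ref{eq:dalphah}); the $y$-scaling on the input side contributes at most $\langle\omega(\j)\rangle^{|\alpha|((1-\beta)(1/2-\theta)+4\theta)}$, and on the output side $\langle\omega(\j')\rangle^{-(1-\beta)(1/2-\theta)-4\theta}$. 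Multiplying these factors produces the stated estimate. Throughout, the only genuinely nontrivial analytic input beyond chain-rule bookkeeping is the H\"older step in (1); the remaining work is careful tracking of the $E_\omega$ anisotropic scalings.
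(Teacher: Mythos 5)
Your proposal follows essentially the same route as the paper's own proof: Taylor-expand/estimate the unscaled map $h^t_{\j\to \j'}$ using (\ref{eq:dalphah}), obtain the fiber offset from the $\beta$-H\"older continuity of $e_u$ and the contraction bound on the $y$-block from hyperbolicity on $\ker D\pi_G\subset \widetilde{E}_s$, and then conjugate by $E_{\omega(\j)}$, $E_{\omega(\j')}^{-1}$ and $a^t_{\j\to \j'}\in\cA_2$ to track the anisotropic scalings, exactly as in the paper. Your extra remark that Lemma \ref{lm:tau} does not literally apply (since $y_*\neq 0$) together with the direct contact-form argument for $D\tau(0)=0$ is a small refinement of a point the paper leaves implicit, but it does not change the approach.
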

\begin{Remark}\label{Rem:derivz}
For the derivatives of $f^t_{\j\to \j'}$ and $\tilde{f}^t_{\j\to \j'}$ with respect to $z$, we always have 
$
\partial_z f^t_{\j\to \j'}(x,y,z)=\partial_z \tilde{f}^t_{\j\to \j'}(x,y,z)\equiv (0,0,1)$ 
because $f^t_{\j\to \j'}$ and $\tilde{f}^t_{\j\to \j'}$ are fibered contact diffeomorphisms. 
\end{Remark}
\begin{proof}
Since $h^t_{\j\to \j'}$ defined in (\ref{eq:htj}) is a fibered contact diffeomorphism, its  Taylor expansion up to the first order at the origin is of the form
\[
h^t_{\j\to \j'}\begin{pmatrix}x\\y\\z\end{pmatrix}=\begin{pmatrix}x_0\\y_0\\z_0\end{pmatrix}+
\begin{pmatrix}A_{1,1}&0&0\\
A_{2,1}&A_{2,2}&0\\
A_{3,1}&0&1
\end{pmatrix}
\begin{pmatrix}x\\y\\z\end{pmatrix}
+
\begin{pmatrix}h_1(x)\\h_2(x,y)\\h_3(x)\end{pmatrix}.
\]
We have the following estimates when $0\le t\le 2t(\omega(\j))$:
\begin{enumerate}
\setlength{\leftskip}{-6mm}
\setlength{\itemsep}{1mm}
\item $|x_0|\le C\langle \omega(\j')\rangle^{-(1/2-\theta)}$, 
$ |y_0|\le C\langle \omega(\j')\rangle^{-\beta(1/2-\theta)}$ and $|z_0|\le C$.
\item $\|A_{1,1}\|\le \langle \omega(\j)\rangle^{\theta/4}$, $\|A_{2,1}\|\le \langle \omega(\j)\rangle^{\theta/4}$, 
 $\|A_{2,2}\|\le e^{-\chi_0 t}$, and\footnote{We do not need the estimate on $\|A_{3,1}\|$ as it is determined by $x_0$ and $A_{1,1}$. } 
\item $\|\partial_w^{\alpha} h^t_{\j\to \j'}\|_\infty\le C_{\alpha}\langle \omega(\j)\rangle^{\theta |\alpha|/4} $ for $\alpha$ with $|\alpha|\ge 2$, from (\ref{eq:dalphah}), 
\end{enumerate}
provided that we take sufficiently small constant $\epsilon_0>0$ in the definition of $t(\omega)$. 
\begin{Remark} For the second estimate on $|y_0|$ in (1) above, recall that the origin of the local chart $\kappa_{\j}$ corresponds to a point on the section $e^u$ which is $\beta$-H\"older continuous. 
\end{Remark}
From the relation (\ref{eq:EhE}) and the choice of $a^t_{\j\to \j'}$, we see that 
the diffeomorphism $\tilde{f}^t_{\j\to \j'}$ is expressed as 
\begin{align*}
\tilde{f}^t_{\j\to \j'}\begin{pmatrix}x\\y\\z\end{pmatrix}=&\begin{pmatrix}0\\ 
\langle \omega(\j')\rangle^{-\Theta} \cdot  y_0
\\0\end{pmatrix}\\
&\quad +
\begin{pmatrix}A_{1,1}&0&0\\
\langle \omega(\j')\rangle^{-\Theta}  A_{2,1}&\langle \omega(\j)\rangle^{\Theta}\cdot \langle \omega(\j')\rangle^{-\Theta} A_{2,2}&0\\
0&0&1
\end{pmatrix}
\begin{pmatrix}x\\y\\z\end{pmatrix}\\
&\quad 
+
\begin{pmatrix}g_1(x)\\ \langle \omega(\j')\rangle^{-\Theta} \cdot  g_2(x, \langle \omega(\j)\rangle^{\Theta}\cdot y)\\g_3(x)\end{pmatrix}
\end{align*}
where we put $\Theta=(1-\beta)(1/2-\theta)+4\theta$ for brevity  and  the functions $g_i(\cdot)$ for $i=1,2,3$ are those obtained from $h_i(\cdot)$ by changing  the variable $x$ by a translation. 
The required estimates follow immediately from this expression.
\end{proof}

In the case where  $\omega(\j)$ and $\omega(\j')$ are relatively close to each other, we have
\begin{Corollary}\label{lm:distortion}
Suppose that $\j, \j'\in \cJ$ satisfies the condition 
\begin{equation}
\label{eq:close}
|\langle \omega\rangle-\langle\omega'\rangle |\le \langle \omega\rangle^{1/2}
\end{equation} with $\omega=\omega(\j)$ and $\omega'=\omega(\j')$. Then the diffeomorphism $f^t_{\j\to \j'}$ for $0\le t\le 2t(\omega(\j))$ is expressed as the composition 
\begin{equation}\label{eq:agb}
f^t_{\j\to \j'}=a^t_{\j\to \j'}\circ g^t_{\j\to \j'}\circ B^t_{\j\to \j'}
\end{equation}
where 
\begin{enumerate}
\setlength{\leftskip}{-6mm}
\setlength{\itemsep}{1mm}
\item $a^t_{\j\to \j'}\in\cA_2$ is the affine transform that we chose just before Lemma \ref{lm:crude_estimates2}, 
\item $B^t_{\j\to \j'}$ is a linear map of the form (\ref{eq:B}) (or (\ref{eq:Bt}) with $t=0$) with the linear maps $A:\real^{d}\to \real^d$ and $\widehat{A}:\real^{d'}\to \real^{d'}$ satisfying (\ref{eq:normA}),
\item $g^t_{\j\to \j'}$ is a fibered contact diffeomorphism such that the family \[
\cG_\omega=\{g^t_{\j\to \j'}\mid\;\omega(\j)=  \omega, \mbox{ $\omega'=\omega(\j')$ satisfies (\ref{eq:close}), and  } 0\le  t\le 2t(\omega)\}
\] 
fulfills the conditions (G0), (G1) and (G2) in Setting II.
\end{enumerate}
\end{Corollary}
\begin{proof}
From the choice of the local coordinate charts $\kappa_{\j}=\kappa^{(\omega(\j))}_{a(\j), n(\j)}$, we see that the linear map $a_{1,1}:\real^{2d}\to \real^{2d}$ in Lemma \ref{lm:crude_estimates2}, is  written in the form
\[
a_{1,1}\begin{pmatrix}q\\p\end{pmatrix}=\left(\mathrm{Id}+\mathcal{O}
(\langle \omega(\j)\rangle^{-\beta(1/2-\theta)})\right)\begin{pmatrix}A&0\\0&A^{\dag}\end{pmatrix}\begin{pmatrix}q\\p\end{pmatrix}
\]
where $A:\real^d\to \real^d$ is a linear map satisfying the condition in  (\ref{eq:normA}) and the term $\mathcal{O}
(\langle \omega(\j)\rangle^{-\beta(1/2-\theta)})$ denotes a linear map whose operator norm is bounded by 
$C\langle \omega(\j)\rangle^{-\beta(1/2-\theta)}$ with $C$ independent of $\j,\j'$ and $t\in [0,2t(\omega)]$. 

Let  ${B}^t_{\j\to \j'}:\real^{2d+d'+1}\to \real^{2d+d'+1}$ be the linear map  
\[
{B}^t_{\j\to \j'}\begin{pmatrix}q\\p\\y\\z\end{pmatrix}=\begin{pmatrix}A&0&0&0\\
0&A^{\dag}&0&0\\
0&0&a_{2,2}&0\\
0&0&0&1
\end{pmatrix}
\begin{pmatrix}q\\p\\y\\z\end{pmatrix}
\]
where $a_{2,2}$ is that in the claim of Lemma \ref{lm:crude_estimates2}. 
This corresponds to the differential $D\tilde{f}^t_{\j\to \j'}$ in Lemma \ref{lm:crude_estimates2} (2). 
But notice that we omitted the term $a_{2,1}$, which enjoys the estimate
\[
\|a_{2,1}\|\le C\langle \omega(\j)\rangle^{-(1-\beta)(1/2-3\theta)-\theta}
\le C\langle \omega(\j)\rangle^{-3\theta}
\]
and is incorporated in the nonlinear term
\[
g^t_{\j\to \j'}=(a^t_{\j\to \j'})^{-1}\circ f^t_{\j\to \j'}\circ ({B}^t_{\j\to \j'})^{-1}.
\]
The claims other than (3) are obvious. 
We can check that Claim (3) follows from  Lemma \ref{lm:crude_estimates2} by elementary estimates using the fact that the expansion rate of $f^t_{\j\to \j'}$ and $B^t_{\j,\j'}$ (and their inverses) for $0\le t\le 2t(\omega(\j))$ are bounded by $Ce^{2\chi_{\max} t(\omega)}\le C\langle \omega(\j)\rangle^{\theta/4}$, provided that we choose sufficiently small $\epsilon_0$ in the definition of $t(\omega)$. \end{proof}

We next consider the functions  $\tilde{b}^t_{\j'}\cdot \rho^t_{\j\to \j'}$ that appears as the coefficient in the definition (\ref{eq:ljj}) of $\bL^{t,\sigma\to \sigma'}_{\j\to \j'}$. 
As a bound for this function, we define 
\begin{equation}\label{eq:barbt}
\bar{b}^t_{\j'}:=\max \{|\tilde{b}^t_{\j'}(x,y,z)|\mid (x,y,z)\in \supp \rho_{\j'}  \}.
\end{equation}
Letting $\epsilon_0$ in the definition of $t(\omega)$ be small if necessary, we may and do assume 
\begin{align*}
C_0^{-1}\bar{b}^t_{\j'}\le \tilde{b}^t_{\j'}(x,y,z) \le C_0 \bar{b}^t_{\j'}\quad\text{for $(x,y,z)\in \supp \rho_{\j'}$ and $0\le t\le t(\omega(\j'))$.}
\end{align*}
\begin{Corollary}\label{cor:rho_distortion}
There exists a constant $C_{\alpha,k}>0$ for each multi-indices $\alpha$ and integer $k\ge 0$ such that 
\begin{equation}\label{eq:rhoz}
\|\partial_w^{\alpha}\partial_z^k (\tilde{b}^t_{\j'}\cdot \rho^t_{\j\to \j'})\|_\infty <C_{\alpha, k}  \bar{b}^t_{\j'}\cdot 
\max\{\langle \omega(\j)\rangle, \langle \omega(\j')\rangle\}^{(1-\theta)|\alpha|/2}
\end{equation}
for $0\le t\le 2t(\omega(\j))$. In particular, the family  
\[
\mathcal{X}_{\omega}=\{ (\bar{b}^t_{\j'})^{-1}\cdot \tilde{b}^t_{\j'}\cdot \rho^t_{\j\to \j'}\mid \mbox{$\omega(\j)=\omega$, $\omega'=\omega(\j')$ satisfies  (\ref{eq:close}),  $0\le t\le 2t(\omega)$} \}
\]
for $\omega\in \integer$ satisfies the conditions (C1) and (C2) in Setting I. 
\end{Corollary}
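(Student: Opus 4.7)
The plan is to estimate each factor of $\tilde{b}^t_{\j'}\cdot\rho^t_{\j\to \j'}=\tilde{b}^t_{\j'}\cdot\rho_{\j'}\cdot(\tilde{\rho}_{\j}\circ (f^t_{\j\to\j'})^{-1})$ separately and then combine them through Leibniz's rule. Throughout, the central arithmetic fact to keep in mind is that the choice of $\theta$ in (\ref{eq:choice_of_theta}) forces
\[
(1-\beta)(1/2-\theta)+4\theta \;<\; \tfrac{1-\theta}{2},
\]
so the non-isotropic scaling factor $e_{\omega}=\langle\omega\rangle^{(1-\beta)(1/2-\theta)+4\theta}$ in $E_\omega$ is strictly dominated by $\langle\omega\rangle^{(1-\theta)/2}$; this is what allows the claimed $(1-\theta)|\alpha|/2$ exponent to absorb all errors coming from $E_{\omega(\j)}$ and $E_{\omega(\j')}$.

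First, I would handle $\rho_{\j'}$. By definition (\ref{eq:rhoan}), it is the composition of a $C^\infty$ function with uniformly bounded derivatives (namely $\rho_a(x',y',z')\cdot \rho_0(\langle\omega(\j')\rangle^{1/2-\theta}(x'-n))$) with the affine map $A_{a(\j'),n(\j')}\circ E_{\omega(\j')}$. The latter stretches the $x$-coordinate by $\langle\omega(\j')\rangle^{1/2-\theta}$ (through the bump $\rho_0$) and the $y$-coordinate by $e_{\omega(\j')}=\langle\omega(\j')\rangle^{(1-\beta)(1/2-\theta)+4\theta}$, both of which are $\le \langle\omega(\j')\rangle^{(1-\theta)/2}$. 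So by the chain rule $|\partial_w^\alpha\partial_z^k \rho_{\j'}|\le C_{\alpha,k}\langle\omega(\j')\rangle^{(1-\theta)|\alpha|/2}$, giving a bound of the required form with $\omega'=\omega(\j')$.

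Next, I would treat $\tilde{\rho}_{\j}\circ (f^t_{\j\to\j'})^{-1}$. The function $\tilde{\rho}_{\j}$ satisfies the same kind of bound as above with $\omega(\j)$ in place of $\omega(\j')$, by (\ref{eq:trhoan}). I would then apply Fa\`a~di~Bruno to the composition with $(f^t_{\j\to\j'})^{-1}$, writing the derivatives of $(f^t_{\j\to\j'})^{-1}$ in terms of those of $\tilde{f}^t_{\j\to\j'}$ and the affine map $a^t_{\j\to\j'}\in \mathcal{A}_2$ (whose derivative is bounded by a harmless constant by Lemma \ref{lm:tau}). Lemma \ref{lm:crude_estimates2}(2) provides a first-derivative bound of order $\langle\omega(\j)\rangle^{\theta/2}$ for the expanding/neutral components and of order $e^{-\chi_0 t}(\langle\omega(\j)\rangle/\langle\omega(\j')\rangle)^{(1-\beta)(1/2-\theta)+4\theta}\le 1$ for the $y$-component, while (3) controls higher derivatives. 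A direct power count (using the inequality displayed above and that the sum of $|\beta_i|$ equals $|\alpha|$ in the Fa\`a~di~Bruno expansion) shows each term is bounded by $C_\alpha\max\{\langle\omega(\j)\rangle,\langle\omega(\j')\rangle\}^{(1-\theta)|\alpha|/2}$. Derivatives in $z$ contribute nothing extra because $\tilde{f}^t_{\j\to\j'}$ is a fibered contact diffeomorphism (Remark \ref{Rem:derivz}).

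Third, for $\tilde{b}^t_{\j'}$, note that $\tilde{b}^t$ in (\ref{eq:tbt2}) is a smooth function on $G$ with derivatives bounded uniformly in $0\le t\le 2t(\omega(\j'))$ by a factor which is polynomial in $e^{\chi_{\max} t}\le \langle\omega(\j')\rangle^{\epsilon_0\chi_{\max}}$ and hence, after shrinking $\epsilon_0$, negligible compared to $\langle\omega(\j')\rangle^{(1-\theta)|\alpha|/2}$. Composing with $\kappa_{\j'}$ reintroduces the factor $E_{\omega(\j')}$ and the scaling argument used for $\rho_{\j'}$ gives a bound on $\partial_w^\alpha\partial_z^k\tilde{b}^t_{\j'}$ of the form $\bar{b}^t_{\j'}\cdot C_{\alpha,k}\langle\omega(\j')\rangle^{(1-\theta)|\alpha|/2}$, the $\bar{b}^t_{\j'}$ factor emerging because $\tilde{b}^t$ has no zeros and so the logarithmic derivatives control the bound. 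Combining the three estimates by Leibniz yields (\ref{eq:rhoz}). For the final assertion, (C1) is immediate from the support description of $\rho_{\j'}$ and the preservation of the support of $\tilde\rho_{\j}$ under $(f^t_{\j\to\j'})^{-1}$ (controlled by Lemma \ref{lm:crude_estimates2}(1) and (2)); (C2) follows from (\ref{eq:rhoz}) together with the closeness condition (\ref{eq:close}), which forces $\langle\omega(\j)\rangle\asymp\langle\omega(\j')\rangle$ and collapses the maximum to $\langle\omega\rangle^{(1-\theta)|\alpha|/2}$.

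I expect the main obstacle to be bookkeeping in step two: ensuring the Fa\`a~di~Bruno expansion of $\partial^\alpha(\tilde\rho_\j\circ (f^t_{\j\to\j'})^{-1})$ does not exceed the exponent $(1-\theta)|\alpha|/2$. The delicate case is when a first-order $y$-derivative of $\tilde\rho_\j$ (which is as large as $e_{\omega(\j)}$) hits the $y$-column of $D(f^t_{\j\to\j'})^{-1}$, but the corresponding entry is bounded by $e_{\omega(\j')}/e_{\omega(\j)}\cdot e^{\chi_0 t}$ from Lemma \ref{lm:crude_estimates2}(2), so the two factors combine to give at most $e_{\omega(\j')}\le \langle\omega(\j')\rangle^{(1-\theta)/2}$. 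Analogous cancellations handle the higher-order terms, with the margin between $(1-\beta)(1/2-\theta)+4\theta$ and $(1-\theta)/2$ providing the necessary slack.
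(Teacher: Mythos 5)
Your argument is correct and follows essentially the same route as the paper, which likewise reduces the claim to the distortion estimates of Lemma \ref{lm:crude_estimates2}, the observation of Example \ref{ex:setting1} about the functions $\rho_{\j}$, and the smallness of $\epsilon_0$ to control $\tilde{b}^t$; you have simply written out the Leibniz/Fa\`a di Bruno bookkeeping that the paper leaves implicit. The only slip is cosmetic: the inverse of the $y$-block of the differential is controlled by $e^{\chi_{\max}t}$ rather than $e^{\chi_0 t}$, which is harmless because $e^{\chi_{\max}\cdot 2t(\omega)}\le \langle\omega\rangle^{\theta/2}$ after shrinking $\epsilon_0$.
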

\begin{proof}
From Remark \ref{ex:setting1}, the functions $\rho_{\j'}$ and $\tilde{\rho}_{\j}$ satisfy the conditions on the derivatives stronger than that given in the condition (C2). This is also true for the function $\tilde{b}^t_{\j'}\circ f^t_{\j\to \j'}=b^t\circ \kappa_{\j}$. Hence we can get the corollary by estimating the distortion of $f^t_{\j\to \j'}$ using Lemma \ref{lm:crude_estimates2}, recalling Remark \ref{Rem:derivz} for the derivatives with respect to $z$ and letting  $\epsilon_0$ in the definition of $t(\omega)$ smaller if necessary. 
\end{proof}

\subsection{Supplementary estimates on the operator $\bL^{t,\sigma\to \sigma'}_{\j\to \j'}$.}
We finish this section by providing two supplementary lemmas on the components $\bL^{t,\sigma\to \sigma'}_{\j\to \j'}$. The first one below gives a solid bound on their $L^2$-operator norm. 
\begin{Lemma} \label{cor:multi2}There exists a constant $C_\nu>0$ for any $\nu>0$  such that
\[
\|\bL^{t,\sigma\to \sigma'}_{\j\to \j'}:L^2(\supp \Psi^{\sigma}_{\j})\to L^2(\supp \Psi^{\sigma'}_{\j'})\|\le C_\nu \bar{b}^t_{\j'}\cdot \langle \omega(\j')-\omega(\j)\rangle^{-\nu}
\]
for any $\j, \j'\in \cJ$  and $t\ge 0$. 
(See (\ref{eq:barbt}) for the definition of\/ $\bar{b}^t_{\j'}$.)
\end{Lemma}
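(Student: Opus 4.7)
The plan is to extract the rapid decay in $|\omega(\j')-\omega(\j)|$ from the $z''$-oscillation in the kernel representation of $\bL^{t,\sigma\to\sigma'}_{\j\to\j'}$, while using the standard transfer-operator bound to get the $\bar b^t_{\j'}$ factor. First I would dispose of the case $|\omega(\j')-\omega(\j)| \le C_0$ by the crude estimate
\[
\|\bL^{t,\sigma\to\sigma'}_{\j\to\j'}\|_{L^2\to L^2} \le \|\mathrm{mult}(\Psi^{\sigma'}_{\j'})\|\,\|\pBargmann\|\,\|L(f^t_{\j\to\j'},\tilde b^t_{\j'}\rho^t_{\j\to\j'})\|\,\|\pBargmann^*\|\,\|\mathrm{mult}(\Psi^{\sigma}_{\j})\|,
\]
using the $L^2$-isometry/boundedness of $\pBargmann,\pBargmann^*$ and the change-of-variables bound $\|L(f,\rho)\|_{L^2\to L^2}\le \|\rho\|_\infty\,|\det Df|_\infty^{1/2}$. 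Because $f^t_{\j\to\j'}$ is a fibered contact diffeomorphism, its Jacobian factors into a contact-preserving base part (determinant $1$) and a fiber part that cancels against the denominator of the definition (\ref{eq:tbt2}) of $\tilde b^t$, so $\|\tilde b^t_{\j'}\rho^t_{\j\to\j'}\|_\infty\cdot|\det Df^t_{\j\to\j'}|_\infty^{1/2}\le C\bar b^t_{\j'}$.

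For the main case $|\omega(\j')-\omega(\j)|>C_0$, I would work with the oscillatory-integral kernel $K^{t,\sigma\to\sigma'}_{\j\to\j'}$ from (\ref{eq:bigK})--(\ref{eq:Ksp}). The support cutoffs $\Psi^{\sigma}_{\j}, \Psi^{\sigma'}_{\j'}$ force $\xi_z\in\supp q_{\omega(\j)}$ and $\xi'_z\in\supp q_{\omega(\j')}$, hence $|\xi'_z-\xi_z|\gtrsim\langle\omega(\j')-\omega(\j)\rangle$. The phase $\varphi$ contains the term $-(\xi'_z-\xi_z)z''$, while the amplitude $\Phi$ depends on $z''$ only through the factor $\tilde b^t_{\j'}\cdot\rho^t_{\j\to\j'}$, whose $z''$-derivatives are bounded by $C_k\bar b^t_{\j'}$ thanks to Corollary \ref{cor:rho_distortion} (this is where $\bar b^t_{\j'}$ enters cleanly). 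Applying the differential operator $\cD_0$ of (\ref{eq:D0}) $N$ times inside the $z''$-integral yields a factor $\langle\xi'_z-\xi_z\rangle^{-N}\cdot \bar b^t_{\j'}$ multiplying an amplitude that still enjoys the Gaussian localization in $w''$ around $w'$ and $(\breve f^t_{\j\to\j'})^{-1}(w'')$ around $w$.

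To conclude via the Schur test I would then estimate the remaining $(w'',w',w,\xi_w,\xi'_w)$-integrals exactly as in the derivation of (\ref{eq:KernelEstimate1}) and (\ref{eq:kernel_Lg}): an additional integration by parts in $w''$ using $\cD_1$ gives polynomial decay in $\langle\xi_z\rangle^{-1/2}|\langle\xi'_z\rangle\xi'_w-\langle\xi_z\rangle((D\breve f^t_{\j\to\j'})^*_{w''})^{-1}\xi_w|$, the Gaussians give width $\langle\xi_z\rangle^{-1/2}$ decay in $w'-\breve f^t_{\j\to\j'}(w)$, and the volume factors $\langle\xi_z\rangle^{-(2d+d')}$ in $d\vol$ cancel against the Gaussian widths. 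The $\xi_z,\xi'_z$ integrations are harmless because $q_\omega$ has compact support. Both Schur integrals $\sup_{(w,\xi_w,\xi_z)}\int |K|\,d\vol(w',\xi'_w,\xi'_z)$ and $\sup_{(w',\xi'_w,\xi'_z)}\int |K|\,d\vol(w,\xi_w,\xi_z)$ are then bounded by $C_\nu\bar b^t_{\j'}\langle\omega(\j')-\omega(\j)\rangle^{-\nu}$ after choosing $N$ with $N-1\ge\nu$.

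The main obstacle will be controlling the amplitude when $t$ is large and $\omega(\j), \omega(\j')$ are wildly different, since Corollary \ref{lm:distortion} no longer factorizes $f^t_{\j\to\j'}$ nicely. However, the amplitude's derivative bounds from Lemma \ref{lm:crude_estimates2} and Corollary \ref{cor:rho_distortion} grow only polynomially in $\langle\omega(\j)\rangle,\langle\omega(\j')\rangle$, and these polynomial losses are swallowed by taking $N$ sufficiently large in the $\cD_0$-integration-by-parts step; the dependence on $t$ is absorbed entirely into the factor $\bar b^t_{\j'}$, consistently with the statement of the lemma.
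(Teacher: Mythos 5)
Your plan breaks down precisely at the point the lemma is designed to handle: uniformity in $t$ for \emph{all} $t\ge 0$ (the remark following the statement stresses that the constant $C_\nu$ is uniform for arbitrarily large $t$). The only input of yours that is genuinely $t$-uniform is the $z''$-integration by parts with $\cD_0$, since the amplitude depends on $z''$ only through $\tilde b^t_{\j'}\rho^t_{\j\to\j'}$ and $\partial_{z''}^k(\tilde b^t_{\j'}\rho^t_{\j\to\j'})\le C_k\bar b^t_{\j'}$ uniformly in $t$ (the flow is a translation in $z$). But that alone gives a kernel with no decay whatsoever in $\xi_w,\xi'_w$, so your Schur integrals $\int|K|\,d\vol(w',\xi'_w,\xi'_z)$ diverge in the $\xi'_w$-variable; you are forced to also integrate by parts in $w''$ with $\cD_1$. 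There the phase contains $\langle\xi_z\rangle\,\xi_w\cdot(\breve f^t_{\j\to\j'})^{-1}(w'')$ and the amplitude contains the Gaussian centered at $(\breve f^t_{\j\to\j'})^{-1}(w'')$ and the cutoff $\rho^t_{\j\to\j'}=\rho_{\j'}\cdot(\tilde\rho_{\j}\circ(f^t_{\j\to\j'})^{-1})$, so every $w''$-derivative produced by $\cD_1$ brings factors of $D(\breve f^t_{\j\to\j'})^{-1}$ and higher derivatives, which grow like $e^{\chi_{\max}t}$ (and worse) with no bound in terms of $\omega$. The distortion bounds you invoke (Lemma 8.3 / Corollary 8.5) are only proved for $0\le t\le 2t(\omega(\j))$, and this exponential-in-$t$ loss is neither polynomial in $\langle\omega\rangle$ nor absorbable into $\bar b^t_{\j'}$ (which is just a sup of the coefficient, unrelated to derivative growth) nor killable by taking $N$ large. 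So the argument as written proves the estimate only for the bounded time range, not for all $t\ge 0$.

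The paper sidesteps this by never touching $w$-derivatives of $f^t_{\j\to\j'}$: it factors $\bL^{t,\sigma\to\sigma'}_{\j\to\j'}=\mult(\Psi^{\sigma'}_{\j'})\circ\mult(\tilde b^t_{\j'}\rho^t_{\j\to\j'})^{\lift}\circ L(f^t_{\j\to\j'},\hat\rho_{\j'})^{\lift}$, where $\hat\rho_{\j'}$ is a $z$-independent cutoff. The pure transfer factor is $L^2$-bounded by a uniform constant via $\pBargmann^*\pBargmann=\mathrm{Id}$ and, crucially, does not move the $\xi_z$-support (fibered structure plus $z$-independence of $\hat\rho_{\j'}$); all of the $\langle\omega'-\omega\rangle^{-\nu}$ decay and the factor $\bar b^t_{\j'}$ are then extracted from the lifted multiplication operator alone, using only the $t$-uniform bounds on $\partial_z^k(\tilde b^t_{\j'}\rho^t_{\j\to\j'})$ and the rapid decay of the $z$-Fourier transform of the coefficient. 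If you want to salvage your kernel approach, you would have to reorganize it so that the frequency-transfer estimate uses only $z$-regularity of the coefficient, which is essentially the paper's factorization in disguise.
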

\begin{Remark}Notice that this estimate holds for arbitrarily large $t\ge 0$ with uniform constant $C_\nu$ and also that we consider  the $L^2$ norm {\em without weight}. 
\end{Remark}
\begin{proof} Below we suppose that $f^t_{\j\to \j'}$ is extended naturally in the variable $z$ as we noted in Remark \ref{rem:ext_fibered_contact}. 
Take a $C^\infty$ function 
$\hat{\rho}_{\j'}:\real^{2d+d'+1}_{(x,y,z)}\to [0,1]$ which does not depend on the variable $z$,  $\hat{\rho}_{\j'}(x,y,z)=1$ if $(x,y,z')\in \supp \rho_{\j'}$ for some $z'\in \real$ and $\hat{\rho}_{\j'}(x,y,z)= 0$ if $\tilde{\rho}_{\j'}(x,y,z')=0$ for all $z'\in \real$. Then we may write the operator $\bL^{t,\sigma\to \sigma'}_{\j\to \j'}$ as the composition 
\[
\bL^{t,\sigma\to \sigma'}_{\j\to \j'}=\mult( \Psi_{\j'}^{\sigma'})\circ\mult(\tilde{b}^t_{\j'}\cdot \rho^t_{\j\to \j'})^{\lift} \circ L(f^t_{\j\to \j'}, \hat{\rho}_{\j'})^{\lift}.
\]
From Lemma \ref{lm:pBargmannL2}, the operator norm of  $L(f^t_{\j\to \j'}, \hat{\rho}_{\j'})^{\lift}$ with respect to the $L^2$ norm is bounded by that of $L(f^t_{\j\to \j'}, \hat{\rho}_{\j'})$ and hence by some uniform constant  $C$. Note that, since the function $\hat{\rho}_{\j'}(x,y,z)$ does not depend on $z$ and since $f^t_{\j\to \j'}$ is a fibered contact diffeomorphism,  this operator will not enlarge the support of the function in the $\xi_z$ direction. 
Hence, for the proof of the lemma, it is enough to show  
\[
\|\mult(\tilde{b}^t_{\j'}\cdot \rho^t_{\j\to \j'})^{\lift}
:L^2(\supp q_{\omega(\j)})\to L^2(\supp q_{\omega(\j')}) \|\le C_\nu \bar{b}^t_{\j'}\cdot\langle \omega(\j')-\omega(\j)\rangle^{-\nu}
\]
for any $t\ge 0$ with a uniform constant $C_\nu>0$. 
Since $f^t_{\j\to \j'}$ is a fibered contact diffeomorphism and is just a translation in the lines parallel to the $z$-axis, there exists a constant $C_k>0$ for each $k>0$ such that 
\[
\|\partial_z^k (\tilde{b}^t_{\j'}\cdot \rho^t_{\j\to \j'})\|_{\infty}\le C_k \bar{b}^t_{\j'}\quad \mbox{for any $t\ge 0$.}
\]
If we set $\psi=\tilde{b}^t_{\j'}\cdot \rho^t_{\j\to \j'}$ and let $\widehat{\psi}(w,\xi_z)$ be that defined in (\ref{eq:hatpsi}), we have
\[
\sup_{w} |\widehat{\psi}(w,\xi_z)|\le C_k \bar{b}^t_{\j'}\langle \xi_z\rangle^{-k}
\]
with possibly different constant $C_k>0$.
Since the partial Bargmann transform $\pBargmann$ is a combination of the Fourier transform in $z$ and the (scaled) Bargmann transform in $w$, we see that the $L^2$-operator norm of $\mult(\tilde{b}^t_{\j'}\cdot \rho^t_{\j\to \j'})^{\lift}$ is bounded by
\[
C\int_{|\omega(\j')-\omega(\j)|-2<|\xi_z|<|\omega(\j')-\omega(\j)|+2} \left(\sup_{w} |\widehat{\psi}(w,\xi_z)|\right) d\xi_z\le C_k \bar{b}^t_{\j'}\langle \omega(\j')-\omega(\j)\rangle^{-k}.
\]
This is the required estimate. 
\end{proof}

In the next lemma, we give a coarse estimate on the component $\bL^{t,\sigma\to \sigma'}_{\j\to \j'}$. This is a crude estimate and will be used to get  rough bounds for components that are far from the trapped set $X_0$.
(So we do not pursue optimality in any sense for this lemma and its consequences.) 
We set
\[
\delta(\omega,\xi_w)=\langle \omega\rangle^{(1+\theta)/2}\cdot  \left\langle \langle \omega\rangle^{1/2-4\theta}|\xi_w|\right\rangle^{1/2}\qquad\text{for $\omega\in \integer$ and $\xi_w\in \real^{2d+d'}$}
\] 
so that 
\[
\delta(\omega,\xi_w) \sim \begin{cases}
\langle \omega \rangle^{(1+\theta)/2},&\quad\text{if $|\xi_w|\le \langle \omega\rangle^{-1/2+4\theta}$ };\\
\langle\omega \rangle^{1/4-(3/2)\theta}  \cdot (\langle \omega\rangle |\xi_w|)^{1/2},&\quad\text{if $|\xi_w|> \langle \omega\rangle^{-1/2+4\theta}$. }
\end{cases}
\]
We actually state the lemma for the operator $\pBargmann \circ L(\, f^t_{\j\to \j'},  \,\tilde{b}^t_{\j'}\cdot \rho^t_{\j\to \j'}\,)\circ \pBargmann^*$.
Recall from (\ref{eq:ljj}) that $\bL^{t,\sigma\to \sigma'}_{\j\to \j'}$ is just this operator followed by the multiplication by $\Psi_{\j'}^{\sigma'}$.
\begin{Lemma}\label{lm:coarse}  
The operator $\pBargmann \circ L(\, f^t_{\j\to \j'},  \,\tilde{b}^t_{\j'}\cdot \rho^t_{\j\to \j'}\,)\circ \pBargmann^*$ for any $\j,\j'\in \cJ$ and $0\le t\le 2t(\omega(\j))$ is written as an integral operator of the form
\begin{align*}
&(\pBargmann \circ L(\, f^t_{\j\to \j'},  \,\tilde{b}^t_{\j'}\cdot \rho^t_{\j\to \j'}\,)\circ \pBargmann^*) u(w',\xi'_w,\xi'_z)\\
& =\int \left(\int_{\supp \rho^{t}_{\j\to \j'}} \!\!\!\!\!\!\!\!\!\!\!\!\!\!\!\! K(w'',z'';w',\xi'_w,\xi'_z;w,\xi_w,\xi_z) \frac{dw''dz''}{\langle \omega\rangle^{-(2d+d')/2}}\right) u(w,\xi_w,\xi_z) 
d\vol(w,\xi_w,\xi_z)
\end{align*}
and the function $K(\cdot)$ in the integrand satisfies the following estimate: There exists a constant $C_\nu>0$ for each $\nu>0$ (independent of $\j,\j'$ and $t$), such that 
\begin{align}\label{eq:Kof}
|K(w''&,z'';w',\xi'_w,\xi'_z;w,\xi_w,\xi_z)| \\
&  
 \le\frac{ C_\nu \langle \omega\rangle^{\theta}\cdot \left\langle \delta(\omega,\xi_w)^{-1}\cdot |\langle \xi_z\rangle((D\breve{f}^t_{\j\to \j'})^*_{w''})^{-1} \xi_w -\langle \xi'_z\rangle \xi'_w| \right\rangle^{-\nu}} 
{\langle \langle \omega \rangle^{1/2} |(\breve{f}^t_{\j\to \j'})^{-1}(w'')-w|\rangle^{\nu}\cdot 
\langle \langle \omega' \rangle^{1/2} |w''-w'|\rangle^{\nu}\cdot  \langle \omega'-\omega\rangle^{\nu}}\notag
\end{align}
with setting $\omega=\omega(\j)$ and $\omega'=\omega(\j')$, provided
\begin{equation}\label{ass:w}
w''\in \proj_{(x,y)}(\supp \rho^{t}_{\j\to \j'})\quad\text{and}\quad (w,\xi_w,\xi_z)\in \supp \Psi^{\sigma}_{\j}.
\end{equation}
\end{Lemma}
\begin{proof}
In  (\ref{eq:bigK}), 
we have a similar expression for the kernel of $\bL^{t,\sigma\to \sigma'}_{\j\to \j'}$ as in the statement above. 
But the function $k^t_{\j\to \j'}(\cdot)$ in (\ref{eq:bigK}) does not satisfy the required estimates. Below we apply integration by parts to the integral in (\ref{eq:bigK}) to realize the required estimate. 
For simplicity, we consider $\tilde{f}^t_{\j\to \j'}$ given in (\ref{eq:tildef}) in the place of $f^t_{\j\to \j'}$. This does not violate the validity of the proof because, from Lemma \ref{lm:expression_lift}, the action of the lift of the transfer operator for  $a_{\j\to \j'}^{t}$ is that for $D^\dag a_{\j\to \j'}^{t}$ composed with the partial Bargmann projection $\pBargmannP$.
We write $k^t_{\j\to \j'}(\cdot)$ as  (\ref{eq:Kt_expression}) and apply integration by parts to it several times, viewing $e^{i \varphi(\cdot) }$ as oscillatory part and using the differential operator $\cD_0$ in (\ref{eq:D0}) and 
\[
\cD_2=\frac{1-i\cdot \delta(\omega,\xi_w)^{-2}\cdot 
(\langle \xi_z\rangle ((D\breve{f}^t_{\j\to \j'})^*_{w''})^{-1}\xi_w -\langle \xi'_z\rangle \xi'_w)\cdot \partial_{w''}
}{1+ \delta(\omega,\xi_w)^{-2}\cdot |\langle \xi_z\rangle ((D\breve{f}^t_{\j\to \j'})^*_{w''})^{-1}\xi_w -\langle \xi'_z\rangle  \xi'_w|^2}.
\]
To get the required estimate, it is sufficient to prove
\begin{equation}\label{eq:req_est_1}
|\partial_{w''}^\alpha\partial_{z''}^k\Phi(w'',z'';w',\xi'_w,\xi'_z;w,\xi_w,\xi_z)|\le C_{\alpha,k} 
\langle \omega\rangle^{\theta/2+(2d+d')/2}\cdot \delta(\omega,\xi_w)^{|\alpha|}
\end{equation}
for the function $\Phi(\cdot)$ in (\ref{eq:Ksp}) and that
\begin{equation}\label{eq:req_est_2}
|\partial_{w''}^\alpha\partial_{z''}^k(\langle \xi_z\rangle ((D\breve{f}^t_{\j\to \j'})^*)_{w''}^{-1}\xi_w -\langle \xi'_z\rangle \xi'_w)|\le C_{\alpha,k} \cdot \delta(\omega,\xi_w)^{|\alpha|+1}\quad\text{if $\alpha\neq \emptyset$.}
\end{equation}
To proceed, we again consider the two cases in (\ref{cond:near-apart}) separately. 
In the case (ii), we may suppose that we have an extra factor $\langle \omega\rangle^m$ with arbitrary $m>0$ on the right-hand side of (\ref{eq:req_est_1}) and (\ref{eq:req_est_2}), as we noted in the proof of Lemma \ref{lm:IdMinusG}. 
Then such an estimate can be obtained by crude estimates using Lemma \ref{lm:crude_estimates2}. 

In the case (i), we need more precise estimates, but the estimates are still easy. The condition $w''=(x'',y'')\in \proj_{(x,y)}(\supp \rho^{t}_{\j\to \j'})$ implies that $|x''|\le C \langle \omega\rangle^{-1/2+\theta}$. 
Then, from Lemma \ref{lm:tau} and Lemma \ref{lm:crude_estimates2}(3), it is easy to see 
\[
|\xi_z \partial_{x''}^\alpha \tau_{\j\to \j'}(x'')|\le 
C_\alpha  \delta(\omega,\xi_w)^{|\alpha|}\quad\text{for any multi-index $\alpha\neq \emptyset$}.
\]
We may and do assume that $|w''-w|$ and 
$|(\breve{f}^t_{\j\to \j'})^{-1}(w'')-w|$ is bounded by $C\langle \omega\rangle^{1/2+\theta}$ for some constant $C>0$, because, otherwise, the last factor on the right-hand side of 
(\ref{eq:Ksp}) is bounded by $C_\nu\langle \omega\rangle^{-m}$ for arbitrarily large $m$ and, hence, we can prove the claims easily as in the case (ii). Then, under such an assumption, it is straightforward to check the claims (\ref{eq:req_est_1}) and (\ref{eq:req_est_2}) using  (\ref{eq:cos:near}), Lemma \ref{lm:crude_estimates2} and Corollary \ref{cor:rho_distortion}. 
\end{proof}

For ease of use in the next section, we derive a corollary from the last lemma.
We write  $\Delta_{\j\to \j'}^{t,\sigma\to \sigma'}$ for the supremum of 
 the quantity appearing in (\ref{eq:Kof}), 
\begin{equation}\label{eq:Deltajj}
\frac{\left\langle \delta(\omega(\j),\xi_w)^{-1} \cdot |\langle \xi_z\rangle((D\breve{f}^t_{\j\to \j'})^*_{w''})^{-1} \xi_w -\langle \xi'_z\rangle \xi'_w| \right\rangle^{-1} }
{\langle \langle \omega(\j) \rangle^{1/2} |(\breve{f}^t_{\j\to \j'})^{-1}(w'')-w|\rangle\cdot 
\langle \langle \omega(\j') \rangle^{1/2} |w''-w'|\rangle\cdot  \langle \omega(\j')-\omega(\j)\rangle}
\end{equation}
under the conditions 
\[
w''\in \proj_{(x,y)}(\supp \rho^{t}_{\j\to \j'})\quad\text{and}\quad (w,\xi_w,\xi_z)\in \supp \Psi^{\sigma}_{\j}, \quad (w',\xi'_w,\xi'_z)\in \supp \Psi^{\sigma'}_{\j'}.
\]
\begin{Corollary} \label{cor:coarse} Let $\sigma,\sigma'\in \Sigma$. 
There exist constants  $C_\nu>0$ for each $\nu>0$ and $m_0>0$ such that the trace norm of \/
$\bL^{t,\sigma\to \sigma'}_{\j\to \j'}:\bK^{r,\sigma}_{\j}\to \bK^{r,\sigma'}_{\j'}$ for $\j,\j'\in \cJ$ and $0\le t\le 2t(\omega)$ is bounded by
\begin{equation}\label{eq:CDelta}
C_\nu (\Delta_{\j\to \j'}^{t,\sigma\to \sigma'})^{\nu} \cdot 
\max\{ \langle \omega(\j)\rangle, \langle \omega(\j')\rangle,  e^{m(\j)}, e^{m(\j')}\}^{m_0}.
\end{equation}
\end{Corollary}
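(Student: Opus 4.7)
Set $M:=\max\{\langle\omega(\j)\rangle,\langle\omega(\j')\rangle,e^{m(\j)},e^{m(\j')}\}$ and $\Delta:=\Delta_{\j\to\j'}^{t,\sigma\to\sigma'}$. The plan is to extract from the pointwise kernel bound of Lemma~\ref{lm:coarse} a bound of the form $|\widetilde{K}|\le C_\nu M^{c}\Delta^{\nu}$ for the weight-adjusted kernel $\widetilde{K}$, and then invoke a standard trace-norm inequality for smoothing operators. First I will reduce $\bL_{\j\to\j'}^{t,\sigma\to\sigma'}\colon \bK^{r,\sigma}_{\j}\to \bK^{r,\sigma'}_{\j'}$ to an $L^2\to L^2$ operator. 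On the supports of $\Psi_\j^\sigma$ and $\Psi_{\j'}^{\sigma'}$, both the polynomial weight $\mathcal{W}^{r,\sigma}$ (when $m=0$) and the scalar factor $2^{-r m}$ (when $m\neq 0$) are bounded above and below by $M^{O(r)}$, so trace norms are comparable up to such a factor.

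Next I will perform the $(w'',z'')$-integration in the expression for the kernel given by Lemma~\ref{lm:coarse}. The support of $\rho_{\j\to\j'}^t$ has $(w'',z'')$-volume polynomial in $M$; since the pointwise quantity in \eqref{eq:Deltajj} is bounded by $\Delta\in [0,1]$, its $\nu$-th power is bounded by $\Delta^{\nu}$ for every $\nu>0$. Combined with weight absorption, this yields a pointwise bound of the form $|\widetilde{K}|\le C_\nu M^{c_1}\Delta^{\nu}$ on the support of the weighted kernel. The same integration-by-parts argument that underlies Lemma~\ref{lm:coarse} (using the differential operators $\mathcal{D}_0,\mathcal{D}_2$ together with differentiations of the Gaussian factor in~\eqref{eq:Ksp}) gives analogous bounds for all mixed derivatives $\partial^\alpha_x\partial^\beta_y \widetilde{K}$; each derivative contributes at most an extra polynomial factor in $M$, which is uniform in $\j,\j'$ and $t\in[0,2t(\omega)]$ thanks to the distortion estimates of Lemma~\ref{lm:crude_estimates2} and Corollary~\ref{cor:rho_distortion}.

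To pass from such pointwise bounds to the trace norm I will use the standard fact that for a smooth integral operator whose kernel is compactly supported in a Euclidean space of dimension $N=4d+2d'+1$, one has
\[
\|T\|_{\mathrm{tr}}\le C_{N_0}\cdot |\mathrm{supp}_x \widetilde{K}|\cdot |\mathrm{supp}_y \widetilde{K}|\cdot \max_{|\alpha|,|\beta|\le N_0}\|\partial_x^\alpha \partial_y^\beta \widetilde{K}\|_{L^\infty}
\]
for $N_0>N$ sufficiently large; this can be proved by factoring $T$ through a high-order Sobolev space and using that the inclusion into $L^2$ is trace class on bounded sets. The $(w,\xi_w,\xi_z)$-supports of $\Psi_\j^\sigma$ and $\Psi_{\j'}^{\sigma'}$ have measure polynomial in $M$; combining this with the pointwise estimates on $\widetilde{K}$ and its derivatives gives $\|\bL_{\j\to\j'}^{t,\sigma\to\sigma'}\|_{\mathrm{tr}}\le C_\nu \Delta^{\nu} M^{m_0}$, with $m_0$ a fixed combination of the exponents coming from the weights ($O(r)$), the supports, and the derivative losses ($O(N_0)$).

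The main obstacle will be the uniform tracking of the polynomial losses in $M$ from the three sources above. The restriction $0\le t\le 2t(\omega)$ is crucial here: without it one would pay a factor $e^{O(\chi_{\max}t)}$ per derivative rather than a polynomial factor in $M$, and the argument would fail. Once this bookkeeping is carried out, the strong $\Delta^\nu$-decay absorbs all $\nu$-dependent constants into $C_\nu$, so that the polynomial exponent $m_0$ can be chosen independently of $\nu$, as required by the statement.
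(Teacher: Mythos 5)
The step that breaks is your trace-class criterion. The inequality you invoke requires a kernel compactly supported in both variables, with the trace norm controlled by the measures of the two supports; but $\supp\Psi^{\sigma}_{\j}$ and $\supp\Psi^{\sigma'}_{\j'}$ have \emph{infinite} Lebesgue measure: the cut-offs $\Psi^{\sigma}_{\omega,m}$ constrain only $\xi_z$ and the transverse coordinates $(\zeta_p,\tilde{\xi}_y,\zeta_q,\tilde{y})$, while the coordinates $\nu=(\nu_q,\nu_p)$ along the trapped set are completely unrestricted. So the assertion that "the $(w,\xi_w,\xi_z)$-supports have measure polynomial in $M$" is false, and the factorization through a high-order Sobolev space on a bounded set does not apply. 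The finiteness of the trace norm really comes from the decay of the kernel in those $\nu$-directions, i.e.\ from the factors $\langle\langle\omega'\rangle^{1/2}|w''-w|\rangle^{-\nu}$ and $\langle\langle\omega\rangle^{1/2}|(\breve{f}^t_{\j\to\j'})^{-1}(w'')-w'|\rangle^{-\nu}$ with $w''$ confined to the compact set $\supp\rho^t_{\j\to\j'}$ (which, together with the transverse cut-offs, also bounds $\xi_w$ on the effective support); this localization is precisely the bookkeeping your argument skips. A second, smaller gap: you also need sup-norm bounds on all mixed derivatives of the weighted kernel, which Lemma \ref{lm:coarse} does not provide — it bounds only the integrand $K(w'',z'';\cdot)$ before the $(w'',z'')$-integration, and only at order zero — so "the same integration by parts gives the derivative bounds with polynomial loss" is an assertion that would itself require a nontrivial (if plausible) proof, uniformly in $0\le t\le 2t(\omega)$.

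The paper avoids both issues by a different mechanism: it writes $\bL^{t,\sigma\to\sigma'}_{\j\to\j'}$ as the composition of two operators, inserting $\mult(H^{-\mu/2})$ and $\mult(H^{\mu/2})$ where $H(w,\xi_w,\xi_z)$ measures the scaled distance to $\supp\Psi_{\j'}$. By Lemma \ref{lm:coarse}, for $\mu$ large each factor has a square-integrable kernel, hence is Hilbert--Schmidt, with Hilbert--Schmidt norms bounded by $C_\nu(\Delta^{t,\sigma\to\sigma'}_{\j\to\j'})^{\nu}M^{m_0/2}$ and $C\,M^{m_0/2}$ respectively, and the trace norm is then bounded by the product of the two Hilbert--Schmidt norms. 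This route needs only the zeroth-order pointwise estimate and handles the non-compact $\nu$-directions automatically through square integrability. If you wish to keep your route, you must replace "compact support" by weighted estimates (for instance factoring through weighted Sobolev spaces with weights that decay in all variables) and actually carry out the derivative estimates; as written, the proposal has a genuine gap at the trace-class step.
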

\begin{proof} 
Note first of all that the claim is a crude estimate as we admit the factor $\max\{ \langle \omega(\j)\rangle, \langle \omega(\j')\rangle,  e^{m(\j)}, e^{m(\j')}\}^{m_0}$. 
Consider $\j,\j'\in \cJ$ and $0\le t\le 2t(\omega)$ and let  $H:\real^{4d+2d'+1}_{(w,\xi_z,\xi_z)}\to \real$ be the function 
\begin{align*}
H(&w,\xi_w,\xi_z)\\
&=\inf\{\langle \xi_z-\xi''_z\rangle \cdot \langle \langle \xi_z\rangle^{1/2}|\xi_w-\xi''_w|\rangle\cdot \langle \langle \xi_z\rangle^{1/2}|w-w''|\rangle \mid (w'',\xi''_w,\xi''_z)\in \supp \Psi^\sigma_{\j'}\},
\end{align*}
which measures the distance of a point $(w,\xi_w,\xi_z)\in \real^{4d+2d'+1}_{(w,\xi_z,\xi_z)}$ from the support of the function $\Psi^\sigma_{\j'}$. 
We regard  the operator $\bL^{t,\sigma\to \sigma'}_{\j\to \j'}$ as the composition  of\begin{equation}\label{eq:op1}
\mult(H^{-\ell})\circ \pBargmann \circ L(f^t_{\j\to \j'}, \tilde{b}^t_{\j'}\cdot \rho^t_{\j\to \j'})\circ \pBargmann^*:
\bK^{r,\sigma}_{\j}\to L^2(\real^{4d+2d'+1})
\end{equation}
and
\begin{equation}\label{eq:op2}
\mult(\Psi^\sigma_{\j'})\circ \pBargmann\circ \mult(\tilde{\rho}_{\j'})\circ \pBargmann^*\circ \mult(H^{\ell}):
L^2(\real^{4d+2d'+1})\to \bK^{r,\sigma'}_{\j'}
\end{equation}
for some large $\ell>0$. We already have estimates on the kernels of 
\[
\pBargmann \circ L(f^t_{\j\to \j'}, \tilde{b}^t_{\j'}\cdot \rho^t_{\j\to \j'})\circ \pBargmann^*
\quad \text{and}\quad \pBargmann\circ \mult(\tilde{\rho}_{\j'})\circ \pBargmann^*
\]
in Lemma \ref{lm:coarse} and (in the proof of) Lemma \ref{lm:multiplication} respectively. 
With using  those estimates, we see that, 
if $\ell$ is sufficiently large, the kernels of these operators are square-integrable as functions on $(\real^{4d+2d'+1})^2$ and therefore the operators (\ref{eq:op1}) and (\ref{eq:op2}) are Hilbert-Schmidt operators\footnote{We refer \cite[Chapter IV, Section 7]{GohbergBook} for Hilbert-Schmidt operators.}.
Their Hilbert-Schmidt norms are just the $L^2$-norm of their kernels and easy to estimate. Indeed, for a very crude bound for the volume of the supports of $\Psi^{\sigma}_{\j}$ and $\Psi^{\sigma'}_{\j'}$, we may take  some power of $\max\{ \langle \omega(\j)\rangle, \langle \omega(\j')\rangle,  e^{m(\j)}, e^{m(\j')}\}$. Then, from the estimates on the kernels mentioned above and the definition of $\Delta_{\j\to \j'}^{t,\sigma\to \sigma'}$, the Hilbert-Schmidt norms of the operators above are bounded respectively by   
\begin{align*}
&C_\nu (\Delta_{\j\to \j'}^{t,\sigma\to \sigma'})^{\nu} \cdot \max\{ \langle \omega(\j)\rangle, \langle \omega(\j')\rangle,  e^{m(\j)}, e^{m(\j')}\}^{m_0/2}
\intertext{and} 
& C \max\{ \langle \omega(\j)\rangle, \langle \omega(\j')\rangle,  e^{m(\j)}, e^{m(\j')}\}^{m_0/2}
\end{align*}
for sufficiently large $m_0$. Since the trace norm of $\bL^{t,\sigma\to \sigma'}_{\j\to \j'}$ is bounded by the product of the Hilbert-Schmidt norms of  (\ref{eq:op1}) and (\ref{eq:op2}),
 we obtain the claim. 
\end{proof}

\section{Properties of the lifted operators}
\label{sec:proofs_of_props}
In this section, we prove several propositions for the lifted operators $\bL^{t, \sigma\to \sigma'}$ using the estimates on their components $\bL^{t,\sigma\to \sigma'}_{\j\to \j'}$ obtained in the previous sections and then deduce the propositions in  Subsection \ref{ss:lt} from them. This finishes the proof of Theorem \ref{th:spectrum}. 
 
\subsection{Decomposition of the lifted transfer operator}\label{sec:lifted_operator2}
We classify the components $\bL^{t,\sigma\to \sigma'}_{\j\to \j'}$ of the lifted operator $\bL^{t,\sigma\to \sigma'}$ into three classes, namely ``low frequency", ``hyperbolic (or peripheral)" and ``central" components. The classification  depends on a constant  $k_0>0$ that we will specify in the course of the argument. 

\begin{Definition}\label{Def:partsOfOperators} (1) 
A component $\bL^{t,\sigma\to \sigma'}_{\j\to \j'}$ is  a {\em low frequency component} if 
\begin{itemize}
\item[{\rm(LF)}] either $\max\{ |\omega(\j)|, |m(\j)|\}\le k_0$ or $\max\{|\omega(\j')|, |m(\j')|\}\le k_0$.
\end{itemize}
(2)
A component $\bL^{t,\sigma\to \sigma'}_{\j\to \j'}$ is  a \emph{central  component} if it is not a low frequency component but  
\begin{itemize}
\item[{\rm(CT)}] $m(\j)=m(\j')=0$.
\end{itemize}
(3) The other components are called {\em hyperbolic (or peripheral) components}. That is, a component $\bL^{t,\sigma\to \sigma'}_{\j\to \j'}$ is  a hyperbolic component if 
\begin{itemize}
\item[{\rm(HYP)}] $\max\{ \omega(\j), m(\j)\}> k_0$, $\max\{\omega(\j'),  m(\j')\}> k_0$ and (either $m(\j)\neq 0$ or $m(\j')\neq  0$).
\end{itemize}
\end{Definition}
The low frequency components  are responsible for the action of transfer operators on low frequency part of  functions (in all the directions) and will be treated as a negligible part in our argument.  The central components are of primary importance in our argument. 
In the global picture discussed at the end of Section \ref{sec:Gext}, the central part is responsible for the action of transfer operators $\cL^{t}$ on the wave packets corresponding to points near the trapped set $X$ in (\ref{eq:X}). 
We are going to see that the central components are well approximated by the linear models considered in Section \ref{sec:linear}. 
The hyperbolic components are those components which are strongly affected by the hyperbolicity and non-linearity of the flow. 
For these components, we will see that  the weight $2^{-rm(\j)}$ in the definition (\ref{eq:bKnorm}) of the norm on $\bK^{r,\sigma}$ takes effect and makes the operator norms  small (at least if the constants $k_0>0$ and $r>0$ are sufficiently large). 
 
Correspondingly to the classification of the components above, we decompose the transfer operator $\bL^{t,\sigma\to \sigma'}$ into three parts: 
\begin{equation}\label{eq:decomp_Lt}
\bL^{t,\sigma\to \sigma'}=\bL^{t,\sigma\to \sigma'}_{\mathrm{low}}+\bL^{t,\sigma\to \sigma'}_{\mathrm{ctr}}+\bL^{t,\sigma\to \sigma'}_{\mathrm{hyp}}
\end{equation}
where the low frequency part $\bL^{t,\sigma\to \sigma'}_{\mathrm{low}}$ (resp.\ the central part $\bL^{t,\sigma\to \sigma'}_{\mathrm{ctr}}$,  the hyperbolic part $\bL^{t,\sigma\to \sigma'}_{\mathrm{hyp}}$) is defined as the operator that consists of only the low frequency (resp.\  central, hyperbolic) components of 
$\bL^{t,\sigma\to \sigma'}$. For instance, the low frequency part $\bL^{t,\sigma\to \sigma'}_{\mathrm{low}}$ is defined by 
\[
\bL^{t,\sigma\to \sigma'}_{\mathrm{low}}\bu =
\left(\sum_{\rm low} L_{\j\to \j'}^{t,\sigma\to \sigma'}u_{\j}\right)_{\j'\in \cJ} \quad \mbox{for $\bu=(u_{\j})_{\j\in \cJ}$,}
\]
where $\sum_{\rm low}$ is the sum over $\j\in \cJ$ such that $\bL^{t,\sigma\to \sigma'}_{\j\to \j'}$ is a low frequency component. 

\begin{Remark}
In some places below, we will let the constant $k_0$ be larger to get preferred estimates. If we let $k_0$ be larger, the components classified as central and hyperbolic components will become fewer and this will enable us to get better uniform estimates for the corresponding parts.
Of course then the components classified as low frequency components will increase. But, since we need only a few simple estimates for this part, this will not cause any problem.   
\end{Remark}

\subsection{The central part}
For the central components, we prove two propositions. The first one below is a counterpart of Proposition \ref{pp:norm_estimate} for the lifted operators and corresponding to Theorem  \ref{cor:linear1} in the linear setting. 
Let us recall the operator $\mathbf{T}^{\sigma\to \sigma'}_\omega$ from Definition \ref{Def:bT}.
For simplicity, we write $\mathbf{T}^{\sigma}_\omega$ for $\mathbf{T}^{\sigma\to \sigma'}_\omega$ when $\sigma'=\sigma$.
\begin{Proposition}\label{lm:TL}
Let $\sigma, \sigma'\in \Sigma$.  There exist constants $\epsilon>0$ and $C_\nu>1$ for each $\nu>0$ such that 
\begin{align*}
&\|\mathbf{T}_{\omega'}^{\sigma'}\circ \bL^{t, \sigma\to \sigma'}_{\mathrm{ctr}}\circ \mathbf{T}^{\sigma}_{\omega}:\bK^{r,\sigma}\to \bK^{r,\sigma'}\|\le 
C_\nu  \langle \omega'-\omega\rangle^{-\nu},\\
&\|\mathbf{T}^{\sigma'}_{\omega'}\circ \bL^{t, \sigma\to \sigma'}_{\mathrm{ctr}}\circ (\bPi_\omega-\mathbf{T}^{\sigma}_{\omega}):\bK^{r,\sigma}\to \bK^{r,\sigma'}\|\le C_\nu \langle \omega\rangle^{-\epsilon}  \langle \omega'-\omega\rangle^{-\nu},\\
&\|(\bPi_{\omega'}-\mathbf{T}^{\sigma'}_{\omega'})\circ \bL^{t, \sigma\to \sigma'}_{\mathrm{ctr}}\circ \mathbf{T}^{\sigma}_{\omega}:\bK^{r,\sigma}\to \bK^{r,\sigma'}\|\le C_\nu \langle \omega\rangle^{-\epsilon}  \langle \omega'-\omega\rangle^{-\nu}
\intertext{for any $\omega, \omega'\in \integer$ and $0\le t\le 2t(\omega)$, and further, if the condition (\ref{eq:condition_on_t}) with respect to $\sigma$ and $\sigma'$ holds, then we have }
&\|(\bPi_{\omega'}-\mathbf{T}^{\sigma'}_{\omega'})\circ  \bL^{t, \sigma\to \sigma'}_{\mathrm{ctr}}\circ (\bPi_\omega-\mathbf{T}^{\sigma}_{\omega}):\bK^{r,\sigma}\to \bK^{r,\sigma'}\|\le 
C_\nu e^{-\chi_0 t}  \langle \omega'-\omega\rangle^{-\nu}
\end{align*}
for any $\omega, \omega'\in \integer$ and $0\le t\le 2t(\omega)$.
\end{Proposition}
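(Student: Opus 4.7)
The plan is to reduce the four estimates to applications of Theorem \ref{cor:linear1}(3) for the linearized model, using Corollary \ref{lm:distortion} to replace $f^t_{\j\to\j'}$ by an affine map followed by a small nonlinear perturbation, and then using Lemmas \ref{lm:multiplication}, \ref{lm:IdMinusG}, \ref{lm:mphi}, \ref{lm:TM} to control the resulting error terms. First I would dispose of components with $|\omega(\j')-\omega(\j)|$ large: since each central component $\bL^{t,\sigma\to\sigma'}_{\j\to\j'}$ carries a factor $\bar{b}^t_{\j'}\cdot\langle \omega(\j')-\omega(\j)\rangle^{-\nu}$ by Lemma \ref{cor:multi2} (combined with the $q_\omega$--localization inherent in $\mathbf{T}^\sigma_\omega$ and $\bPi_\omega$), it suffices to consider pairs $\j,\j'$ for which $|\omega(\j')-\omega(\j)|\le \langle\omega\rangle^{1/2}$, i.e.\ the regime (\ref{eq:close}) of Corollary \ref{lm:distortion}.

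For such pairs I would write, using that corollary, $f^t_{\j\to\j'}=a^t_{\j\to\j'}\circ g^t_{\j\to\j'}\circ B^t_{\j\to\j'}$, where $a^t_{\j\to\j'}\in\cA_2$ acts unitarily on all the Hilbert spaces in sight and commutes with $\cT_0$ (Lemma \ref{lm:unitaryaction}), $B^t_{\j\to\j'}$ is a partially hyperbolic linear map of the form (\ref{eq:Bt}) to which Theorem \ref{cor:linear1} applies with expansion rate $\lambda\ge e^{\chi_0 t}$, and $g^t_{\j\to\j'}$ belongs to a family satisfying Setting II. Because the central components have $m(\j)=m(\j')=0$, the supports of $\Psi^{\sigma}_{\j}$ and $\Psi^{\sigma'}_{\j'}$ lie in the $e^2\langle\omega\rangle^{\theta}$--neighborhood of the trapped set (Remark \ref{Rem:index}), which is contained in $\{Y\equiv 1\}$. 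Hence, inserting $\mult(Y)$ harmlessly on the relevant side and applying Lemma \ref{lm:IdMinusG} to the pair $(g^t_{\j\to\j'},\tilde b^t_{\j'}\rho^t_{\j\to\j'})$ (rescaled to satisfy Setting I by Corollary \ref{cor:rho_distortion}), I can replace $L(f^t_{\j\to\j'},\cdot)^\lift$ by $\mult(\tilde b^t_{\j'}\rho^t_{\j\to\j'})^\lift\circ L(a^t_{\j\to\j'}\circ B^t_{\j\to\j'},\mathbf{1})^\lift$ with an error of size $\langle\omega\rangle^{-\epsilon}\langle\omega'-\omega\rangle^{-\nu}$. This error is already good enough for the second and third estimates.

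Next I would transport the projection operators across the multiplication by $\tilde b^t_{\j'}\rho^t_{\j\to\j'}$ and the cutoffs. The cutoff $X_{n_0(\omega)}$ in the definition (\ref{eq:mathbfT}) of $\mathbf{T}^{\sigma\to\sigma'}_\omega$ localizes to $\{Y\equiv 1\}$, so by Lemma \ref{lm:TM} the insertion of $\mult(Y)$ against $\cT_0^\lift$ costs at most a rapidly decaying error; by Lemma \ref{lm:mphi} the commutator $[\mult(\tilde b^t_{\j'}\rho^t_{\j\to\j'})^\lift,\cT_0^\lift]$ contributes $O(\langle\omega\rangle^{-\theta/2}\langle\omega'-\omega\rangle^{-\nu})$. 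Modulo such acceptable errors, the central part of the lifted operator is conjugated into the linear-model transfer operator of Lemma \ref{lm:U} up to affine pre- and post-composition. At this point Theorem \ref{cor:linear1}(3) gives directly the four required estimates: the $\mathbf{T}^{\sigma'}_{\omega'}\circ(\cdot)\circ\mathbf{T}^\sigma_\omega$ piece is uniformly bounded (because $L^t$ is unitary on $\cH_0=\mathrm{Im}\,\cT_0$), the two mixed pieces vanish for the purely linear model because $L^t$ commutes with $\cT_0$ (so the errors come entirely from the $\langle\omega\rangle^{-\epsilon}$ nonlinear correction), and the $(\bPi_{\omega'}-\mathbf{T}^{\sigma'}_{\omega'})\circ(\cdot)\circ(\bPi_\omega-\mathbf{T}^\sigma_\omega)$ piece is bounded by $C\lambda^{-1}\le Ce^{-\chi_0 t}$ from the contracting action on $\mathrm{Ker}\,\cT_0$, provided (\ref{eq:condition_on_t}) holds so that Theorem \ref{cor:linear1}(2) applies with $\sigma'\le\sigma$ or $t\ge t_0$ (which gives $\lambda\ge e^{\chi_0 t_0}>10$, hence unconditional boundedness).

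The main obstacle will be the bookkeeping of the anisotropic norms $\|\cdot\|_{\bK^{r,\sigma}}$ through the various cutoffs and commutators, together with the need to assemble local component-wise estimates into a global bound on $\bL^{t,\sigma\to\sigma'}_{\mathrm{ctr}}$. The assembly is handled by the Schur test on the index set $\cJ$: within a fixed $\omega$-level there are only $O(\langle\omega\rangle^{d(1-2\theta)})$ values of $n\in\cN(a,\omega)$ whose supports meet a given one, and the rapid $\langle\omega'-\omega\rangle^{-\nu}$ decay absorbs the sum over $\omega'$, while the almost-orthogonality of the cutoffs $\rho_{\j}$ keeps the Schur constants uniform. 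The delicate point is that the nonlinear error $\langle\omega\rangle^{-\epsilon}$ from Lemma \ref{lm:IdMinusG} must dominate the loss coming from $\langle\omega(\j)\rangle^{\theta/2}$ factors in the distortion estimates of Corollary \ref{cor:rho_distortion}; this is ensured by the choice (\ref{eq:choice_of_theta}) of $\theta$ relative to $\beta$ and the smallness of $\epsilon_0$ in $t(\omega)$, which guarantees $e^{2\chi_{\max}t(\omega)}\le\langle\omega\rangle^{\theta/2}\ll\langle\omega\rangle^{\epsilon}$ for the relevant $\epsilon$.
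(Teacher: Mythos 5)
Your local analysis is essentially the paper's own: the reduction to the regime (\ref{eq:close}) via Lemma \ref{cor:multi2}, the decomposition $f^t_{\j\to \j'}=a^t_{\j\to \j'}\circ g^t_{\j\to \j'}\circ B^t_{\j\to \j'}$ from Corollary \ref{lm:distortion}, the application of Theorem \ref{cor:linear1}(3) to the linear model, and the control of the nonlinear and commutator errors by Lemmas \ref{lm:IdMinusG}, \ref{lm:mphi}, \ref{lm:TM} reproduce the component-wise estimates that the paper isolates as Lemma \ref{lm:main}.

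The gap is in the assembly step, which is exactly the point the paper flags as nontrivial. For a fixed $\j'$ the number of $\j$ with $\omega(\j)=\omega$, $m(\j)=0$ and $\bL^{t,\sigma\to\sigma'}_{\j\to\j'}\neq 0$ grows exponentially in $t$ (and, under $t\le 2t(\omega)$, like $\langle\omega\rangle^{\theta}$ by Remark \ref{rem:cardD}); it is not $O(1)$, and your quoted count $O(\langle\omega\rangle^{d(1-2\theta)})$ is the total number of charts at level $\omega$, not the interaction count. A Schur test built from the component-wise operator norms therefore loses a factor of order $e^{Cd t}$, resp.\ $\langle\omega\rangle^{\theta}$, and this loss cannot be absorbed: when $\omega'=\omega$ the factor $\langle\omega'-\omega\rangle^{-\nu}$ gives no decay, the first estimate must be uniform in $\omega$, and the fourth needs the sharp $e^{-\chi_0 t}$. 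Your appeal to "almost-orthogonality of the cutoffs $\rho_{\j}$" points in the right direction but does not work as stated, because the kernels of the components $\bL^{t,\sigma\to\sigma'}_{\j\to\j'}$ (and of $\cT_0^{\lift}$) have tails, so the outputs for different $\j$ are not supported in the flowed cutoff regions. The paper's resolution is to first truncate each operator $\mathbf{M}^{t,\sigma\to\sigma'}_{\j\to\j'}$ by the indicator functions of the neighborhoods $D_{\j\to\j'}$, $D'_{\j\to\j'}$ of $\proj_{(x,y)}(\supp(\rho^t_{\j\to\j'}\circ f^t_{\j\to\j'}))$ and $\proj_{(x,y)}(\supp\rho^t_{\j\to\j'})$, with a truncation error $O(\langle\omega\rangle^{-\theta}\langle\omega'-\omega\rangle^{-\nu})$ per component (summable against the $\langle\omega\rangle^{\theta}$ count), and only then exploit that the intersection multiplicity of the families $\{D'_{\j\to\j'}\}_{\j}$ and $\{D_{\j\to\j'}\}_{\j'}$ is bounded uniformly in $\omega,\omega',t$, via an $\ell^2$ almost-orthogonality estimate (Cauchy--Schwarz on the truncated blocks), rather than an $\ell^1$ sum of norms. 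Without this truncation-plus-bounded-multiplicity argument your proof of all four global bounds is incomplete.
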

\begin{proof}
We prove the claims assuming that the condition (\ref{eq:close}) for $\omega$ and $\omega'$ holds because, otherwise, the claims follow immediately from Lemma~\ref{cor:multi2}. (See Remark \ref{Rem:far2}.)
The following lemma is the {\em component-wise version} of  Proposition  \ref{lm:TL}, in which we write $\mathbf{T}_{\omega}^{\sigma}$ for the restrictions of $\mathbf{T}_{\omega}^{\sigma}=\mathbf{T}_{\omega}^{\sigma\to\sigma}$ to each $\bK^{r,\sigma}_\j$ with $m(\j)=0$.
\begin{Lemma}\label{lm:main} There exist constants $\epsilon>0$ and $C_\nu>0$ for each $\nu>0$, independent of $\omega, \omega'\in \integer$, such that, for  any central component \/ $\bL_{\j\to \j'}^{t,\sigma\to \sigma'}$ with  $\omega(\j)=\omega$, $\omega(\j')=\omega'$ and $m(\j)=m(\j')=0$, we have  
\begin{align*}
&\|\mathbf{T}_{\omega'}^{\sigma'}\circ \bL_{\j\to \j'}^{t,\sigma\to \sigma'}\circ \mathbf{T}_{\omega}^{\sigma}:
\bK^{r,\sigma}_{\j}\to \bK^{r,\sigma'}_{\j'}\|\le  C_\nu \langle  \omega'-\omega\rangle^{-\nu},\\
&\|\mathbf{T}_{\omega'}^{\sigma'}\circ \bL_{\j\to \j'}^{t,\sigma\to \sigma'}\circ (1-\mathbf{T}_{\omega}^{\sigma}):
\bK^{r,\sigma}_{\j}\to \bK^{r,\sigma'}_{\j'}\|\le  C_\nu \langle \omega\rangle^{-\epsilon} \langle  \omega'-\omega\rangle^{-\nu},\\
&\|(1-\mathbf{T}_{\omega'}^{\sigma'})\circ \bL_{\j\to \j'}^{t,\sigma\to \sigma'}\circ \mathbf{T}_{\omega}^{\sigma}:
\bK^{r,\sigma}_{\j}\to \bK^{r,\sigma'}_{\j'}\|\le  C_\nu \langle \omega\rangle^{-\epsilon} \langle  \omega'-\omega\rangle^{-\nu}
\intertext{for $0\le t\le 2t(\omega(\j))$, and further, if the condition (\ref{eq:condition_on_t}) with respect to $\sigma$ and $\sigma'$ holds for $t$ in addition, then}
&\|(1-\mathbf{T}_{\omega'}^{\sigma'})\circ \bL_{\j\to \j'}^{t,\sigma\to \sigma'} \circ (1-\mathbf{T}_{\omega}^{\sigma}) : \bK^{r,\sigma}_{\j}\to \bK^{r,\sigma'}_{\j'}\| \le C_\nu  e^{-\chi_0 t} \langle  \omega'-\omega\rangle^{-\nu}.
\end{align*} 
\end{Lemma}
\begin{proof} 
We first prove the claim that, if $0\le t\le 2t(\omega)$ satisfies the condition (\ref{eq:condition_on_t}) with respect to $\sigma$ and $\sigma'$, then
\[
\|\bL_{\j\to \j'}^{t,\sigma\to \sigma'}:
\bK^{r,\sigma}_{\j}\to \bK^{r,\sigma'}_{\j'}\|\le  C_\nu \langle  \omega'-\omega\rangle^{-\nu}.
\]
We express the diffeomorphism $f^t_{\j\to \j'}$  as in Corollary \ref{lm:distortion} and correspondingly write the operator $\bL_{\j\to \j'}^{t,\sigma\to \sigma'}$ as 
\begin{equation*}
\mult(X_{n_0(\omega(\j'))}\cdot {q}_{\omega(\j')}) \circ L^{\lift}_a\circ \mult^{\lift}(\rho^t_{\j\to \j'}\circ a^t_{\j\to \j'}) \circ L^{\lift}_g \circ L^{\lift}_B:\bK^{r,\sigma}_{\j}\to \bK^{r,\sigma'}_{\j'}
\end{equation*}
where $L^{\lift}_a$, $L^{\lift}_g$ and $L^{\lift}_B$ are the lifts of transfer operators for $a^t_{\j\to \j'}$, $g^t_{\j\to \j'}$ and $B^t_{\j\to \j'}$ respectively. We regard that the rightmost factor $L^{\lift}_B$ above as an operator from $\bK^{r,\sigma}_{\j}$ to $L^2(\mathcal{W}^{r,\sigma}):=L^2(\real^{4d+2d'+1},(\mathcal{W}^{r,\sigma})^2)$ and that the rest is that from $L^2(\mathcal{W}^{r,\sigma})$ to $\bK^{r,\sigma'}_{\j'}$. For the latter, we also note that $\cT^{\lift}_0:L^2(\mathcal{W}^{r,\sigma})\to L^2(\mathcal{W}^{r,\sigma'})$ is bounded for any combination of $\sigma, \sigma'\in \Sigma$ from Theorem \ref{cor:linear1} (1).

Recall the function $Y$ defined in (\ref{def:Y}) and also Lemma \ref{lm:expression_lift} and Remark \ref{Rem:expression_lift} for the operator $L^{\lift}_a$. 
Then we see 
\[
\|\mult(X_{n_0(\omega(\j'))}\cdot {q}_{\omega(\j')}) \circ L^{\lift}_a\circ \mult(1-Y):L^2(\mathcal{W}^{r,\sigma})\to \bK^{r,\sigma'}_{\j'}\|\le C_\nu \langle \omega'\rangle^{-\nu}.
\]
From this and Theorem \ref{cor:linear1} (2) for $L_B^{\lift}$, we see that  the difference between the operator $\bL_{\j\to \j'}^{t,\sigma\to \sigma'}$  and 
\[
\mult(X_{n_0(\omega(\j'))}\cdot {q}_{\omega(\j')}) \circ L^{\lift}_a\circ \big[\mult(Y)\circ  \mult^{\lift}(\rho^t_{\j\to \j'}\circ a^t_{\j\to \j'}) \circ L^{\lift}_g\big] \circ L^{\lift}_B:\bK^{r,\sigma}_{\j}\to \bK^{r,\sigma'}_{\j'}
\] 
is bounded by $C_\nu \langle \omega'\rangle^{-\nu}$ and hence negligible.
By virtue of the factor $\mult(Y)$, we can apply Lemma \ref{lm:IdMinusG} to the operator in the square bracket $[\cdot]$ above. 
Since $L^{\lift}_a$ and $L^{\lift}_B$ preserve $\supp q_{\omega(\j')}$ and $\supp q_{\omega(\j)}$ respectively, 
Lemma \ref{lm:IdMinusG} together with Corollary \ref{cor:multi} and Corollary \ref{cor:rho_distortion} yield the estimate that the operator norm of the operator above other than the rightmost factor $L_B^{\lift}$ is bounded by $C_\nu \bar{b}^t_{\j'}\langle \omega'-\omega\rangle^{\nu}$. On the other hand, from Theorem \ref{cor:linear1}, the operator norm of  $\bar{b}^t_{\j'}\cdot L^{\lift}_B$ is bounded by a constant $C_0$. (For this, recall the definition (\ref{eq:Lt}) of the operator $L^t$ appearing in Theorem \ref{cor:linear1}.) Therefore we obtain the claim. 

Now we prove the claims of the lemma. 
The proofs of the four claims are all similar to that in the preceding   paragraphs. Below we prove the second claim and mention for the other cases at the end.  We write 
$\mathbf{T}_{\omega'}^{\sigma'}\circ \bL_{\j\to \j'}^{t,\sigma\to \sigma'}\circ (1-\mathbf{T}_{\omega}^{\sigma})$ as 
\begin{align}\label{eq:bare}
\mult&(X_{n_0(\omega(\j'))})\circ \cT^{\lift}_0\circ \mult(X_{n_0(\omega(\j'))}\cdot {q}_{\omega(\j')}) \circ L^{\lift}_a\\
&\circ \mult^{\lift}(\rho^t_{\j\to \j'}\circ a^t_{\j\to \j'}) \circ L^{\lift}_g \circ L^{\lift}(B)\circ (1-\mult(X_{n_0(\omega(\j))})\circ \cT^{\lift}_0).\notag
\end{align}
Then, producing error terms bounded by $C_\nu \langle \omega\rangle^{-\epsilon} \langle  \omega'-\omega\rangle^{-\nu}$, we may 
\begin{itemize}
\item introduce the factor $\mult(Y)$ before $\mult^{\lift}(\rho^t_{\j\to \j'}\circ a^t_{\j\to \j'})$, 
\item replace $L^{\lift}_g$ by the identity, using Lemma \ref{lm:IdMinusG},
\item replace $\mult(X_{n_0(\omega(\j'))})$ and $\mult(Y)$ by the identity, using the localized property of the kernel of $\cT^{\lift}_0$ given in Lemma \ref{cor:Tzero}, and
\item change the order of $\cT^{\lift}_0$ and $\mult^{\lift}(\rho^t_{\j\to \j'}\circ a^t_{\j\to \j'})$,  using Lemma \ref{lm:mphi}.
\end{itemize}
With these deformations, we reach the operator 
\begin{equation}\label{eq:resultingop}
\mult({q}_{\omega(\j')}) \circ L^{\lift}_a\circ \mult^{\lift}(\rho^t_{\j\to \j'}\circ a^t_{\j\to \j'}) \circ  \big[\cT^{\lift}_0 \circ L^{\lift}(B)\circ (1- \cT^{\lift}_0)\big],
\end{equation} 
noting that $\cT^{\lift}_0$ commutes with $\mult({q}_{\omega(\j')})$ and $L^{\lift}_a$. But this operator is null from Theorem \ref{cor:linear1}. We therefore obtained the second claim. 

The proofs of the other claims are parallel: We deform the operators to the form  corresponding to (\ref{eq:resultingop}) in the same manner as above,  producing error terms bounded by $C_\nu \langle \omega\rangle^{-\epsilon} \langle  \omega'-\omega\rangle^{-\nu}$,
and then use Corollary \ref{cor:multi} (with Corollary \ref{cor:rho_distortion}) and Theorem \ref{cor:linear1} (2). 
(For the last claim, we assume $e^{-\chi_0 t(\omega)}\gg \langle \omega\rangle^{-\epsilon}$ by letting the constant $\epsilon_0>0$ in the definition of $t(\omega)$ smaller.)
\end{proof}

Proposition  \ref{lm:TL} is obtained by summing the estimates in Lemma \ref{lm:main}.  
But there is a small problem. Notice that, for  $\j'\in \cJ$ with $\omega(\j')=\omega'$ and $m(\j')=0$, the number of $\j\in \cJ$ satisfying $\omega(\j)=\omega$, $m(\j)=0$ and $\bL^{t,\sigma\to \sigma'}_{\j\to \j'}\neq 0$ will grow exponentially  with respect to $t$. Hence if we just add  the inequalities in Lemma \ref{lm:main}, we will not get the claims of Proposition  \ref{lm:TL}. 
This problem is resolved by using the localized properties of the kernels of the operators 
$\cT_0^{\lift}$ and $\bL^{t,\sigma\to \sigma'}_{\j\to \j'}$ and also the fact that the intersection multiplicities of the supports of  $\{\rho_{\j}\circ \kappa_{\j}^{-1}\mid \j\in \cJ, \omega(\j)=\omega\}$ is bounded uniformly for $\omega$. The argument is easy but may not be completely obvious.
Since we will use similar argument later, we present it below in some detail. 

Recall the definition (\ref{eq:rhot}) of the functions $\rho^t_{\j\to \j'}$ 
(and also that of $\rho_{a,n}^{(\omega)}$ in Subsection \ref{ss:chart_omega}).
Let $\widetilde{D}^t_{\j\to \j'}$ and $D^t_{\j\to \j'}$ be the $\langle \omega\rangle^{-(1-\theta)/2}$-neighborhoods of the subsets 
\[
\proj_{(x,y)}(\supp \rho^t_{\j\to \j'}) \quad\mbox{and}\quad  
\proj_{(x,y)}(\supp (\rho^t_{\j\to \j'}\circ f^t_{\j\to \j'}))\quad\mbox{in $\real^{2d+d'}_{(x,y)}$}
\]
respectively.  For each $\j'\in \cJ$ with $\omega(\j')=\omega'$ and $m(\j')=0$ (resp.\ $\j\in \cJ$ with $\omega(\j)=\omega$ and $m(\j)=0$) and for $0\le t\le 2t(\omega)$,  the intersection multiplicity of the subsets in
\begin{align}\label{eq:Djj}
&\{ \widetilde{D}^t_{\j\to \j'} \mid \j\in \cJ,\; \omega(\j)=\omega,\; m(\j)=0,\; \widetilde{D}^t_{\j\to \j'}\neq \emptyset\}\\
 (\mbox{resp.\  }&\{ D^t_{\j\to \j'} \mid \j'\in \cJ,\; \omega(\j')=\omega',\; m(\j')=0,\;  D^t_{\j\to \j'}\neq \emptyset\})\notag
\end{align}
is bounded by a constant $C$ independent of $\omega$, $\omega'$ and $t$, provided that the constant $\epsilon_0$ in the definition of $t(\omega)$ is small enough.
\begin{Remark}\label{rem:cardD}
The cardinality of the sets in (\ref{eq:Djj}) will not be bounded uniformly with respect to $t$ and $\omega$, as we have noted.
But note that, letting the constant $\epsilon_0$ be small, we may assume that this  is bounded by $C\langle \omega\rangle^{\theta}$. 
\end{Remark}

In order to discuss about the four claims in Lemma \ref{lm:main} in parallel, we write $\mathbf{M}^{t,\sigma\to \sigma'}_{\j\to \j'}$ for either of the operators
\begin{align*}
&\mathbf{T}_{\omega(\j')}^{\sigma'}\circ \bL_{\j\to \j'}^{t,\sigma\to \sigma'}\circ \mathbf{T}_{\omega(\j)}^{\sigma}, \quad \mathbf{T}_{\omega(\j')}^{\sigma'}\circ \bL_{\j\to \j'}^{t,\sigma\to \sigma'}\circ (1-\mathbf{T}_{\omega(\j)}^{\sigma}),\quad \mbox{or}\\
& (1-\mathbf{T}_{\omega(\j')}^{\sigma'})\circ \bL_{\j\to \j'}^{t,\sigma\to \sigma'}\circ \mathbf{T}_{\omega(\j)}^{\sigma}, \quad (1-\mathbf{T}_{\omega(\j')}^{\sigma'})\circ \bL_{\j\to \j'}^{t,\sigma\to \sigma'} \circ (1-\mathbf{T}_{\omega(\j)}^{\sigma}).
\end{align*}
Let us denote by $\mathbf{1}'_{\j\to \j'}$ (resp.\  $\mathbf{1}_{\j\to \j'}$) the indicator function of the subset
\[
\{ (x,y,\xi_x,\xi_y, \xi_z)\in \real^{4d+2d'+1}\mid (x,y)\in \widetilde{D}^t_{\j\to \j'} \mbox{(resp.\ } (x,y)\in D^t_{\j\to \j'} )\}.
\]
We first approximate the operator $\mathbf{M}^{t,\sigma\to \sigma'}_{\j\to \j'}$ by 
\begin{equation}\label{eq:wtM}
\widetilde{\mathbf{M}}^{t,\sigma\to \sigma'}_{\j\to \j'}=\mult(\mathbf{1}'_{\j\to \j'})\circ \mathbf{M}^{t,\sigma\to \sigma'}_{\j\to \j'}\circ \mult(\mathbf{1}_{\j\to \j'}),
\end{equation}
by cutting off the tail part.  
By crude estimates using the localized properties of the kernels of the operators 
$\mathbf{T}_{\omega(\j)}^{\sigma}$ and $\bL^{t,\sigma\to \sigma'}_{\j\to \j'}$, given in Corollary \ref{cor:Tzero} and Lemma \ref{lm:coarse}, together with the definitions of $\widetilde{D}^t_{\j\to \j'}$ and $D^t_{\j\to \j'}$, we see that 
\begin{equation}\label{eq:wtM2}
\|\mathbf{M}^{t,\sigma\to \sigma'}_{\j\to \j'}-\widetilde{\mathbf{M}}^{t,\sigma\to \sigma'}_{\j\to \j'}:\bK^{r,\sigma}_{\j}\to \bK^{r,\sigma'}_{\j'} \|\le C_{\nu}\langle \omega\rangle^{-\theta}\cdot \langle \omega'-\omega\rangle^{-\nu}
\end{equation}
for $\j,\j'\in \cJ$ with $\omega(\j)=\omega$, $\omega(\j')=\omega'$ and $m(\j)=m(\j')=0$. 
(The factor $\langle \omega\rangle^{-\theta}$ could be much better but this is enough.) 
Since the cardinality of the set (\ref{eq:Djj}) is bounded by $C\langle \omega\rangle^{\theta}$ as we noted in Remark \ref{rem:cardD} above, we obtain
\begin{align*}\label{eq:sep1}
\sum_{\j':\omega(\j')=\omega', m(\j')=0}&\left\| \sum_{\j:\omega(\j)=\omega, m(\j)=0}
({\mathbf{M}}^{t,\sigma\to \sigma'}_{\j\to \j'} - \widetilde{\mathbf{M}}^{t,\sigma\to \sigma'}_{\j\to \j'})u_{\j}\right\|_{\bK^{r,\sigma'}_{\j'}}^2\\
 &\qquad\qquad \le  C_{\nu}\langle \omega\rangle^{\theta}\cdot \langle \omega\rangle^{-2\theta}\cdot \langle \omega'-\omega\rangle^{-2\nu} \sum_{\j:\omega(\j)=\omega, m(\j)=0}\|u_{\j}\|_{\bK^{r,\sigma}_{\j}}^2
\end{align*}
for $(u_{\j})_{\j\in \cJ} \in \bK^{r,\sigma}$.
Therefore the claims of Proposition  \ref{lm:TL} follow if we prove the required estimates with  ${\mathbf{M}}^{t,\sigma\to \sigma'}_{\j\to \j'}$ replaced by $\widetilde{\mathbf{M}}^{t,\sigma\to \sigma'}_{\j\to \j'}$. 

 By boundedness of the intersection multiplicities of (\ref{eq:Djj}), we have 
\begin{align*}
&\sum_{\j':\omega(\j')=\omega', m(\j')=0}\left\| \sum_{\j:\omega(\j)=\omega, m(\j)=0}\widetilde{\mathbf{M}}^{t,\sigma\to \sigma'}_{\j\to \j'} u_{\j} \right\|_{\bK^{r,\sigma'}_{\j'}}^2\\
 &\le 
C \sum_{\j':\omega(\j')=\omega', m(\j')=0}\sum_{\j:\omega(\j)=\omega, m(\j)=0}\left\|{\mathbf{M}}^{t,\sigma\to \sigma'}_{\j\to \j'}:\bK^{r,\sigma}_{\j}\to \bK^{r,\sigma'}_{\j'}\right\|^2
\|\mult(\mathbf{1}_{\j\to \j'}) u_{\j}\|_{\bK^{r,\sigma}_{\j}}^2
\end{align*}
and also
\[
\sum_{\j':\omega(\j')=\omega', m(\j')=0}\|\mult(\mathbf{1}_{\j\to \j'}) u_{\j}\|_{\bK^{r,\sigma}_{\j}}^2\le C\| u_{\j}\|^2_{\bK^{r,\sigma}_{\j}}.
\]
Hence, applying the claims of Lemma \ref{lm:main} to the term $\|{\mathbf{M}}^{t,\sigma\to \sigma'}_{\j\to \j'}:\bK^{r,\sigma}_{\j}\to \bK^{r,\sigma'}_{\j'}\|$, we obtain the required estimates. 
\end{proof}

\begin{Remark}\label{Rem:far2}
In the case where (\ref{eq:close}) does not hold, we have 
\[
\langle \omega'-\omega\rangle \ge \max\{\langle \omega\rangle^{1/2}, \langle \omega'\rangle^{1/2}\}/2,
\]
and we can prove Lemma \ref{lm:main} by crude estimate using Lemma \ref{lm:coarse}. 
We do not need the argument on the intersection multiplicity as above because we have the factor $\langle \omega'-\omega\rangle^{-\nu}\lesssim \max\{\langle \omega\rangle, \langle \omega'\rangle\}^{-\nu/2}$. (See Remark \ref{rem:cardD}.)
\end{Remark}

The second proposition below is essentially same as Proposition \ref{pp:backward} but stated in terms of lifted operators.  
\begin{Proposition} \label{pp:backward_lifted}
Let $\sigma, \sigma'\in \Sigma$. 
There exist constants $\epsilon>0$, $C_0>0$ and $C_\nu>0$ for each $\nu>0$ such that, for  
 $u\in \cK^{r,\sigma}(K_0)$, $\omega\in \integer$ and $0\le t\le 2t(\omega)$, there exists $v_\omega\in \cK^{r,\sigma'}(K_1)$  such
that 
\begin{equation}\label{eq:backward1_lifted}
\|\bL^{t, \sigma'\to \sigma} \circ \bI^{\sigma'}v_\omega- 
 \bI^{\sigma}\circ \mult(\rho_{K_1})\circ (\bI^\sigma)^*\circ\mathbf{T}^{\sigma}_{\omega} \circ \bI^{\sigma}u\|_{\bK^{r,\sigma}}\le C_0 \langle \omega\rangle^{-\theta}\|u\|_{\cK^{r,\sigma}}
\end{equation}
and that, for $\omega'\in \integer$ and $0\le t'\le t$, we have
\begin{align}
&\| \bPi_{\omega'}\circ \bI^{\sigma'}\circ \cL^{t'}  v_{\omega}\|_{\bK^{r,\sigma'}}
\le 
C_\nu \langle \omega'-\omega\rangle^{-\nu}  \|u\|_{\cK^{r,\sigma}},\quad \mbox{and}\label{eq:vomegatrans}
\\
&\|({\bPi}_{\omega'}-\mathbf{T}^{\sigma'}_{\omega'})\circ  \bI^{\sigma'}\circ \cL^{t'}  v_{\omega}\|_{\bK^{r,\sigma'}}
\le 
C_\nu \langle \omega\rangle^{-\epsilon} \langle \omega'-\omega\rangle^{-\nu}  \|u\|_{\cK^{r,\sigma}}.\label{eq:vomegatrans2}
\end{align}
\end{Proposition}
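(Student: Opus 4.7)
The plan is to construct $v_\omega$ by approximately inverting the central part of $\bL^{t,\sigma'\to\sigma}$ on the subspace $\mathrm{Im}\,\mathbf{T}^{\sigma'}_\omega$. The key observation is that on the image of the projection $\mathbf{T}_\omega$, the transfer operator $\cL^t$ behaves almost as a unitary: using Corollary \ref{lm:distortion} to write $f^t_{\j\to\j'}=a^t_{\j\to\j'}\circ g^t_{\j\to\j'}\circ B^t_{\j\to\j'}$, then applying Lemma \ref{lm:IdMinusG} to discard $g^t_{\j\to\j'}$ with error $\langle\omega\rangle^{-\epsilon}$, the component $\bL^{t,\sigma'\to\sigma}_{\j\to\j'}$ on $\mathrm{Im}\,\mathbf{T}^{\sigma'}_\omega$ is approximated by the linear model $\mathfrak{L}^t$ of Lemma \ref{lm:U}. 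By Lemma \ref{lm:linear1}(2)(i) the factor $L_{A\oplus\wA^{-1}}$ is the identity on $\mathrm{Im}\,T_0$, so its contribution cancels the coefficient $|\det A|^{1/2}/|\det\wA|^{1/2}$, leaving the action $e^{i\omega t}\cdot L_A\otimes\mathrm{Id}$ on $\mathrm{Im}\,\mathbf{T}^{\sigma'}_\omega$. This is unitary, with explicit inverse $e^{-i\omega t}\cdot L_{A^{-1}}\otimes\mathrm{Id}$.

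Concretely, first I would define, for each pair $(\j,\j')$ connected by the flow in time $t$, a local approximate inverse $\mathbf{S}^{t}_{\j'\to\j}:\mathrm{Im}(\mathbf{T}^\sigma_\omega|_{\bK^{r,\sigma}_{\j'}})\to\mathrm{Im}(\mathbf{T}^{\sigma'}_\omega|_{\bK^{r,\sigma'}_{\j}})$ by lifting $e^{-i\omega t}\cdot L_{A^{-1}}\otimes\mathrm{Id}\otimes\mathrm{Id}$ from the linear model via $\kappa_\j,\kappa_{\j'}$ and the affine correction $a^t_{\j\to\j'}\in\cA_2$. Uniform boundedness of $\mathbf{S}^{t}_{\j'\to\j}$ follows from Theorem \ref{cor:linear1}(2). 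Assembling these componentwise inverses globally, and using the bounded intersection multiplicity of the chart supports (as in the proof of Proposition \ref{lm:TL}, cf.\ Remark \ref{rem:cardD}) to control the sum over $\j$ mapped to a given $\j'$, yields a bounded operator $\mathbf{S}^t_\omega:\mathrm{Im}\,\mathbf{T}^\sigma_\omega\to\mathrm{Im}\,\mathbf{T}^{\sigma'}_\omega$. Set
\[
v_\omega:=\mult(\rho_{K_1})\circ(\bI^{\sigma'})^*\circ\mathbf{S}^t_\omega\circ\mathbf{T}^\sigma_\omega\circ\bI^\sigma u.
\]

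To verify (\ref{eq:backward1_lifted}), write $\bL^{t,\sigma'\to\sigma}=\bL^{t,\sigma'\to\sigma}_{\mathrm{low}}+\bL^{t,\sigma'\to\sigma}_{\mathrm{ctr}}+\bL^{t,\sigma'\to\sigma}_{\mathrm{hyp}}$. The hyperbolic and low-frequency parts applied to $\bI^{\sigma'}v_\omega$ are negligible because $\bI^{\sigma'}v_\omega\in\mathrm{Im}\,\mathbf{T}^{\sigma'}_\omega$ (giving vanishing contribution up to tails treated by Lemma \ref{cor:multi2}). On the central part, by construction of $\mathbf{S}^t_\omega$ and the linear-model identity above, $\bL^{t,\sigma'\to\sigma}_{\mathrm{ctr}}\circ\mathbf{S}^t_\omega$ equals $\mathbf{T}^\sigma_\omega$ modulo (i) the $g^t_{\j\to\j'}$ correction of Lemma \ref{lm:IdMinusG}, (ii) the coefficient/localization corrections from Lemma \ref{lm:main}, and (iii) the cut-off $\rho_{K_1}$; each contributes an error of size $\langle\omega\rangle^{-\theta}$ (the dominant one being the H\"older regularity of $e_u$ encoded in the choice of $\theta$). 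For (\ref{eq:vomegatrans}) and (\ref{eq:vomegatrans2}), note that $\bI^{\sigma'}v_\omega$ has $\bK^{r,\sigma'}$-support concentrated in $\mathrm{Im}\,\mathbf{T}^{\sigma'}_\omega$, i.e.\ frequency close to $\omega$ and on the trapped-set region. Applying Proposition \ref{lm:TL} (the first three estimates) to $\bPi_{\omega'}\circ\bI^{\sigma'}\circ\cL^{t'}v_\omega$ yields the rapid decay $\langle\omega'-\omega\rangle^{-\nu}$, and the "off-central" component is bounded by $\langle\omega\rangle^{-\epsilon}\langle\omega'-\omega\rangle^{-\nu}$ thanks to the second and third inequalities of Proposition \ref{lm:TL}, together with the bound from Lemma \ref{cor:multi2} for the hyperbolic tail.

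The main obstacle will be the global assembly of $\mathbf{S}^t_\omega$: the number of charts $\j'$ reachable from a fixed $\j$ under $f^t_G$ grows with $t\le 2t(\omega)$, while the individual componentwise inverses are only approximately well-localized on each chart. This is precisely the issue handled in Proposition \ref{lm:TL} by cutting off each operator onto the neighborhoods $D_{\j\to\j'},D'_{\j\to\j'}$ and using bounded intersection multiplicity of the supports (Remark \ref{rem:cardD}), together with the rapid decay of the Bargmann kernel off the trapped set from Corollary \ref{cor:Tzero}; one has to repeat this bookkeeping for both the construction of $\mathbf{S}^t_\omega$ and its composition with $\bL^{t,\sigma'\to\sigma}_{\mathrm{ctr}}$, which is technical but follows the template already established.
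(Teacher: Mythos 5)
Your central insight—that the backward dynamics is controlled on the image of the projection because the linearized model acts unitarily there—is indeed the engine of the paper's argument, but your construction of $v_\omega$ diverges from the paper's and, as written, leaves two genuine gaps. The paper does not assemble an approximate inverse $\mathbf{S}^t_\omega$ at all: it simply sets $v_\omega:=\rho_{K_0}\cdot\cL^{-t}(\cT_\omega u)$, which is legitimate because $\cT_\omega u$ is smooth (Remark \ref{Rem:smoothness}) even though $\cL^{-t}$ is unbounded on the anisotropic spaces. With that choice, $\cL^t v_\omega-\cT_\omega u$ is exactly a cutoff-mismatch term, $\mult\bigl(\rho_{K_1}(\rho_{K_0}\circ f^{-t}_G-1)\bigr)$ applied after $\mathbf{T}^\sigma_\omega$, supported outside a $C^{-1}\langle\omega\rangle^{-\theta}$-neighborhood of $\mathrm{Im}\,e_u$, so (\ref{eq:backward1_lifted}) follows purely from the localization of the kernel of $\cT_0^\lift$ (Corollary \ref{cor:Tzero}) with no inversion error whatsoever. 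In your scheme, by contrast, the error in (\ref{eq:backward1_lifted}) is dominated by the linearization step: Lemma \ref{lm:IdMinusG} only yields $\langle\omega\rangle^{-\epsilon}$ (with $\epsilon$ of order $\theta/2$), and you have no mechanism producing the stated exponent $\theta$; on top of that the gluing error coming from $\bI^{\sigma'}\circ(\bI^{\sigma'})^*\neq\mathrm{Id}$ and the cross-chart terms in $\bL^{t,\sigma'\to\sigma}_{\mathrm{ctr}}\circ\mathbf{S}^t_\omega$ is exactly where the work lies and is only gestured at. (Also, uniform boundedness of your local inverses comes from Theorem \ref{cor:linear1}(3)(i), unitarity on $\cH_0$, not from part (2).)

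The second gap concerns (\ref{eq:vomegatrans}) and (\ref{eq:vomegatrans2}). You propose to apply Proposition \ref{lm:TL} to $\bPi_{\omega'}\circ\bI^{\sigma'}\circ\cL^{t'}v_\omega$, but those estimates control the forward central part sandwiched between projections; to invoke them you must first know, uniformly for $0\le t'\le t\le 2t(\omega)$, that $\bI^{\sigma'}\cL^{t'}v_\omega$ is bounded by $C\|u\|$ and stays within $\langle\omega\rangle^{-\epsilon}$ of $\mathrm{Im}\,\mathbf{T}$. Since $\cL^{t'}v_\omega$ is morally the backward evolution of $\cT_\omega u$ by time $t-t'$, this is precisely the statement that needs proving and is not supplied by the forward estimates; in the paper it is the dedicated component-wise Lemma \ref{lm:backward local}, whose proof bounds $\mathbf{T}_{\omega'}^{\sigma'}\circ\bL^{-(t-t')}_{\j\to\j'}\circ\mathbf{T}^\sigma_\omega$, $(1-\mathbf{T}_{\omega'}^{\sigma'})\circ\bL^{-(t-t')}_{\j\to\j'}\circ\mathbf{T}^\sigma_\omega$, and, for $m(\j')\neq 0$, $\bL^{-(t-t')}_{\j\to\j'}\circ\mathbf{T}^\sigma_\omega$ with an extra factor $e^{-|m(\j')|}$, the key point being that the unbounded backward linearized operator becomes controllable only after composition with $\cT_0^\lift$, where its restriction is unitary up to the coefficient $\bar b^{-t}_{\j'}$. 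You would need to prove an analogue of this lemma (and redo the $D_{\j\to\j'}$-cutoff/intersection-multiplicity bookkeeping over the exponentially many charts) for your glued $\mathbf{S}^t_\omega$; until then the off-central decay in (\ref{eq:vomegatrans2}) and the uniformity in $t'$ are unsupported.
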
 
\begin{proof} 
For construction of $v_{\omega}$, we first set  
\[
u_{\omega}=\cT_\omega u=\mult(\rho_{K_1})\circ (\bI^\sigma)^*\circ \mathbf{T}_\omega^{\sigma}\circ \bI^\sigma u\in \cK^{r,\sigma}(K_0).
\] 
(Recall (\ref{eq:mathbfT}) and (\ref{def:cTomega})  for the definition of $\cT_{\omega}$.) 
As we noted in Remark \ref{Rem:smoothness}, this is a smooth function and hence so is the function   
\[
v_\omega:=\rho_{K_0}\cdot \cL^{-t}u_{\omega}=
\cL^{-t}((\rho_{K_0}\circ f^{-t}_{G})\cdot u_{\omega})
\]
where $\rho_{K_0}:G\to [0,1]$ is the smooth function defined  in (\ref{eq:defrhoK}) and is thrown in because the support of $\cL^{-t}u_{\omega}$ for $0\le t\le 2t(\omega)$ may not be contained in $K_0$. 
\begin{Remark}
Beware that we are considering the transfer operator $\cL^{-t}$ for a negative time $-t<0$, which will not be bounded on $\cK^{r,\sigma}$. 
\end{Remark} 

For the proof of  (\ref{eq:backward1_lifted}), we write the left hand side of (\ref{eq:backward1_lifted}) as 
\[
\|\bI^{\sigma}\circ \mult(\rho_{K_1}(\rho_{K_0}\circ f^{-t}_{G}-1))\circ (\bI^{\sigma})^* \circ  \mathbf{T}_{\omega}^{\sigma} \circ \bI^{\sigma}u\|_{\bK^{r,\sigma}}.
\]
Since we are assuming  $0\le t\le 2t(\omega)$, the function $\rho_{K_1}(\rho_{K_0}\circ f^{-t}_{G}-1)$ is supported on the outside of the $C^{-1} \langle \omega\rangle^{-\theta}$ neighborhood of the section $\mathrm{Im}\,e_u$. (To ensure this, let the constant $\epsilon_0$ in the definition of $t(\omega)$ be smaller if necessary.) Hence, by the localized property of the kernel of  
$\mathbf{T}_{\omega}^{\sigma}$ given in Corollary \ref{cor:Tzero}, we obtain  (\ref{eq:backward1_lifted}).

For the proof of (\ref{eq:vomegatrans}) and (\ref{eq:vomegatrans2}), we basically follow the argument in the proof of Proposition \ref{lm:TL}. Below we assume the condition (\ref{eq:close}) because, otherwise, the proof is obtained easily by using crude estimates. (See Remark \ref{Rem:far2}.)  We take $0\le t'\le t\le 2t(\omega)$ arbitrarily and write
\[
\bI^{\sigma}u=(u_{\j})_{\j\in \cJ}\quad\text{and}\quad \bI^{\sigma'}(\cL^{t'} v_{\omega})=(v_{\j})_{\j\in \cJ}\quad\text{respectively.}
\]
Then $v_{\j'}$ for $\j'\in \cJ$ with $\omega(\j')=\omega'$ and $m(\j')=m'$ is written as the sum 
\[
v_{\j'}=\sum_{\j:\omega(\j)=\omega, m(\j)=0} \bL^{-(t-t'),\sigma\to \sigma'}_{\j\to \j'}\circ \mathbf{T}_{\omega}^{\sigma} u_{\j}
\]
where $\bL^{-(t-t'),\sigma\to \sigma'}_{\j\to \j'}$ is defined in (\ref{eq:ljj}), but we read (\ref{eq:rhot}) as 
\[
{\rho}^{-(t-t')}_{\j\to \j'}= 
\tilde{b}^{-(t-t')}_{\j'}\cdot
( (\rho_{K_1}\circ f^{t-t'}_G\cdot \rho_{K_0}\circ f^{-t'}_G)\circ  \kappa_{\j'})\cdot  {\rho}_{\j'}\cdot \tilde{\rho}_{\j}\circ f^{t-t'}_{\j'\to \j}.
\]
Here we have the additional factor $(\rho_{K_1}\circ f^{t-t'}_G\cdot \rho_{K_0}\circ f^{-t'}_G)\circ  \kappa_{\j'}$, but this hardly affects the argument below as we noted in Remark \ref{rem:difference}. 
The following lemma is the component-wise version of the claims  (\ref{eq:vomegatrans}) and (\ref{eq:vomegatrans2}).
\begin{Lemma}\label{lm:backward local}
There exist constants $\epsilon>0$ and  $C_\nu>0$ for any $\nu>0$, independent of  $\omega, \omega'\in \integer$ and $0\le t'\le 2t(\omega)$,   such that  we have 
\begin{align}\label{eq:vjj}
&\|\mathbf{T}_{\omega'}^{\sigma'}\circ \bL^{-(t-t'),\sigma\to \sigma'}_{\j\to \j'}\circ \mathbf{T}_{\omega}^{\sigma}:\bK^{r,\sigma}_{\j}
\to \bK^{r,\sigma'}_{\j'}\|\le
C_\nu    \langle \omega'-\omega\rangle^{-\nu} 
\intertext{and}
 &\|(1-\mathbf{T}_{\omega'}^{\sigma'})\circ \bL^{-(t-t'),\sigma\to \sigma'}_{\j\to \j'}\circ \mathbf{T}_{\omega}^{\sigma}:\bK^{r,\sigma}_{\j}
\to \bK^{r,\sigma'}_{\j'}\|\le
C_\nu  \langle \omega\rangle^{-\epsilon} \langle \omega'-\omega\rangle^{-\nu} 
\label{eq:vjj2}
\end{align}
for $\j, \j'\in \cJ$ with $\omega(\j)=\omega$, $\omega(\j')=\omega'$, $m(\j)=m(\j')=0$ and further
\begin{equation}\label{eq:vjj3}
\|\bL^{-(t-t'),\sigma\to \sigma'}_{\j\to \j'}\circ \mathbf{T}_{\omega}^{\sigma}:\bK^{r,\sigma}_{\j}
\to \bK^{r,\sigma'}_{\j'}\|\le
C_\nu   e^{- |m(\j')|}\cdot \langle \omega'-\omega\rangle^{-\nu} 
\end{equation}
for $\j, \j'\in \cJ$ with $\omega(\j)=\omega$, $\omega(\j')=\omega'$, $m(\j)=0$ and $m(\j')\neq 0$.  
\end{Lemma}

\begin{proof}[Proof of Lemma \ref{lm:backward local}] 
We apply the argument in the proofs of Lemma \ref{lm:crude_estimates2} and Corollary \ref{lm:distortion} to the time reversed system. Then, for $0\le t\le 2t(\omega)$, we get the decomposition $f^{-t}_{\j\to \j'}=a^{-t}_{\j\to \j'}\circ g^{-t}_{\j\to \j'}\circ B^{-t}_{\j\to \j'}$ corresponding to (\ref{eq:agb}), such that
\begin{enumerate}
\item $a^{-t}_{\j\to \j'}$ is an affine transform in the group $\cA_2$, 
\item the \emph{inverse} of $B^{-t}_{\j\to \j'}$ is a hyperbolic linear map of the form (\ref{eq:Bt}) with the linear maps $A:\real^{d}\to \real^d$ and $\widehat{A}:\real^{d'}\to \real^{d'}$ satisfying (\ref{eq:normA}), and
\item $g^{-t}_{\j\to \j'}$ is a fibered contact diffeomorphism and the family
\[
\cG_\omega=\{g^{-t}_{\j\to \j'}\mid\;\omega(\j)=\omega, \omega'=\omega(\j') \mbox{ satisfy (\ref{eq:close}), and } 0\le  t\le 2t(\omega)\},
\] 
fulfills the conditions (G0), (G1) and (G2) in  Setting II  in Section \ref{sec:pre}. 
\end{enumerate}
Also, in parallel to Corollary \ref{cor:rho_distortion}, we can show that the family 
\[
\mathcal{X}_{\omega}=\{(\bar{b}^{-t}_{\j'})^{-1}\cdot \tilde{b}^{-t}_{\j'}\cdot \rho^{-t}_{\j\to \j'}\mid \; \mbox{$\omega(\j)=\omega$, $\omega'=\omega(\j')$ satisfy  (\ref{eq:close}),  $0\le t\le 2t(\omega)$}\}
\]
satisfies the conditions (C1) and (C2) in Setting I in Section \ref{sec:pre}.  

\begin{Remark}
The main point of the argument below is that, though the lift of the transfer operator associated to $B^{-t}_{\j\to \j'}$ will not be bounded as an operator on $\bK^{r,\sigma}$, we have precise description about its inverse in Theorem \ref{cor:linear1}.\end{Remark}

To prove (\ref{eq:vjj}), we suppose  $m(\j)=m(\j')=0$ and write $\mathbf{T}_{\omega}^{\sigma'}\circ \bL^{-(t-t'),\sigma\to \sigma'}_{\j\to \j'}\circ \mathbf{T}_{\omega}^{\sigma}$ as the composition of 
\begin{align}\label{eq:op1a}
&\mult(X_{n_0(\omega')}) \circ \cT_0^{\lift}\circ\mult(\Psi^{\sigma'}_{\j})\circ \pBargmann\circ 
 L\left(a^{-t}_{\j\to \j'}\circ g^{-t}_{\j\to \j'}, {\rho}^{-(t-t')}_{\j\to \j'}\right)\circ \pBargmann^*
\qquad\text{and}\\
 &\pBargmann\circ L(B^{-(t-t')}_{\j\to \j'}, 1)\circ \pBargmann^*\circ \mult(X_{n_0(\omega)}) \circ 
 \cT_0^{\lift}. \label{eq:op2a}
\end{align}
Below we regard (\ref{eq:op2a}) as an operator from $\bK^{r,\sigma}_{\j}$ to  $L^2(\supp q_\omega,(\cW^{r,\sigma'+1})^2)$ and (\ref{eq:op1a}) as that from $L^2(\supp q_\omega,(\cW^{r,\sigma'+1})^2)$ to $\bK^{r,\sigma'}_{\j'}$.

From Theorem \ref{cor:linear1} for $B=(B^{-(t-t')})^{-1}$, we see that the operator norm of
\[
\pBargmann\circ L(B^{-(t-t')}_{\j\to \j'}, 1)\circ \pBargmann^*\circ 
 \cT_0^{\lift} :L^2(\supp \Psi_{\j}^{\sigma},(\cW^{r,\sigma})^2)\to L^2(\supp q_{\omega}, (\cW^{r,\sigma})^2) 
\]
is bounded by $C_0|\det \widehat{A}|^{-1} |\det {A}|^{1/2}$. The difference of this operator and (\ref{eq:op2a}) is $\mult(X_{n_0(\omega)})$ in the middle of (\ref{eq:op2a}). But, by the localized property of the kernel of $\cT_0^{\lift}$ in Corollary~\ref{cor:Tzero} and also by Lemma \ref{lm:expression_lift},  we see that the insertion of $\mult(X_{n_0(\omega)})$ makes only a negligible difference bounded by $C\langle \omega\rangle^{-\theta}$. Hence the operator norm of (\ref{eq:op2a}) is bounded by $C_0|\det \widehat{A}|^{-1} |\det {A}|^{1/2}$.
The operator norm of (\ref{eq:op1a}) is bounded by $C_\nu\bar{b}^{-t}_{\j'}\langle \omega(\j')-\omega(\j)\rangle^{-\nu}$ for any $\nu>0$, by Lemma~\ref{lm:IdMinusG} and Corollary~\ref{cor:multi}. 
Since $\bar{b}^{-t}_{\j'}\le C |\det \widehat{A}|^{-1} |\det {A}|^{1/2}$, the claim 
(\ref{eq:vjj}) follows from these estimates. 

To prove the claim (\ref{eq:vjj2}), we express the operator on its left-hand side  
as the composition of (\ref{eq:op1a}) and (\ref{eq:op2a}), with  $\cT_0^{\lift}$ in (\ref{eq:op1a}) replaced by $(1-\cT_0^{\lift})$. 
The operator $\cT_0^{\lift}$ commutes with 
$
\pBargmann\circ L(B^{-(t-t')}_{\j\to \j'}, 1)\circ \pBargmann^*$ and approximately with  
$
\mult(\Psi^{\sigma'}_{\j})$ and $\mult(X_{n_0(\omega)})$, {\it i.e.} producing negligible terms bounded by $C_\nu  \langle \omega\rangle^{-\epsilon} \langle \omega'-\omega\rangle^{-\nu} $,  by the localized property of the kernel of $\cT_0^{\lift}$ in Corollary~\ref{cor:Tzero}.
Also we see that $\cT_0^{\lift}$ commutes approximately with  $\pBargmann\circ 
 L(a^{-t}_{\j\to \j'}\circ g^{-t}_{\j\to \j'}, {\rho}^{-(t-t')}_{\j\to \j'})\circ \pBargmann^*$ by using  Lemma~\ref{lm:IdMinusG}, Lemma \ref{lm:mphi} and Lemma \ref{lm:TM}.
Therefore we get (\ref{eq:vjj2}) from the fact that $\cT_0^{\lift}$ is a projection operator.

To prove the last claim (\ref{eq:vjj3}), we express the operator on its left-hand side as the composition of (\ref{eq:op1a}) and (\ref{eq:op2a}), but now without the term $\mult(X_{n_0(\omega')}) \circ \cT_0^{\lift}$ in (\ref{eq:op1a}). 
On the one hand,  from the properties of $\cT_0^{\lift}$ we have mentioned above, the image of (\ref{eq:op2a}) concentrates around the trapped set $X_0$ if we view it in the scale $\langle \omega'\rangle^{-1/2}$ and through the weight function $\cW^{r,\sigma'}$.
On the other hand, since we have $|m(\j')|\ge n_0(\omega')=[\theta\cdot \log \langle \omega'\rangle]$ from the assumption, the distance of the support of $\Psi_{\j'}^{\sigma'}$ from the trapped set $X_0$ is not less than 
 $e^{|m(\j')|}\cdot \langle \omega'\rangle^{-1/2}\gtrsim \langle \omega'\rangle^{-1/2+\theta}$ . 
Therefore we conclude the claim (\ref{eq:vjj3}), applying Lemma \ref{lm:IdMinusG} (and Lemma \ref{lm:multiplication}) for  the operator  $\pBargmann\circ 
 L(a^{-t}_{\j\to \j'}\circ g^{-t}_{\j\to \j'}, {\rho}^{-(t-t')}_{\j\to \j'})\circ \pBargmann^*$.
\end{proof}

The claims of Proposition \ref{pp:backward_lifted} are obtained by summing the estimates for the components in Lemma \ref{lm:backward local}. 
We actually have to deal with the same problem as that in deducing Proposition \ref{lm:TL} from Lemma \ref{lm:main}. 
But we omit it because the argument is exactly same as that in the proof of Proposition \ref{lm:TL}.  
\end{proof}

\subsection{The hyperbolic part}
We next consider the hyperbolic part $\bL^{t,\sigma\to \sigma'}_{\mathrm{hyp}}$. 
We decompose it further into two parts.
For this, we first introduce the new index  
\begin{equation}\label{def:tm}
\tilde{m}(\j)=\begin{cases}
m(\j)+(1/2)\log \langle \omega(\j)\rangle, &\mbox{if $m(\j)>0$};\\
m(\j)-(1/2)\log \langle \omega(\j) \rangle, &\mbox{if $m(\j)<0$};\\
0, &\mbox{if $m(\j)=0$}. 
\end{cases}
\end{equation}
Recall that the frequency vector  of the wave packet $\phi_{w,\xi_w,\xi_z}(\cdot)$ is 
$(\langle \xi_z\rangle \xi_w, \xi_z)$ and its distance from the trapped set $X_0$, disregarding the normalization mentioned in Remark \ref{Rem:rescaledCoordinates},  is $\langle \xi_z\rangle^{1/2} |(\zeta_p,\tilde{\xi}_y,\zeta_q,\tilde{y})|$. 
The absolute value of $\tilde{m}(\j)$ is directly related to this distance (without normalization).
Indeed, from the definition, we have
\[
e^{|\tilde{m}(\j)|-1}\le \langle \xi_z\rangle^{1/2} |(\zeta_p,\tilde{\xi}_y,\zeta_q,\tilde{y})| \le e^{|\tilde{m}(\j)|+1}\quad \mbox{for $(w,\xi_w,\xi_z)\in \supp \Psi_{\j}^{\sigma}$,}
\]
provided that $0<|m(\j)|\le n_1(\omega)$. (In the case $|m(\j)|> n_1(\omega)$, we can get a similar estimate but need modification related to the factor $E_{\omega,n}$ in the definition of $\Psi_{\j}^{\sigma}$.) 
Motivated by the observations (Ob3) in Subsection \ref{ss:lifted}, we introduce
\begin{Definition}[The relation $\hookrightarrow^t$]\label{def:arrow} We write $\j \hookrightarrow^t \j'$ for $\j,\j'\in \cJ$ and $t\ge 0$ if either of the following conditions holds true:
\begin{enumerate}
\item $\tilde{m}(\j)\le 0$ and $\tilde{m}(\j')\ge  0$, or
\item $\tilde{m}(\j)\cdot \tilde{m}(\j')>0$ and $\tilde{m}(\j')\ge \tilde{m}(\j) +[t\chi_0]-10$.
\end{enumerate}
Otherwise we write $\j \not\hookrightarrow^t\j'$. 
\end{Definition}
\begin{Remark}\label{rem:notarrow}
For the argument below, we ask the readers  to observe that the relation $\j \not\hookrightarrow^t\j'$ implies that $((Df^t_{\j\to \j'})^{-1})^*(\supp \Psi_{\j}^{\sigma})$ is separated from  $\supp \Psi_{\j'}^{\sigma'}$ for $t$ satisfying the condition (\ref{eq:condition_on_t}), at least if $\omega=\omega(\j)$ and $\omega'=\omega(\j')$ satisfy (\ref{eq:close}) and that 
$|m(\j)|,|m(\j')|\le n_1(\omega)$. (For this, Remark \ref{Rem:index} will be useful.) 
We will give a related quantitative estimate in Lemma \ref{lm:basic2}.
\end{Remark}
Correspondingly to the definition above,  we decompose the hyperbolic part $\bL^{t,\sigma\to \sigma'}_{\mathrm{hyp}}$ into two parts as follows.
 For $t\ge 0$ and $\bu=(u_{\j})_{\j\in \cJ}$, we  set
\[
\bL^{t,\sigma\to \sigma'}_{\mathrm{hyp},\hookrightarrow} \bu=
\left(\sum_{\j:\j\hookrightarrow^t \j'} \bL^{t,\sigma\to \sigma'}_{\j\to \j'} u_{\j}\right)_{\j'\in \cJ}
\]
and
\[
\bL^{t,\sigma\to \sigma'}_{\mathrm{hyp},\not\hookrightarrow} \bu=
\left(\sum_{\j:\j\not\hookrightarrow^t \j'} \bL^{t,\sigma\to \sigma'}_{\j\to \j'} u_{\j}\right)_{\j'\in \cJ}
\]
where the sum $\sum_{\j:\j\hookrightarrow^t \j'} $ (resp.\ $\sum_{\j:\j\not\hookrightarrow^t \j'}$) denotes that over $\j$ such that $ \bL^{t,\sigma\to \sigma'}_{\j\to \j'}$ is a hyperbolic component and that $\j\hookrightarrow^t \j'$ (resp.\  $\j\not\hookrightarrow^t \j'$) holds. Obviously we have
\begin{equation}\label{eq:decomphcL}
{\bL}^{t,\sigma\to \sigma'}_{\mathrm{hyp}}=\bL^{t,\sigma\to \sigma'}_{\mathrm{hyp},\hookrightarrow}+\bL^{t,\sigma\to \sigma'}_{\mathrm{hyp},\not\hookrightarrow}.
\end{equation}

By geometric consideration based on the observations (Ob1) and (Ob3) discussed in Subsection \ref{ss:lifted} and crude estimates using Corollary \ref{cor:coarse}, we will see that a hyperbolic component $\bL^{t,\sigma\to \sigma'}_{\j\to \j'}$ satisfying $\j \not\hookrightarrow^t \j'$  is (extremely) small in the trace norm as well as in the operator norm and, consequently, so is
the latter part $\bL^{t,\sigma\to \sigma'}_{\mathrm{hyp},\not\hookrightarrow}$.  The former part $\bL^{t,\sigma\to \sigma'}_{\mathrm{hyp},\hookrightarrow}$ will not be small if we view it in the $L^2$ norm. But, recall from (\ref{eq:bKnorm})  that  the norm on $\bK^{r,\sigma}$ counts the component in  $\bK^{r,\sigma}_\j$ with  the weight $2^{-r m(\j)}$ (provided $m(\j)\neq 0$).  This weight and the definition of the relation $\hookrightarrow^t$ allow us to show that the latter part 
 $\bL^{t,\sigma\to \sigma'}_{\mathrm{hyp},\hookrightarrow}$  has a small operator norm. 
\begin{Proposition}\label{prop:hyp} Let $\sigma, \sigma'\in \Sigma$. 
The  hyperbolic part\/ $\bL^{t,\sigma\to \sigma'}_{\mathrm{hyp}}:\bK^{r,\sigma}\to \bK^{r,\sigma'}$ is bounded for $0\le t\le 2t_0$  provided that  (\ref{eq:condition_on_t}) holds with respect to $\sigma$ and $\sigma'$. There exist constants $C_\nu>0$ and $C'_\nu>0$ for each $\nu>0$ such that 
\begin{equation}\label{claim:hyp1}
\|\bPi_{\omega'}\circ \bL^{t,\sigma\to \sigma'}_{\mathrm{hyp},\hookrightarrow}\circ \bPi_{\omega}\|\le 
C_\nu   e^{-(r/2) \chi_0 t}\cdot \langle \omega'-\omega\rangle^{-\nu}
\end{equation}
and
\begin{equation}\label{claim:hyp2}
\|\bPi_{\omega'}\circ \bL^{t,\sigma\to \sigma'}_{\mathrm{hyp},\not\hookrightarrow}\circ \bPi_{\omega}\|  \le \|\bPi_{\omega'}\circ \bL^{t,\sigma\to \sigma'}_{\mathrm{hyp},\not\hookrightarrow}\circ \bPi_{\omega}\|_{\trace}\le C'_\nu \langle \omega  \rangle^{-\nu}\langle \omega'-\omega\rangle^{-\nu}
\end{equation}
for any $\omega,\omega'\in \integer$ and $0\le t\le 2t(\omega)$ provided that the condition   (\ref{eq:condition_on_t}) holds.
\end{Proposition}
\begin{proof}
The first claim follows from the claims (\ref{claim:hyp1}) and (\ref{claim:hyp2}). 
Below we first prove  (\ref{claim:hyp1}) on the part $\bL^{t,\sigma\to \sigma'}_{\mathrm{hyp},\hookrightarrow}$.
We restrict ourselves to the case where (\ref{eq:close}) holds for $\omega, \omega'\in \integer$ by the same reason as in the proofs of a few previous propositions. 
First we prove the following lemma for the components.
\begin{Lemma}\label{Claim3a} 
Suppose that  $\omega, \omega'\in \integer$ satisfy (\ref{eq:close}). There exists a constant $C_\nu>0$ for each $\nu>0$, independent of $\omega$ and $\omega'$, such that, 
if\/  $\bL^{t,\sigma\to \sigma'}_{\j\to \j'}$ for $\j,\j'\in \cJ$ with $\omega(\j)=\omega$ and $\omega(\j')=\omega'$ is a component of $\bL^{t,\sigma\to \sigma'}_{\mathrm{hyp},\hookrightarrow}$ and if $0\le t\le 2t(\omega)$ satisfies (\ref{eq:condition_on_t}), then we have 
\begin{equation}\label{eq:lhypnorm}
 \left\| \bL^t_{\j\to \j'}:\bK^{r,\sigma}_{\j}\to \bK^{r,\sigma'}_{\j'}\right\|\le C_\nu 
 \bar{b}_{\j'}^t\cdot  \Lambda_\nu(m(\j),\omega(\j);m(\j'),\omega(\j');t)
 \end{equation}
 where $\bar{b}_{\j'}^t$ is that defined in (\ref{eq:barbt}) and we set 
 \[
\Lambda_\nu(m, \omega; m',\omega';t)=\begin{cases}
e^{-rm'+rm}\langle \omega'-\omega\rangle^{-\nu},& \mbox{ if $m\neq 0$, $m'\neq 0$;}\\
\min\{e^{-r\chi_0 t}, e^{-r (m'-n_{0}(\omega))}\}\langle \omega'-\omega\rangle^{-\nu},& \mbox{ if $m= 0$, $m'> 0$;}\\
\min\{e^{-r\chi_0 t}, e^{r(m+n_{0}(\omega))}\}\langle \omega'-\omega\rangle^{-\nu},& \mbox{ if $m< 0$,  $m'= 0$.}
\end{cases}
\]
\end{Lemma}
\begin{proof} In the case where $m(\j)\neq 0$ and $m(\j')\neq 0$, the conclusion is a direct consequence of Lemma \ref{cor:multi2} and the definition (\ref{eq:bKnorm}) of the weight on $\bK^{r,\sigma}_\j$. Below we prove the lemma for the case  $m(\j)= 0$, but the case  $m(\j')= 0$ is  proved in a parallel manner.

Recall that the image and target spaces of $\bL^t_{\j\to \j'}$ are
\[
\bK^{r,\sigma'}_{\j'}=L^2(\supp \Psi_{\j'}^{\sigma'},2^{-2rm(\j')}), \quad
\bK^{r,\sigma}_\j=L^2(\supp \Psi_{\j}^{\sigma},(\cW^{r,\sigma})^2).
\] 
If $e^{-r\chi_0 t}\ge e^{-r (m(\j')-n_{0}(\omega(\j)))}$, we get the required estimate easily by Lemma \ref{cor:multi2} because $C^{-1} e^{-r n_0(\omega)}\le \cW^{r,\sigma}(\cdot)\le C e^{+r n_0(\omega)}$ on $\supp \Psi^{\sigma}_{\j}=\supp \Psi^{\sigma}_{\omega, 0}$.
Below we assume $e^{-r\chi_0 t}<e^{-r (m(\j')-n_{0}(\omega(\j)))}$. 
But this implies   
\begin{equation}\label{eq:smallm}
e^{m(\j')-n_{0}(\omega(\j))}< e^{\chi_0 t} < \langle \omega(\j')\rangle^{\theta}\quad \mbox{and so}\quad 
e^{m(\j')}< \langle \omega(\j')\rangle^{2\theta}
\end{equation}
and hence both of the supports of  $\Psi^{\sigma'}_{\j'}$ and $\Psi^{\sigma}_{\j}$ are contained in that of the function  $Y$ in (\ref{def:Y}). This enables us to use the argument we have used in the proof of Lemma \ref{lm:main}.
We express $f^t_{\j\to \j'}$ as in Corollary \ref{lm:distortion} and note that, if the non-linear diffeomorphism $g^t_{\j\to \j'}$ were identity, we could conclude (\ref{eq:lhypnorm}) with $\Lambda_\nu(m,\omega;m',\omega';t)=e^{-r\chi_0 t}\langle \omega'-\omega\rangle^{-\nu}$ immediately from the precise estimate on the kernel of $\bL^t_{\j\to \j'}$ in Lemma \ref{lm:IdMinusG} and  Lemma \ref{lm:expression_lift}. 
But, by virtue of Lemma \ref{lm:multiplication} and the estimate  (\ref{eq:kernel_Lg}) in its proof, we may indeed replace  $g^t_{\j\to \j'}$ by the identity producing a negligible error term bounded by $C_\nu \langle \omega\rangle^{-\theta}\langle \omega'-\omega\rangle^{-\nu}$. 
\end{proof}

We deduce the claim  (\ref{claim:hyp1}) from the estimates (\ref{eq:lhypnorm}) by the argument parallel to that in the latter part of the proof of Proposition  \ref{lm:TL}. To begin with we note the following estimate;   
Since  $\bar{b}_{\j'}^t<e^{(1/4)r\chi_0 t}$ from the choice of $r$ in (\ref{eq:choice_of_r4}), we have
\begin{align}\label{eq:sumLambda}
&\sup_{m} \sum_{m'} \bar{b}_{\j'}^t\cdot \Lambda_\nu(m,\omega;m',\omega';t)\le C_\nu e^{-(1/2)r\chi_0 t}\langle \omega'-\omega\rangle^{-\nu}
\intertext{and} 
&\sup_{m'} \sum_{m} \bar{b}_{\j'}^t\cdot \Lambda_\nu(m,\omega;m',\omega';t)\le C_\nu e^{-(1/2)r\chi_0 t}\langle \omega'-\omega\rangle^{-\nu}.
\label{eq:sumLambda2}
\end{align}
We write $\bL_{\j\to \j'}$ for the $\j\to \j'$ component of $\bL^{t,\sigma\to \sigma'}_{\mathrm{hyp},\hookrightarrow}$ for brevity. As in the proof of Proposition \ref{lm:TL}, we approximate it by  $\widetilde{\bL}_{\j\to \j'}=\mult(\mathbf{1}'_{\j\to \j'})\circ \bL_{\j\to \j'} \circ \mult(\mathbf{1}_{\j\to \j'})$ where  $\mathbf{1}'_{\j\to \j'}$ and $\mathbf{1}_{\j\to \j'}$ are those defined in the paragraph preceding  (\ref{eq:wtM}). It is not difficult to see
\[
\|\bL_{\j\to \j'}-\widetilde{\bL}_{\j\to \j'}:\bK^{r,\sigma}_{\j}\to \bK^{r,\sigma'}_{\j'} \|\le C_{\nu}\langle \omega\rangle^{-2\theta}\cdot \Lambda_\nu(m(\j),\omega(\j);m(\j'),\omega(\j');t)
\]
from the proof of Lemma \ref{Claim3a}. Hence, from the uniform boundedness of the intersection multiplicity of the sets (\ref{eq:Djj}) and Remark \ref{rem:cardD}, we obtain  
\begin{align*}
\|\bPi_{\omega'}\circ \bL^{t,\sigma\to \sigma'}_{\mathrm{hyp},\hookrightarrow}\circ \bPi_{\omega} \bu\|^2_{\bK^{r,\sigma'}}
&\le \sum_{m'}\;\sum_{\j':\omega',m'}
\left\|\sum_{m}\;\sum_{\j:\omega, m} \widetilde{\bL}_{\j\to\j'} u_{\j}\right\|_{\bK^{r,\sigma'}_{\j'}}^2\\
&\qquad +C_\nu \langle \omega\rangle^{-\theta} \langle \omega'-\omega\rangle^{-\nu} \|\bu\|_{\bK^{r,\sigma}}^2
\end{align*}
for $\bu=(u_{\j})_{\j\in \cJ}\in \bK^{r,\sigma}$, 
where $\sum_{\j:\omega,m}$ denotes the sum over $\j\in \cJ$ such that $\omega(\j)=\omega$ and $m(\j)=m$ and so on. For the sum of the right-hand side, we have 
\begin{align*}
&\sum_{m'}\;\sum_{\j':\omega',m'}
\left\|\sum_{m}\;\sum_{\j:\omega, m} \widetilde{\bL}_{\j\to\j'} u_{\j}\right\|_{\bK^{r,\sigma'}_{\j'}}^2\\
&\le C_\nu  e^{-r\chi_0 t/2}\langle \omega'-\omega\rangle^{-\nu} \sum_{m,m'}\sum_{\j':\omega', m'}(\bar{b}_{\j'}^t\cdot \Lambda_\nu(\omega,m;\omega',m';t))^{-1} \left\|\;\sum_{\j:\omega, m} \bL_{\j\to\j'} u_{\j}\right\|_{\bK^{r,\sigma'}_{\j'}}^2\\
&\le C_\nu   e^{-r\chi_0 t/2} \langle \omega'-\omega\rangle^{-\nu}\sum_{m,m'}\sum_{\j:\omega, m}
\bar{b}_{\j'}^t\cdot\Lambda_\nu(\omega,m;\omega',m';t)\left\|u_{\j}\right\|_{\bK^{r,\sigma}_{\j}}^2\\
&\le C_\nu  e^{-r\chi_0 t} \langle \omega'-\omega\rangle^{-2\nu}\sum_{m}\sum_{\j:\omega, m}
\left\| u_{\j}\right\|_{\bK^{r,\sigma}_{\j}}^2=  C_\nu  e^{-r\chi_0} \langle \omega'-\omega\rangle^{-2\nu}
\left\|\bu\right\|_{\bK^{r,\sigma}}^2
\end{align*} 
where, besides boundedness of the intersection multiplicities of (\ref{eq:Djj}),  we used Schwartz inequality and (\ref{eq:sumLambda2}) in the first inequality, 
Lemma \ref{Claim3a} in the second, and (\ref{eq:sumLambda}) in the third. 
We therefore obtain  the required estimate (\ref{claim:hyp1}).

We next prove the claim (\ref{claim:hyp2}). We deduce it from the following lemma. Recall the definition of the quantity $\Delta_{\j\to \j'}^{t, \sigma\to\sigma'}$ preceding Corollary \ref{cor:coarse}. 
\begin{Lemma}\label{lm:basic2}
Suppose that $\sigma, \sigma'\in \Sigma$ and consider a (non-zero) component  $\bL^{t,\sigma\to \sigma'}_{\j\to \j'}$ of $\bL^{t,\sigma\to \sigma'}_{\mathrm{hyp},\not\hookrightarrow}$ and  $0<t\le 2t(\omega(\j))$ satisfying (\ref{eq:condition_on_t}). Then there  exist constants $\gamma_0>0$ and $C_0>0$ (independent of $\j, \j'$ and $t$) such that  
\begin{equation}\label{Delta}
\Delta_{\j\to \j'}^{t, \sigma\to\sigma'}<C_0 \max\{\omega(\j), \omega(\j'), e^{|m(\j)|}, e^{|m(\j')|}\}^{-\gamma_0}.
\end{equation}
\end{Lemma}
The conclusion of Lemma \ref{lm:basic2} is just an estimate between the supports of 
$(Df^t_{\j\to j'})^*(\Psi^{\sigma'}_{\j'})$ and $\Psi^\sigma_{\j}$ and hence should be obtained by elementary geometric consideration. 
Also, from Remark \ref{rem:notarrow} and the construction of $E_{\omega,m}$ in Subsection \ref{ss:modified_pu}, the claim may be intuitively rather obvious. This is indeed the case, however, because of the involved definition of $\Psi^{\sigma}_{\omega,m}$, we need to separate several cases and go through cumbersome estimates that are not very essential.   
Thereby we defer the proof of Lemma \ref{lm:basic2} to Section \ref{ssA} in the appendix. 

Once we obtain Lemma \ref{lm:basic2}, we combine it with  Corollary \ref{cor:coarse} to bound the trace norms of the components of $\bL^{t,\sigma\to \sigma'}_{\mathrm{hyp},\not\hookrightarrow}$. 
Observe that the term $(\Delta^{t, \sigma\to \sigma'}_{\j\to \j'})^{\nu}$ in the bound thus obtained dominates the latter factor in  (\ref{eq:CDelta}) provided $\nu$ is sufficiently large. Hence we  conclude the claim (\ref{claim:hyp2}) simply by summing such bounds on the trace norms. 
\end{proof}

\begin{Remark}\label{Rem:const}
For the proof of Theorem \ref{th:zeta} in Section \ref{sec:zeta} in the appendix, we will actually need a little more information above the constants $C_\nu$ and $C'_\nu$ in the claims of Proposition \ref{prop:hyp}. Note that the constant $C_\nu$ in (\ref{claim:hyp1}) comes from that in Corollary \ref{cor:multi2} and can be chosen independently of the choice of $t_0>0$ if we take larger $k_0$ according to $t_0$. Hence, by letting $t_0$ be larger if necessary, we may suppose that 
$\|\bL^{t,\sigma\to \sigma'}_{\mathrm{hyp},\hookrightarrow}:\bK^{r,\sigma}\to \bK^{r,\sigma'}\|\le e^{-(r/2) \chi_0 t}$ for $t_0\le t\le 2t_0$. 
Also it is easy to see that, by letting the constant $k_0$ in Definition \ref{Def:partsOfOperators} be larger, we may restrict the sum over $(\j,\j')\in \cJ^2$ and  let $C'_\nu$ in (\ref{claim:hyp2}) be arbitrarily small. 
\end{Remark}

From the definition of the hyperbolic part $\bL^{t, \sigma\to \sigma'}_{\mathrm{hyp}}$, its components $\bL^{t,\sigma\to \sigma'}_{\j\to \j'}$ satisfy either $m(\j)\neq 0$ or $m(\j')\neq 0$, that is, either $|m(\j)|> n_0(\omega(\j))$ or $|m(\j')|> n_0(\omega(\j'))$. 
Hence either of the supports of $\Psi^{\sigma}_{\j}$ or $\Psi^{\sigma'}_{\j'}$ are separated from the trapped set $X_0$ by the distance proportional to $\langle \omega(\j)\rangle^{-1/2+\theta}$ or $\langle \omega(\j')\rangle^{-1/2+\theta}$. 
On the other hand, from Corollary \ref{cor:Tzero}, the kernel of $\cT_0^{\lift}$ is localized around the trapped set $X_0$ in the scale $\langle \omega\rangle^{-1/2}$ if we view it through the weight $\cW^{r,\sigma}$. 
These observations lead to
\begin{Lemma} \label{lm:center_peri}
Let  $\sigma, \sigma'\in \Sigma$. There exists a constant   $C_\nu>0$ for $\nu>0$ such that 
\begin{align}
&\|\mathbf{T}_{\omega'}^{\sigma'} \circ \bL^{t, \sigma\to \sigma'}_{\mathrm{hyp}}\circ \bPi_{\omega}\|\le C_\nu \langle \omega\rangle^{-\theta} \langle \omega'-\omega\rangle^{-\nu}\quad \mbox{and}\\
&\|\bPi_{\omega'}\circ  \bL^{t, \sigma\to \sigma'}_{\mathrm{hyp}}\circ \mathbf{T}^{\sigma}_{\omega}\|\le C_\nu \langle \omega\rangle^{-\theta}  \langle \omega'-\omega\rangle^{-\nu}\label{eq:bLt_remainder2}
\end{align}
for any $\omega, \omega'\in \integer$ and $0\le t\le 2t(\omega)$.  
\end{Lemma}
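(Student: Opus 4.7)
The plan is a component-wise analysis in the spirit of the proof of Proposition \ref{lm:TL}, combined with the decomposition (\ref{eq:decomphcL}) of the hyperbolic part. After inserting $\bPi_\omega$ and $\mathbf{T}^{\sigma'}_{\omega'}$, only components $\bL^{t,\sigma\to\sigma'}_{\j\to\j'}$ with $\omega(\j)=\omega$, $\omega(\j')=\omega'$ and $m(\j')=0$ contribute to the first estimate. The hyperbolic condition then forces $m(\j)\neq 0$, so $|m(\j)|>n_0(\omega)$ and the support of $\Psi^{\sigma}_\j$ lies at distance $\gtrsim\langle\omega\rangle^{\theta}$ from the trapped set $X_0$ in the normalized coordinates $(\zeta_p,\tilde\xi_y,\zeta_q,\tilde y)$. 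This geometric gap, placed against the $X_0$-concentration of $\cT_0^{\lift}$ recorded in Corollary \ref{cor:Tzero}, is what supplies the desired gain $\langle\omega\rangle^{-\theta}$.

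I would separate the contribution by the sign of $m(\j)$. If $m(\j)>0$, then $\tilde m(\j)>0$ while $\tilde m(\j')=0$, so Definition \ref{def:arrow} gives $\j\not\hookrightarrow^t\j'$; such components lie in $\widecheck{\bL}^{t,\sigma\to\sigma'}_{\mathrm{hyp}}$ and (\ref{claim:hyp2}) of Lemma \ref{lm:hyp} already controls their contribution via the trace-norm bound $C'_\nu\langle\omega\rangle^{-\nu}\langle\omega'-\omega\rangle^{-\nu}$, much stronger than required. The case $m(\j)<0$ belongs to $\widehat{\bL}^{t,\sigma\to\sigma'}_{\mathrm{hyp}}$ and must be treated directly. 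Restricting to $\omega,\omega'$ satisfying (\ref{eq:close}) (the opposite regime being absorbed into $\langle\omega'-\omega\rangle^{-\nu}\lesssim\langle\omega\rangle^{-\nu/2}$ by Lemma \ref{cor:multi2}), I would factor $f^t_{\j\to\j'}=a^t_{\j\to\j'}\circ g^t_{\j\to\j'}\circ B^t_{\j\to\j'}$ via Corollary \ref{lm:distortion}. The cut-off $X_{n_0(\omega')}$ inside $\mathbf{T}^{\sigma'}_{\omega'}$ keeps us inside $\supp Y$, so Lemma \ref{lm:IdMinusG} lets one replace $g^t_{\j\to\j'}$ by the identity at the cost of $\langle\omega\rangle^{-\epsilon}\langle\omega'-\omega\rangle^{-\nu}$. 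For the resulting affine transfer operator, Lemma \ref{lm:expression_lift} produces a symplectic lift that preserves $X_0$ and the decomposition (\ref{eq:symp_decomp}); combined with Corollary \ref{cor:Tzero}, the kernel of $\cT_0^{\lift}$ composed with this lift carries the factor $\langle e^{|m(\j)|}\rangle^{-r}\lesssim\langle\omega\rangle^{-r\theta}\ll\langle\omega\rangle^{-\theta}$ on the support of $\Psi^{\sigma}_\j$, thanks to the choice of $r$ in (\ref{eq:choice_of_r}). The coefficient $\bar b^t_{\j'}\le e^{(r/4)\chi_0 t}$ and the stable-direction weight $2^{-rm(\j)}$ in the $\bK^{r,\sigma}_\j$-norm recorded in (\ref{eq:bKnorm}) are absorbed by $e^{-r|m(\j)|}$ once $\epsilon_0$ is small enough and $r$ large enough.

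Summing the resulting component-wise bound over admissible $\j,\j'$ proceeds exactly as in the closing paragraph of the proof of Proposition \ref{lm:TL}: one localizes via the neighborhoods $D_{\j\to\j'},D'_{\j\to\j'}$ preceding (\ref{eq:Djj}) to invoke the bounded intersection multiplicity, and uses Remark \ref{rem:cardD} for the at-most $\langle\omega\rangle^{\theta}$-many admissible images of $\j$. The second inequality follows by exchanging the roles of input and output, the geometric gap now sitting on the output side through $m(\j')\neq 0$. The main technical point I expect to dwell on is the verification that $\supp\Psi^{\sigma}_\j$ indeed lies at distance $\gtrsim\langle\omega\rangle^{\theta}$ from $X_0$ uniformly in $|m(\j)|$; this is direct in the range $|m(\j)|\le n_1(\omega)$, whereas for $|m(\j)|>n_1(\omega)$ the weight $2^{-rm(\j)}\le\langle\omega\rangle^{-r\Theta_1}$ alone dominates the crude bound provided by Corollary \ref{cor:multi2}, bypassing the need to analyze the distortion introduced by the rescaling $E_{\omega,m}$.
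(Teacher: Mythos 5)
Your overall strategy is the paper's: estimate each hyperbolic component by playing the $X_0$-localization of the kernel of $\cT_0^{\lift}$ (Corollary \ref{cor:Tzero}) against the fact that $\supp \Psi^{\sigma}_{\j}$ with $m(\j)\neq 0$ sits at distance $\gtrsim e^{|m(\j)|}\langle\omega\rangle^{-1/2}\geq \langle\omega\rangle^{-1/2+\theta}$ from the trapped set, and then sum the component bounds by the intersection-multiplicity argument from the end of the proof of Proposition \ref{lm:TL}. The paper, however, treats \emph{all} the relevant components ($m(\j)\neq0$, $m(\j')=0$ for the first bound, and symmetrically for (\ref{eq:bLt_remainder2})) by this direct argument, proving $\|\mathbf{T}^{\sigma'}_{\omega'}\circ\bL^{t,\sigma\to\sigma'}_{\j\to\j'}\|\le C_\nu e^{-|m(\j)|}\langle\omega\rangle^{-\theta}\langle\omega'-\omega\rangle^{-\nu}$; it never splits $\bL^{t,\sigma\to\sigma'}_{\mathrm{hyp}}$ into $\widehat{\bL}+\widecheck{\bL}$ or invokes Lemma \ref{lm:hyp} here.

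That splitting is where your proof has a genuine gap. The bound (\ref{claim:hyp2}), and Lemma \ref{lm:basic2} behind it, are only established under the extra hypothesis (\ref{eq:condition_on_t}) (either $\sigma'<\sigma$, or $t\ge t_0$); see Remark \ref{rem:hype1}, whose treatment of $0\le t\le t_0$ explicitly uses $\sigma'<\sigma$. Lemma \ref{lm:center_peri} is asserted for \emph{all} $0\le t\le 2t(\omega)$ and all $\sigma,\sigma'\in\Sigma$, and this extra generality is exactly what is needed downstream: it feeds the estimates (\ref{eq:upperboundTLT})--(\ref{eq:upperboundTLQ}) of Proposition \ref{pp:norm_estimate}, which are later applied at $t=0$ with $\sigma=\sigma'$ (for instance to obtain (\ref{eq:cQ-1}) in the proof of Proposition \ref{pp:lowerBound}). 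So your reduction of the $m(\j)>0$ components (and, for the second inequality, the $m(\j')<0$ ones) to Lemma \ref{lm:hyp} does not cover the regime $0\le t<t_0$, $\sigma'\ge\sigma$. The repair is simply to handle these components by the same direct argument you already use: the factor $\mathbf{T}^{\sigma'}_{\omega'}$ pins the output near $X_0$, while the canonical image of $\supp\Psi^{\sigma}_{\j}$ stays at distance $\gtrsim e^{|m(\j)|}e^{-\chi_{\max}t}\langle\omega\rangle^{-1/2}\gtrsim\langle\omega\rangle^{-1/2+(9/10)\theta}$ from $X_0$ for all $0\le t\le 2t(\omega)$ (by the choice of $\epsilon_0$), so the separation needed for Corollary \ref{cor:Tzero} is available with no cone-nesting or $t\ge t_0$ assumption. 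A minor bookkeeping slip in the same direction: in the case you treat directly one has $m(\j)<0$, so $2^{-rm(\j)}=2^{r|m(\j)|}$ sits on the input norm and yields a \emph{gain} $2^{-r|m(\j)|}$ when passing from $\|\cdot\|_{\bK^{r,\sigma}_{\j}}$ to $L^2$; your inequality $2^{-rm(\j)}\le\langle\omega\rangle^{-r\Theta_1}$, and the phrase about absorbing this weight, presuppose $m(\j)>0$, i.e. the very components you routed through Lemma \ref{lm:hyp}.
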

\begin{proof}
From the observations made above, it is not difficult to see that 
\[
\|\mathbf{T}_{\omega'}^{\sigma'}\circ \bL_{\j\to \j'}^{t,\sigma\to \sigma'}:
\bK^{r,\sigma}_{\j}\to \bK^{r,\sigma'}_{\j'}\|\le  C_\nu e^{-|m(\j)|} \cdot \langle \omega\rangle^{-\theta} \langle  \omega'-\omega\rangle^{-\nu}
\]
for $\j,\j'\in \cJ$ with $\omega(\j)=\omega$, $m(\j)\neq 0$ and $\omega(\j')=\omega'$, $m(\j')= 0$, and also that
\[
\| \bL_{\j\to \j'}^{t,\sigma\to \sigma'}\circ \mathbf{T}_{\omega}^\sigma:
\bK^{r,\sigma}_{\j}\to \bK^{r,\sigma'}_{\j'}\|\le  C_\nu e^{-|m(\j')|} \cdot\langle \omega\rangle^{-\theta}  \langle  \omega'-\omega\rangle^{-\nu} 
\]
for $\j,\j'\in \cJ$ with $\omega(\j)=\omega$, $m(\j)=0$ and $\omega(\j')=\omega'$, $m(\j')\neq 0$.
Then we obtain the claims of Lemma \ref{lm:center_peri}, adding these estimates for the components by using the  argument parallel to that in the latter part of the proof of Proposition \ref{lm:TL}. 
\end{proof}

\subsection{The low frequency part}
For the low frequency part, we prove
\begin{Lemma}\label{lm:low} Let $\sigma, \sigma'\in \Sigma$. 
The low frequency part\/ $\bL^{t,\sigma\to \sigma'}_{\mathrm{low}}:\bK^{r,\sigma}\to \bK^{r,\sigma'}$ for $0\le t\le 2t_0$  are trace class operators. Further, there exists a constant $C_\nu>0$ for each $\nu>0$ such that 
\[
\|\bPi_{\omega'}\circ \bL^{t,\sigma\to \sigma'}_{\mathrm{low}}\circ \bPi_{\omega}\|
\le 
\|\bPi_{\omega'}\circ\bL^{t,\sigma\to \sigma'}_{\mathrm{low}}\circ \bPi_{\omega}\|_{\trace}
\le C_\nu\langle \omega'\rangle^{-\nu}\langle \omega\rangle^{-\nu}
\]
for any $\omega,\omega'\in \integer$ and  $0\le t\le 2t(\omega)$.  
\end{Lemma}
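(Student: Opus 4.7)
The plan is to bound the trace norm of $\bPi_{\omega'}\circ \bL^{t,\sigma\to\sigma'}_{\mathrm{low}}\circ \bPi_{\omega}$ by summing the trace norms of its individual components $\bL^{t,\sigma\to\sigma'}_{\j\to\j'}$, and exploiting the fact that for low frequency components one of the pairs $(\omega(\j),m(\j))$ or $(\omega(\j'),m(\j'))$ lies in the bounded set $[-k_0,k_0]^2$. I would split $\bL^{t,\sigma\to\sigma'}_{\mathrm{low}}$ into two subparts according to which of the two pairs is bounded. By symmetry, it suffices to treat one of them, say the subpart $\bL^{(a)}$ consisting of components with $\max\{|\omega(\j)|,|m(\j)|\}\le k_0$. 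Then $\bPi_{\omega'}\circ \bL^{(a)}\circ \bPi_{\omega}$ vanishes unless $|\omega|\le k_0$, so it suffices to obtain a uniform bound $C_\nu\langle \omega'\rangle^{-\nu}$ (the factor $\langle \omega\rangle^{-\nu}$ being absorbed into $C_\nu$ when $|\omega|\le k_0$).

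The key tool is Corollary \ref{cor:coarse}, which bounds the trace norm of each component by $C_\nu(\Delta^{t,\sigma\to\sigma'}_{\j\to\j'})^{\nu}\cdot \max\{\langle\omega(\j)\rangle,\langle\omega(\j')\rangle,e^{|m(\j)|},e^{|m(\j')|}\}^{m_0}$. With $(\omega(\j),m(\j))$ in a bounded set, the maximum is controlled by $\max\{\langle\omega'\rangle,e^{|m(\j')|}\}$, and I need to show that $(\Delta^{t,\sigma\to\sigma'}_{\j\to\j'})^{\nu}$ gives enough decay to beat these polynomial factors after summing over the admissible $\j'$. Inspecting $\Delta$ as defined before Corollary \ref{cor:coarse}, I would isolate three decay factors: (i) the term $\langle \omega'-\omega\rangle^{-1}$ directly yields $\langle\omega'\rangle^{-\nu}$ after raising to the $\nu$th power when $|\omega|\le k_0$; (ii) the spatial factor $\langle\langle\omega'\rangle^{1/2}|(\breve f^t_{\j\to\j'})^{-1}(w'')-w'|\rangle^{-1}$ produces summable decay as the lattice point $n(\j')\in \mathcal{N}(a(\j'),\omega')$ ranges over the fine grid of mesh $\langle\omega'\rangle^{-1/2+\theta}$; and (iii) the frequency factor involving $\xi'_w$ produces decay in $e^{|m(\j')|}$ because the support of $\Psi^{\sigma'}_{\j'}$ lies in the region where the rescaled frequency is of order $e^{|m(\j')|}$, while the image of $\supp\Psi^{\sigma}_{\j}$ under the canonical lift of $f^t_{\j\to\j'}$ stays bounded since $\j$ is in the low frequency zone.

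Combining (i)–(iii), for any fixed $\j$ with $(\omega(\j),m(\j))$ in the bounded range and any fixed $\omega'$, I would estimate
\[
\sum_{\j':\,\omega(\j')=\omega'}\|\bL^{t,\sigma\to\sigma'}_{\j\to\j'}\|_{tr}\le C_\nu\langle\omega'\rangle^{-\nu},
\]
where the sum over the position indices $n(\j')$ converges by (ii) and the sum over $m(\j')$ converges by (iii), each supplying a factor vastly exceeding the polynomial growth $\max\{\langle\omega'\rangle,e^{|m(\j')|}\}^{m_0}$ once $\nu$ is taken large. Since the number of $\j$ in the low frequency zone with $\omega(\j)=\omega$ is bounded by a constant depending only on $k_0$, summing over such $\j$ yields the claimed bound. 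The symmetric case where $\j'$ sits in the low frequency zone is handled identically, and adding the two contributions gives the estimate stated in the lemma; summing the resulting bound over all $(\omega,\omega')$ establishes that $\bL^{t,\sigma\to\sigma'}_{\mathrm{low}}$ is trace class.

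The main technical obstacle will be verifying (iii), namely that the frequency factor in $\Delta^{t,\sigma\to\sigma'}_{\j\to\j'}$ really produces exponential decay in $|m(\j')|$ uniformly in the range $n_0(\omega')<|m(\j')|<n_2(\omega')$ where the interpolation (\ref{def:eomega}) is active and the cones defining $\Psi^{\sigma'}_{\j'}$ are distorted by $D^*E_{\omega',m(\j')}$. This requires tracking the geometry of $\supp\Psi^{\sigma'}_{\j'}$ in the coordinates (\ref{eq:newcoordinates}) and comparing it with the bounded support of the dual propagated wave packet coming from $\j$; the estimate should follow from the same kind of elementary geometric bookkeeping underlying Lemma \ref{lm:basic2}, but simplified by the fact that $\j$ is now pinned in a compact set.
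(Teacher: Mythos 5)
Your proposal follows essentially the same route as the paper: there, too, the trace norm of each low-frequency component is bounded by Corollary \ref{cor:coarse} combined with a geometric decay estimate on $\Delta^{t,\sigma\to\sigma'}_{\j\to\j'}$ (stated as Lemma \ref{lm:basic3}, whose proof is likewise omitted by analogy with Lemma \ref{lm:basic2}), and the lemma follows by summing these component bounds. Your factor-by-factor analysis of $\Delta$ and the explicit bookkeeping of the sums over $n(\j')$, $m(\j')$ and the finitely many bounded indices is simply an unpacking of that same argument.
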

The lemma above is obtained immediately by adding the estimates on the trace norms of the components of $\bL^{t,\sigma\to \sigma'}_{\mathrm{low}}$ given by the next lemma and Corollary \ref{cor:coarse}. 
 
\begin{Lemma}\label{lm:basic3}
Let $\sigma, \sigma'\in \Sigma$.  
Suppose that   $\bL^{t,\sigma\to \sigma'}_{\j\to \j'}$ for $\j, \j'\in \cJ$ is a low frequency component of 
$\bL^{t,\sigma\to \sigma'}$ and  that $0<t\le 2t(\omega(\j))$ satisfies (\ref{eq:condition_on_t}) w.r.t.~$\sigma$ and $\sigma'$. Then there  exist constants $\gamma_0>0$ and $C_0>0$ (independent of $\j, \j'$ and $t$ but dependent on the constant $k_0$) such that  
\begin{equation}\label{Delta_bound}
\Delta_{\j\to \j'}^{t, \sigma\to\sigma'}< C_0 \max\{\langle\omega(\j)\rangle, \langle \omega(\j')\rangle, e^{|m(\j)|}, e^{|m(\j')|}\}^{-\gamma_0}.
\end{equation}
\end{Lemma}
Since we can take large constant $C_0$ in the statement above depending on  $k_0$, we can prove the claims above by crude geometric estimates 
on the supports of $\Psi^{\sigma}_{\omega,m}$. We omit the proof because it is straightforward and is obtained as an easier case of the proof of Lemma \ref{lm:basic2} in the appendix.

\subsection{The operator $\mathbf{T}^{\sigma\to \sigma'}_{\omega}$}
Let us recall the operator $\mathbf{T}^{\sigma\to \sigma'}_\omega$ from Definition \ref{Def:bT}.
The following  is the counterpart of Lemma \ref{lm:trace1}.
\begin{Lemma}\label{lm:trace_lifted} Let $\sigma, \sigma'\in \Sigma$.  The operator $\mathbf{T}^{\sigma\to \sigma'}_\omega:\bK^{r,\sigma}\to \bK^{r,\sigma'}$ is bounded  and its operator norm is bounded uniformly in $\omega$.
There is a constant $C_0>0$ such that 
\begin{equation}
\label{eq:Tlifted2}
\|\bI^{\sigma'}\circ \mult(1-\rho_{K_1})\circ (\bI^\sigma)^*\circ\mathbf{T}^{\sigma}_\omega:\bK^{r,\sigma}\to \bK^{r,\sigma'}\|\le C_0 \langle \omega\rangle^{-\theta}
\end{equation}
for any $\omega\in \mathbb{Z}$. For the trace norm, we have that
\begin{equation}\label{eq:traceTlifted}
\|\bI^{\sigma'}\circ \mult(\rho_{K_1})\circ (\bI^\sigma)^*\circ\mathbf{T}^{\sigma}_\omega:\bK^{r,\sigma}\to \bK^{r,\sigma'}\|_{\trace}\le C_0 \langle \omega\rangle^d
\end{equation}
for any $\omega\in \mathbb{Z}$.
Further, for each $\omega\in \integer$ with $|\omega|> k_0$,  there exists  a finite dimensional vector subspace $V(\omega)\subset \cK^{r,\sigma}(K_1)$ with  
$\dim V(\omega)\ge \langle \omega\rangle^{d}/C_0$ such  that 
\begin{equation}\label{eq:trace_V}
\|\bI^{\sigma'}\circ \mult(\rho_{K_1})\circ (\bI^{\sigma})^*\circ \mathbf{T}^{\sigma}_\omega \circ \bI^\sigma u\|_{\bK^{r,\sigma'}}\ge C_0^{-1} \|u\|_{\cK^{r,\sigma}}\quad \mbox{for all $u\in V(\omega)$. }
\end{equation}
\end{Lemma}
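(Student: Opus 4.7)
The plan is to reduce each of the four claims to a uniform estimate on a single component of $\mathbf{T}^{\sigma\to\sigma'}_\omega$ followed by summation over the non-trivial components. By Definition \ref{Def:bT}, $\mathbf{T}^{\sigma\to\sigma'}_\omega$ is block-diagonal on $\cJ$ and acts only on components $\bK^{r,\sigma}_{\j}$ with $m(\j)=0$ and $\omega(\j)=\omega$, where it coincides with $\mult(X_{n_0(\omega)})\circ \cT_0^{\lift}$.

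The uniform boundedness in claim (1) will follow from Corollary \ref{cor:Tzero}: the kernel bound there already absorbs the weight ratio $\mathcal{W}^{r,\sigma}/\mathcal{W}^{r,\sigma'}$ and has rapid off-diagonal decay, so Schur's test supplies a uniform $L^2$-bound on each block, and the block-diagonal structure delivers the total bound. For the tail bound (\ref{eq:Tlifted2}), observe that $(\bI^\sigma)^*\circ \mathbf{T}^{\sigma}_\omega \bu$ is a smooth function concentrated within distance $C\langle\omega\rangle^{-1/2+\theta}$ of $\mathrm{Im}\,e_u\subset K_2$: this follows from the Gaussian concentration of $\cT_0^{\lift}$ in the $(\zeta_p,\tilde{\xi}_y,\zeta_q,\tilde{y})$-variables (Lemma \ref{lm:Tzero}), combined with the cutoff $X_{n_0(\omega)}$ restricting those variables to $|(\zeta_p,\tilde{\xi}_y,\zeta_q,\tilde{y})|\le C\langle\omega\rangle^\theta$. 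Since $1-\rho_{K_1}\equiv 0$ on $K_2$ and $K_2$ contains an $O(1)$-neighborhood of $\mathrm{Im}\,e_u$, the Gaussian tail of this function outside that neighborhood is exponentially small in $\langle\omega\rangle$, easily absorbing the polynomial amplification from $\mathcal{W}^{r,\sigma'}$ when re-lifting by $\bI^{\sigma'}$, and yielding the bound $C_0\langle\omega\rangle^{-\theta}$ with great margin.

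For the trace-norm estimate (\ref{eq:traceTlifted}), I would use the tensorial decomposition (\ref{eq:Tlift2}), $\cT_0^{\lift}=\Phi^*\circ(\BargmannP^{(d)}_1\otimes T_0^{\lift}\otimes\mathrm{Id})\circ(\Phi^*)^{-1}$. Here $T_0^{\lift}$ is rank one (Lemma \ref{lm:Tzero}); $q_\omega$ localizes $\xi_z$ to a unit interval; and within each scaled local chart the relevant range of the variable $\nu=(\nu_q,\nu_p)$ has volume of order $\langle\omega\rangle^{2d\theta}$, so by the standard coherent-state count the Bargmann projector $\BargmannP^{(d)}_1$ restricted there has effective rank of order $\langle\omega\rangle^{2d\theta}$. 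The number of non-trivial charts is $\sum_a|\cN(a,\omega)|$, of order $\langle\omega\rangle^{d(1-2\theta)}$; because $\mathbf{T}^{\sigma\to\sigma'}_\omega$ is block-diagonal, the component trace norms add, giving a total trace norm of order $\langle\omega\rangle^d$. The operator in (\ref{eq:traceTlifted}) is $\mathbf{T}^\sigma_\omega$ composed with the uniformly bounded map $\bI^{\sigma'}\circ\mult(\rho_{K_1})\circ(\bI^\sigma)^*$, so its trace norm satisfies the same bound. For the lower bound (\ref{eq:trace_V}), I would build $V(\omega)$ chart-by-chart: for each $(a,n)$, Lemma \ref{lm:choiceV} yields a subspace $W(\omega;a,n)\subset C^\infty$ of dimension at least $c\langle\omega\rangle^{2d\theta}$, supported in the scaled chart, on which the local model of $\mathbf{T}^{\sigma}_\omega$ satisfies the two-sided bound (\ref{eq:Vclaim}); transporting these to $K_1$ via $\kappa_{a,n}^{(\omega)}$ and summing over $(a,n)$, the nearly disjoint supports of $\rho_{a,n}^{(\omega)}$ in $K_1$ ensure $V(\omega)$ has dimension at least $c'\langle\omega\rangle^d$, while the local two-sided bound together with near-orthogonality yields (\ref{eq:trace_V}).

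The main technical obstacle I anticipate is making the rank counting in claim (3) genuinely rigorous: $\BargmannP^{(d)}_1$ is infinite-rank, so its restriction to the trapped-set region must be approximated by an explicit finite-rank operator of rank of order $\langle\omega\rangle^{2d\theta}$, with trace-norm remainder summed across the $\langle\omega\rangle^{d(1-2\theta)}$ charts remaining negligible; this should exploit the Gaussian decay of the kernel $K_{\mathcal{P},\hbar}$ in (\ref{eq:kernel_KP}). A parallel concern arises in claim (4) for the anisotropic weights $\mathcal{W}^{r,\sigma}$ on different charts, but here the bounded intersection multiplicity of $\{\supp \rho_{a,n}^{(\omega)}\}$, a fact used repeatedly in the preceding section, should suffice.
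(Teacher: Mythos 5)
Your proposal follows essentially the same route as the paper's proof: claim (1) and the tail bound (\ref{eq:Tlifted2}) from the weighted kernel localization of $\cT_0^{\lift}$ in Corollary \ref{cor:Tzero}, the trace bound (\ref{eq:traceTlifted}) from a per-chart estimate $C\langle\omega\rangle^{2d\theta}$ (which the paper makes rigorous exactly as you anticipate must be done, by writing the block $\bI^{\sigma'}\circ\mult(\rho_{K_1})\circ(\bI^\sigma_\j)^*\circ\mult(X_{n_0(\omega)})\circ\cT_0^{\lift}$ as an integral of rank-one operators via the factorized kernel of Corollary \ref{cor:Tzero}) times the $C\langle\omega\rangle^{d(1-2\theta)}$ charts with $m(\j)=0$, and the lower bound (\ref{eq:trace_V}) from Lemma \ref{lm:choiceV} transported by the charts $\kappa^{(\omega)}_{a,n}$. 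Only two cosmetic touch-ups are needed: the decay of the kernel seen through the weights is polynomial of order $r$ rather than Gaussian, which still dominates $\langle\omega\rangle^{-\theta}$ by a wide margin, and for (\ref{eq:trace_V}) one should, as the paper does, restrict to a subfamily $\cJ_\omega$ of charts with genuinely disjoint supports (the supports of $\rho^{(\omega)}_{a,n}$ for different $a$ overlap heavily, so summing over all $(a,n)$ would not by itself justify the dimension count), which already gives $\dim V(\omega)\ge \langle\omega\rangle^{d}/C_0$ and removes any interference issue.
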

\begin{proof}
Recall the definition (\ref{eq:mathbfT}) of $\mathbf{T}^{\sigma\to \sigma'}_\omega$ and note that we have a precise description of the kernel of $\cT_0^{\lift}$ from  Corollary \ref{cor:Tzero}.
Then we obtain the uniform boundedness of the operator norm of  $\mathbf{T}^{\sigma\to \sigma'}_\omega$ and the claim (\ref{eq:Tlifted2}) as immediate consequences.
 To prove  (\ref{eq:traceTlifted}), 
it is enough to show, for some constant $C>0$ independent of $\omega$, that
\[
\|\bI^{\sigma'}\circ \mult(\rho_{K_1})\circ (\bI^\sigma_\j)^*
\circ \mult(X_{n_0(\omega)}) \circ \cT_0^{\lift}:\bK^{r,\sigma}_{\j}\to \bK^{r,\sigma'}\|_{\trace}\le C\langle \omega\rangle^{2d\cdot\theta}
\]
for any $\j\in \cJ$ with $\omega(\j)=\omega$ and $m(\j)=0$, because the cardinality of the element $\j\in \cJ$ satisfying $\omega(\j)=\omega$ and $m(\j)=0$ is bounded by $C\langle \omega\rangle^{(1/2-\theta)\cdot 2d}$. 
To prove this claim, we express the operator on the left-hand side above as integration of rank one operators by applying  Corollary \ref{cor:Tzero} to $\cT_0^{\lift}$. Applying Lemma \ref{lm:IdMinusG} to the $\j\to \j'$ component of $\bI^{\sigma'}\circ \mult(\rho_{K_1})\circ (\bI^{\sigma})^*$ with $m(\j)=0$, we see that those rank one operators are uniformly bounded. Therefore we get the required inequality by using triangle inequality on the trace norm.

We prove the last claim as a consequence of 
Lemma \ref{lm:choiceV}. 
Suppose that $c>0$ is sufficiently small. Then, for each $\omega\in \integer$, we can choose a finite  subset $\cJ_\omega$ in $\{\j\in \cJ\mid \omega(\j)=\omega, m(\j)=0\}$ so that $\# \cJ_{\omega}>c\langle \omega\rangle^{(1/2-\theta)\cdot 2d}$ and that the supports of  $\tilde{\rho}_{\j}\circ \kappa_{\j}^{-1}$ for $\j\in \cJ_\omega$ are separated by distance not less than $c\langle \omega\rangle^{-1/2+\theta}$. 
By choosing the points in $\cJ_\omega$ appropriately (and also $\rho_a$ if necessary), we may and do assume that, for some $\epsilon>0$ independent of $\omega$, the function $\rho_{\j}$ for each $\j\in \cJ_\omega$ takes constant value $1$ 
on the subset 
\[
 \{(w,z)\in \real^{2d+d'+1}\mid |w|\le \epsilon\langle \omega\rangle^{-1/2+\theta},\;
|z|\le \epsilon\}.
\]
Let $W(\omega)$ be the finite dimensional subspace given in Lemma \ref{lm:choiceV}  and put
\[
V(\omega):=\bigoplus_{\j\in \cJ_{\omega}}(\kappa_{\j}^{-1})^{*}(W(\omega))\subset C^\infty(K_0).
\]
This is a direct sum because the subspaces on the right-hand side are almost orthogonal\footnote{Here and a few lines below, we mean by ``almost orthogonal" that we have the estimate corresponding to (\ref{eq:almostorthogonal}). } from the assumption made above. Hence we have
\[
\dim V(\omega)=\dim W(\omega)\cdot \# \cJ \ge C_0^{-1}\langle \omega\rangle^d
\]
for some constant $C_0>0$ independent of $\omega$. 
From (\ref{eq:Vclaim}),  
we have (\ref{eq:trace_V}) for $v\in (\kappa_{\j}^{-1})^{*}W(\omega)$ and   $\j\in \cJ_\omega$.
Then it extends to all $v\in V(\omega)$ again by almost orthogonality between the images of $(\kappa_{\j}^{-1})^{*}(W(\omega))$ by $\bI^{\sigma'}\circ \mult(\rho_{K_1})\circ (\bI^\sigma)^*\circ\mathbf{T}^{\sigma}_\omega$.   
\end{proof}

\subsection{Short time estimates}
Lastly we give an estimate on the lifted operator $\bL^{t,\sigma\to \sigma'}$ for small $t>0$. This is the counterpart of Lemma \ref{lm:continuity_omega}. 
\begin{Lemma}\label{lm:cintinuity_lifted} 
Suppose that $\sigma, \sigma'\in \Sigma$ satisfies $\sigma'<\sigma$.
Then there exist constants $t_*>0$ and $C_\nu>0$ for each $\nu>0$  such that
\begin{equation}\label{eq:Aoo2}
\|\bPi_{\omega'}\circ ( e^{-i\omega t} \bL^{t,\sigma\to \sigma'}-\bL^{0,\sigma\to \sigma'})\circ \bPi_\omega:\bK^{r,\sigma}\to \bK^{r,\sigma'}\|\le C_\nu t\cdot 
\langle \omega'-\omega\rangle^{-\nu}
\end{equation}
for $0\le t<t_*$ and $\omega, \omega'\in \integer$. The limit
\begin{equation}\label{eq:Alimit}
\mathbf{A}_{\omega,\omega'}:=\lim_{t\to +0} \frac{1}{t} \cdot \bPi_{\omega'}\circ( e^{-i\omega t} \bL^{t,\sigma\to \sigma'}-\bL^{0,\sigma\to \sigma'})\circ \bPi_\omega
\end{equation}
exists and is a bounded operator from $\bK^{r,\sigma}$ to $\bK^{r,\sigma'}$ satisfying  
\begin{equation}\label{eq:Aoo}
\|\mathbf{A}_{\omega,\omega'}:\bK^{r,\sigma}\to \bK^{r,\sigma'}\|\le C_\nu  \langle \omega'-\omega\rangle^{-\nu}.
\end{equation}
\end{Lemma}
\begin{proof}
From the expression (\ref{eq:ljj}), we can write 
the $\j\to \j'$ component of  
\begin{equation}\label{eq:bLt0}
\frac{1}{t} \cdot \bPi_{\omega'}\circ (e^{-i\omega t}\cdot \bL^t-\bL^0)\circ \bPi_{\omega}
\end{equation}
with $\omega(\j)=\omega$ and $\omega(\j')=\omega'$ as 
\begin{align*}
\mult(\Psi_{\omega(\j'), m(\j')})\circ 
\pBargmann\circ \left(\frac{1}{t}\left(e^{-i\omega t}\cdot L(f^{t}_{\j\to \j'}, \tilde{b}^t_{\j'}\cdot \rho^{t}_{\j\to \j'})-L(f^{0}_{\j\to \j'}, \tilde{b}^0_{\j'}\cdot\rho^{0}_{\j\to \j'})\right)\right)\circ \pBargmann^*. 
\end{align*}
Note that we have $
f^t_{\j\to \j'}(x,y,z)=f^0_{\j\to \j'}(x,y,z+t)$ for sufficiently small $t$, provided that the both sides are defined. Therefore, setting $T_t(x,y,z)=(x,y,z+t)$, we
 rewrite the operator above in the middle  as  
\begin{align}\label{eq:smallT}
\frac{1}{t}&\left(e^{-i\omega t} L(f^{t}_{\j\to \j'}, \tilde{b}^t_{\j'}\cdot\rho^{t}_{\j\to \j'})-L(f^{0}_{\j\to \j'}, \tilde{b}^0_{\j'}\cdot\rho^{0}_{\j\to \j'})\right)\\
&\qquad=\frac{1}{t}(e^{-i\omega t}-e^{-i\omega' t})\cdot L(f^{t}_{\j\to \j'}, \tilde{b}^t_{\j'}\cdot\rho^{t}_{\j\to \j'})\notag\\
&\qquad \qquad
 +e^{-i\omega' t}\cdot L\left(f^{t}_{\j\to \j'}, \frac{1}{t}\left(\tilde{b}^t_{\j'}\cdot\rho^{t}_{\j\to \j'}-(\tilde{b}^0_{\j'}\cdot\rho^{0}_{\j\to \j'})\circ T_{-t}\right)\right)\notag
\\
&\qquad \qquad+ \left(
\frac{e^{-i\omega' t}}{t}\cdot  L(T_{t},1)-\mathrm{Id}\right)\circ L(f^{0}_{\j\to \j'},\tilde{b}^0_{\j'}\cdot\rho^{0}_{\j\to \j'}).\notag
\end{align}
Correspondingly we decompose (\ref{eq:bLt0}) into three parts $\bL_0$, $\bL_1$ and $\bL_2$, which have the first, the second and the third operators on the right-hand side above respectively in their $(\j\to \j')$-components. 

To prove the former claim in the lemma, it is enough to prove 
\begin{equation}\label{ClaimSmallt}
\|\bL_k:\bK^{r,\sigma}\to \bK^{r,\sigma'}\|\le C_\nu \langle \omega'-\omega\rangle^{-\nu}\quad \mbox{for $k=0,1,2$}
\end{equation}
uniformly for small $t>0$.
The case $k=0$ is obvious, because we have the same estimate for $\bPi_{\omega'}\circ \bL^{t,\sigma\to \sigma'}\circ \bPi_{\omega}$ by Proposition \ref{lm:TL}, Proposition \ref{prop:hyp} and Lemma \ref{lm:low} and because $t^{-1}(e^{-i\omega t}-e^{-i\omega' t})$ is bounded by $2\langle \omega'-\omega\rangle $.
For the case $k=1$, we note that the only difference between the operators $\bPi_{\omega'}\circ \bL^{t,\sigma\to \sigma'}\circ \bPi_{\omega}$ and $\bL_1$ is that  the 
multiplication by $\tilde{b}^t_{\j'}\cdot\rho^{t}_{\j\to \j'}$ in each component is replaced with that by 
$(\tilde{b}^t_{\j'}\cdot\rho^{t}_{\j\to \j'}-(\tilde{b}^0_{\j'}\cdot\rho^{0}_{\j\to \j'})\circ T_{-t})/t$.
Since the last function satisfies the similar estimates for the derivatives and the support  as $\tilde{b}^t_{\j'}\cdot\rho^{t}_{\j\to \j'}$, we can follow the argument in the previous subsections to get 
(\ref{ClaimSmallt}) for $k=1$. 
For the case $k=2$, we regard the components of the operator $\bL_2$ as the operators $\bL^{0,\sigma\to \sigma'}_{\j\to \j'}$ post-composed by the lift of $t^{-1} (e^{-i\omega t}L(T_{t},1)-\mathrm{Id})$. Since the supports of the functions in $\bK^{r,\sigma}_{\j'}$ is contained in $[\omega'-1,\omega'+1]$ in the coordinate $\xi_z$, the operator norm of 
\[
\left(\frac{1}{t} (e^{-i\omega' t}L(T_{t},1)-\mathrm{Id})\right)^{\lift}=\pBargmannP\circ \mult(t^{-1}(e^{i(\xi_z-\omega') t}-1)):\bK_{\j'}^{r,\sigma'}\to \bK^{r,\sigma'}_{\j'}
\]
is bounded by $2$.  Therefore we obtain (\ref{ClaimSmallt}) in the case $k=2$. 

To prove the latter claim on $\mathbf{A}_{\omega,\omega'}$, we consider the limit $t\to +0$ in the argument above.  Convergence of  (\ref{eq:Alimit}) is clear from the expression (\ref{eq:smallT}). Then the estimate (\ref{eq:Aoo})  follows from (\ref{eq:Aoo2}). 
\end{proof}

\subsection{Proof of propositions in Subsection \ref{ss:lt}}
Finally we deduce the propositions given in Subsection \ref{ss:lt}. 
Below we keep in mind that the diagram (\ref{cd:bL}) commutes and that $\bI^\sigma:\cK^{r,\sigma}\to \bK^{r,\sigma}$ is an isometric embedding. 

\begin{proof}[Proof of Proposition \ref{pp:continuity}]
Applying Proposition \ref{lm:TL}, Proposition \ref{prop:hyp} and  Lemma \ref{lm:low} to the central, hyperbolic and low frequency parts of $\bL^{t,\sigma\to \sigma'}$ respectively, we see that there exists a constant $C_\nu>0$ for any $\nu>0$ such that 
\begin{equation}\label{eq:bL_omega_omega'}
\|\bPi_{\omega'}\circ \bL^t\circ \bPi_{\omega}:\bK^{r,\sigma}\to \bK^{r,\sigma'}\|\le C_\nu\langle \omega'-\omega\rangle^{-\nu}
\end{equation}
for $0\le t\le 2t_0$ satisfying   (\ref{eq:condition_on_t}). (Note that $t(\omega)\ge t_0$ by definition.) 
By the definition of the norm on $\bK^{r,\sigma}$,  this implies 
\begin{equation}\label{eq:bL_omega_omega'2}
\| \bL^t:\bK^{r,\sigma}\to \bK^{r,\sigma'}\|\le C
\quad \mbox{for $0\le t\le 2t_0$ satisfying   (\ref{eq:condition_on_t}).}
\end{equation}
Then, in view of the commutative diagram (\ref{cd:bL}), we get
\[
\|\cL^t:\cK^{r,\sigma}(K_0)\to \cK^{r,\sigma'}(K_0)\|\le C \quad \mbox{for $0\le t\le 2t_0$ satisfying   (\ref{eq:condition_on_t}).}
\]
We obtain the last claim of Proposition \ref{pp:continuity} by iterative use of this estimate. 
\end{proof}

\begin{proof}[Proof of Lemma \ref{lm:Qomega}]
We consider the inequality  (\ref{eq:bL_omega_omega'}) for the case $t=0$, but with $\cL^t$ replaced by $\cL^t\circ \mult(\rho_{K_i})$, $i=0,1$. (Recall Remark \ref{rem:difference}.) 
Then we have
\begin{equation}\label{eq:I_omega_omega'}
\|\bPi_{\omega'}\circ \bI^{\sigma'} \circ \mult(\rho_{K_i})\circ  (\bI^\sigma)^{*}\circ \bPi_{\omega}:\bK^{r,\sigma}\to \bK^{r,\sigma'}\|\le
C_\nu \langle \omega'-\omega\rangle^{-\nu}
\end{equation}
and hence
\begin{equation}\label{eq:I_omega_omega'2}
\|\bI^{\sigma'} \circ \mult(\rho_{K_i})\circ  (\bI^\sigma)^{*}:\bK^{r,\sigma}\to \bK^{r,\sigma'}\|\le
C
\end{equation}
provided $\sigma'<\sigma$. 
From the definition of the norm $\|\cdot\|_{\cK^{r,\sigma}}$ in Definition \ref{Def:cK}, we have $\|u\|_{\cK^{r,\sigma}}^2=\sum_{\omega} \|\bPi_{\omega}\circ \bI^{\sigma} u\|_{\bK^{r,\sigma}}^2$ and 
\[
\|\cQ_{\omega} u\|^2_{\cK^{r,\sigma'}}=
\sum_{\omega'}\|(\bPi_{\omega'}\circ \bI^{\sigma'} \circ \mult(\rho_{K_0})\circ (\bI^\sigma)^{*}\circ \bPi_{\omega})\circ \bPi_{\omega}\circ \bI^{\sigma} u\|_{\bK^{r,\sigma'}}^2. 
\]
Therefore the inequality (\ref{one:qsum1}) follows from (\ref{eq:I_omega_omega'}). 
\end{proof}
\begin{proof}[Proof of Lemma \ref{lm:trace1}]
We deduce the claims from Lemma \ref{lm:trace_lifted}.
From uniform boundedness of $\mathbf{T}_\omega^{\sigma\to \sigma'}$ given in Lemma \ref{lm:trace_lifted} and the fact that $\mathbf{T}_\omega^{\sigma\to \sigma'}$ for different $\omega$ acts on different components in effect,  there exists a constant $C>0$ such that the operator norm of 
\[
\sum_{\omega\in Z} \mathbf{T}_\omega^{\sigma\to \sigma'+1}:\bK^{r,\sigma}\to \bK^{r,\sigma'+1}
\]
for any subset $Z \subset \integer$
is bounded uniformly by $C$. 
The claim  (\ref{eq:TZ}) follows from this  and (\ref{eq:I_omega_omega'2})
because
the operator norm in (\ref{eq:TZ}) is bounded by that of 
\[
\bI^{\sigma'}\circ \mult(\rho_{K_1})\circ (\bI^{\sigma'+1})^*\circ \left(\sum_{\omega\in Z} \mathbf{T}_\omega^{\sigma\to \sigma'+1}\right):\bK^{r,\sigma}\to \bK^{r,\sigma'}.
\]
We next prove the latter claims (a) and (b) in Lemma \ref{lm:trace1}. Since  
\[
\|\cT_\omega:\cK^{r,\sigma}(K_0)\to \cK^{r, \sigma'}(K_0)\|_{\trace}\le
\|\bI^{\sigma'}\circ \mult(\rho_{K_1})\circ (\bI^\sigma)^*\circ\mathbf{T}^{\sigma}_\omega:\bK^{r,\sigma}\to \bK^{r,\sigma'}\|_{\trace},
\]
the upper bound in (\ref{eq-trace-order}) follows from (\ref{eq:traceTlifted}). 
The claim (b) is basically a literal translation of the latter claim in Lemma \ref{lm:trace_lifted}. 
The estimates (\ref{eq:Vexp}) and (\ref{eq:Vexp2}) are immediate consequences of (\ref{eq:trace_V}) and (\ref{eq:I_omega_omega'}).
Finally we see that the lower bound in  (\ref{eq-trace-order}) is a consequence of the claim (b).
\end{proof}

\begin{proof}[Proof of Proposition \ref{pp:norm_estimate}]
We first prove the claim (\ref{eq:upperboundTLT}). 
From  (\ref{eq:I_omega_omega'2}) and the commutative diagram (\ref{cd:bL}), we have that 
\begin{align*}
\|\cT_{\omega'}\circ &\cL^t\circ \cT_{\omega}:\cK^{r, \sigma}(K_0)\to \cK^{r, \sigma'}(K_0)\|\\
&\le  C\| \bI^{\sigma'}\circ \mult(\rho_{K_1})\circ  (\bI^{\sigma'+1})^*\circ  \mathbf{T}_{\omega'}^{\sigma-1\to \sigma'+1}\circ \bL^{t, \sigma\to \sigma-1}\circ \mathbf{T}_{\omega}^{\sigma}:\bK^{r, \sigma}\to \bK^{r,\sigma'}\|\\
&\le C \|  \mathbf{T}_{\omega'}^{\sigma-1\to \sigma'+1}\circ \bL^{t, \sigma\to \sigma-1}\circ \mathbf{T}_{\omega}^{\sigma}:\bK^{r, \sigma}\to \bK^{r,\sigma'+1}\|\\
&= C \|  \mathbf{T}_{\omega'}^{\sigma'+1}\circ \bL^{t, \sigma\to \sigma'+1}\circ \mathbf{T}_{\omega}^{\sigma}:\bK^{r, \sigma}\to \bK^{r,\sigma'+1}\|
\end{align*}
where $\bL^{t, \sigma\to \sigma-1}$ is to be read as the lift of $\cL^t\circ \mult(\rho_{K_1})$. (Recall Remark \ref{rem:difference}.)
We decompose $\bL^{t, \sigma\to \sigma'+1}$ as in (\ref{eq:decomp_Lt}) and apply Proposition \ref{lm:TL}, Lemma \ref{lm:center_peri} and Lemma \ref{lm:low} to each part, noting uniform  boundedness of $\mathbf{T}_{\omega}^{\sigma}$ and the relation 
\begin{equation}\label{eq:comm_TQ}
\bPi_\omega\circ \mathbf{T}_{\omega}^{\sigma\to \sigma'}=\mathbf{T}_{\omega}^{\sigma\to \sigma'}\circ \bPi_\omega=\mathbf{T}_{\omega}^{\sigma\to \sigma'}.
\end{equation} 
Then we obtain the first claim:
\begin{align*}
\|\cT_{\omega'}\circ &\cL^t\circ \cT_{\omega}:\cK^{r, \sigma}(K_0)\to \cK^{r, \sigma'}(K_0)\|\\
&\le 
C_\nu \langle \omega'-\omega\rangle^{-\nu}
+C_\nu \langle\omega\rangle^{-\theta} \langle \omega'-\omega\rangle^{-\nu}
+C_\nu \langle \omega'\rangle^{-\nu} \langle\omega\rangle^{-\nu}
\le C_\nu \langle \omega'-\omega\rangle^{-\nu}.
\end{align*}
We can prove (\ref{eq:upperboundQLT}), (\ref{eq:upperboundTLQ}), (\ref{eq:upperboundQLQ}) in a parallel manner. Just note that, in proving the last inequality, we apply Proposition \ref{prop:hyp} instead of Lemma \ref{lm:center_peri} for the hyperbolic part. 
\end{proof}

\subsubsection{Proof of Proposition \ref{pp:backward}}
The statements are direct consequences of those in Proposition \ref{pp:backward_lifted}. 
In proving (\ref{eq:cv1}) and (\ref{eq:cv2}), we use (\ref{eq:Tlifted2}) and (\ref{eq:I_omega_omega'2}) to deal with the multiplication operators $\mult(\rho_{K_0})$ and $\mult(\rho_{K_1})$. 

\subsubsection{Proof of Lemma  \ref{lm:continuity_omega}} 
The statements are direct consequences of those in Lemma \ref{lm:cintinuity_lifted}, but with $\cL^t$ replaced by $\cL^t\circ \mult(\rho_{K_0})$. (Recall Remark \ref{rem:difference} again.) For the last statement on replacement of $\cQ_\omega$ with $\cT_{\omega}$, we use the relation (\ref{eq:comm_TQ}). 

\appendix
\section{Proof of Lemma \ref{lm:basic2}}
\label{ssA}
As we noted in the text after the statement of Lemma \ref{lm:basic2}, the proof is obtained by elementary geometric estimates about diffeomorphisms with some hyperbolicity. We begin with preliminary argument. 
For definiteness, we assume that
\begin{itemize}
\item[($\bigstar 1$)] the condition (ii) in (\ref{eq:condition_on_t}) holds, that is, $t\ge t_0$, and
\item[($\bigstar 2$)] the condition (\ref{eq:close})  holds.
\end{itemize}
The cases where these conditions do not hold are treated in a parallel and simpler manner.
(See Remark \ref{rem:hype1}.) 
Further we may and do assume
\begin{itemize}
\item[($\bigstar 3$)] $\min\{\max\{e^{|m(\j)|}, \langle \omega(\j)\rangle\}, \max\{e^{|m(\j')|}, \langle \omega(\j')\rangle\}\}$ is large, and  
\item[($\bigstar 4$)] $e^{\chi_{\max}\cdot 2t(\omega)}\le \langle \omega\rangle^{\theta/10}$
\end{itemize}
by choosing large constant $k_0$ in the definition of the low-frequency part and  small constant $\epsilon_0>0$ in the definition of $t(\omega)$ in (\ref{eq:tomega}) respectively. 

For simplicity, we set  $
\omega=\omega(\j)$, $\omega'=\omega(\j')$, $m=m(\j)$, $m'=m(\j')$. Below we  consider  points
\begin{align}
&w''\in \supp \rho^t_{\j\to \j'},\\\label{psi1}
&\mathbf{p}=(w,\xi_w,\xi_z)=(q,p,y,\xi_q,\xi_p,\xi_y,\xi_z)\in \supp \Psi^{\sigma}_{\j}\quad 
\mbox{and}\\
&\mathbf{p}'=(w',\xi'_w,\xi'_z)=(q',p',y',\xi'_q,\xi'_p,\xi'_y,\xi'_z)\in \supp \Psi^{\sigma'}_{\j'}\label{psi2}
\end{align}
and estimate the quantity (\ref{eq:Deltajj}). 
By changing the coordinates by a transformation in $\cA_2$, we may and do assume
\begin{itemize}
\item[($\bigstar 5$)] $\proj_{(x,z)}(w'')=0$ and $\proj_{(x,z)}((\breve{f}^t_{\j\to \j'})^{-1}(w''))=0$
\end{itemize}
without loss of generality. 
From the assumption that $\bL^{t,\sigma\to \sigma'}_{\j\to \j'}$ is a component of $\bL^{t,\sigma\to \sigma'}_{\mathrm{hyp},\not\hookrightarrow}$, we have $m\cdot m'\neq 0$ and hence
\begin{equation}\label{eq:mgeomega}
e^{\max\{|m|,|m'|\}}\ge e^{n_0(\omega)}\ge C^{-1}\langle \omega\rangle^{\theta}.
\end{equation}
We can get the conclusion of the lemma for small $\gamma_0$ easily by using (\ref{eq:mgeomega}) if either     
\[
\langle \omega'\rangle^{1/2}|w'-w''|\ge e^{\max\{|m|,|m'|\}/3}\quad \mbox{or}\quad \langle \omega\rangle^{1/2}|w-(\breve{f}^t_{\j\to \j'})^{-1}(w'')|\ge e^{\max\{|m|,|m'|\}/3}.
\]
Therefore we will assume 
\begin{itemize}
\item[($\bigstar 6$)]
$\max\{\;\langle\omega'\rangle^{1/2}|w'-w''|, \; \langle \omega\rangle^{1/2}|w-(\breve{f}^t_{\j\to \j'})^{-1}(w'')|\;\}< e^{\max\{|m|,|m'|\}/3}$.
\end{itemize}

We prove the following claim under the additional assumptions above. 
\begin{Sublemma}\label{subl}There exists a constant $C_0>0$ such that
\begin{align}\label{claimA}
&\left|(D^*E_{\omega,m})^{-1}\left((\breve{f}^t_{\j\to \j'}(w), 
((D\breve{f}^t_{\j\to \j'})^*_{w''})^{-1} \xi_w,\xi_z) -(w',(\langle \xi'_z\rangle/\langle \xi_z\rangle)\xi'_w, \xi_z)\right)\right| \\
&\qquad \qquad\qquad \qquad \qquad \qquad> C_0^{-1} e^{(2/3)\max\{|m|,|m'|\}} \langle\omega\rangle^{-1/2},\notag
\end{align}
where $D^*E_{\omega,m}$ is the linear map defined in (\ref{eq:Eomeagdag}).
\end{Sublemma}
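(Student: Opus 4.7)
The plan is to work in the adapted coordinates $(\zeta_p,\tilde{\xi}_y,\zeta_q,\tilde{y},\nu_q,\nu_p,\xi_z)$ of (\ref{eq:newcoordinates}) and to exploit the explicit hyperbolic structure of the canonical lift $\Phi:(w,\xi_w,\xi_z)\mapsto(\breve{f}^t_{\j\to\j'}(w),((D\breve{f}^t_{\j\to\j'})^*_{w''})^{-1}\xi_w,\xi_z)$ around the trapped set $X_0=\{(\zeta_p,\tilde{\xi}_y,\zeta_q,\tilde{y})=0\}$. After the $\mathcal{A}_2$-normalization from (e) and applying Corollary \ref{lm:distortion}, $\Phi$ is well approximated on $\supp\rho^t_{\j\to\j'}$ by the cotangent lift of a hyperbolic block-diagonal linear map of the form (\ref{eq:B}) satisfying (\ref{eq:normA}), which in the $(\zeta,\tilde{y},\tilde{\xi}_y)$-coordinates expands the $(\zeta_p,\tilde{\xi}_y)$-axes by at least $e^{\chi_0 t}$ and contracts the $(\zeta_q,\tilde{y})$-axes by at least $e^{-\chi_0 t}$, with nonlinear corrections of relative size $O(\langle\omega\rangle^{-3\theta})$ by (d) and Lemma \ref{lm:crude_estimates2}. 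From the definition (\ref{def:psi_sigma}), the pulled-back point $D^*E_{\omega,m}^{-1}(\mathbf{p})$ for any $\mathbf{p}\in\supp\Psi^\sigma_\j$ with $m\neq 0$ lies in the shell $e^{|m|-2}\le|(\zeta_p,\tilde{\xi}_y,\zeta_q,\tilde{y})|\le e^{|m|+2}$ and inside the expanding cone $\{|(\zeta_q,\tilde{y})|\le c|(\zeta_p,\tilde{\xi}_y)|\}$ if $m>0$, respectively the contracting cone if $m<0$, with $c<1$ depending on $\sigma$.

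I would then split into cases according to Definition \ref{def:arrow}. \emph{Case 1 (opposite signs of $m,m'$)}: the image $\Phi(\mathbf{p})$ stays in the cone determined by $\mathrm{sgn}(m)$ up to the small nonlinear correction, while $\mathbf{p}'$ sits in the opposite cone; the uniform angular gap together with the radial lower bound $\max\{|\zeta,\tilde y,\tilde\xi_y|(\Phi\mathbf{p}),\,|\zeta,\tilde y,\tilde\xi_y|(\mathbf{p}')\}\gtrsim e^{\max\{|m|,|m'|\}}$ yields a mismatch of that order. \emph{Case 2 (same signs, hyperbolic mismatch)}: for $m,m'>0$ with $\tilde{m}(\j')<\tilde{m}(\j)+[t\chi_0]-10$, the image $\Phi(\mathbf{p})$ is pushed to radial distance $\gtrsim e^{|m|+t\chi_0}$ in the expanding cone, whereas $\mathbf{p}'$ lies at radial distance $\sim e^{|m'|}$; translating the inequality on $\tilde{m}$ into one on $m$ via (\ref{def:tm}) and using (\ref{eq:close}), this radial gap is at least $e^{\max\{|m|,|m'|\}}$. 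The case $m,m'<0$ follows from the time-reversed estimate, and the shorter-time regime (i) of (\ref{eq:condition_on_t}) is handled by the same Case 2 argument using the strict cone nesting $Z^{\sigma'}_\pm\subsetneq Z^\sigma_\pm$ when $\sigma'<\sigma$ to close the residual gap when $[t\chi_0]<10$.

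In every case the separation in intrinsic coordinates transfers to the left-hand side of (\ref{claimA}) with only constant-factor loss, because $D^*E_{\omega,m}^{-1}$ rescales the $(\tilde{y},\tilde{\xi}_y)$-components by $e_\omega(m)^{\pm 1}$ and these factors cancel with the scaling already built into $(\tilde y,\tilde\xi_y)=\langle\xi_z\rangle^{1/2}(y,\xi_y)$ up to the slow-variation ratio $e_\omega(m(\j'))/e_\omega(m(\j))\le e^{(|m-m'|+2)/10}$ from (\ref{eq:e}); the latter is dominated by the hyperbolic separation $e^{|m-m'|}$ supplied by Case 2 and by the angular factor in Case 1. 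Combined with (\ref{eq:mgeomega}), (\ref{eq:theta_beta}) and assumption (f) (which ensures the configuration denominators in (\ref{eq:Deltajj}) are $O(1)$ so that all the separation must appear in the momentum factor), this gives the lower bound $e^{\max\{|m|,|m'|\}}\langle\omega\rangle^{-1/2}$ claimed in Sublemma \ref{subl}, from which the bound on $\Delta^{t,\sigma\to\sigma'}_{\j\to\j'}$ asserted in Lemma \ref{lm:basic2} follows for any sufficiently small $\gamma_0>0$ (e.g.\ $\gamma_0=\theta/100$).

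The main technical obstacle is the regime $|m|$ or $|m'|>n_1(\omega)$, where the modification $E_{\omega,m}$ genuinely deforms both the shells and the cone structure in the $(\tilde{y},\tilde{\xi}_y)$-directions, so that $\Phi$ no longer acts block-diagonally in $(\zeta)\oplus(\tilde{y},\tilde{\xi}_y)$ and one must verify case by case that the cone-invariance used in Case 1 and the expansion rate used in Case 2 survive the distortion. This is precisely where the choice $\Theta_2-\Theta_1>10\chi_{\max}/\chi_0$ in (\ref{eq:choice_of_Theta}) enters: it forces $e_\omega(m)$ to vary slowly on the time scale $t\le 2t(\omega)$, so that the hyperbolic rates and cone angles change only by multiplicative constants between scales $m$ and $m\pm 1$. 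Keeping track of these constants uniformly across all subcases is the most labour-intensive part, though each individual estimate is elementary; the cases where $\omega$ and $\omega'$ fail (\ref{eq:close}) are much easier and handled by the crude factor $\langle\omega'-\omega\rangle^{-\nu}\lesssim\max\{\langle\omega\rangle,\langle\omega'\rangle\}^{-\nu/2}$ in the definition of $\Delta$.
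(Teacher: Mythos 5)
Your overall strategy is the paper's: pass to the adapted coordinates, use the shell-and-cone localization of $\supp\Psi^{\sigma}_{\omega,m}$, propagate it by the approximately block-hyperbolic lifted map (Corollary \ref{lm:distortion}, Lemma \ref{lm:crude_estimates}), control the $E_{\omega,m}$-conjugation by the slow variation (\ref{eq:e}) of $e_\omega(m)$, and convert the radial/angular mismatch forced by $\j\not\hookrightarrow^t\j'$ into (\ref{claimA}), with the short-time regime rescued by $\sigma'<\sigma$ as in Remark \ref{rem:hype1}. The paper organizes this slightly differently: it proves a forward expansion-plus-cone estimate, (\ref{eq:ycont}) and (\ref{cone_hyp}), whenever $m>0$, the time-reversed analogue whenever $m'<0$, observes that every $\not\hookrightarrow^t$ pair admits at least one of the two, and then splits on $||m|-|m'||\gtrless 2\chi_{\max}t$ to decide when the discrepancy between $E_{\omega,m}$ and $E_{\omega',m'}$ must be absorbed by slow variation.

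There are, however, concrete gaps in your case analysis. Hyperbolic components require only one of $m(\j),m(\j')$ to be nonzero, so $\widecheck{\bL}^{t,\sigma\to\sigma'}_{\mathrm{hyp}}$ contains the mixed configurations $m(\j)>0,\,m(\j')=0$ and $m(\j)=0,\,m(\j')<0$, which your dichotomy ``opposite signs / same signs'' never treats; the second of these is precisely a case where a pure radius comparison can fail (for $t$ up to $2t(\omega)$ the image of the central block can reach radius comparable to $e^{|m'|}$ when $|m'|$ is only slightly above $n_0(\omega')$), so one must compare contracting components, i.e.\ run the cone argument — this is exactly what the paper's forward/backward dichotomy covers automatically. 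Second, your assertion that ``$\Phi(\mathbf{p})$ stays in the cone determined by $\mathrm{sgn}(m)$'' is false for $m<0$, since the contracting cone is only backward invariant; it is harmless only because the pattern $m<0,\,m'>0$ satisfies condition (1) of Definition \ref{def:arrow} and therefore never occurs in $\widecheck{\bL}$, but your proof never invokes that exclusion, and it is what forces the forward/backward asymmetry of the paper's argument. Third, the cone nesting coming from $\sigma'<\sigma$ is needed in the short-time sign-differing and mixed cases (where angular separation of the supports is marginal), not in your Case 2, where the defining inequality $\tilde m(\j')<\tilde m(\j)+[t\chi_0]-10$ already produces the radial gap; attaching it to Case 2 leaves the genuinely delicate short-time configurations unargued. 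These are repairable within your framework, but as written the proposal does not cover all components the Sublemma must handle.
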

We defer the proof of this sublemma for a while and finish the proof of Lemma \ref{lm:basic2}. 
We show that $w$-component of the quantity on the left hand side of (\ref{claimA}), {\it i.e.}  $E_{\omega,m}(\breve{f}^t_{\j\to \j'}(w)-w')$, is much smaller than the right-hand side of (\ref{claimA}). 
In the case $|m|\le n_1(\omega)$, we have $D^*E_{\omega,m}=\mathrm{Id}$ and $|\breve{f}^t_{\j\to \j'}(w)-w'|$ is much smaller than the right-hand side of (\ref{claimA})
from  ($\bigstar 4$), ($\bigstar 5$), ($\bigstar 6$) and (\ref{eq:mgeomega}).
In the case $|m|> n_1(\omega)$, we have $e^{|m|}\ge C_0^{-1}\langle \omega\rangle^{\Theta_1}$ and 
hence
\[
|E_{\omega,m}(\breve{f}^t_{\j\to \j'}(w)-w')|\le e_{\omega}(m)\cdot |\breve{f}^t_{\j\to \j'}(w)-w'|
\le 
 \langle \omega\rangle^{(1-\beta)(1/2-\theta)+4\theta}\cdot |\breve{f}^t_{\j\to \j'}(w)-w'|
\]
is again much smaller than the right-hand side of (\ref{claimA}).

Comparing Sublemma \ref{subl} with what we proved in the last paragraph, we see     
\begin{equation}\label{eq:db}
|((D\breve{f}^t_{\j\to \j'})^*_{w''})^{-1} (\langle \xi_z\rangle \xi_w) -\langle \xi'_z\rangle\xi'_w|
> (2C'_0)^{-1} e^{(2/3)\max\{|m|,|m'|\}} \langle\omega\rangle^{1/2}.
\end{equation}
To finish, we prove
\begin{align}\label{eq:num}
\langle \omega \rangle^{-(1+\theta)/2} & \cdot \langle\langle \omega\rangle^{1/2-4\theta} |\xi_w|\rangle^{-1/2}\cdot |((D\breve{f}^t_{\j\to \j'})^*_{w''})^{-1} (\langle \xi_z\rangle\xi_w) -\langle \xi'_z\rangle \xi'_w|\\
&\qquad  \ge (C''_0)^{-1} \max\{\omega, \omega', e^{|m|}, e^{|m'|}\}^{\gamma_0}\notag
 \end{align}
for a small constant $\gamma_0>0$. Clearly the required estimate in Lemma \ref{lm:basic2} follows from this claim.
In the case $\langle \omega\rangle^{1/2-4\theta} |\xi_w|\le 2$, we may neglect the second factor on the left hand side and hence obtain (\ref{eq:num}) immediately using (\ref{eq:mgeomega}) and (\ref{eq:db}). 
Thus we consider the case $\langle \omega\rangle^{1/2-4\theta} |\xi_w|> 2$ below. In this case, we have  
\[
\langle \omega\rangle^{1/2}|\xi_w|\le e^{\max\{|m|,|m'|\}+2}\cdot e_{\omega}(|m|)
\]
from  (\ref{psi1}), ($\bigstar 5$) and ($\bigstar 6$). Hence it holds
\[
\langle\langle \omega\rangle^{1/2-4\theta} |\xi_w|\rangle^{-1/2}
=\langle \omega\rangle^{-1/4+2\theta} |\xi_w|^{-1/2}
\ge e^{-\max\{|m|,|m'|\}/2-1}\langle \omega\rangle^{2\theta}\cdot e_{\omega}(|m|)^{-1/2}
\] 
and the left-hand side of (\ref{eq:num}) is bounded from below by 
\[
(e C'_0)^{-1} \langle \omega\rangle^{(3/2)\theta}\cdot e^{\max\{|m|,|m'|\}/6}\cdot e_{\omega}(|m|)^{-1/2}.
\]
Since $e_\omega(|m|)\le e^{\mu \max\{|m|,|m'|\}}\le e^{\max\{|m|,|m'|\}/20}$, we obtain (\ref{eq:num}) again.

\begin{proof}[Proof of Sublemma \ref{subl}]
For the points $\mathbf{p}$ and $\mathbf{p}'$ in (\ref{psi1}) and (\ref{psi2}), we set
\begin{align}\label{eq:pthat}
&(\hat{x},\hat{y},\hat{\xi}_x,\hat{\xi}_y, \xi_z):=(D^*E_{\omega,m})^{-1}(\mathbf{p})=(D^*E_{\omega,m})^{-1}(w,\xi_w, \xi_z),\\\label{eq:pthatdash}
&(\hat{x}',\hat{y}',\hat{\xi}'_x,\hat{\xi}'_y, \xi_z):=(D^*E_{\omega,m'})^{-1}(\mathbf{p}')=(D^*E_{\omega,m'})^{-1}(w',\xi'_w, \xi_z),
\intertext{and also set}
&(\tilde{x},\tilde{y},\tilde{\xi}_x,\tilde{\xi}_y, \xi_z):=(D^*E_{\omega,m})^{-1}(\breve{f}^t_{\j\to \j'}(w), 
((D\breve{f}^t_{\j\to \j'})^*_{w''})^{-1} \xi_w,\xi_z)\label{eq:tildept}\\
& =(E_{\omega,m}\circ \breve{f}^t_{\j\to \j'}\circ E_{\omega,m}^{-1}(\hat{w}),
 (E_{\omega,m}^*)^{-1}\circ((D\breve{f}^t_{\j\to \j'})^*_{w''})^{-1} \circ E_{\omega,m}^*(\hat{\xi}_w),\xi_z).\notag
\end{align} 
The claim of Sublemma \ref{subl} follows if we prove
\begin{equation}\label{eq:claimSublemma}
|(\tilde{y},\tilde{\xi}_x,\tilde{\xi}_y)-(\hat{y}',(\langle \xi'_z\rangle/\langle \xi_z\rangle)\hat{\xi}'_x,\hat{\xi}'_y)|\ge C_0^{-1} e^{(2/3)\max\{|m|,|m'|\}} \langle\omega\rangle^{-1/2}.
\end{equation}
Here we note that $\langle \xi'_z\rangle/\langle \xi_z\rangle$ is bounded because of the assumption ($\bigstar 2$).

For the proof of (\ref{eq:claimSublemma}), we investigate the conditions on the points (\ref{eq:pthat}) and 
(\ref{eq:pthatdash}) that come from the choice (\ref{psi1}) and (\ref{psi2}) of $\mathbf{p}$ and $\mathbf{p}'$. 
Then we look into the correspondence from the point (\ref{eq:pthat}) to (\ref{eq:tildept}). 

From the assumptions ($\bigstar 5$) and ($\bigstar 6$),  the points $\mathbf{p}$ and $\mathbf{p}'$ satisfy respectively
\begin{equation}\label{eq:pqcoord}
|(q,p)|<e^{\max\{|m|,|m'|\}/3}\langle\omega\rangle^{-1/2}, \quad |(q',p')|<e^{\max\{|m|,|m'|\}/3}\langle\omega'\rangle^{-1/2}.
\end{equation}
Recall that the function $\Psi^{\sigma}_{\j}$ is defined in (\ref{def:psi_sigma}) using the coordinates (\ref{eq:newcoordinates}). We let $(\zeta_q,\zeta_p,\tilde{y}, \tilde{\xi}_y)$ and $(\zeta'_q,\zeta'_p,\tilde{y}', \tilde{\xi}'_y)$ be the coordinates  (\ref{eq:newcoordinates}) for the points $\mathbf{p}$ and $\mathbf{p}'$ in (\ref{psi1}) and (\ref{psi2}) respectively. 
Then, from (\ref{eq:pqcoord}), we have 
\begin{align}\label{eq:zetap1}
\left||(\xi_q,\xi_p)|-2^{1/2}\langle \xi_z\rangle^{-1/2}|(\zeta_q,\zeta_p)|\right|<Ce^{\max\{|m|,|m'|\}/3}\langle\omega\rangle^{-1/2}
\intertext{and}\label{eq:zetap2}
\left||(\xi'_q,\xi'_p)-2^{1/2}\langle \xi'_z\rangle^{-1/2}|(\zeta'_q,\zeta'_p)|\right|<Ce^{\max\{|m|,|m'|\}/3}\langle\omega'\rangle^{-1/2}.
\end{align}
From the former estimate and the condition (\ref{psi1}), we see that the point (\ref{eq:pthat}) satisfies the following conditions  
 up to errors bounded by $Ce^{\max\{|m|,|m'|\}/3}\langle\omega\rangle^{-1/2}$: For the distance from the origin,
\begin{align}\label{est1}
e^{|m|-1}\langle \omega\rangle^{-1/2}\le |(\hat{\xi}_q,\hat{y},\hat{\xi}_p,\hat{\xi}_y)|\le e^{|m|+1}\langle \omega\rangle^{-1/2}\quad \mbox{if $m\neq 0$,}\\
\label{est2}
|(\hat{\xi}_q,\hat{y},\hat{\xi}_p, \hat{\xi}_y)|\le e\cdot \langle \omega\rangle^{-1/2+\theta}\quad \mbox{if $m= 0$}
\end{align}
where we write $\hat{\xi}_x=(\hat{\xi}_p,\hat{\xi}_q)$;  For the direction from the origin, 
\begin{align}\label{est3}
&|(\hat{\xi}_q,\hat{y})|<2 \cdot 2^{-\sigma/2}|(\hat{\xi}_p, \hat{\xi}_y)| \quad \mbox{if }m>0,\\
&2\cdot 2^{\sigma/2}|(\hat{\xi}_q,\hat{y})|>|(\hat{\xi}_p, \hat{\xi}_y)| \quad \mbox{if }m<0.\label{est4}
\end{align}
We have the parallel estimates on the point (\ref{eq:pthatdash}) as a consequence of  (\ref{psi2}).

Next we consider the correspondence from the point (\ref{eq:pthat}) to the point (\ref{eq:tildept}).
By contracting property of $f^t_{\j\to \j'}$ along the $y$-axis and Lemma \ref{lm:crude_estimates2}(1), we have
\begin{equation}\label{eq:ty}
|\tilde{y}|\le  e^{-\chi_0 t}|\hat{y}|+Ce_{\omega}(m)\cdot \langle\omega\rangle^{-(1/2+3\theta)}.
\end{equation}
Recall the diffeomorphism $h^t_{\j\to \j'}$ in (\ref{eq:htj}), which is roughly the flow $f^t_G$ viewed in the local charts without the factor $E_{\omega,m}$.  
By the relation (\ref{eq:EhE}),  we have
\begin{equation}\label{eq:corresp}
(\tilde{\xi}_x, \tilde{\xi}_y, \xi_z)=(E_{\omega,m}^*)^{-1}\circ E_{\omega'}^*\circ  
((Dh^t_{\j\to \j'})_{\check{w}}^{*})\circ (E_{\omega}^*)^{-1} \circ  E_{\omega,m}^*(\hat{\xi}_x,\hat{\xi}_y, \xi_z)
\end{equation}
where $\check{w}$ is chosen so that $(\check{w},z)=E_{\omega}(w'',z)$. 

To proceed, let us first consider the case where $|m|\le n_1(\omega)$ and $|m'|\le n_1(\omega')$. 
In this case, we have $e_{\omega}(m)=e_{\omega'}(m')=1$, $E_{\omega,m}=E_{\omega',m'}=\mathrm{Id}$ and therefore the correspondence (\ref{eq:corresp}) is given by nothing but the map $f^t_{\j\to \j'}$. 
Then, by Lemma~\ref{lm:crude_estimates2}, we get (\ref{eq:claimSublemma}) by simple geometric estimates. Indeed, if we ignore  
\begin{itemize}
\item the difference between $f^t_{\j\to \j'}$ and its linearization at the origin,
\item the errors mentioned about the estimates (\ref{est1}) -- (\ref{est4}) and the corresponding estimates on $(\hat{y}',\hat{\xi}'_x,\hat{\xi}'_y)$, and 
\item the difference between $\langle \xi'_z\rangle/\langle \xi_z\rangle$ and $1$,
\end{itemize} 
then the conclusion (\ref{eq:claimSublemma}) is an easy consequence of hyperbolicity of $f^t_{\j\to \j'}$. But we can check easily that the differences above are negligible in fact.

Next we consider the case $|m|\ge n_2(\omega)$ and $|m'|\ge n_2(\omega')$, the other extreme. 
In this case, we have $E_{\omega,m}=E_{\omega}$ and $E_{\omega',m'}=E_{\omega'}$ and therefore the correspondence (\ref{eq:corresp}) is given by the map $h^t_{\j\to \j'}$. Note that $h^t_{\j\to \j'}$ is given as iteration of the maps satisfying the conditions in Lemma \ref{lm:crude_estimates} and therefore have nice hyperbolic property. (Note that Lemma \ref{lm:crude_estimates} is valid only for $0\le t\le 2t_0$.)
Then we can get the conclusion (\ref{eq:claimSublemma}) by essentially same manner as in the previous case. 

The situations in the middle, {\it i.e.} the case where either $n_1(\omega)\le |m|\le n_2(\omega)$ or $n_1(\omega')\le |m'|\le n_2(\omega')$ is slightly more complicated.
If $||m|-|m'||\ge  2\chi_{\max} t$, it is easy to get the conclusion (\ref{eq:claimSublemma})
because the ratio between $e^{|m|}$ and $e^{|m'|}$ is much larger (or smaller) than the expansion (or contraction) given by the correspondence (\ref{eq:corresp}). 
So we may assume $||m|-|m'||\le  2\chi_{\max} t$. 
If we have $e_\omega(m)=e_{\omega'}(m')$ and $E_{\omega,m}=E_{\omega',m'}$, we can see that the correspondence (\ref{eq:corresp}) has good hyperbolic property in the same manner as in the proof of Lemma \ref{lm:crude_estimates2} and hence we can get the conclusion (\ref{eq:corresp}) again.  
But, from the slowly varying property (\ref{eq:e}) of $e_{\omega}(m)$, it is clear that the conclusion remains true without this assumption.   
\end{proof}
\begin{Remark}\label{rem:hype1}
In the case where the condition (i) in (\ref{eq:condition_on_t}) holds and $0\le t\le t_0$, we can follow the argument given in the proof above.  The only difference is that, 
instead of the condition $t\ge t_0$ that ensure enough hyperbolicity of $f^t_{G}$, we use the fact that $\sigma'< \sigma$ in the estimates.  
In the case where the condition (\ref{eq:close}) does not hold, the proof is much simpler. 
We can obtain the conclusion immediately unless both of $|m(\j)|$ and $|m(\j')|$ satisfy $|m(\j)|>n_2(\omega)$ and $|m(\j')|> n_2(\omega')$. But, in such case, the conclusion is again easy to obtain as we mentioned in the proof above.  
\end{Remark}

\section{Proof of the main theorem (2): Theorem \ref{th:zeta}}\label{sec:zeta}
In this section, we prove Theorem \ref{th:zeta}, using the propositions given in Section~\ref{sec:lifted}. 
Below we continue to consider the case $\cL^t=\cL^t_{0,0}$ as in the previous sections. 
But we can proceed in parallel in the case of vector-valued transfer operators $\cL^t_{k,\ell}$ with $(k,\ell)\neq (0,0)$ by regarding them as matrices of scalar valued transfer operators, as we have noted in Remark \ref{rem:vector_valued_case}.  (See also Remark \ref{rem:vector_valued_case_1}, Remark \ref{rem:vector_valued_case_3} and Remark \ref{rem:vector_valued_case_4}.)
The argument in this section is essentially parallel to that in \cite{BaladiTsujii08}, where dynamical zeta functions for hyperbolic diffeomorphisms are considered. The main idea is to decompose the operators in consideration into two parts: a trace class part and a ``upper-triangular'' part whose flat trace is zero. To realize this idea, we will consider the lifted operator $\bL^t$ rather than $\cL^t$.
Below we suppose that the operators are acting on $\cK^r(K_0)$ or $\bK^r$ if we do not specify otherwise.
\subsection{Analytic extension of the dynamical Fredholm determinant}
The dynamical Fredholm determinant $d(s)$ of the one-parameter group  of transfer operators $\mathbb{L}=\{\cL^t=\cL^t_{0,0}\}$ is well-defined if the real part of $s$ is sufficiently large. 
In fact, the sum in the definition (\ref{def:ds_ext}) of $d(s)$ converges absolutely if $\Re(s)$ is larger than the topological pressure 
$P_{top}:=P_{top}(f^t, -(1/2)\log |\det Df^t|_{E_u}|)$. (See \cite[Theorem 4.1]{Lu} for instance.) Hence $d(s)$ is a holomorphic function without zeros on $\Re(s)>P_{top}$.
We take a constant $P\ge P_{top}$ such that 
\begin{equation}\label{eq:bLnorm}
 \|\cL^t\|\le C\|{\bL}^{t}\|\le C e^{P t }\quad \mbox{for $t\ge t_0$}
\end{equation}
and consider the function $\log d(s)$  in the disk
\begin{equation}\label{eq:D(s0)}
\disk(s_0,r_0)=\{z\in \complex\mid |z-s_0|<r_0 \}
\end{equation}
for $s_0\in \complex$ with $\Re(s_0)>P$ and $r_0:=\Re(s_0)+r\chi_0/4$.
The $n$-th coefficient of the Taylor expansion of $\log d(s)$ at the center $s_0$ is
\[
a_n:=\frac{1}{n!} \left(\frac{d^n}{ds^n}\log d\right)(s_0)=(-1)^{n-1}\frac{1}{n!}\int_{+0}^\infty t^{n-1} e^{-s_0t} \cdot \flatTr\,\cL^t dt.
\]
Since we have  
\[
\cR(s_0)^n=\frac{1}{(n-1)!}\int_{+0}^\infty t^{n-1}e^{-s_0 t} \cL^t dt, 
\]
we may write the coefficient $a_n$ as 
\[
a_n=\frac{(-1)^{n-1}}{n}\cdot \flatTr (\cR(s_0)^n)\quad \mbox{for $n\ge 1$.}
\]
We are going to relate the asymptotic behavior of  flat trace $\flatTr (\cR(s_0)^n)$ as $n\to \infty$ with the spectrum of the generator $A$. Precisely we prove 
\begin{Proposition}\label{pp:zeta} The spectral set of the generator $A$ of  $\cL^t:\wK^{r}(K_0)\to \wK^{r}(K_0)$ in the disk $D(s_0,r_0)$ consists of finitely many eigenvalues $\chi_i\in \complex$, $1\le i\le m$,
counted with multiplicity. We have the asymptotic formula 
\[
\flatTr \left(\cR(s_0)^n\right)=\sum_{i=1}^m \frac{1}{(s_0-\chi_i)^n}+Q_n
\]
where the remainder term $Q_n$ satisfies
\begin{equation}\label{eq:claim_ppzeta}
|Q_n|\le C r_0^{-n}\quad \mbox{for $n\ge 0$}
\end{equation}
with a constant $C>0$ which may depend on $s_0$.
\end{Proposition}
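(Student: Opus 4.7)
The plan is to combine discreteness of the spectrum of $A$ on $\wK^r(K_0)$ in the disk $D(s_0,r_0)$ with a comparison between the flat (Atiyah--Bott) trace and the ordinary operator trace, in the spirit of \cite{BaladiTsujii08}. The overall structure is: extract a Riesz projector $\Pi$ onto the finite discrete spectrum in $D(s_0,r_0)$, evaluate $\trace(\Pi\cR(s_0)^n)$ directly via the Jordan form to produce the main term, and show that the flat trace of the complementary piece contributes only $O(r_0^{-n})$.

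First I would establish that the essential spectral radius of $\cL^t:\wK^r(K_0)\to \wK^r(K_0)$ is at most $e^{-(r\chi_0/4)t}$ for $t\ge t_0$. Using the decomposition $\bL^t=\bL^t_{\mathrm{ctr}}+\bL^t_{\mathrm{hyp}}+\bL^t_{\mathrm{low}}$ from Section~\ref{sec:proofs_of_props}, Lemma~\ref{lm:hyp} gives $\|\bL^t_{\mathrm{hyp}}\|\le Ce^{-(r\chi_0/2)t}$, while Lemma~\ref{lm:low} and Lemma~\ref{lm:trace_lifted} furnish, after truncating to finitely many frequencies, trace class approximations of $\bL^t_{\mathrm{low}}$ and of the $\mathbf{T}_\omega$-part of $\bL^t_{\mathrm{ctr}}$ modulo remainders of operator norm $\le C e^{-(r\chi_0/4)t}$ (by Proposition~\ref{lm:TL}). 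Consequently the spectrum of $A$ in $\{\Re(z)>-r\chi_0/4\}$ is discrete with finite multiplicity, so only finitely many eigenvalues $\chi_1,\dots,\chi_m$ lie in $D(s_0,r_0)$. Let $\Pi$ be the corresponding Riesz projector; then $\Pi$ has finite rank, commutes with $\cR(s_0)$, and a Jordan-form computation on $\Pi\wK^r(K_0)$ gives
\[
\trace\bigl(\Pi\,\cR(s_0)^n\bigr)=\sum_{i=1}^m\frac{1}{(s_0-\chi_i)^n}.
\]
Since the flat trace agrees with the ordinary trace on finite-rank operators, this produces the main term.

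It then remains to bound $\flatTr\bigl(\cR(s_0)^n(I-\Pi)\bigr)$ by $Cr_0^{-n}$. The spectral radius of $\cR(s_0)$ on $(I-\Pi)\wK^r(K_0)$ is at most $r_0^{-1}$, hence $\|\cR(s_0)^n(I-\Pi)\|\le C_\epsilon(r_0-\epsilon)^{-n}$ for every $\epsilon>0$; but the flat trace is defined distributionally as an integral of the Schwartz kernel along the diagonal, so an operator-norm bound does not directly yield a flat-trace bound. This is the main obstacle. Following the strategy of \cite{BaladiTsujii08}, I would rewrite
\[
\cR(s_0)^n(I-\Pi)=\frac{1}{(n-1)!}\int_0^\infty t^{n-1}e^{-s_0 t}\,\cL^t(I-\Pi)\,dt,
\]
and split $\cL^t(I-\Pi)$ once more via the Section~\ref{sec:proofs_of_props} decomposition into a trace class piece (whose flat trace equals its Lidskii sum, controlled by $Ce^{-(r\chi_0/4)t}$ because $\Pi$ has absorbed the eigenvalues in $D(s_0,r_0)$) plus a residual with operator norm $Ce^{-(r\chi_0/2)t}$. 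Converting the time-ordered $n$-fold integral into an iterate of smoothing operators, the diagonal of the Schwartz kernel of the residual can be estimated directly using the off-diagonal decay encoded in Proposition~\ref{pp:norm_estimate} and Proposition~\ref{pp:backward}; this yields the required $O(r_0^{-n})$ bound for the flat trace of the residual. The delicate technical step, which I expect to be the hardest part, is precisely this explicit kernel-side estimate: upgrading an operator-norm bound on $\cR(s_0)^n(I-\Pi)$ to a bound on the distributional flat trace of the same operator, for every $n$, with constants independent of $n$.
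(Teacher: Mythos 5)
Your proposal has two genuine gaps. First, the opening step is not just unproven but false as stated: the essential spectral radius of the fixed-time operator $\cL^t$ on $\wK^{r}(K_0)$ is \emph{not} bounded by $e^{-(r\chi_0/4)t}$. By Theorem \ref{th:spectrum}(3) the generator $A$ has infinitely many eigenvalues with real part in $(-\tau,\tau)$ for arbitrarily small $\tau$, hence $\cL^t$ has infinitely many eigenvalues of modulus at least $e^{-\tau t}\gg e^{-(r\chi_0/4)t}$, which forces the essential spectral radius of $\cL^t$ to be essentially $1$. The decomposition you invoke cannot do better, because the central part near the trapped set is approximately unitary on an infinite-dimensional subspace at fixed time; it only becomes trace class after averaging in $t$ (integration by parts in the time variable produces the factor $\langle\omega\rangle^{-\nu}$ — see the proof of Lemma \ref{lm:bl_decomp}). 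This is why the paper proves compact-plus-small structure and the essential spectral radius bound for the \emph{resolvent} $\cR(s_0)$ (Lemma \ref{lm:R_decomp}, Corollary \ref{cor:ess}), not for $\cL^t$, and derives discreteness of the spectrum in $D(s_0,r_0)$ from that. Your intended conclusion (finitely many eigenvalues in the disk, main term from the Riesz projector) is correct, but it must be reached through the time-averaged operators.

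Second, the crucial step you yourself flag as open — upgrading operator-norm decay of $(1-\Pi)\cR(s_0)^n$ to a bound on its flat trace — is exactly the missing idea, and "estimating the diagonal of the Schwartz kernel of the residual" via Propositions \ref{pp:norm_estimate} and \ref{pp:backward} does not supply it: those are norm estimates on composed operators and give no diagonal kernel control uniform in $n$. The paper's mechanism is algebraic rather than kernel-analytic: one works with the lifted operators on $\bK^{r}$ and the class of operators that are \emph{upper triangular} with respect to the index $\tilde m$ (Definition \ref{Def:Trace_for_lift}); such operators have vanishing flat trace and form an algebra (Lemma \ref{lm:monotone}), the hyperbolic part $\widehat{\bL}^{t}_{\mathrm{hyp}}$ is of this type with norm $e^{-(r/2)\chi_0 t}$, and $\flatTr\,\bL(\varphi)=\flatTr\,\cL(\varphi)$ (Corollary \ref{Cor:trace_equality}). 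Writing $\mathbf{R}^{(n)}$ as (upper triangular of norm $\lesssim (r_0')^{-n}$) plus (trace class), and then telescoping the product $(1-\bP^{\lift})\circ\mathbf{R}_1\circ\cdots\circ\mathbf{R}_{m'}$ as in Claim \ref{Claim5}, every surviving term contains one trace-class factor sandwiched by factors controlled in operator norm, which is what yields $|Q_n|\le Cr_0^{-n}$. Without this (or an equivalent device), your plan does not close; you would also need the small-time cutoff $\widetilde{\cR}^{(n)}$ of (\ref{eq:wR}), since the block decomposition into time steps in $[t_0,2t_0]$ and Lemma \ref{lm:bl_decomp} require $t$ bounded away from $0$.
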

Theorem \ref{th:zeta} is an immediate consequence of this proposition. In fact, we have
\begin{align*}
\log d(s_0+z)&=\log d(s_0)+\sum_{n=1}^{\infty}a_n z^n\\
&=\log d(s_0)+\sum_{i=1}^m \sum_{n=1}^{\infty}\frac{(-1)^{n-1} z^n}{n (s_0-\chi_i)^n}+\sum_{n=1}^{\infty}\frac
{(-1)^{n} Q_n}{n} z^n\\
&=\log d(s_0)+\sum_{i=1}^m \log\left(1+\frac{z}{s_0-\chi_i}\right)+\sum_{n=1}^{\infty}\frac{(-1)^{n} Q_n}{n}  z^n
\end{align*}
and hence 
\[
d(s_0+z)= d(s_0)\cdot \frac{\prod_{i=1}^m ((s_0+z)-\chi_i)}{\prod_{i=1}^m (s_0-\chi_i)}\cdot 
\exp\left(\sum_{n=1}^{\infty}\frac{(-1)^{n} Q_n}{n}  z^n\right)
\]
for $z\in \complex$ with sufficiently small absolute value.
The right-most factor on the right-hand side extends holomorphically to the disk $D(s_0,r_0)$ and has no zeros on it. So the dynamical Fredholm determinant  $d(s)$ extends to  the disk $D(s_0,r_0)$ as a holomorphic function and the zeros in $D(s_0,r_0)$ are exactly $\chi_i$, $1\le i\le m$, counted with multiplicity.
Note that this conclusion holds for any $s_0\in \complex$ with $\Re(s)>P_{top}$ so that the imaginary part of $s_0$ is arbitrary. Therefore, taking $r_*>0$ so large that $r_*\chi_0/4>c$, we obtain the conclusion of  Theorem \ref{th:zeta}.

\subsection{The flat trace of the lifted  transfer operators}
To proceed, we discuss about the flat trace of the lifted operators and averaging with respect to time.
Suppose that  $\bL:\bK^{r}\to\bK^{r}$ is a bounded operator, expressed as 
\begin{equation}\label{eq:matrix}
\bL(u_{\j})_{\j\in \cJ}=
\left(
\sum_{\j\in \cJ} \bL_{\j\to \j'} u_{\j}
\right)_{\j'\in \cJ}.
\end{equation}
If  the  diagonal components $\bL_{\j\to \j}:\bK^{r}_{\j}\to \bK^{r}_{\j}$ for $\j\in \cJ$ are trace class operators and if the sum of their traces converges absolutely, we set 
\[
\flatTr\; \bL :=\sum_{\j\in \cJ} \trace\; \bL_{\j\to \j}=\sum_{\j\in \cJ} \flatTr\; \bL_{\j\to \j}
\]
and call it the flat trace of the operator  $\bL:\bK^{r}\to\bK^{r}$. 
\begin{Remark} In this definition, we assume that each $\bL_{\j\to \j}$ is a trace class operator and hence\footnote{ If $\bL_{\j\to \j}$ is a trace class operator, it is expressed as $\bL_{\j\to \j}=\sum_{k} v_k\otimes v_k^*$ with $v_k\in \bK^r_{\j}$, $v_k^*\in (\bK^r_{\j})^*$ satisfying $\sum_k \|v_k\|_{\bK^r_{\j}}\|v_k^*\|_{(\bK^r_{\j})^*}<\infty$. Then the Schwartz kernel of $\bL_{\j\to \j}$ is $\sum_k v_k\otimes v_k^*$ and the flat trace equals $\sum_k (v_k)^* v_k=\trace \bL_{\j\to \j}$.} that its trace coincides with the flat trace.   
\end{Remark}
\begin{Definition} \label{Def:Trace_for_lift}
An operator $\bL$ as above is  {\em  upper triangular} (with respect to the index $\tilde{m}(\cdot)$)  if the components $\bL_{\j\to \j'}$ vanishes whenever $\tilde{m}(\j') \le \tilde{m}(\j)$. (Recall (\ref{def:tm}) for the definition of the index $\tilde{m}(\cdot)$.)
\end{Definition}
The next lemma is obvious from the definitions.
\begin{Lemma}\label{lm:monotone}
If $\bL$ is upper triangular, its flat trace vanishes. If $\bL$ and $\bL'$ are upper triangular, so are their linear combinations $\alpha\bL+\beta\bL'$ and their composition $\bL\circ \bL'$.
\end{Lemma}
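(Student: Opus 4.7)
The plan is to unpack the definition of upper triangular and check the three assertions in order, each of which is essentially a direct verification.

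First I would handle the trace claim. Suppose $\bL$ is upper triangular. Then by definition $\bL_{\j\to\j'}=0$ whenever $\tilde m(\j')\le \tilde m(\j)$. Specializing to $\j'=\j$, the inequality $\tilde m(\j)\le\tilde m(\j)$ is trivially true, so every diagonal component $\bL_{\j\to\j}$ vanishes. Consequently $\flatTr\,\bL=\sum_{\j\in\cJ}\trace\,\bL_{\j\to\j}=0$, where the sum is trivially absolutely convergent. This is the easiest of the three assertions.

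Next I would treat linear combinations. The components of $\alpha\bL+\beta\bL'$ with respect to the decomposition (\ref{eq:matrix}) are simply $\alpha\bL_{\j\to\j'}+\beta\bL'_{\j\to\j'}$, which vanish when $\tilde m(\j')\le\tilde m(\j)$ because each summand does. So $\alpha\bL+\beta\bL'$ is upper triangular. Again, this is immediate.

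The composition claim is the only one that requires a tiny computation and will be the ``main obstacle,'' though it is still elementary. The matrix-style composition formula gives $(\bL\circ\bL')_{\j\to\j'}=\sum_{\j''\in\cJ}\bL_{\j''\to\j'}\circ\bL'_{\j\to\j''}$. For a term in this sum to be nonzero, upper-triangularity of $\bL'$ forces $\tilde m(\j'')>\tilde m(\j)$, and upper-triangularity of $\bL$ forces $\tilde m(\j')>\tilde m(\j'')$, hence $\tilde m(\j')>\tilde m(\j)$. Contrapositively, if $\tilde m(\j')\le\tilde m(\j)$ then every term in the sum vanishes, so $(\bL\circ\bL')_{\j\to\j'}=0$, proving $\bL\circ\bL'$ is upper triangular. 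The only point requiring minor care is that the series defining the composition converges in the operator-theoretic sense on $\bK^r$; this is granted since $\bL$ and $\bL'$ are assumed bounded on $\bK^r$, so the formal reasoning on components is legitimate.
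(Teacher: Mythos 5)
Your verification is correct and is exactly the routine check the paper leaves implicit when it states the lemma is "obvious from the definitions": diagonal components vanish since $\tilde m(\j)\le\tilde m(\j)$, linearity is componentwise, and composition follows from transitivity of the strict inequality in the matrix formula. No gaps; the convergence remark about the composition sum is a fine (if minor) precaution.
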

Since the flat trace of  $\cL^t$ is a distribution as a function of $t$, it   takes values against smooth functions $\varphi(t)$ with compact support. Hence, rather than evaluating  the flat trace of $\cL^t$ itself, it is natural and convenient to consider the flat trace of the integration of $\cL^t$,
\[
\cL(\varphi):=\int \varphi(t) \cdot {\cL}^t dt
\]
against  a smooth function $\varphi:\real_+\to \real$ compactly supported on the positive part of the  real line. We will also consider the corresponding lifted operator 
\begin{equation}\label{eq:blphi}
\bL(\varphi):=\int_0^\infty \varphi(t) \cdot {\bL}^t dt=\bI\circ \cL(\varphi)\circ \bI^*.
\end{equation}
Recall the decomposition of the operator $\bL^t$, 
\[
\bL^t=\bL_{\mathrm{low}}^{t}+{\bL}^{t}_{\mathrm{hyp}}+\bL^{t}_{\mathrm{ctr}}=
\bL_{\mathrm{low}}^{t}+\bL^{t}_{\mathrm{hyp},\hookrightarrow}+\bL^{t}_{\mathrm{hyp},\not\hookrightarrow}+\bL^{t}_{\mathrm{ctr}},
\]
that we introduced in Section \ref{sec:proofs_of_props}. 
\begin{Lemma}\label{lm:bl_decomp} Suppose that $\mathcal{P}$ is a set of $C^\infty$ functions supported on $[0, 2]\subset \real$ and uniformly bounded in the $C^\infty$ sense.
Then there exists a constant $C>0$ such that the following holds true: For any $\varphi\in \mathcal{P}$,   the operator ${\bL}(\varphi)\circ \bL^t=\bL^t\circ {\bL}(\varphi)$ for $t_0\le t\le 2t_0$  is decomposed into two parts 
\begin{align*}
\widehat{\bL}&=\int \varphi(s-t) \cdot \bL^{s}_{\mathrm{hyp},\hookrightarrow} ds\quad \text{and}\\
\widecheck{\bL}&=\int \varphi(s-t) \cdot \left(\bL_{\mathrm{low}}^{s}+\bL^{s}_{\mathrm{hyp},\not\hookrightarrow}+\bL^{s}_{\mathrm{ctr}}\right) ds;
\end{align*}
The former $\widehat{\bL}$ is upper triangular and satisfies  $\|\widehat{\bL}\|\le C$, while the latter $\widecheck{\bL}$ is a trace class  operator and satisfies $\|\widecheck{\bL}\|_{\trace}\le C$; Further the operator
\[
(\bL^{t}-\bL^{t}_{\mathrm{hyp},\hookrightarrow})\circ \bL(\varphi)=(\bL_{\mathrm{low}}^{t}+\bL^{t}_{\mathrm{hyp},\not\hookrightarrow}+\bL^{t}_{\mathrm{ctr}})\circ \bL(\varphi)\quad \text{ for }t_0\le t\le 2t_0
\] is a trace class operator and we have $\|(\bL^{t}-\bL^{t}_{\mathrm{hyp},\hookrightarrow})\circ \bL(\varphi)\|_{\trace}\le C$. 
\end{Lemma}
\begin{proof}
The part $\widehat{\bL}$ is upper triangular and satisfies $\|\widehat{\bL}\|\le C$
by the definition of the relation $\hookrightarrow^t$ in Definition \ref{def:arrow} and Proposition \ref{prop:hyp}. 
From Lemma \ref{lm:low} and Proposition \ref{prop:hyp}, we know that the operators $\bL_{\mathrm{low}}^{s}$ and $\bL^{s}_{\mathrm{hyp},\not\hookrightarrow}$ are trace class operators and their trace norms are bounded uniformly for\footnote{Actually we proved this for $s\in [t_0,2t_0]$. But it is easy to see that the estimates remain true for $[t_0,2t_0+2]$.} $s\in [t_0, 2t_0+2]$. It remains to show that
$\int \varphi(s-t) \cdot \bL^{s}_{\mathrm{ctr}} ds$ and $
\bL^{t}_{\mathrm{ctr}}\circ \bL(\varphi)$
are trace class operators and that  their traces are uniformly bounded for $t_0\le t\le 2t_0$ and $\varphi\in \mathcal{P}$. 
These claims follow if we show   
\begin{equation}\label{eq:bLvarphi}
\|(\bL(\varphi)\circ \bL^t)_{\j\to \j'}:\bK^{r}_{\j}\to \bK^{r}_{\j'}\|_{\trace}\le C_{\nu}
\langle \omega(\j)\rangle^{-\nu} \langle \omega(\j)-\omega(\j')\rangle^{-\nu}\langle |m(\j)|\rangle^{-\nu}
\end{equation}
for $\j,\j'\in \cJ$ with $m(\j')=0$ and $t_0\le t\le 2t_0$.  
Note that, if we apply the argument in the proofs of Lemma \ref{lm:coarse} and Corollary \ref{cor:coarse} to  $(\bL(\varphi)\circ \bL^t)_{\j\to \j'}$, we obtain 
 the required estimate (\ref{eq:bLvarphi}) without the term $\langle \omega(\j)\rangle^{-\nu}$ on the right-hand side. 
To retain the term $\langle \omega(\j)\rangle^{-\nu}$, we make use of the additional integration with respect to time in $\bL(\varphi)$. Since $f^{t+t'}_{\j\to \j'}(w,z)=f^{t}_{\j\to \j'}(w,z)+(0,t')$ when $|t'|$ is sufficiently small, such integration will reduce the $\j'$-components of the image with large $\omega(\j')$. Indeed, if we (additionally) apply integration by parts to that integral with respect to time  using the differential operator $\cD=(1-i\xi'_z\partial_{t})(1+|\xi'_z|^2)$ for several times, we obtain the extra factor $\langle\omega(\j)\rangle^{-\nu}$.  
\end{proof}
\begin{Corollary}\label{Cor:trace_equality}
If $\varphi:[t_0, \infty)\to \real$ be a smooth function with compact support, we have
$\flatTr \bL(\varphi)=\flatTr \cL(\varphi)$.
\end{Corollary}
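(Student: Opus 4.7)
The plan is to reduce Corollary \ref{Cor:trace_equality} to the identity $\bI^*\circ \bI = \mathrm{Id}$ on $C^\infty(K_0)$ (established by the unnumbered lemma immediately preceding Definition \ref{Def:cK}), together with cyclicity of the trace, once everything has been regularized so that all traces have a literal kernel-on-the-diagonal meaning.

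First, I would regularize. For each $\delta>0$, let $\cL^{t,\delta}$ denote the operator whose Schwartz kernel $K^\delta_t(x,y)$ is a standard smoothing of the kernel of $\cL^t$, obtained by convolution in the target variable with a smooth approximate identity on $G$, exactly as in Remark \ref{rm:dyntrace}. By the definition of the Atiyah-Bott trace,
\[
\flatTr\,\cL(\varphi) \;=\; \lim_{\delta\to+0}\int \varphi(t)\int K^\delta_t(x,x)\,dx\,dt.
\]
Since $\varphi$ is supported in $[t_0,\infty)$ and $f^t_G(U_0)\subset K_1\Subset K_0$ for $t\ge t_0$, the operator $\cL(\varphi)^\delta := \int \varphi(t)\,\cL^{t,\delta}\,dt$ has smooth kernel compactly supported in $K_0\times K_0$ for all sufficiently small $\delta$, and is in particular trace class on $L^2(G)$.

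Second, I would carry out the main computation in the lifted picture. Using the direct-sum decomposition $\bI = \bigoplus_{\j\in\cJ}\bI_{\j}$ from (\ref{eq:bIpm}), the diagonal components of the smoothed lifted operator $\bL(\varphi)^\delta := \bI\circ\cL(\varphi)^\delta\circ\bI^*$ are $\bL(\varphi)^\delta_{\j\to\j} = \bI_{\j}\circ\cL(\varphi)^\delta\circ\bI_{\j}^*$, each trace class with smooth compactly supported kernel on $\supp\Psi^\sigma_{\j}\times \supp\Psi^\sigma_{\j}$. Cyclicity of the trace gives
\[
\trace \bL(\varphi)^\delta_{\j\to\j} \;=\; \trace\bigl(\cL(\varphi)^\delta\circ(\bI_{\j}^*\circ \bI_{\j})\bigr).
\]
Summing over $\j\in\cJ$, invoking $\sum_{\j}\bI_{\j}^*\circ \bI_{\j} = \bI^*\circ\bI = \mathrm{Id}$ on $C^\infty(K_0)$, and using that the range of $\cL(\varphi)^\delta$ is contained in $C^\infty(K_0)$, one obtains
\[
\sum_{\j\in\cJ}\trace\bL(\varphi)^\delta_{\j\to\j} \;=\; \trace \cL(\varphi)^\delta \;=\; \int K^\delta_\varphi(x,x)\,dx,
\]
where $K^\delta_\varphi$ denotes the kernel of $\cL(\varphi)^\delta$.

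Finally, I would pass to the limit $\delta\to +0$. The right-hand side tends to $\flatTr\,\cL(\varphi)$ by the definition recalled above. For the left-hand side, Lemma \ref{lm:bl_decomp}, applied after writing $\varphi$ as a finite sum of pieces $\varphi_i$ supported in intervals of length at most $2$ and factoring an appropriate semigroup element off each piece via $\bL(\varphi_i) = \bL(\tilde\varphi_i)\circ \bL^{t_i}$, presents $\bL(\varphi)$ as an upper triangular part plus a trace-class part, the former contributing zero to the diagonal traces by Lemma \ref{lm:monotone}. Since the smoothing $\delta$ commutes with the partition of unity and with the oscillatory-integral constructions of Section \ref{sec:proofs_of_props}, the trace-norm bounds of Lemma \ref{lm:bl_decomp} carry over uniformly in $\delta$ to $\bL(\varphi)^\delta$, and hence one may pass the limit $\delta\to+0$ inside the sum $\sum_{\j\in\cJ}$. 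This yields $\sum_{\j}\trace\bL(\varphi)^\delta_{\j\to\j} \to \flatTr\,\bL(\varphi)$, completing the proof. The main obstacle is precisely this uniformity in $\delta$: one must verify that the trace-class estimates of Lemma \ref{lm:bl_decomp}, originally proved for the unsmoothed operators $\bL^s$, extend verbatim to $\bL^{s,\delta}$ so that the infinite sum over $\j\in\cJ$ remains absolutely and uniformly convergent as $\delta\to +0$.
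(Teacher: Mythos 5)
Your route is genuinely different from the paper's. The paper does not regularize and take limits at all: it first notes (from the proof of Lemma \ref{lm:bl_decomp}) that $\flatTr\,\bL(\varphi)=\sum_{\j}\trace\,\bL(\varphi)_{\j\to\j}$ is well defined, and then evaluates that sum directly from the explicit kernel expression (\ref{eq:bLex1}) -- summing first over the $\j$ with $a(\j),n(\j)$ fixed (so that the phase-space partition of unity $\Psi^{\sigma}_{\j}$ and the cut-offs $\rho_{\j},\tilde\rho_{\j}$ resum), and then carrying out the $\xi_w,\xi_z$ integrals, which collapse to the periodic-orbit sum that \emph{defines} $\flatTr\,\cL(\varphi)$. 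In other words, the Bargmann wave packets themselves play the role of the admissible smoothing in Remark \ref{rm:dyntrace}, so no $\delta\to+0$ interchange is ever needed. Your idea of instead proving the identity at the level of honest traces of the $\delta$-smoothed operators, via cyclicity and the resolution of identity $\bI^*\circ\bI=\mathrm{Id}$, and then letting $\delta\to+0$ on both sides, is a legitimate alternative strategy, but it shifts all the analytic weight onto the limit interchange on the lifted side.

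That is where your argument has a genuine gap. The assertion that ``the smoothing $\delta$ commutes with the partition of unity and with the oscillatory-integral constructions'' is not true as stated: the smoothing is a convolution on $G$ in the target variable of the kernel, and it commutes neither with $\mult(\Psi^{\sigma}_{\j})$ nor with $\pBargmann$, so the trace-norm estimates behind Lemma \ref{lm:bl_decomp} (i.e.\ the component-wise bounds of Lemmas \ref{lm:low} and \ref{lm:hyp} and the central-part estimate in its proof) do not ``carry over verbatim''; they would have to be re-derived for the smoothed kernels with constants uniform in $\delta$, which is plausible but is precisely the unproven step on which your dominated-convergence argument for $\sum_{\j}\trace\,\bL(\varphi)^{\delta}_{\j\to\j}\to\flatTr\,\bL(\varphi)$ rests. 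There is also a secondary point you pass over: at fixed $\delta$, exchanging the infinite sum over $\j$ with the trace in $\sum_{\j}\trace\bigl(\cL(\varphi)^{\delta}\circ(\bI^*)_{\j}\circ\bI_{\j}\bigr)=\trace\,\cL(\varphi)^{\delta}$ needs either trace-norm summability over $\j$ or a uniform-boundedness plus strong-convergence argument for the partial sums of $\sum_{\j}(\bI^*)_{\j}\circ\bI_{\j}$ (the identity $\bI^*\circ\bI=\mathrm{Id}$ is only proved on $C^\infty(K_0)$, pointwise, by resumming). Moreover, once the uniform diagonal bounds are in hand, the vanishing of the upper-triangular contribution must also hold uniformly in $\delta$; the diagonal components of the smoothed operator are no longer produced by a decomposition that Lemma \ref{lm:monotone} applies to unless you redo the decomposition for $\bL(\varphi)^{\delta}$ itself. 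None of these obstacles looks fatal, but as written they are asserted rather than proved, whereas the paper's direct computation avoids them entirely.
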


\begin{proof}
From the proof of Lemma \ref{lm:bl_decomp} above, we see that $\flatTr \bL(\varphi)$ is well-defined, that is, the sum over $\j\in \cJ$ in the definition converges absolutely. Hence we obtain 
\[
\flatTr \bL^t= \flatTr (\bI^{\sigma}\circ \cL^t\circ  (\bI^{\sigma})^*)
= \flatTr  (\cL^t\circ  (\bI^{\sigma})^* \circ \bI^{\sigma})=\flatTr  \cL^t
\]
by rotating the order of composition in the middle.
\end{proof}

\subsection{The flat trace of the iteration of the resolvent}
 Let us put
\begin{align}
&{\cR}^{(n)}=
\int_0^\infty (1-\chi(t/(2t_0)))\cdot  \frac{t^{n-1} e^{-ts_0}}{(n-1)!}\cdot \cL^t\, dt\\
\intertext{
and}
&{\mathbf{R}}^{(n)}=
\int_0^\infty (1-\chi(t/(2t_0)))\cdot  \frac{t^{n-1} e^{-ts_0}}{(n-1)!}\cdot \bL^t\, dt\label{eq:Rns}
\end{align}
where the function $\chi(\cdot)$ is that in (\ref{eq:def_chi}). 
\begin{Remark}\label{rem:wR} The operator ${\cR}^{(n)}$ above is defined as an approximation of  ${\cR}(s_0)^{n}$ and the difference is 
\begin{equation}\label{eq:wR}
\widetilde{\cR}^{(n)}:={\cR}(s_0)^{n}-{\cR}^{(n)}=
\int_0^\infty \chi(t/(2t_0))\cdot  \frac{t^{n-1}\cdot e^{-ts_0}}{(n-1)!}\cdot \cL^t dt.
\end{equation}
We put  the part $\widetilde{\cR}^{(n)}$ aside because
 we can not treat the operators $\cL^t$ with small $t>0$ in the same way as those with large $t>0$. Since the flat trace and also the operator norm of $\widetilde{\cR}^{(n)}$ on $\wK^{r}(K_0)$  converges to zero super-exponentially fast as $n\to \infty$, this does not cause any essential problem, though it introduces some complication  in a few places below.
\end{Remark}

We take constants $r'_0<r''_0$ such that 
\[
r_0=\Re(s_0)+(1/4)r\chi_0<r'_0<r''_0<\Re(s_0)+(1/2)r\chi_0.
\]
\begin{Lemma} \label{lm:R_decomp} There exists a constant $C>0$, independent of $n$,  such that the operator ${\mathbf{R}}^{(n)}$  is expressed as a sum $
{\mathbf{R}}^{(n)}=\widehat{\mathbf{R}}^{(n)}+\widecheck{\mathbf{R}}^{(n)}$
and 
\begin{enumerate}
\item $\widecheck{\mathbf{R}}^{(n)}:\bK^{r}\to \bK^{r}$ is a trace class operator, while 
\item $\widehat{\mathbf{R}}^{(n)}:\bK^{r}\to \bK^{r}$ is upper triangular and satisfies  
\[
\|\widehat{\mathbf{R}}^{(n)}\|\le C (r''_0)^{-n}.
\]
\end{enumerate}
\end{Lemma}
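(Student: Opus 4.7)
The plan is to mirror the discrete-dynamics argument of \cite{BaladiTsujii08}, using Lemma \ref{lm:bl_decomp} as the fundamental building block.  The idea is to extract many factors of the single-time operator $\bL^{t_0}$ from $\mathbf{R}^{(n)}$ via the semigroup property, then split each factor into an upper-triangular hyperbolic piece with exponentially small norm and a trace-class remainder, and finally collect: the term in which every factor is the hyperbolic one becomes $\widehat{\mathbf{R}}^{(n)}$, while the remaining terms, each involving at least one trace-class factor, combine into $\widecheck{\mathbf{R}}^{(n)}$.

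Concretely, I would first enlarge $t_0$ so that Remark \ref{Rem:const} provides $\|\widehat{\bL}^{t_0}_{\mathrm{hyp}}\|\le e^{-(r/2)\chi_0 t_0}$, and decompose $\psi_n$ via a smooth partition of unity $\psi_n(t)=\sum_{k\ge 2}\varphi_{n,k}(t-kt_0)$ with each $\varphi_{n,k}$ supported in $[0,2]$ and, after dividing out the sup of $\psi_n$ on the cell, uniformly bounded in $C^\infty$.  Using $\bL^t=\bL^{t-kt_0}\circ(\bL^{t_0})^k$ on $\supp\varphi_{n,k}(\cdot-kt_0)$ rewrites
\[
\mathbf{R}^{(n)} \;=\; \sum_{k\ge 2}\bL(\varphi_{n,k})\circ(\bL^{t_0})^k .
\]
Now peel off one $\bL^{t_0}$ at a time: Lemma \ref{lm:bl_decomp} gives $\bL(\varphi_{n,k})\circ\bL^{t_0}=\widehat{\bL}^{(1)}_{n,k}+\widecheck{\bL}^{(1)}_{n,k}$ with $\widehat{\bL}^{(1)}_{n,k}=\int \varphi_{n,k}(s-t_0)\,\widehat{\bL}^{s}_{\mathrm{hyp}}\,ds$ upper triangular of norm $\lesssim e^{-(r/2)\chi_0 t_0}\|\varphi_{n,k}\|_{L^1}$ and $\widecheck{\bL}^{(1)}_{n,k}$ trace class with $\|\widecheck{\bL}^{(1)}_{n,k}\|_{tr}\le C\|\varphi_{n,k}\|_{L^\infty}$.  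Because $\widehat{\bL}^{(1)}_{n,k}$ is again a $\varphi$-superposition of hyperbolic time-$s$ parts, composing it with the next $\bL^{t_0}$ and re-applying Lemma \ref{lm:bl_decomp} produces one more iterate of the same kind; iterating $k$ times yields a telescope whose ``all-hyperbolic'' branch I call $\widehat{\mathbf{R}}^{(n)}$, and I set $\widecheck{\mathbf{R}}^{(n)}:=\mathbf{R}^{(n)}-\widehat{\mathbf{R}}^{(n)}$.

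By Lemma \ref{lm:monotone}, $\widehat{\mathbf{R}}^{(n)}$ is upper triangular with
\[
\|\widehat{\mathbf{R}}^{(n)}\|\;\lesssim\;\sum_{k\ge 2}\|\varphi_{n,k}\|_{L^1}\cdot e^{-(r/2)\chi_0 k t_0}\;\lesssim\;(\Re(s_0)+(r/2)\chi_0)^{-n}\;\le\;C(r_0')^{-n},
\]
where the comparison of the $k$-sum to the Gamma integral $\int_0^\infty\frac{t^{n-1}}{(n-1)!}e^{-(\Re(s_0)+(r/2)\chi_0)t}\,dt$ uses the choice $r_0'<\Re(s_0)+r\chi_0/2$.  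Each remaining branch of the telescope contains at least one factor $\widecheck{\bL}^{(i)}$ and is therefore trace class; grouping the $\binom{k-1}{j}$ branches with exactly $j\ge 1$ trace-class factors gives a trace-norm bound of order $\binom{k-1}{j}\|\widecheck{\bL}^{(\cdot)}\|_{tr}\cdot M^{j-1}\cdot\|\widehat{\bL}^{(\cdot)}\|^{k-1-j}$ with $M:=\|\bL^{t_0}\|\le Ce^{Pt_0}$, and this absolute sum in $k$ and $j$ converges provided $2M e^{-\Re(s_0)t_0}<1$, i.e., $\Re(s_0)>P+(\log 2C)/t_0$ --- compatible with $\Re(s_0)>P$ by further enlarging $t_0$ if necessary.

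The main obstacle is the iterative step itself: verifying that the upper-triangular residue produced by one application of Lemma \ref{lm:bl_decomp} is of a form to which the lemma can be applied again after composition with the next $\bL^{t_0}$.  The explicit representation $\widehat{\bL}^{(1)}_{n,k}=\int\varphi_{n,k}(s-t_0)\widehat{\bL}^{s}_{\mathrm{hyp}}\,ds$ makes this possible, since $\widehat{\bL}^{s}_{\mathrm{hyp}}\circ\bL^{t_0}$ differs from a new $\widehat{\bL}^{s'}_{\mathrm{hyp}}$ only by a trace-class error controlled by the central, low, and bad-hyperbolic estimates of Proposition \ref{lm:TL}, Lemmas \ref{lm:hyp}, \ref{lm:low}; this error is absorbed into $\widecheck{\mathbf{R}}^{(n)}$ at each step, at the cost of a uniformly bounded trace norm that still fits the geometric summation above.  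Arranging this bookkeeping --- so that the hyperbolic branch remains genuinely upper triangular under composition and the cumulative trace-class errors summed over all $2^k$ branches and all $k$ still converge --- is the crux of the proof; the norm and trace-class estimates indicated above then deliver the claim.
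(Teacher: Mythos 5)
Your overall plan (block the time integral, peel off hyperbolic factors to build an upper triangular branch whose norm is summed against the Gamma weight, and call the rest trace class) is the same strategy as the paper's, but the step you yourself identify as the crux is exactly where your version breaks. You assert that $\widehat{\bL}^{s}_{\mathrm{hyp}}\circ\bL^{t_0}$ differs from an operator of hyperbolic type only by a trace-class error ``controlled by the central, low, and bad-hyperbolic estimates''. The low and bad-hyperbolic pieces of the \emph{bare} operator $\bL^{t_0}$ are indeed trace class (Lemmas \ref{lm:low} and \ref{lm:hyp} give trace-norm bounds summable in the frequency blocks), but the central piece $\bL^{t_0}_{\mathrm{ctr}}$ is not: Proposition \ref{lm:TL} provides only operator-norm bounds for it, its frequency blocks have trace norms of order $\langle\omega\rangle^{d}$ and are bounded below in norm on subspaces of dimension $\gtrsim\langle\omega\rangle^{d}$ (cf.\ Lemma \ref{lm:trace_lifted}), so the sum over $\omega$ diverges and the operator is not even compact. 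The central part becomes trace class only \emph{after} integration against a fixed compactly supported time weight — this is precisely how the proof of Lemma \ref{lm:bl_decomp} handles it, by integrating by parts in $t$ to gain the factor $\langle\omega(\j)\rangle^{-\nu}$ — and in your scheme the weight $\varphi_{n,k}$ is consumed at the first peeling step, so at every subsequent step the central error appears unaveraged and cannot be absorbed into $\widecheck{\mathbf{R}}^{(n)}$. Nor can you keep those terms in the hat branch: terms like $\widehat{\bL}^{s}_{\mathrm{hyp}}\circ(\bL^{t_0}_{\mathrm{ctr}})^{k-1}$ are upper triangular, but the central band has no norm decay (it carries the resonances near the imaginary axis), so such branches have norm bounded below uniformly in $k$ and the resulting sum is of size $\Re(s_0)^{-n}$, which is larger than the required $C(r'_0)^{-n}$.

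The paper's proof is organized precisely to avoid this. It writes $\bL(q^{(n)}_{\omega})=\bL(\tilde q^{(n)}_{\omega})\circ\bL^{t_{k}}\circ\cdots\circ\bL^{t_{1}}$ with $t_i\in[t_0,2t_0]$ and telescopes so that each branch contains \emph{exactly one} bad factor $(\bL^{t_j}-\widehat{\bL}^{t_j}_{\mathrm{hyp}})$, and — using that all these operators commute — that single bad factor always sits adjacent to the time-averaged operator $\bL(\tilde q^{(n)}_{\omega})$, so Lemma \ref{lm:bl_decomp} makes the pair trace class; the remaining bare factors $\bL^{t_{j-1}}\circ\cdots\circ\bL^{t_1}$ are only estimated in operator norm by $e^{Pt}$, and summability of the trace norms over $\omega$ for fixed $n$ uses nothing beyond $\Re(s_0)>P$ (the lemma does not ask for a uniform trace-norm bound). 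In particular no condition like $\Re(s_0)>P+(\log 2C)/t_0$ is needed; that artifact comes from your full $2^{k}$-branch expansion with binomial counting, whereas the paper's telescope has only $k$ branches with one bad factor each. A further, minor slip: your $\varphi_{n,k}$, supported in $[0,2]$ but translated by multiples of $t_0$, do not cover the time axis once $t_0>2$; the paper uses the unit-spaced partition $q_\omega$ in the time variable instead.
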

\begin{proof} Using the periodic partition of unity $\{q_\omega\}_{\omega\in \mathbb{Z}}$ defined in (\ref{eq:chi_omega}), we set\footnote{Note that the variable $\omega\in \integer$  
does not indicate the frequency as in the previous sections but  the range of time, now and henceforth. } 
\[
q_{\omega}^{(n)}(t)=q_{\omega}(t)\cdot (1-\chi(t/(2t_0))\cdot \frac{t^{n-1}e^{-s_0 t}}{(n-1)!},
\]
so that $
\sum_{\omega\ge [2t_0]-1} q_{\omega}^{(n)}(t)= (1-\chi(t/(2t_0))\cdot (t^{n-1}e^{-s_0 t})/(n-1)!$ and that
\[
{\mathbf{R}}^{(n)}=\sum_{\omega=[2t_0]-1}^\infty \bL(q_{\omega}^{(n)})
\]
from the definition (\ref{eq:blphi}). 
We deduce the claims of the lemma from  
\begin{Claim}\label{Claim4}
For arbitrarily small $\tau>0$, there exists a constant $C>0$, independent of $n$, such that  the operators $\bL(q_{\omega}^{(n)})$ for $n\ge 1$ are decomposed as 
\[
\bL(q_{\omega}^{(n)})=\widehat{\bL}(q_{\omega}^{(n)})+\widecheck{\bL}(q_{\omega}^{(n)})
\]
where $\widehat{\bL}(q_{\omega}^{(n)})$ are upper triangular and satisfy 
\begin{equation}\label{eq:Claim4-1}
\|\widehat{\bL}(q_{\omega}^{(n)})\|\le C \frac{\omega^{n-1}}{(n-1)!}\cdot
 e^{-(\Re(s_0) +(1/2)r \chi_0)\omega+\tau n},
\end{equation}
while $\widecheck{\bL}(q_{\omega}^{(n)})$ are trace class operators satisfying
\begin{equation}\label{eq:sum_trace}
\sum_{\omega=[2t_0]-1}^\infty 
\|\widecheck{\bL}(q_{\omega}^{(n)})\|_{\trace}<+\infty.
\end{equation}
\end{Claim}
From the claim above, we set
\[
\widehat{\mathbf{R}}^{(n)}=\sum_{\omega=[2t_0]-1}^\infty \widehat{\bL}(q_{\omega}^{(n)})\quad \mbox{and}\quad
\widecheck{\mathbf{R}}^{(n)}=\sum_{\omega=[2t_0]-1}^\infty \widecheck{\bL}(q_{\omega}^{(n)}).
\]
Then the first claim (1) of the lemma follows from (\ref{eq:sum_trace}). 
The second claim (2) also follows because  $\widehat{\mathbf{R}}^{(n)}$ is upper triangular from Lemma \ref{lm:monotone} and because
\begin{align*}
\|\widehat{\mathbf{R}}^{(n)}\|&\le 
\sum_{\omega=[2t_0]-1}^\infty \| \widehat{\bL}(q_{\omega}^{(n)})\|\le C \sum_{\omega=[2t_0]-1}^\infty\frac{\omega^{n-1}}{(n-1)!}\cdot
 e^{-(\Re(s_0) +(1/2)r \chi_0)\omega+\tau n}\\
&\le C\int_0^\infty \frac{t^{n-1}}{(n-1)!} \cdot e^{-(\Re(s_0) +(1/2)r \chi_0)t+\tau n} dt\\
&= C e^{\tau n}\cdot \left(\int_0^\infty e^{-(\Re(s_0) +(1/2)r \chi_0)t} dt\right)^n<C (r''_0)^{-n}
\end{align*} 
from (\ref{eq:Claim4-1}). We give the proof of Claim \ref{Claim4} below to complete the proof.
\end{proof}
\begin{proof}[Proof of Claim \ref{Claim4}.]
We first note that the family 
\begin{equation}\label{eq:cP}
\mathcal{P}=\left\{(n-1)!\cdot \omega^{-n+1}\cdot  e^{\Re(s_0) \omega-\tau n}\cdot {q}_{\omega}^{(n)}(t+\omega)\;\bigg|\;  n\ge 1, \omega \ge [2t_0]-1 \right\}
\end{equation}
satisfies the assumption in Lemma \ref{lm:bl_decomp}. 
To proceed, we write an integer $\omega\ge [2t_0]-1$ as a sum of real numbers in $[t_0,2t_0]$:
\[
\omega=\sum_{i=1}^{k(\omega)} t_i,\qquad t_0\le t_i\le 2t_0.
\]
Then we decompose ${\bL}(q_{\omega}^{(n)})$ as follows: First we write   
\begin{align*}
{\bL}(q_{\omega}^{(n)})={\bL}(\tilde{q}_{\omega}^{(n)})\circ \bL^{\omega}
&=\bL^{t_{k(\omega)}}_{\mathrm{hyp},\hookrightarrow}\circ \bL(\tilde{q}_{\omega}^{(n)})\circ {\bL}^{t_{k(\omega)-1}}\circ \cdots\circ {\bL}^{t_{2}}\circ  \bL^{t_1}\\
&\qquad + ({\bL}^{t_{k(\omega)}}-\bL^{t_{k(\omega)}}_{\mathrm{hyp},\hookrightarrow})\circ {\bL}^{t_{k(\omega)-1}}\circ\cdots\circ {\bL}^{t_{2}}\circ \bL(\tilde{q}_{\omega}^{(n)})\circ \bL^{t_1}
\end{align*}
where we set $\tilde{q}_{\omega}^{(n)}(t)=q_{\omega}^{(n)}(t+\omega)$ for brevity. 
Since the first term on the right-hand side other than the first factor is of the same form as ${\bL}(\tilde{q}_{\omega}^{(n)})\circ \bL^{\omega}$, we apply the parallel operation to it. 
If we continue this procedure, we can express  $\bL(q_{\omega}^{(n)})$ as 
\begin{align}\label{eq:Lqw0}
&\bL^{t_{k(\omega)}}_{\mathrm{hyp},\hookrightarrow}\circ\cdots\circ \bL^{t_{2}}_{\mathrm{hyp},\hookrightarrow}\circ \bL(\tilde{q}_{\omega}^{(n)})\circ \bL^{t_1}\\
& +\sum_{j=2}^{k(\omega)} \bL^{t_{k(\omega)}}_{\mathrm{hyp},\hookrightarrow}\circ\cdots\circ \bL^{t_{j+1}}_{\mathrm{hyp},\hookrightarrow}
\circ ({\bL}^{t_{j}}-\bL^{t_{j}}_{\mathrm{hyp}, \hookrightarrow})\circ {\bL}(\tilde{q}_{\omega}^{(n)})\circ {\bL}^{t_{j-1}}\circ \cdots \circ {\bL}^{t_{1}}.\notag
\end{align}
Note that, from Lemma \ref{lm:bl_decomp} and the estimate noted in the beginning, we see  
\[
(n-1)! \cdot \omega^{-n+1}\cdot e^{\Re(s_0) \omega-\tau n}\cdot\bL(\tilde{q}_{\omega}^{(n)})\circ \bL^{t_1}=\widehat{\bL}+\widecheck{\bL}
\]
where $\widehat{\bL}$ is upper triangular and $\|\widehat{\bL}\|\le C$ while $\widecheck{\bL}$ is in the trace class and $\|\widecheck{\bL}\|_{\trace}\le C$, with $C>0$ a constant independent of $n$ and $\omega$. 
So we may rewrite the first term on the right-hand side of (\ref{eq:Lqw0}) as  
\begin{align}\label{eq:Lqw}
&\frac{\omega^{n-1}}{(n-1)!}\cdot  e^{-\Re(s_0) \omega+\tau n}\cdot\bL^{t_{k(\omega)}}_{\mathrm{hyp},\hookrightarrow}\circ\cdots\circ \bL^{t_{2}}_{\mathrm{hyp},\hookrightarrow}\circ \widehat{\bL}\\
&\qquad +\frac{\omega^{n-1}}{(n-1)!}\cdot  e^{-\Re(s_0) \omega+\tau n}\cdot
\bL^{t_{k(\omega)}}_{\mathrm{hyp},\hookrightarrow}\circ\cdots\circ \bL^{t_{2}}_{\mathrm{hyp},\hookrightarrow}\circ \widecheck{\bL}.\notag
\end{align}

Let $\widehat{\bL}(q_{\omega}^{(n)})$ in Claim \ref{Claim4} be the first term of (\ref{eq:Lqw}) above and $\widecheck{\bL}(q_{\omega}^{(n)})$ be the remainder, that is, the sum of the second term in (\ref{eq:Lqw}) and the sum on the second line of (\ref{eq:Lqw0}).  
By Lemma \ref{lm:monotone}, $\widehat{\bL}(q_{\omega}^{(n)})$ is  upper triangular. Since  
\begin{equation}
\label{eq:bLhypnorm}
\| \bL^{t}_{\mathrm{hyp},\hookrightarrow}\|\le  e^{-r\chi_0 t/2 }\quad \mbox{for $t_0\le t\le 2t_0$}
\end{equation}
from Remark \ref{Rem:const}, we obtain the estimate (\ref{eq:Claim4-1}) immediately.
Also, for the second term in (\ref{eq:Lqw}), we have
\begin{align*}
&\left \|\frac{\omega^{n-1}  e^{-\Re(s_0) \omega+\tau n}}{(n-1)!}\cdot \bL^{t_{k(\omega)}}_{\mathrm{hyp},\hookrightarrow}\circ\cdots\circ \bL^{t_{2}}_{\mathrm{hyp},\hookrightarrow}\circ \widecheck{\bL}\right\|_{\trace}\\
&\le  \frac{\omega^{n-1}  e^{-\Re(s_0) \omega+\tau n}}{(n-1)!} \|\bL^{t_{k(\omega)}}_{\mathrm{hyp},\hookrightarrow}\|\cdots\| \bL^{t_{2}}_{\mathrm{hyp},\hookrightarrow}\|  \|\widecheck{\bL}\|_{\trace}\le    \frac{C\omega^{n-1}  e^{-(\Re(s_0) +(1/2)r\chi_0)\omega+\tau n}}{(n-1)!}
\end{align*}
and this bound is summable with respect to $\omega$. 
To look into the sum in (\ref{eq:Lqw0}), note that the operator
 $({\bL}^{t_{j}}-\bL^{t_{j}}_{\mathrm{hyp},\hookrightarrow})\circ {\bL}(\tilde{q}_{\omega}^{(n)})$ satisfies 
\[
\|({\bL}^{t_{j}}-\bL^{t_{j}}_{\mathrm{hyp},\hookrightarrow})\circ {\bL}(\tilde{q}_{\omega}^{(n)})\|_{\trace}
\le C \cdot  \frac{\omega^{n-1}\cdot e^{-\Re(s_0) \omega+\tau n}}{(n-1)!}
\]
for a constant $C>0$ independent of $n$ and $\omega$, from the latter claim of Lemma \ref{lm:bl_decomp} and uniform boundedness of $\mathcal{P}$.  
Hence we have
\begin{align*}
&\sum_{j=2}^{k(\omega)}\| \bL^{t_{k(\omega)}}_{\mathrm{hyp},\hookrightarrow}\circ\cdots\circ \bL^{t_{j+1}}_{\mathrm{hyp},\hookrightarrow}
\circ ({\bL}^{t_{j}}-\bL^{t_{j}}_{\mathrm{hyp},\hookrightarrow})\circ {\bL}(\tilde{q}_{\omega}^{(n)})\circ {\bL}^{t_{j-1}}\circ \cdots \circ {\bL}^{t_{1}}\|_{\trace}\\
&\le 
\sum_{j=2}^{k(\omega)}\| \bL^{t_{k(\omega)}}_{\mathrm{hyp},\hookrightarrow}\|\cdots\|\bL^{t_{j+1}}_{\mathrm{hyp},\hookrightarrow}\|
\cdot \| ({\bL}^{t_{j}}-\bL^{t_{j}}_{\mathrm{hyp},\hookrightarrow})\circ {\bL}(\tilde{q}_{\omega}^{(n)})\|_{\trace}\cdot \|{\bL}^{t_{j-1}} \circ \cdots\circ  {\bL}^{t_{1}}\|\\
&\le C  \cdot \frac{\omega^{n-1}\cdot  e^{-(\Re(s_0)-P)\omega+\tau n}}{(n-1)!}\quad \mbox{by (\ref{eq:bLnorm}) and (\ref{eq:bLhypnorm}).}
\end{align*}
This bound is again summable with respect to $\omega$, provided $\tau$ is sufficiently small. Therefore we obtain the estimate (\ref{eq:sum_trace}). 
\end{proof}

\begin{Corollary}\label{cor:ess}
The essential spectral radius of  $\cR(s_0):\widetilde{\cK}^{r}(K_0)\to \widetilde{\cK}^{r}(K_0)$ is bounded by $(r''_0)^{-1}$. 
\end{Corollary}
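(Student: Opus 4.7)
The plan is to bound $r_{\mathrm{ess}}(\cR(s_0))$ via Nussbaum's formula $r_{\mathrm{ess}}(T) = \lim_n \|T^n\|_{\mathrm{ess}}^{1/n}$, where $\|T\|_{\mathrm{ess}}$ denotes the distance from $T$ to the compact operators. Accordingly, it suffices to exhibit, for each large $n$, a decomposition $\cR(s_0)^n = \widehat{\cR}^{(n)} + \widecheck{\cR}^{(n)}$ on $\widetilde{\cK}^r(K_0)$ with $\widecheck{\cR}^{(n)}$ compact and $\|\widehat{\cR}^{(n)}\|_{\widetilde{\cK}^r} \le C(r'_0)^{-n}$ for a constant $C$ independent of $n$.

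First I would dispose of the short-time remainder. From (\ref{eq:wR}) we have $\cR(s_0)^n = \cR^{(n)} + \widetilde{\cR}^{(n)}$; since $\widetilde{\cR}^{(n)}$ integrates $\cL^t$ against $(t^{n-1}/(n-1)!) \chi(t/(2t_0)) e^{-ts_0}$, supported in $t \in [0, 10t_0/3]$, and Proposition \ref{pp:strong_continuity} gives $\|\cL^t\|_{\widetilde{\cK}^r} \le Ce^{Ct}$, a direct estimate yields $\|\widetilde{\cR}^{(n)}\|_{\widetilde{\cK}^r} \le C'(10t_0/3)^n/n!$. This decays super-exponentially and can be absorbed into $\widehat{\cR}^{(n)}$, so only $\cR^{(n)}$ remains to analyze.

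The main step is to transfer the decomposition $\mathbf{R}^{(n)} = \widehat{\mathbf{R}}^{(n)} + \widecheck{\mathbf{R}}^{(n)}$ of Lemma \ref{lm:R_decomp} from $\bK^r$ to $\widetilde{\cK}^r$. The crucial observation is that the cutoff $1-\chi(t/(2t_0))$ forces $t \ge 8t_0/3 > t_0$, so by (\ref{eq:wK2}) every $\cL^t$ appearing in $\cR^{(n)}$ sends $\widetilde{\cK}^r$ into $\cK^{r,+2}(K_2) \subset \cK^r(K_0)$. Setting $t'_0 = 3t_0/2$, I would factor
\[
\cR^{(n)} = \cR^{(n)}_{\mathrm{sh}} \circ \cL^{t'_0}, \qquad \cR^{(n)}_{\mathrm{sh}} := \int_0^\infty (1-\chi((s+t'_0)/(2t_0))) \frac{(s+t'_0)^{n-1} e^{-(s+t'_0)s_0}}{(n-1)!}\, \cL^s \, ds,
\]
so that $\cL^{t'_0}: \widetilde{\cK}^r \to \cK^r$ is bounded and $\cR^{(n)}_{\mathrm{sh}}: \cK^r(K_0) \to \cK^r(K_0)$ is bounded (every $\cL^s$ in the integrand has $s \ge 7t_0/6 > t_0$). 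On $\cK^r$ the intertwining $\bL^s \bI = \bI \cL^s$ identifies $\bI \cR^{(n)}_{\mathrm{sh}} \bI^*$ with an operator $\mathbf{R}^{(n)}_{\mathrm{sh}}$ of exactly the form treated in Lemma \ref{lm:R_decomp}, the only change being the polynomial-times-exponential weight $h_n(s+t'_0)$ instead of $h_n(s)$. Re-running the proof of Claim \ref{Claim4} verbatim with this shifted weight yields $\mathbf{R}^{(n)}_{\mathrm{sh}} = \widehat{\mathbf{R}}^{(n)}_{\mathrm{sh}} + \widecheck{\mathbf{R}}^{(n)}_{\mathrm{sh}}$ obeying the estimates (\ref{eq:Claim4-1}) and (\ref{eq:sum_trace}) up to absolute constants, since the shift only alters the polynomial prefactors by bounded factors.

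Pulling back via $\bI^*$ and composing with $\cL^{t'_0}: \widetilde{\cK}^r \to \cK^r$ and the continuous inclusion $\iota: \cK^r \hookrightarrow \widetilde{\cK}^r$ (cf.~(\ref{eq:wK1})), we obtain the desired decomposition of $\cR^{(n)}$ on $\widetilde{\cK}^r$: the trace class part transfers to a compact operator, and the upper triangular part has operator norm bounded by $C(r'_0)^{-n}$. I expect the main (though mild) obstacle to be the bookkeeping required to verify that the iterative decomposition (\ref{eq:Lqw0})--(\ref{eq:Lqw}) in the proof of Claim \ref{Claim4} goes through unchanged for the shifted kernel $h_n(s+t'_0)$; this is a routine check since the factor $e^{-t'_0 \Re(s_0)}$ is a harmless constant and $(s+t'_0)^{n-1}/(n-1)!$ still satisfies the uniform $C^\infty$ bounds (\ref{eq:cP}) after the usual normalization. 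Combining this with the short-time estimate and Nussbaum's formula yields $r_{\mathrm{ess}}(\cR(s_0)) \le (r'_0)^{-1}$.
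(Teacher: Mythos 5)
Your proposal is correct and follows essentially the same route as the paper: dispose of the short-time piece $\widetilde{\cR}^{(n)}$ by its super-exponentially small norm, invoke the decomposition of Lemma \ref{lm:R_decomp} into an upper triangular part of norm $O((r'_0)^{-n})$ plus a trace class part, transfer it from $\cK^r(K_0)$ to $\wK^{r}(K_0)$ using that $\cL^{t}:\wK^{r}(K_0)\to\cK^{r}(K_0)$ is bounded for $t>t_0$, and conclude by multiplicativity of the essential spectral radius (your Nussbaum formula). Your explicit factorization $\cR^{(n)}=\cR^{(n)}_{\mathrm{sh}}\circ\cL^{t'_0}$ with the shifted weight is simply a more detailed rendering of the transfer step that the paper dispatches in one sentence, so no genuinely different argument is involved.
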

\begin{proof} 
We consider  the decomposition of $\cR(s_0)^n:\wK^{r}(K_0)\to \wK^{r}(K_0)$ into  
\[
\widetilde{\cR}^{(n)},\quad \widehat{\cR}^{(n)}=\bI^*\circ \widehat{\mathbf{R}}^{(n)}\circ \bI\quad \mbox{and} \quad \widecheck{\cR}^{(n)}=\bI^*\circ \widecheck{\mathbf{R}}^{(n)}\circ\bI
\] 
where $\widetilde{\cR}^{(n)}$ is that in (\ref{eq:wR}). 
From the last lemma,  the operator norm of  $\widehat{\cR}^{(n)}$ on $\cK^r(K_0)$ is bounded by $C (r''_0)^{-n}$ with $C>0$ independent of $n$, and  $\widecheck{\cR}^{(n)}$ on $\cK^r(K_0)$ is a trace class operator. 
Further it is not difficult to see that these remain true when we regard $\widehat{\cR}^{(n)}$ and $\widecheck{\cR}^{(n)}$ as operators on $\wK^{r}(K_0)$, because $\cL^{t_0}:\wK^{r}(K_0)\to \cK^r(K_0)$ is bounded from Proposition~\ref{pp:continuity}. 
Therefore, recalling Remark \ref{rem:wR} for $\widetilde{\cR}^{(n)}$, we obtain that the essential spectral radius of $\cR(s_0)^n$ is bounded by $C (r''_0)^{-n}$ and hence by  $(r''_0)^{-n}$ from the multiplicative property of essential spectral radius.  
\end{proof}

Corollary \ref{cor:ess} implies that the spectral set of  $\cR(s_0):\wK^{r}(K_0)\to \wK^{r}(K_0)$ on the outside of the disk $|z|\le (r'_0)^{-1}$ consists of discrete eigenvalues $\mu_i$, $1\le i\le m$, counted with multiplicity. Since $A \cR(s_0)=s_0 \cR(s_0)+1$, we have
\[
\mu-\cR(s_0)=\mu\cdot \left( (s_0-\mu^{-1}) -A\right) \cdot \cR(s_0).
\]
This implies that $\mu_i$'s are in one-to-one correspondence to the eigenvalues $\chi_i$, $1\le i\le m$, of the generator $A$ in the disk $D(s_0,r'_0)$ by the relation  
\[
\mu_i=\frac{1}{s_0-\chi_i}=\int_0^\infty e^{-s_0 t}e^{\chi_i t}dt.
\]
\begin{Remark}\label{rem:meromorphy}
Since the argument above holds for any $s_0$ satisfying $\Re(s_0)>P$, the spectrum of the generator $A$ on the half-plane
$\Re(s)>-(1/4)r\chi_0$ consists of discrete eigenvalues with finite multiplicity and the resolvent $\cR(s)$ is meromorphic on that  half-plane.
\end{Remark}

Let $\bP:\widetilde{\cK}^{r}(K_0)\to \widetilde{\cK}^{r}(K_0)$ be the spectral projector of  $\cR(s_0)$ for the spectral set $\{\mu_i\}_{i=1}^m$ on the outside of the disk $|z|\le r_0^{-1}$. This is also the spectral projector of the generator $A$ for the spectral set $\{\chi_i\}_{i=1}^m$ and its image is contained in  $\cK^{r}(K_0)$ from Proposition~\ref{pp:continuity}. We set $
\mathcal{F}(s_0)=\bP\circ \cR(s_0)$, 
so that 
\[
\trace\,\mathcal{F}(s_0)^n=\flatTr\,\mathcal{F}(s_0)^n=\sum_{i=1}^m \mu_i^n=\sum_{i=1}^m \frac{1}{(s_0-\chi_i)^n}.
\]
Our task is to prove (\ref{eq:claim_ppzeta}) in Proposition \ref{pp:zeta} for 
\[
Q_n=\flatTr \left(\cR(s_0)^n-\mathcal{F}(s_0)^n\right)
=\flatTr \left((1-\bP)\circ \cR(s_0)^n\right).
\]
To continue, let $N_0>0$ be a large integer constant which will be specified later in the course of the argument. 
Consider large integer $n$  and write
\begin{align*}
\cR(s_0)^n-\mathcal{F}(s_0)^n&
=(\mathrm{Id}-{\bP})\circ 
\cR(s_0)^{n(m)}\circ\
\cdots \circ\cR(s_0)^{n(1)}.
\end{align*}
where $n=n(1)+n(2)+\cdots +n(I)$ with  $N_0\le n(i)\le 2N_0$.
We decompose each term $\cR(s_0)^{n(i)}$ on the right-hand side as
\[
\cR(s_0)^{n(i)}=\cR^{(n(i))}+\widetilde{\cR}^{(n(i))}
\]
in the same manner as  (\ref{eq:wR}). 
From Remark \ref{rem:wR}, the part $\widetilde{\cR}^{(n(i))}$ is very small in the operator norm if we let the constant $N_0$ be sufficiently large. (And note that the following argument is much simpler if we ignore this part.)

Since the operators $\cR^{(n(i))}$ and $\widetilde{\cR}^{(n(i))}$ for $1\le i\le I$ commute each other and also  with the projection operator $\bP$, we can express $\cR(s_0)^n-\mathcal{F}(s_0)^n$ as the sum of the $2^I$ terms of the form
\begin{equation}\label{eq:prod_N_R}
(1-{\bP})\circ 
\left(\prod_{i=1}^{I''} \widetilde{\cR}^{(n''(i))}\right)\circ  
\left(\prod_{i=1}^{I'} \cR^{(n'(i))}\right)
\end{equation}
where $\{n''(1),\cdots, n''(I''), n'(1), \cdots, n'(I')\}$ with $I=I'+I''$ ranges over all the rearrangements of $\{n(1), n(2), \cdots, n(I)\}$. 
Hence our task is reduced to show  
\begin{Claim}\label{Claim5} There exists a constant $C>0$ such that  
\begin{equation}\label{eq:dynTr_negligible}
\left|\flatTr\left((1-{\bP})\circ 
\left(\prod_{i=1}^{I''}\widetilde{\cR}^{(n''(i))}\right)\circ  
\left(\prod_{i=1}^{I'} \cR^{(n'(i))}\right)\right)\right|\le C r_0^{-n}\cdot 2^{-I}.
\end{equation}
\end{Claim}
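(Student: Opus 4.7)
My plan is to adapt the strategy of \cite{BaladiTsujii08} to our continuous-time setting. The core idea is that on the lifted space $\bK^{r}$ the decomposition $\mathbf{R}^{(n)} = \widehat{\mathbf{R}}^{(n)} + \widecheck{\mathbf{R}}^{(n)}$ of Lemma~\ref{lm:R_decomp} replaces the Atiyah-Bott trace by an actual trace: the upper-triangular part has vanishing Atiyah-Bott trace (Lemma~\ref{lm:monotone}) and the trace class part is controlled by its trace norm. First I would split
\[
\flatTr\big((1-\bP)\circ X\big) = \flatTr(X) - \trace(\bP \circ X), \qquad X := \prod_{i=1}^{m''}\widetilde{\cR}^{(n''(i))}\circ \prod_{i=1}^{m'}\cR^{(n'(i))}.
\]
Since the rank of $\bP$ is fixed (equal to the number of $\chi_i$ in $D(s_0,r_0)$), $\|\bP\|_{tr}$ is a constant and $|\trace(\bP\circ X)| \le \|\bP\|_{tr}\|X\|_{op}$. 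I would bound $\|X\|_{op}$ by combining $\|\cR^{(n)}\|_{op} \le C(r'_0)^{-n}$ (inherited from Lemma~\ref{lm:R_decomp} through the isometric embedding $\bI$) with the super-exponential estimate $\|\widetilde{\cR}^{(n)}\|_{op} \le C(3t_0)^{n}/(n-1)!$, which follows from the compact support of the integrand in (\ref{eq:wR}). For $N_0$ large enough, both factors $C(r_0/r'_0)^{N_0}$ (per $n'(i)$) and $C(3t_0)^{N_0} r_0^{N_0}/(N_0-1)!$ (per $n''(i)$) are $\le 1/4$, giving $\|X\|_{op} \le 4^{-m}r_0^{-n}$.

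Next I would bound $\flatTr(X)$ by passing to the lifted side. Using the convolution identity $\prod \cR^{(n'(i))}\prod \widetilde{\cR}^{(n''(i))} = \cL(\Phi)$ for an explicit kernel $\Phi$, and the periodic-orbit formula (\ref{def:dynTr}), one obtains the identity $\flatTr X = \flatTr \mathbf{X}$ where $\mathbf{X} := \prod \widetilde{\mathbf{R}}^{(n''(i))}\prod \mathbf{R}^{(n'(i))}$, because both are equal to the same sum over closed orbits. Expanding each $\mathbf{R}^{(n'(i))}=\widehat{\mathbf{R}}^{(n'(i))}+\widecheck{\mathbf{R}}^{(n'(i))}$ gives $2^{m'}$ subterms $\mathbf{Y}_S$ indexed by $S\subseteq [m']$. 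For $S\neq [m']$ at least one factor is the trace class $\widecheck{\mathbf{R}}^{(n'(k))}$, so $|\flatTr \mathbf{Y}_S| \le \|\widecheck{\mathbf{R}}^{(n'(k))}\|_{tr}\prod_{i\neq k}\|\cdot\|_{op}$; the trace-norm bound implicit in the proof of Lemma~\ref{lm:R_decomp} together with the operator bounds above yields a contribution $\le 2^{-2m}r_0^{-n}$ per term, summing to $\le 2^{-m}r_0^{-n}$. For $S=[m']$ with $m''=0$, $\mathbf{Y}_S$ is a composition of upper-triangular operators and hence $\flatTr \mathbf{Y}_S = 0$ by Lemma~\ref{lm:monotone}.

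The main obstacle is the remaining case $S=[m']$ with $m''\ge 1$, where $\mathbf{Y} = \bL(\Psi)\cdot \prod \widehat{\mathbf{R}}^{(n'(i))}$, with $\Psi:=\tilde\varphi_{n''(1)}*\cdots*\tilde\varphi_{n''(m'')}$, is neither upper triangular (the $\widetilde{\mathbf{R}}$ factors spoil this) nor manifestly trace class. To handle it I would use the identity $\bL(\Psi)\prod \mathbf{R}^{(n'(i))}=\bL(\Psi*\phi)$ (with $\phi = \varphi_{n'(1)}*\cdots$) to write
\[
\bL(\Psi)\prod_{i=1}^{m'}\widehat{\mathbf{R}}^{(n'(i))} = \bL(\Psi*\phi) \; - \; \bL(\Psi)\!\!\!\sum_{T\subsetneq [m']}\!\!\!\prod_{i\in T}\widehat{\mathbf{R}}^{(n'(i))}\prod_{i\notin T}\widecheck{\mathbf{R}}^{(n'(i))}.
\]
The sum on the right is trace class (each summand contains a $\widecheck{\mathbf{R}}$), and the super-small $\|\bL(\Psi)\|_{op}$ supplies the required $2^{-m}$ saving when combined with trace-norm bounds as before. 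For the term $\bL(\Psi*\phi)$ I would re-run the partition-of-unity construction in the proof of Lemma~\ref{lm:R_decomp} directly on the function $\Psi*\phi$ in place of $\varphi_n$, noting that its support $[m'\cdot 8t_0/3,\infty)\supseteq [t_0,\infty)$ when $m'\ge 1$: the resulting upper-triangular piece has zero Atiyah-Bott trace, while the trace-class piece has its trace norm scaled by the super-small $\|\Psi\|_1 \le \prod (10t_0/3)^{n''(i)}/(n''(i)!)$, again giving the $2^{-m}$ factor. The boundary case $m'=0$ is handled directly: $\Psi$ has compact support in $[0,10t_0 m/3]$, so by (\ref{def:dynTr}) $\flatTr\bL(\Psi)$ reduces to a finite sum over bounded-period orbits, each contribution super-small. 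Collecting all contributions yields the estimate of Claim~\ref{Claim5}.
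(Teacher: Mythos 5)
There is a genuine gap, and it occurs already in your first step. Splitting $\flatTr((1-\bP)\circ X)=\flatTr(X)-\trace(\bP\circ X)$ and bounding the two pieces separately cannot work: both pieces are individually of size comparable to $\sum_i |s_0-\chi_i|^{-n}$, and since the peripheral eigenvalues $\chi_i$ lie strictly inside $\disk(s_0,r_0)$, i.e. $|s_0-\chi_i|^{-1}>r_0^{-1}$, neither piece is $O(r_0^{-n})$; the whole content of Claim \ref{Claim5} is the cancellation between them, which your split discards. The error is hidden in the claimed bound $\|\cR^{(n)}\|_{op}\le C(r'_0)^{-n}$ "inherited from Lemma \ref{lm:R_decomp}": that lemma bounds only the upper-triangular part $\widehat{\mathbf{R}}^{(n)}$ by $C(r'_0)^{-n}$, while the trace-class part $\widecheck{\mathbf{R}}^{(n)}$ is merely bounded (its trace norm is a constant), and it is precisely this part that carries the peripheral spectral data $(s_0-\chi_i)^{-n}$. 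Hence $\|X\|_{op}$ does not decay like $r_0^{-n}$, and $|\trace(\bP\circ X)|$ is in general much larger than $r_0^{-n}$. This is exactly why the paper never separates the projector: the decaying estimate $\|(1-\bP^{\lift})\circ\mathbf{R}_i\|\le (r'_0)^{-\tilde n(i)}/4$ in (\ref{eq:norm_1Pi}) holds only because $(1-\bP^{\lift})$ stays glued to each resolvent block (and, to make every block enjoy such a bound, the case $m'\ge m''$ is arranged so that each $\widetilde{\mathbf{R}}^{(n''(i))}$ is paired with an $\mathbf{R}^{(n'(i))}$ inside a single block $\mathbf{R}_i$).

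The same defect reappears in your treatment of $\flatTr(X)$: the full binomial expansion of $\prod_i\mathbf{R}^{(n'(i))}$ into $2^{m'}$ hat/check terms does not give the asserted "$\le 2^{-2m}r_0^{-n}$ per term", because a factor $\widecheck{\mathbf{R}}^{(n'(k))}$ only contributes an $O(1)$ trace (or operator) norm, not a factor $(r'_0)^{-n'(k)}$; any term containing several check factors — and in particular the all-check term — has no $r_0^{-n}$ decay at all, consistent with the fact that this is where $\sum_i(s_0-\chi_i)^{-n}$ lives. The paper's proof avoids both problems at once by the telescoping development analogous to (\ref{eq:Lqw0}): one expands $\prod_i\bigl((1-\bP^{\lift})\circ\mathbf{R}_i\bigr)$ so that every term contains exactly one trace-class factor $(\widecheck{\mathbf{R}}_j-\bP^{\lift}\circ\mathbf{R}_j)$, with upper-triangular factors $\widehat{\mathbf{R}}_i$ (small operator norm, zero flat trace for the pure product) on its left and \emph{unexpanded} factors $(1-\bP^{\lift})\circ\mathbf{R}_i$ (small norm thanks to the projector) on its right. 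Your elaborate handling of the case $S=[m']$, $m''\ge 1$ is therefore beside the point: the terms with $S\neq[m']$ are already out of control under your scheme, and the overall strategy needs to be replaced by the projector-attached, one-trace-class-factor expansion.
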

\begin{proof}[Proof of Claim \ref{Claim5}]
Below we prove the claim in the case  $I'\ge I''$ because, otherwise, we can get  the conclusion by a similar but much easier argument using Remark~\ref{eq:wR}.
We translate the claim to that on the lifted operators.  
Let us put
\[
\bP^{\lift}:=\bI\circ \bP\circ \bI^*:\bK^r\to \bK^r.
\]
We write
\begin{equation}\label{def:wtR}
\widetilde{\mathbf{R}}^{(n)}= 
\int_0^\infty \chi(t/(2t_0))\cdot  \frac{t^{n-1} e^{-ts_0}}{(n-1)!}\cdot \bL^t\, dt,
\end{equation}
so that
\[
{\mathbf{R}}^{(n)}+
\widetilde{\mathbf{R}}^{(n)}=\bI\circ \cR(s_0)^n\circ \bI^*
=\int_0^\infty \frac{t^{n-1} e^{-ts_0}}{(n-1)!}\cdot \bL^t\, dt.
\]
Then, from Corollary \ref{Cor:trace_equality},   the inequality in Claim \ref{Claim5} is equivalent to 
\begin{equation}\label{eq:main_estimate_trace}
\left|\flatTr\,\left((1-\bP^{\lift})\circ 
\left(\prod_{i=1}^{I''} \widetilde{\mathbf{R}}^{(n''(i))}\right)\circ  
\left(\prod_{i=1}^{I'} \mathbf{R}^{(n'(i))}\right)\right)\right|\le Cr_0^{-n}\cdot 2^{-I}.
\end{equation}
Since we are assuming $I'\ge I''$ and since the operators on the left hand side above commute, we may write the operator on the left-hand side above as 
\begin{equation}\label{eq:bPR}
\prod_{i=1}^{I'} ((1-\bP^{\lift})\circ{\mathbf{R}}_i)
\quad\text{
setting }
{\mathbf{R}}_i=
\begin{cases}
\widetilde{\mathbf{R}}^{(n''(i))}\circ \mathbf{R}^{(n'(i))},&\quad\mbox{if $1\le i\le I''$};\\
\mathbf{R}^{(n'(i))},&\quad \mbox{if $I''<i\le I'$.}
\end{cases}
\end{equation}
From the choice of the spectral projector $\bP$ and the fact that $\widetilde{\mathbf{R}}^{(n)}$ has small operator norm by the same reason as noted in Remark \ref{rem:wR}, we get the estimate  
\begin{equation}\label{eq:norm_1Pi}
\|(1-\bP^{\lift})\circ \mathbf{R}_i:\bK^{r}\to \bK^{r}\|\le (r_0)^{-\tilde{n}(i)}/4\quad \mbox{for $1\le i\le I'$},
\end{equation}
provided that the constant $N_0$ is  sufficiently large, where we set
\[
\tilde{n}(i)=\begin{cases}
n'(i)+n''(i),&\quad \mbox{if $1\le i\le I''$};\\
n'(i),&\quad \mbox{if $I''< i\le I'$}.
\end{cases}
\]
By Lemma \ref{lm:R_decomp},  the operators $\mathbf{R}_i$ are decomposed as $
\mathbf{R}_i=\widehat{\mathbf{R}}_i+\widecheck{\mathbf{R}}_i$
where  
\begin{enumerate}
\item $\widecheck{\mathbf{R}}_i$ is a trace class operator and $\|\widecheck{\mathbf{R}}_i\|_{\trace}<C$, and 
\item $\widehat{\mathbf{R}}_i$ is upper triangular and satisfies $
\|\widehat{\mathbf{R}}_i\|\le  
 r_0^{-\tilde{n}(i)}/4$ 
\end{enumerate}
provided that the constant $N_0$ is  sufficiently large.
(For the case $1\le  i\le I''$, we need a slight modification of Lemma \ref{lm:R_decomp} but the proof goes as well.) 
In (\ref{eq:bPR}), we consider the decomposition 
\[
(1-\bP^{\lift})\circ \mathbf{R}_i=\widehat{\mathbf{R}}_i+(\widecheck{\mathbf{R}}_i-\bP^{\lift} \circ \mathbf{R}_i)
\]
and apply the development  it in the parallel manner as we used to obtain (\ref{eq:Lqw0}).
Then, noting that $\flatTr\, (\widehat{\mathbf{R}}_1\circ \cdots \circ\widehat{\mathbf{R}}_{I'})=0$ from Lemma \ref{lm:monotone}, we obtain
\begin{align*}
&\flatTr\, \left((1-\bP^{\lift})\circ \mathbf{R}_1\circ \cdots \circ \mathbf{R}_{I'}\right)\\
& =
\sum_{j=1}^{I'} \flatTr\, \left(\widehat{\mathbf{R}}_1\circ \cdots \circ \widehat{\mathbf{R}}_{j-1}\circ(\widecheck{\mathbf{R}}_j-\bP^{\lift} \circ \mathbf{R}_j)
\circ ((1-\bP^{\lift})\circ\mathbf{R}_{j+1}\circ\cdots \circ \mathbf{R}_{I'})\right).
\end{align*}
This is bounded in absolute value by 
\begin{align*}
&\sum_{j=1}^{I'} \|\widehat{\mathbf{R}}_1\| \cdots  \|\widehat{\mathbf{R}}_{j-1}\|\cdot
\|(\widecheck{\mathbf{R}}_j-\bP^{\lift} \circ \mathbf{R}_j)\|_{\trace}
\cdot \|(1-\bP^{\lift})\circ\mathbf{R}_{j+1}\circ \cdots \circ\mathbf{R}_{I'}\|. 
\end{align*}
The trace norm $\|(\widecheck{\mathbf{R}}_j-\bP^{\lift} \circ \mathbf{R}_j)\|_{\trace}$ is bounded by a constant $C$ independent of $j$ and $n$.
Therefore, using the condition on $\widehat{\mathbf{R}}_i$ and  (\ref{eq:norm_1Pi}), we conclude (\ref{eq:main_estimate_trace}). 
This completes the proof of  Claim \ref{Claim5} and hence that of   Proposition \ref{pp:zeta}. 
 \end{proof}

\bibliographystyle{amsplain}
\bibliography{mybib}

\end{document}